\newtheorem{theorem}{Theorem}[section]
\newtheorem*{theorem*}{Theorem}
\newtheorem{lemma}[theorem]{Lemma}
\newtheorem{definition}[theorem]{Definition}
\newtheorem{prop}[theorem]{Proposition}
\newtheorem{claim}[theorem]{Claim}
\newtheorem{corollary}[theorem]{Corollary}
\theoremstyle{remark}
\newtheorem{remark}[theorem]{Remark}
\newtheorem{example}[theorem]{Example}
\newcommand{\eps}{\varepsilon}
\newcommand{\E}{\mathbb E}
\newcommand{\B}{\mathcal B}
\newcommand{\sB}{\mathscr B}
\newcommand{\sO}{\mathscr O}
\newcommand{\sF}{\mathscr F}
\newcommand{\cE}{\mathcal E}
\newcommand{\cH}{\mathcal H}
\newcommand{\cS}{\mathcal S}
\newcommand{\sH}{\mathscr{H}}
\newcommand{\sL}{\mathscr L}
\newcommand{\sK}{\mathscr K}
\newcommand{\D}{\mathfrak D}
\newcommand{\hD}{\hat{\D}}
\newcommand{\bH}{\mathbf H}
\newcommand{\bEr}{\mathbf{Err}}
\newcommand{\C}{\mathbb C}
\newcommand{\Z}{\mathbb Z}
\newcommand{\PP}{\mathbb P}
\newcommand{\N}{\mathbb N}
\newcommand{\bX}{\mathbf X}
\newcommand{\bx}{\mathbf x}
\newcommand{\bY}{\mathbf Y}
\newcommand{\by}{\mathbf y}
\newcommand{\bA}{\mathbf A}
\newcommand{\bI}{\mathbf I}
\newcommand{\bF}{\mathbf F}
\newcommand{\Clo}{D}
\newcommand{\wes}{d}
\newcommand{\Ceps}{\hat{C}}
\newcommand{\bb}{\boldsymbol{\upsilon}}
\newcommand{\bu}{\mathbf u}
\newcommand{\bp}{\mathbf p}
\newcommand{\bL}{\mathbf L}
\newcommand{\bdel}{\boldsymbol{\delta}}
\newcommand{\bDel}{\mathbf \Delta}
\newcommand{\ttt}{\boldsymbol{\tau}}
\newcommand{\R}{\mathbb R}
\renewcommand{\P}{\mathcal P}
\newcommand{\hP}{\hat{\P}}
\newcommand{\baP}{\overline{\P}}
\newcommand{\fb}{\mathfrak b}
\newcommand{\cC}{\mathcal Q}
\newcommand{\cM}{\mathcal M}
\newcommand{\cL}{\mathcal L}
\newcommand{\bk}{\mathbf k}
\newcommand{\bQ}{\mathbf Q}
\newcommand{\fB}{\mathfrak B}
\newcommand{\bm}{\mathbf m}
\renewcommand{\d}{\mathrm d}
\newcommand{\AB}{\mathcal{A}^\beta}
\newcommand{\GT}{\mathcal G}
\newcommand{\don}{\mathds 1}
\DeclareMathOperator{\Err}{\Theta}
\newcommand{\rmA}{\mathrm A}
\newcommand{\rmB}{\mathrm B}
\newcommand{\rmC}{\mathrm C}
\newcommand{\rmI}{\mathrm I}
\newcommand{\rmII}{\mathrm {II}}
\newcommand{\rmIII}{\mathrm {III}}
\newcommand{\rmIV}{\mathrm {IV}}
\newcommand{\rmV}{\mathrm V}
\newcommand{\rmVIa}{\mathrm {VI.1}}
\newcommand{\rmVIb}{\mathrm {VI.2}}
\newcommand{\rmVIc}{\mathrm {VI.3}}
\title{Airy$_\beta$ line ensemble and its Laplace transform}
\author{Vadim Gorin
\thanks{Departments of Statistics and Mathematics, UC Berkeley, Berkeley, CA, USA. e-mail: vadicgor@gmail.com}
\and
Jiaming Xu
\thanks{Department of Mathematics, KTH Royal Institut of Technology, Stockholm, Sweden. e-mail: jxu0800@gmail.com}
\and
Lingfu Zhang
\thanks{Division of Physics, Mathematics and Astronomy, Caltech, Pasadena, CA, USA, and Department of Statistics, UC Berkeley, Berkeley, CA, USA. e-mail: lingfuz@caltech.edu}
}
\date{}
\begin{document}

\maketitle

\begin{abstract}
 The Airy$_\beta$ line ensemble is a random collection of continuous curves, which should serve as a universal edge scaling limit in problems related to eigenvalues of random matrices and models of 2d statistical mechanics. This line ensemble unifies many existing universal objects including Tracy-Widom distributions, eigenvalues of the Stochastic Airy Operator, Airy$_2$ process from the KPZ theory. Here $\beta>0$ is a real parameter governing the strength of the repulsion between the curves.

 We introduce and characterize the Airy$_\beta$ line ensemble in terms of the Laplace transform, by producing integral formulas for its joint multi-time moments. We prove two asymptotic theorems for each $\beta>0$: the trajectories of the largest eigenvalues in the Dyson Brownian Motion converge to the Airy$_\beta$ line ensemble; the extreme particles in the G$\beta$E corners process converge to the same limit.

 The proofs are based on the convergence of random walk expansions for the multi-time moments of prelimit objects towards their Brownian counterparts. The expansions are produced through Dunkl differential-difference operators acting on multivariate Bessel generating functions.
\end{abstract}

\setcounter{tocdepth}{2}
\tableofcontents

\section{Introduction}

\subsection{Overview}
Following the seminal work of Dyson, Mehta, Wigner, and others in 1950s-60s and the influential papers of Tracy and Widom in the 1990s, the central question of the random matrix theory is to describe the probability distributions for the scaling limits of eigenvalues of very large random self-adjoint matrices. The original focus (see \cite{Meh} for the review) has been on the eigenvalues in the bulk of the spectrum, motivated by the nuclear physics, where spacings between such eigenvalues served as a model for spacing between energy levels in heavy nuclei; later the same bulk eigenvalues were discovered to be connected to the statistics of the zeros of the Riemann zeta-function (see, e.g.\ \cite{montgomery1973pair,berry1999riemann}), thus, reinvigorating the interest to the topic. More recently, the largest eigenvalues attracted lots of attention, driven by their limiting distributions turning out to be relevant for the 2d-statistical mechanics (see, e.g.\ \cite{johansson2018edge,gorin2021lectures}), to interacting particle systems and growth models in the Kardar-Parisi-Zhang (KPZ) universality class (see e.g.\ \cite{corwin2012kardar,borodin2016lectures}), to asymptotic representation theory (see, e.g., \cite{okounkov2000random, borodin2000asymptotics, jeong2016limit, ahn2023airy}), and to the statistical testing procedures (see, e.g., \cite{johnstone2006high}). One remarkable discovery in the random matrix theory was the phenomenon of \emph{universality}, which is reminiscent of the universality of the Gaussian distribution in classical probability theory: it turns out that there is a very short list of distinguished limiting probability distributions (with the sine-process and Tracy-Widom distribution being two most celebrated examples) and the asymptotic behavior for each of the hundreds of systems, ranging from different classes of random matrices and to the stochastic PDEs and objects of the number theory, is governed by one of these few distributions, see, e.g., \cite{deift2006universality,tao2014random,erdHos2017dynamical}.

The main goal of this paper is to construct and investigate the most recent addition to this family of the universal limit distributions --- the Airy$_\beta$ line ensemble: for each $\beta>0$, this is an infinite collection of random continuous curves $\AB_1(\ttt)\ge \AB_2(\ttt)\ge \AB_3(\ttt)\dots$. For special choices of $\beta$ and by passing to the marginals, $\{\AB_i(\ttt)\}_{i=1}^{\infty}$ specializes to several important objects: $\AB_1(0)$ is the general $\beta$ Tracy-Widom distribution; for $\beta=2$, the Airy$_2$ line ensemble $\{\mathcal A^{2}_i(\ttt)\}_{i=1}^{\infty}$ plays  a central role in the KPZ universality class (see, e.g., the introduction in \cite{aggarwal2023strong} and references therein), it is also the same object as the extended Airy determinantal point process; $\{\AB_i(0)\}_{i=1}^{\infty}$ are eigenvalues of the Stochastic Airy Operator \cite{edelman2007random,RRV}; for $\beta=1,2$ the Airy$_\beta$ line ensemble is identified with the spectral edge for corners of time-dependent Wigner matrices \cite{sodin2015limit}. Hence, our $\{\AB_i(\ttt)\}_{i=1}^\infty$ combines all these objects and gives them a unified treatment. We prove two main types of results for the Airy$_\beta$ line ensemble:

\begin{itemize}
 \item We characterize $\{\AB_i(\ttt)\}_{i=1}^{\infty}$, by giving integral expressions for the moments of its (random) Laplace transforms, and deriving the continuity of the trajectories (see Theorem \ref{thm:main}). 
 \item We show that this object appears in two distinct edge limit transitions: for the largest particles in the general $\beta$ Dyson Brownian Motion (see also \cite{landon2020edge,HZ})  and for the Gaussian $\beta$ corners process.
\end{itemize}
Our approach to the proofs suggests numerous generalizations, and therefore predicts universality of $\{\AB_i(\ttt)\}_{i=1}^{\infty}$.
Towards this universality, in a companion paper \cite{KX}, the approach using actions of Dunkl operators similar to this text is applied to extend the one-time $\{\AB_{i}(0)\}_{i=1}^{\infty}$ edge universality to certain additions of Gaussian and Laguerre $\beta$-ensembles.
In another paper \cite{HZ} a general framework of proving convergence to $\{\AB_i(\ttt)\}_{i=1}^{\infty}$ is developed, and is applied to various continuous random processes.

\subsection{Beta ensembles and their $2d$--extensions}

The starting point for our text is the Gaussian $\beta$ Ensemble. It depends on an integer $N=1,2,\dots,$ and two parameters $\beta, \tau>0$ and is a probability distribution on $N$--tuples of reals $\lambda_1>\lambda_2>\dots>\lambda_N$ of density proportional to:
\begin{equation} \label{eq_GBE_t}
  \mathcal P(\lambda_1,\dots,\lambda_N)\sim \prod_{i<j} |\lambda_i-\lambda_j|^{\beta} \exp\left(-\frac{1}{2 \tau} \sum_{i=1}^N (\lambda_i)^2 \right).
\end{equation}
For $\beta=1,2,4$, the formula \eqref{eq_GBE_t} records the eigenvalue distribution of $N\times N$ self-adjoint Wigner matrices with i.i.d.\ mean $0$ Gaussian real/complex/quaternionic matrix elements, respectively, see, e.g., \cite{Meh,forrester2010log}. For general values of $\beta$, \eqref{eq_GBE_t} are eigenvalues of tridiagonal matrices of Dumitriu and Edelman \cite{DE}. The computation of the normalization constant which makes \eqref{eq_GBE_t} a probability distribution is the Mehta integral evaluation, which is a particular case of the famous Selberg integral, cf.\ \cite{forrester2008importance}. Other points of view on \eqref{eq_GBE_t} connect it to Calogero–Sutherland quantum many–body system and to Coulomb log-gases, cf.\ \cite{forrester_oxford,Serfaty24}.

A powerful strategy for studying $N\to\infty$ asymptotics of the random matrix-type distributions uses \eqref{eq_GBE_t} as the basic ingredient: we first investigate various limits of the Gaussian $\beta$ Ensemble by one set of tools and then use another set of tools to prove the universality, i.e.\ to compare other models with the Gaussian $\beta$ Ensemble and to show that the limits should be the same.  For classical values $\beta=1,2,4$, the formulas for the limits of \eqref{eq_GBE_t} both in the bulk and at the edge are quite old (see, e.g., \cite[Chapters 6,7,8]{Meh} or \cite[Chapter 7]{forrester2010log} and references therein); their development was based on the theory of determinantal and Pfaffian point processes. More recently they were shown to extend universally to the log-gases (of the form \eqref{eq_GBE_t} with other potentials replacing $ \exp\left(-\tfrac{1}{2 \tau} (\lambda)^2\right)$), to Wigner matrices with generic matrix elements, cf.\ \cite{erdHos2012universality,tao2014random}, and to many other random matrix models not mentioned here. The edge and bulk scaling limits of \eqref{eq_GBE_t} for general $\beta>0$ were found in \cite{edelman2007random,RRV} and \cite{valko2009continuum}, respectively, based on the tridiagonal matrix model of \cite{DE}. Subsequently, they were extended to log-gases with more general potential in \cite{bourgade2014edge,bourgade2014universality,bekerman2015transport,krishnapur2016universality}, to discrete log-gases in \cite{guionnet2019rigidity}, and to some $\beta$ matrix additions in \cite{KX}.

\medskip

The focus of this text is not on the Gaussian $\beta$ Ensemble per se, but rather on its two-dimensional extensions. There are two natural ways to add a second dimension to \eqref{eq_GBE_t}: we can either allow $\tau$ to vary, or we can allow $N$ to vary. Somewhat surprisingly, when it comes to edge limits, both approaches lead to exactly the same limiting objects, as our Theorems \ref{thm:cor-conv} and \ref{thm:dbm-conv} below indicate.

The dynamical point of view on \eqref{eq_GBE_t} goes back to \cite{dyson1962brownian} and interprets $\tau$ as a time parameter. For that we consider an $N$-dimensional diffusion known as the Dyson Brownian Motion (see
\cite[Chapter 9]{Meh}, \cite[Section 4.3]{AGZ} and references therein)

\begin{definition} \label{Definition_DBM} For a real $\beta>0$, the Dyson Brownian Motion (DBM) is the (unique,
strong) solution\footnote{For $0<\beta<1$ a special care is needed, because particles start to collide, see \cite{cepa1997diffusing}.} of the system of $N$ stochastic differential equations
\begin{equation}
 \label{eq_Dyson_BM}
\mathrm{d}Y_i(\tau )=\frac{\beta}{2}\,\sum_{j\neq i} \frac{1}{Y_i(\tau )-Y_j(\tau )}\,\mathrm{d}\tau +
\mathrm{d}W_i(\tau ), \quad i=1,2,\ldots,N,
\end{equation}
where $W_1,W_2,\ldots,W_N$ are independent standard Brownian motions.
\end{definition}
 If one starts
\eqref{eq_Dyson_BM} from zero initial condition, then the distribution of the solution at time $\tau$
is given by \eqref{eq_GBE_t}. For the special value $\beta=2$, the
diffusion \eqref{eq_Dyson_BM} admits two probabilistic interpretations.
First, it can be identified with a
system of $N$ independent standard Brownian motions conditioned never to collide, thereby making it a key tool to analyze KPZ universality class models (see e.g., \cite{DOV,DZ}). Besides, it also can be viewed as the evolution of the eigenvalues of a Hermitian
random matrix whose elements evolve as independent Brownian motions. Similarly, for $\beta=1$ and $4$, \eqref{eq_Dyson_BM} is the evolution of eigenvalues for self-adjoint symmetric and quaternionic matrices of Brownian motions, respectively. The DBM can be also linked to prominent conformally invariant objects: it is the driving function for multiple SLEs in a simultaneous parameterization; this observation can be traced back to \cite{cardy2003stochastic}\footnote{See also \cite{cardy2004calogero} for the correction of the correspondence between the parameters.}, and it was made more precise recently, see, e.g., \cite{healey2021n, wu2023multiple, feng2024multiple}. We recall that Schramm-Loewner Evolutions (SLEs) appear as conformally invariant scaling limits of interfaces in various critical models of 2d statistical mechanics, see, e.g., \cite{tsirelson2004wendelin}. SLEs depend on a parameter $\kappa$, which is related to the parameter of the DBM via $\beta=8/\kappa$. In particular, the relevant values of the parameter go beyond the classical $\beta=1,2,4$, for instance, the celebrated Ising model corresponds to $\kappa=3$ , i.e.\ $\beta=8/3$, cf.\ \cite{chelkak2014convergence}, while self-avoiding random walks should be related to $\kappa=8/3$, i.e. $\beta=3$, cf.\ \cite{lawler2002scaling}.

\smallskip

For an alternative 2d-extension of \eqref{eq_GBE_t}, we define a \emph{Gelfand--Tsetlin pattern of rank $N$ with top row $\lambda_1,\dots,\lambda_N$} as an array $\{y_{i}^n\}_{1\le i \le n \le N}$ of real numbers satisfying
$y^{n+1}_i\ge y^{n}_i \ge y^{n+1}_{i+1}$ and $(y_1^N,y_2^N,\dots,y_N^N)=(\lambda_1,\dots,\lambda_N)$. Denote by $\GT_N$ the space of all Gelfand--Tsetlin patterns of rank $N$ with arbitrary top rows.

\begin{definition}\label{def_betacorner} Fix $\beta>0$.\footnote{Compared to \cite[Definition 2.18]{GS1} or \cite[Definition 2.1]{BGC}, we set $\beta=2\theta$.} The \emph{$\beta$-corners process with top row $\lambda_1>\dots>\lambda_N$}
is a random array
$\{y^n_i\}_{1\leq i\leq n\leq N}\in \GT_N$ with top row $(\lambda_1,\dots,\lambda_N)$, in which
the $N(N-1)/2$ coordinates $y_i^n$, $1\le i \le n<N$, have the probability density
\begin{equation}
\label{eq_beta_corners_def}
\frac{1}{Z_{N; \beta}} \cdot
\prod_{n=1}^{N-1} \left[\prod_{1\le i<j\le n} |y_j^n-y_i^n|^{2-\beta}\right] \cdot \left[\prod_{a=1}^n \prod_{b=1}^{n+1}
|y^n_a-y^{n+1}_b|^{\beta/2-1}\right],
\end{equation}
where $Z_{N; \beta}$ is the normalization constant:
\begin{equation}
\label{eq_normalization}
Z_{N; \beta} =\left[\prod_{n=1}^N \frac{ \Gamma(\beta/2)^n}{\Gamma(k\beta/2)}\right] \cdot \prod_{1\le i < j \le N}
 |\lambda_j-\lambda_i|^{\beta-1}.
\end{equation}
\end{definition}
By taking  limits in the space of probability measures on $\GT_N$, we can allow equalities and extend the definition to arbitrary $\lambda_1\ge \lambda_2\ge\dots\ge \lambda_N$.

\begin{definition} \label{eq_GBE_corners}
 The G$\beta$E corners process of rank $N$ and variance $\tau$ is an $N(N+1)/2$ dimensional random array $\{y^n_i\}_{1\leq i\leq n\leq N}\in \GT_N$, such that the top row $(y_1^N,y_2^N,\dots,y_N^N)$ is distributed according to \eqref{eq_GBE_t} and the conditional distribution of $\{y^n_i\}_{1\leq i\leq n<N}$ given the top-row is that of the $\beta$--corners process of Definition \ref{def_betacorner}.
\end{definition}
For $\beta=1,2,4$, the distribution of G$\beta$E corners process coincides with that of eigenvalues of top-left $k\times k$ corners, $k=1,\dots,N$, of the Wigner matrix with Gaussian real, or complex, or quaternionic matrix elements, respectively: the ideas of this computation go back to \cite[Section 9.3]{GN}, with more recent treatments in \cite{baryshnikov2001gues,neretin2003rayleigh}. \Cref{eq_GBE_corners} is consistent\footnote{See, e.g., discussion after Definition 1.1 in \cite{GS1}.} over $N$: for $M<N$, the distribution of $\{y^k_i\}_{1\leq i\leq k\leq M}$ is again the G$\beta$E corners process (of rank $M$ and the same variance $t$). In this way, one can also set $N=\infty$ and think about infinite arrays $\{y^n_i\}_{1\leq i\leq n}$; these infinite random infinite arrays are examples of extreme ergodic Gibbs measures and play a role in the classification theorems for the latter, see \cite{OV,AN,BR}. For the special value $\beta=2$, the conditional density in \eqref{eq_beta_corners_def} is uniform, and the corresponding GUE-corners process (first studied in \cite{johansson2006eigenvalues,okounkov2006birth}) is universally appearing as a scaling model of many models of $2d$ statistical mechanics, see \cite[Lecture 20]{gorin2021lectures}, \cite{gorin2023six}, and references therein; some traces of this universality also exist for general values of $\beta$, cf.\ \cite{cuenca2021universal}.

\subsection{Edge limits via the Airy$_\beta$ line ensemble}  \label{ssec:convtoale}

We are interested in the largest particles (eigenvalues) in either of the DBM or the G$\beta$E corners process, and we now introduce the limiting object governing their asymptotic behavior.

\begin{theorem}\label{thm:main}
For any $\beta>0$, there is a unique family of time-dependent (stationary) continuous random processes --- denoted by $\{\AB_i(\ttt)\}_{i=1}^\infty$ --- such that they are ordered $\AB_1\ge \AB_2\ge \dots$; and for any $m\in \N$, $\vec \bk \in \R^m_+ $, and $\vec \ttt\in \R^m$ such that $\ttt_1\le \cdots \le \ttt_m$, the joint moment for the Laplace transforms
\begin{equation} \label{eq_expected_Laplace}
\E\left[ \prod_{\ell=1}^m \left(\sum_{i=1}^\infty
\exp(\bk_\ell \AB_i(\ttt_\ell)/2) \right) \right]
\end{equation}
equals $\bL_\beta(\vec\bk,\vec\ttt)$, which is defined below in \Cref{defn:core}\footnote{The factor of dividing by $2$ in \eqref{eq_expected_Laplace} is to simply the expressions in \Cref{defn:core}, avoiding various extra factors there. It also comes up naturally in the edge scaling limits.}.
\end{theorem}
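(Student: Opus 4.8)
The plan is to establish existence and uniqueness separately. For uniqueness, I would first observe that the joint moments \eqref{eq_expected_Laplace} determine the joint law of the finite-dimensional distributions of $\{\AB_i(\ttt)\}_{i=1}^\infty$. Indeed, for fixed times $\ttt_1 \le \cdots \le \ttt_m$, varying $\vec\bk$ over an open set of $\R^m_+$ and taking products over $\ell$ of the quantities $\sum_i \exp(\bk_\ell \AB_i(\ttt_\ell)/2)$, one recovers (by a moment-problem / Laplace-inversion argument, using that the $\AB_i$ decay fast enough for the sums to converge and for the exponential moments to be finite) the joint distribution of the point configurations $(\{\AB_i(\ttt_1)\}_i, \dots, \{\AB_i(\ttt_m)\}_i)$. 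The ordering $\AB_1 \ge \AB_2 \ge \cdots$ then pins down the individual curves from the configuration at each fixed time. Since a continuous process is determined by its finite-dimensional distributions, and continuity is part of what we demand, uniqueness follows once we check that the quantity $\bL_\beta$ is consistent under reordering/merging of times and under projections — which should be visible directly from the formula in \Cref{defn:core}.

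For existence, the strategy dictated by the paper's abstract and introduction is to construct $\{\AB_i(\ttt)\}_{i=1}^\infty$ as the edge scaling limit of a prelimit object — the largest particles of the Dyson Brownian Motion of Definition \ref{Definition_DBM} (equivalently, after the other main theorem, of the G$\beta$E corners process). Concretely, I would: (i) take the DBM with $N$ particles started from an appropriate initial condition, rescale space near the spectral edge $2\sqrt{N}$ and time on the correct (likely $N^{-1/3}$ spatial, $N^{-1/3}$ temporal, suitably matched) scale; (ii) compute the joint multi-time moments of the Laplace transforms $\sum_i \exp(\bk_\ell \cdot (\text{rescaled }Y_i(\ttt_\ell))/2)$ via the Dunkl-operator / multivariate Bessel generating function machinery, obtaining a random-walk expansion; (iii) show these prelimit expansions converge, term by term and with uniform control of the tail, to a Brownian-motion expansion whose value is exactly $\bL_\beta(\vec\bk,\vec\ttt)$; (iv) prove tightness of the rescaled top curves in the space of continuous functions, so that a subsequential limit exists and is continuous; (v) identify any subsequential limit by its joint Laplace moments, which by (iii) all equal $\bL_\beta$, and then invoke the uniqueness half to conclude the limit is unique and is the desired $\{\AB_i(\ttt)\}_{i=1}^\infty$. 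Stationarity in $\ttt$ should follow either from the stationarity built into the construction (e.g. starting DBM from its stationary G$\beta$E law) or from the translation-invariance of $\bL_\beta$ in the $\ttt$-variables.

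The main obstacle, and the technical heart of the paper, is step (iii)–(iv): proving that the random-walk expansions for the prelimit multi-time moments actually converge to their Brownian counterparts, with enough quantitative control (uniform-in-$N$ bounds on the expansion, summability of the combinatorial series, moderate-deviation estimates for the random walks) to pass to the limit and simultaneously extract tightness of the trajectories. The Laplace-transform moments only see the point configuration at each time, so upgrading this to joint control of \emph{continuous} curves — establishing modulus-of-continuity estimates uniform in $N$, and ruling out that mass escapes to $-\infty$ or that curves collide pathologically for $0<\beta<1$ — requires genuine work beyond the algebraic identity. I also expect the careful verification that the formula $\bL_\beta$ from \Cref{defn:core} is manifestly a consistent family (under time-merging and particle-projection) to be a necessary but delicate bookkeeping step underpinning uniqueness.
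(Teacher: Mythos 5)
Your existence half (steps (i)--(v)) is essentially the paper's route: joint Laplace--transform moments of the rescaled DBM/corners processes are computed via Dunkl operators, the random-walk expansions converge to Brownian ones equal to $\bL_\beta$, tightness gives a continuous subsequential limit, and the limit is identified by its moments. (The paper's tightness step is worth noting: it first proves tightness of the observables $\sum_i(1+N^{-2/3}\by_i/2)^k$ via a fourth-moment bound in the time increment, whose $(\ttt_2-\ttt_1)^2$ decay rests on the commutator identity $[[[\cS,\hP_k],\hP_k],\hP_k]=0$, and then transfers this to the individual curves by a deterministic topological lemma; but your outline is compatible with this.)

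The genuine gap is in your uniqueness argument. You assert that the joint moments \eqref{eq_expected_Laplace} determine the finite-dimensional laws ``by a moment-problem / Laplace-inversion argument, using that \ldots the exponential moments are finite.'' Finiteness is not the issue: the random variables $X_\ell=\sum_i\exp(\bk_\ell\AB_i(\ttt_\ell)/2)$ have $m$-th moments growing roughly like $\exp(cm^3)$ (the top curve has Tracy--Widom right tail $\sim\exp(-cx^{3/2})$, so $\E[e^{m\bk\AB_1/2}]$ is super-exponential in $m$), which violates the Carleman/Cram\'er-type conditions, and the moment problem for these variables is \emph{indeterminate} --- the paper states this explicitly at the start of Section \ref{Section_process_convergence}. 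So knowing all mixed moments of the $X_\ell$ does not, by a direct moment argument, pin down their joint law. The paper's fix is an indirect two-step inversion: writing each $X_\ell$ as a Laplace transform in $h$ of the counting function $f(\ttt_\ell,h)=\#\{i:\AB_i(\ttt_\ell)>h\}$, integrating by parts, and applying Lerch's theorem to recover the mixed moments $\E[\prod_\ell f(\ttt_\ell,h_\ell)]$; then the moment problem for $f$ \emph{is} determinate because $\PP[f(\ttt,h)\ge i]=\PP[\AB_i(\ttt)>h]$ decays geometrically in $i$ --- a fact that requires identifying the fixed-time marginal with the Stochastic Airy Operator spectrum (\cite{RRV}) and a Riccati-diffusion tail estimate (Lemma \ref{lem:bdabci}) --- after which Petersen's theorem upgrades one-dimensional determinacy to finite-dimensional determinacy. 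Without this detour your uniqueness proof does not go through. (Your proposed ``consistency of $\bL_\beta$ under time-merging/projection'' check is not needed in the paper's scheme, since existence is obtained as a limit of genuine processes rather than by a Kolmogorov-extension construction.)
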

We call such $\{\AB_i\}_{i=1}^\infty$ the Airy$_\beta$ line ensemble, and we also refer to the individual paths as particles.\footnote{Following our proofs, each $\AB_i$ is a continuous function from $\R$ to $\R\cup\{-\infty\}$, with topology which makes each $[-\infty, a)$, $a\in\mathbb R$ an open set.
However, it is implied by \cite{landon2020edge} ($\beta\ge 1$) and \cite{HZ} (any $\beta>0$) that, almost surely, $\AB_i(\ttt)\in \mathbb R$ for each $i\in\N$ and $\ttt\in\R$. Therefore, eventually the potential presence of $-\infty$ can be ignored.}
The expectations in \eqref{eq_expected_Laplace} can be rewritten as Laplace transforms of the correlation functions for the Airy$_\beta$ line ensemble, although extraction of compact expressions for correlation functions from \eqref{eq_expected_Laplace} is a separate task to be done, cf.\ \cite{okounkov2002generating}, \cite{borodin2016moments} for computations of such type at $\beta=2$. We plan to address this in the future work.

For $\beta=1,2$ representations of \eqref{eq_expected_Laplace} as infinite sums of integrals were previously studied in \cite{sodin2015limit,jeong2016limit} (following the earlier results on fixed time marginals, corresponding to $\ttt_1=\dots=\ttt_m=0$ in \eqref{eq_expected_Laplace} in \cite{soshnikov1999universality,okounkov2000random,okounkov2002generating}); the answers in these articles have distinct combinatorial structures for $\beta=1$ and $\beta=2$. On the other hand, our expressions for $\bL_\beta(\vec\bk,\vec\ttt)$ are analytic functions in $\beta$.
Another approach at $\beta=2$ is to characterize the Airy$_2$ line ensemble in terms of its correlation functions and determinantal formulas for the latter written in terms of the extended Airy kernel \cite{prahofer2002scale}; the interplay between determinantal formulas and combinatorial expressions for \eqref{eq_expected_Laplace} was explored in \cite{okounkov2002generating} to compute certain intersection numbers. Yet another approach to the Airy$_2$ line ensemble is to characterize it through the Brownian Gibbs property, see \cite{CH,aggarwal2023strong}. It is unknown how to extend either of the two last approaches to the general values of $\beta$. Very recently \cite[Problem 4]{arXiv:2411.01633} suggested an attractive and simple conjectural definition for the Airy$_\beta$ ensemble; though, at the end of Section 1 the authors report that ``numerical experiments do not yet provide strong evidence for these conjectures''. It is yet to be understood whether our Theorem \ref{thm:main} agrees with the conjectures of \cite{arXiv:2411.01633}.

The fixed time $\ttt_1=\dots=\ttt_m=0$ marginals of \eqref{eq_expected_Laplace} were previously computed in \cite{GS} leading to different expressions and using a method based on tridiagonal matrices, which does not generalize to the multi-time case. The only path to the Airy$_\beta$ line ensemble existing before this work was an approach through an infinite system of stochastic differential equations, making sense of \eqref{eq_Dyson_BM} at $N=\infty$. The initial attempt on this method goes back to \cite{osada2014infinite} for $\beta=1,2,4$; more recently results for all $\beta\ge 1$ were presented in \cite{landon2020edge} in the language of individual particles; and in the most recent (simultaneous with the current paper) \cite{HZ} in the language of an SDE satisfied by the Stieltjes transform. One inevitable difficulty of this method is that directly solving the resulting system of SDEs becomes a new (unsolved) hard task. On the other hand, Theorem \ref{thm:main} provides direct formulas for the Airy$_\beta$ line ensemble.

Our following two results demonstrate the appearance of the Airy$_\beta$ line ensemble as an edge scaling limit.

\begin{theorem}  \label{thm:cor-conv}
For fixed $\beta>0$, let $\{y_i^n\}_{1\le i\le n}$ be the (infinite rank) G$\beta$E-corners process of \Cref{eq_GBE_corners} with variance $1$.
As $N\to\infty$, the process
\[
(\ttt, i)\mapsto 2N^{2/3} \left( \frac{y_i^{\lfloor N-\ttt N^{2/3} \rfloor}}{\sqrt{2\beta(N-\ttt N^{2/3})}}-1\right)
\]
converges to the Airy$_\beta$ line ensemble in distribution in the topology of uniform convergence over compact subsets of $\mathbb R$ for $\ttt$--variable and jointly in finitely many $i$.
\end{theorem}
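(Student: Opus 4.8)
The plan is to leverage the characterization of Theorem \ref{thm:main}: since the Airy$_\beta$ line ensemble is uniquely determined by its joint moments of Laplace transforms \eqref{eq_expected_Laplace}, it suffices to (i) establish tightness of the rescaled corners processes in the stated topology, and (ii) show that the joint Laplace-transform moments of the prelimit converge to $\bL_\beta(\vec\bk,\vec\ttt)$. Step (ii) is where the Dunkl-operator machinery advertised in the abstract enters. The key tool is the multivariate Bessel generating function of the G$\beta$E corners process. Concretely, for the row of length $n$ the joint law is the Gaussian $\beta$-ensemble of variance $1$, whose Bessel generating function is an explicit Gaussian-type exponential; and the conditional law of row $n$ given row $n+1$ (the $\beta$-corners kernel \eqref{eq_beta_corners_def}) acts on Bessel generating functions by a known multiplier / integral operator. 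Applying the Dunkl differential-difference operators to these generating functions produces, after expansion, a combinatorial sum over lattice paths (``random walk expansions'') for each fixed-$N$ multi-time moment
\[
\E\!\left[ \prod_{\ell=1}^m \Bigl(\sum_{i=1}^\infty \exp\bigl(\bk_\ell \cdot 2N^{2/3}(y_i^{n_\ell}/\sqrt{2\beta n_\ell}-1)/2\bigr)\Bigr)\right],
\qquad n_\ell=\lfloor N-\ttt_\ell N^{2/3}\rfloor,
\]
where the several Laplace transforms at different ``times'' $n_1\ge n_2\ge\cdots$ correspond to inserting the operators at different levels of the Gelfand--Tsetlin array, using the consistency of the corners process over $N$ together with the fact that the array is built row by row downward.

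The second main step is the asymptotic analysis of these random walk expansions. Under the edge scaling $n_\ell=N-\ttt_\ell N^{2/3}$, $\bk_\ell\mapsto \bk_\ell$ fixed (recall the generator $\exp(\bk\AB/2)$ is already dimensionless after the $N^{2/3}$ rescaling inside the exponent), the discrete random walks appearing in the expansion should, by a local central limit / invariance principle argument, converge to the Brownian bridges/excursions that define $\bL_\beta(\vec\bk,\vec\ttt)$ in \Cref{defn:core}. I would first isolate the ``leading'' terms — those lattice paths that stay within $O(N^{2/3})$ of the edge and make $O(N^{1/3})$ steps between consecutive times — carry out a Riemann-sum-to-integral passage term by term, and then bound the contribution of all remaining terms. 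The variance-$1$ normalization and the factor $\sqrt{2\beta n_\ell}$ in the denominator are precisely tuned so that the drift terms coming from the $|y_i-y_j|^{2-\beta}$ and $|y_a^n-y_b^{n+1}|^{\beta/2-1}$ weights telescope into the Airy-type potential $\ttt^2/2$-type curvature; keeping careful track of these deterministic shifts across the $N-n_\ell \sim \ttt_\ell N^{2/3}$ levels is essential, and is the source of the linear-in-$\ttt$ and quadratic-in-$\ttt$ corrections in the limiting formula.

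For tightness (step (i)), I would use a Kolmogorov-type criterion: derive from the same Laplace-transform moments (now at two nearby times $\ttt,\ttt'$) a bound of the form $\E[|\text{(rescaled }y_i^{n(\ttt)}) - \text{(rescaled }y_i^{n(\ttt')})|^{p}] \le C_p |\ttt-\ttt'|^{1+\alpha}$ for some $p$ large and $\alpha>0$, uniformly in $N$ and locally uniformly in $\ttt$; combined with a one-point upper tail bound for the top particle (again extracted from \eqref{eq_expected_Laplace} with a single $\bk$, using $\sum_i e^{\bk\AB_i/2}\ge e^{\bk\AB_1/2}$ and Markov), this gives tightness in the uniform-on-compacts topology for each fixed index $i$, and then jointly for finitely many $i$ by the same argument applied to $\AB_i = (\sum_{j\le i}e^{\bk\AB_j/2}) - (\sum_{j<i}\dots)$-type inclusion-exclusion. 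Together with the moment convergence and the uniqueness in Theorem \ref{thm:main}, tightness plus convergence of all finite-dimensional Laplace-transform moments identifies the limit as $\{\AB_i(\ttt)\}_{i=1}^\infty$.

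The main obstacle, I expect, will be the asymptotic analysis of the multi-time random walk expansion: controlling the error terms uniformly in $N$ while passing many nested discrete sums (one per level of the array between consecutive times, i.e.\ $\Theta(N^{2/3})$ of them) to a multivariate Brownian integral. In particular, one must show that the combinatorial expansion, though a priori an alternating/oscillatory sum over exponentially many lattice configurations, reorganizes into a convergent Riemann sum with controllable tails — this requires precise estimates on the Dunkl-operator action on the Bessel generating functions and on the one-step transition weights near the spectral edge, and is presumably the bulk of the technical work in the paper.
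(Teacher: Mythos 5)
Your overall architecture matches the paper's: moments of the Laplace transforms are computed via Dunkl operators acting on Bessel generating functions (the paper's Theorem \ref{Theorem_corners_moments}, using the branching identity \eqref{eq_Bessel_branching} to move between levels of the array), the resulting walk expansions are shown to converge to the Brownian-bridge integrals defining $\bL_\beta$ (Theorem \ref{thm:multil}), and the limit is then identified via tightness plus the uniqueness clause of Theorem \ref{thm:main}. The moment-convergence half of your plan is the same route as the paper's, correctly outlined at the level of a strategy.

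There are, however, two concrete gaps in your tightness/identification step. First, you propose a Kolmogorov bound of the form $\E\bigl[|\,\text{rescaled }y_i^{n(\ttt)}-\text{rescaled }y_i^{n(\ttt')}|^p\bigr]\le C_p|\ttt-\ttt'|^{1+\alpha}$ for \emph{individual} particles $y_i^n$. The Dunkl-operator formalism only gives access to moments of the symmetric observables $\sum_i (y_i^n)^k$, and there is no direct way to extract increment moments of the $i$-th ordered particle from these; your ``inclusion-exclusion'' remark does not supply one. The paper instead proves a fourth-moment increment bound for the power-sum processes themselves (Proposition \ref{prop:mom-cor}) and then invokes a separate \emph{deterministic} topological lemma (Proposition \ref{prop:keyconv}): if the sums of large powers converge uniformly for a countable family of exponents, then each individual ordered curve converges uniformly. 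This lemma is the missing idea that lets one pass from the symmetric observables to the curves. Second, even for the power sums, a naive estimate of $\E[(\bX(\ttt_2)-\bX(\ttt_1))^4]$ yields only $O(|\ttt_2-\ttt_1|)$, which is useless for Kolmogorov's criterion; obtaining the exponent $2$ rests on a nontrivial algebraic cancellation, namely the vanishing of the triple commutator $[[[\baP_k^{N,\tau},\hP_k^{N,\tau}],\hP_k^{N,\tau}],\hP_k^{N,\tau}]=0$ (Proposition \ref{prop:piden}). Your proposal correctly states the bound that is needed but gives no mechanism for producing it. A smaller caveat: citing the uniqueness in Theorem \ref{thm:main} to conclude is legitimate only because that theorem's uniqueness proof itself circumvents the indeterminacy of the moment problem for $\exp(\bk\AB_i(\ttt)/2)$ (the paper passes to counting functions, Lerch's theorem, and tail bounds from the Stochastic Airy Operator); if you intended to prove that uniqueness along the way, the ``moments determine the law'' step would fail as stated.
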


\begin{theorem}  \label{thm:dbm-conv}
For fixed $\beta>0$, let $\{Y_i(\tau)\}_{1\le i\le N, \tau>0}$ be the $N$ dimensional DBM of \Cref{Definition_DBM} started from zero initial condition.
As $N\to\infty$, the process
\[
(\ttt, i)\mapsto 2N^{2/3} \left( \frac{Y_i(2N/\beta + 2\ttt N^{2/3}/\beta)}{2\sqrt{(N + \ttt N^{2/3})N} }-1 \right)
\]
converges to the Airy$_\beta$ line ensemble in distribution in the topology of uniform convergence over compact subsets of $\mathbb R$ for $\ttt$--variable and jointly in finitely many $i$.
\end{theorem}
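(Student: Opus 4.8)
The plan is to reduce the DBM convergence to the convergence already (to be) established for the Gaussian $\beta$ Ensemble / G$\beta$E corners, using the fact that both prelimit families are accessed through the same machinery of Bessel generating functions and Dunkl operators. The key structural observation is that the object in \eqref{eq_expected_Laplace} is characterized purely through the multi-time Laplace moments $\bL_\beta(\vec\bk,\vec\ttt)$, so it suffices to prove (i) tightness of the rescaled trajectories in the stated topology, and (ii) convergence of the finite-dimensional multi-time Laplace moments
\[
\E\left[\prod_{\ell=1}^m\left(\sum_{i=1}^N \exp\bigl(\bk_\ell\, \widetilde Y_i(\ttt_\ell)/2\bigr)\right)\right]\longrightarrow \bL_\beta(\vec\bk,\vec\ttt),
\]
where $\widetilde Y_i(\ttt)=2N^{2/3}\bigl(Y_i(2N/\beta+2\ttt N^{2/3}/\beta)/(2\sqrt{(N+\ttt N^{2/3})N})-1\bigr)$ is the rescaled DBM. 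The point is that \eqref{eq_Dyson_BM} with zero initial condition has, at any fixed collection of times, a joint law whose Bessel generating function can be written explicitly; applying the Dunkl differential-difference operators to this BGF produces the same kind of random-walk expansion for the prelimit multi-time moments as in the G$\beta$E case, and the scaling in Theorem \ref{thm:dbm-conv} is precisely the one under which those expansions converge term-by-term to their Brownian analogues defining $\bL_\beta$.

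Concretely, I would proceed as follows. First, record the joint Bessel generating function of $(Y(\tau_1),\dots,Y(\tau_m))$ for the DBM started at $0$: because DBM is the $\beta$-Dyson diffusion, the transition ``kernel'' between times is governed by the heat semigroup in the Dunkl/Cherednik sense, so the multi-time BGF factors through multivariate Bessel functions and Gaussian weights, and the relevant observables $\sum_i \exp(\bk_\ell \lambda_i/2)$ are extracted by acting with symmetric combinations of Dunkl operators $\{\mathcal D_i\}$ evaluated at a distinguished point. Second, perform the asymptotic analysis of this operator expansion under the edge scaling: substitute $\tau=2N/\beta+2\ttt N^{2/3}/\beta$, expand the Gaussian exponents and the Bessel-function asymptotics, and track how each application of a Dunkl operator contributes a step of a random walk; the $N^{2/3}$ spatial scaling together with the $N^{2/3}$ time scaling matches the parabolic scaling of Brownian motion, so the discrete walk converges to Brownian motion with the correct drift coming from the $\beta/2$-interaction term in \eqref{eq_Dyson_BM}. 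Third, identify the limit of the combinatorial sum with $\bL_\beta(\vec\bk,\vec\ttt)$ of \Cref{defn:core} — this identification should be essentially forced, since both are built from the same Brownian random-walk data, but one must check the ordering condition $\ttt_1\le\cdots\le\ttt_m$ enters correctly (it controls the direction in which the semigroup is composed). Fourth, establish tightness: uniform-over-compacts control of the $\ttt$-variable follows from moment bounds on increments $\widetilde Y_i(\ttt)-\widetilde Y_i(\ttt')$ obtained from the same Laplace-transform formulas (a Kolmogorov-type criterion), and the joint-in-$i$ statement follows because the Laplace transform $\sum_i\exp(\bk\widetilde Y_i/2)$ with $\bk$ large localizes onto the top finitely many particles.

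The main obstacle I expect is the asymptotic analysis in the second step: controlling the Dunkl-operator expansion uniformly as $N\to\infty$, since naively the number of terms grows and individual terms involve ratios of Bessel functions near a turning point of the scaling. The delicate issues are (a) showing the expansion is absolutely convergent with $N$-uniform tail bounds so that term-by-term passage to the limit is justified (this is where the companion framework of \cite{HZ} and the technology of \cite{KX} would be invoked, or reproved in the needed generality), and (b) verifying that the cross-time structure — the DBM increments between $\tau_\ell$ and $\tau_{\ell+1}$ — produces exactly the Brownian transition appearing in $\bL_\beta$ rather than some spurious extra covariance; this requires carefully matching the heat-semigroup composition in the Dunkl calculus with the factor $\exp(-\tfrac{1}{2}\int \text{(something)})$ built into \Cref{defn:core}. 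Once the single-time edge limit for the Gaussian $\beta$ Ensemble is in hand (which is \cite{GS} combined with the multi-time upgrade of this paper, i.e.\ essentially Theorem \ref{thm:cor-conv}'s $\ttt=0$ slice), the multi-time case is ``morally'' an application of the Markov property of DBM plus continuity, but making that rigorous at the level of Laplace moments — rather than at the level of individual particles, which would require the much harder trajectory-wise SDE analysis — is the real content.
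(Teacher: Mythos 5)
Your overall strategy coincides with the paper's: express the multi-time moments via Dunkl operators acting on the Bessel generating function of the DBM (this is Theorem \ref{Theorem_DBM_moments}, built on the transition-density formula of Lemma \ref{Lemma_DBM_B}), expand into random walks, pass to the Brownian limit to get $\bL_\beta(\vec\bk,\vec\ttt)$ (Theorem \ref{thm:multit}), and then upgrade to process convergence via tightness plus identification. However, there are two genuine gaps at exactly the points where the paper needs its least obvious inputs.

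First, the identification step. You assert that convergence of the Laplace moments \eqref{eq_expected_Laplace} suffices to pin down the limit, but the moment problem for the random variables $\sum_i\exp(\bk\,\AB_i(\ttt)/2)$ is \emph{not} determinate --- the moments grow too fast in $m$ --- so matching moments does not by itself identify the limiting finite-dimensional distributions. The paper circumvents this by passing to the counting functions $f(\ttt,h)=\max\{i:\AB_i(\ttt)>h\}$, recovering their mixed moments from the Laplace moments via Lerch's theorem, and then showing these counting functions \emph{do} have a determinate moment problem because $\PP[\Lambda_{i-1}<-h]$ decays geometrically in $i$ (Lemma \ref{lem:bdabci}, imported from the Stochastic Airy Operator analysis of \cite{RRV}). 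Your remark that ``$\bk$ large localizes onto the top finitely many particles'' is the right intuition for extracting individual curves (made precise in the deterministic Proposition \ref{prop:keyconv}), but it does not substitute for the determinacy argument. Second, the tightness step. A Kolmogorov-type criterion needs $\E[(\text{increment of the Laplace transform})^4]\lesssim(\ttt_2-\ttt_1)^{1+\epsilon}$, whereas the naive first-order estimate from the Dunkl expansion only yields $O(|\ttt_2-\ttt_1|)$. The paper obtains the required $(\ttt_2-\ttt_1)^2$ because the coefficient of the linear term vanishes, which is a consequence of the purely algebraic commutator identity $[[[\baP_k^{N,\tau},\hP_k^{N,\tau}],\hP_k^{N,\tau}],\hP_k^{N,\tau}]=0$ (Proposition \ref{prop:piden}); without noticing this cancellation your proposed increment bound would not close the tightness argument. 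As a minor point, the paper does not reduce the DBM case to the corners case as your opening sentence suggests; it runs the same walk-expansion machinery in parallel for both models (Section \ref{sec:DBMlimit}), with the only changes being in the weight formula \eqref{eq:dbmww}.
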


\begin{remark}\label{rem:multitimelevel}
 One can glue together the corners process and the DBM in a three-dimensional object --- time evolution of a $\GT_N$ array, see \cite{GS1}. Most of our results extend to \emph{monotone $2d$--sections} of these $3d$ object: we can deal with the joint distribution of the marginals at points $(N_1,\tau_1)$, $(N_2,\tau_2)$,\dots, $(N_k,\tau_k)$, with $N_1\ge N_2\ge \dots\ge N_k$, $\tau_1\le \tau_2\le\dots\le \tau_k$, but not with the full distribution.
\end{remark}

The proof of Theorems \ref{thm:cor-conv} and \ref{thm:dbm-conv} is split into two parts: in Theorems \ref{thm:multil} and \ref{thm:multit} we prove convergence of joint moments of Laplace transforms towards the expressions of Theorem \ref{thm:main}. Then in Section \ref{Section_process_convergence} we deduce the distributional convergence in the uniform in compacts topology in $\ttt$.

We refer to \cite{landon2020edge} and \cite{HZ} for alternative approaches to studying the multi-time edge limits of the DBM. On the other hand, no other approaches exist for investigating the edge limits of G$\beta$E-corners process at general $\beta>0$.

The coincidence of the edge scaling limits for the G$\beta$E corners process and for the DBM might look mysterious. At $\beta=2$ such a coincidence (first results of this type go back to \cite{ferrari2003step,johansson2005arctic}, where Airy$_2$ line ensemble was found in tiling and growth models --- and see \cite{nordenstam2006eigenvalues,aggarwal2022gaussian} and \cite[Lecture 20]{gorin2021lectures} for connections between tilings and GUE corners process) can be explained through the Brownian Gibbs property: one naturally expects both limiting objects to satisfy it and then \cite{aggarwal2023strong} shows that there is only one such object. For other values of $\beta$, one explanation is that both scaling limits should satisfy an infinite dimensional SDE, which can be viewed as either the $N\to\infty$ limit of \eqref{eq_Dyson_BM}, or a Markov property of the G$\beta$E corners process (which can be seen from \Cref{def_betacorner}).
From a general perspective, this coincidence serves as an indication of the \emph{universality} of the Airy$_\beta$ line ensemble. The coincidence itself was previously known for $\beta=1$ due to \cite{sodin2015limit} and for $\beta=\infty$ due to \cite{gorin2022universal}.

We expect that the Airy$_\beta$ line ensemble is also the edge limit of many other general $\beta$ objects, such as: Laguerre and Jacobi corners as in \cite{BP,DW,FN} and \cite{borodin2015general,sun2016matrix}; Macdonald processes as in \cite{borodin2014macdonald}; Jack-Gibbs measures as in \cite{GS1,dimitrov2024global} and multi-level versions of discrete $\beta$--ensembles as in \cite{guionnet2019rigidity}; various non-intersecting random walks as in \cite{huang2021beta,huang2021law}; DBM with general potentials as in \cite{li2013generalized,huang2019rigidity}; Laguerre and Jacobi dynamics as in \cite{bru1991wishart,konig2001eigenvalues} and \cite{demni2010beta}; additions of Gaussian and Laguerre $\beta$ ensembles as in \cite{KX}.
It would be extremely interesting to create proper tools for proving them, thus developing the universality of the Airy$_\beta$ line ensemble.
One approach is initiated in \cite{HZ}: it consists in first proving tightness and then checking that any subsequential limit satisfies an SDE, which also characterizes the Airy$_\beta$ line ensemble.
This strategy is being successfully applied in \cite{HZ} to establish the Airy$_\beta$ line ensemble limit for DBM with general potentials, and the Laguerre and Jacobi dynamics.

\subsection{Methodology} The classical approach for finding the local limits of eigenvalue distributions in random matrix theory goes back to the 60s and is based on the determinantal/Pfaffian formulas for the correlation functions; it is extensively covered in textbooks, see \cite{Meh,forrester2010log}. This method was very successful for $\beta=1,2,4$, but it failed to extend to generic $\beta>0$. An alternative proposed in \cite{DE,edelman2007random} was to use tridiagonal matrix models for all $\beta>0$ and this had led to computations of the scaling limits of $\beta$-ensembles in the bulk and in the edge through stochastic objects such as Brownian carousel \cite{valko2009continuum} and Stochastic Airy Operator \cite{RRV}. Adding time (as in Definition \ref{Definition_DBM}) or matrix size $N$ (as in Definition \ref{def_betacorner}) to these results turned out to be extremely difficult: the tridiagonalization procedure of \cite{DE} does not agree with these two-dimensional extensions (as discussed, e.g.\ after (9) in \cite{voit2023freezing}), while alternative matrix models (cf.\ \cite{allez2012invariant,allez2013diffusive,holcomb2017tridiagonal}) do not pave a way for the computations of local limits. The most recent attempt to add time to the tridiagonalization procedure is \cite{arXiv:2411.01633}, yet computing the edge scaling limit remains a hard open problem in that paper as well.

A very different approach for computing edge limits was pioneered in \cite{soshnikov1999universality} (see also \cite{furedi1981eigenvalues,bai1988necessary,boutet1995norm,sinai1998central,sinai1998refinement}): it suggests investigating the traces of very high powers of matrices (equivalently, eigenvalue power sums of large degrees). In subsequent years, these ideas were adapted to various classes of random matrices, e.g.\ in \cite{peche2009universality,sodin2014several,GS}, to Plancherel measure for symmetric groups in \cite{okounkov2000random,jeong2016limit}, to tensor products of representations of unitary groups in \cite{ahn2023airy}. In order to use this method for the Airy$_\beta$ line ensemble, the main challenge was to find a combinatorial expression for the expectations of (products of) power sums of eigenvalues, which simultaneously:
\begin{itemize}
\item Captures the information on multi-level/time distributions, as in Definitions \ref{Definition_DBM}, \ref{def_betacorner};
\item Is amenable to large $N$, large power limits, necessary to probe the edge.
\end{itemize}
It took us more than ten years and several attempts to figure out a path forward. We previously tried: an approach to expected power sums through Macdonald difference operators of \cite{borodin2015general} (obstacle: the combinatorial sums do not seem to be absolutely convergent in the large power limit), an approach to expected power sums through Negut's difference operators \cite{gorin2018interlacing} (obstacle: large powers required contour integrals of growing dimensions), and approach through powers of tridiagonal matrices \cite{GS} (obstacle: unclear how to add multilevel/time structures). More recently, we investigated in \cite{BGC,xu2023rectangular} the Laws of Large Numbers for empirical distributions of the eigenvalues of $\beta$--ensembles through the \emph{Dunkl differential-difference operators} of \cite{dunkl1989differential}. The key observation underlying the proofs in the present article is that the expressions for the expectations of the product of the power sums obtained through the Dunkl operators fit the bill and satisfy both desired properties. Compared to the fixed degree power sums calculation in \cite{BGC,xu2023rectangular}, the degrees of power sums are growing with $N$ (more precisely, of order $O(N^{\frac{2}{3}})$) in order to capture the edge behavior, which makes taking asymptotic of their expectation much more involved. After significant technical effort, we manage to use these operators to produce combinatorial expressions involving sums over decorated positive random walks for the prelimiting objects and then discover that the expressions essentially survive in the large $N$ limit: random walks are asymptotically replaced by Brownian motions, while ``decorations'' remain similar, and eventually we arrive at a series expansion in terms of ``number of decorations''. To our best knowledge, such combinatorial expression is novel, and gives explicit formulas even new for the one-time marginal $\{\AB_{i}(0)\}_{i=1}^{\infty}$. Simultaneously, similar developments and expressions were achieved in \cite{KX} in the study of limits of expectations of eigenvalue power sums for additions of Gaussian and Laguerre $\beta$-ensembles.

\medskip

\noindent\textbf{Acknowledgements.}
V.G.\ was partially supported by NSF grant DMS - 2246449.
L.Z.\ was partially supported by NSF grant DMS - 2246664, and the Miller Institute of Basic Research in Science.
This project was revived when L.Z. was visiting UW Madison in the fall of 2021, and he thanks them for their hospitality. The project was further developed at the Institute for Pure and Applied Mathematics, and Institut Mittag-Leffler, to which we are also very grateful.
We thank Jiaoyang Huang and B\'alint Vir\'ag for helpful discussions.
\medskip

\noindent\textbf{Notations.}
For any real numbers $a<b$, we denote $\llbracket a, b\rrbracket = [a, b]\cap\Z$.
We will also use Pochhammer symbols $(x)_n = x(x+1)\cdots(x+n-1)$, for any $x\in\R$ and $n\in \N$.
For a function $f$ whose domain of definition contains $x\in \R$ and a left (resp.~right) neighborhood of $x$,  we use $f(x-)$ (resp.~$f(x+)$) to denote the left limit of $f$ at $x$, if it exists.
For two real numbers $x$ and $y$, we sometimes denote $x\wedge y=\min(x, y)$, to make formulas shorter. If $y=\varnothing$, then $\min(x,y)=\max(x,y)=x$. For a set $A$, we let $|A|$ denote the number of elements in it.

Throughout the rest of this paper, we fix $\beta>0$ and treat it as a constant.

\section{Formulas for joint moments}  \label{sec:forjm}
In this section, we present formulas for $\bL_\beta(\vec \bk, \vec\ttt)$ of Theorem \ref{thm:main}.  We will later prove the convergence of moments of G$\beta$E corners process and DBM towards these quantities in \Cref{thm:multil,thm:multit} by demonstrating that the discrete versions of the constructions of this section converge in a diffusive scaling limit to the continuous versions introduced here.

\subsection{Blocks}
Our expression involves a combinatorial structure, which we call \emph{blocks} and illustrate in Figure \ref{fig:block}.
Blocks consist of several objects: a block process, a virtual block process, and a block height. Informally, block process and virtual block process are piecewise-constant constraints on a function, which is being conditioned to stay above them (later this function will become a Brownian bridge). The block height represents the values of this function in the most important points.

We next give formal definitions sequentially.
\begin{definition}  \label{defn:dbfdef}
For an interval $[a, b]$, we call a function $f:[a, b]\to \R_{\ge 0}$ a \emph{block function}, if it is right-continuous on $[a,b)$, and $f(a)=f(b)=f(b-)=0$, and the support of $f$ is connected, and the image $f([a,b])$ is a finite set.
We denote by $\fB([a, b])$ the space of all block functions $f$ on $[a,b]$.
\end{definition}
Clearly, every block function is a finite and positive linear combination of indicators of intervals.

We next define block processes, as families of block functions with certain constraints.
\begin{definition} \label{Definition_block_process}
Take $m\in \N$, $\vec \bk=(\bk_1,\ldots, \bk_m)\in \R_+^m$, and denote $\bQ_\ell=\sum_{\ell'=1}^\ell \bk_{\ell'}$ for each $\ell \in \llbracket 0, m\rrbracket$.
A \emph{block process with times $\vec \bk$}
is a family $\vec \bp=\{\bp_j\}_{j=1}^{m+\bu}$, where  $\bu\in \Z_{\ge 0}$, and each $\bp_j:[0, \bQ_m]\to \R_{\ge 0}$ is a function satisfying the following:
\begin{itemize}
    \item for each $\ell \in \llbracket 1, m\rrbracket$, $\bp_\ell\in \fB([0,\bQ_{m}])$, and $\bp_\ell= 0$ on $[\bQ_{\ell-1}, \bQ_m]$;
    \item for each $j\in \llbracket m+1, m+\bu\rrbracket$, $\bp_j \in \fB([0,\bQ_m])$ and $\bp_j$ is not identical zero.
    \item for each $j\in \llbracket 1, m+\bu\rrbracket$, $\ell \in \llbracket 1, m\rrbracket$, $\bp_j$ is left-continuous at $x=\bQ_{\ell-1}$ unless $j=\ell$.
    \item regularity condition spelled in Definition \ref{defn:blp} below.\footnote{Here is some intuition for the definition of a block process. It corresponds to the degrees of polynomials in Dunkl operators expansion, as shown later in \Cref{sec:gme}. The values of $\bp_\ell$ at points $[0,\bQ_{\ell-1}]$ for $\ell \in \llbracket 1, m\rrbracket$ relate to the degrees of variables $x_i$ with indices matching one of the Dunkl operators $\D_i^N$ (to be introduced in \Cref{sec:dunkl}) we are expanding, and $\bp_j$ for $j\in\llbracket m+1, m+\bu\rrbracket$ are the degrees of the other variables. In particular, the variable corresponding to the first operator would have zero degree after $\bQ_0=0$, so $\bp_1$ is identical zero.}
\end{itemize}
\end{definition}
In particular, the definition implies that $\bp_1$ is always identical zero. Given a block process and $\ell\in\llbracket 0, m\rrbracket$, we also denote
\begin{equation}\label{eq_p_ell}
\bp^\ell=\sum_{j=\ell+1}^{m+\bu} \bp_j.
\end{equation}

The block functions are encoded by the jumps at all the discontinuity points, for which we also set up notations. For a block process $\vec \bp$ as defined above, and  $j\in\llbracket 1, m+\bu\rrbracket$, $\ell\in\llbracket 1,m\rrbracket$, we let
\begin{equation}
\label{eq_discont_def_1}
 \bDel_{j,\ell}=\{x\in (\bQ_{\ell-1}, \bQ_\ell): \bp_j(x-)\neq \bp_j(x)\}, \qquad \bdel_{j,\ell}=|\bDel_{j,\ell}|\qquad  \text{ and}
\end{equation}
\begin{equation}
\label{eq_discont_def_2}
\bdel=\sum_{j=1}^{m+\bu}\sum_{\ell=1}^m \bdel_{j,\ell}, \quad  \bDel=\bigcup_{j=1}^{m+\bu}\bigcup_{\ell=1}^m \bDel_{j,\ell}.
\end{equation}

\begin{definition}  \label{defn:blp}
For a block process $\vec \bp$ as defined above, we say it is \emph{regular}, if all $\bDel_{j,\ell}$ are mutually disjoint\footnote{From definition of $\bDel$ it also follows that  the set $\bDel$ is disjoint with $\{0,\bQ_1,\bQ_2,\dots,\bQ_m\}$.}, and  $\min \bigcup_{\ell=1}^m\bDel_{j,\ell} < \min \bigcup_{\ell=1}^m\bDel_{j',\ell}$ for all $m+1\le j<j'\le m+\bu$.
\end{definition}

The condition on $\min \bigcup_{\ell=1}^m\bDel_{j,\ell}$ is introduced for convenience of enumerations: it breaks the symmetry between $\bp_j$ for $j\in \llbracket m+1, m+\bu\rrbracket$. We note that these requirements can be replaced by any other ordering. Throughout the text, by a block process, we always mean a regular one.

Another ingredient we need is virtual block process.
\begin{definition} \label{Definition_virtual_block}
For a block process $\vec \bp$, a compatible \emph{virtual block process} is a vector $\vec\bb=(\bb_1,\ldots, \bb_m)\in (\R_+\cup \{\varnothing\})^m$ satisfying the following conditions: for each $\ell \in \llbracket 1,m\rrbracket$,
\begin{itemize}
    \item if $\bb_\ell\neq \varnothing$, then $\bQ_{\ell-1}+\bb_\ell < \min\bigl( \bigcup_{j=1}^{m+\bu}\bDel_{j,\ell}\cup \{\bQ_\ell\}\bigr)$;
    \item if $\ell=1$, or $\ell>1$ and $\bp_\ell(\bQ_{\ell-1}-)=0$, then $\bb_\ell=\varnothing$.
\end{itemize}
\end{definition}

The virtual block process can be treated as a part of the block process from the previous definitions, which will be treated in a slightly different way in the following constructions.

The next ingredient is ``block height''. This object originates from considering a Brownian bridge, conditioned to stay above the function $\bp^0$ coming from the block process and conditioned on certain additional inequalities coming from the virtual block process. We are interested in the values of this conditional Brownian bridge at the various points of time appearing in the previous definitions, cf.\ Figure \ref{fig:block}, which are all encoded in the following definition.

\begin{definition} \label{Definition_block_height}
For a block process $\vec \bp$ and compatible virtual block process $\vec\bb$, a compatible \emph{block height} is a function $\bH:\bDel\cup\{\bQ_\ell\mid \ell\in  \llbracket 0, m\rrbracket\}\cup\{\bQ_{\ell-1}+\bb_\ell \mid \ell\in  \llbracket 1, m\rrbracket,\, \bb_\ell\ne \varnothing\} \to \R_{\ge 0}$, such that
\begin{itemize}
    \item $\bH(\bQ_\ell)=\bp^0(\bQ_\ell-)$ for each $\ell \in \llbracket 1, m\rrbracket$ and $\bH(0)=0$;
    \item $\bH(\bQ_{\ell-1}+\bb_\ell)=\bp^0(\bQ_{\ell-1}-)$ for each $\ell$ with $\bb_\ell\neq\varnothing$;
    \item for all $x\in \bDel$, we have $\bH(x)\ge  \max\bigl(\bp^0(x),\bp^0(x-)\bigr)$;
    \item if $j\in \llbracket 1, m+\bu\rrbracket$ and $x\in \bDel$ is a discontinuity point for $\bp_j$, then the jump $\bp_j(x)-\bp_j(x-)$ and the difference $\bH(x)-\bp^0(x-)-\bp_j(x)$ have the same sign;
    \item if $\bb_\ell=\varnothing$ for $\ell\in \llbracket 1, m\rrbracket$, then  $\bH\bigl(\min\bigl(\bigcup_{j=1}^{m+\bu}\bDel_{j,\ell}\cup \{\bQ_\ell\}\bigr)\bigr)\ge \bH(\bQ_{\ell-1})$.
\end{itemize}
\end{definition}
In particular, if $\bp_j(x-)>0$ and $\bp_j(x)=0$ for $x\in\bDel$, then we must have $\bH(x)=\bp^0(x-)$, because $\bH(x)\ge \max\bigl(\bp^0(x), \bp^0(x-)\bigr)$ on one hand, and $\bH(x)-\bp^0(x-)\le \bp_j(x)$ on the other hand.

\begin{figure}[!ht]
    \centering
\begin{subfigure}[b]{0.98\textwidth}
    \resizebox{0.95\textwidth}{!}{
    \begin{tikzpicture}
        \draw (0,0)--(28,0);
        \draw[thick] [dashed] (9,0)--(9,1);
        \draw[thick] [dashed] (17,0)--(17,1);
        \draw[brown] [line width=2.5pt] (0,0)--(28,0);
        \draw (0,0) node[anchor=north]{$0$};
        \draw (9,0) node[anchor=north]{$\bQ_1$};
        \draw (17,0) node[anchor=north]{$\bQ_2$};
        \draw (28,0) node[anchor=north]{$\bQ_3$};
        \draw (0,1) node[anchor=west, color=brown]{{\LARGE $\bp_1$}};
    \end{tikzpicture}}
\end{subfigure}
\par\bigskip
\begin{subfigure}[b]{0.98\textwidth}
    \resizebox{0.95\textwidth}{!}{
    \begin{tikzpicture}
        \draw (0,0)--(28,0);
        \draw[thick] [dashed] (9,0)--(9,3);
        \draw[thick] [dashed] (17,0)--(17,3);
        \draw[blue] [line width=2.5pt] (0,0)--(6,0);
        \draw[blue] [line width=2.5pt] (6,2)--(9,2);
        \draw[blue] [line width=2.5pt] (9,0)--(28,0);
        \draw[blue, fill=white] [ultra thick] (6,0) circle (3pt);
        \draw[blue, fill=blue] [ultra thick] (6,2) circle (3pt);
        \draw[blue, fill=blue] [ultra thick] (9,2) circle (3pt);
        \draw[blue, fill=white] [ultra thick] (9,0) circle (3pt);
        \draw (0,0) node[anchor=north]{$0$};
        \draw (9,0) node[anchor=north]{$\bQ_1$};
        \draw (17,0) node[anchor=north]{$\bQ_2$};
        \draw (28,0) node[anchor=north]{$\bQ_3$};
        \draw (0,3) node[anchor=west, color=blue]{{\LARGE $\bp_2$}};
    \end{tikzpicture}}
\end{subfigure}
\par\bigskip
\begin{subfigure}[b]{0.98\textwidth}
    \resizebox{0.95\textwidth}{!}{
    \begin{tikzpicture}
        \draw (0,0)--(28,0);
        \draw[thick] [dashed] (9,0)--(9,3);
        \draw[thick] [dashed] (17,0)--(17,3);
        \draw[red] [line width=2.5pt] (0,0)--(3,0);
        \draw[red] [line width=2.5pt] (3,1.5)--(12,1.5);
        \draw[red] [line width=2.5pt] (12,1)--(17,1);
        \draw[red] [line width=2.5pt] (17,0)--(28,0);
        \draw[red, fill=white] [ultra thick] (3,0) circle (3pt);
        \draw[red, fill=red] [ultra thick] (3,1.5) circle (3pt);
        \draw[red, fill=white] [ultra thick] (12,1.5) circle (3pt);
        \draw[red, fill=red] [ultra thick] (12,1) circle (3pt);
        \draw[red, fill=white] [ultra thick] (17,0) circle (3pt);
        \draw[red, fill=red] [ultra thick] (17,1) circle (3pt);
        \draw (0,0) node[anchor=north]{$0$};
        \draw (9,0) node[anchor=north]{$\bQ_1$};
        \draw (17,0) node[anchor=north]{$\bQ_2$};
        \draw (28,0) node[anchor=north]{$\bQ_3$};
        \draw (0,3) node[anchor=west, color=red]{{\LARGE $\bp_3$}};
    \end{tikzpicture}}
\end{subfigure}
\par\bigskip
\begin{subfigure}[b]{0.98\textwidth}
    \resizebox{0.95\textwidth}{!}{
    \begin{tikzpicture}
        \draw (0,0)--(28,0);
        \draw[thick] [dashed] (9,0)--(9,3);
        \draw[thick] [dashed] (17,0)--(17,3);
        \draw[orange] [line width=2.5pt] (0,0)--(15,0);
        \draw[orange] [line width=2.5pt] (15,1.7)--(24,1.7);
        \draw[orange] [line width=2.5pt] (24,0)--(28,0);
        \draw[orange, fill=white] [ultra thick] (15,0) circle (3pt);
        \draw[orange, fill=orange] [ultra thick] (15,1.7) circle (3pt);
        \draw[orange, fill=white] [ultra thick] (24,1.7) circle (3pt);
        \draw[orange, fill=orange] [ultra thick] (24,0) circle (3pt);
        \draw (0,0) node[anchor=north]{$0$};
        \draw (9,0) node[anchor=north]{$\bQ_1$};
        \draw (17,0) node[anchor=north]{$\bQ_2$};
        \draw (28,0) node[anchor=north]{$\bQ_3$};
        \draw (0,3) node[anchor=west, color=orange]{{\LARGE $\bp_4$}};
    \end{tikzpicture}}
\end{subfigure}
\par\bigskip
\begin{subfigure}[b]{0.98\textwidth}
    \resizebox{0.95\textwidth}{!}{
    \begin{tikzpicture}
        \draw[teal] [ultra thick] [dashed] (0,0)--(0.2,1)--(0.4,0.8)--(0.6,2)--(0.8,1.6)--(1,3)--(1.2,2.1)--(1.4,3.2)--(1.6,2.9)--(1.8,3.4)--(2,2)--(2.2,2.5)--(2.4,1.9)--(2.6,2.7)--(2.8,2.2)--
        (3.0,2.7)--(3.2,2.5)--(3.4,3.1)--(3.6,2.4)--(3.8,2.9)--(4.0,2.2)--(4.2,2.8)--(4.4,2.1)--(4.6,2.9)--(4.8,2.7)--(5.0,4.0)--(5.2,3.2)--(5.4,4.3)--(5.6,4.8)--(5.8,5.9)--(6.0,5.6)--(6.2,5.0)--(6.4,5.7)--(6.6,5.2)--(6.8,5.9)--(7.0,4.8)--(7.2,5.3)--(7.4,4.6)--(7.6,5.2)--(7.8,5.0)--(8.0,6.0)--(8.2,5.4)--(8.4,4.2)--(8.6,4.6)--(8.8,3.7)--(9.0,3.5)--(9.2,4.0)--(9.4,3.6)--(9.6,4.5)--(9.8,4.2)--(10.0,4.3)--(10.2,3.7)--(10.4,4.1)--(10.5,3.8)--(10.6,4.2)--(10.8,3.7)--(11.0,3.5)--(11.2,3.8)--(11.4,3.2)--(11.5,2.7)--(11.6,3.4)--(11.8,2.9)--(12.0,1.9)--(12.2,2.3)--(12.4,2.0)--(12.6,1.7)--(12.8,2.5)--(13.0,2.7)--(13.2,2.3)--(13.4,2.9)--(13.6,2.5)--(13.8,2.1)--(14.0,2.0)--(14.2,2.8)--(14.4,1.7)--(14.5,3.8)--(14.6,3.4)--(14.8,4.0)--(15.0,4.1)--(15.2,4.6)--(15.4,3.5)--(15.6,3.8)--(15.8,3.2)--(16.0,3.3)--(16.2,3.7)--(16.4,3.2)--(16.5,3.8)--(16.6,4.3)--(16.8,2.9)--(17.0,2.7)--(17.2,3.2)--(17.4,2.9)--(17.6,3.0)--(17.8,2.8)--(18.0,3.8)--(18.2,3.1)--(18.4,2.8)--(18.5,2.7)--(18.6,3.3)--(18.8,3.1)--(19.0,3.8)--(19.2,3.5)--(19.4,4.0)--(19.6,3.2)--(19.8,4.0)--(20.0,4.2)--(20.2,3.0)--(20.4,3.5)--(20.6,2.9)--(20.8,3.8)--(21.0,2.5)--(21.2,3.0)--(21.4,4.1)--(21.6,2.7)--(21.8,2.9)--(22.0,3.7)--(22.2,3.1)--(22.4,2.4)--(22.6,2.8)--(22.8,2.1)--(23.0,2.2)--(23.2,1.8)--(23.4,2.5)--(23.6,1.9)--(23.8,2.1)--(24.0,1.7)--(24.2,2.3)--(24.4,2.7)--(24.6,1.5)--(24.8,1.9)--(25.0,3.0)--(25.2,2.3)--(25.4,1.8)--(25.6,2.5)--(25.8,1.0)--(26.0,1.5)--(26.2,1.9)--(26.4,1.2)--(26.6,2.3)--(26.8,1.8)--(27.0,2.0)--(27.2,1.2)--(27.4,0.5)--(27.6,0.2)--(27.8,0.4)--(28.0,0.0);

        \draw (0,0)--(28,0);
        \draw[thick] [dashed] (9,0)--(9,6);
        \draw[thick] [dashed] (17,0)--(17,6);

        \draw[purple] [dashed] (9,3.5)--(11,3.5)--(11,0);

        \draw[purple] [dashed] (17,2.7)--(18.5,2.7)--(18.5,0);

        \draw[gray] [line width=2.5pt] (0,0)--(3,0);
        \draw[gray] [line width=2.5pt] (3,1.5)--(6,1.5);
        \draw[gray] [line width=2.5pt] (6,3.5)--(9,3.5);
        \draw[gray] [line width=2.5pt] (9,1.5)--(12,1.5);
        \draw[gray] [line width=2.5pt] (12,1)--(15,1);
        \draw[gray] [line width=2.5pt] (15,2.7)--(17,2.7);
        \draw[gray] [line width=2.5pt] (17,1.7)--(24,1.7);
        \draw[gray] [line width=2.5pt] (24,0)--(28,0);

        \draw[gray, fill=white] [ultra thick] (3,0) circle (3pt);
        \draw[gray, fill=gray] [ultra thick] (3,1.5) circle (3pt);

        \draw[gray, fill=white] [ultra thick] (6,1.5) circle (3pt);
        \draw[gray, fill=gray] [ultra thick] (6,3.5) circle (3pt);

        \draw[gray, fill=white] [ultra thick] (9,1.5) circle (3pt);
        \draw[gray, fill=gray] [ultra thick] (9,3.5) circle (3pt);

        \draw[gray, fill=white] [ultra thick] (12,1.5) circle (3pt);
        \draw[gray, fill=gray] [ultra thick] (12,1) circle (3pt);

        \draw[gray, fill=white] [ultra thick] (15,1) circle (3pt);
        \draw[gray, fill=gray] [ultra thick] (15,2.7) circle (3pt);

        \draw[gray, fill=gray] [ultra thick] (17,2.7) circle (3pt);
        \draw[gray, fill=white] [ultra thick] (17,1.7) circle (3pt);

        \draw[gray, fill=white] [ultra thick] (24,1.7) circle (3pt);
        \draw[gray, fill=gray] [ultra thick] (24,0) circle (3pt);

        \draw[cyan, fill=cyan] [ultra thick] (0,0) circle (2pt);
        \draw[cyan, fill=cyan] [ultra thick] (28,0) circle (2pt);
        \draw[cyan, fill=cyan] [ultra thick] (11,3.5) circle (2pt);
        \draw[cyan, fill=cyan] [ultra thick] (18.5,2.7) circle (2pt);
        \draw[cyan, fill=cyan] [ultra thick] (3,2.7) circle (2pt);
        \draw[cyan, fill=cyan] [ultra thick] (6,5.6) circle (2pt);
        \draw[cyan, fill=cyan] [ultra thick] (9,3.5) circle (2pt);
        \draw[cyan, fill=cyan] [ultra thick] (12,1.9) circle (2pt);
        \draw[cyan, fill=cyan] [ultra thick] (15,4.1) circle (2pt);
        \draw[cyan, fill=cyan] [ultra thick] (17,2.7) circle (2pt);
        \draw[cyan, fill=cyan] [ultra thick] (24,1.7) circle (2pt);

        \draw (11,0) node[anchor=north,color=purple]{$\bQ_1+\bb_2$};
        \draw (18.5,0) node[anchor=north,color=purple]{$\bQ_2+\bb_3$};
        \draw (0,0) node[anchor=north]{$0$};
        \draw (9,0) node[anchor=north]{$\bQ_1$};
        \draw (17,0) node[anchor=north]{$\bQ_2$};
        \draw (28,0) node[anchor=north]{$\bQ_3$};
        \draw (0,7) node[anchor=west, color=gray]{{\LARGE $\bp^0=\bp_1+\bp_2+\bp_3+\bp_4$}};
        \draw (0,6) node[anchor=west, color=cyan]{{\LARGE $\bH$}};
        \draw (0,5) node[anchor=west, color=purple]{{\LARGE $\vec\bb$}};
        \draw (19,5) node[anchor=west, color=teal]{{\LARGE Brownian bridge}};
    \end{tikzpicture}}
\end{subfigure}
    \caption{An illustration of blocks, with $m=3$, $\bu=1$, $\bb_1=\varnothing$.}
    \label{fig:block}
\end{figure}

All the above objects combine together into ``blocks''.
\begin{definition} ``Blocks'' is a triplet $(\vec\bp,  \vec\bb, \bH)$: block process, virtual block process, and compatible block height.
For each $\vec \bk\in \R_+^m$, we let $\sK=\sK[\vec\bk]$ be the space of all blocks with times $\vec\bk$.
\end{definition}

\subsection{Measure on blocks}

We next fix $m$, $\vec\bk$ and equip $\sK[\vec\bk]$ with a $\sigma$--finite measure. We recall the notations \eqref{eq_discont_def_1}, \eqref{eq_discont_def_2}.

\begin{definition}  \label{defn:measurecb}
 The measure $\d(\vec\bp,  \vec\bb,\bH)$ on $\sK[\vec\bk]$ views it as a disjoint union of subspaces with fixed values of $\bu=0,1,\dots$ and $\bdel=0,1,\dots$. For the fixed $(\bu,\bdel)$ the space is further partitioned according to the values of $\bdel_{j,\ell}$.
 We equip each part with the product of Lebesgue measures over all parameters of blocks, except that whenever only a single value for a parameter is involved, then we assign weight $1$ for this value and for $\R_+\cup \{\varnothing\}$ in the definition of virtual blocks, $\varnothing$ comes with the Dirac delta measure of weight $1$, while $\R_+$ comes with the Lebesgue measure.
\end{definition}

Let us decode the definition in more detail. We start with fixing the values of non-negative integers $\bu$ and $\bdel_{j,\ell}$, $j=1,\dots,m+\bu$, $\ell=1,\dots, m$. The measure $\d(\vec\bp,  \vec\bb,\bH)$ is separately defined for each choice of these values.\footnote{Therefore, the integral over this measure can be split into an infinite sum of integrals, where the summation goes over all possible values of $\bu$ and $\bdel_{j,\ell}$.} The space of all block processes with given $\bu$, $\{\bdel_{j,\ell}\}$ can be parameterized with finitely many real numbers. For each $j\in\llbracket 1, \bu\rrbracket$ there are two options: either $\sum_\ell \bdel_{j,\ell}=0$ implying that $\bp_j$ is identical zero (and then there are no parameters to choose for it), or we should choose $\sum_\ell \bdel_{j,\ell}+\sum_\ell \bdel_{j,\ell}$ reals to specify $\bp_j$ --- the first sum counts the parameters encoding the positions of all discontinuity points for $\bp_j$ and the second sum counts the parameters encoding the values of the jumps at these points. For each $j\in\llbracket \bu+1, \bu+m\rrbracket$, the first $\bp_j=0$ case is prohibited by our definitions, while in the second case we should choose  $\sum_\ell \bdel_{j,\ell}+\sum_\ell \bdel_{j,\ell}-1$ reals to specify $\bp_j$ ---  we subtracted $1$ because the last jump of $\bp_j$, $j>m$, is predetermined by the condition that $\bp_j$ is left-continuous at $\bQ_m$. We combine all parameters together in a vector of length $r$ and equip the coordinates of this vector with the Lebesgue measure; if $r=0$ (which is possible only when $\bu=\bdel=0$), then there is a unique choice of the blocks structure and we equip it with weight $1$.

Next, when we specify the virtual block process, we need to choose an $m$--dimensional vector $\vec\bb=(\bb_1,\ldots, \bb_m)\in (\R_+\cup \{\varnothing\})^m$. The coordinates of this vector are equipped with Lebesgue measure on $\R_+$ part and weight $1$ (i.e.\ the Dirac delta measure of such weight) on $\varnothing$ part.

Finally, we specify the block height, for which we need to choose $\bdel-\bu$ real parameters: in Definition \ref{Definition_block_height} there is freedom in choosing the values of $\bH$ at the discontinuity points from the set $\bDel$, and as remarked immediately after Definition \ref{Definition_block_height}, at the last discontinuity point the inequalities uniquely determine the value $\bH(x)=\bp^0(x-)$. (That remark does not apply at other points: $\bp_j$ does not take value $0$ except to the left from the first discontinuity for all $j$ and to the right of the last discontinuity for $j>m$.) We again equip the vector of these $\bdel-\bu$ real parameters with the Lebesgue measure.

\medskip

\begin{example} \label{Example_for_measure}
 Take $m=1$, $\bu=0$, $\bdel=0$. In this situation there is a unique blocks structure: $\bp_1=0$, $\bb_1=\varnothing$. This block structure comes with weight $1$.

 In Figure \ref{fig:block} we have $m=3$, $\bu=1$, $\bdel_{1,1}=\bdel_{1,2}=\bdel_{1,3}=0$, $\bdel_{2,1}=1$, $\bdel_{2,2}=\bdel_{2,3}=0$, $\bdel_{3,1}=\bdel_{3,2}=1$, $\bdel_{3,3}=0$, $\bdel_{4,1}=0$, $\bdel_{4,2}=\bdel_{4,3}=1$, leading to $\bdel=5$. For the block process, we choose two parameters for $\bp_2$, four parameters for $\bp_3$, and three parameters for $\bp_4$. For the virtual block process, we have $\bb_1=\varnothing$ and choose two parameters $\bb_2$ and $\bb_3$. For the block height, we choose four parameters (they are blue dots strictly above $\bp^0$ in the bottom panel of Figure \ref{fig:block}).
\end{example}

\subsection{Weight of blocks via Brownian bridges} \label{ssec:wobvbb}
The value of $\bL_\beta(\vec \bk, \vec\ttt)$ is an integral of a function $\bI_{\beta,\vec\bk}:\sK[\vec\bk]\to\R$.
Given the blocks $(\vec\bp, \vec\bb, \bH)$, this function can be roughly defined as follows.
Take a Brownian bridge on $[0, \bQ_m]$, and consider the following events based on it (see the bottom panel of \Cref{fig:block}): the trajectory is
\begin{itemize}
    \item[(a)] equal to $\bH$ on the set where $\bH$ is defined
    \item[(b)] $\ge \bp^0$ on $[0, \bQ_m]$,
    \item[(c)] $\ge \bp^0(\bQ_{\ell-1}-)$ on $[\bQ_{\ell-1}, \bQ_{\ell-1}+\bb_\ell]$ for each $\ell\in\llbracket 1,m\rrbracket$ with $\bb_\ell\neq\varnothing$,
    \item[(d)] $\ge \bp^0(\bQ_{\ell-1}-)$ on $[\bQ_{\ell-1},\min\bigl(\bigcup_{j=1}^{m+\bu}\bDel_{j,\ell}\cup \{\bQ_\ell\}\bigr)$   for each $\ell\in\llbracket 1,m\rrbracket$ with $\bb_\ell=\varnothing$.
\end{itemize}
Then $\bI_{\beta,\vec\bk}[(\vec\bp, \vec\bb, \bH]$ is defined as the product of:
\begin{itemize}
\item the `probability density' of the intersection of the above events; and
\item  the expectation of the exponentiated and multiplied by $\beta^{-1}$ area between the Brownian bridge and $\bp^0$, conditional on all of the above events; and
\item $\pm 1$, depending on whether some jumps of $\bp_0$ are positive or negative.
\end{itemize}

We next give a detailed definition.
For this, we remind the reader of the following classical stochastic processes. For $W:\R_{\ge 0}\to \R$ being a standard Wiener process,
\begin{itemize}
    \item for an interval $[a, b]$, and $g, h\in \R$, $x\mapsto B(x)= W(x-a)+g+\frac{x-a}{b-a}(-W(b-a) +h-g)$ is a Brownian bridge on $[a, b]$, with $B(a)=g$ and $B(b)=h$;
    \item for $W$ conditional on $W>-\epsilon$ on $[0, \epsilon^{-1}]$, its $\epsilon\to 0+$ limit is the Bessel$_3$ process;
    \item for $a>0$, the Brownian excursion on $[0, a]$ is the $\epsilon\to 0+$ limit of $W$ on $[0, a]$, conditional on $W>-\epsilon$ on $[0, a]$ and $W(a)<\epsilon$.
\end{itemize}
For more details on these stochastic processes, see e.g., the textbooks \cite{borodin2015handbook,morters2010brownian}.

We need the following functions built out of the above three processes.
\begin{definition}   \label{defn:bIs}
For $x>0$, and $h, g\ge 0$, we define
\[
\bF(x;h,g) = \frac{1}{\sqrt{2\pi x}} \left( \exp\left(-\frac{(g-h)^2}{2x}\right)-\exp\left(-\frac{(g+h)^2}{2x}\right)\right),
\]
and
\[
\bI(x;h,g) = \bF(x;h,g)\E\left[ \exp\left( \beta^{-1}\int_0^x B(y) \d y \right) \right],
\]
where $B:[0, x]\to \R$ is a Brownian bridge with $B(0)=g$ and $B(x)=h$, conditional on $B\ge 0$. We also define
\[
\bF_0(x;h) = \frac{2h}{\sqrt{2\pi x^3}} \exp\left(-\frac{h^2}{2x}\right), \qquad
\bI_0(x;h) =\bF_0(x;h)\E\left[ \exp\left( \beta^{-1}\int_0^x B_3(y) \d y \right) \right],
\]
where $B_3:[0, x]\to \R_{\ge 0}$ is a Bessel$_3$ process, conditional on $B_3(x)=h$.
Finally,
\[
\bF_{0,0}(x)=\frac{2}{\sqrt{2\pi x^3}}, \qquad \bI_{0,0}(x) = \frac{2}{\sqrt{2\pi x^3}} \E\left[ \exp\left( \beta^{-1}\int_0^x B_e(y) \d y \right) \right],
\]
where $B_e:[0, x]\to \R_{\ge 0}$ is a Brownian excursion.
\end{definition}
Note that $\bI_0(x;h)=\lim_{g\to 0}g^{-1}\bI(x;h,g)$, and $\bI_{0,0}(x)=\lim_{h\to 0}h^{-1}\bI_0(x;h)$.

\smallskip

Let us fix $\vec \bk\in \R_+^m$, recall that $\bQ_\ell=\sum_{\ell'=1}^\ell \bk_\ell$, and also fix the blocks structure $(\vec\bp,
\vec\bb, \bH)\in \sK[\vec\bk]$. Consider the set of points on which $\bH$ is defined --- $\bDel\cup\{\bQ_\ell\mid \ell\in  \llbracket 0, m\rrbracket\}\cup\{\bQ_{\ell-1}+v_\ell \mid \ell\in  \llbracket 2, m\rrbracket,\, v_\ell\ne \varnothing\}$ --- and let us denote them $(0=x_0<x_1<x_2<\dots<x_n)$. We further let $\Xi$ be the set of $n-1$ adjacent points: $\Xi=\{(x_0,x_1), (x_1,x_2),\dots,(x_{n-1},x_n)\}$. We subdivide $\Xi$ into four subsets $\Xi_1,\Xi_2,\Xi_3,\Xi_4$, depending on the types of the constraints Definition \ref{Definition_block_height} imposed on $\bH$ in these points. For that, we introduce $A=\bp^0(x-)$ if $x=Q_\ell$, and otherwise $A=\bp^0(x)=\bp^0(y-)$; $A$ has the meaning of a lower bound for the Brownian motion in $[x, y]$. We compare possible values of $\bH(x)$ and $\bH(y)$ (dictated by Definition \ref{Definition_block_height}) with $A$: $(x,y)\in \Xi_i$ if
\begin{enumerate}
    \item[$\Xi_1:$] $\bH(x)=\bH(y)=A$. This happens when $x=\bQ_{\ell-1}$ for some $\ell \in \llbracket 1,m\rrbracket$, and either $y=\bQ_\ell$,  or $y=\bQ_{\ell-1}+\bb_\ell$, or $y=\sup\{y': \bp_j(y')>0\}$ for some $j\in\llbracket m+1, m+\bu\rrbracket$;
    \item[$\Xi_2:$] $\bH(x)=A$, $\bH(y)>A$. This happens when $x=\bQ_{\ell-1}$ for some $\ell \in \llbracket 1,m\rrbracket$, and $y\in \bDel_{j,\ell}$ with either $j\in\llbracket 1, m\rrbracket$ or  $j\in\llbracket m+1, m+\bu\rrbracket$ and $y\ne \sup\{y': \bp_j(y')>0\}$ in the latter case;
    \item[$\Xi_3:$] $\bH(x)>A$, $\bH(y)=A$. This happens when $x
    \not\in \{\bQ_\ell\}_{\ell=0}^{m-1}$, and either $y=\bQ_\ell$ for some $\ell \in \llbracket 1,m\rrbracket$, or $y=\sup\{y': \bp_j(y')>0\}$ for some $j\in\llbracket m+1, m+\bu\rrbracket$;
    \item[$\Xi_4:$] $\bH(x)>A$, $\bH(y)>A$. This happens when $x
    \not\in \{\bQ_\ell\}_{\ell=0}^{m-1}$, and $y\in \bDel_{j,\ell}$ with either $j\in\llbracket 1, m\rrbracket$ or  $j\in\llbracket m+1, m+\bu\rrbracket$ and $y\ne \sup\{y': \bp_j(y')>0\}$ in the latter case.
\end{enumerate}
Schematically, these four cases correspond to four panels in \Cref{fig:fourcases}.

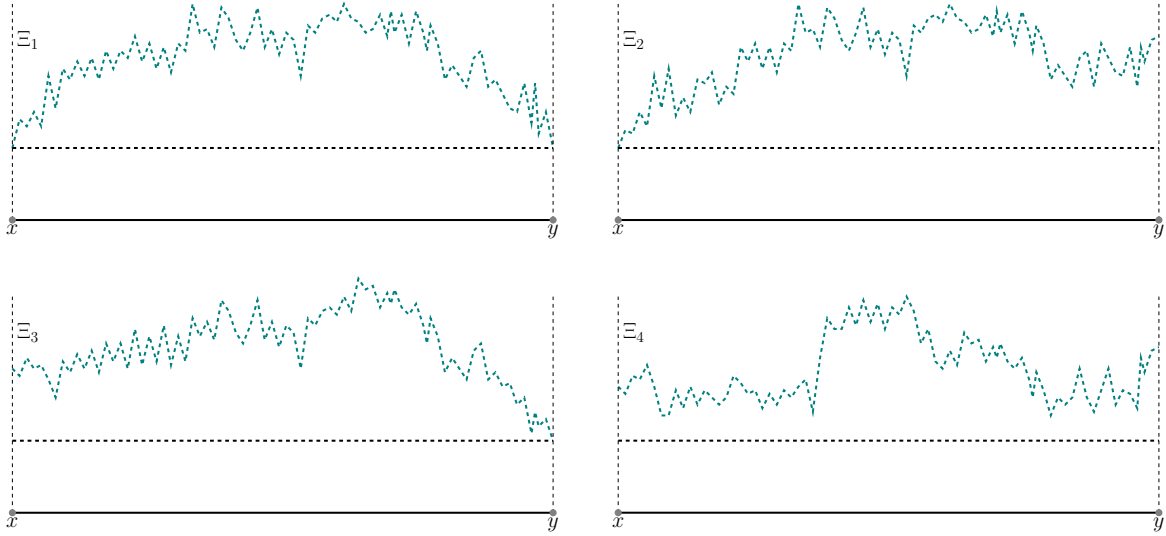
\begin{figure}[t]
    \centering
\begin{subfigure}{0.48\textwidth}
    \resizebox{0.95\textwidth}{!}{
    \begin{tikzpicture}
        \draw[teal] [dashed] [ultra thick] (0,0)--(0.2,0.8)--(0.4,0.6)--(0.6,1)--(0.8,0.6)--(1,2)--(1.2,1.1)--(1.4,2.2)--(1.6,1.9)--(1.8,2.4)--(2,2)--(2.2,2.5)--(2.4,1.9)--(2.6,2.7)--(2.8,2.2)--
        (3.0,2.7)--(3.2,2.5)--(3.4,3.1)--(3.6,2.4)--(3.8,2.9)--(4.0,2.2)--(4.2,2.8)--(4.4,2.1)--(4.6,2.9)--(4.8,2.7)--(5.0,4.0)--(5.2,3.2)--(5.4,3.3)--(5.6,2.8)--(5.8,3.9)--(6.0,3.6)--(6.2,3.0)--(6.4,2.7)--(6.6,3.2)--(6.8,3.9)--(7.0,2.8)--(7.2,3.3)--(7.4,2.6)--(7.6,3.2)--(7.8,3.0)--(8.0,2.0)--(8.2,3.4)--(8.4,3.2)--(8.6,3.6)--(8.8,3.7)--(9.0,3.5)--(9.2,4.0)--(9.4,3.6)--(9.6,3.5)--(9.8,3.2)--(10.0,3.3)--(10.2,3.7)--(10.4,3.1)--(10.5,3.8)--(10.6,3.2)--(10.8,3.7)--(11.0,2.9)--(11.2,3.8)--(11.4,3.2)--(11.5,2.7)--(11.6,3.4)--(11.8,2.9)--(12.0,1.9)--(12.2,2.3)--(12.4,2.0)--(12.6,1.7)--(12.8,2.5)--(13.0,2.7)--(13.2,1.7)--(13.4,1.9)--(13.6,1.5)--(13.8,1.1)--(14.0,1.0)--(14.2,1.8)--(14.4,0.7)--(14.5,1.8)--(14.6,0.4)--(14.8,1.0)--(15.0,0);

        \draw [dashed] (0,-2)--(0,4);
        \draw [dashed] (15,-2)--(15,4);
        \draw [ultra thick] [dashed] (0,0)--(15,0);

        \draw [ultra thick] (0,-2)--(15,-2);

        \draw (0,-2) node[anchor=north]{{\LARGE $x$}};
        \draw (15,-2) node[anchor=north]{{\LARGE $y$}};
        \draw (0,3) node[anchor=west]{{\LARGE $\Xi_1$}};
        \draw[gray, fill=gray] [ultra thick] (0,-2) circle (2pt);
        \draw[gray, fill=gray] [ultra thick] (15,-2) circle (2pt);
    \end{tikzpicture}}
\end{subfigure}
\begin{subfigure}{0.48\textwidth}
    \resizebox{0.95\textwidth}{!}{
    \begin{tikzpicture}
        \draw[teal] [dashed] [ultra thick] (0,0)--(0.2,0.5)--(0.4,0.4)--(0.6,1)--(0.8,0.6)--(1,2)--(1.2,1.1)--(1.4,2.2)--(1.6,0.9)--(1.8,1.4)--(2,1)--(2.2,1.9)--(2.4,1.8)--(2.6,2.1)--(2.8,1.2)--
        (3.0,1.7)--(3.2,1.5)--(3.4,2.8)--(3.6,2.4)--(3.8,2.9)--(4.0,2.2)--(4.2,2.8)--(4.4,2.1)--(4.6,2.9)--(4.8,2.7)--(5.0,4.0)--(5.2,3.2)--(5.4,3.3)--(5.6,2.8)--(5.8,3.9)--(6.0,3.6)--(6.2,3.0)--(6.4,2.7)--(6.6,3.2)--(6.8,3.9)--(7.0,2.8)--(7.2,3.3)--(7.4,2.6)--(7.6,3.2)--(7.8,3.0)--(8.0,2.0)--(8.2,3.4)--(8.4,3.2)--(8.6,3.6)--(8.8,3.7)--(9.0,3.5)--(9.2,4.0)--(9.4,3.6)--(9.6,3.5)--(9.8,3.2)--(10.0,3.3)--(10.2,3.7)--(10.4,3.1)--(10.5,3.8)--(10.6,3.2)--(10.8,3.7)--(11.0,2.9)--(11.2,3.8)--(11.4,3.2)--(11.5,2.7)--(11.6,3.4)--(11.8,2.9)--(12.0,1.9)--(12.2,2.3)--(12.4,2.0)--(12.6,1.7)--(12.8,2.5)--(13.0,2.7)--(13.2,1.7)--(13.4,2.9)--(13.6,2.5)--(13.8,2.1)--(14.0,2.0)--(14.2,2.8)--(14.4,1.7)--(14.5,2.8)--(14.6,2.4)--(14.8,3.0)--(15.0,3.1);

        \draw [dashed] (0,-2)--(0,4);
        \draw [dashed] (15,-2)--(15,4);
        \draw [ultra thick] [dashed] (0,0)--(15,0);

        \draw [ultra thick] (0,-2)--(15,-2);

        \draw (0,-2) node[anchor=north]{{\LARGE $x$}};
        \draw (15,-2) node[anchor=north]{{\LARGE $y$}};
        \draw (0,3) node[anchor=west]{{\LARGE $\Xi_2$}};
        \draw[gray, fill=gray] [ultra thick] (0,-2) circle (2pt);
        \draw[gray, fill=gray] [ultra thick] (15,-2) circle (2pt);
    \end{tikzpicture}}
\end{subfigure}
\par\bigskip
\begin{subfigure}[b]{0.48\textwidth}
    \resizebox{0.95\textwidth}{!}{
    \begin{tikzpicture}
        \draw[teal] [dashed] [ultra thick] (0,2)--(0.2,1.8)--(0.4,2.3)--(0.6,2)--(0.8,2.1)--(1,1.7)--(1.2,1.2)--(1.4,2.2)--(1.6,1.9)--(1.8,2.4)--(2,2)--(2.2,2.5)--(2.4,1.9)--(2.6,2.7)--(2.8,2.2)--
        (3.0,2.7)--(3.2,2)--(3.4,3.1)--(3.6,2.1)--(3.8,2.9)--(4.0,2.2)--(4.2,3.2)--(4.4,2.1)--(4.6,2.9)--(4.8,2.2)--(5.0,3.4)--(5.2,2.9)--(5.4,3.3)--(5.6,2.8)--(5.8,3.9)--(6.0,3.6)--(6.2,3.0)--(6.4,2.7)--(6.6,3.2)--(6.8,3.9)--(7.0,2.8)--(7.2,3.3)--(7.4,2.6)--(7.6,3.2)--(7.8,3.0)--(8.0,2.0)--(8.2,3.4)--(8.4,3.2)--(8.6,3.6)--(8.8,3.7)--(9.0,3.5)--(9.2,4.0)--(9.4,3.6)--(9.6,4.5)--(9.8,4.2)--(10.0,4.3)--(10.2,3.7)--(10.4,4.1)--(10.5,3.8)--(10.6,4.2)--(10.8,3.7)--(11.0,3.5)--(11.2,3.8)--(11.4,3.2)--(11.5,2.7)--(11.6,3.4)--(11.8,2.9)--(12.0,1.9)--(12.2,2.3)--(12.4,2.0)--(12.6,1.7)--(12.8,2.5)--(13.0,2.7)--(13.2,1.7)--(13.4,1.9)--(13.6,1.5)--(13.8,1.6)--(14.0,1.0)--(14.2,1.2)--(14.4,0.2)--(14.5,0.8)--(14.6,0.4)--(14.8,0.6)--(15.0,0);

        \draw [dashed] (0,-2)--(0,4);
        \draw [dashed] (15,-2)--(15,4);
        \draw [ultra thick] [dashed] (0,0)--(15,0);

        \draw [ultra thick] (0,-2)--(15,-2);

        \draw (0,-2) node[anchor=north]{{\LARGE $x$}};
        \draw (15,-2) node[anchor=north]{{\LARGE $y$}};
        \draw (0,3) node[anchor=west]{{\LARGE $\Xi_3$}};
        \draw[gray, fill=gray] [ultra thick] (0,-2) circle (2pt);
        \draw[gray, fill=gray] [ultra thick] (15,-2) circle (2pt);
    \end{tikzpicture}}
\end{subfigure}
\begin{subfigure}[b]{0.48\textwidth}
    \resizebox{0.95\textwidth}{!}{
    \begin{tikzpicture}
        \draw[teal] [dashed] [ultra thick] (0,3)--(0.2,2.8)--(0.4,3.3)--(0.6,3.2)--(0.8,3.6)--(1,3)--(1.2,2.2)--(1.4,2.2)--(1.6,2.9)--(1.8,2.4)--(2,3)--(2.2,2.5)--(2.4,2.9)--(2.6,2.7)--(2.8,2.5)--
        (3.0,2.7)--(3.2,3.3)--(3.4,3.1)--(3.6,2.8)--(3.8,2.9)--(4.0,2.4)--(4.2,2.8)--(4.4,2.5)--(4.6,2.9)--(4.8,2.7)--(5.0,3.0)--(5.2,3.2)--(5.4,2.3)--(5.6,3.8)--(5.8,4.9)--(6.0,4.6)--(6.2,4.6)--(6.4,5.2)--(6.6,4.7)--(6.8,5.4)--(7.0,4.8)--(7.2,5.3)--(7.4,4.6)--(7.6,5.2)--(7.8,5.0)--(8.0,5.5)--(8.2,5.1)--(8.4,4.2)--(8.6,4.6)--(8.8,3.7)--(9.0,3.5)--(9.2,4.0)--(9.4,3.6)--(9.6,4.5)--(9.8,4.2)--(10.0,4.3)--(10.2,3.7)--(10.4,4.1)--(10.5,3.8)--(10.6,4.2)--(10.8,3.7)--(11.0,3.5)--(11.2,3.8)--(11.4,3.2)--(11.5,2.7)--(11.6,3.4)--(11.8,2.9)--(12.0,2.2)--(12.2,2.7)--(12.4,2.5)--(12.6,2.7)--(12.8,3.5)--(13.0,2.7)--(13.2,2.3)--(13.4,2.9)--(13.6,3.5)--(13.8,2.5)--(14.0,2.9)--(14.2,2.8)--(14.4,2.4)--(14.5,3.8)--(14.6,3.4)--(14.8,4.0)--(15.0,4.1);

        \draw [dashed] (0,-0.5)--(0,5.5);
        \draw [dashed] (15,-0.5)--(15,5.5);
        \draw [ultra thick] [dashed] (0,1.5)--(15,1.5);

        \draw [ultra thick] (0,-0.5)--(15,-0.5);

        \draw (0,-0.5) node[anchor=north]{{\LARGE $x$}};
        \draw (15,-0.5) node[anchor=north]{{\LARGE $y$}};
        \draw (0,4.5) node[anchor=west]{{\LARGE $\Xi_4$}};
        \draw[gray, fill=gray] [ultra thick] (0,-0.5) circle (2pt);
        \draw[gray, fill=gray] [ultra thick] (15,-0.5) circle (2pt);
    \end{tikzpicture}}
\end{subfigure}
    \caption{An illustration of the four cases in $\Xi$.}
    \label{fig:fourcases}
\end{figure}

\smallskip

\begin{example} The bottom panel of Figure \ref{fig:block} has ten segments. Enumerating them from left to right: $\Xi_1$ contains segments $4$ and $8$; $\Xi_2$ contains segment $1$; $\Xi_3$ contains segments $3$, $7$, $9$, $10$; $\Xi_4$ contains segments $2$, $5$, $6$.
\end{example}

\smallskip

Using the above notations, we can now define the weight function for blocks.

\begin{definition}  \label{def:xibi} Using the above $\Xi$, $\Xi_1$, $\Xi_2$, $\Xi_3$, $\Xi_4$ notations, we define
\begin{equation} \label{eq:defnibk}
\bI_{\beta,\vec\bk}[\vec\bp,\vec\bb, \bH]=2^{-\bdel-|\{\ell\in\llbracket 1,m\rrbracket: \bb_\ell\neq\varnothing\}|}\prod_{(x,y)\in\Xi} \bI_\beta[x,y],
\end{equation}
where
\begin{equation*}
\bI_\beta[x,y]=
\begin{cases}
\exp\big(\beta^{-1}(y-x)(\bH(x)-\bp^0(x))\big)\bI_{0,0}(y-x), & (x,y) \in \Xi_1, \\
\exp\big(\beta^{-1}(y-x)(\bH(x)-\bp^0(x))\big) \bI_0(y-x; \bH(y)-\bH(x)), & (x,y) \in \Xi_2, \\
(-1)^{\don[\bp^0(x)<\bp^0(x-)]}\bI_0(y-x;\bH(x)-\bH(y)) , & (x,y) \in \Xi_3, \\
(-1)^{\don[\bp^0(x)<\bp^0(x-)]} \bI(y-x;\bH(x)-\bp^0(x), \bH(y)-\bp^0(y_-) ), & (x,y) \in \Xi_4.
\end{cases}
\end{equation*}
\end{definition}
Heuristically, the factor $\exp(\beta^{-1}(y-x)(\bH(x)-\bp^0(x)))$ in the first two cases accounts for the (exponentiated) area between $\bp^0$ and the red dashed horizontal line in Figure \ref{fig:block} corresponding to virtual blocks. This factor combines with $\bI_{0,0}$ or $\bI_{0}$ to produce the full area between the Brownian bridge and $\bp^0$. The power of $2$ in \eqref{eq:defnibk} does not have any particular meaning, it will come out of the rescaling of random walks to Brownian motion in the Section \ref{sec:gme}. 

\medskip

Here is the expression for the mixed moments of the Laplace transform, as in Theorem \ref{thm:main}.
\begin{definition}  \label{defn:core}
Take any $m\in \N$, $\vec \bk \in \R^m_+ $, and $\vec \ttt\in \R^m$ such that $\ttt_1\le \cdots \le \ttt_m$, and denote $\bQ_\ell = \sum_{\ell'=1}^\ell \bk_{\ell'}$ for each $\ell\in\llbracket 1,m\rrbracket$.
We let
\[
\bL_\beta(\vec\bk, \vec\ttt) = \mathrm{p.v.}\int_{\sK[\vec\bk]} \exp\left(\sum_{\ell=1}^{m-1}
(\ttt_\ell-\ttt_{\ell+1}) \bH(\bQ_\ell)/2 \right)\bI_{\beta,\vec\bk}[\vec \bp, \vec\bb, \bH] \d(\vec\bp,  \vec\bb,\bH),
\]
where the principal value integral is taken in the following sense.
For any small enough $\epsilon>0$, we let  $\sK_\epsilon=\sK_\epsilon[\vec\bk]$ denote the space of all blocks $(\vec\bp, \vec\bb, \bH)\in \sK[\vec \bk]$, such that for each $\ell\in\llbracket 1,m\rrbracket$, $\bp^0$ is constant in $(\bQ_{\ell-1}, \bQ_{\ell-1}+\epsilon)$, and either $\bb_\ell>\epsilon$, or $\bb_\ell=\varnothing$.
We integrate over $\sK_\epsilon$, then send $\epsilon\to 0+$.
\end{definition}
We note that it is essential to take a principal value integral here, since a direct integral over all blocks in $\sK[\vec \bk]$ would not be absolute convergent, because $\bI_{0,0}$ and $\bI_0(\cdot;h)$ (for any $h>0$) are not integrable as functions of $x$.
In fact, a priori, it is also unclear whether the principal value integral is well-defined (i.e., whether the $\epsilon\to 0+$ limit exists; moreover, why the integral over $\sK_\epsilon$ is well-defined). These will be justified late, see \Cref{prop:fixedepsconv} and \Cref{prop:epszerocov}.

\
\section{Joint moments through Dunkl operators}  \label{sec:dunkl}

In this section, we introduce our main technical tool for analyzing edge limits of both corners processes and DBM, which is the \emph{Dunkl operators.}

We deal with functions in variables $x_1,x_2,x_3,\dots$ and let $\sigma_{ij}$ denote the operator swapping $x_i$ with $x_j$:
 $$
 [\sigma_{ij}f](x_1,\dots,x_N)=f(x_1,\dots,x_N)_{x_i\leftrightarrow x_j}.
$$
We further set for each $N\in \N$ and $i\in\llbracket 1, N\rrbracket$
$$
 \D_i^N= \frac{\partial}{\partial x_i} + \frac{\beta}{2} \sum_{\begin{smallmatrix} j\in\llbracket 1,N\rrbracket,\\ j\ne i\end{smallmatrix}} \frac{1-\sigma_{ij}}{x_i-x_j}.
$$
\begin{lemma} \label{Lemma_Dunkls_commute} For each $N\in\N$, the operators $\D_1^N$, $\D_2^N$, \dots, $\D_N^N$ commute.
\end{lemma}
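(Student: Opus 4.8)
The plan is to verify $\D_i^N\D_j^N f = \D_j^N\D_i^N f$ directly on a suitable dense class of functions (say, rational functions regular on the set where all $x_a$ are distinct, or polynomials; since the operators preserve such a class it suffices to check the identity there). I would first record the basic intertwining relations between the partial derivatives, the divided-difference operators, and the transpositions $\sigma_{ab}$: namely $\sigma_{ab}\partial_{a} = \partial_b\sigma_{ab}$, $\sigma_{ab}$ is an involution, and $\sigma_{ab}$ acts on the ``two-body'' operators $\partial_a - \partial_b$ and $(x_a-x_b)^{-1}$ with a sign flip. These are the only algebraic inputs needed; everything else is bookkeeping.

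Next I would expand both $\D_i^N\D_j^N$ and $\D_j^N\D_i^N$ using $\D_i^N = \partial_i + \tfrac{\beta}{2}\sum_{a\ne i}\tfrac{1-\sigma_{ia}}{x_i-x_a}$, and sort the resulting terms by the number of factors of $\beta$: the $\beta^0$ terms are $\partial_i\partial_j = \partial_j\partial_i$ and cancel trivially; the $\beta^1$ terms are cross-terms of the form $\partial_i \cdot (\text{divided difference in } j)$ against $\partial_j\cdot(\text{divided difference in } i)$; the $\beta^2$ terms are products of two divided-difference operators. For the linear-in-$\beta$ part, the key point is that differentiating the kernel $(x_j-x_a)^{-1}$ produces a term that is symmetric enough to match the other side once one uses $\partial_i\sigma_{ja} = \sigma_{ja}\partial_i$ when $i\notin\{j,a\}$ and handles the special index $a=i$ separately; collecting these, the linear terms match. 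The genuinely substantive computation is the $\beta^2$ part: one gets a double sum over $a\ne i$ and $b\ne j$ of operators $\tfrac{1-\sigma_{ia}}{x_i-x_a}\cdot\tfrac{1-\sigma_{jb}}{x_j-x_b}$ and the transposed expression, and one must show their difference vanishes. I would organize this sum by the pattern of coincidences among the indices $\{i,j,a,b\}$: the ``generic'' terms where all four are distinct commute outright because the two divided-difference blocks act on disjoint pairs of variables; the terms with exactly one coincidence ($a=j$, or $b=i$, or $a=b$) are the ones that must cancel in groups of three using the classical identity
\[
\frac{1}{(x_i-x_j)(x_j-x_a)}+\frac{1}{(x_j-x_a)(x_a-x_i)}+\frac{1}{(x_a-x_i)(x_i-x_j)}=0
\]
together with the braid/cocycle relation $\sigma_{ij}\sigma_{ja} = \sigma_{ia}\sigma_{ij} = \sigma_{ja}\sigma_{ia}$ for the transposition factors. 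Tracking signs and the action of the $\sigma$'s on the kernels, one checks every such triple cancels, and the case $a=b$ (with $a\ne i,j$) similarly collapses.

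The main obstacle I expect is purely organizational rather than conceptual: making sure the $\beta^2$ double sum is partitioned into the right orbits and that within each orbit the three-term kernel identity is applied with correctly commuted transpositions, since $\sigma$'s and rational kernels do not commute and a sloppy ordering produces spurious leftover terms. A clean way to manage this is to first conjugate everything to the ``difference-operator only'' picture — i.e. note that on symmetric functions the Dunkl operators restrict to honest differential operators, but for the general identity to work out one really does need the full braid relations among the $\sigma_{ab}$, so I would keep the transpositions explicit throughout and simply be disciplined about the index case analysis. (This is, of course, Dunkl's original theorem for the rank-$N$ symmetric group with the single conjugacy class of reflections and all multiplicities equal to $\beta/2$; one may alternatively just cite \cite{dunkl1989differential}, but the direct check above is short enough to include.)
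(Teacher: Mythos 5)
Your plan is correct, and it is the standard direct verification of Dunkl's commutativity theorem. The paper itself does not carry out this computation: its proof of Lemma \ref{Lemma_Dunkls_commute} consists entirely of citing \cite{dunkl1989differential} and the lecture notes \cite{Ki_lect}, \cite{Et_lect}, which is exactly the alternative you mention in your closing parenthesis. So there is nothing to compare beyond noting that your sketch reproduces the argument contained in those references: the $\beta^0$ terms commute trivially, the $\beta^2$ double sum is handled by the three-term partial-fraction identity together with the braid relations among the $\sigma_{ab}$, and the only place where I would ask you to be more explicit is the $\beta^1$ step, where "symmetric enough to match the other side" should be replaced by the actual two-term cancellation: the only nonzero contributions to $[\partial_i, T_j]+[T_i,\partial_j]$ (with $T_i=\sum_{a\ne i}\tfrac{1-\sigma_{ia}}{x_i-x_a}$) come from the index coincidences $b=i$ and $a=j$, each producing a second-order kernel term $\tfrac{1-\sigma_{ij}}{(x_i-x_j)^2}$ and a first-order term $\tfrac{\sigma_{ij}(\partial_i-\partial_j)}{x_j-x_i}$, and these cancel pairwise between the two commutators. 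With that spelled out, the proof is complete and matches the classical one.
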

\begin{proof}
 See, \cite{dunkl1989differential}, or \cite{Ki_lect}, or \cite{Et_lect}.
\end{proof}
Next, define
$$
 \P_k^N=\sum_{i=1}^N (\D_i^N)^k.
$$

\begin{theorem} \label{Theorem_corners_moments}
 Let $\{y^n_i\}_{1\leq i\leq k\leq N}\in \GT_N$ be the G$\beta$E corners process of  variance $\tau$. Fix $m=1,2,\dots$ and take $2m$ positive integers $N_m\le N_{m-1}\le \dots \le N_1\le N$ and $k_1,k_2,\dots,k_m$. We have
 \begin{equation} \label{eq_corners_moments}
  \E \left[ \prod_{\ell=1}^m \left(\sum_{i=1}^{N_\ell} \bigl(y^{N_\ell}_i\bigr)^{k_\ell}\right) \right]= \P_{k_m}^{N_m} \cdots \P_{k_2}^{N_2} \P_{k_1}^{N_1} \left[\exp\left(\frac{\tau}{2} \sum_{i=1}^N (x_i)^2\right)\right]_{x_1=\dots=x_N=0}.
 \end{equation}
\end{theorem}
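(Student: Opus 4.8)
`\textbf{Proof proposal.}` I would prove \Cref{Theorem_corners_moments} by first establishing the statement for a single factor ($m=1$) and then bootstrapping to the general case using the consistency of the $\beta$-corners process over $N$. The key algebraic input is the classical \emph{eigenfunction property} of the Dunkl operators: the multivariate Bessel function (Dunkl kernel) $\mathcal B(x;\lambda)$ satisfies $\D_i^N \mathcal B(x;\lambda) = \lambda_i \mathcal B(x;\lambda)$, so that $\P_k^N$ acts on $\mathcal B(x;\lambda)$ by multiplication by the power sum $\sum_i \lambda_i^k$. The Gaussian weight $\exp\big(\tfrac{\tau}{2}\sum_i x_i^2\big)$ is (up to constants) the multivariate Bessel generating function of the Gaussian $\beta$-ensemble of variance $\tau$ evaluated at the $x$-variables, i.e.\ one has an integral representation $\exp\big(\tfrac\tau2\sum_i x_i^2\big) = c_N \int \mathcal B(x;\lambda)\, \mathcal P_{\mathrm{GBE}}(\lambda)\,d\lambda$ over the $N$-dimensional Gaussian $\beta$-ensemble (this is the standard Bessel-function form of the Gaussian integral / Mehta integral). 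Applying $\P_{k_1}^N$ and specializing $x=0$ (where $\mathcal B(0;\lambda)=1$) then yields $\E\big[\sum_i (\lambda_i)^{k_1}\big]$, which is exactly the $m=1$ case with $N_1 = N$.

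The main structural point for general $m$ is the interplay between the operators $\P_k^{N_\ell}$ acting on \emph{different} numbers of variables and the conditional structure of the corners process. I would proceed by downward induction on $\ell$ (equivalently, apply the operators in the order $\P_{k_1}^{N_1}$ first, then $\P_{k_2}^{N_2}$, etc., reading the right-hand side of \eqref{eq_corners_moments} inside-out). The crucial lemma is a \emph{branching/projection identity}: if $F(x_1,\dots,x_N)$ is symmetric in $x_1,\dots,x_{N'}$ for some $N'\le N$ and in a suitable class (spanned by Bessel functions), then applying $\P_k^{N'}$ and then setting $x_{N'+1}=\dots=x_N$ equal to the common value of the remaining variables reproduces the effect of integrating out one level of the $\beta$-corners branching. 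Concretely, the Bessel function in $N$ variables, restricted by setting the last $N-N'$ variables to $0$, decomposes as an integral over $N'$-variable Bessel functions against precisely the $\beta$-corners conditional kernel relating the rank-$N$ top row to a rank-$N'$ row (this is the content of the Dixon–Anderson / $\beta$-corners integral and is where the exponents $\beta/2-1$ and $2-\beta$ in \eqref{eq_beta_corners_def} enter). Iterating this identity, each application of $\P_{k_\ell}^{N_\ell}$ followed by the partial specialization converts the generating function into one carrying an extra factor $\sum_{i=1}^{N_\ell}(\lambda_i^{N_\ell})^{k_\ell}$ inside the expectation over the full corners array, and at the end the complete specialization $x_1=\dots=x_N=0$ leaves exactly the left-hand side of \eqref{eq_corners_moments}.

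Two points need care. First, since the $\D_i^N$ do not commute with relabeling the number of variables, one must check that at the moment $\P_{k_\ell}^{N_\ell}$ is applied, the function it acts on is still symmetric in $x_1,\dots,x_{N_\ell}$ and lies in the span of Bessel functions in those variables with the other variables as parameters — this holds because $\P_{k}^{N}$ preserves symmetry and because specialization of a Bessel function is again a combination of Bessel functions (the branching identity). By \Cref{Lemma_Dunkls_commute} the order within a single block $\P_{k_\ell}^{N_\ell}$ is irrelevant. Second, one must justify exchanging the differential operators with the integral representation of the Gaussian weight; this is a routine dominated-convergence / analyticity argument since everything in sight is entire in $x$ and the Gaussian ensemble has all moments.

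\textbf{Main obstacle.} The step I expect to be the technical heart is the branching identity — showing precisely that $\P_k^{N'}$ acting on the $N$-variable Gaussian generating function, followed by the partial specialization, equals the $N'$-variable Gaussian generating function weighted by $\sum_{i=1}^{N'}(\lambda_i)^k$ \emph{and} that the measure appearing is exactly the $\beta$-corners conditional density \eqref{eq_beta_corners_def} with the normalization \eqref{eq_normalization}. Equivalently, one must match the Dunkl-operator side with the probabilistic corners side, keeping track of all Gamma-function constants; the natural route is to verify it first on Bessel functions via the Dixon–Anderson integral and then extend by linearity/density, being careful that the integral representation of $\exp(\tfrac\tau2\sum x_i^2)$ is the one compatible with the $\beta$-corners normalization.
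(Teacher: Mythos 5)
Your proposal is correct and follows essentially the same route as the paper: the Bessel generating function identity for the G$\beta$E, Opdam's eigenrelation for $\P_k^N$, and the branching identity under partial specialization $x_{N_{\ell+1}+1}=\dots=x_{N_\ell}=0$, iterated over $\ell$. The one point you flag as the "main obstacle" — matching the specialization of Bessel functions with the $\beta$-corners conditional density via Dixon–Anderson integrals — is actually immediate here, because the paper \emph{defines} $\B_\lambda$ by the combinatorial formula \eqref{eq_Bessel_combinatorial} as an expectation over the corners process, so the branching identity \eqref{eq_Bessel_branching} follows from the tower property with no Gamma-function bookkeeping.
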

\begin{remark}
 While the operators $\P_{k_1}^N$ and $\P_{k_2}^N$ commute by \Cref{Lemma_Dunkls_commute}, the operators $\P_{k_1}^{N_1}$ and $P_{k_2}^{N_2}$ do not for $N_1\ne N_2$. Hence, the order of application of the operators in the right-hand side is important in \eqref{eq_corners_moments}.
\end{remark}

\begin{theorem}\label{Theorem_DBM_moments} Fix $N$ and let $(Y_1(\tau)\ge Y_2(\tau)\ge \dots\ge Y_N(\tau))$ denote the DBM started from zero initial condition. For each $m=1,2,\dots$ and each $0\le \tau_1\le \tau_2\le \dots\le \tau_m$, we have
 \begin{multline} \label{eq_DBM_moments}
  \E \left[ \prod_{\ell=1}^m \left(\sum_{i=1}^{N} \Bigl(Y_i(\tau_\ell)\Bigr)^{k_\ell}\right) \right]= \P_{k_m}^{N} \Biggl[\exp\left(\frac{\tau_m-\tau_{m-1}}{2} \sum_{i=1}^N (x_i)^2\right)  \P_{k_{m-1}}^{N} \Biggl[ \\  \cdots \P_{k_2}^{N} \Biggl[\exp\left(\frac{\tau_2-\tau_1}{2} \sum_{i=1}^N (x_i)^2\right) \P_{k_1}^{N} \Biggl[\exp\left(\frac{\tau_1}{2} \sum_{i=1}^N (x_i)^2\right)\Biggr]\dots\Biggr]_{x_1=\dots=x_N=0},
 \end{multline}
 where in the product we alternate applications of $\P_{k_\ell}^N$ and multiplications by $\exp(\cdot)$.
\end{theorem}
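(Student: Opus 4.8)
The plan is to evaluate the right-hand side of \eqref{eq_DBM_moments} through multivariate Bessel generating functions, exploiting that Dunkl operators act on them by multiplication; this parallels the proof of Theorem \ref{Theorem_corners_moments}. Write $\kappa=\beta/2$ and let $E_\kappa(x;\lambda)$ be the type $A_{N-1}$ Dunkl kernel, so that $\D_i^{N}$ (acting in $x$) has $E_\kappa(\cdot\,;\lambda)$ as an eigenfunction with eigenvalue $\lambda_i$, and $E_\kappa(0;\lambda)=1$. Let $\mathcal B(x;\lambda)=\tfrac{1}{N!}\sum_{w\in S_N}E_\kappa(x;w\lambda)$ be the associated symmetric multivariate Bessel function; it is $S_N$–invariant in each argument, symmetric under $x\leftrightarrow\lambda$, and $\mathcal B(0;\lambda)=1$. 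For a finite signed measure $\mu$ on $\R^N$ with finite moments set $G_\mu(x)=\int \mathcal B(x;\lambda)\,d\mu(\lambda)$, viewed as a formal power series in $x$ about $0$ (only finitely many Taylor coefficients will be needed, and all measures appearing below are dominated by Gaussians, so these exist). Denote by $P_s$ the DBM transition semigroup on functions and by $\mu\mapsto\mu P_s$ its adjoint action on measures. The proof rests on three facts. (i) Since the $\D_i^{N}$ commute and $\P_k^{N}=\sum_i(\D_i^{N})^k$ is $S_N$–invariant, acting with $\P_k^{N}$ in $x$ multiplies $E_\kappa(x;w\lambda)$ by $\sum_i(w\lambda)_i^k=p_k(\lambda)$; hence $\P_k^{N}\mathcal B(x;\lambda)=p_k(\lambda)\mathcal B(x;\lambda)$ and therefore $\P_k^{N}G_\mu=G_{p_k\cdot\mu}$, where $p_k\cdot\mu$ has density $\lambda\mapsto\sum_i\lambda_i^k$ against $\mu$. (ii) The DBM evolution multiplies the generating function by a Gaussian: $G_{\mu P_s}(x)=\exp\!\big(\tfrac{s}{2}\sum_i x_i^{2}\big)\,G_\mu(x)$. (iii) $\int f\,d(\mu P_s)=\int(P_s f)\,d\mu$, and $G_{\delta_0}\equiv 1$; combined with (ii) this gives $G_{\delta_0 P_s}(x)=\exp\!\big(\tfrac{s}{2}\sum_i x_i^{2}\big)$.

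Fact (ii) is the analytic heart. On $S_N$–symmetric functions the DBM generator of \eqref{eq_Dyson_BM} coincides with $\tfrac12\Delta_\kappa$, where $\Delta_\kappa=\sum_i(\D_i^{N})^2$ is the Dunkl Laplacian, which restricted to symmetric functions equals $\sum_i\partial_i^{2}+\beta\sum_{i\ne j}(x_i-x_j)^{-1}\partial_i$ (this is where $\kappa=\beta/2$ enters). Applying Dynkin's formula to the smooth symmetric function $y\mapsto\mathcal B(x;y)$ and using $\Delta_\kappa^{(y)}\mathcal B(x;y)=\big(\sum_i x_i^{2}\big)\mathcal B(x;y)$—which follows from the eigenfunction property of $E_\kappa$ together with the symmetry $\mathcal B(x;y)=\mathcal B(y;x)$—one obtains $\partial_s G_{\mu P_s}(x)=\tfrac12\big(\sum_i x_i^{2}\big)G_{\mu P_s}(x)$, and integrating in $s$ yields (ii). For $0<\beta<1$ one interprets \eqref{eq_Dyson_BM} via \cite{cepa1997diffusing} and checks (e.g.\ by regularization) that particle collisions contribute no boundary terms; alternatively, since only finitely many Taylor coefficients are required, (ii) can be proved purely algebraically as an identity of truncated polynomials using $P_s=e^{\frac{s}{2}\Delta_\kappa}$ on symmetric polynomials.

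With these facts in hand, I would evaluate the right-hand side of \eqref{eq_DBM_moments} from the inside out, writing $\tau_0=0$. The innermost factor $\exp\!\big(\tfrac{\tau_1}{2}\sum_i x_i^2\big)$ is $G_{\delta_0 P_{\tau_1}}$ by (iii); applying $\P_{k_1}^{N}$ turns it into $G_{\nu_1}$ with $\nu_1=p_{k_1}\cdot(\delta_0 P_{\tau_1})$ by (i); multiplying by $\exp\!\big(\tfrac{\tau_2-\tau_1}{2}\sum_i x_i^2\big)$ turns it into $G_{\nu_1 P_{\tau_2-\tau_1}}$ by (ii); iterating the alternation of (i) and (ii) over all $m$ layers produces $G_{\nu_m}$ for a signed measure $\nu_m$ obtained from $\delta_0$ by alternately evolving under the DBM semigroup for the increments $\tau_\ell-\tau_{\ell-1}$ and multiplying by the power sums $p_{k_\ell}$. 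Setting $x=0$ and using $\mathcal B(0;\lambda)=1$ gives the total mass $\nu_m(\R^N)=\int 1\,d\nu_m$. Unfolding this mass by repeated use of the semigroup duality (iii)—$\int p_{k_m}\,d(\,\cdot\, P_{\tau_m-\tau_{m-1}})=\int (P_{\tau_m-\tau_{m-1}}p_{k_m})\,d(\,\cdot\,)$, and so on—rewrites it as
\[
\Big(P_{\tau_1}\big[\,p_{k_1}\cdot P_{\tau_2-\tau_1}\big[\,p_{k_2}\cdots P_{\tau_m-\tau_{m-1}}p_{k_m}\big]\big]\Big)(0),
\]
which, by the Markov property of the DBM started from the origin, is exactly $\E\big[\prod_{\ell=1}^m p_{k_\ell}(Y(\tau_\ell))\big]=\E\big[\prod_{\ell=1}^m\sum_{i=1}^N (Y_i(\tau_\ell))^{k_\ell}\big]$, proving \eqref{eq_DBM_moments}.

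The main obstacle is the rigorous justification of Fact (ii): the interchange of expectation and generator in Dynkin's formula, the integrability of $\mathcal B(x;Y(\tau))$ and of the successive signed-measure generating functions, and the treatment of colliding particles for $0<\beta<1$. The algebraic skeleton—Facts (i) and (iii) and the telescoping of the nested semigroups—is routine once these analytic points, together with the standard properties of $E_\kappa$ and $\mathcal B$, are secured; a secondary but useful reduction is to work throughout with Taylor polynomials of bounded degree in $x$, which legitimizes all power-series manipulations and makes (ii) a finite-dimensional statement.
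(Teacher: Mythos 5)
Your proposal is correct and its overall architecture coincides with the paper's: the eigenrelation $\P_k^N\B_\lambda=p_k(\lambda)\B_\lambda$ (Theorem \ref{thm:opdam}), a statement that the DBM semigroup multiplies the Bessel generating function by $\exp\bigl(\tfrac{s}{2}\sum_i x_i^2\bigr)$, and the alternating iteration followed by setting $x=0$ and using $\B_\lambda(0,\dots,0)=1$ are exactly the three ingredients the paper uses. The one genuine difference is how the key multiplication property (your Fact (ii), the paper's Lemma \ref{Lemma_DBM_B}) is established: you derive it infinitesimally, identifying the DBM generator on symmetric functions with $\tfrac12\sum_i(\D_i^N)^2$ and applying Dynkin's formula together with the $x\leftrightarrow\lambda$ symmetry of $\mathcal B$, whereas the paper proceeds globally, writing the DBM transition density explicitly in terms of Bessel functions (its equation \eqref{eq_DBM_transition}) and invoking the inversion formula for the Dunkl transform. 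Your route is self-contained modulo the analytic justifications you flag (integrability, interchange of expectation and generator, collisions for $0<\beta<1$); the paper's route outsources those issues to the known transition-density formula and Dunkl harmonic analysis, which is why it can afford to be terse. Your fallback of working with truncated Taylor polynomials and $P_s=e^{\frac{s}{2}\Delta_\kappa}$ on symmetric polynomials is a legitimate way to sidestep the analytic points entirely, since only finitely many Taylor coefficients at $x=0$ are needed to extract the moments.
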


\subsection{Multivariate Bessel functions}
In the rest of this section, we prove \Cref{Theorem_corners_moments,Theorem_DBM_moments}.

\begin{definition} \label{Definition_Bessel_function}
For any reals $\lambda_1\ge \lambda_2\ge \dots \ge \lambda_N$, the \emph{multivariate Bessel function} $\B_{(\lambda_1, \ldots, \lambda_N)}(x_1, \ldots, x_N; \beta)$ is a function of $N$ complex variables $x_1,\dots,x_N$ defined as
\begin{equation}\label{eq_Bessel_combinatorial}
 \B_{(\lambda_1,\dots,\lambda_N)}(x_1,\dots,x_N;\,\beta)= \E_{\{y^k_i\}}\!\left[\exp\left(\sum_{k=1}^{N} x_k
 \cdot \left(\sum_{i=1}^{k} y_i^k-\sum_{j=1}^{k-1} y_j^{k-1}\right)  \right) \right],
\end{equation}
where the expectation is with respect to $\beta$-corners process with top row $(\lambda_1,\dots,\lambda_N)$ from Definition \ref{def_betacorner}.
\end{definition}
This definition (which can be found, e.g., in \cite{GK}) implies that
\begin{equation}
\label{eq_Bessel_normalization}
\B_{(\lambda_1, \dots, \lambda_N)}(0, \dots, 0; \beta) = 1.
\end{equation}
Alternatively, the multivariate Bessel function can be defined as the semiclassical limit of Jack symmetric polynomials (see, e.g., \cite[Section 4]{Ok_Olsh_shifted_Jack}):
\begin{equation}
\label{eq_J_to_B}
 \B_{\lambda_1,\dots,\lambda_N}(x_1,\dots,x_N; \beta)=\lim_{\eps\to 0} \frac{J_{\lfloor \eps^{-1} \lambda_1\rfloor ,\dots,\lfloor \eps^{-1} \lambda_N\rfloor }(1+\eps x_1,\dots,1+\eps x_N; \beta/2)}{J_{\lfloor\eps^{-1} \lambda_1\rfloor ,\dots,\lfloor \eps^{-1} \lambda_N\rfloor}(1,\dots,1; \beta/2)}.
\end{equation}
If we take \eqref{eq_J_to_B} as a definition, then \eqref{eq_Bessel_combinatorial} is obtained as a limit of the combinatorial formula for Jack polynomials.
\cite{O} shows that $\B_{(\lambda_1,\dots,\lambda_N)}(x_1,\dots,x_N;\beta)$ admits an analytic continuation on the $2N+1$ variables $\lambda_1, \dots, \lambda_N$, $x_1, \dots, x_N, \beta$, to an open subset of $\C^{2N+1}$ containing $\{(\lambda_1, \cdots, \lambda_N, x_1, \cdots, x_N, \beta)\in\C^{2N+1} \mid \mathrm{Re}(\beta)\ge 0\}$. In particular, for a fixed $\beta>0$,  $\B_{(\lambda_1,\dots,\lambda_N)}(x_1,\dots,x_N;\beta)$ is an entire function on the variables $\lambda_1, \cdots, \lambda_N, x_1, \cdots, x_N$.

Another important property is that the $\B_{(\lambda_1,\dots,\lambda_N)}(x_1,\dots,x_N;\,\beta)$ is \emph{symmetric} in its arguments $x_1,\dots,x_N$. In the particular case $\beta=2$, there is an explicit determinantal formula, which makes all these properties especially transparent:
\begin{equation}\label{eq_Bessel_1}
  B_{(a_1,\dots,a_N)}(x_1,\dots,x_N;\,2)= 1!\cdot 2! \cdots (N-1)! \cdot  \frac{\det\bigl[ e^{a_i x_j}\bigr]_{i,j=1}^N}{\prod_{i<j} (x_i-x_j)(a_i-a_j)}.
\end{equation}

We interested in Bessel functions because of their link to the Dunkl operators:
\begin{theorem}[\cite{O}]\label{thm:opdam}
For each $k,N=1,2,\dots,$ and each $\lambda_1\ge \lambda_2 \ge \dots \ge \lambda_N$, we have
\begin{equation}\label{eq_eigenrelation}
\P_k^N \B_{(\lambda_1,\dots,\lambda_N)}(x_1,\dots,x_N; \beta) = \left(\sum_{i=1}^N{\lambda_i^k}\right)\cdot \B_{(\lambda_1,\dots,\lambda_N)}(x_1,\dots,x_N; \beta).
\end{equation}
\end{theorem}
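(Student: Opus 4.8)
The statement to prove is Theorem \ref{thm:opdam}, the eigenrelation $\P_k^N \B_{(\lambda_1,\dots,\lambda_N)} = \left(\sum_i \lambda_i^k\right) \B_{(\lambda_1,\dots,\lambda_N)}$.

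\textbf{Approach.} The plan is to derive the eigenrelation from the more fundamental fact that the individual Dunkl operators $\D_i^N$ act on the multivariate Bessel function $\B_\lambda$ in a way that mimics the action of the coordinate functions $\lambda_i$ on the ``spectral'' side. More precisely, the key input is the \emph{Dunkl kernel} identity: the Bessel function $\B_{(\lambda_1,\dots,\lambda_N)}(x_1,\dots,x_N;\beta)$ is, up to the normalization \eqref{eq_Bessel_normalization}, the symmetrization of Opdam's hypergeometric function / Dunkl kernel $E_\kappa(\lambda, x)$, which satisfies $T_i^{(x)} E_\kappa(\lambda,x) = \lambda_i E_\kappa(\lambda,x)$ for each $i$, where $T_i^{(x)}$ is the Dunkl operator in the $x$-variables (this is precisely the defining property of the Dunkl kernel, see \cite{dunkl1989differential}, \cite{O}). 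Symmetrizing in $\lambda$ gives $\B_\lambda$, and because the symmetric polynomial $\sum_i \xi_i^k$ is $W$-invariant, applying $\P_k^N = \sum_i (\D_i^N)^k$ in the $x$-variables pulls out the eigenvalue $\sum_i \lambda_i^k$.

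\textbf{Key steps in order.} First I would recall that the Dunkl operators $\D_i^N$ (here for the type $A_{N-1}$ root system with multiplicity $\beta/2$) together with multiplication by $x_i$ generate the rational Cherednik algebra, and that there is a unique (up to normalization) joint eigenfunction of $\D_1^N,\dots,\D_N^N$ with eigenvalues $(\lambda_1,\dots,\lambda_N)$ that is analytic near the origin and normalized to $1$ at $x=0$ — the Dunkl kernel $E(\lambda,x)$. Second, I would invoke that $\B_{(\lambda_1,\dots,\lambda_N)}(x;\beta)$ equals $\frac{1}{|W|}\sum_{w\in W} E(w\lambda, x)$ (the $W$-symmetrization, where $W=S_N$), which one can verify either from the Jack-polynomial limit \eqref{eq_J_to_B} together with the analogous symmetrization relating non-symmetric and symmetric Jack polynomials, or directly from the combinatorial/representation-theoretic characterization — this is the content of \cite{O}. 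Third, since $E(w\lambda, x)$ is a joint eigenfunction of the commuting operators $\D_i^N$ with eigenvalues $(w\lambda)_i$, we get $(\D_i^N)^k E(w\lambda,x) = (w\lambda)_i^k E(w\lambda,x)$, hence $\P_k^N E(w\lambda, x) = \left(\sum_i (w\lambda)_i^k\right) E(w\lambda,x) = \left(\sum_i \lambda_i^k\right) E(w\lambda,x)$, the last equality because $\sum_i \xi_i^k$ is a symmetric function of $\xi$. Summing over $w\in W$ and dividing by $|W|$ yields \eqref{eq_eigenrelation}. Finally, I would note that \eqref{eq_eigenrelation} holds first for $\lambda_1\ge\cdots\ge\lambda_N$ real as stated, and — if needed elsewhere — extends by the analyticity of $\B_\lambda$ in $\lambda$ (from \cite{O}) to all complex $\lambda$, since both sides are entire in $\lambda$ and agree on a set with accumulation points.

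\textbf{Main obstacle.} The routine differentiations are not the difficulty; the real content is establishing the two black-box facts cited from \cite{dunkl1989differential,O}: (i) the existence and uniqueness of the Dunkl kernel as the normalized joint eigenfunction of the $\D_i^N$, and (ii) the precise statement that the $W$-symmetrization of the Dunkl kernel equals the multivariate Bessel function as defined in \eqref{eq_Bessel_combinatorial} (equivalently \eqref{eq_J_to_B}), with matching normalizations \eqref{eq_Bessel_normalization}. If one wished to be self-contained rather than citing \cite{O}, the hardest part would be proving (ii), which requires carefully matching the limit transition from (non-symmetric) Jack polynomials — where the analogous eigenrelation for Cherednik–Dunkl operators is classical — to the Bessel-function level, controlling the normalizations and the convergence in \eqref{eq_J_to_B}. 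Given that the excerpt explicitly attributes the theorem to \cite{O}, the intended proof is simply the short symmetrization argument above, citing \cite{O} for the kernel identification and \cite{dunkl1989differential} (or the lecture notes \cite{Ki_lect,Et_lect}) for the eigenfunction property; I would present it in that spirit.
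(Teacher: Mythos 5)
The paper offers no proof of this statement — it is quoted directly from \cite{O} — and your symmetrization argument (Dunkl kernel as joint eigenfunction of the $\D_i^N$, Bessel function as its $S_N$-symmetrization, power sums being symmetric) is exactly the standard derivation underlying the cited result. Your proposal is correct and matches the intended route.
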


Let us point to several reviews covering various aspects of the Dunkl theory and its generalizations: \cite{A}, \cite{CM}, \cite{Et_lect},  \cite{Ki_lect}.

\subsection{Proof of Theorem \ref{Theorem_corners_moments}}
The proof is based on the following integral identity. In the terminology of \cite{BGC} this is a computation of the Bessel Generating Function (BGF) of the Gaussian $\beta$ Ensemble. In terminology of \cite{A} this is a computation of its (symmetric) Dunkl transform.

\begin{lemma}
 Let $\lambda_1>\lambda_2>\dots>\lambda_N$ be distributed as the G$\beta$E of \eqref{eq_GBE_t} with parameters $N$ and $\tau$. For any complex numbers $x_1,\dots,x_N$, we have
 \begin{equation}
  \label{eq_GBE_Fourier}
  \E_{\lambda_1,\dots,\lambda_N}\bigl[\B_{\lambda_1,\dots,\lambda_N}(x_1,\dots,x_N;\, \beta)\bigr]= \exp\left(\frac{\tau}{2} \sum_{i=1}^N (x_i)^2\right).
 \end{equation}
\end{lemma}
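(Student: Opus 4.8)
The plan is to prove the identity \eqref{eq_GBE_Fourier} by combining the eigenrelation for the multivariate Bessel function (\Cref{thm:opdam}) with an explicit evaluation of the Gaussian integral, exploiting the self-duality of the Bessel function under the Gaussian weight. First I would record that, by \eqref{eq_Bessel_normalization}, the left-hand side is a well-defined entire function of $x_1,\dots,x_N$ (one checks integrability: the Bessel function is bounded by $\exp(\max_i|\mathrm{Re}\,x_i|\cdot\sum_i|\lambda_i|)$ on the relevant domain, which is dominated by the Gaussian weight $\exp(-\tfrac{1}{2\tau}\sum\lambda_i^2)$ after the log-gas factors are absorbed). So both sides are entire in $x$, and it suffices to identify them.

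The cleanest route is the following. Consider the operator identity: applying $\P_k^N$ (in the $x$-variables) under the expectation and using \Cref{thm:opdam} gives
\[
\P_k^N\, \E_{\lambda}\bigl[\B_\lambda(x;\beta)\bigr] = \E_\lambda\Bigl[\bigl(\textstyle\sum_i \lambda_i^k\bigr)\B_\lambda(x;\beta)\Bigr].
\]
Taking $k=2$ and then setting $x=0$, and using \eqref{eq_Bessel_normalization}, recovers $\E[\sum_i\lambda_i^2]=N\tau$ (consistent with \eqref{eq_GBE_t}); but more usefully, one wants to show that $G(x):=\E_\lambda[\B_\lambda(x;\beta)]$ satisfies the same second-order PDE as $\exp(\tfrac{\tau}{2}\sum x_i^2)$. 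The key is that the Gaussian measure \eqref{eq_GBE_t} is, up to normalization, its own Bessel/Dunkl transform. Concretely, I would use the known fact (the ``Mehta integral with a Bessel insertion'', or equivalently the Gaussian being the fixed point of the Dunkl heat kernel, cf.\ \cite{A}) that
\[
\frac{1}{Z}\int_{\lambda_1>\cdots>\lambda_N}\B_{\lambda}(x;\beta)\,\prod_{i<j}|\lambda_i-\lambda_j|^\beta \exp\Bigl(-\frac{1}{2\tau}\sum_i\lambda_i^2\Bigr)\,d\lambda
= \exp\Bigl(\frac{\tau}{2}\sum_i x_i^2\Bigr),
\]
which is the statement to be proven. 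To make this self-contained rather than a citation, I would verify it by a differential-equation argument: let $L=\sum_i (\D_i^N)^2$ be the Dunkl Laplacian; then $L\,\B_\lambda(x;\beta)=(\sum_i\lambda_i^2)\B_\lambda(x;\beta)$ by \Cref{thm:opdam}, and on the other hand $L$ restricted to symmetric functions acts as a genuine second-order differential operator (the radial part of the Dunkl Laplacian, with no difference terms surviving on symmetric functions). One then checks that $G(x)$ is symmetric (inherited from symmetry of $\B_\lambda$ in $x$) and that $L\,G(x) = \E_\lambda[(\sum\lambda_i^2)\B_\lambda(x;\beta)]$; integrating by parts in the $\lambda$-variables against the Gaussian weight (here $\sum\lambda_i^2$ gets converted, via the identity $\sum\lambda_i^2\cdot(\text{Gaussian}) = (\text{Gaussian}) \cdot (\tau^2\,\text{[radial Dunkl Laplacian in }\lambda] + N\tau)$ up to boundary terms, using that the log-gas factor is precisely the Jacobian making the Bessel operator symmetric) transfers the operator onto $\B_\lambda$ in the $\lambda$-variables, and by the eigenrelation again this equals $\tau^2$ times $L$ applied in $x$ plus lower order. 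This yields a closed first-order ODE for $G$ along rays, $\partial_t\log G = \tfrac12\sum x_i^2$ after a rescaling, forcing $G(x)=\exp(\tfrac{\tau}{2}\sum x_i^2)$ once we match $G(0)=1$.

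Alternatively, and perhaps more simply for the write-up, I would prove it by expanding $\B_\lambda(x;\beta)$ in its Taylor series around $x=0$ in terms of the ``Dunkl monomials''/Jack-type basis, integrating term by term against the Gaussian $\beta$-ensemble, and recognizing the resulting moment generating series as that of $\exp(\tfrac{\tau}{2}\sum x_i^2)$; the needed Gaussian moments of power-sum-type symmetric functions of the eigenvalues are exactly what \Cref{thm:opdam} plus the eigenrelation compute recursively. The main obstacle I anticipate is the justification of the interchange of the differential/difference operators $\P_k^N$ (or $L$) with the expectation $\E_\lambda[\cdot]$, i.e.\ differentiating under the integral sign and controlling the non-local $\sigma_{ij}$ terms — this requires uniform-in-$x$ (on compacts) integrable bounds on $\B_\lambda$ and its derivatives against the Gaussian weight, which follow from the analyticity/growth estimates of \cite{O} but need to be invoked carefully; the second subtlety is handling the boundary terms in the integration by parts over the Weyl chamber $\lambda_1>\cdots>\lambda_N$, which vanish because the log-gas factor $\prod_{i<j}|\lambda_i-\lambda_j|^\beta$ degenerates on the walls (for $\beta>0$) and the Gaussian decays at infinity.
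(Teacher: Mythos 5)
The paper does not actually prove this lemma: it observes that the $N=1$ case is the Laplace transform of a Gaussian and then cites \cite[Lemma 4.5]{BGC} (equivalently, the classical fact from Dunkl theory that the Gaussian is a fixed point of the symmetric Dunkl transform, cf.\ \cite{A}). You correctly identify this as the content of the statement, so simply invoking that reference — as the paper does — would already be an acceptable proof. Your attempt to make the argument self-contained has the right ingredients (the eigenrelation of \Cref{thm:opdam} applied in both sets of variables via the $x\leftrightarrow\lambda$ symmetry of $\B$, self-adjointness of the symmetric Dunkl Laplacian $L=\sum_i(\D_i^N)^2$ with respect to the weight $\prod_{i<j}|\lambda_i-\lambda_j|^\beta$, vanishing of boundary terms on the Weyl chamber walls for $\beta>0$).

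However, the concluding step of your main route has a genuine gap. After integrating by parts you obtain that $G(x)=\E_\lambda[\B_\lambda(x;\beta)]$ satisfies the \emph{second-order} eigenvalue equation $L\,G=\bigl(\tau^2\sum_i x_i^2+\tau\gamma\bigr)G$ with $\gamma=N+\beta N(N-1)/2$ — the same equation satisfied by $\exp(\tfrac{\tau}{2}\sum_i x_i^2)$ — but your claim that this ``yields a closed first-order ODE for $G$ along rays'' is false: restricting a second-order operator to a ray still gives a second-order ODE, and the single condition $G(0)=1$ does not select a unique solution (one would additionally need to rule out the second, singular or differently-growing branch, which requires a separate growth/regularity argument). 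The clean fix is to differentiate in the variance parameter $\tau$ rather than in $x$: the G$\beta$E density at variance $\tau$ solves the symmetric Dunkl heat equation $\partial_\tau\rho_\tau=\tfrac12 L^*_\lambda\rho_\tau$, so that $\partial_\tau G_\tau(x)=\tfrac12\E_\lambda[L_\lambda\B_\lambda(x;\beta)]=\tfrac12\bigl(\sum_i x_i^2\bigr)G_\tau(x)$ by the eigenrelation in $\lambda$; this \emph{is} a first-order ODE in $\tau$, and with $G_0\equiv 1$ it forces $G_\tau(x)=\exp(\tfrac{\tau}{2}\sum_i x_i^2)$. Your second alternative (term-by-term Taylor expansion) is too vague to assess and, as phrased, borders on circular, since the Gaussian moments it requires are exactly what the identity is meant to produce.
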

\begin{proof}
 At $N=1$, \eqref{eq_GBE_Fourier} is a well-known computation of the Laplace/Fourier transform of the Gaussian density. The general case and many references are discussed in \cite[Lemma 4.5]{BGC}.
\end{proof}

\begin{proof}[Proof of Theorem \ref{Theorem_corners_moments}]
 We start from the identity \eqref{eq_GBE_Fourier} in $N_1$ variables and apply $\P_{k_1}^{N_1}$ to both sides using the eigenrelation \eqref{eq_eigenrelation}. We get
  \begin{equation}
  \label{eq_GBE_Fourier_1}
  \E_{y_1^{N_1},\dots,y_{N_1}^{N_1}}\left[\sum_{i=1}^{N_1} \left(y_i^{N_1}\right)^{k_1}\cdot \B_{y_1^{N_1},\dots,y_{N_1}^{N_1}}(x_1,\dots,x_{N_1};\, \beta)\right]= \P_{k_1}^{N_1} \exp\left(\frac{\tau}{2} \sum_{i=1}^N (x_i)^2\right).
 \end{equation}
 Next, we plug $x_{N_2+1}=x_{N_2+2}=\dots=x_{N_1}=0$ into \eqref{eq_GBE_Fourier_1} and use the following identity: if $\{y_i^j\}_{1\le i \le j \le N}$ is the $\beta$--corners process of Definition \ref{def_betacorner} with arbitrary top row, then for each $M\le N$, we have
 \begin{equation}
 \label{eq_Bessel_branching}
  \B_{y_1^N,y_2^N\dots,y_N^N}(x_1,\dots,x_M, 0^{N-M};\, \beta)=\E_{y_1^M,y_2^M,\dots,y_M^M}\left[\B_{y_1^M,y_2^M,\dots,y_M^M}(x_1,\dots,x_M;\, \beta)\right].
 \end{equation}
 The identity \eqref{eq_Bessel_branching} is readily seen from \eqref{eq_Bessel_combinatorial}. Hence, the result of plugging zeros into \eqref{eq_GBE_Fourier_1} is
  \begin{multline}
  \label{eq_GBE_Fourier_2}
  \E_{y_1^{N_1},\dots,y_{N_1}^{N_1};\,\,\, y_1^{N_2},\dots,y_{N_2}^{N_2}}\left[\sum_{i=1}^{N_1} \left(y_i^{N_1}\right)^{k_1}\cdot \B_{y_1^{N_2},\dots,y_{N_2}^{N_2}}(x_1,\dots,x_{N_2};\, \beta)\right]\\= \left[\P_{k_1}^{N_1} \exp\left(\frac{\tau}{2} \sum_{i=1}^N (x_i)^2\right)\right]_{x_{N_2+1}=x_{N_2+2}=\dots=x_{N_1}=0}.
 \end{multline}
 Applying $\P_{k_2}^{N_2}$, we get
  \begin{multline}
  \label{eq_GBE_Fourier_3}
  \E_{y_1^{N_1},\dots,y_{N_1}^{N_1};\,\,\, y_1^{N_2},\dots,y_{N_2}^{N_2}}\left[\sum_{i=1}^{N_1} \left(y_i^{N_1}\right)^{k_1}\cdot \sum_{i=1}^{N_2} \left(y_i^{N_2}\right)^{k_2}\cdot \B_{y_1^{N_2},\dots,y_{N_2}^{N_2}}(x_1,\dots,x_{N_2};\, \beta)\right]\\= \P_{k_2}^{N_2} \left[\P_{k_1}^{N_1} \exp\left(\frac{\tau}{2} \sum_{i=1}^N (x_i)^2\right)\right]_{x_{N_2+1}=x_{N_2+2}=\dots=x_{N_1}=0}.
 \end{multline}
 We further plug  $x_{N_3+1}=x_{N_3+2}=\dots=x_{N_2}=0$, then apply $\P_{k_3}^{N_3}$, and continue recursively. Finally, after we have applied $\P_{k_m}^{N_m}$, we set $x_1=\dots=x_{N_m}=0$, use the normalization condition \eqref{eq_Bessel_normalization} and arrive at \eqref{eq_corners_moments}.
\end{proof}

\subsection{Proof of Theorem \ref{Theorem_DBM_moments}}
The proof is based on the representation of transition probabilities of the DBM in terms of the Bessel functions.
\begin{lemma}\label{Lemma_DBM_B}
 Fix $N$ and suppose that the DBM of \eqref{eq_Dyson_BM} is started at time $0$ from a deterministic configuration $(Y_1(0)\ge Y_2(0)\ge \dots Y_N(0))$. Then at time $\tau>0$, for any $x_1,\dots,x_N\in\mathbb C$, we have
 \begin{multline}
 \label{eq_DBM_by_B}
  \E_{Y_1(\tau),Y_2(\tau),\dots,Y_N(\tau)}\left[ \B_{Y_1(\tau),\dots, Y_N(\tau)} (x_1,\dots,x_N;\, \beta)\right]\\= \B_{Y_1(0),\dots, Y_N(0)} (x_1,\dots,x_N;\, \beta) \cdot \exp\left(\frac{\tau}{2} \sum_{i=1}^N (x_i)^2\right).
 \end{multline}
\end{lemma}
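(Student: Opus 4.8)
The plan is to identify $\lambda\mapsto\B_{(\lambda_1,\dots,\lambda_N)}(x_1,\dots,x_N;\beta)$ as an eigenfunction of the generator of the DBM \eqref{eq_Dyson_BM}, with eigenvalue $\tfrac12\sum_i x_i^2$, and then to convert this fact into \eqref{eq_DBM_by_B} by a martingale argument. Recall that the generator of \eqref{eq_Dyson_BM}, acting on smooth symmetric functions of $\lambda_1,\dots,\lambda_N$, is
\[
\cL=\frac12\sum_{i=1}^N\frac{\partial^2}{\partial\lambda_i^2}+\frac\beta2\sum_{i<j}\frac{1}{\lambda_i-\lambda_j}\Bigl(\frac{\partial}{\partial\lambda_i}-\frac{\partial}{\partial\lambda_j}\Bigr).
\]
A standard computation shows that on symmetric functions $\cL$ coincides with $\tfrac12\P_2^N=\tfrac12\sum_i(\D_i^N)^2$: on a symmetric $f$ one has $\D_i^N f=\partial_{x_i} f$, hence $(\D_i^N)^2 f=\partial_{x_i}^2 f+\tfrac\beta2\sum_{j\ne i}\frac{\partial_{x_i} f-\partial_{x_j} f}{x_i-x_j}$ since $\sigma_{ij}$ carries $\partial_{x_i} f$ to $\partial_{x_j} f$, and summing over $i$ reproduces $2\cL f$.

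Next I would check the eigenrelation $\cL_\lambda\,\B_{(\lambda)}(x;\beta)=\tfrac12\bigl(\sum_i x_i^2\bigr)\B_{(\lambda)}(x;\beta)$. \Cref{thm:opdam} with $k=2$ gives $\P_2^N\B_{(\lambda)}(x;\beta)=\bigl(\sum_i\lambda_i^2\bigr)\B_{(\lambda)}(x;\beta)$, with $\P_2^N$ acting in the $x$--variables. The multivariate Bessel function is symmetric under interchanging its two groups of arguments, $\B_{(\lambda)}(x;\beta)=\B_{(x)}(\lambda;\beta)$ — transparent at $\beta=2$ from \eqref{eq_Bessel_1}, and in general a consequence of realizing $\B$ as the $W$--symmetrization of the symmetric Dunkl kernel — so applying $\P_2^N$ in the $\lambda$--variables produces eigenvalue $\sum_i x_i^2$ instead. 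Since moreover $\B_{(\lambda)}(x;\beta)$ is symmetric in $\lambda$ (clear from \eqref{eq_Bessel_combinatorial}), the identification of the previous paragraph turns this into the claimed eigenrelation, which is an equality of entire functions valid for all complex $x$.

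Now fix $x\in\C^N$ and set $g(\lambda)=\B_{(\lambda)}(x;\beta)$; by \cite{O} this is an entire, hence smooth, symmetric function of $\lambda\in\R^N$. Applying Itô's formula to the DBM $Y(\cdot)$ started from the deterministic $Y(0)$, the drift of $g(Y(\tau))$ is $(\cL g)(Y(\tau))=\tfrac12\bigl(\sum_i x_i^2\bigr)g(Y(\tau))$, so
\[
M_\tau:=\exp\Bigl(-\frac\tau2\sum_i x_i^2\Bigr)\,g\bigl(Y(\tau)\bigr)
\]
is a local martingale. To upgrade $M$ to a genuine martingale one uses that, by \eqref{eq_Bessel_combinatorial}, $g$ and its gradient grow at most exponentially in $\lambda$, while $\sup_{s\le\tau}\max_i|Y_i(s)|$ has finite exponential moments of all orders; taking expectations in $\E[M_\tau]=\E[M_0]$ then yields exactly \eqref{eq_DBM_by_B}. (Alternatively, one can run the argument for real $x$, where integrability is immediate, and extend to all complex $x$ by analyticity of both sides of \eqref{eq_DBM_by_B}.)

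The one genuinely delicate point — and the step I expect to be the main obstacle — is the application of Itô's formula when $0<\beta<1$, since then each drift coefficient $\tfrac\beta2\sum_{j\ne i}(Y_i-Y_j)^{-1}$ fails to be integrable along the path and the particles actually collide (cf.\ \cite{cepa1997diffusing}). This is harmless because only the \emph{total} drift of $g(Y(\tau))$ enters, and
\[
\sum_{i=1}^N\frac{\partial g}{\partial\lambda_i}(\lambda)\cdot\frac\beta2\sum_{j\ne i}\frac{1}{\lambda_i-\lambda_j}=\frac\beta2\sum_{i<j}\frac{\partial_{\lambda_i} g(\lambda)-\partial_{\lambda_j} g(\lambda)}{\lambda_i-\lambda_j}
\]
is a difference quotient of the smooth symmetric function $g$, hence extends to a function locally bounded (indeed smooth) on all of $\R^N$. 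Thus the Itô/Dynkin identity for $g(Y(\tau))$ holds in the usual way — e.g.\ by stopping $Y$ before it enters an $\epsilon$--neighborhood of the diagonal, applying Itô there, and letting $\epsilon\to0$ — without ever separating the singular terms.
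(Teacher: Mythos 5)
Your proof is correct, but it takes a genuinely different route from the paper's. The paper simply quotes the explicit transition density of the DBM in terms of multivariate Bessel functions, \eqref{eq_DBM_transition} (which it notes can be taken as the definition of the DBM), and observes that \eqref{eq_DBM_by_B} is its reformulation via the inversion formula for the Dunkl transform — an essentially algebraic/analytic argument resting on the Dunkl-transform literature. You instead identify $\lambda\mapsto\B_{(\lambda)}(x;\beta)$ as an eigenfunction of the DBM generator $\cL=\tfrac12\P_2^N$ (acting in $\lambda$, via the symmetry $\B_{(\lambda)}(x)=\B_{(x)}(\lambda)$ and \Cref{thm:opdam}) and run a martingale argument. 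Your route is more self-contained probabilistically and avoids invoking the transition density, at the cost of (i) using the argument-symmetry of the Bessel function, which the paper only exhibits at $\beta=2$ in \eqref{eq_Bessel_1} and which for general $\beta$ must be imported from Dunkl theory (Opdam/R\"osler), and (ii) the stochastic-analysis care you flag for $0<\beta<1$. On the latter point: your observation that only the symmetrized drift $\tfrac\beta2\sum_{i<j}\frac{\partial_{\lambda_i}g-\partial_{\lambda_j}g}{\lambda_i-\lambda_j}$ enters, and that this is smooth across the diagonal, is the right fix, but the stopping argument "before an $\epsilon$-neighborhood of the diagonal and let $\epsilon\to0$" only reaches the first collision time when $\beta<1$; to continue past collisions you must invoke the semimartingale decomposition of the C\'epa–L\'epingle solution (which in fact gives $\int_0^\tau|Y_i-Y_j|^{-1}\,\mathrm ds<\infty$ a.s., so It\^o's formula applies globally). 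With that reference supplied, the argument is complete.
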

\begin{remark} If $(Y_1(0),\dots,Y_N(0))=(0,\dots,0)$, then the law of $Y_1(\tau),Y_2(\tau),\dots,Y_N(\tau)$ is the G$\beta$E of \eqref{eq_GBE_t}, and \eqref{eq_DBM_by_B} turns into \eqref{eq_GBE_Fourier}.
\end{remark}
\begin{proof}[Proof of Lemma \ref{Lemma_DBM_B}.]
We use a formula for the transition density of DBM. It reads for $z_1\ge \dots\ge z_N$ and $y_1\ge \dots\ge z_N$:
\begin{multline}
\label{eq_DBM_transition}
 P_\tau\bigl((z_1,\dots,z_N)\to(y_1,\dots,y_N);\, \beta\bigr)= \tau^{-\frac{N}{2}+\beta \frac{N(N-1)}{4}}\frac{N!}{(2\pi)^{N/2}} \prod_{i=1}^{N} \frac{\Gamma\left(1+\tfrac{\beta}{2}\right)}{\Gamma\left(1+j\tfrac{\beta}{2}\right)}\\ \times \exp\left(-\sum_{i=1}^N \frac{z_i^2+y_i^2}{2\tau}\right)\, \prod_{i<j} (y_i-y_j)^{\beta}\, \B_{z_1,\dots,z_N}\left(\frac{y_1}{\tau},\dots,\frac{y_N}{\tau}\right) \d y_1\d y_2\cdots \d y_N.
\end{multline}
The last formula can be taken as a definition of DBM: for the interplay between various points of view on DBM, see \cite{baker1997calogero}, \cite{rosler1998generalized}, \cite[Section 2]{andraus2012interacting}, \cite{voit2023freezing} and references therein.
We remark that for $N=1$, \eqref{eq_DBM_transition} is a familiar Gaussian density, which matches the transition probability of the standard Brownian motion:
\begin{equation}
\label{eq_DBM_transition_N1}
 P_\tau\bigl(z\to y;\, \beta\bigr)=\frac{1}{(2\pi \tau)^{1/2}} \exp\left(-\frac{z^2+y^2}{2\tau}\right) \exp\left(\frac{z y}{\tau}\right) \d y
\end{equation}
Equivalence of \eqref{eq_DBM_transition_N1} with $N=1$ version of \eqref{eq_DBM_by_B} is a standard computation of direct (or inverse) Fourier transform of the Gaussian density. Similarly, \eqref{eq_DBM_transition} is equivalent to \eqref{eq_DBM_by_B} by the inversion formula for the Dunkl transform, see, e.g.\ \cite[Section 3.3]{A} and references therein.
\end{proof}

\begin{proof}[Proof of \Cref{Theorem_DBM_moments}]
 We start from the identity \eqref{eq_GBE_Fourier} for $\tau=\tau_1$ variables and apply $\P_{k_1}^{N}$ to both sides using the eigenrelation \eqref{eq_eigenrelation}. We get
 \begin{equation}
  \label{eq_DBM_Fourier_1}
  \E_{Y_1(\tau_1),\dots,Y_{1}(\tau_1)}\left[\sum_{i=1}^{N} \left(Y_i(\tau_1)\right)^{k_1}\cdot \B_{Y_1(\tau_1),\dots,Y_1(\tau_1)}(x_1,\dots,x_{N};\, \beta)\right]= \P_{k_1}^{N} \exp\left(\frac{\tau_1}{2} \sum_{i=1}^N (x_i)^2\right).
 \end{equation}
Next, we apply the identity \eqref{eq_DBM_by_B} with $\tau=\tau_2-\tau_1$, transforming \eqref{eq_DBM_Fourier_1} into:
 \begin{multline}
  \label{eq_DBM_Fourier_2}
  \E_{Y_1(\tau_1),\dots,Y_{1}(\tau_1);\,\, Y_1(\tau_2),\dots,Y_{1}(\tau_2)}\left[\sum_{i=1}^{N} \left(Y_i(\tau_1)\right)^{k_1}\cdot \B_{Y_1(\tau_2),\dots,Y_1(\tau_2)}(x_1,\dots,x_{N};\, \beta)\right]\\= \exp\left(\frac{\tau_2-\tau_1}{2} \sum_{i=1}^N (x_i)^2\right) \P_{k_1}^{N} \exp\left(\frac{\tau_1}{2} \sum_{i=1}^N (x_i)^2\right).
 \end{multline}
 Applying $\P_{k_2}^{N}$ to both sides, we get:
\begin{multline}
  \label{eq_DBM_Fourier_3}
  \E_{Y_1(\tau_1),\dots,Y_{1}(\tau_1);\,\, Y_1(\tau_2),\dots,Y_{1}(\tau_2)}\left[\sum_{i=1}^{N} \left(Y_i(\tau_1)\right)^{k_1}\cdot \sum_{i=1}^{N} \left(Y_i(\tau_2)\right)^{k_2}\cdot \B_{Y_1(\tau_2),\dots,Y_1(\tau_2)}(x_1,\dots,x_{N};\, \beta)\right]\\= \P_{k_2}^{N} \left[\exp\left(\frac{\tau_2-\tau_1}{2} \sum_{i=1}^N (x_i)^2\right) \P_{k_1}^{N} \exp\left(\frac{\tau_1}{2} \sum_{i=1}^N (x_i)^2\right)\right].
 \end{multline}
 Further iterating this procedure and finally plugging $x_1=\dots=x_N=0$ after applying $\P_{k_m}^N$, we arrive at \eqref{eq_DBM_moments}.
\end{proof}

\section{Scaling limit of general moments}   \label{sec:gme}

\subsection{Asymptotic statements}

In this section we prove  the convergence of high moments, for both the G$\beta$E corners process and the DBM.
For the G$\beta$E corners process, we take the variance $\tau=2N/\beta$, so that  the top level particles would fill in the interval $[-2N, 2N]$ as $N\to\infty$.
For the DBM, we take slices around the time $2N/\beta$. In our notations, whenever possible, we use bold letters for the quantities related to the limiting objects and non-bold fonts for their prelimit counterparts.

\begin{theorem}  \label{thm:multil}
Choose $m\in \N$, $\vec \bk \in \R^m_+ $, and $\vec \ttt\in \R_{\ge 0}^m$ such that $\ttt_1\le \cdots \le \ttt_m$, any real $C_1>0$, and a large enough $N\in \N$. We take the rank $N$ G$\beta$E corners process $\{y^n_i\}_{1\leq i\leq n\leq N}$, with variance $\tau=2N/\beta$; and $\vec k \in \Z^m$, and integers $N\ge N_1\ge \cdots \ge N_m$,  such that
\begin{equation} \label{eq_x1}
|k_\ell-\bk_\ell N^{2/3}|<C_1, \qquad  |N - \ttt_\ell N^{2/3}-N_\ell|<C_1, \qquad \text{for each }\ell\in\llbracket 1,m\rrbracket.
\end{equation}
Then as $N\to\infty$, uniformly over the ($N$--dependent) choices of $\vec k$ and $\vec N$ satisfying \eqref{eq_x1}:
\[
 \E \left[ \prod_{\ell=1}^m \left(\frac{1}{2}\sum_{i=1}^{N_\ell} \left(\frac{y^{N_\ell}_i}{2\sqrt{N_\ell N}}\right)^{k_\ell}+\left(\frac{y^{N_\ell}_i}{2\sqrt{N_\ell N}}\right)^{k_\ell+1}\right) \right]\to \bL_\beta(\vec \bk, \vec\ttt).
\]
\end{theorem}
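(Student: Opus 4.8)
The plan is to carry out the program announced at the start of \Cref{sec:gme}: turn the Dunkl-operator formula \eqref{eq_corners_moments} into a sum over decorated positive random walks, then pass to a diffusive scaling limit in which the walks become Brownian bridges and the decorations survive as the combinatorial data parametrizing $\sK[\vec\bk]$. By \Cref{Theorem_corners_moments} with variance $\tau=2N/\beta$, after expanding $\prod_\ell\bigl(\tfrac12(\cdot)^{k_\ell}+(\cdot)^{k_\ell+1}\bigr)$ into $2^m$ terms, each carrying an explicit rescaling $\prod_\ell (2\sqrt{N_\ell N})^{-d_\ell}$ with $d_\ell\in\{k_\ell,k_\ell+1\}$, the left-hand side reduces to analyzing a single quantity $\P_{d_m}^{N_m}\cdots\P_{d_1}^{N_1}\bigl[\exp(\tfrac{N}{\beta}\sum_i x_i^2)\bigr]_{x=0}$. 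The first step is purely algebraic: expand $\D_i^N=\partial_{x_i}+\tfrac{\beta}{2}\sum_{j\le N,\,j\ne i}\tfrac{1-\sigma_{ij}}{x_i-x_j}$, use that $\exp(\tfrac{N}{\beta}\sum x_i^2)$ is symmetric (so each $\sigma_{ij}$ fixes it and each difference operator applied to it vanishes directly), and write out $\P_{d_\ell}^{N_\ell}=\sum_{i\le N_\ell}(\D_i^{N_\ell})^{d_\ell}$ step by step. This yields a finite sum over sequences of elementary moves performed during $d_1+\dots+d_m$ ``time steps'' (concatenated into blocks of lengths $d_1,\dots,d_m$): within the $\ell$-th block one fixed ``active'' variable is hit $d_\ell$ times, each hit being either a \emph{derivative move} (acting on (monomial)$\times$Gaussian: lowers the active variable's degree with weight equal to that degree, or raises it with the Gaussian weight $2N/\beta$) or a \emph{swap move} with a partner $j\le N_\ell$ (a divided difference redistributing degree and bringing the partner into play). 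Only sequences returning every variable to degree $0$ at the end survive the substitution $x=0$.

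The second step organizes this sum by its ``block'' combinatorics and matches it, term by term, to $\sK[\vec\bk]$. One records, as functions of the rescaled time $s\in[0,\bQ_m]$ (running step index over $N^{2/3}$, so the blocks occupy $[\bQ_{\ell-1},\bQ_\ell]$), the degree profiles of the finitely many distinguished variables: the one carried by each of the $m$ Dunkl blocks, encoded as $\bp_\ell$ before its block starts, and the $\bu$ further variables activated by swap moves, encoded as $\bp_{m+1},\dots,\bp_{m+\bu}$; the swap moves become the discontinuity sets $\bDel_{j,\ell}$ (the regularity/ordering conventions of \Cref{defn:blp} just fixing a canonical enumeration of the activated variables); the mismatch between $N_\ell$ and $N_{\ell-1}$ produces the virtual-block data $\vec\bb$; and the \emph{total} degree of all $N$ variables traces a positive lattice path whose values at the special points are recorded by the prelimit analogue of $\bH$ (and must coincide with the distinguished total degree $\bp^0$ at the $\bQ_\ell$'s, the excess over $\bp^0$ being the degree carried by the anonymous bulk variables). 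The four cases $\Xi_1,\dots,\Xi_4$ of \Cref{def:xibi} correspond to whether, on a given segment, the path starts and/or ends pinned to the floor $\bp^0$ or strictly above it, and the signs $(-1)^{\don[\bp^0(x)<\bp^0(x-)]}$ are exactly the signs produced by degree-decreasing divided differences. Careful bookkeeping of all prefactors --- the Gaussian weight $2N/\beta$ per up-move, the degrees consumed on down-moves, the $\tfrac{\beta}{2}$ and the $\sim N$ choices of partner per swap, against $\prod_\ell(2\sqrt{N_\ell N})^{-d_\ell}$ --- must then show that all powers of $N$ cancel under the diffusive scaling (time by $N^{-2/3}$, degrees by $N^{-1/3}$), and that the surviving per-step products over Gaussian-versus-degree factors telescope, on each segment, into the area exponential $\exp(\beta^{-1}\int(\text{path}-\bp^0))$; the factors $\exp(\beta^{-1}(y-x)(\bH(x)-\bp^0(x)))$ in $\Xi_1,\Xi_2$ and the factor $\exp(\sum_\ell(\ttt_\ell-\ttt_{\ell+1})\bH(\bQ_\ell)/2)$ arise from the raised effective floor over virtual-block segments and from the $N_\ell\ne N_{\ell-1}$ mismatch, respectively.

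With the combinatorics and scaling fixed, the convergence is an invariance principle: for a fixed block structure (fixed $\bu$, fixed $\bdel$, fixed combinatorial type of $\bDel$), a local central limit theorem for positive lattice bridges with prescribed endpoints gives convergence of the rescaled sums to $\bF(x;h,g)$, to $\bF_0(x;h)$ when one endpoint degenerates to $0$, and to $\bF_{0,0}(x)$ when both do; carrying the area weights along upgrades this to $\bI(x;h,g)$, $\bI_0(x;h)$, $\bI_{0,0}(x)$ of \Cref{defn:bIs}; and the weights of the remaining discrete choices (jump positions, jump sizes, heights) reassemble into the product of Lebesgue measures of \Cref{defn:measurecb} (the stray powers of $2$ in \eqref{eq:defnibk} being the artifact of the lattice-to-Brownian rescaling noted after \Cref{def:xibi}). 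Thus each fixed-block-structure summand of the prelimit converges to the corresponding piece of $\int_{\sK[\vec\bk]}\bI_{\beta,\vec\bk}[\vec\bp,\vec\bb,\bH]\,\d(\vec\bp,\vec\bb,\bH)$.

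The final step, and the main obstacle, is to promote this term-by-term convergence to convergence of the full sum and to identify the result with the principal-value integral $\bL_\beta(\vec\bk,\vec\ttt)$. This has two facets. First, one must bound the $(\bu,\bdel)$-indexed summand by something summable, uniformly in $N$ and in the scaled data satisfying \eqref{eq_x1}, in order to interchange $\lim_N$ with the sums over $\bu$ and $\bdel$; here the growth $d_\ell\sim\bk_\ell N^{2/3}$ makes the estimates delicate, since each extra swap brings a factor $\sim N\cdot\tfrac{\beta}{2}$ and a compensating $\sim N^{-1}$ from the normalization, and the net contribution must be shown to decay (geometrically or faster) in $\bdel$. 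Second, and more subtly, $\bI_{0,0}$ and $\bI_0(\cdot;h)$ are not integrable in the segment-length variable, so the limiting object is only conditionally convergent; it cannot be bounded block-by-block, and the needed cancellations live in the principal-value grouping of \Cref{defn:core}. The key observation is that the prelimit, being a finite sum, automatically realizes the discrete analogue of the $\sK_\epsilon$-truncation with $\epsilon\asymp N^{-2/3}$ (the active variable cannot swap before it has been raised, so $\bp^0$ is forced constant just to the right of every $\bQ_{\ell-1}$), and one must show that $\lim_N$ of these truncated finite sums equals $\lim_{\epsilon\to0}\int_{\sK_\epsilon}$ --- which is exactly the content deferred in the text to \Cref{prop:fixedepsconv} and \Cref{prop:epszerocov}. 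Recombining the $2^m$ terms from the first step then reconstitutes the combination $\tfrac12(\cdot)^{k_\ell}+(\cdot)^{k_\ell+1}$ and delivers the claimed limit $\bL_\beta(\vec\bk,\vec\ttt)$.
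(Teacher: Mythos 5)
Your overall architecture matches the paper's: Dunkl-operator expansion of \eqref{eq_corners_moments} into a sum over decorated positive lattice walks, reorganization of the walks by a discrete block structure, a coupling/local-CLT step turning positive bridges with area weights into $\bI$, $\bI_0$, $\bI_{0,0}$, and a final identification with the principal-value integral of \Cref{defn:core}. The gap is in your final step, and it is not a technicality: you propose to ``bound the $(\bu,\bdel)$-indexed summand by something summable, uniformly in $N$'' and treat the only remaining subtlety as the conditional convergence of the limiting integral. This fails. For certain walks with a single jump --- those in which, just after some $Q_{\ell-1}$, the total degree $\sH$ stays $\ge\sH(Q_{\ell-1})$ up to the first jump time --- the sum of \emph{absolute values} of the weights is of order $N^{1/3}$ times the target normalization (the paper's \Cref{ex:blowup} computes this explicitly). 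So no absolute, term-by-term bound of the kind you describe can exist for the raw expansion; the prelimit signed sum only has the right order because these ``blow-up'' walks cancel against walks carrying one extra auxiliary variable with a weight of the opposite sign (\Cref{ex:blowup2}, formalized as the Type I versus Type II cancellation in \Cref{lem:eq:caneq}). Your claim that the prelimit ``automatically realizes the discrete $\sK_\epsilon$-truncation with $\epsilon\asymp N^{-2/3}$ because the active variable cannot swap before it has been raised'' is also incorrect --- at the first step of block $\ell$ the earlier blocks have already produced variables of positive degree, so a swap (case 3(b) of \Cref{defn:tie}) can occur immediately --- and even the correct $\epsilon$-truncation would not remove the blow-up, since the divergence comes from the height constraint $\sH\ge\sH(Q_{\ell-1})$, not from early jump times.

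A second omission: in the limiting blocks each $\bp_\ell$ with $\ell\le m$ vanishes on $[\bQ_{\ell-1},\bQ_m]$, but in the prelimit the degree of a main variable can survive past its block and be picked up by a later main variable. These configurations (Type III/Type B in the paper) are eliminated by a separate inclusion--exclusion over modified operators (\Cref{lem:hDpbd}, a M\"obius inversion), which again exchanges one family of walks for another with compensating signs before any absolute estimates are applied. Without these two cancellation mechanisms --- both performed at the level of the discrete sum, prior to the invariance principle --- the interchange of $\lim_N$ with the sums over $\bu$ and $\bdel$ that your argument requires is unavailable. (A minor misattribution: the virtual-block data $\vec\bb$ is not produced by the mismatch $N_\ell\ne N_{\ell-1}$; it records whether $\sH$ dips below $\sH(\bQ_{\ell-1})$ before the first jump of block $\ell$, which is exactly the dichotomy governing the blow-up, while the $N_\ell$ mismatch yields only the exponential prefactor you correctly identified.)
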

Note that here the variance is taken to be different from \Cref{thm:cor-conv}: 
one needs to replace $y_i^n$ by $\sqrt{2N/\beta} y_i^n$ to match the scaling there.

\begin{theorem}  \label{thm:multit}
Choose $m\in \N$, $\vec \bk \in \R^m_+ $, and $\vec \ttt\in \R_{\ge 0}^m$ such that $\ttt_1\le \cdots \le \ttt_m$, any real $C_1>0$, and a large enough $N\in \N$.
We take the $N$ dimensional DBM $\{Y_i(\tau)\}_{1\leq i\leq N, \tau>0}$ started from zero initial condition, and $\vec k \in \N^m$, $\tau_1\le \tau_2\le \dots \le \tau_m$ such that
\begin{equation}
\label{eq_x2}
|k_\ell-\bk_\ell N^{2/3}|<C_1, \qquad \left|\frac{2}{\beta}N + \frac{2}{\beta}\ttt_\ell N^{2/3}-\tau_\ell\right|<C_1,\qquad  \text{for each }\ell\in\llbracket 1,m\rrbracket.
\end{equation}
Then as $N\to\infty$, uniformly over the ($N$--dependent) choices of $\vec k$ and $\vec \tau$ satisfying \eqref{eq_x2}:
\[
 \E \left[ \prod_{\ell=1}^m \left(\frac{1}{2}\sum_{i=1}^N \left(\frac{Y_i(\tau_\ell)}{\sqrt{2\beta\tau_\ell N}}\right)^{k_\ell}+\left(\frac{Y_i(\tau_\ell)}{\sqrt{2\beta\tau_\ell N}}\right)^{k_\ell+1}\right) \right]\to \bL_\beta(\vec \bk, \vec\ttt).
\]
\end{theorem}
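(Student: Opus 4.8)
The plan is to run the argument in parallel with the (upcoming) proof of \Cref{thm:multil}, the only structural change being that one starts from \Cref{Theorem_DBM_moments} rather than \Cref{Theorem_corners_moments}: the unnormalized left-hand side equals
\[
\left.\P_{k_m}^{N}\Bigl[e^{\frac{\tau_m-\tau_{m-1}}{2}\sum_i x_i^2}\,\P_{k_{m-1}}^{N}\bigl[\,\cdots\,\P_{k_1}^{N}\bigl[e^{\frac{\tau_1}{2}\sum_i x_i^2}\bigr]\cdots\bigr]\Bigr]\right|_{x_1=\dots=x_N=0},
\]
i.e.\ an iterated application of the Dunkl power-sum operators $\P_k^N$ with the \emph{same} $N$, interleaved with multiplications by Gaussians of variance $(\tau_{\ell+1}-\tau_\ell)\asymp(\ttt_{\ell+1}-\ttt_\ell)N^{2/3}$. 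First I would expand this expression completely. Writing $\D_i^N=\partial_{x_i}+\tfrac\beta2\sum_{j\ne i}\tfrac{1-\sigma_{ij}}{x_i-x_j}$ and expanding every power $(\D_{i_\ell}^N)^{k_\ell}$ together with the sum over the distinguished index $i_\ell$, one obtains (for fixed $N$) a finite sum indexed by: the indices $i_1,\dots,i_m$; for each factor and each of its $k_\ell\asymp\bk_\ell N^{2/3}$ elementary steps, a choice of ``differentiate $x_{i_\ell}$'' or ``swap $x_{i_\ell}$ with some $x_j$''; and the monomial carried by each variable at each intermediate stage. Only monomials even in every variable survive evaluation at $0$, and the Gaussian insertions contribute Wick pairings; after this bookkeeping the central object becomes the running degree of the distinguished variable as a function of the number of steps so far, which performs a positive lattice path on a segment of length $\asymp\bQ_m N^{2/3}$, pinned at the marked positions $\bQ_\ell N^{2/3}$ by the Gaussian insertions, while the ``swap'' operations create finitely many \emph{decorations} that redistribute degree among the variables. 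The particular combination of consecutive power sums appearing in the statement is precisely what makes the possible values of the degree at each marked position fill a full (rather than parity-restricted) arithmetic progression, so that in the scaling limit the discrete sum becomes the Lebesgue integration implicit in $\d(\vec\bp,\vec\bb,\bH)$.

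Next I would set up the dictionary between this prelimit combinatorial sum and the space of blocks $\sK[\vec\bk]$ of \Cref{sec:forjm} under the diffusive rescaling of the step- and degree-coordinates: the degree profiles become the block functions $\bp_\ell$ and $\bp_j$ ($j>m$), decoration positions become the discontinuity sets $\bDel_{j,\ell}$ (with $\bdel$ counting them), the distinguished-variable values at the marked positions and decoration points become the block height $\bH$, and the small-scale behaviour of the path just after each pinning time $\bQ_{\ell-1}$ produces the virtual block $\bb_\ell$ — which is exactly the feature the $\epsilon$-regularization of \Cref{defn:core} is built to handle. Fixing $\bu$, the numbers $\bdel_{j,\ell}$, and the macroscopic block structure, I would then prove that the corresponding part of the prelimit sum converges, once rescaled, to $\int\exp\bigl(\sum_{\ell=1}^{m-1}(\ttt_\ell-\ttt_{\ell+1})\bH(\bQ_\ell)/2\bigr)\,\bI_{\beta,\vec\bk}[\vec\bp,\vec\bb,\bH]\,\d(\vec\bp,\vec\bb,\bH)$ restricted to that part of $\sK[\vec\bk]$. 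This uses three inputs: (i) a local-limit/strong-coupling estimate turning the conditioned, pinned positive lattice path into a conditioned Brownian bridge, whose transition laws in the various regimes (staying above the bound and returning to it, leaving it, or neither) are the Brownian-excursion, Bessel$_3$, and Brownian-bridge densities encoded in $\bF_{0,0},\bF_0,\bF$ of \Cref{defn:bIs}; (ii) convergence of the exponentiated discrete area under the path to $\E[\exp(\beta^{-1}\int B)]$ under the conditioned bridge — it is here that the Gaussian factors $e^{\frac{\tau_{\ell+1}-\tau_\ell}{2}\sum_i x_i^2}$ contribute the prefactors $\exp((\ttt_\ell-\ttt_{\ell+1})\bH(\bQ_\ell)/2)$, which is the DBM-specific mechanism producing the $\vec\ttt$-dependence (in the corners case this dependence enters instead through the decreasing number of active variables); (iii) tracking the combinatorial weights of the swap operations, which in the limit yield the powers of $\beta^{-1}$, the signs $(-1)^{\don[\bp^0(x)<\bp^0(x-)]}$, and the powers of $2$ of \Cref{def:xibi}.

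The delicate point is to exchange the $N\to\infty$ limit with the infinite sum over block structures. Since $\bI_{0,0}(x)$ and $\bI_0(x;h)$ are non-integrable in $x$, this sum is not absolutely convergent, which is exactly why $\bL_\beta$ is a principal value $\lim_{\epsilon\to0}\int_{\sK_\epsilon}$ in \Cref{defn:core}. I would therefore first work at a fixed cutoff $\epsilon>0$, imposing on the prelimit sum the discrete analogue of the $\sK_\epsilon$-constraint (no decoration and no virtual block within distance $\epsilon N^{2/3}$ of a pinning position), under which one can establish a bound, uniform in $N$, on the total contribution of block structures with $\bu+\bdel$ large — geometric or better in $\bu+\bdel$ — so the sum over structures converges uniformly and may be interchanged with $N\to\infty$. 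This yields convergence of the $\epsilon$-truncated prelimit quantity to $\int_{\sK_\epsilon}(\cdots)$, which is the content of \Cref{prop:fixedepsconv}. It then remains to send $\epsilon\to 0$: one shows that $\int_{\sK_\epsilon}(\cdots)$ has a limit and that the truncation error is uniformly small in $N$ (the content of \Cref{prop:epszerocov}), so that the iterated limits agree in both orders and the two sides of the asserted convergence are identified.

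The main obstacle is the uniform-in-$N$ geometric decay in $\bu+\bdel$ — equivalently, a quantitative statement that a positive random walk conditioned to be pinned at several points and weighted by its exponentiated area strongly suppresses the insertion of extra decorations — together with the global area-integrability/tightness estimates needed to control $\int_{\sK_\epsilon}(\cdots)$ uniformly and to let $\epsilon\to0$. Essentially all of the technical effort goes into these estimates; the remainder is a careful but routine combination of the Dunkl-expansion bookkeeping, local limit theorems, and explicit identities for conditioned Brownian bridges, excursions, and Bessel$_3$ processes, and is shared almost verbatim with the proof of \Cref{thm:multil}.
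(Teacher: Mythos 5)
Your overall architecture coincides with the paper's: expand the iterated Dunkl operators of \Cref{Theorem_DBM_moments} into decorated positive lattice paths, match the rescaled combinatorial data with the blocks of \Cref{sec:forjm}, prove blockwise convergence via a random-walk-to-Brownian-bridge coupling, and handle the conditional convergence through the $\epsilon$-truncation of \Cref{defn:core}. You also correctly identify the DBM-specific mechanism producing the $\vec\ttt$-dependence (the Gaussian insertions, versus the shrinking number of variables in the corners case), which is essentially all that changes relative to \Cref{thm:multil}.

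There is, however, a genuine gap in how you propose to justify interchanging $N\to\infty$ with the sum over block structures. You attribute the failure of absolute convergence solely to the non-integrability of $\bI_{0,0}$ and $\bI_0(\cdot;h)$ near the pinning points, and you claim that after imposing the $\epsilon$-cutoff one obtains a uniform-in-$N$ bound, geometric in $\bu+\bdel$, on the remaining contributions; your stated ``main obstacle'' (suppression of extra decorations by the conditioned, area-weighted walk) is an absolute-value estimate. This is not enough: already for a single decoration located far from the pinning points, the sum of $|w(\vec r)|$ over walks whose total degree stays above its value at the next pinning time --- exactly the configurations that produce a nontrivial virtual block $\bb_\ell$ --- exceeds the target scale by a factor $N^{1/3}$ (see \Cref{ex:blowup}), and these configurations are \emph{not} removed by the $\epsilon$-cutoff. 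The proof requires an exact signed cancellation between each such walk and the $\approx N$ walks obtained by splitting the carried degree off into a fresh auxiliary variable (the Type~I versus Type~II mechanism of \Cref{ssec:buc}, cf.\ \Cref{lem:eq:caneq} and \Cref{ex:blowup2}), together with a M\"obius-inversion elimination of the paired types III/B (\Cref{ssec:pair}) that reduces the full sum over $(i_1,\dots,i_m)$ to $N^m$ times a sum over a single canonical operator. Only after these cancellations does one land on a subfamily $\sB_\epsilon^*$ over which the sum is absolutely convergent uniformly in $N$ (\Cref{prop:truncreg}, \Cref{prop:fixedepsconv}), at which point your dominated-convergence strategy becomes available. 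Without identifying these cancellations, the blockwise limits you compute do not assemble into a convergent answer, and the $\epsilon$-truncated prelimit quantity you work with is not even bounded in $N$ in absolute value.
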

For the ease of our analysis, we slightly reformulate the Dunkl operators, as follows.
For $\tau>0$, $N\in\N$, and $i\in\llbracket 1,N\rrbracket$ we consider the following operators on polynomials of $x_1, x_2, \ldots$ (in the following formula $x_i$ means the operator of multiplication by $x_i$):
\begin{equation}  \label{eq:defhD}
 \hD_i^{N,\tau}= \frac{\partial}{\partial x_i} + \tau x_i + \frac{\beta}{2} \sum_{\begin{smallmatrix} j\in\llbracket 1,N\rrbracket,\\ j\ne i\end{smallmatrix}} \frac{1-\sigma_{ij}}{x_i-x_j}.
\end{equation}
\begin{equation}  \label{eq:defhP}
 \hP_k^{N,\tau}=\sum_{i=1}^N (\hD_i^{N,\tau})^k, \qquad k=1,2,\dots.
\end{equation}
The right-hand side of \eqref{eq_corners_moments} now equals the degree zero term of $\hP_{k_m}^{N_m,\tau} \cdots \hP_{k_1}^{N_1,\tau}$ applied to the function $1$;
similarly, the right-hand side of \eqref{eq_DBM_moments} equals the degree zero term of $\hP_{k_m}^{N,\tau_m} \cdots \hP_{k_1}^{N,\tau_1}$ applied to the function $1$.

By \Cref{Theorem_corners_moments,Theorem_DBM_moments},
we see that \Cref{thm:multil,thm:multit} are implied by the following two statements, respectively.
\begin{prop}  \label{prop:conv}
Take $m\in \N$, $\vec \bk \in \R^m_+ $, and $\vec \ttt\in \R_{\ge 0}^m$ such that $\ttt_1\le \cdots \le \ttt_m$, and $C_1>0$.
For any large enough $N\in \N$, we take $\vec k$ and $N\ge N_1\ge \cdots \ge N_m$ satisfying \eqref{eq_x1}.
Then as $N\to\infty$,
\begin{equation}   \label{eq:convpp}
2^{-m}\left(\frac{\hP_{k_m}^{N_m,2N/\beta}}{(2\sqrt{N_mN})^{k_m}}+\frac{\hP_{k_m+1}^{N_m,2N/\beta}}{(2\sqrt{N_mN})^{k_m+1}}\right) \cdots \left(\frac{\hP_{k_1}^{N_1,2N/\beta}}{(2\sqrt{N_1N})^{k_1}}+\frac{\hP_{k_1+1}^{N_1,2N/\beta}}{(2\sqrt{N_1N})^{k_1+1}}\right) [1]_{x_1=\dots=x_N=0}    
\end{equation}
converges to $\bL_\beta(\vec\bk,\vec\ttt)$.
\end{prop}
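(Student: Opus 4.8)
The plan is to prove \Cref{prop:conv} by turning the finite-$N$ left-hand side of \eqref{eq:convpp} into an \emph{exact} finite sum over discrete combinatorial objects --- the lattice analogues of the blocks $(\vec\bp,\vec\bb,\bH)$ of \Cref{sec:forjm} --- and then showing that, under the diffusive rescaling dictated by \eqref{eq_x1}--\eqref{eq_x2}, this sum converges to the principal-value integral $\bL_\beta(\vec\bk,\vec\ttt)$ of \Cref{defn:core}. Three things will happen simultaneously in the limit: the discrete ``exponent walks'' become Brownian bridges/excursions/Bessel$_3$ bridges conditioned to stay non-negative; the combinatorial ``decorations'' (the number $\bu$ of auxiliary indices, the discontinuity data $\bdel_{j,\ell}$, the discrete choices of $\bH$) stay of bounded complexity and survive as the countable sum over types and the Lebesgue integrals of \Cref{defn:measurecb}; and the per-segment weights converge to the functions $\bI_{0,0},\bI_0,\bI$ of \Cref{defn:bIs}.

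\textbf{Step 1: combinatorial expansion.} Expand each $\hP_{k}^{N,\tau}=\sum_{i}(\hD_i^{N,\tau})^k$, with $\hD_i^{N,\tau}=\partial_i+\tau x_i+\tfrac\beta2\sum_{j\ne i}\tfrac{1-\sigma_{ij}}{x_i-x_j}$, apply the whole composition to $1$, and extract the degree-zero coefficient at $x=0$. This produces a sum over sequences of elementary moves acting on monomials: ``raise'' (the $\tau x_i$ term, degree $+1$), ``lower'' (the $\partial_i$ term, degree $-1$), and ``hop'' (the $\tfrac\beta2\tfrac{1-\sigma_{ij}}{x_i-x_j}$ term, degree $-1$, transferring exponent between $x_i$ and $x_j$ and possibly activating a fresh index). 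I would organize each such sequence by recording, for every index that ever carries a positive exponent, the piecewise-constant trajectory of that exponent along the time axis $[0,\bQ_m]$, where $\bQ_\ell=\sum_{\ell'\le\ell}\bk_{\ell'}$ and each sub-interval $[\bQ_{\ell-1},\bQ_\ell]$ (of length $\approx k_\ell$) hosts the $k_\ell$ copies of $\hD^{N_\ell,2N/\beta}$. This bookkeeping yields a lattice block process $\vec\bp$ --- the $m$ distinguished trajectories $\bp_\ell$ being those of the index on which the $\ell$-th operator acts by $\partial_i$ or $\tau x_i$, the remaining $\bu$ trajectories coming from indices pulled in by hops --- a virtual block process $\vec\bb$ (indices raised and immediately lowered near a segment boundary, handled separately because the first operator of each segment may act on a fresh variable) and the heights $\bH$ (exponents at the special times). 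Each configuration carries an explicit weight factoring over the sub-segments between consecutive special times: a factor $\tau$ for each ``raise'', the current exponent for each ``lower'', a ratio from $\tfrac\beta2\tfrac{1-\sigma_{ij}}{x_i-x_j}$ for each ``hop'', and binomial counts for the number of ways to realize a prescribed trajectory. The positivity ``$\bp\ge 0$'' of \Cref{defn:blp} is automatic: the exponent of $x_i$ is never negative. After this step, \eqref{eq:convpp} is literally $2^{-m}$ times a finite sum over lattice blocks of these weights times $\exp(\tfrac12\sum_\ell(\ttt_\ell-\ttt_{\ell+1})\bH(\bQ_\ell))$-type factors; the ``$\tfrac12(\cdot)^{k}+(\cdot)^{k+1}$'' combination, and the resulting $2^{-\bdel-\#\{\bb_\ell\ne\varnothing\}}$ prefactor in \eqref{eq:defnibk}, arise precisely from the way the endpoints $\bQ_\ell$ of each segment can be reached (equivalently, from passing to Bessel functions via \Cref{thm:opdam} with eigenvalue $\sum\lambda_i^{k}(1+\lambda_i)$).

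\textbf{Step 2: diffusive limit of each term.} Insert $\tau=2N/\beta$, $N_\ell=N-\ttt_\ell N^{2/3}+O(1)$, $k_\ell=\bk_\ell N^{2/3}+O(1)$, rescale time by $N^{2/3}$ and exponent-heights by $N^{1/3}$, and fix the combinatorial type (the values of $\bu$, the $\bdel_{j,\ell}$, and the discrete part of $\bH$). Over a sub-segment with no special time, the exponent trajectory is a nearest-neighbour walk of $\approx N^{2/3}$ steps conditioned to stay $\ge 0$; by the local central limit theorem with a hard wall it converges, after rescaling, to a Brownian bridge / Bessel$_3$ bridge / Brownian excursion conditioned to stay non-negative, according to whether its endpoints sit strictly above the floor or on it --- precisely the four cases $\Xi_1,\Xi_2,\Xi_3,\Xi_4$ of \Cref{def:xibi}. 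The product of the ``raise'' factors $\tau$ telescopes, after division by $(2\sqrt{N_\ell N})^{k_\ell}$, into $\exp(\beta^{-1}\int(\text{rescaled walk}))$ since $\tau/(2\sqrt{N_\ell N})\to\beta^{-1}$, while the ``lower''- and ``hop''-weights and binomial counts converge to the Gaussian-type prefactors $\bF_{0,0},\bF_0,\bF$; together these are exactly $\bI_{0,0},\bI_0,\bI$. The sign $(-1)^{\don[\bp^0(x)<\bp^0(x-)]}$ comes from the orientation of the last exponent jump, and the factor $\exp(\tfrac12(\ttt_\ell-\ttt_{\ell+1})\bH(\bQ_\ell))$ appears because at the boundary between segments $\ell$ and $\ell+1$ the index set shrinks from $N_\ell$ to $N_{\ell+1}$, contributing a residual exponential of the current height. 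Finally, summing over the positions of the special times and the magnitudes of the jumps turns the Riemann sums into the Lebesgue integrals of $\d(\vec\bp,\vec\bb,\bH)$, and summing over the finitely many combinatorial types recovers the full integrand $\bI_{\beta,\vec\bk}[\vec\bp,\vec\bb,\bH]$.

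\textbf{Step 3, the main obstacle: uniform control and the principal value.} The series is not absolutely convergent --- $\bI_{0,0}(x)$ and $\bI_0(x;h)$ fail to be integrable in $x$, which is why $\bL_\beta$ is only a principal value and why, in the prelimit, there is heavy cancellation between short and long excursions near the boundaries $\bQ_{\ell-1}$. The difficulty is therefore to establish, uniformly in $N$ and over the window \eqref{eq_x1}--\eqref{eq_x2}: (i) a tail estimate showing that lattice blocks with many decorations (large $\bu$ or large $\bdel$) or with the exponent walk reaching rescaled height $h\gg 1$ contribute negligibly --- here each extra hop brings a factor $\tfrac\beta2$ times a ratio of size $O(1/(\#\text{active indices}))$, and each excursion to rescaled height $h$ is Gaussian-suppressed like $e^{-ch^2}$, so the total mass over ``bad'' configurations is summable; (ii) a matching of the discrete near-boundary structure (a walk cannot make a nontrivial move in fewer than one lattice step, and the first return past $\bQ_{\ell-1}$ is forced) with the analytic cutoff $\sK_\epsilon$ of \Cref{defn:core}, so that the limits $N\to\infty$ and $\epsilon\to0+$ commute; and (iii) uniform integrability allowing the termwise convergence of Step 2 to be upgraded to convergence of the whole regularized sum by dominated convergence. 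Granting (i)--(iii), the limit of \eqref{eq:convpp} equals $\mathrm{p.v.}\int_{\sK[\vec\bk]}(\cdots)=\bL_\beta(\vec\bk,\vec\ttt)$, proving \Cref{prop:conv}. The estimates (i)--(ii) --- simultaneously combinatorial (bounding the number of decorations) and analytic (controlling excursions and the cancellations behind the principal value) --- are where essentially all of the difficulty lies.
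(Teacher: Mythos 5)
Your overall architecture matches the paper's: expand the product of Dunkl operators into a sum over exponent trajectories (``walks''), organize these by a discrete blocks structure, couple the rescaled walks with Brownian bridges/excursions to recover $\bI_{0,0},\bI_0,\bI$, and match the discrete near-boundary constraints with the cutoff $\sK_\epsilon$ before sending $\epsilon\to0+$. Steps 1 and 2 are essentially \Cref{ssec:setdisexp}, \Cref{ssec:bdecom}, and \Cref{lem:RW1} of the paper.

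The genuine gap is in your Step 3(i). You claim that the total mass over ``bad'' configurations is summable because each extra hop costs $O(1/\#\text{active indices})$ and high excursions are Gaussian-suppressed, and you locate the non-absolute convergence only in the $\epsilon\to0+$ limit of the continuum integral. This is false at the prelimit level: as the paper's \Cref{ex:blowup} shows, the class of walks with a \emph{single} hop for which the total degree $\sH$ stays $\ge\sH(Q_\ell)$ on a short interval to the right of a segment boundary contributes, in absolute value, a quantity of order $(2N)^{2k}N^{1/3}$ --- larger than the target scale $\prod_\ell(2\sqrt{N_\ell N})^{k_\ell}$ by a positive power of $N$. No tail bound on the number of decorations or on excursion heights can repair this; one must first exhibit an exact signed cancellation between two distinct combinatorial classes (the paper's ``type I'' walks, where a main variable starts from a positive exponent, versus ``type II'' walks, where that exponent is carried by a freshly activated auxiliary variable --- the map $\cC$ of \Cref{defn:fP} and the bound of \Cref{lem:eq:caneq}), after which the \emph{remainder} is small enough for absolute-value estimates to close (\Cref{prop:csBJ}). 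A second, independent cancellation is also needed and absent from your proposal: walks in which a later main variable's trajectory is a continuation of an earlier one are overcounted across different index tuples $(i_1,\dots,i_m)$, and the paper removes these ``type III / type B'' pairs by a M\"obius-inversion argument over a poset of operators (\Cref{ssec:pair}). Without these two structural cancellations, identified at the level of the discrete combinatorics rather than the limiting integral, the argument cannot even establish that \eqref{eq:convpp} stays bounded as $N\to\infty$, let alone that it converges.
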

\begin{prop}  \label{prop:DBMconv}
Take $m\in \N$, $\vec \bk \in \R^m_+ $, and $\vec \ttt\in \R_{\ge 0}^m$ such that $\ttt_1\le \cdots \le \ttt_m$, and $C_1>0$.
For any large enough $N\in \N$, we take $\vec k\in\N^m$, $\tau_1\le \tau_2\le \dots\le \tau_m$ satisfying \eqref{eq_x2}.
Then as $N\to\infty$,
\[
2^{-m}\left(\frac{\hP_{k_m}^{N,\tau_m}}{(2\beta \tau_mN)^{k_m/2}}+\frac{\hP_{k_m+1}^{N,\tau_m}}{(2\beta\tau_mN)^{(k_m+1)/2}}\right) \cdots \left(\frac{\hP_{k_1}^{N,\tau_1}}{(2\beta\tau_1N)^{k_1/2}}+\frac{\hP_{k_1+1}^{N,\tau_1}}{(2\beta\tau_1N)^{(k_1+1)/2}}\right) [1]_{x_1=\dots=x_N=0}
\]
converges to $\bL_\beta(\vec\bk,\vec\ttt)$.
\end{prop}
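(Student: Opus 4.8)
The plan is to expand $\hP_{k_m}^{N,\tau_m}\cdots\hP_{k_1}^{N,\tau_1}$ applied to the constant function $1$ into a sum over ``decorated positive lattice paths'' --- the prelimit analogues of the blocks of Section~\ref{sec:forjm} --- and to show that under the diffusive scaling dictated by \eqref{eq_x2} this sum converges, term by term and with a uniform summable tail, to the integral $\bL_\beta(\vec\bk,\vec\ttt)$ of \Cref{defn:core}; by \Cref{Theorem_DBM_moments} together with \eqref{eq:defhD}--\eqref{eq:defhP}, this is exactly \Cref{prop:DBMconv}. Concretely, apply the factors $\hD_i^{N,\tau}$ one at a time to $1$: each application either raises the exponent of some $x_i$ by one with a scalar $\tau$ (the $\tau x_i$ term), lowers it by one with the current exponent as a scalar (the $\partial_{x_i}$ term), or performs the divided difference $\tfrac\beta2\tfrac{1-\sigma_{ij}}{x_i-x_j}$ turning a pair of exponents $(a,b)$ on $(x_i,x_j)$ into $\pm\tfrac\beta2\sum_c x_i^c x_j^{a+b-2-c}$. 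Composing all operators and keeping only the degree-zero term at $x_1=\cdots=x_N=0$ produces a sum over exponent trajectories on the rescaled time axis $[0,\bQ_m]$, $\bQ_\ell=\sum_{\ell'\le\ell}k_{\ell'}$, with every exponent staying $\ge 0$ and returning to $0$ at $\bQ_m$: the $m$ variables attached to the operators being expanded become $\bp_1,\dots,\bp_m$, the auxiliary variables created by divided differences become $\bp_{m+1},\dots,\bp_{m+\bu}$, the divided-difference events are the points of $\bDel$, their new exponents are the block heights $\bH$, and the virtual blocks $\vec\bb$ record the location of the first event in each window.

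The normalization $(2\beta\tau_\ell N)^{-k_\ell/2}$ together with the factor $\tau_\ell\sim 2N/\beta$ per up-step forces, to leading order, about $k_\ell/2$ up-steps and $k_\ell/2$ down-steps per active variable in the $\ell$-th window, with exponent fluctuations of order $\sqrt{k_\ell}\sim N^{1/3}$, while $\bu$ and $\bdel$ must remain bounded on pain of extra vanishing factors --- exactly the regime $N^{2/3}$ in time and $N^{1/3}$ in space in which lattice paths converge to Brownian motion. Fixing the discrete block data, a local central limit theorem then turns the biased walk into a Brownian bridge pinned between the prescribed heights and conditioned to stay $\ge\bp^0$ (with Bessel$_3$ or excursion at the boundary cases); the product of down-step weights exponentiates into the area functional $\exp(\beta^{-1}\int B)$ via $\prod(\text{exponent})=\exp(\sum\log(\text{exponent}))$ expanded around the deterministic profile; the arrival combinatorial factors converge to the hitting densities $\bF$, $\bF_0$, $\bF_{0,0}$; and the signs of the divided differences survive verbatim. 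Summation over the positions and destinations of the decorations becomes the Lebesgue integrals of $\d(\vec\bp,\vec\bb,\bH)$, summation over $\bu$ and $\bdel$ assembles the full measure on $\sK[\vec\bk]$, and the factor $\exp(\sum_\ell(\ttt_\ell-\ttt_{\ell+1})\bH(\bQ_\ell)/2)$ is the leftover scalar from matching the normalizations $(2\beta\tau_\ell N)^{k_\ell/2}$ across consecutive windows, using $\tau_\ell-\tau_{\ell-1}\sim(2/\beta)(\ttt_\ell-\ttt_{\ell-1})N^{2/3}$ from \eqref{eq_x2}.

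Since $\bI_{0,0}$ and $\bI_0(\cdot\,;h)$ are not integrable at large argument, the trajectory sum is only conditionally convergent, and the role of the combination $\tfrac12(\hP_{k_\ell}/(\cdot)^{k_\ell}+\hP_{k_\ell+1}/(\cdot)^{k_\ell+1})$ is to symmetrize over the parity of the number of steps so that, window by window, the ``long empty gap'' contributions cancel in pairs; this is the discrete shadow of the principal-value prescription of \Cref{defn:core} (the $\epsilon$-cutoff on $\bb_\ell$ and on $\bp^0$ being constant near $\bQ_{\ell-1}$). I would make this precise by isolating, in each window, the first divided-difference or up-event after $\bQ_{\ell-1}$, computing the limiting density of its location, and bounding the $\ge\epsilon$ tail uniformly via the Gaussian decay of $\bF$. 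The argument is then completed by an $N$-uniform summable majorant for the contributions with $\bu+\bdel\ge R$ or with maximal exponent exceeding $RN^{1/3}$, after which dominated convergence assembles the term-by-term limits; the $O(1)$ windows in \eqref{eq_x2} are what make all constants uniform.

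The hard part is the combination of this cancellation with the uniform tail bound: because $\bI_{0,0}$ and $\bI_0$ are non-integrable, convergence cannot be read off from absolute estimates, and everything hinges on exhibiting --- at the discrete level and uniformly in $N$ --- the cancellation coming jointly from the alternating divided-difference signs and from the $k$-vs-$(k+1)$ averaging, and then matching it with the principal value of \Cref{defn:core} (which itself must be shown to be well-defined). Finally, \Cref{prop:conv} is proved by the same argument, the only change being that decreasing $N_\ell$ rather than increasing $\tau_\ell$ replaces the ``multiply by a Gaussian'' transitions by ``restrict to fewer variables'' branchings; this alters the bookkeeping of which variables are active in each window but not the structure of the limit.
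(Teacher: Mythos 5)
Your overall architecture coincides with the paper's: expand $\hP_{k_m}^{N,\tau_m}\cdots\hP_{k_1}^{N,\tau_1}[1]$ via \Cref{Theorem_DBM_moments} into a sum over positive lattice walks decorated by divided-difference events, pass to the diffusive limit in which the walks become the Brownian bridges of \Cref{defn:bIs} and the decorations become the blocks of \Cref{sec:forjm}, and control the conditionally convergent sum by combining cancellations with uniform tail bounds. The paper carries this out in full for \Cref{prop:conv} and then records in Section \ref{sec:DBMlimit} that the only change for the DBM is in the walk weight \eqref{eq:dbmww}, where the factors $(\sqrt{NN_\ell})^{k_\ell}$ and $N_\ell^{(\sH(Q_{\ell-1})-\sH(Q_\ell))/2}$ are replaced by $(\beta\tau_\ell N/2)^{k_\ell/2}$ and $(\beta\tau_\ell/2)^{(\sH(Q_\ell)-\sH(Q_{\ell-1}))/2}$; your closing remark that the two propositions differ only in this bookkeeping is consistent with that.

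There is, however, a genuine gap at the step you yourself single out as the hard part. You attribute the cancellation that tames the non-absolutely-convergent sum ``jointly to the alternating divided-difference signs and to the $k$-vs-$(k+1)$ averaging,'' with the averaging making ``long empty gap'' contributions ``cancel in pairs.'' In the paper the combination $\tfrac12\bigl(\hP_{k_\ell}/(\cdot)^{k_\ell}+\hP_{k_\ell+1}/(\cdot)^{k_\ell+1}\bigr)$ plays no role in any cancellation: it is purely a parity device, needed because a $\pm1$ walk matching a given block configuration exists for only one parity of $k_\ell$, so both parities must be summed to cover all limiting blocks (see the opening of Section \ref{ssec:sob}). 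The actual cancellation is of a different nature: a blow-up walk in which the main variable $x_{i_\ell}$ enters its window carrying positive degree (type I in the paper's classification) is paired with the $\approx N$ walks obtained by routing that degree through an auxiliary variable (type II); the divided-difference sign makes the two families opposite in sign, and it is the near-cancellation $(N-2)w(\vec r)+w(\vec r^*)$, quantified in \Cref{lem:eq:caneq} and illustrated in Examples \ref{ex:blowup}--\ref{ex:blowup2}, that removes the spurious $N^{1/3}$ growth. A second, separate cancellation (a M\"obius inversion over operators, Section \ref{ssec:pair}) eliminates the type III/type B pairs. Without identifying these specific pairings your plan cannot establish the conditional convergence uniformly in $N$, nor the well-posedness of the principal value in \Cref{defn:core}. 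A minor further point: the non-integrability of $\bI_{0,0}$ and $\bI_0(\cdot;h)$ that forces the principal value occurs at small argument ($\bF_{0,0}(x)\sim x^{-3/2}$ as $x\to0+$), not at large argument as you state.
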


\subsection{Strategy of the proof}

Sections \ref{ssec:setdisexp}--\ref{ssec:sob} are devoted to proving \Cref{prop:conv}, and our approach is to expand the product of the operators in terms of random walks and then study diffusive scaling limit for them, in which walks turn into Brownian motions. Section \ref{sec:DBMlimit} explains how the same arguments can be adapted to the Dyson Brownian Motion to get \Cref{prop:DBMconv}.

\smallskip

As a starting point, we write the product of operators in Proposition \ref{prop:conv} as a linear combination of  $(\hD_{i_m}^{N_m})^{k_m} \cdots (\hD_{i_1}^{N_1})^{k_1}$. In Section \ref{ssec:setdisexp} we look at how each $\hD_{i_\ell}^{N_\ell}$ acts on monomials, and convert this information into a combinatorial expansion for a general operator  $(\hD_{i_m}^{N_m})^{k_m} \cdots (\hD_{i_1}^{N_1})^{k_1}$. The expansion is written in terms of the objects that we call ``walks'', as defined in Section \ref{ssec:dwrep}. Each walk is a finite collection of discrete paths and we refer to Figures \ref{fig:nrw} -- \ref{fig:blp} for graphical illustrations. Generically, these paths have either $\pm 1$ steps or horizontal plateaus where they stay constant. However, at special points there are much larger jumps.

Our task is to send $N\to\infty$ in the sum over this collection of paths. We will eventually see that, generically, each path becomes a Brownian bridge (or maybe a Brownian excursion, if there are some positivity constraints), however, jumps play the central role in the analysis. The main idea is that all terms can be treated as finite perturbations of walks without jumps. In other words, the terms in the sum can be grouped by the number of jumps and the subsum of the terms with more than $\delta$ jumps becomes negligible as $\delta\to\infty$, uniformly in $N$. Therefore (in the $N\to\infty$ limit) it suffices to consider the paths with finitely many jumps. Careful encoding of the information contained in these jumps eventually gives rise to the combinatorial structure of ``blocks'' which we have already introduced in Section \ref{sec:forjm}.

\medskip

There is an important obstacle in directly developing such arguments. A naive sum over all walks fails to be \emph{absolutely} convergent in the $N\to\infty$ limit. In more detail, each walk $\vec{r}$ comes with a signed weight $w$, and while we are eventually interested in $\sum_{\vec{r}} w(\vec{r})$, the sum of absolute values $\sum_{\vec r} |w(\vec r)|$ grows faster as $N\to\infty$. We call this a blow-up issue and illustrate it in the examples in Section \ref{Section_examples_blow_ups}. Because of this feature, we need to keep track of certain cancelations as $N\to\infty$; this is also the reason why the limiting expression $\bL_\beta(\vec\bk,\vec\ttt)$ is represented in Definition \ref{defn:core} as a conditionally convergent improper integral.

\medskip

The first step of the asymptotic analysis of $\sum_{\vec{r}} w(\vec{r})$ is to better understand the combinatorics of walks and how it leads to different asymptotic behaviors of $w(\vec{r})$ and subsums corresponding to walks of different types. A combinatorial classification of walks is developed in Section \ref{ssec:classi}, and afterwards in Section \ref{ssec:generabd} we present a general upper-bound for a contribution of walks of a given type to sum $\sum_{\vec{r}} w(\vec{r})$. This bound is repeatedly used in subsequent subsections to produce various tail bounds and to understand which walks survive in $N\to\infty$ limit and whose contributions are asymptotically negligible. The proof of the upper-bound of Section \ref{ssec:generabd} relies on (somewhat standard, but quite tedious) counts for the random walks with various restrictions, which are presented in Section  \ref{ssec:expvar} of the appendix.

\medskip

In Sections \ref{ssec:buc} and \ref{ssec:pair} we explore various cancelations between walks of different combinatorial types in $\sum_{\vec{r}} w(\vec{r})$. The key result is Proposition \ref{prop:truncreg}: it introduces a subset $\sB_\epsilon^*$, $\epsilon>0$, of all walks, which satisfies two key features:
\begin{itemize}
 \item The subsum over  $\sB_\epsilon^*$ is absolutely convergent, i.e.\ the asymptotics scale of $\sum_{\vec{r}\in \sB_\epsilon^*} |w(\vec{r})|$ is the same as for the full signed sum $\sum_{\vec{r}} w(\vec{r})$.
 \item As $\epsilon\to 0+$ the walks outside  $\sB_\epsilon^*$ have negligible total contribution to $\sum_{\vec{r}} w(\vec{r})$.
\end{itemize}
Here $\epsilon$ has a role of a regularization parameter, and it is the same $\epsilon$ as the one in $\sK_\epsilon$ of Definition \ref{defn:core}. The constraint on the walks in $\sB_\epsilon^*$ is that on certain $\epsilon$--dependent intervals some paths are required to stay constant, while others are required to stay above certain lower bounds.

\medskip

In Section \ref{ssec:bdecom} we assign to each walk $\vec{r}$ in $\sB_\epsilon^*$ combinatorial data, which is a discrete version of the blocks structure of Section \ref{sec:forjm}. Proposition \ref{prop:IXHGest} shows that the subsum of $\sum_{\vec{r}\in \sB_\epsilon^*} w(\vec{r})$ corresponding to the walks whose data is close to a particular blocks structure is well-approximated by $\exp\left(\sum_{\ell=1}^{m-1}
(\ttt_\ell-\ttt_{\ell+1}) \bH(\bQ_\ell)/2 \right)\bI_{\beta,\vec\bk}[\vec \bp, \vec\bb,\bH]$, which is precisely the integrand in Definition \ref{defn:core}.
The proof is based on $N\to\infty$ approximation of Bernoulli random walks by Brownian motions; again somewhat standard, but quite technical properties of such approximations are presented in the appendix, see Sections \ref{Appendix_2} and \ref{ssec:couplgwbb}.

\medskip

Section \ref{ssec:sob} is the culmination of the proof of Proposition \ref{prop:conv}: we show that discrete sum $\sum_{\vec{r}\in \sB_\epsilon^*} w(\vec{r})$ approximates as $N\to\infty$ the integral over $\sK_\epsilon$ in Definition \ref{defn:core} and further send $\epsilon\to 0$ to complete the proof. On our way we also establish in Propositions \ref{prop:fixedepsconv} and \ref{prop:epszerocov} that $\bL_\beta(\vec\bk, \vec\ttt)$ from \Cref{defn:core} is well-defined as a conditionally convergent improper integral.

\medskip

For the Dyson Brownian Motion framework, i.e., for Proposition \ref{prop:DBMconv}, the proof follows exactly the same steps, and we only outline in Section \ref{sec:DBMlimit} the necessary modifications.

\bigskip

\noindent{\bf Conventions. }For the convenience of notations, from now on we write $\hD_i^{N_\ell}=\hD_i^{N_\ell,2N/\beta}$ and $\hP_k^{N_\ell}=\hP_k^{N_\ell,2N/\beta}$. Throughout the rest of this section, we fix $m\in \N$, and use $C$ and $c$ to denote large and small constants that may depend on $m$, while the specific values can change from line to line.
We will also use $C_1, C_2, \ldots$ to denote constants that are used locally within each statement (i.e., they are not changing inside a particular proof, but $C_1$ inside one lemma is allowed to be different from $C_1$ inside another lemma).
In particular, $C, c$ may also depend on $C_1, C_2, \ldots$, unless otherwise noted.

\subsection{Setup and discrete expansion}  \label{ssec:setdisexp}

\subsubsection{Terms in expansions}

We start analysis of the expression in Proposition \ref{prop:conv}. The product $\hP_{k_m}^{N_m} \cdots \hP_{k_1}^{N_1}$ can be expanded as the sum of $(\hD_{i_m}^{N_m})^{k_m} \cdots (\hD_{i_1}^{N_1})^{k_1}$,
over all $(i_1, \ldots, i_m) \in \prod_{\ell=1}^m\llbracket 1, N_\ell \rrbracket$. We now expand $(\hD_{i_m}^{N_m})^{k_m} \cdots (\hD_{i_1}^{N_1})^{k_1}$.
For a monomial $x_1^{d_1}\cdots x_N^{d_N}$ (where each $d_1, \dots, d_N\in\Z_{\ge 0}$), and any $j\neq i$, the operator $\frac{1-\sigma_{ij}}{x_i-x_j}$ acting on it leads to
\begin{equation}   \label{eq:explong1}
x_1^{d_1}\cdots x_N^{d_N}\sum_{\gamma=1}^{d_i-d_j} x_i^{-\gamma}x_j^{\gamma-1},
\end{equation}
if $d_i>d_j$; and
\begin{equation}   \label{eq:explong2}
-x_1^{d_1}\cdots x_N^{d_N}\sum_{\gamma=1}^{d_j-d_i} x_i^{\gamma-1}x_j^{-\gamma}
\end{equation}
if $d_i<d_j$; and zero if $d_i=d_j$.

We introduce the following notation: for $i\in\llbracket 1, N\rrbracket$ and monomial $f$ of $x_1,\ldots, x_N$, we use $\deg_i(f)$ to denote the degree of $x_i$ in $f$. We recall $\tau=\frac{2}{\beta}N$, set $Q_\ell=\sum_{\ell'=1}^\ell k_{\ell'}$ for each $\ell\in \llbracket 0, m\rrbracket$,
and expand $(\hD_{i_m}^{N_m})^{k_m} \cdots (\hD_{i_1}^{N_1})^{k_1}$:
\begin{definition}   \label{defn:tie}
A \emph{term in expansion} of $(\hD_{i_m}^{N_m})^{k_m} \cdots (\hD_{i_1}^{N_1})^{k_1}$ is a sequence $\{O(t), P(t)\}_{t=1}^{Q_m}$, where each $O(t)\in\{+,-,\top\}$ denotes the operation at each step, and each $P(t)$ is a monomial\footnote{In this paper, a monomial refers to a product of powers of variables with nonnegative integer exponents, and a non-zero constant. In particular, any non-zero constant is a monomial.} of $x_1,\ldots, x_N$. For each $\ell\in\llbracket 1, m\rrbracket$, and $t\in \llbracket Q_{\ell-1}+1, Q_\ell \rrbracket$, one of the following is required to hold for $P(t)$ (assuming that $P(0)=1$ for the ease of notations):
\begin{enumerate}
    \item[1:] $O(t)=+$, and $P(t)=\frac{2Nx_{i_\ell}P(t-1)}{\beta}$;
    \item[2:] $O(t)=-$, and $P(t)=\big(\deg_{i_\ell}(P(t-1))+\tfrac{\beta}{2} R(t-1)\big)\frac{P(t-1)}{x_{i_\ell}}$, where $R(t-1)$ is the number of $j\in\llbracket 1, N_\ell\rrbracket$ with $\deg_{i_\ell}(P(t-1))>\deg_j(P(t-1))$;\\
    (We note that in this case, necessarily $\deg_{i_\ell}(P(t-1))>0$, since otherwise $R(t-1)$ would also be zero, and $P(t)$ equals zero, which is not a monomial.)
    \item[3(a):] $O(t)=\top$, and $P(t)=x_{i_\ell}^{-\gamma} x_j^{\gamma-1}\frac{\beta P(t-1)}{2}$ for some $j\in\llbracket 1, N_\ell\rrbracket$ with $\deg_{i_\ell}(P(t-1))\ge \deg_j(P(t-1))+2$, and some $\gamma \in \llbracket 2, \deg_{i_\ell}(P(t-1))- \deg_j(P(t-1))\rrbracket$;
    \item[3(b):] $O(t)=\top$, and $P(t)=-x_{i_\ell}^{\gamma-1} x_j^{-\gamma}\frac{\beta P(t-1)}{2}$ for some $j\in\llbracket 1, N_\ell\rrbracket$ with $\deg_{i_\ell}(P(t-1))\le\deg_j(P(t-1))-1$, and some $\gamma \in \llbracket 1, \deg_j(P(t-1))-\deg_{i_\ell}(P(t-1))\rrbracket$.
\end{enumerate}
In addition, we require that $\deg_i(P(Q_m))=0$ for each $i\in\llbracket 1, N\rrbracket$, i.e., $P(Q_m)$ is a constant.
\end{definition}
We note that the four cases in the above definition corresponds to different parts of the operator $\hD_{i_\ell}^{N_\ell}=\hD_{i_\ell}^{N_\ell,2N/\beta}$:
\begin{enumerate}
    \item[1:] $\tau x_{i_\ell}=\frac{2}{\beta}Nx_{i_\ell}$;
    \item[2:] $\frac{\partial}{\partial x_{i_\ell}}$, as well as the $\frac{\beta}{2} x_{i_\ell}^{-1}$ (i.e., $\gamma=1$) terms in the expansion of $\frac{\beta}{2}\cdot\frac{1-\sigma_{i_\ell j}}{x_{i_\ell}-x_j}$ in \eqref{eq:explong1}, for each $j\in\llbracket 1, N_\ell\rrbracket$ with $\deg_{i_\ell}(P(t-1))>\deg_j(P(t-1))$;
    \item[3(a):] the $\frac{\beta}{2} x_{i_\ell}^{-\gamma}x_j^{\gamma-1}$ with $\gamma\ge 2$ terms in the expansion of $\frac{\beta}{2}\cdot\frac{1-\sigma_{i_\ell j}}{x_{i_\ell}-x_j}$ in \eqref{eq:explong1}, for each $j\in\llbracket 1, N_\ell\rrbracket$ with $\deg_{i_\ell}(P(t-1))\ge \deg_j(P(t-1))+2$;
    \item[3(b):] the $\frac{\beta}{2} x_{i_\ell}^{\gamma-1}x_j^{-\gamma}$ terms in the expansion of $\frac{\beta}{2}\cdot\frac{1-\sigma_{i_\ell j}}{x_{i_\ell}-x_j}$ in \eqref{eq:explong2}, for each $j\in\llbracket 1, N_\ell\rrbracket$ with $\deg_{i_\ell}(P(t-1))\le \deg_j(P(t-1))-1$.
\end{enumerate}
\begin{example}
The operator $\hD_2^5$ applied to $x_1^5x_2^2x_4$ leads to the sum of the following monomials:
\begin{enumerate}
    \item[1:] $\frac{2Nx_1^5x_2^3x_4}{\beta}$,
    \item[2:] $(2+3\beta/2)x_1^5x_2x_4$,
    \item[3(a):] $x_2^{-2}x_3\cdot\frac{\beta}{2} x_1^5x_2^2x_4$ and $x_2^{-2}x_5\cdot\frac{\beta}{2} x_1^5x_2^2x_4$,
    \item[3(b):] $-x_1^{-1}\cdot\frac{\beta}{2} x_1^5x_2^2x_4$, $-x_1^{-2}x_2\cdot\frac{\beta}{2} x_1^5x_2^2x_4$, and $-x_1^{-3}x_2^2\cdot\frac{\beta}{2} x_1^5x_2^2x_4$.
\end{enumerate}
\end{example}

It is straightforward to see that $(\hD_{i_m}^{N_m})^{k_m} \cdots (\hD_{i_1}^{N_1})^{k_1}$ applied to the function $1$ is precisely the sum of $P(Q_m)$ over all the terms in expansion.
Therefore the sum of $P(Q_m)$ over all the terms in expansion would give the zero degree term of $(\hD_{i_m}^{N_m})^{k_m} \cdots (\hD_{i_1}^{N_1})^{k_1}$ applied to $1$.

\subsubsection{Discrete walk representation}  \label{ssec:dwrep}

We next observe that  a term $\{O(t), P(t)\}_{t=1}^{Q_m}$ in expansion  of $(\hD_{i_m}^{N_m})^{k_m} \cdots (\hD_{i_1}^{N_1})^{k_1}$ can be encoded by the degrees of the monomials $P(t)$.
More precisely, we denote $r_i(t)=\deg_i(P(t))$, for each $t\in \llbracket 0, Q_m\rrbracket$ and $i\in \llbracket 1, N\rrbracket$.
We call such $\vec r=\{r_i\}_{i=1}^N$ a \emph{walk} of  $(\hD_{i_m}^{N_m})^{k_m} \cdots (\hD_{i_1}^{N_1})^{k_1}$.

\begin{definition}  \label{defn:wallblock}
A \emph{walk} of $(\hD_{i_m}^{N_m})^{k_m} \cdots (\hD_{i_1}^{N_1})^{k_1}$ consists of $\vec r=\{r_j\}_{j=1}^N$, with each $r_j:\llbracket 0, Q_m\rrbracket\to \Z_{\ge 0}$, satisfying the following conditions.
Denote $\sH(t)=\sum_{j=1}^N r_j(t)$.
\begin{enumerate}
    \item $|\sH(t)-\sH(t-1)|=1$ for each $t\in \llbracket 1, Q_m\rrbracket$, $\sH(t)\ge 0$, and  $\sH(0)=\sH(Q_m)=0$.
        \item For each $\ell\in \llbracket 1, m\rrbracket$ and $t\in \llbracket Q_{\ell-1}+1, Q_\ell\rrbracket$, we must have one of the two cases:
        \begin{itemize}
        \item Either, $r_{i_\ell}(t)=r_{i_\ell}(t-1)\pm 1$ and $r_j(t)=r_j(t-1)$ for all $j\ne i_\ell$;
        \item Or there exists one and only one index\footnote{This corresponds to the third option, i.e., $O(t)=\top$, in \Cref{defn:tie}.} $j\ne i_\ell$, such that $r_j(t)\neq r_j(t-1)$ .
    Besides, in this case, necessarily $\sH(t)=\sH(t-1)-1$, and $j\le N_\ell$, and the differences $r_j(t)-r_j(t-1)$ and $r_j(t)-r_{i_\ell}(t-1)+1/2$ are of opposite signs.

        \end{itemize}
\end{enumerate}
\end{definition}
 In other words, the last condition says that when $r_j(t)>r_j(t-1)$, i.e., $r_j$ increases, we have $r_j(t)<r_{i_\ell}(t-1)$, corresponding to 3(a) in \Cref{defn:tie}; and when $r_j(t)<r_j(t-1)$, i.e., $r_j$ decreases, we have $r_j(t)\ge r_{i_\ell}(t-1)$, corresponding to 3(b) in \Cref{defn:tie}. In particular, if $r_j(t)=0$, then we must have $r_{i_\ell}(t-1)=0$.

We use $\sB=\sB[i_1,\ldots,i_m]$ to denote the set of all walks of  $(\hD_{i_m}^{N_m})^{k_m} \cdots (\hD_{i_1}^{N_1})^{k_1}$.

We remark that in the continuous limit $\sH(t)$ turns into the trajectories of (positive) Brownian bridges, as in the bottom panel of Figure \ref{fig:block}, while the remaining data (block and virtual block processes) encodes how $\sH(t)$ is split into degrees of various variables.

There is a bijection between  terms in expansion and walks. In addition, each walk uniquely determines the value of $P(Q_m)$ --- the latter is a zero degree monomial and, hence, a constant.
\begin{definition} \label{defn:wr}
For $\vec r\in\sB$, we define its weight $w(\vec r)$ to be the constant $P(Q_m)$ in the corresponding term in expansion.
Equivalently, multiplying the factors from cases 1, 2, 3(a), 3(b) in Definition \ref{defn:tie}:
\begin{align}
\label{eq_x3} w(\vec r) = \prod_{\ell=1}^m \Biggl[ & (-1)^{\bigl|\{t\in\llbracket Q_{\ell-1}+1, Q_\ell \rrbracket:\, \sH(t)=\sH(t-1)-1,\, r_{i_\ell}(t)\ge r_{i_\ell}(t-1)\}\bigr|} \\ &\times(\sqrt{NN_\ell})^{k_\ell}\, N_\ell^{(\sH(Q_{\ell-1})-\sH(Q_\ell))/2-\bigl|\{t\in\llbracket Q_{\ell-1}+1, Q_\ell \rrbracket:\, \sH(t)=\sH(t-1)-1,\, r_{i_\ell}(t)\neq r_{i_\ell}(t-1)-1\}\bigr|}\notag \\
&\times \prod_{\substack{t\in \llbracket Q_{\ell-1}+1, Q_\ell \rrbracket: \\ \sH(t)=\sH(t-1)-1, \\r_{i_\ell}(t)= r_{i_\ell}(t-1)-1}}  \left(1+\frac{2r_{i_\ell}(t-1)}{\beta N_\ell}-\frac{|\{j\in \llbracket 1,N\rrbracket: r_j(t-1)\ge r_{i_\ell}(t-1)\}|}{N_\ell}\right)\Biggr].\notag
\end{align}
\end{definition}
Comparing with Definition  \ref{defn:tie}: the first case there contributes to the power of $N$ in the second line of  \eqref{eq_x3}; the second case contributes to the product in the last line of \eqref{eq_x3}; the case 3(b) contributes to the power of $-1$ in the first line of \eqref{eq_x3}. In addition the factors $\frac{\beta}{2}$ and $\frac{2}{\beta}$ from all four cases cancel out, because the walk starts and ends at $0$. Furthermore, the factors of $N_\ell$ in the second line of \eqref{eq_x3} precisely cancel with similar factors in the denominators in the third line. The exponent $N_\ell^{(\sH(Q_{\ell-1})-\sH(Q_\ell))/2}$ is responsible for the appearance in the $N\to\infty$ limit of $\exp\left(\sum_{\ell=1}^{m-1}
(\ttt_\ell-\ttt_{\ell+1}) \bH(\bQ_\ell)/2 \right)$ in Definition \ref{defn:core}.

We can now summarize the expansion of $\hP_{k_m}^{N_m} \cdots \hP_{k_1}^{N_1}$:
\begin{lemma}  \label{lem:exp}
We have
\begin{equation}  \label{eq:expsum}
\hP_{k_m}^{N_m} \cdots \hP_{k_1}^{N_1} [1]_{x_1=\dots=x_N=0}= \sum_{(i_1, \ldots, i_m) \in \prod_{\ell=1}^m\llbracket 1, N_\ell \rrbracket} \, \sum_{\vec r\in\sB[i_1,\ldots,i_m]} w(\vec r).
\end{equation}
\end{lemma}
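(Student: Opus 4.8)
The plan is to prove the identity by unwinding the expansions constructed in this subsection; the argument is pure bookkeeping. First, since $\hP_{k_\ell}^{N_\ell}=\sum_{i_\ell=1}^{N_\ell}(\hD_{i_\ell}^{N_\ell})^{k_\ell}$ by definition, multilinearity of composition gives
\[
\hP_{k_m}^{N_m}\cdots\hP_{k_1}^{N_1}[1]=\sum_{(i_1,\ldots,i_m)\in\prod_{\ell=1}^m\llbracket 1,N_\ell\rrbracket}(\hD_{i_m}^{N_m})^{k_m}\cdots(\hD_{i_1}^{N_1})^{k_1}[1],
\]
and evaluation at $x_1=\dots=x_N=0$ commutes with this finite sum, so it suffices to fix $(i_1,\ldots,i_m)$ and show that $(\hD_{i_m}^{N_m})^{k_m}\cdots(\hD_{i_1}^{N_1})^{k_1}[1]_{x_1=\dots=x_N=0}=\sum_{\vec r\in\sB[i_1,\ldots,i_m]}w(\vec r)$.

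Next I would apply the $Q_m$ operator factors to the constant $1$ one at a time, keeping the intermediate result fully expanded into monomials. Each $\hD_{i_\ell}^{N_\ell}$ sends a monomial to a linear combination of monomials in three ways: the part $\tau x_{i_\ell}$ multiplies by $\tfrac{2N}{\beta}x_{i_\ell}$; the derivative $\partial/\partial x_{i_\ell}$ together with the $\gamma=1$ summands of $\tfrac\beta2\sum_{j\ne i_\ell}\tfrac{1-\sigma_{i_\ell j}}{x_{i_\ell}-x_j}$ (read off from \eqref{eq:explong1}) multiplies by $\bigl(\deg_{i_\ell}+\tfrac\beta2R\bigr)x_{i_\ell}^{-1}$, with $R$ the number of $j\in\llbracket 1,N_\ell\rrbracket$ having $\deg_{i_\ell}>\deg_j$; and the $\gamma\ge2$ summands of \eqref{eq:explong1} together with all summands of \eqref{eq:explong2} produce exactly the monomials of cases 3(a) and 3(b). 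Since $\tfrac{1-\sigma_{i_\ell j}}{x_{i_\ell}-x_j}$ maps polynomials to polynomials, and \eqref{eq:explong1}, \eqref{eq:explong2} exhibit every monomial produced as having nonnegative exponents, distributing over all these choices at each of the $Q_m$ steps expresses $(\hD_{i_m}^{N_m})^{k_m}\cdots(\hD_{i_1}^{N_1})^{k_1}[1]$ as a sum of $P(Q_m)$ over all operation/monomial sequences $\{O(t),P(t)\}_{t=1}^{Q_m}$ built this way. Evaluating at $x_1=\dots=x_N=0$ kills every summand whose $P(Q_m)$ has a positive exponent and leaves the constant ones unchanged; the surviving sequences are precisely the \emph{terms in expansion} of Definition \ref{defn:tie}, which impose exactly $\deg_i(P(Q_m))=0$ for all $i$. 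Hence the left-hand side equals the sum of the constants $P(Q_m)$ over all terms in expansion of $(\hD_{i_m}^{N_m})^{k_m}\cdots(\hD_{i_1}^{N_1})^{k_1}$.

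It remains to identify this with $\sum_{\vec r\in\sB}w(\vec r)$, for which I would spell out the bijection between terms in expansion and walks. To a term associate $r_i(t)=\deg_i(P(t))$: the conditions $\sH(0)=\sH(Q_m)=0$ come from $P(0)=1$ and the final degree-zero constraint, $\sH(t)\ge0$ is automatic, $|\sH(t)-\sH(t-1)|=1$ with $+1$ only when $O(t)=+$ is a one-line check of the four cases, and the remaining clauses of Definition \ref{defn:wallblock} (one coordinate moving by $\pm1$ versus a unique auxiliary coordinate $j\le N_\ell$ changing, with the stated opposite-sign condition) transcribe cases 1/2 versus 3(a)/3(b). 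Conversely a walk reconstructs the term: $O(t)$ is read off from which coordinates move and how, the auxiliary index $j$ and the parameter $\gamma$ from the unique changed coordinate $j\ne i_\ell$ and the magnitude of its jump, and then the $P(t)$ — including their scalar prefactors — are determined, so the map is a bijection onto $\sB[i_1,\ldots,i_m]$. Collecting, step by step, the scalar prefactor contributed by the applicable case of Definition \ref{defn:tie} and simplifying — the $\tfrac\beta2$ and $\tfrac2\beta$ factors cancel because $\sH$ begins and ends at $0$, and the powers of $N_\ell$ recombine with the denominators — yields $P(Q_m)=w(\vec r)$, i.e.\ formula \eqref{eq_x3}. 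Substituting back over all $(i_1,\ldots,i_m)$ proves the lemma. The only place where care is genuinely needed — and the step I would treat as the main obstacle — is this last scalar computation: correctly tracking the signs (the powers of $-1$ come solely from case 3(b)), the powers of $N_\ell$ and $\beta$, and the exact index set in the degree comparison, so that the collected prefactor is verbatim \eqref{eq_x3}; everything else matches definitions clause by clause.
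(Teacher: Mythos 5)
Your proposal is correct and follows essentially the same route as the paper, which states \Cref{lem:exp} without a separate proof because it is built directly into the preceding construction: the expansion of $(\hD_{i_m}^{N_m})^{k_m}\cdots(\hD_{i_1}^{N_1})^{k_1}[1]$ into terms via \eqref{eq:explong1}--\eqref{eq:explong2}, the bijection between terms in expansion and walks, and the definition of $w(\vec r)$ as the constant $P(Q_m)$. Your closing remark is well placed — since $w(\vec r)$ is \emph{defined} as $P(Q_m)$, the lemma itself needs no scalar computation, and verifying the explicit formula \eqref{eq_x3} (signs from case 3(b) only, the $\beta/2$ versus $2/\beta$ cancellation, the index set in the degree comparison) is exactly the residual bookkeeping the paper also singles out after \Cref{defn:wr}.
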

We note that this sum can be viewed as a discrete analogue of the integral in \Cref{defn:core}, and the rest of the proof is to show that the discrete sum  converges to the principal value integral as $N\to\infty$.

\subsubsection{Scaling analysis and blow-up terms} \label{Section_examples_blow_ups}
For the sum \eqref{eq:expsum},
we expect it to be of order $\prod_{\ell=1}^m (2\sqrt{N_\ell N})^{k_\ell}$, since under our setup the extreme particles of the corners process at row $N_\ell$ ($y_i^{N_\ell}$ in Theorem \ref{thm:multil}) are approximately equal to $2\sqrt{N_\ell N}$.
We next heuristically explain the scale of the sum, through examples of certain types of walks. In doing so we illustrate that it would be crucial to exploit cancellation between the weights, which can be either positive or negative.

\bigskip
\noindent\textit{Walks without jumps.}
Let us consider all walks $\vec r$ for which $O(t)\in \{+, -\}$ for all $t$. Equivalently, for each $\ell \in \llbracket 1, m\rrbracket$ and  $j\neq i_\ell$, $r_j$ is constant on $\llbracket Q_{\ell-1}+1, Q_\ell \rrbracket$.
Imagine that we apply $\hD_{i_\ell}^{N_\ell}$ using \eqref{eq:defhD}. Then the walks without jumps correspond to taking $\frac{\partial}{\partial x_{i_\ell}}+\frac{2}{\beta}Nx_{i_\ell}+S \frac{\beta}{2} x_{i_\ell}^{-1}$, with the last term coming from $\frac{\beta}{2}\cdot\frac{1-\sigma_{i_\ell j}}{x_{i_\ell}-x_j}$ for each $j\in\llbracket 1, N_\ell\rrbracket$
where the degree of $x_j$ is smaller than the degree of $x_{i_\ell}$, and $S$ counts the number of such cases ($S$ changes as a function of $t$ and satisfies $N_\ell-m\le S\le N_\ell-1$). See \Cref{fig:nrw} for some illustrations of such walks.

\begin{figure}[t]
    \centering
\begin{subfigure}[b]{0.49\textwidth}
    \resizebox{0.95\textwidth}{!}{
    \begin{tikzpicture}
        \draw[line width=3pt] (0,0)--(1,1)--(1.5,0.5)--(3.5,2.5)--(4,2)--(5,3)--(5.5,2.5)--(7,4)--(9.5,1.5)--(10,2)--(11,1)--(11.5,1.5)--(12,2)--(14,0);

        \draw[thin] [dotted] [step=0.5] (0,0) grid (14,5);

        \draw[green] [ultra thick] (0,0)--(1,1)--(1.5,0.5)--(3.5,2.5)--(4,2)--(5,3)--(5.5,2.5)--(7,4)--(9.5,1.5)--(10,2)--(11,1)--(11.5,1.5)--(12,2)--(14,0);

        \draw (0,0) node[anchor=north]{{\LARGE $0$}};
        \draw (14,0) node[anchor=north]{{\LARGE $Q_1$}};
    \end{tikzpicture}}
\end{subfigure}
\par\medskip
\begin{subfigure}[b]{0.98\textwidth}
    \resizebox{0.95\textwidth}{!}{
    \begin{tikzpicture}
        \draw[line width=3pt] (0,0)--(2,2)--(2.5,1.5)--(5.5,4.5)--(6.5,3.5)--(7,4)--(8.5,2.5)--(9,3)--(9.5,2.5)--(10,3)--(11,2)--(11.5,2.5)--(12,2)--(14.5,4.5)--(15,4)--(15.5,4.5)--(17,3)--(18,4)--(20,2)--(21,3)--(22,2)--(22.5,2.5)--(23.5,1.5)--(24,2)--(25.5,0.5)--(26.5,1.5)--(28,0);

        \draw[thin] [dotted] [step=0.5] (0,0) grid (28,5);
        \draw[thick] [dashed] (11,0)--(11,5);
        \draw[thick] [dashed] (20,0)--(20,5);

        \draw[green] [ultra thick] (0,0)--(2,2)--(2.5,1.5)--(5.5,4.5)--(6.5,3.5)--(7,4)--(8.5,2.5)--(9,3)--(9.5,2.5)--(10,3)--(11,2)--(20,2)--(21,3)--(22,2)--(22.5,2.5)--(23.5,1.5)--(24,2)--(25.5,0.5)--(26.5,1.5)--(28,0);

        \draw[red] [ultra thick] (0,0)--(11,0)--(11.5,0.5)--(12,0)--(14.5,2.5)--(15,2)--(15.5,2.5)--(17,1)--(18,2)--(20,0)--(22,0)--(28,0);

        \draw (0,0) node[anchor=north]{{\LARGE $0$}};
        \draw (11,0) node[anchor=north]{{\LARGE $Q_1$}};
        \draw (20,0) node[anchor=north]{{\LARGE $Q_2$}};
        \draw (28,0) node[anchor=north]{{\LARGE $Q_3$}};
    \end{tikzpicture}}
\end{subfigure}
    \caption{
    Illustrations of walks without jumps (i.e., all $O(t)$ being $\pm$).\\
    Top panel: a walk of $(\hD_3^N)^{k_1}$ without jumps: the curve represents $\sH =r_3$, and all its steps have slopes $\pm 1$.
    (This walk corresponds to a term in \Cref{defn:core} with $m=1$, and all $\bp_j=0$.)
    \\
    Bottom panel: a walk $\vec r$ of $(\hD_5^N)^{k_3}(\hD_2^N)^{k_2}(\hD_5^N)^{k_1}$ without jumps: the green and red curves are $r_5$ and $r_2$ respectively, and the black curve is $\sH =r_2+r_5$.
    All the steps of the green curve in $\llbracket 0, Q_1\rrbracket \cup \llbracket Q_2, Q_3\rrbracket$ have slopes $\pm 1$, and all the steps of the red curve in $\llbracket Q_1, Q_2\rrbracket$ have slopes $\pm 1$. (The contribution of such walk vanishes as $N\to\infty$ due to cancellations in \Cref{ssec:pair} below.)
    }
    \label{fig:nrw}
\end{figure}

In this case, the non-constant part of each $r_j$ is a path of length $\sum_{\ell\in\llbracket 1,m\rrbracket: i_\ell = j}k_\ell$, starting from and ending at $0$, taking $\pm 1$ at each step, and staying non-negative.
Thus the number of such $\vec r$ is a product of Catalan numbers $\frac{1}{n+1}{{2n}\choose n}$. Plugging the asymptotic expansion of the binomial coefficients, the number of $\vec r$ is of order $2^{Q_m}N^{-\eta}$, because each $k_\ell$ is of order $N^{2/3}$, and where $\eta$ is the number of different numbers appearing in $(i_1,\ldots, i_m)$.

For a walk with each $O(t)\in \{+, -\}$, its weight roughly\footnote{Comparing with \Cref{defn:wr}, we omit $\frac{|\{j\in \llbracket 1,N_\ell\rrbracket: r_j(t-1)\ge r_{i_\ell}(t-1)\}|}{N_\ell}$, which is of order $N^{-1}$ and not necessarily zero, but does not change our rough bounds.} equals
\[
\prod_{\ell=1}^m (\sqrt{NN_\ell})^{k_\ell} \prod_{t\in \llbracket Q_{\ell-1}+1, Q_\ell\rrbracket: \sH(t)=\sH(t-1)-1}  \left(1+\frac{2r_{i_\ell}(t-1)}{\beta N_\ell}\right),
\]
which is of order $\prod_{\ell=1}^m (\sqrt{NN_\ell})^{k_\ell}$.
Finally, by multiplying this with $2^{Q_m}N^{-\eta}$, and summing over $\prod_{\ell=1}^m N_\ell$ choices for $(i_1,\ldots, i_m)$, we get the desired order $\prod_{\ell=1}^m (2\sqrt{N_\ell N})^{k_\ell}$.

We remark that if we take the $N\to\infty$ limit, but fix $\vec k$, the sum of these terms would give (after rescaling) the products of the moments of the semi-circle law, which are Catalan numbers.
On the other hand, if $\vec k$ also $\to\infty$ in the scaling limit, i.e., under the setting of \Cref{prop:conv}, these terms would give the integral over all $(\vec\bp,
\vec\bb,\bH)\in \sK[\vec\bk]$ with $\bDel=\emptyset$, in defining $\bL_\beta(\vec\bk, \vec\ttt)$ in \Cref{defn:core}. In particular, for $m=1$ this corresponds to the first case in Example \ref{Example_for_measure}.

\bigskip

We proceed to walks (equivalently, terms in expansion) with some $\top$ (i.e., jumps) in $\{O(t)\}_{t=1}^{Q_m}$.
For some of such walks, the sum of the weights, even when taking absolute values, is still of the desired order. For some other walks with $\top$, we may encounter a `blow-up' when summing over the absolute values of the weights; i.e., the sum of absolute values is of a much larger order.
The difference between these two types of walks (with jumps) is from the behavior to the right of each $Q_\ell$: in the `blow-up' case the total degree $\sH(t)$ is $\ge \sH(Q_\ell)$ for $t$ in a  small interval to the right from $Q_\ell$, for some $\ell\in\llbracket 1, m\rrbracket$ (once this interval is large enough, the blow-up disappears).  The precise formulation will be given shortly in \Cref{ssec:classi} below.

The walks with a blow-up correspond to blocks with non-trivial virtual block processes (i.e., $\vec\bb$ with at least one $\bb_\ell\neq\varnothing$), in Definition \ref{Definition_virtual_block};  other walks correspond to blocks with each virtual block coordinate $\bb_\ell=\varnothing$.

We next give some examples of walks with jumps, to illustrate the phenomena, and to explain how we resolve the blow-up issue using cancellations.

\bigskip
\noindent\textit{Walks with jumps, but no blow-up.}
\begin{example}
Consider the expansion of the operator $(\hD_1^N)^k$, and its walks $\vec r$ of the following form illustrated in \Cref{fig:nblpe}:
There exists a $j_*\in \llbracket 2, N\rrbracket$, such that $r_j=0$ for each $j\in \llbracket 2, N\rrbracket\setminus\{j_*\}$.
For $r_1$, it is integer-valued and non-negative on $\llbracket 0, k\rrbracket$. For some $s, s'\in \llbracket 2, k\rrbracket$, $s<s'$, we have
\begin{itemize}
    \item For each $t\in\llbracket 1,k\rrbracket\setminus\{s, s'\}$, we have $|r_1(t)-r_1(t-1)|=1$.
    \item $r_{j_*}=0$ on $\llbracket 0,s-1\rrbracket\cup\llbracket s', k\rrbracket$, and $r_{j_*}$ is constant and positive on $\llbracket s, s'-1\rrbracket$.
    Moreover, $r_1(s)+r_{j_*}(s)=r_1(s-1)+r_{j_*}(s-1)-1$ and $r_1(s')+r_{j_*}(s')=r_1(s'-1)+r_{j_*}(s'-1)-1$.
\end{itemize}
Any such $\vec r$ is uniquely determined by the following information: the sum $\sH =r_1+r_{j_*}$ (which is a path on $\llbracket 0, k\rrbracket$, starting and ending at $0$ and taking $\pm 1$ at each step, with $\sH(k)>0$), and the values of $s$ and $r_{j_*}(s)$.
The number $s'$ is uniquely determined as the smallest number in $\llbracket s+1, k\rrbracket$ with $\sH(s')<r_{j_*}(s)$.
Therefore, the number of such $\vec r$ is of the order of $2^{k}=2^{k} k^{-3/2}\cdot k\cdot k^{1/2}$, where $2^{k} k^{-3/2}$ is from counting $\sH $ (Catalan numbers), and $k$ and $k^{1/2}$ are from the number of choices of $s$ and $r_{j_*}(s)$, respectively.

On the other hand, for all such $\vec r$, we have that $w(\vec r)$ is negative, with $|w(\vec r)|$ in the order of $N^{k-2}$.
Then by summing over all such $\vec r$ one gets order $(2N)^{k}N^{-2}$. The variables $x_1$ and $x_{j_*}$ could have been any two other variables, and summing over all choices them, we finally arrive at the order of magnitude $(2N)^{k}$, matching what was announced at the start of Section \ref{Section_examples_blow_ups}.
\end{example}

\begin{example}
Consider the expansion of the operator $(\hD_2^N)^k(\hD_1^N)^k$, and its walks $\vec r$ of the following form illustrated in \Cref{fig:nblp}:
For each $j\in \llbracket 3, N\rrbracket$, we have $r_j=0$.
For $r_1$ and $r_2$, they are integer-valued and non-negative on $\llbracket 0, 2k\rrbracket$. For some $s\in \llbracket k+1, 2k\rrbracket$, we have
\begin{itemize}
    \item For each $t\in\llbracket 1,k\rrbracket$, we have $|r_1(t)-r_1(t-1)|=1$,  and $r_1$ is constant and positive on $\llbracket k, s-1\rrbracket$, and $r_1=0$ on $\llbracket s, 2k\rrbracket$.
    \item $r_2=0$ on $\llbracket 0,k\rrbracket$. For each $t\in \llbracket k+1, s-1\rrbracket$, or $t\in \llbracket s+1, 2k\rrbracket$, we have $|r_2(t)-r_2(t-1)|=1$. Also $r_2(s)=r_1(s-1)-1$.
\end{itemize}
Any such $\vec r$ is uniquely determined by the sum $\sH =r_1+r_2$, which is a path on $\llbracket 0, 2k\rrbracket$, starting and ending at $0$ and taking $\pm 1$ at each step, with $\sH(k)>0$.
The number $s$ is uniquely determined as the smallest number in $\llbracket k+1, 2k\rrbracket$ with $\sH(s)<\sH(k)$.
Therefore, the number of such $\vec r$ is of the order of $2^{2k} k^{-3/2}$, or
(equivalently) of the order of $2^{2k}N^{-1}$.

On the other hand, for all such $\vec r$, we have that $w(\vec r)$ is negative, with $|w(\vec r)|$ in the order of $N^{2k-1}$.
Then by summing over all such $\vec r$ one gets order $(2N)^{2k}N^{-2}$. The variables $x_1$ and $x_2$ could have been any two other variables, and summing over all choices them, we finally arrive at the order of magnitude $(2N)^{2k}$, which matches $\prod_{\ell=1}^2(2\sqrt{N_\ell N})^{k_\ell}$ announced at the start of Section \ref{Section_examples_blow_ups}.
\end{example}

\begin{figure}[t]
    \centering
    \resizebox{0.45\textwidth}{!}{
    \begin{tikzpicture}
        \draw[ultra thin] [dashed] (0,0) grid (14,6);
        \draw[line width=3pt] (0,0)--(4,4)--(5,3)--(7,5)--(11,1)--(12,2)--(14,0);
        \draw[ultra thick] [green] (0,0)--(4,4)--(5,1)--(7,3)--(10,0)--(11,1)--(12,2)--(14,0);
        \draw[ultra thick] [blue] (0,0)--(4,0)--(5,2)--(10,2)--(11,0)--(14,0);
        \draw (0,0) node[anchor=north]{{\LARGE $0$}};
        \draw (5,0) node[anchor=north]{{\LARGE $s$}};
        \draw (11,0) node[anchor=north]{{\LARGE $s'$}};
        \draw (14,0) node[anchor=north]{{\LARGE $k$}};
    \end{tikzpicture}
    }
    \caption{An illustration of a walk of $(\hD_1^N)^k$, with jumps but no blow-up: the green and blue curves are $r_1$ and $r_{j_*}$ respectively, for some $j_*\in\llbracket 2,N\rrbracket$; and the black curve is $\sH =r_1+r_{j_*}$.
    (This walk corresponds to a term in \Cref{defn:core} with $m=1$, $\bp_2$ having two discontinuous points ($\bdel_{2,1}=2$), and all other $\bp_j=0$.)}
    \label{fig:nblpe}
\end{figure}
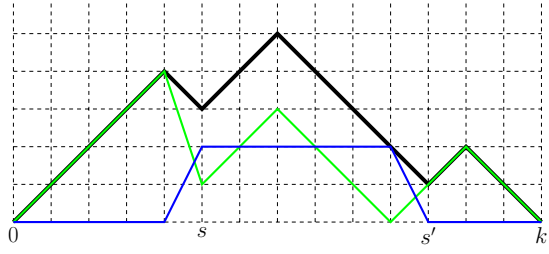

\begin{figure}[t]
    \centering
    \resizebox{0.9\textwidth}{!}{
    \begin{tikzpicture}
        \draw[ultra thin] [dashed] (0,0) grid (28,8);
        \draw[line width=3pt] (0,0)--(1,1)--(2,0)--(6,4)--(7,3)--(9,5)--(10,6)--(11,5)--(12,6)--(14,4)--(15,5)--(16,4)--(17,5)--(20,2)--(22,4)--(24,2)--(25,3)--(28,0);
        \draw[ultra thick] [dashed] (14,0)--(14,8);
        \draw[ultra thick] [green] (0,0)--(1,1)--(2,0)--(6,4)--(7,3)--(9,5)--(10,6)--(11,5)--(12,6)--(14,4)--(18,4)--(19,0)--(28,0);
        \draw[ultra thick] [red] (0,0)--(14,0)--(15,1)--(16,0)--(17,1)--(18,0)--(19,3)--(20,2)--(22,4)--(24,2)--(25,3)--(28,0);
        \draw (0,0) node[anchor=north]{{\LARGE $0$}};
        \draw (19,0) node[anchor=north]{{\LARGE $s$}};
        \draw (14,0) node[anchor=north]{{\LARGE $k$}};
        \draw (28,0) node[anchor=north]{{\LARGE $2k$}};
    \end{tikzpicture}
    }
    \caption{An illustration of a walk of $(\hD_2^N)^k(\hD_1^N)^k$, with jumps but no blow-up: the green and red curves are $r_1$ and $r_2$ respectively, and the black curve is $\sH =r_1+r_2$. (Note that this walk does not exist in the limit, due to cancellations in \Cref{ssec:pair} below.)}
    \label{fig:nblp}
\end{figure}
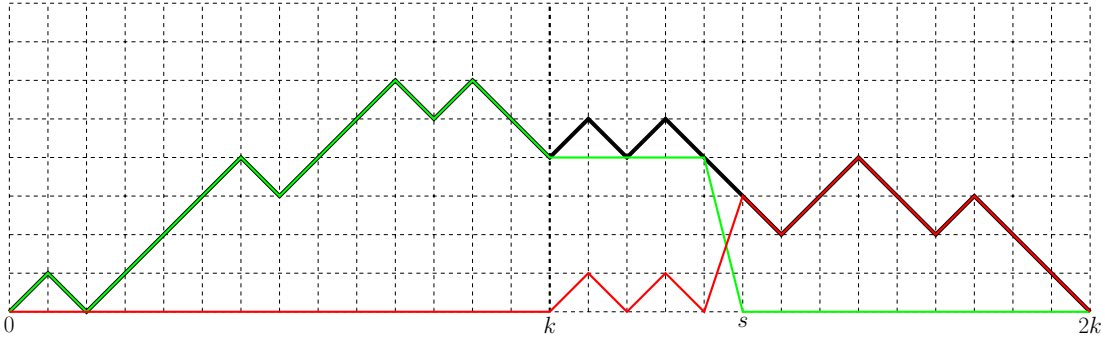

\bigskip
\noindent\textit{Blow up with jumps.}
We now turn to the `blow-up' issue, which can happen already when there is just one $\top$.
\begin{example}  \label{ex:blowup}
We again consider of the operator $(\hD_2^N)^k(\hD_1^N)^k$, and its walks $\vec r$ of the following form illustrated in the top panel of \Cref{fig:blp}.
For each $j\in \llbracket 3, N\rrbracket$, we have $r_j=0$.
For $r_1$ and $r_2$, they are integer-valued and non-negative on $\llbracket 0, 2k\rrbracket$.
For some $s\in \llbracket 2, k\rrbracket$ we have
\begin{itemize}
    \item $r_2=0$ on $\llbracket 0, s-1\rrbracket$, and $r_2$ is constant and positive on $\llbracket s, k\rrbracket$. For each $t\in \llbracket k+1, 2k\rrbracket$, we have $|r_2(t)-r_2(t-1)|=1$. Also $r_2(2k)=0$.
    \item For each $t\in\llbracket 1,s-1\rrbracket$ or $t\in \llbracket s+1, k\rrbracket$, we have $|r_1(t)-r_1(t-1)|=1$. We also have $r_1(s)+r_2(s)=r_1(s-1)+r_2(s-1)-1$, and $r_1=0$ on $\llbracket k, 2k\rrbracket$.
\end{itemize}
For any such $\vec r$ with given $s$, it is uniquely determined by the sum $\sH =r_1+r_2$, which is a path on $\llbracket 0, 2k\rrbracket$, starting and ending at $0$ and taking $\pm 1$ at each step, being non-negative, and staying $\ge \sH(k)$ on $\llbracket s, k\rrbracket$.
Therefore, the number of such $\vec r$ (with given $s$) is in the order of $2^{2k} k^{-3/2}(k-s+1)^{-1/2}$. The factor $2^{2k}k^{-3/2}$ comes from Catalan number, counting the number of all possible $\sH$; and $(k-s+1)^{-1/2}$ corresponds to the extra requirement of $\ge \sH(k)$ on $\llbracket s, k\rrbracket$ --- one could think about imposing this requirement on a uniformly random positive random walk bridge and then computing probability using the reflection principle. By summing over all possible choices for $s$, one gets order $2^{2k}k^{-1}$, which can be rewritten as  order $2^{2k}N^{-2/3}$.

Definition \ref{defn:wr} yields that for any such $\vec r$ the weight $w(\vec r)$ is negative and $|w(\vec r)|$ is of the order of $N^{2k-1}$.
Then by summing the weights over all such $\vec r$ one gets $(2N)^{2k}N^{-5/3}$. We also need to multiply by $N^2$, in order to account for all the possible choices of two variables which could replace $x_1$ and $x_2$ in our arguments. Hence, we eventually get a negative contribution of the order $(2N)^{2k}N^{1/3}$, which is much larger than the desired scale $(2N)^{2k}$, which we saw in the previous examples and announced at the start of Section \ref{Section_examples_blow_ups}.
\end{example}
Such blow-up issue in summing over $w(\vec r)$ is resolved by exploiting cancellations.
In particular, $\vec r$ of the above example are paired with the following walks whose weights have opposite signs.

\begin{figure}[t]
\begin{subfigure}[b]{0.98\textwidth}
    \centering
    \resizebox{0.9\textwidth}{!}{
    \begin{tikzpicture}
        \draw[ultra thin] [dashed] (0,0) grid (28,8);
        \draw[line width=3pt] (0,0)--(1,1)--(2,0)--(6,4)--(7,3)--(9,5)--(10,6)--(11,5)--(12,6)--(14,4)--(15,5)--(16,4)--(17,5)--(20,2)--(22,4)--(24,2)--(25,3)--(28,0);
        \draw[ultra thick] [dashed] (14,0)--(14,8);
        \draw[ultra thick] [green] (0,0)--(1,1)--(2,0)--(6,4)--(7,3)--(10,6)--(11,1)--(12,2)--(14,0)--(28,0);
        \draw[ultra thick][red] (0,0)--(10,0)--(11,4)--(14,4)--(15,5)--(16,4)--(17,5)--(20,2)--(22,4)--(24,2)--(25,3)--(28,0);
        \draw (0,0) node[anchor=north]{{\LARGE $0$}};
        \draw (11,0) node[anchor=north]{{\LARGE $s$}};
        \draw (14,0) node[anchor=north]{{\LARGE $k$}};
        \draw (28,0) node[anchor=north]{{\LARGE $2k$}};
    \end{tikzpicture}
    }
\end{subfigure}
\par\bigskip
\begin{subfigure}[b]{0.98\textwidth}
    \centering
    \resizebox{0.9\textwidth}{!}{
    \begin{tikzpicture}
        \draw[ultra thin] [dashed] (0,0) grid (28,8);
        \draw[line width=3pt] (0,0)--(1,1)--(2,0)--(6,4)--(7,3)--(9,5)--(10,6)--(11,5)--(12,6)--(14,4)--(15,5)--(16,4)--(17,5)--(20,2)--(22,4)--(24,2)--(25,3)--(28,0);
        \draw[ultra thick] [dashed] (14,0)--(14,8);
        \draw[ultra thick] [green] (0,0)--(1,1)--(2,0)--(6,4)--(7,3)--(10,6)--(11,1)--(12,2)--(14,0)--(28,0);
        \draw[ultra thick] [red] (0,0)--(14,0)--(15,1)--(16,0)--(17,1)--(18,0)--(19,3)--(20,2)--(22,4)--(24,2)--(25,3)--(28,0);
        \draw[ultra thick] [blue] (0,0)--(10,0)--(11,4)--(14,4)--(18,4)--(19,0)--(28,0);
        \draw (0,0) node[anchor=north]{{\LARGE $0$}};
        \draw (11,0) node[anchor=north]{{\LARGE $s$}};
        \draw (14,0) node[anchor=north]{{\LARGE $k$}};
        \draw (19,0) node[anchor=north]{{\LARGE $s'$}};
        \draw (28,0) node[anchor=north]{{\LARGE $2k$}};
    \end{tikzpicture}
    }
\end{subfigure}
    \caption{An illustration of two walks of $(\hD_2^N)^k(\hD_1^N)^k$, where cancellation of weights happen and blow-up is resolved. \\
    Top panel: a walk $\vec r$, with the green and red curves indicating $r_1$ and $r_2$ respectively, and the black curve indicating $\sH =r_1+r_2$. \\
    Bottom panel: another walk $\vec r$, with the green, red, and blue curves indicating $r_1$, $r_2$, and $r_{j_*}$ (for some $j_*\in \llbracket 3,N\rrbracket$) respectively, and the black curve indicating $\sH =r_1+r_2+r_{j_*}$.\\
    (These two walks together correspond to a term in \Cref{defn:core} with $m=2$, $\bp_2$ having one discontinuous point in $(0,\bQ_1)$ ($\bdel_{2,1}=1$), and all other $\bp_j=0$, and the virtual block $\bb_2\neq \varnothing$.)}
    \label{fig:blp}
\end{figure}

\begin{example} \label{ex:blowup2}
For the same operator $(\hD_2^N)^k(\hD_1^N)^k$, consider walks $\vec r$ of the following form illustrated in the bottom panel of \Cref{fig:blp}.
There exists a $j_*\in \llbracket 3, N\rrbracket$, such that for each $j\in \llbracket 3, N\rrbracket\setminus\{j_*\}$, we have $r_j=0$.
For $r_1$, $r_2$, and $r_{j_*}$, they are integer-valued and non-negative on $\llbracket 0, 2k\rrbracket$.
For some $s\in \llbracket 2, k\rrbracket$ and $s'\in \llbracket k+1, 2k\rrbracket$ (necessarily $s'-k$ should be odd), we have
\begin{itemize}
    \item $r_{j_*}=0$ on $\llbracket 0, s-1\rrbracket\cup\llbracket s', 2k\rrbracket$, and $r_{j_*}$ is constant and positive on $\llbracket s, s'-1\rrbracket$.
    \item $r_2=0$ on $\llbracket 0, k\rrbracket$. For each $t\in \llbracket k+1, s'-1\rrbracket$ or $t\in \llbracket s', 2k\rrbracket$, we have $|r_2(t)-r_2(t-1)|=1$. Also $r_{j_*}(s')+r_2(s')=r_{j_*}(s'-1)+r_2(s'-1)-1$ and $r_2(2k)=0$.
    \item For each $t\in\llbracket 1,s-1\rrbracket$ or $t\in \llbracket s+1, k\rrbracket$, we have $|r_1(t)-r_1(t-1)|=1$. Also $r_1(s)+r_{j_*}(s)=r_1(s-1)+r_{j_*}(s-1)-1$, and $r_1=0$ on $\llbracket k, 2k\rrbracket$.
\end{itemize}
We note that for any such $\vec r$ with given $s$, it is uniquely determined by the sum $\sH =r_1+r_2+r_{j_*}$, which is a path on $\llbracket 0, 2k\rrbracket$, starting and ending at $0$ and taking $\pm 1$ at each step, and staying non-negative, and staying $\ge \sH(k)$ on $\llbracket s, k\rrbracket$.
The number $s'$ is the smallest integer in $\llbracket k+1, 2k\rrbracket$ with $\sH(s')<\sH(k)$.

Let $r^*_1=r_1$, $r^*_2=r_2+r_{j_*}$, and $r^*_j=0$ for each $j\in\llbracket 3, N\rrbracket$.
The map $\vec r\mapsto \vec r^*$ is onto, from the walks in this example to those in the previous example.
Each walk in the previous example has precisely $N-2$ pre-images under this map.
Also, $w(\vec r)>0$ and $w(\vec r^*)<0$, and one can check that $|(N-2)w(\vec r)+w(\vec r^*)|$ is of order at most $N^{2k-1}\cdot(s'-k)k^{1/2}N^{-1}$. The factor $N^{2k-1}$ is from the second line of \eqref{eq_x3}, which is roughly the same for $(N-2)w(\vec r)$ and $w(\vec r^*)$; and the factor $(s'-k)k^{1/2}N^{-1}$ is from the third line of \eqref{eq_x3}, which (for $(N-2)w(\vec r)+w(\vec r^*)$) is bounded by $N^{-1}\sum_{t=k+1}^{s'-1}\sH(t)$, whose asymptotics is estimated by assuming that random walks converge to Brownian objects under diffusive scaling.

Note that for given $s$ and $s'$, the number of $\vec r^*$ is of order at most $2^{2k} k^{-3/2}(k-s+1)^{-1/2}(s'-k)^{-3/2}$. Here (as in the previous example) $2^{2k}k^{-3/2}$ comes from the Catalan number, and the factors $(k-s+1)^{-1/2}$ and $(s'-k)^{-3/2}$ account for the requirements of $\ge \sH(k)$ on $\llbracket s, k\rrbracket$, and $\ge \sH(k)=\sH(s'-1)$ on $\llbracket k, s'-1\rrbracket$, respectively, which we estimated as probabilities for uniformly random positive random walk bridges.

Multiplying $N^{2k-1}\cdot(s'-k)k^{1/2}N^{-1}$ with  $2^{2k} k^{-3/2}(k-s+1)^{-1/2}(s'-k)^{-3/2}$ from the previous two paragraphs
and summing over $s$ and $s'$, one gets order $(2N)^{2k}N^{-2}$. Again, summing over all possible choices of a pair of variables which can replace $x_1$ and $x_2$, we get the desired order $(2N)^{2k}$, announced at the beginning of Section \ref{Section_examples_blow_ups}.
\end{example}

The situation in Example \ref{ex:blowup} corresponds as $N\to\infty$ to having $\bb_2>0$ for virtual block process in Definition \ref{Definition_virtual_block}. On the other hand, the situation in Example \ref{ex:blowup2} corresponds to $\bb_2=\varnothing$ case. The cancellation of such terms is also responsible for the necessity of considering the principle value integral in Definition \ref{defn:core}.

\subsection{Classification of beginnings and endings}  \label{ssec:classi}

For a walk $\vec r$ of $(\hD_{i_m}^{N_m})^{k_m} \cdots (\hD_{i_1}^{N_1})^{k_1}$, we set up the following notations:
\begin{itemize}
    \item For each $j\in\llbracket 1,N\rrbracket$ and $\ell\in\llbracket 1,m\rrbracket$, if $i_\ell\neq j$, then we let $\Delta_{j,\ell}$ to be the set of all ${t\in \llbracket Q_{\ell-1}+1, Q_\ell\rrbracket}$, such that $r_j(t-1)\neq r_j(t)$; otherwise we let $\Delta_{j,\ell}=\emptyset$.
    Let $\delta_{j,\ell}=|\Delta_{j,\ell}|$, $\delta_j=\sum_{\ell=1}^m \delta_{j,\ell}$, $\delta=\sum_{j=1}^N \delta_j$,
    and $\Delta_j=\bigcup_{\ell=1}^m\Delta_{j,\ell}$, $\Delta=\bigcup_{j=1}^N\Delta_j$.

    Note that $\Delta=\{t\in \llbracket 1, Q_m\rrbracket: O(t)=\top\}$ for $O$ from the corresponding term in expansion, and that this is a discrete version of $\bDel$ in \eqref{eq_discont_def_1}, \eqref{eq_discont_def_2}.
    \item Let $J$ be the collection of all $j$ which appear in $(i_1,\ldots, i_m)$. These are the indices of the main variables. For each $j\in J$, we let $a_j$ (resp.\ $b_j$) be the smallest (resp.\ largest) $\ell$ such that $j=i_\ell$.
    \item Let $U$ be the collection of all $j\in \llbracket 1, N\rrbracket \setminus J$, with $\sum_{\ell=1}^m \delta_{j,\ell}>0$. These are the indices of auxiliary variables; eventually they give rise to $\bp_\ell$ with $\ell>m$ in Definition \ref{Definition_block_process} of the block process.
\end{itemize}
We rewrite the expression in \Cref{defn:wr} in the new notations as
\begin{equation}  \label{eq:weight}
    \begin{split}
w(\vec r) = \prod_{\ell=1}^m & (-1)^{\bigl|\{t\in\llbracket Q_{\ell-1}+1, Q_\ell \rrbracket:\, \sH(t)=\sH(t-1)-1,\, r_{i_\ell}(t)\ge r_{i_\ell}(t-1)\}\bigr|} \\ &\times(\sqrt{NN_\ell})^{k_\ell} N_\ell^{(\sH(Q_{\ell-1})-\sH(Q_\ell))/2- \bigl|\llbracket Q_{\ell-1}+1, Q_\ell \rrbracket\cap\Delta\bigr| }\\
&\times \prod_{\substack{t\in \llbracket Q_{\ell-1}+1, Q_\ell \rrbracket\setminus\Delta \\ \sH(t)=\sH(t-1)-1 }} \left(1+\frac{2r_{i_\ell}(t-1)}{\beta N_\ell}-\frac{\bigl|\{j\in \llbracket 1,N\rrbracket: r_j(t-1)\ge r_{i_\ell}(t-1)\}\bigr|}{N_\ell}\right).
\end{split}
\end{equation}

We next introduce a notation, which is related to virtual blocks of Definition \ref{Definition_virtual_block} and plays an important role in the structure of cancellations.

\begin{definition}  \label{defn:discrete_virtual}
For each $j\in J$, we let $\vartheta_j=\min\{t\in \llbracket 1, \tfrac{1}{4}(Q_{a_j}-Q_{a_j-1})\rrbracket: \sH(Q_{a_j-1}+t)<\sH(Q_{a_j-1})\}$. If the last set is empty, then we let $\vartheta_j=\varnothing$.

For each $j\in J$, we let $\dot\vartheta_j=\min\{t\in \llbracket 1, (Q_{a_j}-Q_{a_j-1})/4\rrbracket: Q_{a_j-1}+t\in \Delta\}$. If the last set is empty, then $\dot \vartheta_j=\varnothing$.

\end{definition}
 Note that if $r_{j}(Q_{a_j-1})=0$, then $\dot\vartheta_j\le \vartheta_j$. The constant $\tfrac{1}{4}$ in the definition of $\vartheta_j$ is not important and can be replaced by any other positive number smaller than $\tfrac{1}{2}$.

Next, we take a walk $\vec r$ and consider the corresponding set $J$. We would like to classify points of $J$ in two ways. For the first way, summarized in Table \ref{tab:1}, we look into the behavior of $\vec r(t)$ for $t$ immediately to the right from $Q_{a_j-1}$, and partition $J$ into eight sets $J_\rmI$,  $J_\rmII$, $J_\rmIII$, $J_\rmIV$, $J_\rmV$, $J_\rmVIa$, $J_\rmVIb$, $J_\rmVIc$ (see \Cref{fig:beginning}). For the second way, we look into the behavior of each $r_j(t)$ at the points $t$ after which $r_j$ becomes zero and partition $J$ into three sets $J_{\rmA}$, $J_{\rmB}$, $J_{\rmC}$ (see \Cref{fig:ending}).

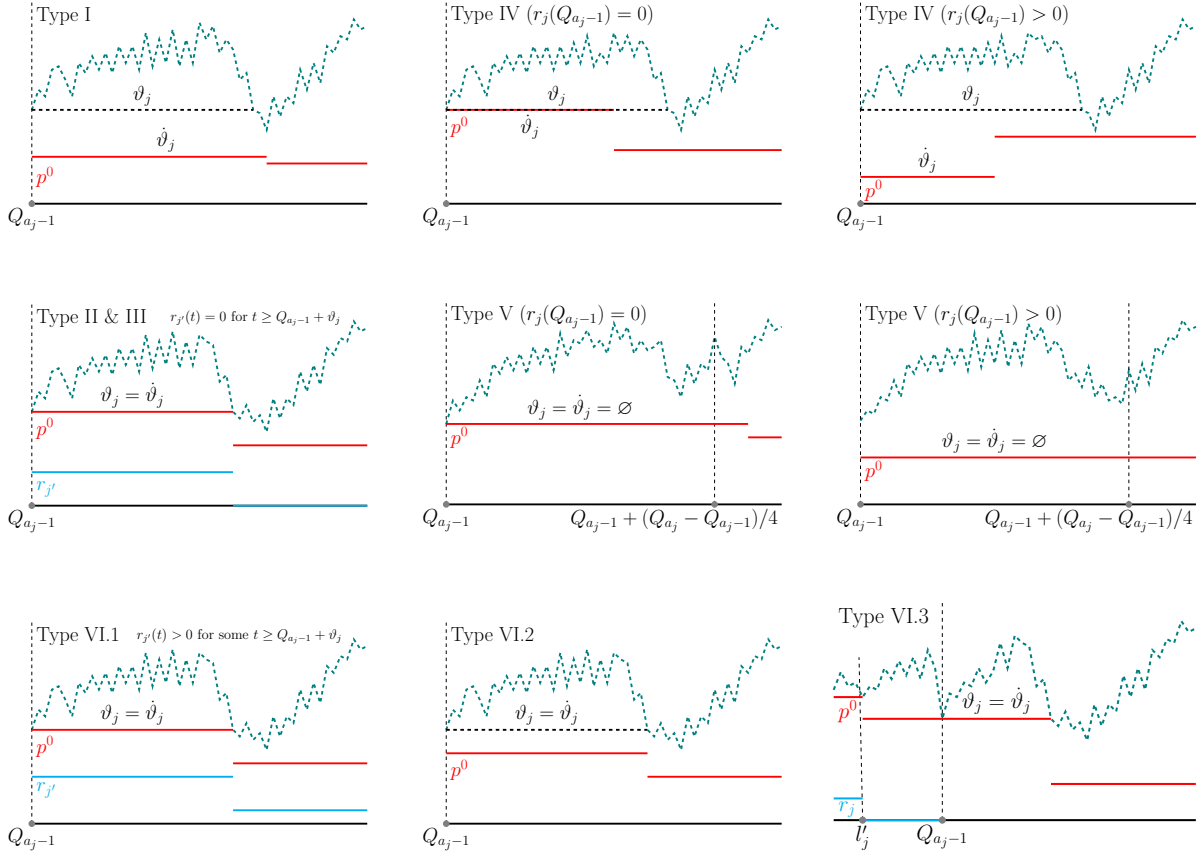
\begin{figure}[t]
    \centering
\begin{subfigure}{0.3\textwidth}
    \resizebox{0.99\textwidth}{!}{
    \begin{tikzpicture}
        \draw[teal] [dashed] [ultra thick] (0,0.8)--(0.2,1.4)--(0.4,1.2)--(0.6,2)--(0.8,2.1)--(1,1.7)--(1.2,1.2)--(1.4,2.2)--(1.6,1.9)--(1.8,2.4)--(2,2)--(2.2,2.5)--(2.4,1.9)--(2.6,2.7)--(2.8,2.2)--
        (3.0,2.7)--(3.2,2)--(3.4,3.1)--(3.6,2.1)--(3.8,2.9)--(4.0,2.2)--(4.2,3.2)--(4.4,2.1)--(4.6,2.9)--(4.8,2.2)--(5.0,3.4)--(5.2,2.9)--(5.4,3.3)--(5.6,2.5)--(5.8,2.9)--(6.0,2.6)--(6.2,1.7)--(6.4,1.9)--(6.6,0.8)--(6.8,0.7)--(7.0,0.2)--(7.2,1.2)--(7.4,0.6)--(7.6,1.2)--(7.8,1.0)--(8.0,2.0)--(8.2,1.4)--(8.4,2.2)--(8.6,1.6)--(8.8,2.7)--(9.0,2.5)--(9.2,3.0)--(9.4,2.9)--(9.6,3.5)--(9.8,3.2)--(10.0,3.3);
        \draw [dashed] (0,-2)--(0,4);
        \draw [ultra thick] [dashed] (0,0.8)--(6.6,0.8);

        \draw[red] [ultra thick] (0,-0.6)--(7,-0.6);

        \draw[red] [ultra thick] (7,-0.8)--(10,-0.8);

        \draw [ultra thick] (0,-2)--(10,-2);

        \draw (3.3,0.8) node[anchor=south]{{\LARGE $\vartheta_j$}};
        \draw (4,-0.6) node[anchor=south]{{\LARGE $\dot\vartheta_j$}};
        \draw (0,-2) node[anchor=north]{{\LARGE $Q_{a_j-1}$}};
        \draw (0,3.6) node[anchor=west]{{\LARGE Type I}};
        \draw [red] (0,-1.2) node[anchor=west]{{\LARGE $p^0$}};
        \draw[gray, fill=gray] [ultra thick] (0,-2) circle (2pt);
    \end{tikzpicture}}
\end{subfigure}
\hspace{0.3cm}
\begin{subfigure}{0.3\textwidth}
    \resizebox{0.99\textwidth}{!}{
    \begin{tikzpicture}
        \draw[teal] [dashed] [ultra thick] (0,0.8)--(0.2,1.4)--(0.4,1.2)--(0.6,2)--(0.8,2.1)--(1,1.7)--(1.2,1.2)--(1.4,2.2)--(1.6,1.9)--(1.8,2.4)--(2,2)--(2.2,2.5)--(2.4,1.9)--(2.6,2.7)--(2.8,2.2)--
        (3.0,2.7)--(3.2,2)--(3.4,2.7)--(3.6,2.1)--(3.8,2.8)--(4.0,2.2)--(4.2,3)--(4.4,2.1)--(4.6,2.5)--(4.8,2.2)--(5.0,3.1)--(5.2,2.5)--(5.4,3)--(5.6,2.4)--(5.8,2.9)--(6.0,2.6)--(6.2,1.7)--(6.4,1.9)--(6.6,0.8)--(6.8,0.7)--(7.0,0.2)--(7.2,1.2)--(7.4,0.6)--(7.6,1.2)--(7.8,1.0)--(8.0,2.0)--(8.2,1.4)--(8.4,2.2)--(8.6,1.6)--(8.8,2.7)--(9.0,2.5)--(9.2,3.0)--(9.4,2.9)--(9.6,3.5)--(9.8,3.2)--(10.0,3.3);
        \draw [dashed] (0,-2)--(0,4);

        \draw[red] [ultra thick] (0,0.8)--(5,0.8);

        \draw[red] [ultra thick] (5,-0.4)--(10,-0.4);

        \draw [ultra thick] [dashed] (0,0.8)--(6.6,0.8);

        \draw [ultra thick] (0,-2)--(10,-2);

        \draw (3.3,0.8) node[anchor=south]{{\LARGE $\vartheta_j$}};
        \draw (2.5,0.8) node[anchor=north]{{\LARGE $\dot\vartheta_j$}};
        \draw (0,-2) node[anchor=north]{{\LARGE $Q_{a_j-1}$}};
        \draw (0,3.6) node[anchor=west]{{\LARGE Type IV ($r_j(Q_{a_j-1})=0$)}};
        \draw [red] (0,0.3) node[anchor=west]{{\LARGE $p^0$}};
        \draw[gray, fill=gray] [ultra thick] (0,-2) circle (2pt);
    \end{tikzpicture}}
\end{subfigure}
\hspace{0.3cm}
\begin{subfigure}{0.3\textwidth}
    \resizebox{0.99\textwidth}{!}{
    \begin{tikzpicture}
        \draw[teal] [dashed] [ultra thick] (0,0.8)--(0.2,1.4)--(0.4,1.2)--(0.6,2)--(0.8,2.1)--(1,1.7)--(1.2,1.2)--(1.4,2.2)--(1.6,1.9)--(1.8,2.4)--(2,2)--(2.2,2.5)--(2.4,1.9)--(2.6,2.7)--(2.8,2.2)--
        (3.0,2.7)--(3.2,2)--(3.4,2.7)--(3.6,2.1)--(3.8,2.8)--(4.0,2.2)--(4.2,3)--(4.4,2.1)--(4.6,2.5)--(4.8,2.2)--(5.0,3.1)--(5.2,2.9)--(5.4,3)--(5.6,2.5)--(5.8,2.9)--(6.0,2.6)--(6.2,1.7)--(6.4,1.9)--(6.6,0.8)--(6.8,0.7)--(7.0,0.2)--(7.2,1.2)--(7.4,0.6)--(7.6,1.2)--(7.8,1.0)--(8.0,2.0)--(8.2,1.4)--(8.4,2.2)--(8.6,1.6)--(8.8,2.7)--(9.0,2.5)--(9.2,3.0)--(9.4,2.9)--(9.6,3.5)--(9.8,3.2)--(10.0,3.3);
        \draw [dashed] (0,-2)--(0,4);
        \draw [ultra thick] [dashed] (0,0.8)--(6.6,0.8);

        \draw[red] [ultra thick] (0,-1.2)--(4,-1.2);

        \draw[red] [ultra thick] (4,0)--(10,0);

        \draw [ultra thick] (0,-2)--(10,-2);

        \draw (3.3,0.8) node[anchor=south]{{\LARGE $\vartheta_j$}};
        \draw (2,-1.2) node[anchor=south]{{\LARGE $\dot\vartheta_j$}};
        \draw (0,-2) node[anchor=north]{{\LARGE $Q_{a_j-1}$}};
        \draw (0,3.6) node[anchor=west]{{\LARGE Type IV ($r_j(Q_{a_j-1})>0$)}};
        \draw [red] (0,-1.6) node[anchor=west]{{\LARGE $p^0$}};
        \draw[gray, fill=gray] [ultra thick] (0,-2) circle (2pt);
    \end{tikzpicture}}
\end{subfigure}
\par\bigskip\bigskip
\begin{subfigure}{0.3\textwidth}
    \resizebox{0.99\textwidth}{!}{
    \begin{tikzpicture}
        \draw[teal] [dashed] [ultra thick] (0,0.8)--(0.2,1.4)--(0.4,1.2)--(0.6,2)--(0.8,2.1)--(1,1.7)--(1.2,1.2)--(1.4,2.2)--(1.6,1.9)--(1.8,2.4)--(2,2)--(2.2,2.5)--(2.4,1.9)--(2.6,2.7)--(2.8,2.2)--
        (3.0,2.7)--(3.2,2)--(3.4,3.1)--(3.6,2.1)--(3.8,2.9)--(4.0,2.2)--(4.2,3.2)--(4.4,2.1)--(4.6,2.9)--(4.8,2.2)--(5.0,3.1)--(5.2,2.9)--(5.4,2.8)--(5.6,1.7)--(5.8,1.9)--(6,0.8)--(6.2,0.6)--(6.4,1)--(6.6,0.5)--(6.8,0.7)--(7.0,0.2)--(7.2,1.2)--(7.4,0.6)--(7.6,1.2)--(7.8,1.0)--(8.0,2.0)--(8.2,1.4)--(8.4,2.2)--(8.6,1.6)--(8.8,2.7)--(9.0,2.5)--(9.2,3.0)--(9.4,2.9)--(9.6,3.5)--(9.8,3.2)--(10.0,3.3);
        \draw [dashed] (0,-2)--(0,4);

        \draw[red] [ultra thick] (0,0.8)--(6,0.8);

        \draw[red] [ultra thick] (6,-0.2)--(10,-0.2);

        \draw [ultra thick] (0,-2)--(10,-2);

        \draw[cyan] [ultra thick] (0,-1)--(6,-1);

        \draw[cyan] [ultra thick] (6,-2)--(10,-2);

        \draw (3,0.8) node[anchor=south]{{\LARGE $\vartheta_j=\dot\vartheta_j$}};
        \draw (0,-2) node[anchor=north]{{\LARGE $Q_{a_j-1}$}};
        \draw (0,3.6) node[anchor=west]{{\LARGE Type II \& III}};
        \draw (4,3.6) node[anchor=west]{{\large $r_{j'}(t)=0$ for $t\ge Q_{a_j-1}+\vartheta_j$}};
        \draw [red] (0,0.3) node[anchor=west]{{\LARGE $p^0$}};
        \draw [cyan] (0,-1.5) node[anchor=west]{{\LARGE $r_{j'}$}};
        \draw[gray, fill=gray] [ultra thick] (0,-2) circle (2pt);
    \end{tikzpicture}}
\end{subfigure}
\hspace{0.3cm}
\begin{subfigure}{0.3\textwidth}
    \resizebox{0.99\textwidth}{!}{
    \begin{tikzpicture}
        \draw[teal] [dashed] [ultra thick] (0,0.4)--(0.2,1.1)--(0.4,0.8)--(0.6,1.2)--(0.8,1.1)--(1,1.7)--(1.2,1.2)--(1.4,2.2)--(1.6,1.9)--(1.8,2.4)--(2,2)--(2.2,2.5)--(2.4,1.9)--(2.6,2.7)--(2.8,2.2)--
        (3.0,2.7)--(3.2,2)--(3.4,3.1)--(3.6,2.4)--(3.8,2.9)--(4.0,2.8)--(4.2,3.2)--(4.4,2.7)--(4.6,2.9)--(4.8,2.5)--(5.0,3.4)--(5.2,2.9)--(5.4,3.3)--(5.6,2.5)--(5.8,2.9)--(6.0,2.6)--(6.2,2.7)--(6.4,2.9)--(6.6,1.8)--(6.8,1.7)--(7.0,1.2)--(7.2,2.2)--(7.4,1.6)--(7.6,2.2)--(7.8,2.0)--(8.0,3.0)--(8.2,2.4)--(8.4,2.2)--(8.6,1.6)--(8.8,2.7)--(9.0,2.5)--(9.2,3.0)--(9.4,2.9)--(9.6,3.5)--(9.8,3.2)--(10.0,3.3);
        \draw [dashed] (0,-2)--(0,4);
        \draw [dashed] (8,-2)--(8,4);

        \draw[red] [ultra thick] (0,0.4)--(9,0.4);

        \draw[red] [ultra thick] (9,0)--(10,0);

        \draw [ultra thick] (0,-2)--(10,-2);

        \draw (4,0.4) node[anchor=south]{{\LARGE $\vartheta_j=\dot\vartheta_j=\varnothing$}};
        \draw (0,-2) node[anchor=north]{{\LARGE $Q_{a_j-1}$}};
        \draw (6.8,-2) node[anchor=north]{{\LARGE $Q_{a_j-1}+(Q_{a_j}-Q_{a_j-1})/4$}};
        \draw (0,3.6) node[anchor=west]{{\LARGE Type V ($r_j(Q_{a_j-1})=0$)}};
        \draw [red] (0,0) node[anchor=west]{{\LARGE $p^0$}};
        \draw[gray, fill=gray] [ultra thick] (0,-2) circle (2pt);
        \draw[gray, fill=gray] [ultra thick] (8,-2) circle (2pt);
    \end{tikzpicture}}
\end{subfigure}
\hspace{0.3cm}
\begin{subfigure}{0.3\textwidth}
    \resizebox{0.99\textwidth}{!}{
    \begin{tikzpicture}
        \draw[teal] [dashed] [ultra thick] (0,0.5)--(0.2,0.8)--(0.4,0.7)--(0.6,1.2)--(0.8,1.1)--(1,1.7)--(1.2,1.2)--(1.4,2.2)--(1.6,1.9)--(1.8,2.4)--(2,2)--(2.2,2.5)--(2.4,1.9)--(2.6,2.7)--(2.8,2.2)--
        (3.0,2.7)--(3.2,2)--(3.4,3.1)--(3.6,2.1)--(3.8,2.9)--(4.0,2.2)--(4.2,3.2)--(4.4,2.1)--(4.6,2.9)--(4.8,2.2)--(5.0,3.4)--(5.2,2.9)--(5.4,3.3)--(5.6,2.5)--(5.8,2.9)--(6.0,2.6)--(6.2,1.7)--(6.4,1.9)--(6.6,1.5)--(6.8,2.1)--(7.0,1.2)--(7.2,1.6)--(7.4,1.1)--(7.6,1.2)--(7.8,1.0)--(8.0,2.0)--(8.2,1.4)--(8.4,2.2)--(8.6,1.6)--(8.8,2.7)--(9.0,2.5)--(9.2,3.0)--(9.4,2.9)--(9.6,3.5)--(9.8,3.2)--(10.0,3.3);
        \draw [dashed] (0,-2)--(0,4);
        \draw [dashed] (8,-2)--(8,4);

        \draw[red] [ultra thick] (0,-0.6)--(10,-0.6);

        \draw [ultra thick] (0,-2)--(10,-2);

        \draw (4,-0.6) node[anchor=south]{{\LARGE $\vartheta_j=\dot\vartheta_j=\varnothing$}};
        \draw (0,-2) node[anchor=north]{{\LARGE $Q_{a_j-1}$}};
        \draw (6.8,-2) node[anchor=north]{{\LARGE $Q_{a_j-1}+(Q_{a_j}-Q_{a_j-1})/4$}};
        \draw (0,3.6) node[anchor=west]{{\LARGE Type V ($r_j(Q_{a_j-1})>0$)}};
        \draw [red] (0,-1) node[anchor=west]{{\LARGE $p^0$}};
        \draw[gray, fill=gray] [ultra thick] (0,-2) circle (2pt);
        \draw[gray, fill=gray] [ultra thick] (8,-2) circle (2pt);
    \end{tikzpicture}}
\end{subfigure}
\par\bigskip\bigskip
\begin{subfigure}{0.3\textwidth}
    \resizebox{0.99\textwidth}{!}{
    \begin{tikzpicture}
        \draw[teal] [dashed] [ultra thick] (0,0.8)--(0.2,1.4)--(0.4,1.2)--(0.6,2)--(0.8,2.1)--(1,1.7)--(1.2,1.2)--(1.4,2.2)--(1.6,1.9)--(1.8,2.4)--(2,2)--(2.2,2.5)--(2.4,1.9)--(2.6,2.7)--(2.8,2.2)--
        (3.0,2.7)--(3.2,2)--(3.4,3.1)--(3.6,2.1)--(3.8,2.9)--(4.0,2.2)--(4.2,3.2)--(4.4,2.1)--(4.6,2.9)--(4.8,2.2)--(5.0,3.1)--(5.2,2.9)--(5.4,2.8)--(5.6,1.7)--(5.8,1.9)--(6,0.8)--(6.2,0.6)--(6.4,1)--(6.6,0.5)--(6.8,0.7)--(7.0,0.2)--(7.2,1.2)--(7.4,0.6)--(7.6,1.2)--(7.8,1.0)--(8.0,2.0)--(8.2,1.4)--(8.4,2.2)--(8.6,1.6)--(8.8,2.7)--(9.0,2.5)--(9.2,3.0)--(9.4,2.9)--(9.6,3.5)--(9.8,3.2)--(10.0,3.3);
        \draw [dashed] (0,-2)--(0,4);

        \draw[red] [ultra thick] (0,0.8)--(6,0.8);

        \draw[red] [ultra thick] (6,-0.2)--(10,-0.2);

        \draw [ultra thick] (0,-2)--(10,-2);

        \draw[cyan] [ultra thick] (0,-0.6)--(6,-0.6);

        \draw[cyan] [ultra thick] (6,-1.6)--(10,-1.6);

        \draw (3,0.8) node[anchor=south]{{\LARGE $\vartheta_j=\dot\vartheta_j$}};
        \draw (0,-2) node[anchor=north]{{\LARGE $Q_{a_j-1}$}};
        \draw (0,3.6) node[anchor=west]{{\LARGE Type VI.1}};
        \draw (3,3.6) node[anchor=west]{{\large $r_{j'}(t)>0$ for some $t\ge Q_{a_j-1}+\vartheta_j$}};
        \draw [red] (0,0.3) node[anchor=west]{{\LARGE $p^0$}};
        \draw [cyan] (0,-1) node[anchor=west]{{\LARGE $r_{j'}$}};
        \draw[gray, fill=gray] [ultra thick] (0,-2) circle (2pt);
    \end{tikzpicture}}
\end{subfigure}
\hspace{0.3cm}
\begin{subfigure}{0.3\textwidth}
    \resizebox{0.99\textwidth}{!}{
    \begin{tikzpicture}
        \draw[teal] [dashed] [ultra thick] (0,0.8)--(0.2,1.4)--(0.4,1.2)--(0.6,2)--(0.8,2.1)--(1,1.7)--(1.2,1.2)--(1.4,2.2)--(1.6,1.9)--(1.8,2.4)--(2,2)--(2.2,2.5)--(2.4,1.9)--(2.6,2.7)--(2.8,2.2)--
        (3.0,2.7)--(3.2,2)--(3.4,3.1)--(3.6,2.1)--(3.8,2.9)--(4.0,2.2)--(4.2,3.2)--(4.4,2.1)--(4.6,2.9)--(4.8,2.2)--(5.0,3.1)--(5.2,2.9)--(5.4,2.8)--(5.6,1.7)--(5.8,1.9)--(6,0.8)--(6.2,0.6)--(6.4,1)--(6.6,0.5)--(6.8,0.7)--(7.0,0.2)--(7.2,1.2)--(7.4,0.6)--(7.6,1.2)--(7.8,1.0)--(8.0,2.0)--(8.2,1.4)--(8.4,2.2)--(8.6,1.6)--(8.8,2.7)--(9.0,2.5)--(9.2,3.0)--(9.4,2.9)--(9.6,3.5)--(9.8,3.2)--(10.0,3.3);
        \draw [dashed] (0,-2)--(0,4);

        \draw [ultra thick] [dashed] (0,0.8)--(6,0.8);

        \draw[red] [ultra thick] (0,0.1)--(6,0.1);

        \draw[red] [ultra thick] (6,-0.6)--(10,-0.6);

        \draw [ultra thick] (0,-2)--(10,-2);

        \draw (3,0.8) node[anchor=south]{{\LARGE $\vartheta_j=\dot\vartheta_j$}};
        \draw (0,-2) node[anchor=north]{{\LARGE $Q_{a_j-1}$}};
        \draw (0,3.6) node[anchor=west]{{\LARGE Type VI.2}};
        \draw [red] (0,-0.4) node[anchor=west]{{\LARGE $p^0$}};
        \draw[gray, fill=gray] [ultra thick] (0,-2) circle (2pt);
    \end{tikzpicture}}
\end{subfigure}
\hspace{0.3cm}
\begin{subfigure}{0.3\textwidth}
    \resizebox{0.99\textwidth}{!}{
    \begin{tikzpicture}
        \draw[teal] [dashed] [ultra thick] (0,1.6)--(0.2,2.1)--(0.4,1.7)--(0.6,1.8)--(0.8,1.4)--(1,1.7)--(1.2,1.6)--(1.4,2.2)--(1.6,1.9)--(1.8,2.4)--(2,2)--(2.2,2.5)--(2.4,1.9)--(2.6,2.7)--(2.8,2.2)--
        (3.0,0.8)--(3.2,1.5)--(3.4,1.4)--(3.6,1.8)--(3.8,1.7)--(4.0,2.2)--(4.2,1.8)--(4.4,2.1)--(4.6,2.9)--(4.8,2.2)--(5.0,3.1)--(5.2,2.9)--(5.4,2.8)--(5.6,1.7)--(5.8,1.9)--(6,0.8)--(6.2,0.6)--(6.4,1)--(6.6,0.5)--(6.8,0.7)--(7.0,0.2)--(7.2,1.2)--(7.4,0.6)--(7.6,1.2)--(7.8,1.0)--(8.0,2.0)--(8.2,1.4)--(8.4,2.2)--(8.6,1.6)--(8.8,2.7)--(9.0,2.5)--(9.2,3.0)--(9.4,2.9)--(9.6,3.5)--(9.8,3.2)--(10.0,3.3);
        \draw [dashed] (3,-2)--(3,4);
        \draw [dashed] (0.8,-2)--(0.7,2.5);

        \draw [ultra thick] (0,-2)--(10,-2);

        \draw[red] [ultra thick] (0,1.4)--(0.8,1.4);

        \draw[red] [ultra thick] (0.8,0.8)--(6,0.8);

        \draw[cyan] [ultra thick] (0,0.6-2)--(0.8,0.6-2);

        \draw[cyan] [ultra thick] (0.8,0-2)--(3,0-2);

        \draw[red] [ultra thick] (6,-1)--(10,-1);

        \draw (4.5,0.8) node[anchor=south]{{\LARGE $\vartheta_j=\dot\vartheta_j$}};
        \draw (3,-2) node[anchor=north]{{\LARGE $Q_{a_j-1}$}};
        \draw (0.8,-2) node[anchor=north]{{\LARGE $l'_j$}};
        \draw (0,3.6) node[anchor=west]{{\LARGE Type VI.3}};
        \draw [red] (0,1) node[anchor=west]{{\LARGE $p^0$}};
        \draw [cyan] (0,-1.7) node[anchor=west]{{\LARGE $r_j$}};
        \draw[gray, fill=gray] [ultra thick] (3,-2) circle (2pt);
        \draw[gray, fill=gray] [ultra thick] (0.8,-2) circle (2pt);
    \end{tikzpicture}}
\end{subfigure}
    \caption{An illustration of the various beginning types.}
    \label{fig:beginning}
\end{figure}

\begin{enumerate}
\item[Type I:] Any $j\in J$ is of type I, if $r_{j}(Q_{a_j-1})>0$ and $\vartheta_j<\dot\vartheta_j$ or $\dot\vartheta_j=\varnothing$ (and $\vartheta_j\neq\varnothing$).
\item[Type II:] Any $j\in J$ is of type II, if $\sum_{\ell=1}^{a_j-1}\delta_{j,\ell}=0$ (thereby $r_{j}(Q_{a_j-1})=0$), and $\dot\vartheta_j=\vartheta_j \neq \varnothing$, and there exists $j'\in U$, such that $r_{j'}(t)$ is a positive constant for all $t\in \llbracket Q_{a_j-1}, Q_{a_j-1}+\vartheta_j-1\rrbracket$, but $r_{j'}(t)=0$ for each $t\ge Q_{a_j-1}+\vartheta_j$.

For type I or II, we also call $j$ a \emph{blow-up} index.

\item[Type III:] Any $j\in J$ is of type III,  if $\sum_{\ell=1}^{a_j-1}\delta_{j,\ell}=0$ (thereby $r_{j}(Q_{a_j-1})=0$), and $\dot\vartheta_j=\vartheta_j \neq \varnothing$, and there exists $j'\in J$, such that $r_{j'}(t)$ is a positive constant for all $t\in \llbracket Q_{a_j-1}, Q_{a_j-1}+\vartheta_j-1\rrbracket$, but $r_{j'}(t)=0$ for each $t\ge Q_{a_j-1}+\vartheta_j$.

We see that for each $j$ of type II or III, the corresponding $j'$ is unique.

\item[Type IV:] Any $j\in J$ is of type IV, if $\dot\vartheta_j<\vartheta_j$ or $\vartheta_j=\varnothing$ (and $\dot\vartheta_j\neq\varnothing$).

\item[Type V:] Any $j\in J$ is of type V, if $\dot\vartheta_j=\vartheta_j=\varnothing$.

\medskip

Any $j\in J$ that is not of type I to V is of type VI (miscellaneous).
More precisely, for each $j$ of type VI, $\dot\vartheta_j=\vartheta_j \ne\varnothing$, and one of the following happens:

    \item[Type VI.1:] $\sum_{\ell=1}^{a_j-1}\delta_{j,\ell}=0$ (thereby $r_{j}(Q_{a_j-1})=0$); and $r_{j'}(t)>0$ for some $t\ge Q_{a_j-1}+\vartheta_j$, where $j'\in \llbracket 1,N\rrbracket$ is the unique index with $r_{j'}(Q_{a_j-1})>r_{j'}(Q_{a_j-1}+\vartheta_j)$.
    \item[Type VI.2:] $r_{j}(Q_{a_j-1})>0$.
    \item[Type VI.3:] $\sum_{\ell=1}^{a_j-1}\delta_{j,\ell}>0$, but still $r_{j}(Q_{a_j-1})=0$.
\end{enumerate}

\begin{table}[t]
\begin{tabularx}{1.\textwidth} {
  | >{\centering\arraybackslash\hsize=.8\hsize}X
  | >{\centering\arraybackslash\hsize=.5\hsize}X
  | >{\centering\arraybackslash}X
  | >{\centering\arraybackslash\hsize=.8\hsize}X
  | >{\centering\arraybackslash\hsize=.8\hsize}X | }
 \hline
 &  $\vartheta_j<\dot\vartheta_j$ or $\dot\vartheta_j=\varnothing$ & $\dot\vartheta_j=\vartheta_j\neq\varnothing$ & $\dot\vartheta_j=\vartheta_j=\varnothing$ & $\dot\vartheta_j<\vartheta_j$ or $\vartheta_j=\varnothing$ \\
 \hline
$r_{j}(Q_{a_j-1})>0$  & I  & VI.2  & V & IV \\
\hline
$r_{j}(Q_{a_j-1})=0$ & None & II, III, VI.1, VI.3  & V & IV  \\
\hline
\end{tabularx}
\caption{A summary of the classification at the beginning.}\label{tab:1}
\end{table}

The guiding principle underlying this classification is the following: for Types I and II, there are cancellations between them that we shall exploit in order  to resolve the blow-up issue. For Types III, IV, V, each would make a significant contribution to the sum, which survives in the limit $N\to\infty$.  The sum of $w(\vec r)$ with $\vec r$ having any index of type VI would decay to zero as $N\to\infty$.

Coming back to Table \ref{tab:1}, let us comment why the bottom left entry is ``None'': indeed $\vartheta_j<\dot\vartheta_j$ together with $r_j(Q_{a_j-1})=0$ would imply $r_j(Q_{a_j-1}+\vartheta_j)=-1$, which is impossible.

\bigskip

\begin{figure}[t]
    \centering
\begin{subfigure}{0.40\textwidth}
    \resizebox{0.99\textwidth}{!}{
    \begin{tikzpicture}
        \draw[teal] [dashed] [ultra thick] (0,1.8)--(0.2,2.)--(0.4,1.2)--(0.6,2.2)--(0.8,2.1)--(1,1.7)--(1.2,1.2)--(1.4,2.2)--(1.6,1.9)--(1.8,2.4)--(2,2)--(2.2,2.5)--(2.4,1.9)--(2.6,2.7)--(2.8,2.2)--
        (3.0,2.7)--(3.2,2)--(3.4,3.1)--(3.6,2.1)--(3.8,2.9)--(4.0,2.2)--(4.2,3.2)--(4.4,2.1)--(4.6,2.9)--(4.8,2.2)--(5.0,3.1)--(5.2,2.9)--(5.4,2.8)--(5.6,1.7)--(5.8,1.9)--(6,1.8)--(6.2,1.6)--(6.4,2)--(6.6,1.5)--(6.8,1.7)--(7.0,1.2)--(7.2,2.2)--(7.4,1.4)--(7.6,1.7)--(7.8,1.0)--(8.0,2.0)--(8.2,2.1)--(8.4,1.2)--(8.6,1.6)--(8.8,0.7)--(9.0,1.5)--(9.2,1.0)--(9.4,1.1)--(9.6,0.2)--(9.8,0.5)--(10.0,0);
        \draw [dashed] (10,-2)--(10,4);

        \draw[red] [ultra thick] (0,0)--(10,0);

        \draw [ultra thick] (0,-2)--(10,-2);

        \draw (10,-2) node[anchor=north]{{\LARGE $Q_{b_j}$}};
        \draw (0,3.6) node[anchor=west]{{\LARGE Type A}};
        \draw [red] (0,-0.4) node[anchor=west]{{\LARGE $p^0$}};
        \draw[gray, fill=gray] [ultra thick] (10,-2) circle (2pt);
    \end{tikzpicture}}
\end{subfigure}
\hspace{1cm}
\begin{subfigure}{0.40\textwidth}
    \resizebox{0.99\textwidth}{!}{
    \begin{tikzpicture}
        \draw[teal] [dashed] [ultra thick] (0,0.8)--(0.2,1.4)--(0.4,1.2)--(0.6,2)--(0.8,2.1)--(1,1.7)--(1.2,1.2)--(1.4,2.2)--(1.6,1.9)--(1.8,2.4)--(2,2)--(2.2,2.5)--(2.4,1.9)--(2.6,2.7)--(2.8,2.2)--
        (3.0,2.7)--(3.2,2)--(3.4,3.1)--(3.6,2.1)--(3.8,2.9)--(4.0,2.2)--(4.2,3.2)--(4.4,2.1)--(4.6,2.9)--(4.8,2.2)--(5.0,3.1)--(5.2,2.9)--(5.4,2.8)--(5.6,1.7)--(5.8,1.9)--(6,0.8)--(6.2,0.6)--(6.4,1)--(6.6,0.5)--(6.8,0.7)--(7.0,0.2)--(7.2,1.2)--(7.4,0.6)--(7.6,1.2)--(7.8,1.0)--(8.0,2.0)--(8.2,1.4)--(8.4,2.2)--(8.6,1.6)--(8.8,2.7)--(9.0,2.5)--(9.2,3.0)--(9.4,2.9)--(9.6,3.5)--(9.8,3.2)--(10.0,3.3);
        \draw [dashed] (0,-2)--(0,4);

        \draw[red] [ultra thick] (0,0.8)--(6,0.8);

        \draw[red] [ultra thick] (6,-0.2)--(10,-0.2);

        \draw [ultra thick] (0,-2)--(10,-2);

        \draw[cyan] [ultra thick] (0,-1)--(6,-1);

        \draw[cyan] [ultra thick] (6,-2)--(10,-2);

        \draw (3,0.8) node[anchor=south]{{\LARGE $\vartheta_{j'}=\dot\vartheta_{j'}$}};
        \draw (0,-2) node[anchor=north]{{\LARGE $Q_{a_{j'}-1}$}};
        \draw (0,3.6) node[anchor=west]{{\LARGE Type B}};
        \draw (4,3.6) node[anchor=west]{{\large $r_j(t)=0$ for $t\ge Q_{a_{j'}-1}+\vartheta_{j'}$}};
        \draw [red] (0,0.4) node[anchor=west]{{\LARGE $p^0$}};
        \draw [cyan] (0,-1.5) node[anchor=west]{{\LARGE $r_j$}};
        \draw[gray, fill=gray] [ultra thick] (0,-2) circle (2pt);
    \end{tikzpicture}}
\end{subfigure}
    \caption{An illustration of the various ending types.}
    \label{fig:ending}
\end{figure}
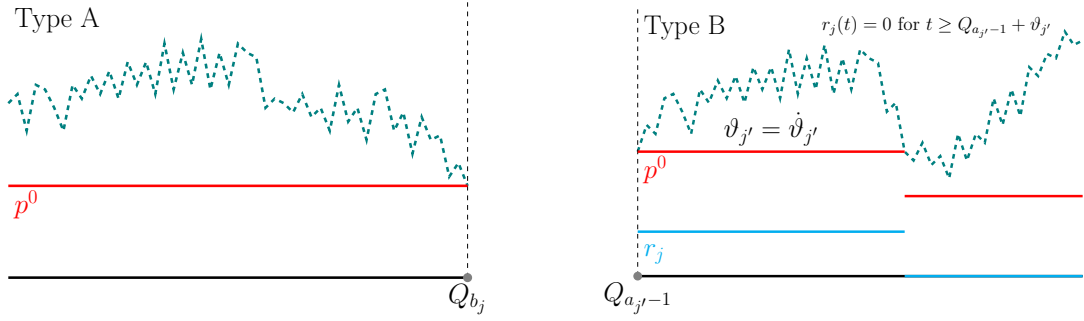

Here is the second classification: Any $j\in J$ is of
\begin{itemize}
    \item[Type A:] if $\sum_{\ell = b_j+1}^m \delta_{j,\ell}=0$.  Equivalently, the variable $x_j$ no longer appears in the monomials after the last times it has been a main variable;
    \item[Type B:] if  there is a $j'\in J_\rmIII$, such that $\dot\vartheta_{j'}=\vartheta_{j'} \neq \varnothing$, and $\sum_{\ell=1}^{a_{j'}-1}\delta_{j',\ell}=0$; and $r_{j}(t)$ being a positive constant for all $t\in \llbracket Q_{a_{j'}-1}, Q_{a_{j'}-1}+\vartheta_{j'}-1\rrbracket$, and $r_{j}(t)=0$ for each $t\ge Q_{a_j-1}+\vartheta_{j'}$;
    \item[Type C:] in all other cases.
\end{itemize}
Note that Type B definition implicitly includes $a_{j'}-1\ge b_j$. Further examining the definition, one sees a bijection between Type B and Type III: one type turns into another under $j\leftrightarrow j'$ correspondence, which implies $|J_{\rmIII}|=|J_{\rmB}|$. Types A and B would make a significant contribution to the sum, while the sum of $w(\vec r)$ with $\vec r$ having any index of type C would eventually decay to zero as $N\to\infty$.

\subsection{A general bound on walk weights}   \label{ssec:generabd}

We next give a general upper bound on the sum of $|w(\vec r)|$, with fixed index types.
This bound will be repeatedly used for various purposes.
In \Cref{prop:mbdc} we only consider those $\vec r$ with $J_\rmII=\emptyset$. 
Later in this subsection, \Cref{cor:mbdcII} extends the analysis to the case of non-empty $J_\rmII$.

As already stated, we always fix $m\in \N$.
In addition, we fix the following data: indices $i_1,\dots,i_m$ (recall that the set of walks $\sB=\sB[i_1,\ldots,i_m]$ was defined in Definition \ref{defn:wallblock}); disjoint sets $J,U\subset \llbracket 1, N\rrbracket$ from Section \ref{ssec:classi}; the partitioning of $J$ into $J_\rmI$,  $J_\rmII$, $J_\rmIII$, $J_\rmIV$, $J_\rmV$, $J_\rmVIa$, $J_\rmVIb$, $J_\rmVIc$ and into $J_{\rmA}$, $J_{\rmB}$, $J_{\rmC}$. We assume that this partition is such that $J_\rmII=\emptyset$. Furthermore, we fix the numbers $\vartheta_j$ for all $j\in J$ and also $\dot \vartheta_{j}$ for\footnote{Generally speaking, we want to fix as many parameters, as possible on this step. However, fixing $\dot \vartheta_j$, $j\in J_\rmI$, is harder than for other $j$, because fixing such $\dot \vartheta_j$ might lead to two cases which we would have to treat differently: it could correspond either to the last jump time, or not --- this notion will appear in Step 1 of the proof. Instead, we decided to avoid discussing additional cases and do not fix this parameter.} $j\in J\setminus J_\rmI$. Finally, we fix the total number of jumps, which is $\delta$.

 For any $h\ge 1$, we define $\sB^h\subset \sB$ to be the set of all walks agreeing with all of the above data and such that  $(h-1)N^{1/3}< \max_t \sH(t)<2hN^{1/3}$.

\begin{prop}  \label{prop:mbdc}
For any $C_1>0$, there exist $C_2=C_2(C_1)>0$ and $C_3=C_3(C_1)>0$, such that if $|N-N_\ell|<C_1N^{2/3}$ for all $\ell\in\llbracket 1, m \rrbracket$, 
then for each $h\ge 1$, we have
\begin{multline}  \label{eq:wbdfdt}
\sum_{\vec r\in\sB^h}|w(\vec r)|< \prod_{\ell=1}^m (2\sqrt{NN_\ell})^{k_\ell} N^{-|U|-|J|+\frac{1}{3}|J_\rmI| -\frac{1}{3}\bigl(|J_\rmIV|+|\{j\in J_\rmIV:\,\vartheta_j=\varnothing\}|+|J_\rmVIb|+\max\{|J_\rmVIa|,|J_\rmVIc|\}+|J_\rmC|\bigr)}
\\
\times C_2^{\delta+1} \frac{h^{\delta- 2|U|}}{(\delta-2|U|)!}\exp(-C_3h^2) \big((|U|+1)\log(|U|+2)\big)^{|U|} \big(h/(|U|+1)\big)^{C_3|U|+m\mathbf{1}_{h\ge |U|}} \prod_{j\in J:\vartheta_j \ne \varnothing} \vartheta_j^{-3/2}.
\end{multline}
\end{prop}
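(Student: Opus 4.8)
The plan is to bound the signed sum $\sum_{\vec r\in\sB^h}|w(\vec r)|$ by separately estimating (i) the size of a single weight $|w(\vec r)|$ and (ii) the number of walks of each combinatorial type, and then combining the two. Looking at the three lines of \eqref{eq:weight}, the first factor (sign) is $\pm 1$ and contributes nothing; the second-line factor $\prod_\ell (\sqrt{NN_\ell})^{k_\ell} N_\ell^{(\sH(Q_{\ell-1})-\sH(Q_\ell))/2 - |\llbracket Q_{\ell-1}+1,Q_\ell\rrbracket\cap\Delta|}$ is the source of the prefactor $\prod_\ell (2\sqrt{NN_\ell})^{k_\ell}$ together with various powers of $N^{1/3}$: each $\sH(Q_{\ell-1})-\sH(Q_\ell)$ is $O(N^{1/3})$ so $N_\ell^{(\sH(Q_{\ell-1})-\sH(Q_\ell))/2}$ gives the $N^{1/3}$-power contributions controlled by the $\vec\ttt$'s, while $N_\ell^{-|\ \cdot\ \cap\Delta|}$ accumulates to $N^{-\delta}$, and (using $|N-N_\ell|<C_1N^{2/3}$) each $N_\ell$ is $N(1+O(N^{-1/3}))$, which only costs a $C_2^{\delta+1}$ factor. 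The third-line product $\prod (1+\frac{2 r_{i_\ell}(t-1)}{\beta N_\ell}-\frac{|\{j:\,r_j(t-1)\ge r_{i_\ell}(t-1)\}|}{N_\ell})$ has each factor $1+O(h N^{-1/3})$ with $O(Q_m)=O(N^{2/3})$ factors, so its logarithm is $O(hN^{1/3})$; a more careful look (using that on $\sB^h$ the walk is $O(hN^{1/3})$ and $\sum_t \sH(t)/N_\ell$ can be estimated via the Gaussian tail of $\max_t\sH(t)$) produces the Gaussian factor $\exp(-C_3 h^2)$ and the polynomial-in-$h$ corrections. So step one is a deterministic per-walk bound of the shape $|w(\vec r)|\lesssim \prod_\ell(2\sqrt{NN_\ell})^{k_\ell}\cdot N^{-\delta+(\text{$N^{1/3}$-powers from }\sH\text{ at the }Q_\ell)}\cdot C_2^{\delta+1}\exp(-ch^2)\cdot(\text{poly in }h)$.

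Step two is the enumeration. Here one writes each walk $\vec r$ as its total-degree path $\sH$ (a $\pm1$ Bernoulli bridge of length $Q_m=O(N^{2/3})$ confined to $\max_t\sH(t)<2hN^{1/3}$, together with the "blow-up" confinements near the relevant $Q_{a_j-1}$ dictated by the type of each $j$) plus the bookkeeping of how $\sH$ splits among the individual coordinates $r_j$ at the $\delta$ jump times and at the times $Q_\ell$. The count of such $\sH$'s is controlled by Catalan-type / reflection-principle estimates — a confined bridge of length $L\sim N^{2/3}$ reaching height $\sim hN^{1/3}$ has $\sim 2^L L^{-3/2}$ trajectories with the correct endpoint, and each additional confinement $\ge\sH(Q_{a_j-1})$ on an interval of length $\vartheta_j$ costs a factor $\vartheta_j^{-1/2}$ (hence $\vartheta_j^{-3/2}$ after also accounting for the forced down-step at $\vartheta_j$), giving the $\prod_{j:\vartheta_j\ne\varnothing}\vartheta_j^{-3/2}$ in \eqref{eq:wbdfdt}. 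The $2^L$ part exactly pairs with the $(2\sqrt{NN_\ell})^{k_\ell}$ normalization. The choices of which coordinate receives each jump (there are $|U|$ auxiliary variables, each created and destroyed, accounting for $2|U|$ of the $\delta$ jumps) and the positions of the $\delta-2|U|$ "free" jumps among $O(N^{2/3})$ time slots give the combinatorial factor $\frac{h^{\delta-2|U|}}{(\delta-2|U|)!}$ together with $\big((|U|+1)\log(|U|+2)\big)^{|U|}$ (this log-power is the standard bound for the number of ways the auxiliary plateaus can nest/interleave) and the $\big(h/(|U|+1)\big)^{C_3|U|+m\mathbf 1_{h\ge|U|}}$ correction. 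The powers $N^{-|U|-|J|+\frac13|J_\rmI|-\frac13(\cdots)}$ come from combining: each main or auxiliary index contributes a summation over its label in $\llbracket 1,N\rrbracket$ but this is already used up in the normalization except for a deficit of $N^{-1}$ per index (hence $-|U|-|J|$), while the type-dependent $N^{1/3}$-bonuses/penalties ($+\frac13$ for each $j\in J_\rmI$ because a blow-up index gains a factor from the extra confinement length being free, and $-\frac13$ for each of the "decaying" types IV, VI.2, VI.1/VI.3, C, and the subcase $\vartheta_j=\varnothing$ within IV) are read off from the reflection-principle gains and the structure in Section \ref{ssec:classi}. The bulk of the routine-but-lengthy work — the precise random-walk-with-restrictions counts — is what the paper defers to Section \ref{ssec:expvar} of the appendix, and I would cite those lemmas rather than reprove them.

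The main obstacle I expect is getting the exponents of $N^{1/3}$ attached to each index type exactly right, because the gain/loss of an $N^{1/3}$ depends delicately on (a) whether a jump time coincides with the "last jump" $\dot\vartheta_j$ of an index (this is precisely why the statement only fixes $\dot\vartheta_j$ for $j\notin J_\rmI$), (b) whether the relevant $\vartheta_j$ equals $\varnothing$ or not, and (c) how the confinement $\ge\sH(Q_{a_j-1})$ interacts with the global confinement $\max_t\sH<2hN^{1/3}$. The bookkeeping of these cases is where sign-cancellation has already been used up (the statement is about $|w(\vec r)|$, so no cancellation is available here) and one must be scrupulous that every summation over an auxiliary index, every choice of jump location, and every plateau length has been counted once and only once. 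A secondary technical point is controlling the third-line product of $(1+O(hN^{-1/3}))$ factors uniformly: one needs $\log\prod(\cdots)=O(hN^{1/3})+O(\text{correction})$ with the correction producing exactly the stated $\exp(-C_3h^2)$ and poly-in-$h$ terms, which requires using the confinement $\max_t\sH<2hN^{1/3}$ quantitatively (a worst-case bound $\log\le C h N^{1/3}$ alone is too lossy; one needs the reflection-principle tail on $\max_t\sH$ to extract the Gaussian factor). I would handle this by first proving the per-walk bound conditionally on the profile of $\sH$, then summing over profiles using the appendix counts, so that the Gaussian factor emerges from the $\sum_h$-type tail of the confined bridge rather than from a pointwise estimate.
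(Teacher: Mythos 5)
Your overall strategy (per-walk weight bound times a reflection-principle count of walks, with the Gaussian factor extracted from the tail of the confined bridge rather than from the weight) matches the paper's. But there is one concrete gap in the counting step that the bound \eqref{eq:wbdfdt} cannot survive. You propose to enumerate walks by choosing ``which coordinate receives each jump'' and then the positions/heights of the $\delta-2|U|$ free jumps. Assigning each jump time in $\Delta$ to one of the $|U|+|J|$ coordinates in advance costs a factor of order $|U|^{\delta-2|U|}$, which is not present in $C_2^{\delta+1}\tfrac{h^{\delta-2|U|}}{(\delta-2|U|)!}$ and cannot be absorbed there: when the proposition is later summed over $\delta$ and $|U|$ (as in the derivation of \Cref{cor:sumoverUh}), an extra $|U|^{\delta-2|U|}$ turns $\exp(Ch)$ into $\exp(C|U|h)$ and then, after the sum over $U$ with weight $N^{-|U|}\binom{N}{|U|}\le 1/|U|!$, into a doubly exponential quantity in $h$ that destroys the $\exp(-C_3h^2)$ factor. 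The paper's proof is organized precisely to avoid this: the sets $\Delta_j$ for $j\in U$ are deliberately \emph{excluded} from the pre-specified ``jump time data,'' and the walk is reconstructed \emph{backwards in time}, segment by segment, so that at each non-$\vec l$ jump time $s$ the number of choices of $\vec r(s-1)$ given $\vec r(s)$ — ranging simultaneously over the coordinate that jumps and the jump height — is bounded by $\sum_j 2r_j(s)\le 4hN^{1/3}$, yielding $(4hN^{1/3})^{\delta-2|U|}$ with no $|U|$-dependence. Your plan would need this (or an equivalent device) to close.

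Two smaller points. First, the factor $\big((|U|+1)\log(|U|+2)\big)^{|U|}$ does not come from counting interleavings of auxiliary plateaus; it comes from the moment bound of \Cref{lem:eva} for products $\prod V_i^{a_i}/a_i!$ of oscillations of the conditioned bridge, applied to the $\le|U|+m$ ``last-jump'' heights that must be chosen within ranges $\max\sH-\min\sH$ on each segment. Second, your opening description attributes $\exp(-C_3h^2)$ to the third-line product in \eqref{eq:weight}; on $\sB^h$ that product is only bounded by $\exp(+Ch)$, and the Gaussian decay comes entirely from the count (some segment must exhibit an increment $\gtrsim hN^{1/3}/m$, which is penalized by $\exp(-ch^2)$ via \Cref{lem:countwk} and \eqref{eq:eva22}). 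Your closing paragraph does state the correct mechanism, so this is a matter of making the first step of the plan consistent with the last.
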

\begin{remark}
 There is some freedom in choosing a form for the upper bound \eqref{eq:wbdfdt}. The most important parts are the precise power of $N$ in the first line, as well as $\frac{1}{(\delta-2|U|)!}$ and $\exp(-C_3h^2)$ factors which lead to the fast decay as $\delta$ or $h$ grow.
\end{remark}

\begin{prop} \label{prop:mbdc2}
Subsums of \eqref{eq:wbdfdt} admit an improved upper bound:
\begin{itemize}
    \item[(i)] There is an extra factor\footnote{Depending on $t$, the factor might have been either $N^{-1/3}$ or $N^{-2/3}$; as in the previous footnote, this depends on whether $t$ is a last jump time. $N^{-1/3}$ bound works uniformly as the larger of the two.} of $N^{-1/3}$ in the right-hand side of \eqref{eq:wbdfdt},
if one sums over the subset of $\sB^h$ containing walks with a fixed jump position: we required $t\in\Delta$ for a chosen and fixed $t\in \llbracket 1, Q_m\rrbracket$, that is not in $\bigcup_{j\in J}\llbracket Q_{a_j-1}, Q_{a_j} \rrbracket \cup \llbracket (Q_{b_j-1}+Q_{b_j})/2, Q_{b_j}\rrbracket$.
\item[(ii)] There is an extra factor of $(y-x)^{-1/2}$  in the right-hand side of \eqref{eq:wbdfdt},
if one sums over the subset of $\sB^h$ containing walks such that $\sH(t)\ge \sH(x)$ for $t\in \llbracket x, y\rrbracket$ and $\llbracket x+1, y\rrbracket\cap\Delta=\emptyset$. Here for $x$ and $y$ we are allowed to choose $x=Q_{\ell-1}$ and $y\in \llbracket Q_{\ell-1}+1, Q_{\ell-1}+(Q_\ell-Q_{\ell-1})/4\rrbracket$ for some $\ell\in \llbracket 1,m\rrbracket \setminus \{a_j\}_{j\in J}$.
\end{itemize}
\end{prop}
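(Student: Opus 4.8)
Both statements are refinements of Proposition \ref{prop:mbdc}, and the plan is to go through its proof (and the random-walk estimates of Section \ref{ssec:expvar} it invokes) while tracking the effect of the additional constraint. Recall that the bound \eqref{eq:wbdfdt} is obtained by decomposing $\sum_{\vec r\in\sB^h}|w(\vec r)|$ according to the trajectory of the running sum $\sH$ together with the ``decoration'' data of each walk --- the jump positions, the targets and sizes of the jumps, and the associated discrete block and virtual-block structure --- and then estimating the resulting count, and the size of $|w(\vec r)|$, by Catalan-type asymptotics, the reflection principle, and local central limit estimates. Each part of Proposition \ref{prop:mbdc2} simply imposes one more condition inside this decomposition, and the task is to quantify its cost and check that it does not interfere with the estimates already used.

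Part (ii) is the cleaner one. The condition ``$\sH(t)\ge\sH(x)$ for all $t\in\llbracket x,y\rrbracket$ and $\llbracket x+1,y\rrbracket\cap\Delta=\emptyset$'' says that, in a window of length $y-x$, the trajectory $\sH$ takes only $\pm 1$ steps and never dips below its starting level $\sH(x)$. For a simple random-walk segment of the type counted in Section \ref{ssec:expvar} this event has probability at most a constant times $(y-x)^{-1/2}$ by the reflection principle --- precisely the heuristic already used in Examples \ref{ex:blowup} and \ref{ex:blowup2}. Requiring $x=Q_{\ell-1}$ with $\ell\notin\{a_j\}_{j\in J}$ guarantees that no block function is born at $x$, so the piece of $\sH$ on $\llbracket x,y\rrbracket$ is a genuine unconstrained $\pm1$ random-walk segment whose count is governed by the standard estimate; forbidding jumps on that window only decreases the count. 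Hence the factor $(y-x)^{-1/2}$ multiplies the right-hand side of \eqref{eq:wbdfdt} with no change to the remaining factors.

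For part (i) we instead pin one jump to the prescribed time $t$. In the proof of Proposition \ref{prop:mbdc} the jump positions are among the summation variables, and forcing one of them to equal $t$ removes that summation. Away from the excluded intervals $\llbracket Q_{a_j-1},Q_{a_j}\rrbracket$ and $\llbracket (Q_{b_j-1}+Q_{b_j})/2,Q_{b_j}\rrbracket$ --- the regions where block functions are born or must return to $0$, and where the counting is non-generic --- the local structure of $\sH$ near $t$ is controlled by a local central limit estimate, so pinning a jump there costs a factor $N^{-2/3}$ when $t$ is not the last jump time of an auxiliary variable and $N^{-1/3}$ when it is (in the latter case an additional summation, over the jump size, is removed, which reduces the number of available positions for the pinned jump; this is exactly the last-jump-time bookkeeping already present in the proof of Proposition \ref{prop:mbdc}). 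Retaining the larger of the two bounds, $N^{-1/3}$, yields an estimate valid uniformly in the admissible $t$.

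The main obstacle is the same one that makes Proposition \ref{prop:mbdc} itself technical: the new constraint must be inserted into a decomposition that simultaneously juggles the $\sH$-trajectory, the jump data, and the block/virtual-block combinatorics, and one has to verify that it leaves all the invoked random-walk estimates intact --- in particular that near $t$ (part (i)) or on $\llbracket x,y\rrbracket$ (part (ii)) the walk genuinely behaves like an unconstrained $\pm1$ bridge, which is exactly what the hypotheses $t\notin\bigcup_{j\in J}(\cdots)$ and $\ell\notin\{a_j\}_{j\in J}$ are designed to ensure. The delicate point is the case split around last jump times in part (i), which, as throughout Section \ref{ssec:generabd}, is dealt with by simply keeping the weaker of the available bounds.
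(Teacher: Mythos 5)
Your proposal is correct and follows essentially the same route as the paper: for (i) the same case split between a pinned jump in $\Delta\setminus\vec l$ (costing $N^{-2/3}$ via the reduced count of jump positions) and a pinned last jump time (costing $N^{-1/3}$ via the reduced count of jump heights), keeping the weaker bound; for (ii) the same reflection-principle factor $(y-x)^{-1/2}$ for the segment $\llbracket x,y\rrbracket$ inserted into the segment-by-segment count of $\sH$.
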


\begin{proof}[Proof of Proposition \ref{prop:mbdc}]
In this proof $C,c>0$ may depend on $C_1$, and we take $C_2$ (resp.~$C_3$) to be large (resp.~small) enough depending on all $C,c$.

For each $j\in J_\rmA$, we denote $\hat\vartheta_j=\min\{t\in \llbracket 0, (Q_{b_j}-Q_{b_j-1})/4\rrbracket: Q_{b_j}-t\in\Delta, r_j(Q_{b_j}-t-1)>r_j(Q_{b_j}-t)\}\}$; if the set is empty, then $\hat\vartheta_j=\varnothing$. In addition to the data in definition of $\sB^h$, we fix all $\hat\vartheta_j$ for all $j\in J_\rmA$ and all $\delta_j$ for all $j\in J$. The summation over all possible values for these two parameters is performed at the very end of the proof. We refer to all the data in the definition of $\sB^h$ as well as the last two parameters as \emph{type data}.

We consider arbitrary $\vec r$ with fixed type data and
 observe from \eqref{eq:weight} and each $|N-N_\ell|<C_1N^{2/3}$ that
\begin{equation}
\label{eq_x5}
 |w(\vec r)|<CN^{-\delta}\prod_{\ell=1}^m (\sqrt{NN_\ell})^{k_\ell}\exp(Ch).
\end{equation}
In the rest of the proof we count the total number of the walks with fixed type data. Together with \eqref{eq_x5}, this count would imply \eqref{eq:wbdfdt}.

Here is the plan of the rest of the proof: We first refine the data, by additionally fixing ``jump time data'', which contains most of the set $\Delta$, as well $\Delta_j$ for $j\in J$, but not $\Delta_j$ for $j\in U$. We then count the number of walks with the refined data fixed. The way we count is by dividing the whole interval $\llbracket 0, Q_m\rrbracket$ into segments, and constructing the walk backwards in time from $Q_m$ down to $0$, segment by segment. Namely, for a segment $\llbracket v, w\rrbracket$, we count the number of possible values of $\vec r(t)$ for $t$ in this segment, with given $\vec r(t')$ for $t'\in \llbracket w, Q_m \rrbracket$: this part is subdivided into two steps --- choosing $\sH(t)$ and choosing the heights of the jumps. Finally, we multiply the counts over each segment, and then sum over (unfixed parts of) type data and jump time data.

\medskip
\noindent\textit{Step 1: jump time data.}
We make the following definitions:
\begin{itemize}
    \item For each $j\in U\cup J_\rmB\cup J_\rmC$, we let $l_j = \max \Delta_j$, i.e.\ the last jump time.
    \item For each $j\in J_\rmVIc$, we let $l_j'=\max \Delta_j\cap \llbracket 1, Q_{a_{j}-1} \rrbracket$. (See the bottom-right panel of \Cref{fig:beginning}.)
     \item We write $\vec l=\{l_j\}_{j\in U\cup J_\rmB\cup J_\rmC} \cup \{l_j'\}_{j\in J_\rmVIc}$.
    \item For each $j\in U$, or $j\in J$ with  $\sum_{\ell=1}^{a_j-1}\delta_{j,\ell}>0$, we let $f_j = \min \Delta_j$. For $j\in J$ with $\sum_{\ell=1}^{a_j-1}\delta_{j,\ell}=0$, we let $f_j=\varnothing$.
\end{itemize}

We define \emph{jump time data} for a walk $\vec r$, to consist of the following four parts:
\begin{enumerate}
    \item The set $\Delta\setminus \vec l$, and its intersection with $\Delta_j$ for each $j\in J$.
    \item $f_j$ for each $j\in U$.
    \item The relative order on the elements in $\vec l$: this set can be ordered in two ways --- by the values of $l_j$, $l'_j$ or by the values of indices $j$ --- the permutation between these two orderings is what we mean by the relative order.
\end{enumerate}

Given the information above, we consider a collection $\mathfrak{Q}$ of disjoint intervals contained in $\llbracket 1, Q_m\rrbracket$.
Namely, $\mathfrak{Q}$ contains all maximal by inclusion connected intervals $\llbracket x, y\rrbracket$, satisfying the following: it is contained in $\llbracket Q_{\ell-1}+1, Q_\ell \rrbracket$ for some $\ell \in \llbracket 1, m\rrbracket$;
it does not contain any number in $\Delta\setminus \vec l$;
for each $j\in J$, it does not contain $Q_{a_{j-1}}+\min(\vartheta_j, \lfloor (Q_{a_j}-Q_{a_j-1})/4\rfloor)$ and does not contain $Q_{a_{j-1}}+\min(\vartheta_j, \lfloor (Q_{a_j}-Q_{a_j-1})/4\rfloor)+1$;
for each $j\in J_\rmA$, it does not contain $Q_{b_j}-\min(\hat\vartheta_j,\lfloor (Q_{b_j}-Q_{b_j-1})/4\rfloor)$ and does not contain $Q_{b_j}-\min(\hat\vartheta_j,\lfloor (Q_{b_j}-Q_{b_j-1})/4\rfloor)+1$. (See the top panel of \Cref{fig:countb}.)

The last part of jump time data is the following:
\begin{enumerate}
    \item[4.] To which interval in $\mathfrak{Q}$ does each $l_j$ and $l'_j$ belong.
\end{enumerate}

\begin{figure}[t]
    \centering
\begin{subfigure}[b]{0.98\textwidth}
    \resizebox{0.95\textwidth}{!}{
    \begin{tikzpicture}
        \draw (0,0)--(28,0);
        \draw (0,0) node[anchor=north]{\LARGE $0$};
        \draw (8,0) node[anchor=north]{\LARGE $Q_1$};
        \draw (9.5,0) node[anchor=north]{\LARGE $Q_1+\vartheta_2$};
        \draw (15,0) node[anchor=north]{\LARGE $Q_2$};
        \draw (16.5,0) node[anchor=north]{\LARGE $Q_2+\vartheta_8$};
        \draw (16.5,-0.5) node[anchor=north]{\LARGE $=l_5$};
        \draw (22,0) node[anchor=north]{\LARGE $Q_3$};
        \draw (28,0) node[anchor=north]{\LARGE $Q_4$};

        \draw (20.5,0) node[anchor=north]{\LARGE $Q_3-\hat\vartheta_8$};

        \draw [gray] (4.5,0) node[anchor=north]{\LARGE $f_1$};
        \draw [gray] (11.5,0) node[anchor=north]{\LARGE $f_9$};

        \draw [gray] (23.5,0) node[anchor=north]{\LARGE $l_9$};
        \draw [gray] (26.5,0) node[anchor=north]{\LARGE $l_1$};

        \filldraw[ultra thick] (0,0) circle (1.5pt);
        \filldraw[ultra thick] (8,0) circle (1.5pt);
        \filldraw[ultra thick] (9.5,0) circle (1.5pt);
        \filldraw[ultra thick] (15,0) circle (1.5pt);
        \filldraw[ultra thick] (16.5,0) circle (1.5pt);
        \filldraw[ultra thick] (22,0) circle (1.5pt);
        \filldraw[ultra thick] (28,0) circle (1.5pt);
        \filldraw[ultra thick] (20.5,0) circle (1.5pt);

        \filldraw[orange] [ultra thick] (3,0) circle (1.5pt);
        \filldraw[orange] [ultra thick] (4,0) circle (1.5pt);
        \filldraw[orange] [ultra thick] (18,0) circle (1.5pt);

        \filldraw[brown] [ultra thick] (11,0) circle (1.5pt);

        \filldraw[gray] [ultra thick] (4.5,0) circle (1.5pt);
        \filldraw[gray] [ultra thick] (7,0) circle (1.5pt);
        \filldraw[gray] [ultra thick] (11.5,0) circle (1.5pt);
        \filldraw[gray] [ultra thick] (14,0) circle (1.5pt);
        \filldraw[gray] [ultra thick] (19,0) circle (1.5pt);
        \filldraw[gray] [ultra thick] (22.5,0) circle (1.5pt);
        \filldraw[gray] [ultra thick] (24.5,0) circle (1.5pt);

        \draw (0.2,1) node[anchor=north, color=orange]{{\Large $\Delta_2$}};
        \draw (2.2,1) node[anchor=north, color=brown]{{\Large $\Delta_5\setminus\{l_5\}$}};
        \draw (6.2,1) node[anchor=north, color=gray]{{\Large $(\Delta_1\cup\Delta_9)\setminus\{l_1, l_9\}$}};
    \end{tikzpicture}}
\end{subfigure}
\par\bigskip
\begin{subfigure}[b]{0.98\textwidth}
    \centering
    \resizebox{0.7\textwidth}{!}{
    \begin{tikzpicture}

        \fill[blue,nearly transparent] (27.5,0)--(27,1)--(24.5,1)--(24,0.5)--(20.5,0.5) -- (20.5,0) -- cycle;

        \fill[cyan,nearly transparent] (23.5,0.5)--(23,2)--(22.5,2) -- (22,1) -- (20.5,1) -- (20.5,0.5)-- cycle;
        \fill[orange,nearly transparent] (22.5,2) -- (23,2) -- (22,3) -- (20.5,3) -- (20.5,1) -- (22,1) -- cycle;

        \fill[orange,nearly transparent] (23.5,1.5)--(24,2)--(25,1)--(25.5,1.5)-- (26,1)--(26.5,1.5)--(28,0) -- (27.5,0)--(27,1) -- (24.5,1) -- (24,0.5) -- (23.5,0.5) -- (23,2)-- cycle;

        \fill[red,nearly transparent] (21,4) -- (22,3) -- (20.5,3) -- (20.5,3.5) -- cycle;

        \draw (14.5,0)--(28,0);
        \draw[thick] [dashed] (15,0)--(15,4);
        \draw[thick] [dashed] (22,0)--(22,4);
        \draw[thick] [dashed] (16.5,0)--(16.5,4);
        \draw[thick] [dashed] (20.5,0)--(20.5,4);
        \draw[thick] [dashed] (26.5,0)--(26.5,4);
        \draw (15,0) node[anchor=north]{$Q_2$};
        \draw (16.5,0) node[anchor=north]{$Q_2+\vartheta_8$};
        \draw (16.5,-0.5) node[anchor=north]{$=l_5$};
        \draw (22,0) node[anchor=north]{$Q_3$};
        \draw (28,0) node[anchor=north]{$Q_4$};

        \draw (20.5,0) node[anchor=north]{$Q_3-\hat\vartheta_8$};

        \draw [blue] (27.5,0) node[anchor=north]{$l_1$};
        \draw [cyan] (23.5,0) node[anchor=north]{$l_1$};

        \filldraw[ultra thick] (15,0) circle (1.5pt);
        \filldraw[ultra thick] (16.5,0) circle (1.5pt);
        \filldraw[ultra thick] (22,0) circle (1.5pt);
        \filldraw[ultra thick] (28,0) circle (1.5pt);
        \filldraw[ultra thick] (20.5,0) circle (1.5pt);

        \filldraw[orange] [ultra thick] (18,0) circle (1.5pt);

        \filldraw[gray] [ultra thick] (19,0) circle (1.5pt);

        \filldraw[blue] [ultra thick] (24.5,0) circle (1.5pt);
        \filldraw[blue] [ultra thick] (27.5,0) circle (1.5pt);

        \filldraw[cyan] [ultra thick] (22.5,0) circle (1.5pt);
        \filldraw[cyan] [ultra thick] (23.5,0) circle (1.5pt);

        \draw[ultra thick] (16.5,2.5)-- (18,4)-- (19,3)-- (20,4)-- (20.5,3.5)-- (21,4)-- (22,3)--(23.5,1.5)--(24,2)--(25,1)--(25.5,1.5)-- (26,1)--(26.5,1.5)--(28,0);

        \draw (26.5,3) node[anchor=north]{ $Q_4-\lfloor (Q_4-Q_3)/4\rfloor$};

        \draw (15.2,5) node[anchor=north, color=blue]{{ $r_1$}};
        \draw (16.2,5) node[anchor=north, color=cyan]{{ $r_9$}};
        \draw (17.2,5) node[anchor=north, color=orange]{{ $r_2$}};
        \draw (18.2,5) node[anchor=north, color=red]{{ $r_8$}};
        \draw (20.5,5.15) node[anchor=north]{{ (in $\llbracket Q_3-\hat\vartheta_8, Q_4\rrbracket$)}};
    \end{tikzpicture}}
\end{subfigure}
    \caption{An illustration of the walk counting procedure, in the proof of \Cref{prop:mbdc}.
    Here we take $m=4$, $(i_1, i_2, i_3, i_4)=(5,2,8,2)$ (thereby $J=\{2,5,8\}$, and $a_5=b_5=1$, $a_2=2$, $b_2=4$, $a_8=b_8=3$), $U=\{1,9\}$, $J_\rmIV=\{5\}$, $J_\rmI=\{2\}$, $J_\rmIII=\{8\}$, $J_\rmB=\{5\}$, $J_\rmA=\{2,8\}$.\\
    \textit{Top panel:} jump time data. Note that the sets $\Delta_1\setminus\{l_1\}$ and $\Delta_9\setminus\{l_9\}$ are unknown, although $f_1$, $f_9$, and the union $(\Delta_1\cup\Delta_9)\setminus\{l_1, l_9\}$ is given. The numbers $l_1$ and $l_9$ are given up to an interval; on the other hand, $l_5$ is determined, as it coincides with $Q_2+\vartheta_8=Q_{a_8-1}+\vartheta_8$. \\
    The collection $\mathfrak{Q}$ of intervals can be obtained by breaking $\llbracket 1, Q_m\rrbracket$ at all the plotted points, as well as $\lfloor Q_1/4\rfloor$, $Q_1-\lfloor Q_1/4\rfloor$, and $Q_4-\lfloor (Q_4-Q_3)/4\rfloor$. (Each breaking point either belongs to the interval to its left, or neither.) \\
       \textit{Bottom panel:} the procedure of counting the number of $\vec r$, with given jump time data (from $Q_2$ to $Q_4$). The interval $\llbracket 0, Q_4\rrbracket$ is split into segments, which are processed in backwards order. In each segment, we first determine the sum $\sH $, and next the jump heights (thus determine $\vec r$).}
    \label{fig:countb}
\end{figure}

For any given type data, the number of jump time data is bounded by
\begin{multline} \label{eq_x6}
\frac{(CN^{2/3})^{\delta-|U|-|J_\rmB|-|J_\rmC|-|J_\rmVIc|}}{(\delta-|U|-|J_\rmB|-|J_\rmC|-|J_\rmVIc|)!}
\cdot \frac{(\delta-|U|-|J_\rmB|-|J_\rmC|-|J_\rmVIc|)!\delta^{2|J|}}{(\delta-2|U|-\sum_{j\in J}\delta_j)!\prod_{j\in J}\delta_j! }\cdot (\delta+|U|+5m)^{|U|+2m}
\\
\times \left(\frac{\delta}{N^{2/3}}\right)^{|\{j\in J_\rmA: \hat\vartheta_j\ne \varnothing\}| + |J_\rmIV| + |J_\rmVIb| + \max\{|J_\rmVIa|,|J_\rmVIc|\}-|J_\rmVIc|}.
\end{multline}
This bound is obtained as follows.
First, the number of possibilities of the set $\Delta\setminus \vec l$ is ${Q_m \choose \delta-|U|-|J_\rmB|-|J_\rmC|-|J_\rmVIc|}$, which is bounded by the first factor. Then given the set $\Delta\setminus \vec l$, it would be divided into several sets: $\Delta_j\setminus \vec l$ for each $j\in J$, and $\{f_j\}$ for each $j\in U$, and the remaining numbers (which are $\bigcup_{j\in U}\Delta_j\setminus \{f_j\}$).
The number of such partition is $\frac{(\delta-|U|-|J_\rmB|-|J_\rmC|-|J_\rmVIc|)!}{(1!)^{|U|}|\bigcup_{j\in U}\Delta_j\setminus \{f_j, l_j\}|!\prod_{j\in J} |\Delta_j\setminus \vec l|! }$, which is bounded by the second factor.
For the third and last parts of jump time data, one considers a collection of $|\mathfrak{Q}|$ intervals (with $|\mathfrak{Q}|\le \delta+3m$), and take a multiset of cardinality $|\vec l|\le |U|+2m$ from them, and determine the relative order on the elements in $\vec l$.
This number of these information is $(|\mathfrak{Q}|)_{|\vec l|}$. This is bounded by the third factor.

The last factor comes from that the set $\Delta\setminus \vec l $ is mandated to contain certain numbers:
\begin{itemize}
    \item $Q_{b_j}-\hat\vartheta_j$ for each $j\in J_\rmA$ with $\hat\vartheta_j\ne\varnothing$; $Q_{a_j-1}+\dot\vartheta_j$ for each $j\in J_\rmIV\cup J_\rmVIb$,
    \item $Q_{a_j-1}+\dot\vartheta_j$ for each $j \in J_{\rmVIa}$ with the corresponding $j'$ (i.e., the unique $j'$ with $r_{j'}(Q_{a_j-1})> r_{j'}(Q_{a_j-1}+\vartheta_j)$) not in $J_\rmVIc$ (otherwise it might be that $Q_{a_j-1}+\dot\vartheta_j=l_{j'}'$).
\end{itemize}
By rearranging  the factors and using $\delta^{2|J|}(\delta+|U|+5m)^{|U|+2m}<C^\delta (|U|+1)^{|U|}$, we upper bound \eqref{eq_x6} by
\begin{equation}  \label{eq:tbd1}
 N^{\frac{2}{3}(\delta - |U| - |\{j\in J_\rmA:\hat\vartheta_j\ne\varnothing\}| - |J_\rmB| - |J_\rmC| - |J_\rmIV| - |J_\rmVIb| - \max\{|J_\rmVIa|, |J_\rmVIc|\} )} \frac{C^\delta (|U|+1)^{|U|}}{(\delta-2|U|-\sum_{j\in J}\delta_j)!\prod_{j\in J}\delta_j! }.
\end{equation}

\medskip
\noindent\textit{Step 2: construction of segments.}
We next bound the number of walks, with given type data and jump time data.
Our strategy is to divide $\llbracket 0, Q_m\rrbracket$ into several segments, and count $\vec r$ on them sequentially.
More precisely, we let $D$ be the set consisting of the following numbers:
\begin{itemize}
    \item[] $\{Q_\ell\}_{\ell=0}^m$, $\{Q_{a_j-1} + \min(\vartheta_j, \lfloor (Q_{a_j}-Q_{a_j-1})/4\rfloor)
 \}_{j\in J}$, $\{Q_{b_j}-\min(\hat\vartheta_j,\lfloor (Q_{b_j}-Q_{b_j-1})/4\rfloor ) \}_{j\in J_\rmA}$.
\end{itemize}
The set $D$ splits $\llbracket 0, Q_m\rrbracket$ into $|D|-1\le 3m$ discrete intervals, each of the form of $\llbracket v, w\rrbracket$ with $\llbracket v, w\rrbracket \cap D = \{v, w\}$.
We inductively count the number of choices for the walk on each $\llbracket v, w\rrbracket$, backwards in time, i.e., starting from the rightmost interval.
For each segment $\llbracket v, w\rrbracket$ we split the count into two steps.
We first estimate the number of possible $\sH(t)$, $t\in \llbracket v, w\rrbracket$, given $\vec r(w)$. Then we argue conditionally on $\sH(t)$, $t\in \llbracket v, w\rrbracket$, thinking about the latter as being uniformly random and estimate the number of possible $\vec r(t)$, $t\in \llbracket v, w\rrbracket$, with given $\vec r(w)$ and $\sH $. Since jump time data is given and it includes jump times, the main task in the second step is to estimate the number of possible `jump heights'.

\medskip
\noindent\textit{Step 3: Choosing $\sH(t)$ in a fixed segment.} $\sH(t)$,  $t\in\llbracket v, w\rrbracket$ is a non-negative function with increments $\pm 1$, with given $\sH(w)$, and satisfies one of the following:
\begin{enumerate}
    \item $\sH(t)\ge \sH(w)$ for all $t\in\llbracket v, w\rrbracket$, if $v=Q_{b_j}-\hat\vartheta_j \wedge \lfloor (Q_{b_j}-Q_{b_j-1})/4\rfloor$ and $w=Q_{b_j}$ for some $j\in J_\rmA$.
    \item $\sH(t)\ge \sH(v)$ for all $t\in\llbracket v, w\rrbracket$, if $v=Q_{a_j-1}$ and $w=Q_{a_j-1} + \lfloor (Q_{a_j}-Q_{a_j-1})/4\rfloor$ for some $j\in J$ with $\vartheta_j=\varnothing$.
    \item $\sH(t)\ge \sH(v)$ for all $t\in \llbracket v, w\rrbracket$ and $\sH(v)=\sum_{j'\in J\cup U: j'\neq j, f_{j'} \not\in \llbracket Q_{a_j-1}+1, w\rrbracket} r_{j'}(w)$, if $v=Q_{a_j-1}$ and $w=Q_{a_j-1} + \lfloor (Q_{a_j}-Q_{a_j-1})/4\rfloor$ for some $j\in J$ with $\vartheta_j=\varnothing$ and additionally $f_j=\varnothing$ or $j\in J_\rmVIc$, and $\llbracket Q_{a_j-1}+1, w\rrbracket \cap \Delta \subset F$, where
    \begin{equation} \label{eq:defF}
F = \left(\bigcup_{j'\in J: f_{j'}\in \llbracket Q_{a_j-1}+1, w\rrbracket}\Delta_{j'}\right)\cup \left(\bigcup_{j'\in U: f_{j'}\in \llbracket Q_{a_j-1}+1, w\rrbracket} \{f_{j'},l_{j'}\}\right).
\end{equation}
This is because, for each $j'\in J\cup U$ with $j'\neq j$ and $f_{j'} \not\in \llbracket Q_{a_j-1}+1, w\rrbracket$, $r_{j'}$ must be constant on $\llbracket v, w\rrbracket$; while $r_{j'}(v)=0$ for all other $j'$.
    \item $\sH(t)\ge \sH(v)=\sH(w)+1$ for all  $t\in\llbracket v, w-1\rrbracket$, if $v=Q_{a_j-1}$ and $w=Q_{a_j-1} + \vartheta_j$ for some $j\in J$ with $\vartheta_j\ne\varnothing$.
    \item $\sH(v)=\sum_{j'\in J\cup U: j'\neq j, f_{j'} \not\in \llbracket Q_{a_j-1}+1, w\rrbracket} r_{j'}(w)-1$, if $v=Q_{a_j-1}+\vartheta_j$ for some $j\in J$ with $\vartheta_j\ne\varnothing$, and $f_j=\varnothing$ or $j\in J_\rmVIc$, and $\llbracket Q_{a_j-1}+1, w\rrbracket \cap \Delta \subset F$, where $F$ is the same as above.\\
    This is because in this case, for each $j'\in J\cup U$ with $j'\neq j$ and $f_{j'} \not\in \llbracket Q_{a_j-1}+1, w\rrbracket$, $r_{j'}$ must be constant on $\llbracket Q_{a_j-1}, w\rrbracket$; while $r_{j'}(Q_{a_j-1})=0$ for all other $j'$.
    Therefore $\sH(Q_{a_j-1})$ is determined by $\vec r(w)$. On the other hand there is $\sH(Q_{a_j-1})=\sH(v)+1$.
    \item No further constraints: the remaining cases.
\end{enumerate}
The number of possible values for $\sH(t)$, $t\in\llbracket v, w-1\rrbracket$,  is then (up to a constant factor) bounded by
\begin{equation} \label{eq:EHbd}
\begin{split}
& 2^{w-v}(\hat\vartheta_j^{-1/2}+ N^{-1/3}), \quad 2^{w-v} N^{-1/3}, \quad 2^{w-v} N^{-2/3},\\
& 2^{w-v} \vartheta_j^{-3/2}, \quad 2^{w-v} N^{-1/3},\quad  2^{w-v},
\end{split}
\end{equation}
in these cases, respectively.
These bounds are by \Cref{lem:countwk} (stated and proved in \Cref{ssec:expvar}), which counts the number of Bernoulli walks under various constraints. More precisely, we use (iii), (iii), (iv), (v), (ii), (i) there, in these 6 cases respectively, with $M=0$.
Note that by directly applying  \Cref{lem:countwk}, the bounds would be $2^{w-v}$ multiplying a power of $(w-v)^{-1}$.
It is convenient to keep $w-v$ in the power of $2$, but replace it with an order of magnitude in the polynomial factors; this is how we get \eqref{eq:EHbd}.

\medskip
\noindent\textit{Step 4: choosing jump heights in a fixed segment.}
We count the number of possible $\vec r(t)$, $t\in \llbracket v, w\rrbracket$, given $\sH(t)$, $t\in \llbracket v, w\rrbracket$.
Consider
\[
K = \llbracket v+1, w\rrbracket \cap \left(\Delta\setminus \vec l\right) .
\]
The main task would be to determine $\vec r(s-1)$ and $\vec r(s)$ for each $s\in K$, and $\vec r(v)$;
and from there, $\vec r$  (as well as the set $\llbracket v+1, w\rrbracket \cap  \vec l$) would be uniquely determined, as we will see shortly.

\begin{figure}[!ht]
    \centering
    \resizebox{0.55\textwidth}{!}{
    \begin{tikzpicture}

    \fill[cyan,nearly transparent] (18,-5)--(18,-6)--(1,-6) -- (1,-5)-- cycle;
    \fill[blue,nearly transparent] (18,-5)--(18,-3)--(1,-3) -- (1,-5)-- cycle;
    \fill[orange,nearly transparent] (16,0)--(17,-3)--(1,-3) -- (1,0)-- cycle;
    \fill[red,nearly transparent] (10,0)--(9,3)--(1,3) -- (1,0)-- cycle;
    \fill[violet,nearly transparent] (7,3)--(6,4)--(1,4) -- (1,3)-- cycle;

     \draw[thick] (1,-6)--(18,-6);
          \draw[thick] [dashed] (1,-6)--(1,5);
          \draw[thick] [dashed] (18,-6)--(18,0);
        \draw[ultra thick] (1,5) -- (2,6)--(3,5)--(4,4)--(5,5)--(7,3)--(8,4)--(11,1)--(12,2)--(14,0)--(15,1)--(17,-1)--(18,0);
        \draw (1,-6) node[anchor=north]{\Large{$s$}};
        \draw (18,-6) node[anchor=north]{\Large{$s'$}};

    \fill[orange] (1.1,0)--(1.1,-3)--(.9,-3) -- (.9,0)-- cycle;
    \fill[red] (1.1,0)--(1.1,3)--(.9,3) -- (.9,0)-- cycle;
    \fill[violet] (1.1,4)--(1.1,3)--(.9,3) -- (.9,4)-- cycle;
        \draw (1,-1.5) node[anchor=east, color=orange]{{\Large $r_9(s)$}};
        \draw (1,1.5) node[anchor=east, color=red]{{\Large $r_3(s)$}};
        \draw (1,3.5) node[anchor=east, color=violet]{{\Large $r_8(s)$}};
        \draw (1,-4) node[anchor=east, color=blue]{{\Large $r_1(s)$}};
        \draw (1,-5.5) node[anchor=east, color=cyan]{{\Large $r_4(s)$}};
        \draw (18,-4) node[anchor=west, color=blue]{{\Large $r_1(s')$}};
        \draw (18,-5.5) node[anchor=west, color=cyan]{{\Large $r_4(s')$}};
    \end{tikzpicture}
    }
    \caption{An illustration of determining $\vec r$ in an interval $\llbracket s, s'\rrbracket$ satisfying $\llbracket s+1, s'\rrbracket\cap\Delta\subset \vec l$, with given $\vec r(s')$ and $\sH $ on this interval.
    Here it is known that $\llbracket s+1, s'\rrbracket\cap\Delta=\{l_3,l_8,l_9\}$, with $l_8<l_3<l_9$. Necessarily $r_1(s)=r_1(s')$ and $r_4(s)=r_4(s')$, while
    $r_3(s)$, $r_8(s)$, and $r_9(s)$ can be any positive integers satisfying $r_9(s)\ge 3$, $r_3(s)+r_8(s)+r_9(s)\le 8$.\\ Then given these numbers $\vec r(s)$, $l_8$ is the smallest $t$ with $\sH(t)<r_1(s)+r_3(s)+r_4(s)+r_8(s)+r_9(s)$, $l_3$ is the smallest $t$ with $\sH(t)<r_1(s)+r_3(s)+r_4(s)+r_9(s)$, and $l_9$ is the smallest $t$ with $\sH(t)<r_1(s)+r_4(s)+r_9(s)$.
    }
    \label{fig:detl}
\end{figure}
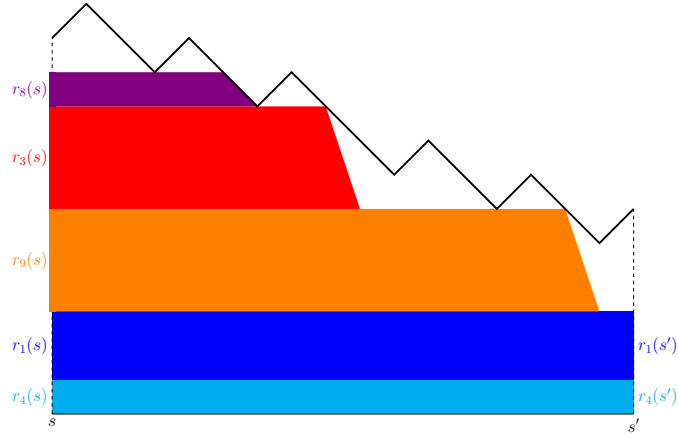

Recall that, from jump time data, it may already be known that $w\in \vec l$, when $w=Q_{a_{j}-1}+\vartheta_{j}$ for some $j\in J$.
We let $w'=w-1$ in this case  \footnote{Note that in this case, $\vec r(w')=\vec r(w-1)$ is determined by $\sH $ and $\vec r(w)$.},
and $w'=w$ otherwise.

Take any $s\in K\cup\{v\}$, let $s'$ denote the smallest number in $\llbracket s, w'\rrbracket$ with $s'+1\in K$, or $s'=w'$ if no such number exists (i.e., $s=\max K\cup\{v\}$).
We now determine $\vec r$ on $\llbracket s, s'\rrbracket$ given $\vec r(s')$.

Denote $u[s]=| \Delta \cap \llbracket s+1, s'\rrbracket |$.
Note that $\Delta\cap\llbracket s+1, s'\rrbracket\subset\vec l$. It remains to determine $r_j(s)$ for $j$ with $l_j$ or $l_j'$ in $\Delta\cap\llbracket s+1, s'\rrbracket$\footnote{Note that the collection of such $j$, and the ordering of these $l_j$ and $l_j'$ are already determined by jump time data}; from there $\vec r$ on $\llbracket s, s'\rrbracket$ would be determined (see \Cref{fig:detl}).
A constraint is that their sum is at most $\max_{\llbracket s, s'\rrbracket} \sH - \sum_{j\neq i_\ell}r_j(w')$ while (a particular) one of them is at least $\min_{\llbracket s, s'\rrbracket} \sH - \sum_{j\neq i_\ell}r_j(w')$, where $\ell$ is the number with $\llbracket v, w\rrbracket\subset\llbracket Q_{\ell-1}, Q_\ell\rrbracket$.
Besides, since $\sH $ on $\llbracket v, w\rrbracket$ is given, sometimes this already determines the sum of some entries of $\vec r(s)$.
More precisely, there are three cases:
\begin{itemize}
    \item[(a)] $v\in \{Q_{a_j-1}, Q_{a_j-1}+\vartheta_j\}$ for some $j\in J$ with $f_j=\varnothing$ or $j\in J_\rmVIc$, and
$\Delta\cap\llbracket Q_{a_j-1}+1, s\rrbracket\subset F$ while $\Delta \cap \llbracket s+1, s'\rrbracket\not\subset F$ (for $F$ defined in \eqref{eq:defF}).\\
In this case necessarily $u[s]\ge 1$, and $\sum_{j'\in J\cup U: f_{j'}\not\in \llbracket Q_{a_j-1}+1, s \rrbracket }r_{j'}(s)=\sH(Q_{a_j-1})$. Thereby, the number of possible choices of $\vec r(s)$ would be bounded by $\frac{(\max_{\llbracket s, s'\rrbracket} \sH - \min_{\llbracket s, s'\rrbracket} \sH +1)^{u[s]-1}}{(u[s]-1)!}$.
\item[(b)] In the remaining case, the number of possible choices of $\vec r(s)$ would be bounded by $\frac{(\max_{\llbracket s, s'\rrbracket} \sH - \min_{\llbracket s, s'\rrbracket} \sH +1)^{u[s]}}{(u[s])!}$.
\end{itemize}

We now take any $s\in K$, and determine $\vec r(s-1)$ given $\vec r(s)$. There are also three cases:
\begin{itemize}
    \item[(d)] $s=f_j$ for some $j\in J\cup U$. In this case $\vec r(s-1)$ is determined by $\vec r(s)$.
    \item[(e)] $v\in \{Q_{a_j-1}, Q_{a_j-1}+\vartheta_j\}$ for some $j\in J$ with $f_j=\varnothing$ or $j\in J_\rmVIc$, and
$\llbracket Q_{a_j-1}+1, s-1\rrbracket\cap \Delta\subset F$, while $s\not\in F$. In this case, necessarily $\sum_{j'\not\in J\cup U: f_{j'}\in \llbracket Q_{a_j-1}+1, s-1 \rrbracket }r_{j'}(s-1)=\sH(Q_{a_j-1})$, and to ensure this there are at most $|J|+|U|$ possible choices of $\vec r(s-1)$.
    \item[(f)] In the remaining case, the number of possible choices of $\vec r(s-1)$ would be at most $4hN^{1/3}$.
    This is because for every $j\in J\cup U$, assuming that $s\in \Delta_j$, there are at most $2r_j(s)$ possible choices of $r_j(s-1)$; and $\sum_{j\in J\cup U} 2r_j(s)<4hN^{1/3}$.
\end{itemize}
We note that the bound of $ChN^{1/3}$ in the last case (f) is the main reason for us to exclude the information of the set $\Delta_j$ (for $j\in U$) from jump time data, and to construct $\sH $ and $\vec r$ backwards.
More precisely, specifying each $\Delta_j$ ahead of time would lead to an extra factor of roughly $|U|^{\delta}$; while if we consider the number of possible $\vec r(s)$ with given $\vec r(s-1)$, it would only be bounded by $2|U|h N^{1/3}$, also leading to an extra factor of $|U|^{\delta}$ when taking the product over all $s$.

As another remark, in case (a) and case (e) we see that there are constraints on the sum of $r_{j'}(s)$ or $r_{j'}(s-1)$ for some $j'$.
These complement case 3 and case 5 in counting the number of $\sH $, in Step 3.

Putting the above cases together, we have that the number of $\vec r$ on $\llbracket v, w\rrbracket$, given $\sH $ on the same interval, is bounded by
\begin{equation}   \label{eq:uminus}
\prod_{s\in K\cup \{v\}} \frac{(\max_{\llbracket s, s'\rrbracket} \sH - \min_{\llbracket s, s'\rrbracket} \sH +1)^{u_-[s]}}{(u_-[s])!} \cdot (|J|+|U|) (4hN^{1/3})^{|K\setminus \{f_j\}_{j\in J\cup U}|-u_-},
\end{equation}
where each $u_-[s]=u[s]-1$ or $u[s]$, depending on whether $s$ is of case (a) or (b); and $u_-=0$ or $1$, depending on the existence of any $s$ of case (e).

We apply \Cref{lem:eva} (stated and proved in \Cref{ssec:expvar}) in order to upper bound \eqref{eq:uminus}. Note that we have counted the number of possible choices for $\sH$ on the previous step, and now we treat \eqref{eq:uminus} as a random variable with respect to the uniform measure on all these choices. This random variable and this measure are the content of  \Cref{lem:eva}. Note that all six cases from \eqref{eq:EHbd} of Step 3 are covered by the lemma applied to $\sH$ or its shifts by a constant.
More precisely, we always take $X=w-v$ or $w-v-1$.
Then in case 1 and 2, we take $G=0$, and sum over $H$;
in case 3 we take $G=0$ and a fixed $H$;
in case 4 we take $H=G=0$;
in case 5 we take fixed $H$ and $G$;
in case 6 we fix $G$ and sum over all $H$.
The number $L$ is chosen to fit the constraint $\max \sH< 2hN^{1/3}$.

We conclude that, given type data and jump time data, the number of $\vec r$ on $\llbracket v, w\rrbracket$ (with given $\vec r(w)$) is at most the number of choices for $\sH(t)$ (upper bounded on the previous step), multiplied by (with the convention that $0^0=1$)
\begin{equation}  \label{eq:buv}
C^{u(v)} \log(u(v)+1)^{u(v)} (u(v)^{-u(v)/2}  + u(v)^{-u(v)}h^{u(v)} )  N^{u(v)/3} \cdot (|J|+|U|) (4hN^{1/3})^{u'(v)} ,
\end{equation}
where
\[u(v)=\sum_{s\in K\cup \{v\}}u[s]= |\vec l \cap \llbracket v+1, w' \rrbracket |,\] and \[u'(v)= |K\setminus \{f_j\}_{j\in J\cup U}|= |(\Delta \setminus \{f_j\}_{j\in J\cup U}) \cap  \llbracket v+1, w' \rrbracket| - u(v) .\]
And there is an extra factor of $N^{-1/3}$, under the following condition:
\begin{itemize}
    \item[(ae)] $v\in \{Q_{a_j-1}, Q_{a_j-1}+\vartheta_j\}$ for some $j\in J$ with $f_j=\varnothing$ or $j\in J_\rmVIc$, and  $\llbracket Q_{a_j-1}+1, w\rrbracket\cap \Delta\setminus F$ is not empty, with its smallest number in $\llbracket v+1, w'\rrbracket$. This holds if one of (a) and (e) happens for one $s$.
\end{itemize}
\smallskip
\noindent\textit{Step 4': choosing jump heights when the increment is large.} In the next step we also need a version of the above count with an additional requirement that $\max_{\llbracket v, w\rrbracket}\sH  - \min_{\llbracket v, w\rrbracket}\sH  > (h-1)N^{1/3}/(3m)$. We claim that in this situation the same bounds \eqref{eq:EHbd} and \eqref{eq:buv} hold with an extra overall factor of $C\exp(-ch^2)$.
Indeed, to see that we consider two situations. If the boundary values of $\sH$ satisfy $|\sH(w) - \sH(v)|  > (h-1)N^{1/3}/(6m)$ in cases 1--3, 5, 6 of Step 3, then there would be an extra factor of $C\exp(-ch^2)$ on Step 3, by taking $M=(h-1)N^{1/3}/(6m)$ in \Cref{lem:countwk}. 
(In particular, in case 3 where \Cref{lem:countwk}(iv) is applied, we have $H=|\sH(w) - \sH(v)|>M$; and in case 5 where \Cref{lem:countwk}(ii) is applied, we have $|G-H|=|\sH(w) - \sH(v)|>M$.)
Otherwise, if  $|\sH(w) - \sH(v)|  \le (h-1)N^{1/3}/(6m)$, then we apply \eqref{eq:eva22} in \Cref{lem:eva} (with $K=(h-1)N^{1/3}/(12m)$) and get an extra factor of $C\exp(-ch^2)$ in the expectation in Step 4.

\medskip
\noindent\textit{Step 5: product of contributions of segments.}
We can now bound the number of possible choices for $\vec r(t)$, $t\in \llbracket 0,Q_m\rrbracket$ (with given type data and jump time data), by multiplying \eqref{eq:EHbd}, and \eqref{eq:buv}, over all $\llbracket v, w\rrbracket$.

We first take the product of \eqref{eq:buv}. Note that the sum of all $u(v)$ is at least $|U|$ and at most $|U|+m$. The former implies $u(v)\ge \frac{1}{3m} |U|$ for at least one segment $\llbracket v,w\rrbracket$.
 For this $\llbracket v,w\rrbracket$, then we have
\begin{equation}\label{eq_lastjumpchoice1}
u(v)^{-u(v)/2} + u(v)^{-u(v)}h^{u(v)} \le (3m/|U|)^{|U|/(6m)} + \Big[(3mh/|U|)^{|U|/(3m)}\ \text{or}\ (3mh/|U|)^{|U|+m\mathbf{1}_{h\ge |U|}}\Big];
\end{equation}
where for the first term we use the fact that $x\mapsto x^{-\frac{x}{2}}$ is decreasing and upper bound it at $x=\frac{1}{3m}|U|$, and the two cases for the second term correspond to the regime $h<\frac{1}{3m}|U|$ (so $\frac{h}{u(v)}\le \frac{3mh}{|U|}<1$ and we minimize the exponent) or $h\ge \frac{1}{3m}|U|$ (so $\max(\frac{h}{u(v)},1)\le \frac{3mh}{|U|}$ and we maximize the exponent at $|U|+m$).

For other $\llbracket v,w\rrbracket$'s, we use the fact that the function $x\mapsto x^{-x}h^x$, $x>0$, achieves its maximum at $x=\frac{h}{e}$: it is increasing when $x<h/e$, and decreasing when $x>h/e$. So instead we have
\begin{equation}\label{eq_lastjumpchoice2}
u(v)^{-u(v)/2} + u(v)^{-u(v)}h^{u(v)} \le 1 + e^{h/e},
\end{equation}
and the product of all such $\llbracket v, w\rrbracket$ is at most $(1+e^{h/e})^{3m}<C^h$.
Combining (\ref{eq_lastjumpchoice1}) and (\ref{eq_lastjumpchoice2}), we upper bound $u(v)$ by $|U|+C$ and slightly adjust the expression to avoid 0 in $\log(\cdot)$ and denominator, and get
\[
\prod_{\llbracket v,w\rrbracket} \log(u(v)+1)^{u(v)} (u(v)^{-u(v)/2} + u(v)^{-u(v)}h^{u(v)}) < C^{|U|+h}\log(|U|+2)^{|U|} (h/(|U|+1))^{c|U|+m\mathbf{1}_{h\ge |U|}},
\]
where $c$ is a positive constant between $\frac{1}{6m}$ and $1$.
Therefore, the product of \eqref{eq:buv} over $\llbracket v, w\rrbracket$ is bounded from above by
\begin{equation}  \label{eq:29}
    C^{\delta+h}\log(|U|+2)^{|U|} (h/(|U|+1))^{c|U|+m\mathbf{1}_{h\ge |U|}} h^{\delta-2|U|} N^{(\delta-|U|-|\{j\in J:f_j\ne \varnothing\}| -|\{j\in J: Q_{a_j-1}+\vartheta_j\in \vec l\}| )/3},
\end{equation}
where we use that $\sum_{\llbracket v,w\rrbracket} u'(v)\le \delta-2|U|$, and
\begin{multline*}
\sum_{\llbracket v,w\rrbracket} u(v)+u'(v) = \sum_{\llbracket v,w\rrbracket} |(\Delta \setminus \{f_j\}_{j\in J\cup U}) \cap  \llbracket v+1, w' \rrbracket| \\ = \delta-|U|-|\{j\in J:f_j\ne\varnothing\}| - |\{j\in J: Q_{a_j-1}+\vartheta_j\in \vec l\}|.
\end{multline*}
For $\llbracket v, w\rrbracket$ under (ae), there is an extra factor of $N^{-1/3}$.
The number of $\llbracket v, w\rrbracket$ under (ae) is the same as the number of $j\in J$, satisfying the following two conditions:
\begin{itemize}
    \item $f_j=\varnothing$ or $j\in J_\rmVIc$, and
    \item
    Let $w_j= Q_{a_j-1}+\lfloor (Q_{a_j}-Q_{a_j-1})/4\rfloor$ if $\vartheta_j=\varnothing$, and let $w_j$ be the smallest number in the set $D$ (from Step 2) larger than $Q_{a_j-1}+\vartheta_j$ if  $\vartheta_j\neq\varnothing$. Let $F_j$ be the set $F$ of \eqref{eq:defF} for $\llbracket Q_{a_j-1}+1, w_j\rrbracket$.\\
    We ask for $\llbracket Q_{a_j-1}+1, w_j\rrbracket\cap (\Delta\setminus F_j)$ to be non-empty;
    besides, when $Q_{a_j-1}+\vartheta_j\in \vec l$,
    the smallest number of the set $\llbracket Q_{a_j-1}+1, w_j\rrbracket\cap \Delta\setminus F_j$ should not be $Q_{a_j-1}+\vartheta_j$. \\ (This follows from (ae), and that $w'=Q_{a_j-1}+\vartheta_j-1$ for the interval $\llbracket Q_{a_j-1}+1, Q_{a_j-1}+\vartheta_j\rrbracket$.)
\end{itemize}
Then we get a factor at most
\begin{equation}  \label{eq:29a}
N^{-|\{j\in J: f_j=\varnothing \text{ or } j\in J_\rmVIc,\; \llbracket Q_{a_j-1}+1, w_j\rrbracket\cap \Delta\not\subset F_j, \;  Q_{a_j-1}+\vartheta_j\not\in \vec l \}|/3}.
\end{equation}
Multiplying \eqref{eq:29} and \eqref{eq:29a}, we get
\begin{multline}  \label{eq:29b}
        C^{\delta+h}\log(|U|+2)^{|U|} (h/(|U|+1))^{c|U|+m\mathbf{1}_{h\ge |U|}} h^{\delta-2|U|}\\ \times N^{(\delta-|U|-|\{j\in J:f_j\ne\varnothing\}|-|\{j\in J: f_j=\varnothing \text{ or } j\in J_\rmVIc,\; \llbracket Q_{a_j-1}+1, w_j\rrbracket\cap \Delta\not\subset F_j\}| )/3}.
\end{multline}

We next take the product of \eqref{eq:EHbd} over $\llbracket v,w\rrbracket$, and get
\begin{multline}  \label{eq:30}
C2^{Q_m}
N^{-|\{j\in J: f_j=\varnothing \text{ or } j\in J_\rmVIc, \llbracket Q_{a_j-1}+1, w_j\rrbracket\cap \Delta\subset F_j\}|/3} \\ \times
\exp(-ch^2) \prod_{j\in J:\vartheta_j \ne \varnothing} \vartheta_j^{-3/2} \prod_{j\in J: \vartheta_j=\varnothing} N^{-1/3}
\prod_{j\in J_\rmA, \hat\vartheta_j\ne\varnothing} \hat\vartheta_j^{-1/2}\prod_{j\in J_\rmA, \hat\vartheta_j=\varnothing}N^{-1/3}.
\end{multline}
Here the factor of $C\exp(-ch^2)$ is from Step 4': because $\max_t \sH(t)>(h-1)N^{1/3}$, we should necessary have  $\max_{\llbracket v,w\rrbracket} \sH  - \min_{\llbracket v,w\rrbracket} \sH  > (h-1)N^{1/3}/(3m)$ for at least one $\llbracket v,w\rrbracket$.

In summary, by multiplying \eqref{eq:29b} and \eqref{eq:30}, the number of possible $\vec r$ (with given type data and jump time data) is bounded by
\begin{multline}  \label{eq:tbd2}
2^{Q_m} C^{\delta+1} N^{-\frac{1}{3}|\{j\in J: f_j=\varnothing \text{ or } j\in J_\rmVIc\}|}N^{\frac{1}{3} (\delta - |U| - |\{j\in J: f_j\ne\varnothing\}| ) } h^{\delta - 2|U|} \log(|U|+2)^{|U|} (h/(|U|+1))^{c|U|+m\mathbf{1}_{h\ge |U|}}
 \\ \times \exp(-ch^2) \prod_{j\in J:\vartheta_j \ne\varnothing} \vartheta_j^{-3/2} \prod_{j\in J: \vartheta_j=\varnothing} N^{-1/3} \prod_{j\in J_\rmA, \hat\vartheta_j\ne \varnothing} \hat\vartheta_j^{-1/2}\prod_{j\in J_\rmA, \hat\vartheta_j=\varnothing}N^{-1/3}.
\end{multline}

\medskip

\noindent\textit{Step 6: Final summation.} By multiplying \eqref{eq:tbd1} and \eqref{eq:tbd2}, and summing over $\delta_j$ for $j\in J$ and $\hat\vartheta_j$ for $j\in J_\rmA$, we get
\begin{multline*}
2^{Q_m} N^{\delta - |U| - \frac{1}{3}(|J| + |\{j\in J:\vartheta_j=\varnothing\}| + |J_\rmA| + |J_\rmVIc|) - \frac{2}{3}(|J_\rmB| + |J_\rmC| + |J_\rmIV| + |J_\rmVIb| + \max\{|J_\rmVIa|, |J_\rmVIc|\})  } \\
\times C^{\delta+1} \frac{h^{\delta- 2|U|}}{(\delta-2|U|)!}\exp(-ch^2) ((|U|+1)\log(|U|+2))^{|U|} (h/(|U|+1))^{c|U|+m\mathbf{1}_{h\ge |U|}} \prod_{j\in J:\vartheta_j \ne\varnothing} \vartheta_j^{-3/2}.
\end{multline*}
Using that $|J_\rmI|+\cdots+|J_\rmVIc|=|J_\rmA|+|J_\rmB|+|J_\rmC|=|J|$, and that $|J_\rmIII|=|J_\rmB|$, and $|\{j\in J:\vartheta_j=\varnothing\}| = |\{j\in J_\rmIV:\vartheta_j=\varnothing\}| + |J_\rmV|$, we can rewrite the exponent of $N$, and the conclusion follows.
\end{proof}

\begin{proof}[Proof of Proposition \ref{prop:mbdc2}]
We explain how to adapt the previous proof to get the improvements, under the extra requirements.

\noindent\textbf{Improvement (i):} Two scenarios can happen if a fixed $t$ is contained in $\Delta$: it is contained in $\Delta\setminus \vec l$, or $\vec l$.
In the first case, there would be an extra factor of $\delta N^{-2/3}$ in \eqref{eq_x6}.
This would lead to an extra factor of $N^{-2/3}$ in the final bound.

In the second case,
in Step 4, in addition to (a) and (b), there is another case (b') where the fixed number $t\in \llbracket s+1, s'\rrbracket$. In this case (b'), necessarily $u[s]\ge 1$, and the number of possible choices of $\vec r(s)$ would be at most $C\frac{(\max_{\llbracket s, s'\rrbracket} \sH - \min_{\llbracket s, s'\rrbracket} \sH +1)^{u[s]-1}}{(u[s]-1)!}$.
Then in \eqref{eq:uminus}, we would have $u_-[s]=u[s]-1$ under (b').
We note that for any $\llbracket v, w\rrbracket$, at most one of (a), (b'), (e) can happen, and for at most one $s$.
Then \eqref{eq:buv}, there would be an extra factor of $N^{-1/3}$ when $t\in \llbracket v+1, w'\rrbracket$.
This would lead to an extra factor of $N^{-1/3}$ in the final bound.

\noindent\textbf{Improvement (ii):}
In Step 1, jump time data would contain the following extra information: for each $l_j$ or $l'_j$, whether it is contained in $\llbracket 1, y\rrbracket$, or $\rrbracket y+1, Q_m\rrbracket$.
In Step 2, the set $D$ would contain $y$.
In Step 3, when $\llbracket v, w\rrbracket = \llbracket x, y\rrbracket$, the function $\sH(t)$ for $t\in\llbracket v,w\rrbracket$ would be required to have $\sH(t)\ge \sH(v)$ for all $t\in \llbracket v, w\rrbracket$.
Then the number of such $\sH$ would be bounded by $C2^{w-v}(y-x)^{-1/2}$, which has an extra factor of order $(y-x)^{-1/2}$ compared to the last case in Step 3, corresponding to the probability for a Bernoulli random walk with $y-x$ steps to be minimized at $x$.
This would lead to an extra factor of $(y-x)^{-1/2}$ in \eqref{eq:30}, and thereby in \eqref{eq:tbd2} and the final bound.
\end{proof}

In \Cref{prop:mbdc} we only consider walks where $J_\rmII=\emptyset$. 
We can also deduce a bound without this assumption, which will also be useful.
\begin{corollary}  \label{cor:mbdcII}
Under the same setup as \Cref{prop:mbdc} except for that $J_\rmII$ is allowed to be non-empty, the estimate becomes
\begin{multline*}
\sum_{\vec r\in\sB^h}|w(\vec r)|< \prod_{\ell=1}^m (2\sqrt{NN_\ell})^{k_\ell} N^{-|U|-|J|+\frac{1}{3}|J_\rmI|+\frac{1}{3}|J_\rmII| -\frac{1}{3}\bigl(|J_\rmIV|+|\{j\in J_\rmIV:\,\vartheta_j=\varnothing\}|+|J_\rmVIb|+\max\{|J_\rmVIa|,|J_\rmVIc|\}+|J_\rmC|\bigr)}
\\
\times C_2^{\delta+1} \frac{h^{\delta- 2|U|}}{(\delta-2|U|)!}\exp(-C_3h^2) \big((|U|+1)\log(|U|+2)\big)^{|U|} \big(h/(|U|+1)\big)^{C_3|U|+m\mathbf{1}_{h\ge |U|}} \prod_{j\in J:\vartheta_j \ne \varnothing} \vartheta_j^{-3/2}.
\end{multline*}
\end{corollary}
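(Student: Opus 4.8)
The plan is to reduce to \Cref{prop:mbdc} by a \emph{merging} operation that converts type II indices into type I ones. Given a configuration $\vec r$ in the broader class, with $J_\rmII=\{j_1,\dots,j_p\}$ and corresponding (unique) auxiliary indices $j'(j_1),\dots,j'(j_p)\in U$, define $\vec r^*$ by $r^*_{j_a}=r_{j_a}+r_{j'(j_a)}$, $r^*_{j'(j_a)}=0$ for $a=1,\dots,p$, and $r^*_k=r_k$ otherwise. Using that $r_{j_a}\equiv 0$ on $\llbracket 0,Q_{a_{j_a}-1}\rrbracket$, that $r_{j'(j_a)}$ is a positive constant $c_a$ on $\llbracket Q_{a_{j_a}-1},Q_{a_{j_a}-1}+\vartheta_{j_a}-1\rrbracket$ and vanishes on $\llbracket Q_{a_{j_a}-1}+\vartheta_{j_a},Q_m\rrbracket$, and that the unique $\top$-step at $Q_{a_{j_a}-1}+\vartheta_{j_a}$ carries $(r_{j_a},r_{j'(j_a)})$ from $(0,c_a)$ to $(c_a-1,0)$, one checks: $\sH^*=\sH$; $\Delta^*=\Delta\setminus\{Q_{a_{j_a}-1}+\vartheta_{j_a}\}_{a=1}^p$ (these $p$ points lying in $p$ distinct blocks, since $a_{j_a}\ne a_{j_b}$ for $a\ne b$); $j_a$ becomes a \emph{type I} index of $\vec r^*$ (indeed $r^*_{j_a}(Q_{a_{j_a}-1})=c_a>0$, $\vartheta^*_{j_a}=\vartheta_{j_a}\ne\varnothing$, and $\dot\vartheta^*_{j_a}>\vartheta_{j_a}$ or $=\varnothing$, which is exactly the type I cell of Table \ref{tab:1}); and every other index keeps its beginning type, its $\vartheta$, its $\dot\vartheta$, and its ending type, the key point being that $Q_{a_j-1}+\dot\vartheta_j$ lies in block $a_j\ne a_{j_a}$ for $j\notin J_\rmII$, so deleting the $p$ jumps cannot perturb $\dot\vartheta_j$. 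Consequently $\delta^*=\delta-p$, $|U^*|=|U|-p$, $|J^*_\rmI|=|J_\rmI|+p$, $J^*_\rmII=\varnothing$, all other type counts unchanged.

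Granting this, the argument is threefold. \emph{(a)} $\vec r^*$ satisfies every constraint used in the proof of \Cref{prop:mbdc}, except possibly the requirement $j\le N_\ell$ at the relabelled $\top$-steps; but that requirement is never used in deriving the upper bound \eqref{eq:wbdfdt}, which therefore applies verbatim to the larger class of formal configurations obtained by dropping it. \emph{(b)} The weight changes only through the factor $\prod_\ell N_\ell^{-|\llbracket Q_{\ell-1}+1,Q_\ell\rrbracket\cap\Delta|}$ in \eqref{eq:weight}, whose exponent gains $+p$ and so contributes $(\prod_a N_{a_{j_a}})^{-1}\le c^{-p}N^{-p}$ to $|w(\vec r)|/|w(\vec r^*)|$, and through the product in the last line of \eqref{eq:weight}, whose ratio is at most $\exp(Ch)$ (each of the $\le\vartheta_{j_a}\lesssim N^{2/3}$ affected factors changes by $1+O(c_aN^{-1})+O(hN^{-2/3})$, and $c_a\le 2hN^{1/3}$) and is thus harmless against the $\exp(-C_3h^2)$ in \eqref{eq:wbdfdt}; so $|w(\vec r)|\le C^\delta N^{-p}\exp(Ch)\,|w(\vec r^*)|$. \emph{(c)} Summing over the $\binom{|U|}{p}p!\le|U|^p\le\delta^m$ ways to recover $\vec r$ from a given $\vec r^*$ (the splitting of $r^*_{j_a}$ being forced, only the supporter labels being free) and applying \Cref{prop:mbdc} to the collections $\vec r^*$ (legitimate since $J^*_\rmII=\varnothing$) gives $\sum_{\vec r\in\sB^h}|w(\vec r)|\le\delta^mC^\delta N^{-p}\exp(Ch)\cdot\bigl(\text{RHS of \eqref{eq:wbdfdt} with }\delta^*,|U^*|,|J^*_\rmI|\bigr)$; since the $N$-exponent of the bracket equals $[\text{exponent of \eqref{eq:wbdfdt}}]+p+\tfrac13p$ (from $-|U^*|=-|U|+p$ and $\tfrac13|J^*_\rmI|=\tfrac13|J_\rmI|+\tfrac13p$), multiplying by $N^{-p}$ produces $[\text{exponent of \eqref{eq:wbdfdt}}]+\tfrac13|J_\rmII|$, which is precisely the claimed bound.

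I expect the main obstacle to be twofold. First, carefully verifying that merging preserves the entire type classification — that no index is re-classified and that $|J^*_\rmIII|=|J^*_\rmB|$ still holds — which means going through Table \ref{tab:1} and the definitions of the beginning and ending types case by case (the annoying case being an index that is simultaneously of beginning type II and ending type B, so that merging its auxiliary interacts with its plateau). Second, the bookkeeping of the sub-polynomial factors when translating $\delta^*,|U^*|$ back to $\delta,|U|$: one must bound $\tfrac{h^{\delta-2|U|}/(\delta-2|U|)!}{h^{\delta^*-2|U^*|}/(\delta^*-2|U^*|)!}$, $\tfrac{((|U|+1)\log(|U|+2))^{|U|}}{((|U^*|+1)\log(|U^*|+2))^{|U^*|}}$ and $\tfrac{(h/(|U|+1))^{C_3|U|+m\mathbf 1_{h\ge|U|}}}{(h/(|U^*|+1))^{C_3|U^*|+m\mathbf 1_{h\ge|U^*|}}}$ by quantities of the form $C^\delta\exp(C_3h^2/2)$, so that they can be absorbed into $C_2,C_3$; this uses $p=|J_\rmII|\le m$, $|U|\le\delta/2$, the monotonicity of $x\mapsto((x+1)\log(x+2))^x$, the unimodality of $x\mapsto x^{-x}h^x$ exactly as in Step 5 of the proof of \Cref{prop:mbdc}, and the factor $\exp(-C_3h^2)$ to kill the bounded powers of $h$.
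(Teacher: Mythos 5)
Your proposal is correct and follows essentially the same route as the paper: your merging map $\vec r\mapsto\vec r^*$ is exactly the paper's map $\cC$ from \Cref{defn:fP}, the weight comparison $|w(\vec r)|\le CN^{-|J_\rmII|}\exp(Ch)|w(\cC(\vec r))|$ and the $\le|U|^{|J_\rmII|}$ multiplicity/type-data count before invoking \Cref{prop:mbdc} are the same steps the paper takes, and the exponent bookkeeping ($-|U|\mapsto-|U|+p$, $\tfrac13|J_\rmI|\mapsto\tfrac13|J_\rmI|+\tfrac13 p$, times $N^{-p}$) matches. Your treatment of the sub-polynomial factors (absorbing the discrepancies from $\delta^*,|U^*|$ into $C_2,C_3$ using $p\le m$) is if anything slightly more explicit than the paper's.
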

To prove this, we note that each walk can be associated with another walk without type II indices, as follows.
\begin{definition}  \label{defn:fP}
For $\vec s\in\sB$, we construct another $\vec r\in\sB$ without type II indices, as follows.
For each type II index $j\in J_\rmII$, and the corresponding $j'$, we let $r_j=s_j+s_{j'}$ and $r_{j'}= 0$.
For the remaining $j\in J\cup U$ we let $r_j=s_j$.
We use $\cC$ to denote the map $\vec s\mapsto \vec r$.
If $J_\rmII=\emptyset$, then $\cC(\vec r)=\vec r$.
\end{definition}
Then we can reduce \Cref{cor:mbdcII} to \Cref{prop:mbdc}.
\begin{proof}[Proof of \Cref{cor:mbdcII}]
From \eqref{eq:weight} we see that
\[
|w(\vec r)| < CN^{-|J_\rmII|} \exp(Ch)|w(\cC(\vec r))|,
\]
for any $\vec r\in\sB^h$. Here the factor $N^{-|J_\rmII|}$ comes from the second line of \eqref{eq:weight}, since $\cC(\vec r)$ has $|J_\rmII|$ fewer jumps than $\vec r$; while the factor $\exp(Ch)$ is from the third line of \eqref{eq:weight}.

We shall apply  \Cref{prop:mbdc} to all $\cC(\vec r)$.
Note that  $J_\rmII$ and $U$ are fixed in $\sB^h$, so at most $|U|^{|J_\rmII|}\le |U|^m$ different walks $\vec r\in\sB^h$ would have the same image under $\cC$. 
On the other hand, in \Cref{prop:mbdc}, the sets $J, U$ and the partition of $J$ are fixed.
However, for different $\vec r\in \sB^h$, the sets $J, U$ and the partition of $J$ for $\cC(\vec r)$ are not necessarily the same; in fact, there are at most $|U|^{|J_\rmII|}\le |U|^m$ possibilities, and we need to apply \Cref{prop:mbdc} to each of them.
Then the upper bound follows, as we replace $|J_\rmI|$ by $|J_\rmI|+|J_\rmII|$ and $|U|$ by $|U|-|J_\rmII|$ in the right-hand side of \eqref{eq:wbdfdt}, and multiply the bound by $N^{-|J_\rmII|} \exp(Ch)$ and $|U|^m\cdot |U|^m=|U|^{2m}$.
The factors $\exp(ch)$ and $|U|^{2m}$ can be respectively absorbed into $\exp(-C_3h^2)$ and $C_2^{\delta+1}$, by taking larger $C_2$ and $C_3$.
\end{proof}

\subsection{Cancellations and the first regularization}  \label{ssec:buc}

In this section we apply the general bound of \Cref{prop:mbdc} to produce two uniform upper bounds for the subsums of the sum of weights \eqref{eq:expsum}.

\begin{prop}  \label{prop:csBJ}
For a fixed $i_1,\dots,i_m$, recalling $\sB=\sB[i_1,\dots,i_m]$ from \Cref{ssec:dwrep}, we have
\[
\left| \sum_{\vec r\in \sB} w(\vec r) \right| < C\prod_{\ell=1}^m (2\sqrt{NN_\ell})^{k_\ell} N^{-|J|}.
\]
\end{prop}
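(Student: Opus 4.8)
The plan is to deduce Proposition~\ref{prop:csBJ} from the general upper bound of \Cref{prop:mbdc} (extended via \Cref{cor:mbdcII}) by first exhibiting the cancellations hinted at in Section~\ref{Section_examples_blow_ups}. The key point is that a naive application of \Cref{prop:mbdc} to $\sum_{\vec r\in\sB}|w(\vec r)|$ would be too large precisely when there are blow-up indices (types I and II), so we must pair such walks against each other before bounding. First I would set up an involution-type map that cancels contributions of type I and type II indices: given a walk $\vec r$ with a type~I blow-up index $j$ (so $r_j(Q_{a_j-1})>0$ and the sum $\sH$ dips below $\sH(Q_{a_j-1})$ at time $\vartheta_j$ before any jump), one redistributes part of $r_j$'s mass onto a freshly introduced auxiliary variable $j_*\in U$, converting $j$ into type~II; conversely, a type~II index, whose companion auxiliary variable $j'$ carries a positive constant plateau that collapses to $0$, is folded back into the main variable via the map $\cC$ of \Cref{defn:fP}. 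The signs $w(\vec r)$ and $w(\vec r^{\,*})$ are opposite (a jump of type 3(b) contributes $-1$), and the crucial estimate is that the sum $(N-|J|-|U|)w(\vec r) + w(\vec r^{\,*})$ is smaller than either term individually by a factor $N^{-1}$ times a polynomial factor in the relevant time increments, exactly as in Example~\ref{ex:blowup2}; this comes from the third line of \eqref{eq:weight}, whose ratio is controlled by $N^{-1}\sum_t \sH(t)$ over the relevant interval, estimated by replacing the random walk bridge with its Brownian scaling limit.

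The second step is the bookkeeping: after pairing, the signed sum over the union of type~I and type~II walks is bounded, for each fixed choice of type data, by the bound of \Cref{prop:mbdc} (the $J_\rmII=\emptyset$ case) but with an \emph{extra} $N^{-1/3}$ saving for each blow-up index — morally because the cancellation converts a would-be $N^{+1/3}$ blow-up (as computed in Example~\ref{ex:blowup}) into the harmless $N^{0}$ scale. Concretely one uses \Cref{cor:mbdcII} to write $\sum_{\vec r\in\sB^h}|w(\vec r)|$ in terms of a power of $N$ whose exponent is $-|U|-|J|+\frac13|J_\rmI|+\frac13|J_\rmII|-\cdots$, observes that after the cancellation the $+\frac13|J_\rmI|$ and $+\frac13|J_\rmII|$ contributions are killed, so the exponent becomes at most $-|J|$, and then sums over $h\ge 1$ and over $\delta$: the $\exp(-C_3h^2)$ factor makes the $h$-sum a convergent geometric-type series, and the $\frac{1}{(\delta-2|U|)!}$ factor together with $C_2^{\delta+1}h^{\delta-2|U|}$ makes the $\delta$-sum converge (after also summing out $|U|$ against the $((|U|+1)\log(|U|+2))^{|U|}$ factor, which is dominated once one divides by the $N^{-|U|}$ gain — each auxiliary variable costs a factor $N$ to choose but contributes $N^{-1}$). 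Summing finally over the choices of $\vartheta_j$, $\dot\vartheta_j$ and the partition of $J$ into the nine types (a bounded-in-$N$ amount of data once the $\vartheta_j^{-3/2}$ and $N^{-1/3}$ factors are used to absorb the positional choices), one obtains the claimed $C\prod_\ell (2\sqrt{NN_\ell})^{k_\ell}N^{-|J|}$.

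For the terms \emph{not} involving any blow-up index (types III, IV, V, VI), no pairing is needed: \Cref{prop:mbdc} applies directly. Here one checks that the exponent of $N$ in \eqref{eq:wbdfdt}, namely $-|U|-|J|+\frac13|J_\rmI|-\frac13(\cdots)$, is at most $-|J|$ whenever $J_\rmI=\emptyset$ (which is automatic once blow-up indices are excluded and $J_\rmII=\emptyset$), so that after the $h$- and $\delta$- and type-data summations one again lands at $N^{-|J|}$ up to a constant. The types VI contributions in fact come with strictly negative extra powers of $N$ ($|J_\rmVIa|$, $|J_\rmVIb|$, $|J_\rmVIc|$ each appear with a $-\frac13$), so they are even smaller and harmless.

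The main obstacle is the pairing step: making the cancellation estimate $|(N-|J|-|U|)w(\vec r)+w(\vec r^{\,*})|\lesssim N^{-1}(\text{poly})\cdot N^{2k-1}\cdots$ rigorous requires controlling the product in the third line of \eqref{eq:weight} term by term, i.e.\ showing that the ratio of the two products differs from $1$ by $O(N^{-1}\sum_t \sH(t))$ uniformly, and then showing that this sum, averaged against the (reflected) uniform measure on positive walk bridges, produces exactly the missing $N^{-1/3}$ (not more, not less). This is where one genuinely needs the diffusive scaling: the random-walk bridge converges to a Brownian bridge (or excursion under positivity constraints), and $\sum_t \sH(t)$ scales like $N^{1/3}\cdot N^{2/3}=N$ on a macroscopic interval but like a smaller power on the short $\epsilon$-scale intervals near $Q_{\ell-1}$ where the blow-up lives. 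Keeping these scales straight across all nine beginning types and three ending types, and making sure the auxiliary-variable combinatorics (the $N-|J|-|U|$ preimages) matches the $N^{-|U|}$ denominators, is the delicate bookkeeping that the subsequent subsections (\ref{ssec:pair} in particular) are devoted to; the present proposition is the first and coarsest consequence of that machinery.
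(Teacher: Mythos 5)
Your proposal follows essentially the same route as the paper: the type I/type II cancellation you describe via redistributing mass onto an auxiliary variable is exactly the map $\cC$ of \Cref{defn:fP} and the estimate of \Cref{lem:eq:caneq}, the "extra $N^{-1/3}$ saving per blow-up index" is precisely what \Cref{cor:sumoverUh} packages (as the factor $\vartheta_j^{-1/2}N^{-1/3}$ per $j\in J_\rmI$), and the remaining steps — summing over $\vartheta_j$, $\dot\vartheta_j$, and the partition of $J$ into types, with the non-blow-up types handled directly by \Cref{prop:mbdc} — match the paper's proof. The argument is correct.
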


The proposition implies that the expression in \Cref{prop:conv} is of the desired order $\prod_{\ell=1}^m (2\sqrt{NN_\ell})^{k_\ell}$, since the number of possible $(i_1,\ldots,i_m)$ with given $|J|$ is of order $N^{|J|}$.

We recall that the announced limit $\bL_\beta(\vec\bk,\vec\ttt)$ is an improper integral, involving $\eps\to 0$ transition. Before producing this integral, we also introduce a bound which would imply its convergence, as well as uniform convergence of the sums representing its discrete counterpart.

\begin{definition} \label{defn:hDeps}
    Take any $\epsilon$ with $\Ceps^{-1}N^{-2/3}<\epsilon<\Ceps$ for a universal constant $\Ceps>0$; any $C, c$ can depend on $\Ceps$. We let $\sB_\epsilon=\sB_\epsilon[i_1,\ldots,i_m]$ be the set of all walks $\vec r$ of $(\hD_{i_m}^{N_m})^{k_m} \cdots (\hD_{i_1}^{N_1})^{k_1}$, with the following requirements:
\begin{itemize}
    \item $J_\rmVIa=J_\rmVIb=J_\rmVIc=J_\rmC=\emptyset$.
    \item For each $j\in J_\rmI\cup J_\rmII\cup J_\rmIV$, $\vartheta_j\wedge \dot\vartheta_j > \epsilon N^{2/3}$.
\end{itemize}
\end{definition}

\begin{prop}  \label{prop:sBJe}
We have
\[
\left| \sum_{\vec r\in \sB\setminus\sB_\epsilon} w(\vec r) \right| < C\sqrt{\epsilon}\prod_{\ell=1}^m (2\sqrt{NN_\ell})^{k_\ell} N^{-|J|}.
\]
\end{prop}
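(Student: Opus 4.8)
\emph{Proof plan.} The plan is to bound $\sum_{\vec r\in\sB\setminus\sB_\epsilon}w(\vec r)$ with the same weighted-walk machinery that proves \Cref{prop:csBJ} --- the $\cC$-collapse of \Cref{defn:fP}, the general count \Cref{prop:mbdc}, and its refinements \Cref{prop:mbdc2} and \Cref{cor:mbdcII} --- while tracking where an extra factor $\sqrt\epsilon$ is produced; throughout I would use $\epsilon>\Ceps^{-1}N^{-2/3}$, so that $N^{-1/3}\le\Ceps^{1/2}\sqrt\epsilon$. First I would split $\sB\setminus\sB_\epsilon$, via \Cref{defn:hDeps}, into walks of three kinds: (a) those with $J_\rmVIa\cup J_\rmVIb\cup J_\rmVIc\cup J_\rmC\ne\emptyset$; (b) those with some $j\in J_\rmIV$ satisfying $\vartheta_j\wedge\dot\vartheta_j\le\epsilon N^{2/3}$; and (c) those with some $j\in J_\rmI\cup J_\rmII$ satisfying $\vartheta_j\le\epsilon N^{2/3}$ --- recalling that for type I one has $\vartheta_j<\dot\vartheta_j$ or $\dot\vartheta_j=\varnothing$, for type II $\vartheta_j=\dot\vartheta_j\ne\varnothing$, and for type IV $\vartheta_j\wedge\dot\vartheta_j=\dot\vartheta_j$, so in each case the displayed inequality pins down the short interval in question. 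Within each kind I would decompose further over the type data (the partition of $J$, the jump count $\delta$, and the numbers $\vartheta_j,\dot\vartheta_j$), apply the relevant estimate, and sum back.

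For (a): after collapsing type II indices via $\cC$ --- costing a factor $N^{-|J_\rmII|}$ and a harmless $|U|^{O(m)}$ multiplicity, exactly as in the proof of \Cref{cor:mbdcII} --- and applying \Cref{prop:mbdc}, the exponent of $N$ in \eqref{eq:wbdfdt} already contains $N^{-\frac13(|J_\rmVIb|+\max\{|J_\rmVIa|,|J_\rmVIc|\}+|J_\rmC|)}\le N^{-1/3}\le C\sqrt\epsilon$. The remaining factors $\frac{C_2^{\delta+1}h^{\delta-2|U|}}{(\delta-2|U|)!}\exp(-C_3h^2)$, $((|U|+1)\log(|U|+2))^{|U|}$, $(h/(|U|+1))^{C_3|U|+m\mathbf{1}_{h\ge|U|}}$ and $\prod_{j:\vartheta_j\ne\varnothing}\vartheta_j^{-3/2}$ sum to an $N$-independent constant over $h\ge1$, over $\delta$, over the $\vartheta_j$, and over the finitely many residual type-data configurations; this is the routine bookkeeping ($\sum_h\sum_\delta C_2^\delta h^\delta/\delta!\cdot e^{-C_3h^2}<\infty$ and $\sum_{\vartheta\ge1}\vartheta^{-3/2}<\infty$) already present in \Cref{prop:csBJ}. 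The one nonroutine point is that such walks may still carry $N^{+\frac13|J_\rmI|}$ from type I indices; this must be absorbed by the type I/type II cancellation of case (c), but that cancellation is needed for \Cref{prop:csBJ} anyway and can be quoted from there.

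For (b): the short interval $\llbracket Q_{a_j-1},Q_{a_j-1}+\dot\vartheta_j-1\rrbracket$ attached to a type IV index $j$ carries no jump and has $\sH\ge\sH(Q_{a_j-1})$ on it (the first dip occurs only at $\vartheta_j>\dot\vartheta_j$), so the same reflection-principle estimate that underlies \Cref{prop:mbdc2}(ii), applied now at $x=Q_{a_j-1}$, supplies an extra factor of order $\dot\vartheta_j^{-1/2}$. Against the $N^{-1/3}$ that \eqref{eq:wbdfdt} already charges to each type IV index, summing $\dot\vartheta_j^{-1/2}$ over $\dot\vartheta_j\le\epsilon N^{2/3}$ gives $\sum_{n\le\epsilon N^{2/3}}n^{-1/2}\asymp\sqrt\epsilon\,N^{1/3}$, as opposed to the $O(N^{1/3})$ of the unrestricted sum; hence the extra $\sqrt\epsilon$. (The sub-cases $\vartheta_j=\varnothing$ of types IV and V are even better, carrying $N^{-2/3}$.) No cancellation is needed here.

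The delicate case is (c), where $\sum|w(\vec r)|$ is genuinely of the wrong order owing to the blow-up of \Cref{ex:blowup} and \Cref{ex:blowup2}, so one must keep the signed cancellation. Following \Cref{prop:csBJ}, I would group each type I walk $\vec r$ with the family $\{\vec s:\cC(\vec s)=\vec r\}$ of type II walks obtained by splitting a single auxiliary index off $\vec r$ (the $N-|J|-|U|$ choices, as in passing from \Cref{ex:blowup} to \Cref{ex:blowup2}) and estimate $|(N-|J|-|U|)\,w(\vec s)+w(\vec r)|$: the second lines of \eqref{eq_x3} for $\vec s$ and $\vec r$ agree up to lower order, and the discrepancy in the third lines is, over the length-$\vartheta_j$ interval attached to the paired type I index, an $N^{-1}\sum_t\sH(t)$-type quantity controlled by the diffusive approximation of $\sH$, yielding a bound of order $|w(\vec r)|\cdot\vartheta_j N^{-2/3}$ per paired index. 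Inserting this combined weight into the count of \Cref{prop:mbdc,prop:mbdc2} (retaining the $\vartheta_j^{-3/2}$ and reflection-principle factors), the sum over the full range $\vartheta_j\in\llbracket1,cN^{2/3}\rrbracket$ is $O(1)$ times the main order --- precisely what \Cref{prop:csBJ} extracts --- while truncating to $\vartheta_j\le\epsilon N^{2/3}$ replaces it by its tail and produces $O(\sqrt\epsilon)$. I expect this last point --- making the type I/type II cancellation quantitative enough, uniformly in $N$, $\delta$, $|U|$ and in all auxiliary parameters at once, that the truncation $\vartheta_j\le\epsilon N^{2/3}$ gives a genuine $\sqrt\epsilon$ gain rather than merely a bounded contribution --- to be the main obstacle; everything else amounts to invoking \Cref{prop:mbdc,prop:mbdc2,cor:mbdcII} and summing the resulting factorial and geometric series.
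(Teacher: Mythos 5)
Your proposal is correct and follows essentially the same route as the paper: the paper's proof likewise starts from the bound \eqref{eq:csBJpf1} (which already packages the type I/type II cancellation via \Cref{lem:eq:caneq} and \Cref{cor:sumoverUh}), and then observes that restricting the sums of $\vartheta_j^{-1/2}/N^{1/3}$ (type I) and $\dot\vartheta_j^{-1/2}/N^{1/3}$ (type IV) to $\vartheta_j,\dot\vartheta_j\le\epsilon N^{2/3}$ yields $O(\sqrt{\epsilon})$ instead of $O(1)$, while a non-empty $J_\rmVIa\cup J_\rmVIb\cup J_\rmVIc\cup J_\rmC$ contributes $N^{-1/3}<\Ceps^{-1/2}\sqrt{\epsilon}$. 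The quantitative cancellation you flag as the main obstacle is exactly \Cref{lem:eq:caneq} combined with \Cref{cor:sumoverUh}, already proved for \Cref{prop:csBJ}, so it can indeed just be quoted.
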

The rest of this section is devoted to the proofs of \Cref{prop:csBJ}, and \Cref{prop:sBJe}.

As already alluded to, there is a blow-up issue, which we now resolve by exploiting cancellations between type I and type II.

For this, we use the map $\cC$ from \Cref{defn:fP}.
Note that a walk $\vec r\in \sB_\epsilon$ if and only if $\cC(\vec r)\in \sB_\epsilon$.
For $\vec r$ satisfying $\cC(\vec r)=\vec r$, i.e., without type II indices,
the pre-image $\{\vec s: \cC(\vec s)=\vec r\}$ contains order $N^{|J_\rmI|}$ many walks, since for each $j$ that is of type I in $\vec r$, it can be made type II, with order $N$ many choices of corresponding $j'$.
(Note that this differs from the setting in \Cref{cor:mbdcII}, where $U$ and $J_\rmII$ are fixed.)

Now from \eqref{eq:weight}, the sum of $w(\vec s)$ over all the order $N^{|J_\rmI|}$ many walks can be bounded as follows.
\begin{lemma}  \label{lem:eq:caneq}
We have
\begin{equation}   \label{eq:caneq1}
\left|\sum_{\vec s: \cC(\vec s)=\vec r} w(\vec s)\right| <C\prod_{j\in J_I}\left( |U|N^{-1}+ N^{-1}\vartheta_{j}\max_t \sH(t) \right)|w(\vec r)|.
\end{equation}
\end{lemma}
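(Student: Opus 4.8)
\textbf{Proof plan for Lemma \ref{lem:eq:caneq}.}

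The plan is to expand the left-hand side by grouping the walks $\vec s$ with $\cC(\vec s)=\vec r$ according to which subset $S\subseteq J_\rmI$ of the type I indices of $\vec r$ is ``split off'' into a type II pair. For a fixed such $S$, a walk $\vec s$ in the preimage is obtained by choosing, for each $j\in S$, an auxiliary index $j'\in U\cup(\llbracket 1,N\rrbracket\setminus(J\cup U))$ together with a positive plateau value for $s_{j'}$ on the interval $\llbracket Q_{a_j-1},Q_{a_j-1}+\vartheta_j-1\rrbracket$ and $s_j = r_j - s_{j'}$ there; for $j\in J_\rmI\setminus S$ the index stays type I and $s_j=r_j$. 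First I would write $w(\vec s)$ via the explicit product formula \eqref{eq:weight}: splitting $r_j$ into $s_j+s_{j'}$ changes $\Delta$ by exactly $|S|$ extra jump points (one at $Q_{a_j-1}$ for each $j\in S$, where $s_{j'}$ goes up, and the original jump at $Q_{a_j-1}+\vartheta_j$ is now a jump of $s_{j'}$ rather than $s_j$ — so the net count of jumps attributable to the split increases by $|S|$), giving a factor $\prod_{j\in S} N_\ell^{-1}$ in the second line of \eqref{eq:weight} relative to $|w(\vec r)|$, while the $(-1)$-signs in the first line and the bulk product in the third line are essentially unchanged (up to lower-order corrections absorbed into $C$).

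The key is the cancellation: for a fixed $j\in S$ the two natural choices of auxiliary index — taking $j'\in U$ (an index already carrying jumps) versus taking $j'$ fresh from $\llbracket 1,N\rrbracket\setminus(J\cup U)$ — produce contributions of opposite sign in the third-line product of \eqref{eq:weight}, because the term $1+\frac{2r_{i_\ell}(t-1)}{\beta N_\ell}-\frac{|\{j'': r_{j''}(t-1)\ge r_{i_\ell}(t-1)\}|}{N_\ell}$ depends on whether $s_{j'}$ is large or small. Summing over the $\sim N$ fresh choices of $j'$ for each $j\in S$, the leading $N^{1}$ growth is killed, leaving a residual of size $C(|U| + \vartheta_j\max_t\sH(t))$: the $|U|$ counts the auxiliary indices in $U$ for which the sign does not flip, and the factor $\vartheta_j\max_t\sH(t)$ arises because each of the $\vartheta_j$ steps of $s_{j'}$ on its plateau interval contributes a discrepancy of size $O(\max_t\sH(t)/N_\ell)$ to the third-line product — summing a telescoping-type estimate over these $\vartheta_j$ steps gives $O(\vartheta_j \max_t\sH(t)/N_\ell)$, and combined with the $N_\ell^{-1}$ from the jump-count factor and the $N_\ell\sim N$ choices of $j'$, one index $j\in S$ contributes $C(|U|N^{-1}+N^{-1}\vartheta_j\max_t\sH(t))$. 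Taking the product over $j\in S$ and then summing over all $S\subseteq J_\rmI$ — the sum over $S$ just turns the product $\prod_{j\in J_\rmI}(\cdots)$ into itself by the binomial-type identity $\sum_{S}\prod_{j\in S}x_j = \prod_{j\in J_\rmI}(1+x_j)$, with the ``$1$'' coming from $S\not\ni j$ (index stays type I, no extra small factor but also no $N$-summation) absorbed into the bound — yields \eqref{eq:caneq1}.

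The main obstacle I anticipate is making the sign-cancellation argument in the third-line product of \eqref{eq:weight} precise and uniform: one must show that replacing $s_{j'}$ (plateau value small) by the complementary allocation, or varying the fresh index $j'$, really does flip the relevant indicator $\mathbf 1[r_{j''}(t-1)\ge r_{i_\ell}(t-1)]$ in a controlled way, and that the surviving terms are exactly those where either $j'\in U$ (hence $O(|U|)$ of them, no cancellation partner available) or the discrepancy is genuinely of order $\vartheta_j\max_t\sH(t)/N$. This requires carefully tracking, step by step along $\llbracket Q_{a_j-1},Q_{a_j-1}+\vartheta_j-1\rrbracket$, how the product of third-line factors for the split walk $\vec s$ differs from that for $\vec r$, and bounding the telescoped difference — a somewhat delicate but essentially elementary estimate, using that $r_j(t-1)\le \sH(t-1)\le \max_t\sH(t)$ throughout. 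The rest (the combinatorics of the sum over $S$, and the jump-count bookkeeping in the second line of \eqref{eq:weight}) is routine.
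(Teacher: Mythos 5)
Your combinatorial skeleton — decomposing the preimage $\cC^{-1}(\vec r)$ by the subset $S\subseteq J_\rmI$ of indices that get split, and recombining via $\sum_S\prod_{j\in S}x_j=\prod_{j\in J_\rmI}(1+x_j)$ — matches the paper, and you land on the right form of the bound, but the cancellation mechanism you propose is not the correct one and, as written, cannot produce the estimate. The true cancellation is between the single \emph{unsplit} option for each $j\in J_\rmI$ (contributing a factor $+1$) and the roughly $N_{a_j}$ \emph{split} options, one per admissible partner $j'$, each contributing a factor close to $-N_{a_j}^{-1}$ relative to $w(\vec r)$: the minus sign comes from the \emph{first} line of \eqref{eq:weight}, because the down-step of $\sH$ at $Q_{a_j-1}+\vartheta_j$ — an ordinary decrease of the main variable in $\vec r$ — becomes in $\vec s$ a jump at which $s_{i_{a_j}}$ does not decrease; the $N_{a_j}^{-1}$ comes from the second line, since $\vec s$ has exactly one more jump than $\vec r$. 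Hence $1+N_{a_j}\cdot(-N_{a_j}^{-1})\approx 0$, with residuals controlled by (i) the deficit $O(|U|+m)$ in the number of admissible $j'$ (they must avoid $J\cup U$; in particular $j'$ cannot lie in $U(\vec r)$, since $\cC$ would then send $r_{j'}$ to $0\neq r_{j'}$), which yields the $|U|N^{-1}$ term, and (ii) the third-line ratio, which lies in $[(1+CN^{-1}\max_t\sH(t))^{-\vartheta_j},\,1]$ and yields the $N^{-1}\vartheta_j\max_t\sH(t)$ term. Your proposed mechanism — that $j'\in U$ versus fresh $j'$ ``produce contributions of opposite sign in the third-line product'' — cannot work: the third-line factors are all positive, so no sign ever flips there, and all split terms carry the \emph{same} sign. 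Without the genuine $(-1)$ from the first line, the unsplit term $+1$ and the $\approx N\cdot N^{-1}=+1$ from the split terms would \emph{add} to $2$ rather than cancel, and your ``absorbed into the bound'' treatment of the $S\not\ni j$ contribution has no justification.

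Two further structural errors. First, the plateau value of $s_{j'}$ is not a free parameter: type II forces $s_j(Q_{a_j-1})=0$, hence $s_{j'}\equiv r_j(Q_{a_j-1})$ on $\llbracket Q_{a_j-1},Q_{a_j-1}+\vartheta_j-1\rrbracket$ and $s_{j'}=r_j$ on $\llbracket 0,Q_{a_j-1}\rrbracket$; the only freedom in the preimage is the choice of the index $j'$, and allowing a free plateau value would overcount the preimage and destroy the one-to-$N$ cancellation. Second, the extra jump of $\vec s$ is created at $Q_{a_j-1}+\vartheta_j$ (where $s_{j'}$ drops to $0$), not at $Q_{a_j-1}$; the jumps of $r_j$ prior to $Q_{a_j-1}$ are simply transferred to $s_{j'}$ with unchanged count.
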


\begin{proof}
We claim that
\begin{equation}   \label{eq:caneq11}
\left|\sum_{\vec s: \cC(\vec s)=\vec r} w(\vec s)\right| <\prod_{j\in J_\rmI} \left( C(|U|+1)N^{-1}+  1- (1+CN^{-1}\max_t \sH(t))^{-\vartheta_{j}} \right)|w(\vec r)|.
\end{equation}
Indeed, here the factors in the bound are from the following reason.
For each type I index $j$ of $\vec r$, if $j$ is type II for some $\vec s$, then in the $\ell=a_j$ factor in \eqref{eq:weight}, $w(\vec r)$ and $w(\vec s)$ differ in the following ways:
\begin{itemize}
    \item in the first line of \eqref{eq:weight}, $w(\vec s)$ has an extra factor of $-1$;
    \item in the second line of \eqref{eq:weight}, $w(\vec s)$ has an extra factor of $N_{a_j}^{-1}$;
    \item in the third line of \eqref{eq:weight}, $s_j(t-1)=r_j(t-1)-r_j(Q_{a_j-1})$, for $t\in \llbracket Q_{a_j-1}+1, Q_{a_j-1}+\vartheta_{j}\rrbracket$. (And $s_j(t-1)=r_j(t-1)$ for any other $t$.)
\end{itemize}
The first two above give a factor of $-N_{a_j}^{-1}$, while the last one gives a factor smaller than $1$, but at least $(1+CN^{-1}\max_t \sH(t))^{-\vartheta_{j}}$.
On the other hand, to make $j$ of type II in $\vec s$, the corresponding $j'$ has at least $N_{a_j}-|U|-m$ and at most $N_{a_j}$ many choices.
The product of this number of choices and the factors above, plus $1$, gives
\[
-(1-(|U|+m)N_{a_j}^{-1})(1+CN^{-1}\max_t \sH(t))^{-\vartheta_{j}} + 1,
\]
whose absolute value is bounded by $ C(|U|+1)N^{-1}+  \left( 1- (1+CN^{-1}\max_t \sH(t))^{-\vartheta_{j}} \right)$.

Hence, we have proven \eqref{eq:caneq11}. In order to deduce \eqref{eq:caneq1} from it, we notice that since since $CN^{-1}\max_t \sH(t)>0$, and $\vartheta_j > 0$ for each $j\in J_\rmI$, we have
\[
(1+CN^{-1}\max_t \sH(t))^{-\vartheta_{j}} \ge  1 - CN^{-1}\vartheta_j\max_t \sH(t). \qedhere
\]
\end{proof}

On the other hand, \Cref{prop:mbdc} and \Cref{prop:mbdc2} give the following.

Let $\sB^{{h}}_+$ be the collection of all $\vec r\in\sB$, satisfying $\max_t \sH(t) >{h}N^{1/3}$ and $J_\rmII=\emptyset$, and the following fixed: $J_\rmI$, $J_\rmIII$, $J_\rmIV$, $J_\rmV$, $J_\rmVIa$, $J_\rmVIb$, $J_\rmVIc$, and $\vartheta_j$ for each $j\in J$, $\dot\vartheta_j$ for each $j\in J\setminus J_\rmI$.
\begin{corollary}  \label{cor:sumoverUh}
For any $h'\ge 0$, we have
\begin{multline}  \label{eq:sumoverUh}
\sum_{\vec r\in\sB^{{h'}}_+}\prod_{j\in J_\rmI} \frac{|U|+\vartheta_j\max_t \sH(t)}{N} |w(\vec r)|<C\prod_{\ell=1}^m(2\sqrt{NN_\ell})^{k_\ell} N^{-|J| -\frac{1}{3}(|J_\rmVIb|+\max\{|J_\rmVIa|, |J_\rmVIc|\}+|J_\rmC|)} \\ \times    \exp(-c{h'}^2)
\prod_{j\in J_\rmI} \frac{\vartheta_j^{-1/2} }{N^{1/3}}
\prod_{j\in J_\rmIV,\vartheta_j\ne \varnothing} \frac{\vartheta_j^{-3/2} }{N^{1/3}}
\prod_{j\in J_\rmIV,\vartheta_j=\varnothing} \frac{1}{N^{2/3}}
\prod_{j\in J_\rmIII\cup J_\rmVIa\cup J_\rmVIb\cup J_\rmVIc} \vartheta_j^{-3/2}.
\end{multline}
There is an extra factor of $N^{-1/3}$ or $(y-x)^{-1/2}$ in the right-hand side, under the condition (i) or (ii), respectively, specified in \Cref{prop:mbdc2}.
\end{corollary}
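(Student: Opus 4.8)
The statement is a bookkeeping consequence of the general bound \eqref{eq:wbdfdt} of \Cref{prop:mbdc} (and its refinement \Cref{prop:mbdc2}), obtained by summing over exactly those parameters that are fixed in \Cref{prop:mbdc} but left free in the definition of $\sB^{h'}_+$ --- the auxiliary set $U$, the jump count $\delta$, the ending--type partition $J=J_\rmA\sqcup J_\rmB\sqcup J_\rmC$ (recall $|J_\rmB|=|J_\rmIII|$ is determined by the bijection of \Cref{ssec:classi}), and the level $h$ --- after absorbing the extra weight $\prod_{j\in J_\rmI}(|U|+\vartheta_j\max_t\sH(t))/N$. The plan is to write $\sB^{h'}_+$ as a disjoint union of the pieces $\sB^h(U,\delta,\{J_\bullet\})$, where $h$ runs over integers with $(h-1)N^{1/3}\le\max_t\sH(t)<hN^{1/3}$ (so the pieces are genuinely disjoint and each lies inside the $\sB^h$ of \Cref{prop:mbdc}); since $J_\rmII=\emptyset$ on $\sB^{h'}_+$, \Cref{prop:mbdc} applies verbatim to each piece, and on pieces satisfying hypothesis (i) or (ii) of \Cref{prop:mbdc2} the corresponding extra factor $N^{-1/3}$ or $(y-x)^{-1/2}$ is supplied uniformly.

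The key point is that the extra weight exactly cancels the factor $N^{+\frac13|J_\rmI|}$ in the exponent of \eqref{eq:wbdfdt} (and converts the $J_\rmI$--part of $\prod_{j:\vartheta_j\ne\varnothing}\vartheta_j^{-3/2}$ into $\prod_{j\in J_\rmI}\vartheta_j^{-1/2}$). Indeed, on a piece with $\max_t\sH(t)<hN^{1/3}$, for each $j\in J_\rmI$ (for which $\vartheta_j\ne\varnothing$), using $\vartheta_j\ge 1$,
\[
N^{1/3}\vartheta_j^{-3/2}\cdot\frac{|U|+\vartheta_j\max_t\sH(t)}{N}
\le \frac{\vartheta_j^{-1/2}}{N^{1/3}}\left(\frac{|U|}{\vartheta_j N^{1/3}}+h\right)
\le \frac{\vartheta_j^{-1/2}}{N^{1/3}}\left(\frac{|U|}{N^{1/3}}+h\right);
\]
taking the product over $j\in J_\rmI$ turns the $J_\rmI$--part of the \Cref{prop:mbdc} bound into precisely the $\prod_{j\in J_\rmI}\vartheta_j^{-1/2}N^{-1/3}$ of the claim, at the cost of a factor $(|U|N^{-1/3}+h)^{|J_\rmI|}$, a polynomial of fixed degree in $|U|$ and $h$. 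After this the power of $N$ in \eqref{eq:wbdfdt} matches the target exactly --- rewriting $\frac13(|J_\rmIV|+|\{j\in J_\rmIV:\vartheta_j=\varnothing\}|)$ as the product $\prod_{j\in J_\rmIV,\vartheta_j\ne\varnothing}N^{-1/3}\prod_{j\in J_\rmIV,\vartheta_j=\varnothing}N^{-2/3}$ --- except for a surviving factor $N^{-|U|}$, which I keep for the summation.

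It remains to sum
\[
N^{-|U|}C_2^{\delta+1}\tfrac{h^{\delta-2|U|}}{(\delta-2|U|)!}\exp(-C_3h^2)\big((|U|+1)\log(|U|+2)\big)^{|U|}\big(h/(|U|+1)\big)^{C_3|U|+m\mathbf 1_{h\ge|U|}}(|U|N^{-1/3}+h)^{|J_\rmI|}
\]
against the number $\le\binom{N}{|U|}\le N^{|U|}/|U|!$ of choices of $U$ and the $O(1)$ choices of ending--type partition, over $\delta$, $U$, $h$. Summing in $\delta\ge 2|U|$ gives $C_2^{2|U|+1}e^{C_2h}$; the factor $N^{-|U|}$ then cancels $\binom{N}{|U|}$ down to $1/|U|!$; expanding the binomial $(|U|N^{-1/3}+h)^{|J_\rmI|}$, every monomial carrying a positive power of $|U|$ also carries a compensating negative power of $N$ (so those terms only help), leaving a polynomial in $h$ of degree $|J_\rmI|$. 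One then checks by Stirling's formula that $\big((|U|+1)\log(|U|+2)\big)^{|U|}C_2^{2|U|}/|U|!$ is dominated by the super--geometric decay $\big(h/(|U|+1)\big)^{C_3|U|}$ once $|U|\gtrsim h$, so the sum over $U$ is at most $\exp(Ch^{C_3}\log h)$, which is absorbed by $\exp(-C_3h^2)$ (recall $C_3$ is a small constant); finally summing over integers $h>h'$ yields the asserted $\exp(-ch'^2)$ for any $c<C_3$.

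The only mildly delicate point I anticipate is this uniform control of the $|U|$--sum: one must verify that the combinatorial factors $\big((|U|+1)\log(|U|+2)\big)^{|U|}$, $C_2^{2|U|}$ (which are not summable against $1/|U|!$ alone) and the polynomial $(|U|N^{-1/3}+h)^{|J_\rmI|}$ are all controlled in the regime $|U|\gtrsim h$ by $\big(h/(|U|+1)\big)^{C_3|U|}$, and are harmless for $|U|\lesssim h$ (being then subsumed into a polynomial--in--$h$ factor killed by $\exp(-C_3h^2)$). Everything else is a direct substitution of \Cref{prop:mbdc} and \Cref{prop:mbdc2} followed by elementary estimates.
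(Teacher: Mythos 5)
Your proposal is correct and follows essentially the same route as the paper: apply \Cref{prop:mbdc} (and \Cref{prop:mbdc2}) at fixed $h$, $U$, $\delta$, absorb the weight $\prod_{j\in J_\rmI}(|U|+\vartheta_j\max_t\sH(t))/N$ using $\max_t\sH(t)\lesssim hN^{1/3}$ to convert $N^{1/3}\vartheta_j^{-3/2}$ into $N^{-1/3}\vartheta_j^{-1/2}$ times polynomial factors in $|U|$ and $h$, sum over $\delta$ to get $C^{2|U|}e^{Ch}$, cancel $N^{-|U|}$ against $\binom{N}{|U|}$ and control the residual $\frac{1}{|U|!}\big((|U|+1)\log(|U|+2)\big)^{|U|}(h/(|U|+1))^{C_3|U|}$, and finally sum over $h$ (the paper uses a dyadic rather than unit decomposition, which is immaterial). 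The only quibble is your intermediate bound $\exp(Ch^{C_3}\log h)$ for the $U$-sum, which should read more like $\exp(Ch\log(h+1))$, but either way it is $o(\exp(ch^2))$ and the conclusion stands.
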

\begin{proof}
In the upper bound \eqref{eq:wbdfdt}, by summing over $\delta\ge 2|U|$, we have
\[
\sum_{\delta = 2|U|}^\infty C^\delta \frac{h^{\delta-2|U|}}{(\delta-2|U|)!} = C^{2|U|}\exp(Ch).
\]
Next, for walks from $\sB^h$ of Proposition \ref{prop:mbdc}, we upper bound $|U|+\vartheta_j\max_t \sH(t)$ in the left-hand side of \eqref{eq:sumoverUh} with  $(|U|+1)\vartheta_j(2h+1) N^{1/3}$. Then by summing over $U$ (noting the number of possible $U$ with given $|U|$ is at most ${N \choose |U|}\le \frac{N^{|U|}}{|U|!}$), we have
\begin{align*}
&\sum_U (|U|+1)^{|J_\rmI|} N^{-|U|} C^{2|U|} \big((|U|+1)\log(|U|+2)\big)^{|U|} \big(h/(|U|+1)\big)^{c|U|+m\mathbf{1}_{h\ge |U|}} \\
&\le \sum_{|U|=0}^\infty \frac{C^{2|U|}}{|U|!} (|U|+1)^m \big((|U|+1)\log(|U|+2)\big)^{|U|} \big(h/(|U|+1)\big)^{c|U|+m\mathbf{1}_{h\ge |U|}}\\
& < C\exp(Ch\log(h+1)).
\end{align*}
We arrive at: 
\begin{multline*}
C\exp(-c{h}^2 + Ch\log(h+1))\prod_{\ell=1}^m(2\sqrt{NN_\ell})^{k_\ell} N^{-|J| -\frac{1}{3}(|J_\rmVIb|+\max\{|J_\rmVIa|, |J_\rmVIc|\}+|J_\rmC|)} \\ \times
\prod_{j\in J_\rmI} \frac{\vartheta_j^{-1/2}(2h+1)}{N^{1/3}}
\prod_{j\in J_\rmIV,\vartheta_j\ne \varnothing} \frac{\vartheta_j^{-3/2} }{N^{1/3}}
\prod_{j\in J_\rmIV,\vartheta_j=\varnothing} \frac{1}{N^{2/3}}
\prod_{j\in J_\rmIII\cup J_\rmVIa\cup J_\rmVIb\cup J_\rmVIc} \vartheta_j^{-3/2}.
\end{multline*}
Summing over dyadic growing $h=h', 2h', 4h', 8h',\dots$, the conclusion of the corollary follows.
Under the conditions (i) or (ii) specified in \Cref{prop:mbdc2}, the extra factors remain unchanged throughout the proof.
\end{proof}

By combining \Cref{lem:eq:caneq} and \Cref{cor:sumoverUh}, we get \Cref{prop:csBJ} and \Cref{prop:sBJe}.
\begin{proof}[Proof of \Cref{prop:csBJ}]
We first sum over $\vec r \in\sB^0_+$ in \Cref{lem:eq:caneq}, and apply \Cref{cor:sumoverUh} (using that $|U|<CN^{2/3}$), and get
\begin{multline}  \label{eq:csBJpf1}
C\prod_{\ell=1}^m(2\sqrt{NN_\ell})^{k_\ell} N^{-|J| -\frac{1}{3}(|J_\rmVIb|+\max\{|J_\rmVIa|, |J_\rmVIc|\}+|J_\rmC|)} \\ \times
\prod_{j\in J_\rmI} \frac{\vartheta_j^{-1/2} }{N^{1/3}}
\prod_{j\in J_\rmIV,\vartheta_j\ne\varnothing} \frac{\vartheta_j^{-3/2} }{N^{1/3}}
\prod_{j\in J_\rmIV,\vartheta_j=\varnothing} \frac{1}{N^{2/3}}
\prod_{j\in J_\rmIII\cup J_\rmVIa\cup J_\rmVIb\cup J_\rmVIc} \vartheta_j^{-3/2}.
\end{multline}
We claim that by summing over $\vartheta_j$ for each $j\in J$, $\dot\vartheta_j$ for each $j\in J\setminus J_\rmI$, each factor in the second line sums to order $1$.

Indeed, for each $j\in J_\rmI$, we would sum $\vartheta_j^{-1/2}$ for $\vartheta_j$ from $1$ and up to a number of order $N^{2/3}$, leading to a factor of order $N^{1/3}$.
For each $j\in J_\rmIII\cup J_\rmVIa\cup J_\rmVIb\cup J_\rmVIc$, we would sum $\vartheta_j^{-3/2}$ for $\vartheta_j$ from $1$ and up to a number of order $N^{2/3}$, giving a sum of order $1$ (note that here $\dot\vartheta_j=\vartheta_j$).
For each $j\in J_\rmV$ there is always $\dot\vartheta_j=\vartheta_j=\varnothing$, so nothing to sum.
For each $j\in J_\rmIV$, if $\vartheta_j=\varnothing$, the number of possible $\dot\vartheta_j$ is of order $N^{2/3}$, thus we get a factor of order $N^{2/3}$.
If $\vartheta_j\neq\varnothing$, we first sum $\vartheta_j^{-3/2}$ for $\vartheta_j$ from $\dot\vartheta_j$ up to a number of order $N^{2/3}$, and get a factor of order $\dot\vartheta_j^{-1/2}$; then we sum this for $\dot\vartheta_j$ from $1$ and up to a number of order $N^{2/3}$, getting a factor of order $N^{1/3}$.

Finally we sum over all possibly ways of partitioning $J$ into the various type sets $J_\rmI, \ldots,$ and $J_\rmA, \ldots$: we simply bound $N^{-|J| -\frac{1}{3}(|J_\rmVIb|+\max\{|J_\rmVIa|, |J_\rmVIc|\}+|J_\rmC|)}\le N^{-|J|}$, and note that the number of partitions is of constant order.
Thus the bound follows.
\end{proof}

\begin{proof}[Proof of \Cref{prop:sBJe}]
Similarly to \Cref{prop:csBJ}, we start from \eqref{eq:csBJpf1}. The difference is that this time we should sum it only over the parameters with (1) $\min\{\vartheta_j: j \in  J_\rmI\} \le \epsilon N^{2/3}$ or $\min\{ \dot\vartheta_j: j \in J_\rmIV\} \le \epsilon N^{2/3}$, or (2) $J_\rmVIa\cup J_\rmVIb\cup J_\rmVIc\cup J_\rmC$ is not empty.

For each $j\in J_\rmI$, the sum of $\frac{\vartheta_j^{-1/2} }{N^{1/3}}$ for $\vartheta_j$ from $1$ up to $\epsilon N^{2/3}$ is of order $\sqrt{\epsilon}$.
For each $j\in J_\rmIV$, the sum of $\frac{\vartheta_j^{-3/2} }{N^{1/3}}$ for $\vartheta_j>\dot\vartheta_j$, plus $\frac{1}{N^{2/3}}$ (for $\vartheta_j=\varnothing$) is bounded by $\frac{C\dot\vartheta_j^{-1/2} }{N^{1/3}} + \frac{1}{N^{2/3}}$; and a further sum of $\dot\vartheta_j$ from $1$ up to $\epsilon N^{2/3}$ is of order $\sqrt{\epsilon}$.
Thus the second line of \eqref{eq:csBJpf1} sum up to order $\sqrt{\epsilon}$.

For the sum over those walks where $J_\rmVIa\cup J_\rmVIb\cup J_\rmVIc\cup J_\rmC$ is non-empty, the factor $N^{ -\frac{1}{3}(|J_\rmVIb|+\max\{|J_\rmVIa|, |J_\rmVIc|\}+|J_\rmC|)}$ is at most $N^{-1/3}<\Ceps^{-1/2}\sqrt{\epsilon}$, since we take $\epsilon>\Ceps^{-1}N^{-2/3}$.

In summary, in either case we get an extra factor of order at most $\sqrt{\epsilon}$, so the conclusion follows.
\end{proof}

\subsection{Elimination of pair types and the second regularization}  \label{ssec:pair}

Type III and type B indices are in pairs.
Our next regularization is to show that these two types can be `eliminated', via a procedure similar to the cancellations between type I and type II.
The goal of this subsection is to write the action of $\hP_{k_m}^{N_m} \cdots \hP_{k_1}^{N_1}$ as a sum of $w(\vec r)$, over $\vec r\in\sB_\epsilon$ without type III indices or type B indices.

Again we take $\epsilon$ to be any number with $\Ceps^{-1}N^{-2/3}<\epsilon<\Ceps$, where $\Ceps>0$ is a universal constant. Any $C, c$ below can depend on $\Ceps$.

We let $\sB_\epsilon^*$ denote the set of all walks $\vec r$ of  a particular operator $(\hD_{m}^{N_m})^{k_m} \cdots (\hD_{1}^{N_1})^{k_1}$ 
(i.e., in the notations of \Cref{ssec:dwrep} for $\sB=\sB[i_1,\ldots,i_m]$, we set $i_\ell=\ell$ for each $\ell\in\llbracket 1, m\rrbracket$)
, such that $J_\rmB=J_\rmC=\emptyset$, $J_\rmIII=J_\rmVIa=J_\rmVIb=J_\rmVIc=\emptyset$, and $\vartheta_\ell\wedge \dot\vartheta_\ell>\epsilon N^{2/3}$ for each $\ell\in J_\rmI\cup J_\rmII\cup J_\rmIV$. 
Note that for such a walk, there is $J=\llbracket 1, m\rrbracket$, and $a_\ell = b_\ell = \ell$ for each $\ell \in \llbracket 1, m\rrbracket$.
\begin{prop}  \label{prop:truncreg}
We have
\[
\left| N^m \sum_{\vec r \in \sB_\epsilon^*} w(\vec r) - \hP_{k_m}^{N_m} \cdots \hP_{k_1}^{N_1} [1]_{x_1=\dots=x_N=0} \right| < C \prod_{\ell=1}^m (2\sqrt{NN_\ell})^{k_\ell} (\sqrt{\epsilon} + N^{-1/12}).
\]
\end{prop}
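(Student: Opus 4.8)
\textbf{Proof plan for Proposition \ref{prop:truncreg}.} The plan is to combine the expansion of Lemma \ref{lem:exp} with the two regularization bounds already established (Propositions \ref{prop:csBJ} and \ref{prop:sBJe}), and then to perform a cancellation between type III and type B indices entirely analogous to the type I/type II cancellation of \Cref{ssec:buc}. I would carry this out in the following steps.

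\emph{Step 1: Reduce to a single operator.} By \Cref{lem:exp},
\[
\hP_{k_m}^{N_m} \cdots \hP_{k_1}^{N_1} [1]_{x_1=\dots=x_N=0} = \sum_{(i_1,\ldots,i_m)} \sum_{\vec r\in\sB[i_1,\ldots,i_m]} w(\vec r).
\]
Partition the index tuples $(i_1,\ldots,i_m)$ by the set $J$ of distinct values appearing. For a fixed tuple with $|J|=m$ (all indices distinct), the number of such tuples is $(N)_m = N^m(1+O(N^{-1}))$, and by relabeling variables the inner sum $\sum_{\vec r} w(\vec r)$ is the same as for the canonical operator $i_\ell=\ell$; thus the contribution of all ``full'' tuples equals $(N)_m \sum_{\vec r\in\sB[1,\ldots,m]} w(\vec r)$. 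By \Cref{prop:csBJ} applied to tuples with $|J|<m$, the contribution of all tuples with a repeated index is $O\!\left(\sum_{|J|<m} N^{|J|}\cdot \prod_\ell (2\sqrt{NN_\ell})^{k_\ell} N^{-|J|}\right) = O\!\left(\prod_\ell (2\sqrt{NN_\ell})^{k_\ell}\right)$ --- but this only gives order $\prod_\ell(2\sqrt{NN_\ell})^{k_\ell}$, not the needed $o(1)$ relative error. So I would instead refine \Cref{prop:csBJ}: a tuple with $|J|<m$ has at least one index $j$ serving as a main variable more than once, which forces an extra small factor. Examining \eqref{eq:weight}, when $i_{\ell}=i_{\ell'}=j$ with $\ell<\ell'$, the walk's behavior near $Q_{\ell'-1}$ is constrained (the variable $x_j$ already carries positive degree, ruling out type III/IV beginnings of the cleanest kind and costing a power of $N^{-1/3}$ in the counting of Step 3 of the proof of \Cref{prop:mbdc}); more simply, one reruns \Cref{cor:sumoverUh} for such tuples and gains $N^{-1/3}$ per repeated index. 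Hence the total over repeated-index tuples is $O\!\left(\prod_\ell(2\sqrt{NN_\ell})^{k_\ell}\, N^{-1/3}\right)$, absorbed into the $N^{-1/12}$ error term. Also $(N)_m\,\sum_{\vec r} w(\vec r)$ differs from $N^m\sum_{\vec r}w(\vec r)$ by $O(N^{m-1})\cdot|\sum_{\vec r}w(\vec r)| = O(\prod_\ell(2\sqrt{NN_\ell})^{k_\ell}N^{-1})$ by \Cref{prop:csBJ}, again negligible.

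\emph{Step 2: Restrict to $\sB_\epsilon$.} By \Cref{prop:sBJe} (applied to the canonical operator), replacing $\sum_{\vec r\in\sB[1,\ldots,m]}w(\vec r)$ by $\sum_{\vec r\in\sB_\epsilon[1,\ldots,m]}w(\vec r)$ introduces an error $O\!\left(\sqrt{\epsilon}\,\prod_\ell(2\sqrt{NN_\ell})^{k_\ell}N^{-m}\right)$, which after multiplying by $N^m$ is within the allowed $\sqrt\epsilon$ error. So it remains to compare $N^m\sum_{\vec r\in\sB_\epsilon}w(\vec r)$ with $N^m\sum_{\vec r\in\sB_\epsilon^*}w(\vec r)$. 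By \Cref{defn:hDeps} and the definition of $\sB_\epsilon^*$, the only difference is that $\sB_\epsilon$ may contain walks with $J_\rmIII\ne\emptyset$ or $J_\rmB\ne\emptyset$, while $\sB_\epsilon^*$ forbids both.

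\emph{Step 3: Cancellation of type III against type B.} This is the heart of the argument and, I expect, the main obstacle. Using the bijection noted after \Cref{fig:ending} between $J_\rmIII$ and $J_\rmB$ (the correspondence $j\leftrightarrow j'$, with $|J_\rmIII|=|J_\rmB|$), I would set up a map on walks: given $\vec r$ with a type III index $j$ and its paired type B index $j'$, produce a modified walk in which the variable $x_{j'}$ is ``merged into'' $x_j$ on the plateau interval $\llbracket Q_{a_j-1},Q_{a_j-1}+\vartheta_j-1\rrbracket$ and set to zero there, turning $j$ from type III into (a genuinely-beginning index of) some cleaner type while removing $j'$ from $J_\rmB$. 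As in \Cref{lem:eq:caneq}, for a fixed ``reduced'' walk $\vec r$ (with $J_\rmIII=J_\rmB=\emptyset$), the sum of $w(\vec s)$ over all pre-images $\vec s$ --- there are $\asymp N^{|J_\rmIII|}$ choices since the partner $j'$ may be any of $\asymp N$ auxiliary-or-main indices --- telescopes: each type-III/type-B pairing contributes, from \eqref{eq:weight}, an extra $-1$ (first line), an extra $N_\ell^{-1}$ (second line, one fewer jump after merging), and a third-line factor in $[(1+CN^{-1}\max_t\sH(t))^{-\vartheta_j},1]$; summing over the $\approx N$ choices of partner gives
\[
\Bigl|\sum_{\vec s\mapsto\vec r} w(\vec s)\Bigr| < C\prod_{j\in J_\rmIII}\bigl(|U|N^{-1}+N^{-1}\vartheta_j\max_t\sH(t)\bigr)\,|w(\vec r)|,
\]
exactly the shape of \eqref{eq:caneq1}. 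I must be careful about one subtlety: unlike type I/II, here the partner $j'$ is a main variable in its own right ($j'\in J$), so merging changes $|J|$ and the operator structure; the bookkeeping of which types $j'$ had before merging, and the constraint $a_{j'}-1\ge b_j$, needs the analogue of \Cref{cor:mbdcII} rather than \Cref{prop:mbdc} directly, and one has to check the merge map is genuinely onto the set of type-III-free, type-B-free walks with the stated multiplicity. A second subtlety: after merging we might land outside $\sB_\epsilon^*$ (e.g. the new beginning of $j$ could have $\vartheta_j\le\epsilon N^{2/3}$), but those walks are controlled by \Cref{prop:sBJe}'s mechanism and contribute $O(\sqrt\epsilon)$.

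\emph{Step 4: Sum the cancellation bound.} Finally, substitute the bound from Step 3 into a sum over reduced walks and over the type data, invoking \Cref{cor:sumoverUh} exactly as in the proof of \Cref{prop:csBJ}/\Cref{prop:sBJe}: the factor $\prod_{j\in J_\rmIII}(|U|+\vartheta_j\max_t\sH(t))/N$ is precisely what \eqref{eq:sumoverUh} is built to absorb, producing $\prod_{j\in J_\rmIII}\vartheta_j^{-3/2}$-type decay (since $j\in J_\rmIII$ has $\dot\vartheta_j=\vartheta_j$), which upon summing over $\vartheta_j\ge 1$ gives a convergent constant, together with an overall gain of $N^{-1/3}$ per former type-III index from the $N^{-|J| -\frac13(\dots)}$ exponent once $j'$ is accounted for. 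Hence $N^m\bigl|\sum_{\vec r\in\sB_\epsilon\setminus\sB_\epsilon^*,\,\vec r\ \text{has some III or B}} w(\vec r)\bigr| < C\prod_\ell(2\sqrt{NN_\ell})^{k_\ell}(\sqrt\epsilon+N^{-1/3})$, and combining Steps 1--4 yields the claimed estimate with error $C\prod_\ell(2\sqrt{NN_\ell})^{k_\ell}(\sqrt\epsilon+N^{-1/12})$ (the crude exponent $N^{-1/12}$ being more than enough to cover the $N^{-1/3}$-type gains above). The main obstacle, as indicated, is making the type III $\leftrightarrow$ type B merge map precise and verifying its combinatorial properties (ontoness, multiplicity, interaction with the type classification of the partner index), since unlike the purely-auxiliary type II case the partner is itself a ``main'' index with its own history.
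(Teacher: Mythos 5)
The decisive flaw is in your Step 1: the tuples $(i_1,\ldots,i_m)$ with a repeated index are \emph{not} negligible, and there is no extra factor of $N^{-1/3}$ to be gained from them. By \Cref{prop:csBJ} an operator with $|J|=m-1$ has $|\sum_{\vec r}w(\vec r)|\asymp\prod_\ell(2\sqrt{NN_\ell})^{k_\ell}N^{-(m-1)}$, and there are $\asymp N^{m-1}$ such tuples, so their total contribution is of the full leading order $\prod_\ell(2\sqrt{NN_\ell})^{k_\ell}$. Nothing in the type classification or in \eqref{eq:weight} penalizes an index for serving as a main variable twice: for $(\hD_1^{N})^{k}(\hD_1^{N})^{k}$ the walks in which $r_1$ performs a single positive excursion over $\llbracket 0,2k\rrbracket$ are of type V at the beginning and type A at the end, and after multiplying by the $\asymp N$ choices of the index they contribute at order $(2N)^{2k}$ (in the limit these are exactly the blocks with $\bH(\bQ_1)>0$, which plainly must survive). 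These repeated-index operators are precisely what the type III / type B walks of the full operator cancel against, so discarding them leaves your Step 3 with nothing on the other side of the ledger.

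Correspondingly, the cancellation mechanism in your Step 3 cannot produce the factor you claim. The type B partner $j'$ of a type III index $j$ lies in $J$ and is in fact \emph{unique} for a given walk, so there are not $\asymp N$ pre-images of a reduced walk; and the merge $r_j\mapsto r_j+r_{j'}$, $r_{j'}\mapsto 0$ lands in the walk set of a \emph{different} operator (the one where the times $\ell$ with $i_\ell=j$ are reassigned to $j'$), not in $\sB_\epsilon^*$. In the paper the map $\cM$ is injective, and the factor of $N$ arises not from a choice of partner but from the ratio $N^{|J|}/N^{|J(\hD')|}$ of index-tuple counts between the full and merged operators, combined with the extra factor $N_{a_j}$ that the merged walk's weight acquires in the second line of \eqref{eq:weight}. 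The whole argument is then closed by a M\"obius inversion over the poset $\sO$ of merged operators (\Cref{eq:redutosum}, \Cref{lem:diff2}, \Cref{lem:hDpbd}): the weighted sum $\sum_{\hD'}N^{|J(\hD')|}\sum_{\vec r\in\sB_\epsilon(\hD')}w(\vec r)$ over all coincidence patterns telescopes onto the fixed points of $\cM$ for the full operator, i.e.\ onto $\sB_\epsilon^*$, and the $N^{-1/12}$ in the statement originates in the comparison of \Cref{lem:diff2} between the image of $\cM$ and these fixed points. Your Step 2 is fine, but without restoring the repeated-index operators and carrying out this inter-operator cancellation the proof does not close.
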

We note that here in the sum we have the restriction from $\epsilon$, i.e., $\vartheta_\ell\wedge \dot\vartheta_\ell>\epsilon N^{2/3}$. When sending $N\to\infty$ later in this section, this constraint is necessary to ensure absolute convergence, i.e., $\limsup_{N\to\infty}N^m \prod_{\ell=1}^m (2\sqrt{NN_\ell})^{-k_\ell} \sum_{\vec r \in \sB_\epsilon^*}|w(\vec r)|<\infty$ (see \Cref{prop:fixedepsconv}).

In the rest of this subsection we prove this proposition through a series of intermediate statements.

We shall consider walks of different operators.
Thus to be clear, we fix one operator $\hD=(\hD_{i_m}^{N_m})^{k_m} \cdots (\hD_{i_1}^{N_1})^{k_1}$. All the above set up notations, such as $\sB$ and $\sB_\epsilon$, refer to those associated with it (unless otherwise noted).

We define a partial order $\prec$ on $J$, such that for each $j\in J_\rmIII$ and the corresponding $j'\in J_\rmB$, we have $j'\prec j$.
Under this partial order, $J$ is a disjoint union of linearly-ordered chains.
By a 'minimal' element of a partially ordered set, we refer to any element such that no other element is smaller than it. In particular, any singleton element (which is not comparable to any other element) is minimal.
\begin{definition}
For $\vec s\in \sB$, we construct a walk $\vec r$ of another operator $(\hD_{i_m'}^{N_m})^{k_m} \cdots (\hD_{i_1'}^{N_1})^{k_1}$ without types III or B indices, as follows.
For each $j\in J$ that is minimal under $\prec$, we let $r_j=\sum_{j':j\preceq j'}s_{j'}$, and $i_\ell'=j$ for each $\ell$ with $j\preceq i_\ell$; and let $r_j= 0$ for each non-minimal $j$.
We use $\cM$ to denote the map $\vec s\mapsto \vec r$, and let $\bm(\vec s)$ be the number of non-minimal $j\in J$.
Note that $\bm(\vec s)$ equals the number of type III (or type B) indices of $\vec s$.
\end{definition}
Note that $\cM$ and $\cC$ (from \Cref{defn:fP}) commute.

\begin{lemma}  \label{eq:redutosum}
We have
\[
\left| \sum_{\vec r\in \sB_\epsilon} \left[w(\vec r) - (-N)^{-\bm(\vec r)} w(\cM(\vec r))\right] \right| < C\prod_{\ell=1}^m (2\sqrt{NN_\ell})^{k_\ell} N^{-|J|-1/3}.
\]
\end{lemma}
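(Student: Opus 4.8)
The statement in Lemma~\ref{eq:redutosum} asserts that, after restricting to the regularized set $\sB_\epsilon$, the signed sum $\sum_{\vec r\in\sB_\epsilon}w(\vec r)$ is well-approximated by $\sum_{\vec r\in\sB_\epsilon}(-N)^{-\bm(\vec r)}w(\cM(\vec r))$, with error $O\bigl(\prod_\ell(2\sqrt{NN_\ell})^{k_\ell}N^{-|J|-1/3}\bigr)$. The strategy mirrors the type~I/type~II cancellation argument of \Cref{ssec:buc}: we group the walks $\vec r$ according to their image $\cM(\vec r)=:\vec r_0$ under the map $\cM$, observe that each fiber $\{\vec r:\cM(\vec r)=\vec r_0\}$ contains roughly $N^{\bm}$ walks (coming from the $\sim N$ many choices of the type~B partner index $j'$ for each of the $\bm$ nonminimal chain elements), and show that the weights within a fiber nearly telescope. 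The factor $(-N)^{-\bm(\vec r)}$ is precisely the expected per-fiber weight multiplier: splitting off a chain element into a type~III/type~B pair multiplies the $\ell=a_j$ factor of \eqref{eq:weight} by a $-1$ (from the $(-1)^{|\cdots|}$ count in the first line) and an $N_{a_j}^{-1}$ (from the power of $N_\ell$ in the second line), exactly as in the proof of \Cref{lem:eq:caneq}. So the plan is: first establish a pointwise estimate, analogous to \eqref{eq:caneq1},
\begin{equation*}
\Bigl|\sum_{\vec r:\cM(\vec r)=\vec r_0}\bigl[w(\vec r)-(-N)^{-\bm(\vec r)}w(\vec r_0)\bigr]\Bigr|
< C\prod_{j\in J_0\text{ nonminimal}}\bigl(|U|N^{-1}+N^{-1}\vartheta_j\max_t\sH(t)\bigr)\,|w(\vec r_0)|,
\end{equation*}
where $J_0$ is the index data of $\vec r_0$; and second, sum this estimate over $\vec r_0$ using the general bound \Cref{prop:mbdc} / \Cref{cor:sumoverUh} to see that the extra factors $N^{-1}\vartheta_j\max_t\sH(t)$ produce exactly one extra power of $N^{-1/3}$ per nonminimal index (because $\vartheta_j^{-3/2}$ from the walk count combines with $\vartheta_j$ to give $\vartheta_j^{-1/2}$, which sums to $N^{1/3}$, leaving a net $N^{-2/3}$ that, after accounting for the extra $N$-factor from the fiber size, is $N^{-1/3}$), yielding the claimed $N^{-|J|-1/3}$.

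\textbf{Key steps, in order.} (1) Fix a walk $\vec r_0\in\sB_\epsilon$ with no type~III or type~B indices (equivalently, $\vec r_0$ in the image of $\cM$), and its index data $J_0$, $U_0$, etc. Enumerate the fiber $\cM^{-1}(\vec r_0)\cap\sB_\epsilon$: a preimage $\vec r$ is obtained by choosing a subset $S$ of the would-be-minimal indices to ``split'' and, for each $j\in S$, an auxiliary partner index $j'$ together with a value $\vartheta_{j'}=\vartheta_j$; there are order $N^{|S|}$ such choices. Since $\vec r_0\in\sB_\epsilon$ forces $\vartheta_j\wedge\dot\vartheta_j>\epsilon N^{2/3}$ for the relevant indices, all preimages also lie in $\sB_\epsilon$, so no boundary terms are lost. (2) Compare $w(\vec r)$ with $w(\vec r_0)$ factor by factor in \eqref{eq:weight}, exactly as in the proof of \Cref{lem:eq:caneq}: the first two lines contribute $-N_{a_j}^{-1}$ per split index, and the third line contributes a factor between $(1+CN^{-1}\max_t\sH(t))^{-\vartheta_j}$ and $1$; this is valid because on $\llbracket Q_{a_j-1}+1,Q_{a_j-1}+\vartheta_j\rrbracket$ one has $s_j=r_j-r_j(Q_{a_j-1})$ while $s_j=r_j$ elsewhere. (3) Multiply out over the $\prod_{j\in S}(N_{a_j}-O(|U|+m))$ many partner choices, add $1$ for the ``no split'' contribution, and use $(1+x)^{-\vartheta}\ge 1-\vartheta x$ to collapse to the pointwise bound above; here the $\prec$-chain structure matters only in that the split must respect the partial order, but since each nonminimal element has a unique type~III/type~B partner the counting is unchanged. (4) Sum over $\vec r_0$: apply \Cref{lem:eq:caneq} simultaneously (since $\cM$ and $\cC$ commute, we may first quotient by type~II and then by type~III/B) together with \Cref{cor:sumoverUh} in the regime $\vartheta_j>\epsilon N^{2/3}$, to bound $\sum_{\vec r_0}\prod_{j\text{ nonmin}}(|U|N^{-1}+N^{-1}\vartheta_j\max_t\sH(t))|w(\vec r_0)|$. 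Each nonminimal index contributes the extra product factor $(|U|+1)\vartheta_j(2h+1)N^{1/3}\cdot N^{-1}$; combined with the $\vartheta_j^{-3/2}$ already present in \eqref{eq:sumoverUh} (as $j\in J_\rmIII$), summing $\vartheta_j^{-1/2}$ up to $CN^{2/3}$ gives $N^{1/3}$, and collecting powers yields a net $N^{-1/3}$ over the whole nonminimal set (and trivially $\le N^{-1/3}$ once at least one nonminimal index is present; when there are none both sums coincide). Finally sum over the constant-order number of ways of partitioning $J$ into type classes.

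\textbf{Main obstacle.} The delicate point is (2)--(3): matching the weight expansion exactly and keeping track of signs when splitting an index inside a $\prec$-chain of length $>1$. Unlike the type~I/II case of \Cref{lem:eq:caneq}, here a single ``minimal'' index $j$ may carry a chain of several type~III elements stacked above it, so one must verify that iterating the split operation (turning one chain element at a time into a type~III/B pair) produces the multiplicative structure $\prod_j(-N_{a_j}^{-1})$ cleanly and that the associated $\vartheta$-parameters and the constraints defining $\sB_\epsilon$ are consistent at every intermediate stage; in particular one must check that the ``third line'' perturbation $s_j=r_j-r_j(Q_{a_j-1})$ does not interact badly with the other chain elements sharing the same auxiliary support. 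A careful but routine induction on chain length handles this; the only genuinely quantitative input is the inequality $(1+CN^{-1}\max_t\sH(t))^{-\vartheta}\ge 1-CN^{-1}\vartheta\max_t\sH(t)$ together with the bound $\max_t\sH(t)<2hN^{1/3}$, which are already available. A secondary bookkeeping issue is ensuring that $\cM$ lands a walk of a \emph{legitimate} operator $(\hD_{i'_m}^{N_m})^{k_m}\cdots(\hD_{i'_1}^{N_1})^{k_1}$ (the $i'_\ell$ are reassigned to the minimal index of each chain), so that $\sum_{\vec r_0}w(\vec r_0)$ over all such operators reassembles into the expression appearing in \Cref{prop:truncreg}; this is a matter of checking that distinct $\vec r_0$ in the image correspond to distinct (operator, walk) pairs, which follows from injectivity of $\cM$ on its image.
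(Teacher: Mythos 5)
Your overall architecture rests on a false premise about the map $\cM$. You propose to group walks by their image $\vec r_0=\cM(\vec r)$ and to exploit a near-cancellation across a fiber of size $\sim N^{\bm}$, ``coming from the $\sim N$ many choices of the type B partner index $j'$.'' But the type B partner of a type III index is an element of $J$ (a main index appearing in $(i_1,\dots,i_m)$), uniquely determined by the walk --- it is not a freely chosen auxiliary index, in contrast to the type I/II situation of Lemma~\ref{lem:eq:caneq}, where $j'\in U$ genuinely ranges over $\sim N$ values. Within the fixed operator whose walks constitute $\sB_\epsilon$, the map $\cM$ is injective (this is stated and used in the proof of Lemma~\ref{lem:diff2}), so each fiber is a singleton and there is no intra-fiber cancellation to exploit. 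The factor of $N$ per nonminimal index that you are looking for lives at the level of the choice of \emph{operator} (the $\sim N$ choices of the index $i_\ell$ itself) and is handled afterwards by Lemmas~\ref{lem:diff2} and~\ref{lem:hDpbd} via M\"obius inversion, not inside this lemma, whose sum involves only one operator.

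As a consequence, your claimed pointwise estimate with the multiplicative error $\prod_{j\ \mathrm{nonmin}}\bigl(|U|N^{-1}+N^{-1}\vartheta_j\max_t\sH(t)\bigr)$ is false. The correct mechanism is a term-by-term ratio comparison: for each single $\vec r$, the quantity $(-N)^{-\bm(\vec r)}w(\cM(\vec r))/w(\vec r)-1$ is controlled by $1-(N_m/N)^{\bm(\vec r)}$ (from the extra $-1$ and the extra factor $N_{a_j}$ in the first two lines of \eqref{eq:weight}) plus $\prod_{j\in J_\rmIII}(1+CN^{-1}\max_t\sH(t))^{\vartheta_j}-1$, i.e.\ an \emph{additive} error of size $C\bigl(N^{-1/3}+\exp(CN^{-1/3}\max_t\sH(t))\,N^{-2/3}\sum_{j\in J_\rmIII}\vartheta_j\bigr)$. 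Your bound omits the $N^{-1/3}$ contribution from $1-(N_m/N)^{\bm(\vec r)}$, which is generically of order $N^{-1/3}$ (since $N-N_\ell\asymp\ttt_\ell N^{2/3}$) and is one of the two sources of the final $N^{-1/3}$ gain; with $\vartheta_j$ and $|U|$ small your bound would be $o(N^{-1/3})$ while the true single-walk difference is $\Theta(N^{-1/3})|w(\vec r)|$. The actual proof multiplies this additive per-walk error by the type I/II fiber cancellation of Lemma~\ref{lem:eq:caneq} and then sums via Corollary~\ref{cor:sumoverUh}, where $\vartheta_j^{-3/2}\cdot\vartheta_jN^{-2/3}$ sums to $N^{-1/3}$. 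So your weight bookkeeping in step (2) and the final power counting in step (4) are essentially sound, but the organizing cancellation mechanism and the intermediate estimate they are meant to feed are not.
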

\begin{proof}
Similarly to \Cref{lem:eq:caneq}, for $\vec r\in \sB$ without type II indices, we have
\begin{multline}  \label{eq:caneq2}
\left|\sum_{\vec s: \cC(\vec s)=\vec r} \left[w(\vec s) - (-N)^{-\bm(\vec r)} w(\cM(\vec s))\right] \right|
< C\left(N^{-1/3} + \exp(CN^{-1/3}\max_t \sH(t)) N^{-2/3}\sum_{j\in J_\rmIII} \vartheta_j\right) \\ \times \prod_{j\in J_\rmI}\left( |U|N^{-1}+ N^{-1}\vartheta_{j}\max_t \sH(t) \right)|w(\vec r)|.
\end{multline}
Here the factor with $\prod_{j\in J_\rmI}$ is by the same reasoning as in the proof \Cref{lem:eq:caneq}.
The factor in the first line is from $(-N)^{-\bm(\vec r)} w(\cM(\vec s)) / w(\vec s)-1$, which equals $(-N)^{-\bm(\vec r)} w(\cM(\vec r)) / w(\vec r)-1$.
Namely, from \eqref{eq:weight}, $w(\vec r)$ and $w(\cM(\vec r))$ differ at $\ell=a_j$ for each $j$ of type III, in the following ways:
\begin{itemize}
    \item in the first line of \eqref{eq:weight}, $w(\cM(\vec r))$ has an extra factor of $-1$;
    \item in the second line of \eqref{eq:weight}, $w(\cM(\vec r))$ has an extra factor of $N_{a_j}$;
    \item in the third line of \eqref{eq:weight}, $r_j(t-1)=\cM(\vec r)_j(t-1)-\cM(\vec r)_j(Q_{a_j-1})$, for $t\in \llbracket Q_{a_j-1}+1, Q_{a_j-1}+\vartheta_{j}\rrbracket$. (And $r_j(t-1)=\cM(\vec r)_j(t-1)$ for any other $t$.)
\end{itemize}
These together imply that
\[
|(-N)^{-\bm(\vec r)} w(\cM(\vec r)) / w(\vec r)-1 | < \left|\prod_{j\in J_\rmIII}(1+CN^{-1}\max_t \sH(t) )^{\vartheta_j} - 1 \right| + 1 - (N_m/N)^{\bm(\vec r)}.
\]
This is further bounded by (using that $N^{-2/3}\sum_{j\in J_\rmIII} \vartheta_j<C$)
\[
CN^{-1/3}+\exp\left(CN^{-1}\sum_{j\in J_\rmIII} \vartheta_j \max_t \sH(t)\right) - 1 < CN^{-1/3} + C\exp(CN^{-1/3}\max_t \sH(t)) N^{-2/3}\sum_{j\in J_\rmIII} \vartheta_j.
\]
Thus we get \eqref{eq:caneq2}. From there, the conclusion follows similarly to the proof of \Cref{prop:sBJe}.
\end{proof}

We now take a operator $\hD'=(\hD_{i_m'}^{N_m})^{k_m} \cdots (\hD_{i_1'}^{N_1})^{k_1}$, and let $\sB(\hD')=\sB[i_1',\ldots,i_m']$ denote the set of walks of it, and $\sB_\epsilon(\hD')=\sB[i_1',\ldots,i_m']$ denote the set of those satisfying the conditions in \Cref{defn:hDeps}.
We assume that $\sB(\hD')$ contains $\cM(\vec r)$ for some $\vec r \in \sB$, and we use $\sO=\sO(\hD)$ to denote the collection of all such operators $\hD'$. 

More precisely, $\sO$ contains all $(\hD_{i_m'}^{N_m})^{k_m} \cdots (\hD_{i_1'}^{N_1})^{k_1}$, such that for each $j\in J$, either (1) $i_\ell'=j$ for each $\ell$ with $i_\ell=j$, or (2) there is a $j'\in J$, such that $i_\ell'=j'$ for each $\ell$ with $i_\ell=j$, and $\max\{\ell:i_\ell=j'\}<\min\{\ell:i_\ell=j\}$.

Moreover, we let $J(\hD')$ be the collection of all $j$ that appears in $(i_1',\ldots, i_m')$.
Then for $\vec r\in\sB$ with $\cM(\vec r)\in\sB(\hD')$, we have $\bm(\vec r)=|J|-|J(\hD')|$.

\begin{lemma}  \label{lem:diff2}
For $\hD'=(\hD_{i_m'}^{N_m})^{k_m} \cdots (\hD_{i_1'}^{N_1})^{k_1} \in \sO$, we have
\[
\left| \sum_{\vec r\in \sB_\epsilon, \cM(\vec r)\in \sB(\hD')} w(\cM(\vec r)) - \sum_{\vec r\in \sB_\epsilon(\hD'), \cM(\vec r)=\vec r} w(\vec r) \right| < C \prod_{\ell=1}^m (2\sqrt{NN_\ell})^{k_\ell} N^{-|J(\hD')|-1/12}.
\]
\end{lemma}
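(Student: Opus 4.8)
The statement concerns comparing, for a fixed auxiliary operator $\hD' \in \sO(\hD)$, the subsum of $w(\cM(\vec r))$ over walks $\vec r \in \sB_\epsilon$ with $\cM(\vec r) \in \sB(\hD')$ against the subsum of $w(\vec r)$ over walks $\vec r \in \sB_\epsilon(\hD')$ that are fixed points of $\cM$. The two sums range over almost the same collection of walks of $\hD'$, namely the fixed points of $\cM$; the discrepancy comes from two sources: (1) the $\epsilon$-truncation conditions ($\vartheta, \dot\vartheta > \epsilon N^{2/3}$ for type I, II, IV indices, and the vanishing of types $\rmIII, \rmVIa, \rmVIb, \rmVIc, \rmB, \rmC$) are imposed on $\vec r \in \sB_\epsilon$ rather than directly on $\cM(\vec r)$, so the two index classifications need not match; and (2) a walk $\vec r' = \cM(\vec r) \in \sB(\hD')$ with $\cM(\vec r') = \vec r'$ may arise from $\vec r \in \sB_\epsilon$, but the preimage under $\cM$ restricted to $\sB_\epsilon$ is not automatically all of $\cM^{-1}(\vec r')$, nor is $\vec r'$ itself guaranteed to lie in $\sB_\epsilon(\hD')$. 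So the first step is to set up the correspondence carefully: for $\vec r \in \sB_\epsilon$ with $\cM(\vec r) \in \sB(\hD')$, analyze how the beginning/ending type of each index $j \in J(\hD')$ for $\cM(\vec r)$ relates to the types of the chain of indices of $\vec r$ collapsed into $j$; in particular, a minimal index $j$ of a $\prec$-chain of length $\ge 2$ in $\vec r$ is precisely one whose chain includes a type III index, and after applying $\cM$ this merges the type III beginning behaviors into $j$'s trajectory. Since $J_\rmIII = \emptyset$ is already built into $\sB_\epsilon$ in the target sum, I expect $\cM(\vec r) = \vec r$ is forced there, so the real content is bounding the boundary/mismatch terms.

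The second step is the error accounting. The difference $\sum_{\vec r \in \sB_\epsilon,\, \cM(\vec r)\in\sB(\hD')} w(\cM(\vec r)) - \sum_{\vec r \in \sB_\epsilon(\hD'),\, \cM(\vec r)=\vec r} w(\vec r)$ should be written as a signed sum over a symmetric difference of walk sets, which decomposes into: (a) walks of $\hD'$ with $\cM(\vec r') = \vec r'$ that satisfy the $\hD'$-version of the $\sB_\epsilon$ conditions but whose $\cM$-preimage in $\sB_\epsilon$ is incomplete or absent because of a type-boundary issue at an index $j \in J(\hD')$ whose collapsed chain in $\vec r$ touches the $\epsilon N^{2/3}$ threshold, or has a type VI or type C index; and (b) walks $\cM(\vec r')$ that are counted in the left sum but where $\vec r'$ fails the $\sB_\epsilon(\hD')$ membership. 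Both are controlled by the general weight bound of \Cref{prop:mbdc}, together with \Cref{cor:mbdcII}, summed over the relevant parameters exactly as in the proofs of \Cref{prop:csBJ}, \Cref{prop:sBJe}, and \Cref{eq:redutosum}. The key mechanism is that each ``bad'' index contributes an extra decaying factor: an index of type $\rmVIa, \rmVIb, \rmVIc, \rmC$ contributes a factor $N^{-1/3}$ or better from the power of $N$ in \eqref{eq:wbdfdt}; a type I or IV index with $\vartheta_j$ (or $\dot\vartheta_j$) in the truncated range $[1, \epsilon N^{2/3}]$ contributes $\sqrt{\epsilon}$ after summing $\vartheta_j^{-1/2} N^{-1/3}$ over that range. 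Since $\epsilon > \Ceps^{-1} N^{-2/3}$, we have $\sqrt{\epsilon} \ge \Ceps^{-1/2} N^{-1/3}$, so either way the contribution is $\lesssim \sqrt{\epsilon}$; but the statement claims $N^{-1/12}$, not $\sqrt{\epsilon}$, so somewhere the bound must be $N^{-1/12}$ uniformly — this suggests that the relevant mismatch terms here actually come with a hard $N$-power decay (coming from type VI/C indices or from the structure of $\sB(\hD')$ being strictly coarser than $\sB$) rather than an $\epsilon$-dependent one; I would reconcile this by noting that on the symmetric difference, at least one index is forced into a class ($\rmVIa$–$\rmVIc$, $\rmC$, or a $\prec$-noncomparable boundary situation) carrying a genuine $N^{-1/3}$, and that the extra factors from summing over the number $\bm$ of collapsed chains and over the combinatorial choices of $\hD'$ only cost polylogarithmic or $C^{\text{const}}$ factors, so a net $N^{-1/12}$ (a deliberately weak, safe exponent) survives.

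The third step is to carry out the explicit summation over parameters: sum the bound from \Cref{prop:mbdc}/\Cref{cor:mbdcII} over $\delta$ (geometric, by the $\frac{1}{(\delta - 2|U|)!}$ and $\exp(-C_3 h^2)$ factors), over $h$ (dyadically, using $\exp(-C_3 h^2)$), over $|U|$ (convergent by the same reasoning as in \Cref{cor:sumoverUh}, using $((|U|+1)\log(|U|+2))^{|U|}/|U|!$), over the $\vartheta_j$ and $\dot\vartheta_j$ parameters (each $\vartheta_j^{-3/2}$ summing to $O(1)$, each surviving $\vartheta_j^{-1/2}N^{-1/3}$ summing to $O(1)$ over $[1, CN^{2/3}]$, and a truncated sum over $[1, \epsilon N^{2/3}]$ giving $O(\sqrt\epsilon)$), and finally over the $O(1)$-many ways to partition $J(\hD')$ into type classes and the $O(1)$-many operators $\hD' \in \sO(\hD)$ for fixed $\hD$. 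Combining with the leading $\prod_{\ell=1}^m (2\sqrt{NN_\ell})^{k_\ell}$ and the $N^{-|J(\hD')|}$ from the power count, and extracting the worst-case residual $N$-power, yields $N^{-|J(\hD')|-1/12}$.

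\textbf{Main obstacle.} The hard part will be the bookkeeping in the first step: precisely matching up the beginning/ending type of an index $j \in J(\hD')$ of $\cM(\vec r)$ with the types of the $\prec$-chain of indices of $\vec r$ collapsed into it, and showing that the symmetric difference between the two sums is supported on walks each of which has at least one index forced into a ``lossy'' type (type $\rmVI$, type $\rmC$, or a near-threshold type I/IV index), so that the general bound \Cref{prop:mbdc} with its $N^{-1/3}$-type factors (rather than merely $\sqrt\epsilon$) applies. Getting the correct $N$-power out of \eqref{eq:wbdfdt} when indices migrate between type classes under $\cM$ — and checking that the combinatorial multiplicities ($\bm(\vec r) = |J|-|J(\hD')|$ collapsed chains, choices of $\hD'$) do not erode the $N^{-1/12}$ margin — is the delicate point; the rest is a routine repetition of the summation template already used three times in Sections \ref{ssec:buc} and \ref{ssec:pair}.
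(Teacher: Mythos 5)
Your high-level frame is right in outline — the difference is a signed sum over a symmetric difference of walk sets, and one then applies the general weight bounds — but the crucial middle step is wrong, and the mechanism you propose for the decay cannot work.

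First, your case (b) is vacuous: $\cM$ maps $\sB_\epsilon$ into $\sB_\epsilon(\hD')$ (and is injective), so every term of the left sum already appears in the right sum. The entire discrepancy is one-sided: it consists of walks $\vec r\in\sB_\epsilon(\hD')$ with $\cM(\vec r)=\vec r$ that are \emph{not} in the image of $\cM|_{\sB_\epsilon}$. Second, and more seriously, you attribute the smallness of these leftover terms to type $\rmVIa$--$\rmVIc$ or type C indices, or to $\vartheta_j$ in the truncated range $[1,\epsilon N^{2/3}]$. But these walks lie in $\sB_\epsilon(\hD')$, whose definition (\Cref{defn:hDeps}) \emph{excludes} exactly those features. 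So the walks in the symmetric difference carry no such lossy index, and your reconciliation of the $N^{-1/12}$ collapses. The actual obstruction to constructing a preimage $\vec r^*\in\sB_\epsilon$ lives at the collapsed positions $Q_{a_j-1}$ for $j\in J\setminus J(\hD')$ (positions which are \emph{interior} to the blocks of $\hD'$, hence unconstrained by the type classification of $\hD'$): the paper shows that a preimage exists unless, on a window $\llbracket Q_{a_j-1}, Q_{a_j-1}+N^{1/4}\rrbracket$, either some jump occurs or $\sH$ never drops below $\sH(Q_{a_j-1})$. The explicit splitting construction of $\vec r^*$ under the complementary event is the heart of the proof and is absent from your proposal. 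Finally, the decay comes from \Cref{prop:mbdc2} — which you never invoke — not from \Cref{prop:mbdc} alone: a fixed jump position costs an extra $N^{-1/3}$, summed over the $N^{1/4}$ positions in the window this gives $N^{1/4-1/3}=N^{-1/12}$, while the staying-above event on the window costs $(N^{1/4})^{-1/2}=N^{-1/8}\le N^{-1/12}$. The exponent $1/12$ is thus not a "deliberately weak, safe" choice but the exact outcome of balancing the window length $N^{1/4}$ against these two refined bounds.
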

\begin{proof}
We note that $\cM$ is injective, and that $\cM(\vec r)\in \sB_\epsilon(\hD')$ whenever $\vec r\in \sB_\epsilon$.
However, there exist walks $\vec r\in \sB_\epsilon(\hD')$ with $\cM(\vec r)=\vec r$, but which are not in the image of $\cM$ from $\sB_\epsilon$; and the left-hand side is the sum of these walks.

We now describe these walks, by giving some necessary conditions any one of them must satisfy.
Any such walk $\vec r\in\sB_\epsilon(\hD')$ must not contain any type II index, and should satisfy one of the following (where $a_j$ is in terms of $(i_1,\ldots,i_m)$):
\begin{itemize}
    \item $t\in\Delta$ for some $t\in\llbracket Q_{a_j-1}, Q_{a_j-1} + N^{1/4}\rrbracket$ and $j\in J\setminus J(\hD')$.
    \item $\sH \ge \sH(Q_\ell)$ on $\llbracket Q_{a_j-1}, Q_{a_j-1} + N^{1/4}\rrbracket$, for some $j\in J\setminus J(\hD')$.
\end{itemize}
This is because, if for each $j\in J\setminus J(\hD')$, none of these two hold, then one can construct a $\vec r^* \in \sB_\epsilon$ with $\cM(\vec r_*)=\vec r$.
(We note that here we take $N^{1/4}$ to make the sum over $\vec r$ satisfying either of the two conditions small, as will be evident at the end of this proof. )

More precisely, we let $r^*_j=r_j$ for each $j\in U$; and for each $j\in J$, we take $r^*_j$ as follows.
If $j\in J(\hD')$, let $r^*_j=r_j$ on $\llbracket 0, Q_{b_j}\rrbracket$ (where $b_j$ is in terms of $(i_1,\ldots,i_m)$).
If $j\not\in J(\hD')$, let $r^*_j=0$ on $\llbracket 0, Q_{a_j-1}\rrbracket$, $r^*_j=r_{j'}-r_{j'}(Q_{a_j-1})$ on $\llbracket Q_{a_j-1}, t_j-1\rrbracket$, and $r^*_j=r_{j'}$ on $\llbracket t_j, Q_{b_j}\rrbracket$. Here $j'\in J(\hD')$ is the index with $j'=i'_{a_j}$, and $t_j$ is the smallest number in $\llbracket Q_{a_j-1}, Q_{a_j}\rrbracket$ with $\sH(t_j)<\sH(Q_{a_j-1})$ (which necessarily exists and is in $\llbracket Q_{a_j-1}+1, Q_{a_j-1} + N^{1/4}\rrbracket$, by the above conditions).
Now that we have defined $r^*_j$ on $\llbracket 0, Q_{b_j}\rrbracket$ for each $j\in J$, we let $r^*_j=0$ on $\llbracket Q_{\ell-1}, Q_\ell\rrbracket$ for each $\ell \ge b_j+2$, and choose $r^*_j$ on $\llbracket Q_{b_j}, Q_{b_j+1}\rrbracket$ (if $b_j<Q_m$) to ensure that the sum $\sum_{j\in J\cup U}r^*_j$ remains the same as $\sH $.

Therefore, the left-hand side in the statement is bounded by
\[
\sum_{\vec r} \left| \sum_{\vec s: \cC(\vec s)=\vec r} w(\vec s) \right|,
\]
where the sum is over all $\vec r$ satisfying at least one of the above two bulleted conditions.
Then the conclusion follows from bounding the sum using \Cref{lem:eq:caneq} and \Cref{cor:sumoverUh} with the additional conditions (i) and (ii), which are precisely 
due to the two bulleted conditions above.
\end{proof}

We next take a (weighted) sum of $\sum_{\vec r\in \sB_\epsilon(\hD')} w(\vec r)$ over all $\hD' \in \sO$.
From cancellations given in \Cref{eq:redutosum} and \Cref{lem:diff2}, the main remaining terms would be $\sum_{\vec r \in \sB_\epsilon, \cM(\vec r)=\vec r} w(\vec r)$.
\begin{lemma}  \label{lem:hDpbd}
We have
    \[
    \left| \sum_{\vec r \in \sB_\epsilon, \cM(\vec r)=\vec r} w(\vec r) - \sum_{\hD' \in \sO} N^{|J(\hD')|-|J|} \sum_{\vec r\in \sB_\epsilon(\hD')} w(\vec r)  \right|<C \prod_{\ell=1}^m (2\sqrt{NN_\ell})^{k_\ell} N^{-|J|-1/12}.
    \]
\end{lemma}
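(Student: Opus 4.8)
The plan is to deduce \Cref{lem:hDpbd} by applying the two cancellation inputs \Cref{eq:redutosum} and \Cref{lem:diff2} not only to the fixed operator $\hD=(\hD_{i_m}^{N_m})^{k_m}\cdots(\hD_{i_1}^{N_1})^{k_1}$ but to every $\hD'\in\sO=\sO(\hD)$, and then resumming over $\sO$. Throughout write $S_{\hD'}=\sum_{\vec r\in\sB_\epsilon(\hD')}w(\vec r)$ and $S_{\hD'}^{\min}=\sum_{\vec r\in\sB_\epsilon(\hD'),\,\cM(\vec r)=\vec r}w(\vec r)$, and equip $\sO$ with the partial order $\hD''\le\hD'$ when $\hD''\in\sO(\hD')$; then $\hD$ is the maximum of $\sO$, the map $\hD'\mapsto|J(\hD')|$ is monotone, and $|J(\hD'')|=|J(\hD')|$ forces $\hD''=\hD'$. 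Grouping the walks of a given $\hD'$ according to which operator $\cM$ sends them to, and using that $\cM(\vec r)\in\sB(\hD'')$ implies $\bm(\vec r)=|J(\hD')|-|J(\hD'')|$, an application of \Cref{eq:redutosum} followed by \Cref{lem:diff2} for each pair $(\hD',\hD'')$ with $\hD''\le\hD'$ yields
\begin{equation}\label{eq:propstar}
\Big|\,S_{\hD'}-\!\!\sum_{\hD''\le\hD'}(-N)^{|J(\hD'')|-|J(\hD')|}\,S_{\hD''}^{\min}\,\Big|<C\prod_{\ell=1}^m(2\sqrt{NN_\ell})^{k_\ell}\,N^{-|J(\hD')|-1/12},
\end{equation}
the error coming from multiplying the $N^{-|J(\hD'')|-1/12}$ bound of \Cref{lem:diff2} by $N^{|J(\hD'')|-|J(\hD')|}$ and summing over the $O_m(1)$ operators in $\sO(\hD')$. (Here one uses that \Cref{eq:redutosum} and \Cref{lem:diff2} are stated for a generic fixed operator, so they apply verbatim with $\hD$ replaced by $\hD'$, with the attendant $\cM$, $\bm$, $\sB_\epsilon(\hD')$.)

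Next I would substitute \eqref{eq:propstar} into the right-hand side $\sum_{\hD'\in\sO}N^{|J(\hD')|-|J|}S_{\hD'}$ of \Cref{lem:hDpbd}. Multiplying \eqref{eq:propstar} by $N^{|J(\hD')|-|J|}$ turns its error into $C\prod_{\ell=1}^m(2\sqrt{NN_\ell})^{k_\ell}N^{-|J|-1/12}$, which survives summation over the constantly many $\hD'\in\sO$; so up to this admissible error the right-hand side equals a double sum over $\hD''\le\hD'\le\hD$, and interchanging the order of summation the coefficient of $S_{\hD''}^{\min}$ becomes
\[
\sum_{\hD'\in[\hD'',\hD]}N^{|J(\hD')|-|J|}(-N)^{|J(\hD'')|-|J(\hD')|}
=(-1)^{|J(\hD'')|}\,N^{|J(\hD'')|-|J|}\!\!\sum_{\hD'\in[\hD'',\hD]}\!(-1)^{|J(\hD')|}.
\]
For $\hD''=\hD$ the interval is a point and this coefficient is $1$, producing exactly $S_{\hD}^{\min}$; for every $\hD''<\hD$ I claim the alternating sum vanishes, because each interval $[\hD'',\hD]$ of $\sO$ is isomorphic to a Boolean lattice $B_r$ with $r=|J(\hD)|-|J(\hD'')|\ge1$, and $\sum_{x\in B_r}(-1)^{\mathrm{corank}(x)}=(1-1)^{r}=0$. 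This kills all off-diagonal contributions and gives $\sum_{\hD'\in\sO}N^{|J(\hD')|-|J|}S_{\hD'}=S_{\hD}^{\min}+O\!\big(\prod_{\ell=1}^m(2\sqrt{NN_\ell})^{k_\ell}N^{-|J|-1/12}\big)$, which after rearranging is \Cref{lem:hDpbd}. (As a sanity check, for $m=2$, $\hD=\hD_2^{N_2}\hD_1^{N_1}$ one has $\sO=\{(1,2),(1,1)\}$ and the identity reads $S_{(1,2)}+N^{-1}S_{(1,1)}\approx(S_{(1,2)}^{\min}-N^{-1}S_{(1,1)}^{\min})+N^{-1}S_{(1,1)}^{\min}=S_{(1,2)}^{\min}$.)

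The main obstacle is the structural claim that every interval of $\sO(\hD)$ is Boolean, i.e.\ that $\sum_{\hD'\in[\hD'',\hD]}(-1)^{|J(\hD')|}=0$ whenever $\hD''<\hD$. Unlike in the full set-partition lattice, the admissibility constraint $\max\{\ell:i_\ell=j'\}<\min\{\ell:i_\ell=j\}$ in the definition of $\sO$ means that only certain coarsenings of $\hD$ lie above a given $\hD''$: a coarsening obtained by a single ``merge'' may reach a dead end and then never reach $\hD''$, so it is automatically absent from $[\hD'',\hD]$, and one checks that what remains is generated freely by the independent merges that actually take $\hD$ down to $\hD''$, yielding $B_r$. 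I would carry out this verification by analysing the $\prec$-chains of walks and the merges they induce (small cases $m\le 3$ already exhibit the phenomenon). A secondary, routine point is the error bookkeeping: one must confirm that each of the finitely many substitutions of \eqref{eq:propstar} costs only a constant ($m$-dependent) multiplicative factor, so that the accumulated error stays $O\big(\prod_{\ell=1}^m(2\sqrt{NN_\ell})^{k_\ell}N^{-|J|-1/12}\big)$ as required.
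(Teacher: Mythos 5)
Your proposal is correct and follows essentially the same route as the paper: the intermediate estimate \eqref{eq:propstar} is exactly the paper's key display (applying Lemmas \ref{eq:redutosum} and \ref{lem:diff2} to each $\hD'\in\sO$), followed by the same resummation over $\sO$ and the same alternating-sign cancellation of the off-diagonal terms. The combinatorial identity $\sum_{\hD'\in[\hD'',\hD]}(-1)^{|J(\hD')|}=0$ that you flag as the main obstacle is precisely what the paper asserts via M\"obius inversion (checking that $(\hD',\hD'')\mapsto(-1)^{|J(\hD')|-|J(\hD'')|}$ is the M\"obius function of $\sO$, equivalently an inclusion--exclusion over the set of merged indices), which is the same statement as your Boolean-interval claim.
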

\begin{proof}
For each $\hD'\in\sO$, we let $\sO(\hD')\subset \sO$ be the collection of all operators $\hD''$, such that $\sB(\hD'')$ contains $\cM(\vec r)$ for some $\vec r\in\sB(\hD')$.
Then by \Cref{eq:redutosum} and \Cref{lem:diff2} we have
\begin{multline}  \label{eq:hDpbdpf}
\left|\sum_{\vec r\in \sB_\epsilon(\hD')} w(\vec r) - \sum_{\hD''\in\sO(\hD')} (-N)^{|J(\hD'')|-|J(\hD')|} \sum_{\vec r\in \sB_\epsilon(\hD''), \cM(\vec r)=\vec r}w(\vec r) \right| \\ < C \prod_{\ell=1}^m (2\sqrt{NN_\ell})^{k_\ell} N^{-|J(\hD')|-1/12}.
\end{multline}
Note that for each $\hD''\in\sO$, $\hD''\neq (\hD_{i_m}^{N_m})^{k_m} \cdots (\hD_{i_1}^{N_1})^{k_1}$,
we have
\[
\sum_{\hD'\in\sO: \hD''\in \sO(\hD')} (-1)^{|J(\hD')|}=0.
\]
This follows from M\"obius inversion formula for partially ordered sets (see e.g., \cite{rota,rotabook}). If we make $\sO$ a partially ordered set, with a partial order $\prec_\sO$ given by $\hD''\prec_\sO \hD'$ for any $\hD', \hD''\in \sO$ with $\hD''\in \sO(\hD')$, then it is straight-forward to check that $(\hD', \hD'')\mapsto (-1)^{|J(\hD')|-|J(\hD'')|}$ is the M\"obius function.
This identity can also be understood as an inclusion-exclusion principle on the set of indices.

Then by multiplying $N^{|J(\hD')|-|J|}$ in both sides of \eqref{eq:hDpbdpf} and summing over $\hD'$, the conclusion follows.
\end{proof}

\begin{proof}[Proof of \Cref{prop:truncreg}]
As stated in \Cref{lem:exp}, we expand $\hP_{k_m}^{N_m} \cdots \hP_{k_1}^{N_1}$ as a sum of operators $\hD=(\hD_{\bar i_m}^{N_m})^{k_m} \cdots (\hD_{\bar i_1}^{N_1})^{k_1}$, over all $(\bar i_1, \ldots, \bar i_m) \in \prod_{\ell=1}^m\llbracket 1, N_\ell \rrbracket$.
Then by \Cref{prop:csBJ} and $N-CN^{2/3}<N_m\le N_1\le N$, we have that the sum of $(\hD_{\bar i_m}^{N_m})^{k_m} \cdots (\hD_{\bar i_1}^{N_1})^{k_1}[1]_{x_1=\dots=x_N=0}$ over those $(\bar i_1, \ldots, \bar i_m)$ with $\max\{\bar i_\ell: \ell \in \llbracket 1,m\rrbracket\} > N_m$ is bounded by $C \prod_{\ell=1}^m (2\sqrt{NN_\ell})^{k_\ell} N^{-1/3}$. Therefore it suffices to sum over $(\bar i_1, \ldots, \bar i_m) \in \llbracket 1, N_m \rrbracket^m$.
By symmetry among the indices, $N-CN^{2/3}<N_m\le N_1\le N$, and further using \Cref{prop:csBJ}, we have
\[
\left| \hP_{k_m}^{N_m} \cdots \hP_{k_1}^{N_1} [1]_{x_1=\dots=x_N=0} - \sum_{\hD} N^{|J(\hD)|} \sum_{\vec r\in \sB(\hD)} w(\vec r) \right|<C \prod_{\ell=1}^m (2\sqrt{NN_\ell})^{k_\ell}N^{-1/3}.
\]
From this, \Cref{prop:sBJe}, and \Cref{lem:hDpbd}, the conclusion follows.
\end{proof}

\subsection{Discrete blocks decomposition}  \label{ssec:bdecom}
The outcome of the previous six subsections is that the expressions of \Cref{prop:conv} stay bounded as $N\to\infty$. The next step is to actually compute the limit and to identify it with $\bL_\beta(\vec\bk,\vec\ttt)$. This will be done in this and the next subsections: we will analyze $N^{m}\sum_{\vec r\in \sB_\epsilon^*}w(\vec r)$ as $N\to\infty$ and show that it converges towards the integral over $\sK_\epsilon[\vec\bk]$ in \Cref{defn:core}.

For this, in the present subsection we decompose a walk into `discrete blocks' --- which is a discrete analog of blocks defined in \Cref{sec:forjm}, and can be thought as the `skeleton' of a walk --- and a Bernoulli path. \Cref{fig:dblock} gives an illustration and more precise definitions follow, culminating in \Cref{prop:IXHGest}, which shows that the discrete structures converge to their continuous counterparts as $N\to\infty$.

Again, we emphasize that we do not work with arbitrary walks in this subsection, but only those in $\sB_\epsilon^*$, which remain after all the regularization steps (i.e., cancellations and pair type eliminations) as being discussed in \Cref{prop:truncreg}.
We recall the notation $Q_\ell=\sum_{\ell'=1}^\ell k_{\ell'}$ for each $\ell\in\llbracket 0,m\rrbracket$.

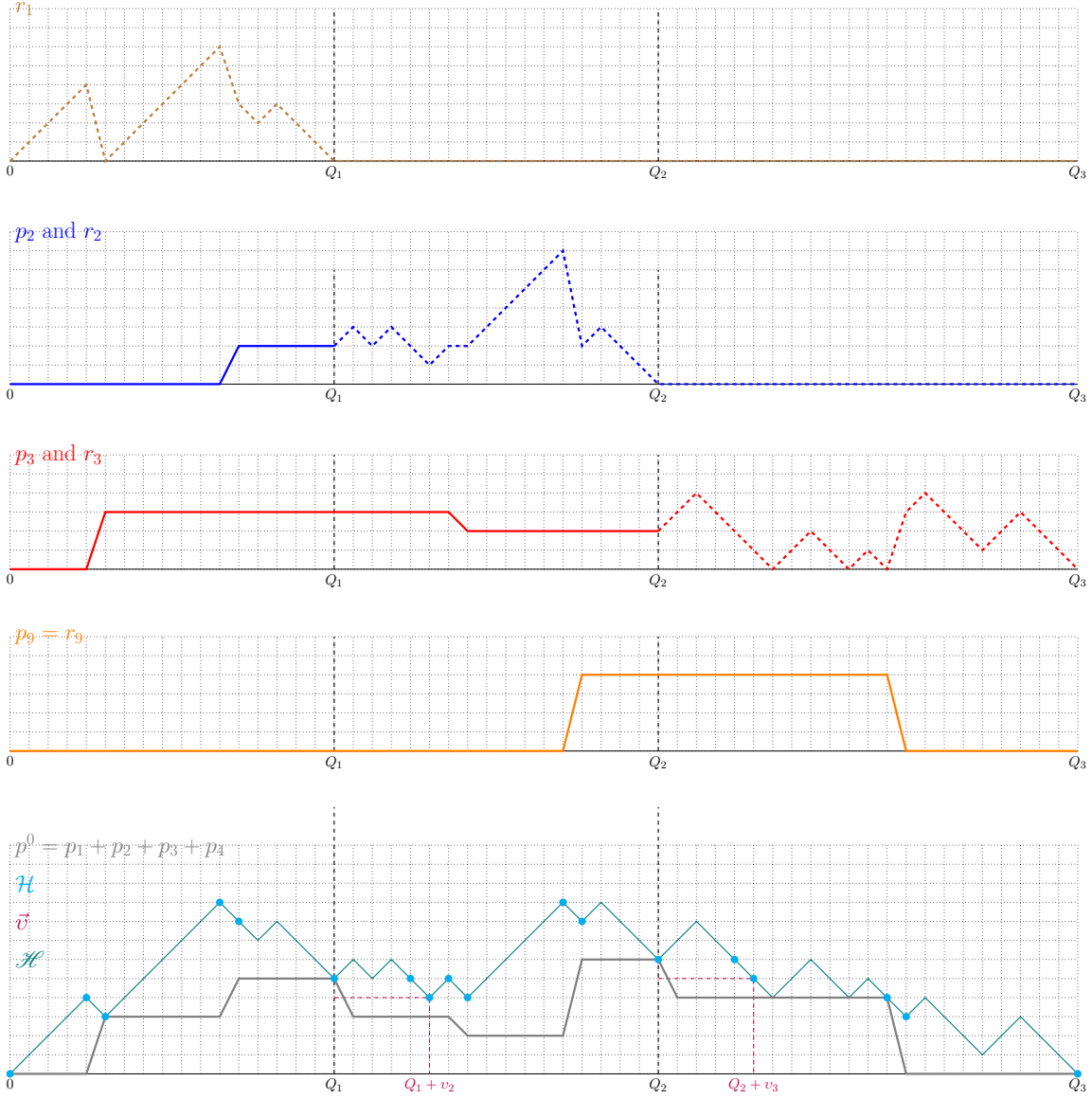
\begin{figure}[!ht]
    \centering
\begin{subfigure}[b]{0.98\textwidth}
    \resizebox{0.95\textwidth}{!}{
    \begin{tikzpicture}
        \draw[thin] [dotted] [step=0.5] (0,0) grid (28,4);
        \draw (0,0)--(28,0);
        \draw[thick] [dashed] (8.5,0)--(8.5,4);
        \draw[thick] [dashed] (17,0)--(17,4);
        \draw[brown] [ultra thick] [dashed] (0,0)--(2,2)--(2.5,0)--(5.5,3)--(6,1.5)--(6.5,1)--(7,1.5)--(8.5,0)--(28,0);
        \draw (0,0) node[anchor=north]{$0$};
        \draw (8.5,0) node[anchor=north]{$Q_1$};
        \draw (17,0) node[anchor=north]{$Q_2$};
        \draw (28,0) node[anchor=north]{$Q_3$};
        \draw (0,4) node[anchor=west, color=brown]{{\LARGE $r_1$}};
    \end{tikzpicture}}
\end{subfigure}
\par\bigskip
\begin{subfigure}[b]{0.98\textwidth}
    \resizebox{0.95\textwidth}{!}{
    \begin{tikzpicture}
        \draw[thin] [dotted] [step=0.5] (0,0) grid (28,4);
        \draw (0,0)--(28,0);
        \draw[thick] [dashed] (8.5,0)--(8.5,3);
        \draw[thick] [dashed] (17,0)--(17,3);
        \draw[blue] [ultra thick] (0,0)--(5.5,0) -- (6,1)--(8.5,1);
        \draw[blue] [ultra thick] [dashed] (8.5,1)--(9,1.5)--(9.5,1)--(10,1.5)--(11,0.5)--(11.5,1)--(12,1)--(14.5,3.5)--(15,1)--(15.5,1.5)--(17,0)--(28,0);
        \draw (0,0) node[anchor=north]{$0$};
        \draw (8.5,0) node[anchor=north]{$Q_1$};
        \draw (17,0) node[anchor=north]{$Q_2$};
        \draw (28,0) node[anchor=north]{$Q_3$};
        \draw (0,4) node[anchor=west, color=blue]{{\LARGE $p_2$ and $r_2$}};
    \end{tikzpicture}}
\end{subfigure}
\par\bigskip
\begin{subfigure}[b]{0.98\textwidth}
    \resizebox{0.95\textwidth}{!}{
    \begin{tikzpicture}
        \draw[thin] [dotted] [step=0.5] (0,0) grid (28,3);
        \draw (0,0)--(28,0);
        \draw[thick] [dashed] (8.5,0)--(8.5,3);
        \draw[thick] [dashed] (17,0)--(17,3);
        \draw[red] [ultra thick] (0,0)--(2,0)--(2.5,1.5)--(11.5,1.5)--(12,1)--(17,1);
        \draw[red] [ultra thick] [dashed] (17,1)--(18,2)--(20,0)--(21,1)--(22,0)--(22.5,0.5)--(23,0)--(23.5,1.5)--(24,2)--(25.5,0.5)--(26.5,1.5)--(28,0);
        \draw (0,0) node[anchor=north]{$0$};
        \draw (8.5,0) node[anchor=north]{$Q_1$};
        \draw (17,0) node[anchor=north]{$Q_2$};
        \draw (28,0) node[anchor=north]{$Q_3$};
        \draw (0,3) node[anchor=west, color=red]{{\LARGE $p_3$ and $r_3$}};
    \end{tikzpicture}}
\end{subfigure}
\par\bigskip
\begin{subfigure}[b]{0.98\textwidth}
    \resizebox{0.95\textwidth}{!}{
    \begin{tikzpicture}
        \draw[thin] [dotted] [step=0.5] (0,0) grid (28,3);
        \draw (0,0)--(28,0);
        \draw[thick] [dashed] (8.5,0)--(8.5,3);
        \draw[thick] [dashed] (17,0)--(17,3);
        \draw[orange] [ultra thick] (0,0)--(14.5,0)--(15,2)--(23,2)--(23.5,0)--(28,0);
        \draw (0,0) node[anchor=north]{$0$};
        \draw (8.5,0) node[anchor=north]{$Q_1$};
        \draw (17,0) node[anchor=north]{$Q_2$};
        \draw (28,0) node[anchor=north]{$Q_3$};
        \draw (0,3) node[anchor=west, color=orange]{{\LARGE $p_9=r_9$}};
    \end{tikzpicture}}
\end{subfigure}
\par\bigskip
\begin{subfigure}[b]{0.98\textwidth}
    \resizebox{0.95\textwidth}{!}{
    \begin{tikzpicture}
        \draw[teal] [thick] (0,0)--(2,2)--(2.5,1.5)--(5.5,4.5)--(6.5,3.5)--(7,4)--(8.5,2.5)--(9,3)--(9.5,2.5)--(10,3)--(11,2)--(11.5,2.5)--(12,2)--(14.5,4.5)--(15,4)--(15.5,4.5)--(17,3)--(18,4)--(20,2)--(21,3)--(22,2)--(22.5,2.5)--(23.5,1.5)--(24,2)--(25.5,0.5)--(26.5,1.5)--(28,0);

        \draw[thin] [dotted] [step=0.5] (0,0) grid (28,6);
        \draw (0,0)--(28,0);
        \draw[thick] [dashed] (8.5,0)--(8.5,7);
        \draw[thick] [dashed] (17,0)--(17,7);

        \draw[purple] [dashed] (8.5,2)--(11,2)--(11,0);

        \draw[purple] [dashed] (17,2.5)--(19.5,2.5)--(19.5,0);

        \draw[gray] [ultra thick] (0,0)--(2,0)--(2.5,1.5)--(5.5,1.5)--(6,2.5)--(8.5,2.5)--(9,1.5)--(11.5,1.5)--(12,1)--(14.5,1)--(15,3)--(17,3)--(17.5,2)--(23,2)--(23.5,0)--(28,0);

        \draw[cyan, fill=cyan] [ultra thick] (0,0) circle (2pt);
        \draw[cyan, fill=cyan] [ultra thick] (28,0) circle (2pt);
        \draw[cyan, fill=cyan] [ultra thick] (12,2) circle (2pt);
        \draw[cyan, fill=cyan] [ultra thick] (19,3) circle (2pt);
        \draw[cyan, fill=cyan] [ultra thick] (19.5,2.5) circle (2pt);
        \draw[cyan, fill=cyan] [ultra thick] (2,2) circle (2pt);
        \draw[cyan, fill=cyan] [ultra thick] (2.5,1.5) circle (2pt);
        \draw[cyan, fill=cyan] [ultra thick] (5.5,4.5) circle (2pt);
        \draw[cyan, fill=cyan] [ultra thick] (6,4) circle (2pt);
        \draw[cyan, fill=cyan] [ultra thick] (8.5,2.5) circle (2pt);
        \draw[cyan, fill=cyan] [ultra thick] (11,2) circle (2pt);
        \draw[cyan, fill=cyan] [ultra thick] (10.5,2.5) circle (2pt);
        \draw[cyan, fill=cyan] [ultra thick] (11.5,2.5) circle (2pt);
        \draw[cyan, fill=cyan] [ultra thick] (15,4) circle (2pt);
        \draw[cyan, fill=cyan] [ultra thick] (14.5,4.5) circle (2pt);
        \draw[cyan, fill=cyan] [ultra thick] (17,3) circle (2pt);
        \draw[cyan, fill=cyan] [ultra thick] (23,2) circle (2pt);
        \draw[cyan, fill=cyan] [ultra thick] (23.5,1.5) circle (2pt);

        \draw (11,0) node[anchor=north,color=purple]{$Q_1+\upsilon_2$};
        \draw (19.5,0) node[anchor=north,color=purple]{$Q_2+\upsilon_3$};
        \draw (0,0) node[anchor=north]{$0$};
        \draw (8.5,0) node[anchor=north]{$Q_1$};
        \draw (17,0) node[anchor=north]{$Q_2$};
        \draw (28,0) node[anchor=north]{$Q_3$};
        \draw (0,6) node[anchor=west, color=gray]{{\LARGE $p^0=p_1+p_2+p_3+p_4$}};
        \draw (0,5) node[anchor=west, color=cyan]{{\LARGE $\cH$}};
        \draw (0,4) node[anchor=west, color=purple]{{\LARGE $\vec\upsilon$}};
        \draw (0,3) node[anchor=west, color=teal]{{\LARGE $\sH $}};
    \end{tikzpicture}}
\end{subfigure}
    \caption{An illustration of discrete blocks $(\vec p, \vec\upsilon, \cH)$, with $m=3$, $U=\{9\}$, and the path $\sH $. These together encode the same information as $\vec r$. Discrete blocks are discrete analogs of blocks (from \Cref{sec:forjm}, and see \Cref{fig:block}).}
    \label{fig:dblock}
\end{figure}

\begin{definition}  \label{defn:discretebp}
A \emph{discrete block process} of $(\hD_{m}^{N_m})^{k_m} \cdots (\hD_{1}^{N_1})^{k_1}$ is a family of functions $\vec p=\{p_j\}_{j=1}^{m+u}$, where $u\in\Z_{\ge 0}$, and each $p_j:\llbracket 0, Q_m\rrbracket\to \Z_{\ge 0}$ is a function satisfying the following:
\begin{itemize}
    \item for each $\ell \in \llbracket 1, m\rrbracket$, $p_j(Q_0)=0$, and $p_\ell=0$ on $\llbracket Q_{\ell-1}+1, Q_m\rrbracket$;
    \item for each $\ell \in \llbracket m+1, m+u\rrbracket$, $p_j(Q_0)=p_j(Q_m)=0$, and $p_j$ is not identical zero;
    \item for each $t\in \llbracket Q_{\ell-1}+1, Q_\ell\rrbracket$, there is at most one $j$ such that $p_j(t)\neq p_j(t-1)$; and we must have $j\in\llbracket \ell+1, N_\ell\rrbracket$;
    \item $\min\{t\in\llbracket 0, Q_m\rrbracket: p_j(t)>0\} < \min\{t\in\llbracket 0, Q_m\rrbracket: p_{j'}(t)>0\}$ for all $m+1\le j < j'\le m+u$.
\end{itemize}
We also denote $p^\ell=\sum_{j=\ell+1}^{m+u} p_j$ for each $\ell\in\llbracket 0,m\rrbracket$.
\end{definition}
Note that (compared to the continuous counterpart defined in \Cref{Definition_block_process}) we do not require the support of each $p_j$ to be continuous. This is for the convenience of definitions in the discrete case, and does not affect the limiting objects: as $N\to\infty$ the total mass of the blocks with discontinuous support tends to $0$.

\begin{definition}
For a discrete block process $\vec p$, a compatible \emph{discrete virtual block process} is a vector $\vec\upsilon=\{\upsilon_\ell\}_{\ell=1}^m\in \prod_{\ell=1}^m (\llbracket 1, Q_\ell-Q_{\ell-1}\rrbracket\cup\{\varnothing\})$ satisfying the following conditions: for each $\ell\in\llbracket 1,m\rrbracket$,
\begin{itemize}
    \item if $\upsilon_\ell\neq\varnothing$, then $p^\ell$ is constant on $\llbracket Q_{\ell-1}, Q_{\ell-1}+\upsilon_\ell\rrbracket$;
    \item if $p_\ell(Q_{\ell-1})=0$, then $\upsilon_\ell=\varnothing$.
\end{itemize}
\end{definition}

\begin{definition}  \label{defn:discretehh}
For a discrete block process $\vec p$ and compatible discrete virtual block process $\vec\upsilon$, a compatible \emph{discrete block height} is a function
\[
\cH:\{Q_\ell\}_{\ell=0}^m\cup\bigcup_{\ell=1}^m \left(\bigcup_{t\in \llbracket Q_{\ell-1}+1, Q_\ell\rrbracket, p^\ell(t)\neq p^\ell(t-1)} \{t-1, t\}\right)\cup\bigcup_{\upsilon_\ell\ne \varnothing}\{Q_{\ell-1}+\upsilon_{\ell}-1, Q_{\ell-1}+\upsilon_{\ell}\} \to \Z_{\ge 0},
\]
such that
\begin{itemize}
    \item $\cH(Q_\ell)=p^0(Q_\ell)$ for each $\ell \in \llbracket 1,m\rrbracket$ and $\cH(0)=0$;
    \item $\cH(Q_{\ell-1}+\upsilon_\ell-1)=p^0(Q_{\ell-1})$ and $\cH(Q_{\ell-1}+\upsilon_\ell)=p^0(Q_{\ell-1})-1$ for each $\ell$ with $\upsilon_\ell\neq\varnothing$\footnote{It might happen that $p^\ell(Q_{\ell-1}+\upsilon_\ell)\neq p^\ell(Q_{\ell-1}+\upsilon_\ell+1)$ or $Q_{\ell-1}+\upsilon_\ell=Q_\ell$, so that there are more than one restrictions at $\cH(Q_{\ell-1}+\upsilon_\ell)$, given by the other conditions in this definition.};
     \item for each $\ell \in\llbracket 1,m\rrbracket$ and $t\in \llbracket Q_{\ell-1}+1, Q_\ell\rrbracket$ with $p_j(t)\neq p_j(t-1)$ for some $j>\ell$, we have $\cH(t-1)=\cH(t)+1$, $\cH(t)\ge p^\ell(t)$, $\cH(t-1)\ge p^\ell(t-1)$;
     \item if $j\in\llbracket 1, N\rrbracket$ and $t\in \llbracket Q_{\ell-1}+1, Q_\ell\rrbracket$ for some $\ell \in\llbracket 1,m\rrbracket$ with $j>\ell$, such that $p_j(t)\neq p_j(t-1)$, then the jump $p_j(t)-p_j(t-1)$ and the difference $\cH(t)+1/2-p^\ell(t-1)-p_j(t)$ have the same sign;
    \item if $\upsilon_\ell=\varnothing$,
and $t\in\llbracket Q_{\ell-1}, Q_\ell\rrbracket$ is the maximum number with $p^\ell$ being constant on $\llbracket Q_{\ell-1}, t\rrbracket$,  then $\cH(t)\ge \cH(Q_{\ell-1})$.
\end{itemize}
\end{definition}
In particular, if $p_j(t)=0$, we must have $\cH(t)=p^\ell(t-1)-1$.

We also note that for each $\ell\in\llbracket 1, m\rrbracket$, $p^\ell(t)=p^0(t)$ for each $t\in\llbracket Q_{\ell-1}+1, Q_\ell\rrbracket$,
while $p^\ell(Q_{\ell-1})=p^0(Q_{\ell-1})-p_\ell(Q_{\ell-1})$.
Therefore $p^\ell$ in the third and fourth point in \Cref{defn:discretehh} is actually in correspondence with $\bp^0$ in (the third and fourth point in) \Cref{Definition_block_height}.

\begin{definition}
``Discrete blocks'' is a triplet $(\vec p, \vec\upsilon, \cH)$: discrete block process, discrete virtual block process, and compatible discrete block height. It is \emph{valid in parity}, if for each $\ell\in\llbracket 1,m\rrbracket$, $Q_\ell+p^\ell(Q_\ell)$ is an even number, and $\upsilon_\ell$ is an odd number unless $\upsilon_\ell=\varnothing$; and for each $t\in \llbracket Q_{\ell-1}+1, Q_\ell\rrbracket$ with $p^\ell(t)\neq p^\ell(t-1)$, $t+\cH(t)$ is an even number.
\end{definition}

\medskip

\noindent\textbf{Connection to walks.}
Below we take $\epsilon$ to be any number with $N^{-1/10}<\epsilon<\Ceps$, for some universal $\Ceps>0$ (and any $C, c$ can depend on $\Ceps$)\footnote{We note that starting from here the lower bound on $\epsilon$ is taken to be larger than in the previous subsection (see e.g.~\Cref{defn:hDeps}), where $\epsilon$ is allowed to be of order $N^{-2/3}$. This is to simplify some computations later in this section.}.
Recall that $\sB_\epsilon^*$ denotes the set of all walks $\vec r$ of the particular operator $(\hD_{m}^{N_m})^{k_m} \cdots (\hD_{1}^{N_1})^{k_1}$, satisfying certain constraints, as specified above \Cref{prop:truncreg}.

For each walk $\vec r\in \sB_\epsilon^*$, we can construct discrete blocks $(\vec p, \vec\upsilon, \cH)$ which are valid in parity, as follows.
For each $\ell \in \llbracket 1, m\rrbracket$, we let $p_\ell=r_\ell$ on $\llbracket 0, Q_{\ell-1}\rrbracket$, and $p_\ell= 0$ on $\llbracket Q_{\ell-1}+1, Q_m\rrbracket$.
We let $\upsilon_\ell = \min\{t\in \llbracket 1, Q_\ell-Q_{\ell-1}\rrbracket: \sH(Q_{\ell-1}+t) < \sH(Q_{\ell-1}), \llbracket Q_{\ell-1}+1, Q_{\ell-1}+t\rrbracket \cap\Delta =\emptyset\}$, and let $\upsilon_\ell=\varnothing$ if the set is empty.

We let $u=|U|$ (recall from \Cref{ssec:classi} that $U$ is the set of $j\in \llbracket m+1,N\rrbracket$ where $r_j$ is not identical zero).
We define $p_j=r_{\sigma(j)}$ for each $j \in \llbracket m+1, m+u\rrbracket$, where $\sigma:\llbracket m+1, m+u\rrbracket \to U$ is a bijection, and is the unique one so that the last bullet point in \Cref{defn:discretebp} is satisfied.

We then let $\cH=\sH$ wherever it should be defined.

\begin{definition}
We let $\sL_\epsilon$ denote the collection of all discrete blocks $(\vec p, \vec\upsilon, \cH)$, that are valid in parity and such that for each $\ell\in \llbracket 1, m\rrbracket$, $p^\ell$ is constant on $\llbracket Q_{\ell-1}, Q_{\ell-1} + \epsilon N^{2/3}\rrbracket$ and either $\upsilon_\ell>\epsilon N^{2/3}$, or $\upsilon_\ell=\varnothing$.
\end{definition}

\begin{lemma}  \label{lem:defcL}
The construction above defines a correspondence $\cL: \vec r\mapsto (\vec p, \vec\upsilon, \cH)$,  which maps $\sB_\epsilon^*$ to $\sL_\epsilon$. 
\end{lemma}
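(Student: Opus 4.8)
\textbf{Plan of the proof of Lemma \ref{lem:defcL}.}
The proof is essentially a verification that the explicit construction described immediately before the statement outputs an object satisfying all the defining conditions of $\sL_\epsilon$. I would organize it in three stages: (i) check that $(\vec p, \vec\upsilon, \cH)$ is a legitimate triplet of discrete blocks (i.e.\ $\vec p$ is a discrete block process, $\vec\upsilon$ is compatible, $\cH$ is a compatible discrete block height); (ii) check that it is valid in parity; (iii) check the additional $\epsilon$-constraints that cut out $\sL_\epsilon$ inside the space of all parity-valid discrete blocks. Throughout, the key input is that $\vec r \in \sB_\epsilon^*$, which by the description above \Cref{prop:truncreg} means $J = \llbracket 1, m\rrbracket$, $a_\ell = b_\ell = \ell$, no type II, III, VI indices, no type B, C indices, and $\vartheta_\ell \wedge \dot\vartheta_\ell > \epsilon N^{2/3}$ for each $\ell \in J_\rmI \cup J_\rmIV$.

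For stage (i): the properties of $\vec p$ in \Cref{defn:discretebp} follow almost directly from \Cref{defn:wallblock}. The identity $p_\ell(Q_0) = r_\ell(0) = 0$ and $p_\ell = 0$ on $\llbracket Q_{\ell-1}+1, Q_m\rrbracket$ are by construction; $p_j(Q_0) = p_j(Q_m) = 0$ for $j > m$ follows since $r_{\sigma(j)}(0) = r_{\sigma(j)}(Q_m) = 0$; the ``at most one $j$ changes'' and ``$j \in \llbracket \ell+1, N_\ell\rrbracket$'' conditions are exactly condition 2 of \Cref{defn:wallblock} combined with the fact that the main variable at step $t \in \llbracket Q_{\ell-1}+1, Q_\ell\rrbracket$ is $i_\ell = \ell$ (so a jump index $j$ must satisfy $j \neq \ell$ and $j \le N_\ell$; to get $j \ge \ell+1$ one uses that $r_{\ell'} \equiv 0$ on this interval for $\ell' < \ell$ because those were earlier main variables that by the $\sB_\epsilon^*$ structure have already returned to zero — this uses the no-type-B/no-type-C structure and should be spelled out carefully); and the ordering of first-positive-times is built into the choice of the bijection $\sigma$. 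Compatibility of $\vec\upsilon$: if $\upsilon_\ell \neq \varnothing$ then on $\llbracket Q_{\ell-1}, Q_{\ell-1}+\upsilon_\ell\rrbracket$ there are no jump times, so $p^\ell$ is constant there; and $p_\ell(Q_{\ell-1}) = 0$ forces (by the discrete analog of the type classification, i.e.\ $\vartheta_\ell$ being $\varnothing$ when $r_\ell$ starts at $0$ and the walk does not dip) $\upsilon_\ell = \varnothing$. The block height conditions in \Cref{defn:discretehh} are the discrete images of \Cref{Definition_block_height}: $\cH(Q_\ell) = \sH(Q_\ell) = p^0(Q_\ell)$ because all main variables have degree zero at $Q_\ell$ except those still ``active'', and the bookkeeping $\sH = \sum r_j$ gives this; the jump-sign conditions are the restatement of the sign constraint in condition 2 of \Cref{defn:wallblock} (the ``$r_j(t) - r_{i_\ell}(t-1) + 1/2$ opposite sign'' clause becomes ``$\cH(t)+1/2 - p^\ell(t-1) - p_j(t)$ same sign as $p_j(t)-p_j(t-1)$'' after substituting $\cH = \sH$ and $\sH(t-1) = r_{i_\ell}(t-1) + p^\ell(t-1) + (\text{other terms})$); and the last bullet (``$\cH(t) \ge \cH(Q_{\ell-1})$ at the first return point when $\upsilon_\ell = \varnothing$'') is exactly the definition of $\upsilon_\ell = \varnothing$ meaning the walk does not go below $\sH(Q_{\ell-1})$ before the first jump.

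For stages (ii) and (iii): parity validity is a routine consequence of $\sH$ having $\pm 1$ increments — $Q_\ell + p^\ell(Q_\ell) = Q_\ell + \sH(Q_\ell)$ is even since $\sH$ starts at $0$; $t + \cH(t) = t + \sH(t)$ is even for the same reason; and $\upsilon_\ell$ is odd because $\sH(Q_{\ell-1}+\upsilon_\ell) = \sH(Q_{\ell-1}) - 1$ forces the number of steps $\upsilon_\ell$ to have the opposite parity to $0$, i.e.\ odd. Finally, the $\epsilon$-constraints defining $\sL_\epsilon$ translate directly: ``$p^\ell$ constant on $\llbracket Q_{\ell-1}, Q_{\ell-1} + \epsilon N^{2/3}\rrbracket$'' follows from $\dot\vartheta_\ell > \epsilon N^{2/3}$ (no jump times, hence no changes in any $p_j$ with $j > \ell$, in that interval) together with the observation that $p^\ell$ can only change at jump times; and ``$\upsilon_\ell > \epsilon N^{2/3}$ or $\upsilon_\ell = \varnothing$'' follows from $\vartheta_\ell > \epsilon N^{2/3}$: indeed $\upsilon_\ell$, when finite, is the first time $\sH$ dips below $\sH(Q_{\ell-1})$ with no intervening jump, which is $\ge \vartheta_\ell \wedge (\text{first jump time})$, and both exceed $\epsilon N^{2/3}$.

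The main obstacle, and the one place where care is genuinely needed rather than mechanical, is the bookkeeping in stage (i) relating the walk data $\{r_j\}$ to the split $\vec r = (\vec p, \vec\upsilon, \cH, \sH)$: one must verify that $r_\ell$ on $\llbracket 0, Q_{\ell-1}\rrbracket$ really does coincide with $p_\ell$ consistently across all $\ell$ (so that a single variable index playing the role of several main variables at different times is correctly recorded), that the auxiliary indices $U$ are correctly enumerated by $\sigma$ so that the ``first positive time'' ordering holds, and — most delicately — that the sign and height conditions of \Cref{defn:discretehh} are equivalent to those of \Cref{defn:wallblock} after the substitution, which requires tracking precisely which terms of $\sH(t-1)$ are absorbed into $p^\ell(t-1)$ versus $r_{i_\ell}(t-1)$ at a jump. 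Once this dictionary is set up cleanly, every condition is a one-line check; but setting it up unambiguously (especially in the presence of virtual blocks, where $\cH$ is defined at the extra points $Q_{\ell-1}+\upsilon_\ell - 1, Q_{\ell-1}+\upsilon_\ell$) is where the proof's content lies. I would therefore devote the bulk of the write-up to this dictionary and then dispatch the remaining verifications rapidly.
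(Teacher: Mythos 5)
Your proposal takes the same route as the paper: the paper's proof is exactly this kind of condition-by-condition verification (matching $J_\rmB=J_\rmC=J_\rmIII=J_{\rmVIa}=J_{\rmVIb}=J_{\rmVIc}=\emptyset$ and the $\vartheta_\ell\wedge\dot\vartheta_\ell>\epsilon N^{2/3}$ bounds to the defining conditions of $\sL_\epsilon$, and attributing the rest to \Cref{defn:wallblock} and the construction), only stated far more tersely than your write-up. One correction: $\sB_\epsilon^*$ does \emph{not} exclude type II indices --- its definition only forces $J_\rmIII=J_{\rmVIa}=J_{\rmVIb}=J_{\rmVIc}=J_\rmB=J_\rmC=\emptyset$, while the constraint $\vartheta_\ell\wedge\dot\vartheta_\ell>\epsilon N^{2/3}$ is imposed on $\ell\in J_\rmI\cup J_\rmII\cup J_\rmIV$, so your recap of the hypotheses is wrong on this point; the slip is harmless here because for $\ell\in J_\rmII$ one has $r_\ell(Q_{\ell-1})=0$ and $\dot\vartheta_\ell=\vartheta_\ell\in\Delta$, so the construction still yields $\upsilon_\ell=\varnothing$ and the remaining checks go through unchanged, but a final write-up must include this case rather than assume it away.
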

\begin{proof}
Recall the definition of $\sB_\epsilon^*$ from \Cref{ssec:pair}: 
\begin{itemize}
    \item the requirement $J_\rmB=J_\rmC=\emptyset$ (thereby $J_\rmIII=\emptyset$) corresponds to $p_\ell=r_\ell=0$ on the segment $\llbracket Q_{\ell-1}+1, Q_m\rrbracket$ for each $\ell\in \llbracket 1, m\rrbracket$;
    \item as for the requirement that $\vartheta_\ell\wedge \dot\vartheta_\ell>\epsilon N^{2/3}$ for $\ell\in J_\rmI\cup J_\rmII\cup J_\rmIV$,
    it is equivalent to that each $\dot\vartheta_\ell>\epsilon N^{2/3}$ or  $\dot\vartheta_\ell=\varnothing$, and $\vartheta_\ell>\epsilon N^{2/3}$ or  $\vartheta_\ell=\varnothing$;
    the $\dot\vartheta_\ell$ part corresponds to that $p^\ell$ is constant on $\llbracket Q_{\ell-1}, Q_{\ell-1} + \epsilon N^{2/3}\rrbracket$; 
    the $\vartheta_\ell$ part corresponds to $\upsilon_\ell>\epsilon N^{2/3}$ or $\upsilon_\ell=\varnothing$.
\end{itemize}
All the other properties of discrete blocks process $\vec p$ and discrete block height $\cH$ are from properties of walks, in \Cref{defn:wallblock}, and the construction of $p_j$ for $j\in\llbracket m+1, m+u\rrbracket$, which ensures that $p_j$ is not identical zero, and the symmetry breaking in \Cref{defn:discretebp}.
All the other properties of discrete virtual block process $\vec\upsilon$ are ensured by its construction above. 
\end{proof}
The definition of $\cL$ also implies the following description of its inverse $\cL^{-1}$.
\begin{lemma}  \label{lem:defcLrev}
For $(\vec p, \vec\upsilon, \cH)\in \sL_\epsilon$, the pre-image $\cL^{-1}(\vec p, \vec\upsilon, \cH)$ either is empty (due to the requirement of $J_\rmVIa=J_\rmVIb=J_\rmVIc=\emptyset$ for $\sB_\epsilon^*$), or consists of all the $\vec r\in \sB_\epsilon^*$ obtained by specifying $\sH $ to be any function on $\llbracket 0, Q_m\rrbracket$, satisfying
\begin{itemize}
    \item $\sH(Q_\ell)=p^0(Q_\ell)$ for each $\ell\in\llbracket 0,m \rrbracket$, and in particular $\sH(0)=\sH(Q_m)=0$;
    \item $|\sH(t)-\sH(t-1)|=1$ and $\sH(t)\ge p^0(t)$ for each $t\in \llbracket 1, Q_m\rrbracket$;
    \item $\sH(t)=\sH(t-1)-1=\cH(t)$ for each $t\in \llbracket Q_{\ell-1}+1, Q_\ell\rrbracket$, $p^\ell(t)\neq p^\ell(t-1)$, or $t=Q_{\ell-1}+\upsilon_\ell$ when $\upsilon_\ell\neq\varnothing$;
\end{itemize}
and specifying the set $U\subset \llbracket m+1, N\rrbracket$ with $|U|=u$, as well as the bijection $\sigma:\llbracket m+1, m+u\rrbracket \to U$.
\end{lemma}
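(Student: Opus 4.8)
The plan is to invert the recipe defining $\cL$ in \Cref{lem:defcL} coordinate by coordinate. Fix $(\vec p,\vec\upsilon,\cH)\in\sL_\epsilon$; given a set $U\subset\llbracket m+1,N\rrbracket$ with $|U|=u$, a bijection $\sigma\colon\llbracket m+1,m+u\rrbracket\to U$, and a function $\sH$ on $\llbracket 0,Q_m\rrbracket$ satisfying the three listed constraints, define $\vec r=\Phi(\sH,U,\sigma)$ by $r_{\sigma(j)}=p_j$ for $j\in\llbracket m+1,m+u\rrbracket$, $r_j\equiv 0$ for $j\in\llbracket m+1,N\rrbracket\setminus U$, $r_\ell=p_\ell$ on $\llbracket 0,Q_{\ell-1}\rrbracket$, $r_\ell=0$ on $\llbracket Q_\ell,Q_m\rrbracket$, and $r_\ell(t)=\sH(t)-p^\ell(t)$ on the active window $\llbracket Q_{\ell-1}+1,Q_\ell\rrbracket$. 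First I would check this is well posed: the identities $\sH(Q_{\ell-1})=\cH(Q_{\ell-1})=p^0(Q_{\ell-1})=p^\ell(Q_{\ell-1})+p_\ell(Q_{\ell-1})$ and $p_j(Q_\ell)=0$ for $j\le\ell$ make $r_\ell$ continuous across window boundaries and consistent with $r_\ell(Q_{\ell-1})=p_\ell(Q_{\ell-1})$, $r_\ell(Q_\ell)=0$, while $\sH\ge p^0=p^\ell$ on windows gives $r_\ell\ge 0$. Then $\Phi(\sH,U,\sigma)\in\sB$: that $\sum_j r_j=\sH$ has $\pm1$ increments, is nonnegative, and vanishes at $0$ and $Q_m$ is built into the constraints on $\sH$; the per-window alternative of \Cref{defn:wallblock} holds because on steps where $p^\ell$ is constant only $r_\ell$ moves, by $\pm1$, whereas on the steps where $p^\ell$ jumps (and at $t=Q_{\ell-1}+\upsilon_\ell$) exactly the one extra coordinate prescribed by $\vec p$ (resp.\ by $\sigma$) changes, $\sH$ drops by one there by the constraint $\sH(t)=\sH(t-1)-1=\cH(t)$, and the sign condition on $\cH$ in \Cref{defn:discretehh} is precisely the opposite-sign condition of \Cref{defn:wallblock}. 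Unwinding the definitions of $p_\ell$, $u$, the re-sorting of auxiliaries, $\upsilon_\ell$ and $\cH$ in \Cref{lem:defcL} then shows $\cL(\Phi(\sH,U,\sigma))=(\vec p,\vec\upsilon,\cH)$ whenever $\Phi(\sH,U,\sigma)\in\sB_\epsilon^*$, and conversely that any $\vec r\in\cL^{-1}(\vec p,\vec\upsilon,\cH)$ equals $\Phi(\sH_{\vec r},U_{\vec r},\sigma_{\vec r})$, with $\sH_{\vec r}$ automatically obeying the three constraints by \Cref{defn:wallblock} and the $\cL$-recipe.

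It remains to establish the dichotomy, and the key claim is that membership of $\Phi(\sH,U,\sigma)$ in $\sB_\epsilon^*$ is independent of the choice of $(\sH,U,\sigma)$. The conditions $J_\rmB=J_\rmC=J_\rmIII=\emptyset$ are forced by $p_\ell=0$ on $\llbracket Q_{\ell-1}+1,Q_m\rrbracket$; and the requirement $\vartheta_\ell\wedge\dot\vartheta_\ell>\epsilon N^{2/3}$ for $\ell\in J_\rmI\cup J_\rmII\cup J_\rmIV$, together with the whole beginning-type assignment of each index (\Cref{tab:1}, types I--VI), is a function of $(\vec p,\vec\upsilon,\cH)$ alone: $r_\ell(Q_{\ell-1})=p_\ell(Q_{\ell-1})$ and $\dot\vartheta_\ell$ (the first jump of $p^\ell$ past $Q_{\ell-1}$) come from $\vec p$, while the features of $\vartheta_\ell$ that actually enter the classification --- whether $\vartheta_\ell=\varnothing$, whether $\vartheta_\ell=\dot\vartheta_\ell$, and whether $\vartheta_\ell>\epsilon N^{2/3}$ --- are read off from $\upsilon_\ell$, from comparing the recorded heights $\cH(Q_{\ell-1}+\dot\vartheta_\ell-1)$ and $\cH(Q_{\ell-1})$, and from the $\sL_\epsilon$-constraints (recall $\upsilon_\ell=\varnothing$ forces $\sH$ to make no unobstructed descent below $\sH(Q_{\ell-1})$ before the first jump, and on that jump-free initial stretch $\sH\ge p^\ell(Q_{\ell-1})$). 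Consequently, if this common assignment puts some index in type VI.1, VI.2 or VI.3, then no $\Phi(\sH,U,\sigma)$ lies in $\sB_\epsilon^*$ and $\cL^{-1}(\vec p,\vec\upsilon,\cH)=\emptyset$; otherwise every $\Phi(\sH,U,\sigma)$ lies in $\sB_\epsilon^*$, so by the two-sided inclusion above $\cL^{-1}(\vec p,\vec\upsilon,\cH)$ is exactly the described set, and it is nonempty because validity in parity of $(\vec p,\vec\upsilon,\cH)$ guarantees at least one admissible $\sH$.

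The main obstacle is this last step: one must run through \Cref{tab:1} and the definitions of types I--VI case by case and verify that every quantity feeding the classification is genuinely pinned down by the discrete-block data under the $\sL_\epsilon$ constraints --- reconciling in particular the ``first quarter of the window'' ranges in $\vartheta_\ell,\dot\vartheta_\ell$ with the ``full window'' range in $\upsilon_\ell$, which is where the parity-validity condition and the lower bound $\epsilon>N^{-1/10}$ get used. Everything else reduces to routine bookkeeping comparison of the discrete constructions of \Cref{ssec:bdecom} with the combinatorics of walks in \Cref{defn:wallblock}.
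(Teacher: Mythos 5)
Your overall strategy --- invert the $\cL$-recipe coordinatewise via an explicit map $\Phi$, then argue that membership in $\sB_\epsilon^*$ is an all-or-nothing property of the discrete blocks --- is exactly the definition-checking the paper intends (its own proof is omitted, with a pointer back to the proof of \Cref{lem:defcL}), and most of your bookkeeping is sound: the boundary consistency of $r_\ell=\sH-p^\ell$ across $Q_{\ell-1}$ and $Q_\ell$, the equivalence of the sign condition in \Cref{defn:discretehh} with the opposite-sign condition in \Cref{defn:wallblock}, and the automatic emptiness of $J_\rmB$, $J_\rmC$, and hence $J_\rmIII$, for walks of the constructed form.

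There is, however, one step that fails as stated: the claim that $\cL(\Phi(\sH,U,\sigma))=(\vec p,\vec\upsilon,\cH)$ for \emph{every} $\sH$ satisfying the three bullets. The bullets do not constrain $\sH$ on the jump-free initial stretch of a window except at the single time $Q_{\ell-1}+\upsilon_\ell$, and the condition $\sH\ge p^0$ there only gives $\sH\ge p^\ell(Q_{\ell-1})=p^0(Q_{\ell-1})-p_\ell(Q_{\ell-1})$. So when $p_\ell(Q_{\ell-1})>0$, an admissible $\sH$ may dip below $\sH(Q_{\ell-1})$ at some $t_0<\upsilon_\ell$ (or, if $\upsilon_\ell=\varnothing$, before the first jump of $p^\ell$); the resulting walk is a legitimate element of $\sB_\epsilon^*$ (type I with $\vartheta_\ell=t_0$, say), but $\cL$ maps it to a block with virtual coordinate $t_0\ne\upsilon_\ell$, so it does not lie in the preimage of $(\vec p,\vec\upsilon,\cH)$. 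Your parenthetical ``recall $\upsilon_\ell=\varnothing$ forces $\sH$ to make no unobstructed descent\dots'' is circular at this point: that is a property of walks already known to be in the preimage, not a consequence of the listed constraints. The repair is to add the condition $\sH(t)\ge\sH(Q_{\ell-1})$ for $t\in\llbracket Q_{\ell-1},Q_{\ell-1}+\upsilon_\ell-1\rrbracket$ (respectively, up to the step preceding the first jump of $p^\ell$ when $\upsilon_\ell=\varnothing$); this is automatic only when $p_\ell(Q_{\ell-1})=0$, and it is exactly what the paper silently uses when it counts the first interval of each $\Upsilon_\ell$ with $\sF^+$ and $I^+$ in the proof of \Cref{prop:IXHGest}. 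Once this constraint is added, your ``key claim'' --- that $\vartheta_\ell$, hence the type assignment, hence membership in $\sB_\epsilon^*$, is pinned down by $(\vec p,\vec\upsilon,\cH)$ alone --- does go through (e.g.\ $\vartheta_\ell=\upsilon_\ell$ or $\varnothing$ according to whether $\upsilon_\ell$ falls in the first quarter); without it, that claim fails too, since the I/IV/V/VI.2 distinction would then depend on the choice of $\sH$.
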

This follows by checking the definition of walks and $\sB_\epsilon^*$, and of blocks, as in the proof of \Cref{lem:defcL}. We omit the details.

We note that for any $\vec r\in \cL^{-1}(\vec p, \vec\upsilon, \cH)$, the sets $\Delta$, $\Delta_{j,\ell}$, and the numbers $\delta$ and $\delta_{j,\ell}$ (for each $j\in\llbracket 1,N\rrbracket$ and $\ell\in\llbracket 1,m\rrbracket$) are the same; therefore below we regard them as functions of $(\vec p, \vec\upsilon, \cH)$.

\bigskip

We next estimate the sum $\sum_{\vec r\in \cL^{-1}(\vec p, \vec\upsilon, \cH)} w(\vec r)$ for any $(\vec p, \vec\upsilon, \cH)\in \sL_\epsilon$.
For this, we recall the blocks defined in \Cref{sec:forjm}.

\medskip
\noindent\textbf{Closeness between blocks and discrete blocks.}
We use the following notion for the correspondence between blocks and discrete blocks.
\begin{definition}  \label{defn:closen}
Take $\Clo>0$.
For any discrete blocks $(\vec p, \vec\upsilon, \cH)$ of $(\hD_{m}^{N_m})^{k_m} \cdots (\hD_{1}^{N_1})^{k_1}$ which are valid in parity, and any blocks $(\vec \bp, \vec\bb,\bH)$ (with times $\vec \bk\in \R_+^m$, and $\bQ_\ell=\sum_{\ell'=1}^\ell \bk_{\ell'}$ for each $\ell \in \llbracket 0, m\rrbracket$),
they are \emph{$\Clo$-close}, or simply \emph{close} if the following conditions hold.
\begin{itemize}
    \item $u=\bu$.
    \item For each $\ell\in\llbracket 1,m\rrbracket$, we have
$|k_\ell- \bk_\ell N^{2/3}|<\Clo$.
    \item For each $\ell\in\llbracket 1,m\rrbracket$, either $\upsilon_\ell=\bb_\ell=\varnothing$, or
$|\upsilon_\ell-\bb_\ell N^{2/3}|<\Clo$ with $\upsilon_\ell\neq\varnothing$.
    \item For each $\ell \in \llbracket 1,m\rrbracket$ and $j\in \llbracket \ell, m+\bu\rrbracket$, we have $\delta_{j,\ell}=\bdel_{j,\ell}$.
    Besides, if we denote the numbers in $\Delta_{j,\ell}$ and $\bDel_{j,\ell}$ by $t_1<\cdots < t_{\delta_{j,\ell}}$ and $x_1<\cdots<x_{\delta_{j,\ell}}$ respectively, we have for each $i\in\llbracket 1, \delta_{j,\ell}\rrbracket$,
    \[
    |t_i - N^{2/3}x_i|,\quad |p^0(t_i)-N^{1/3}\bp^0(x_i)|, \quad |\cH(t_i) - N^{1/3}\bH(x_i)| < \Clo,
    \]
    and
\[
 (\bp^0(x_i)-\bp^0({x_i}-))(p^0(t_i)-p^0(t_i-1))>0,
\]
    and $p_j=0$ on $\llbracket Q_\ell, Q_m \rrbracket$ if and only if $\bp_j=0$ on $[\bQ_\ell,\bQ_m]$.
\end{itemize}
\end{definition}
We note that when $(\vec p, \vec\upsilon, \cH)$ and $(\vec \bp, \vec\bb,\bH)$ are close, necessarily $\delta=\bdel$.

For the sum $\sum_{\vec r\in \cL^{-1}(\vec p, \vec\upsilon, \cH)} w(\vec r)$, we shall approximate it using blocks.

Recall the setup in \Cref{defn:bIs,def:xibi}.
The continuous counterpart of  the sum $\sum_{\vec r\in \cL^{-1}(\vec p, \vec\upsilon, \cH)} w(\vec r)$ is $\bI_{\beta,\vec\bk}[\vec \bp, \vec\bb,\bH]$, and we will explicitly bound the difference between them (with appropriate scaling), using \Cref{lem:RW1}.
We next define the notations in the error terms.
We let
\[
\bF[\vec\bp,\vec\bb, \bH]=2^{-\bdel}\prod_{(x,y)\in\Xi} \bF[x,y],
\]
where
\begin{equation*}
\bF[x,y]=
\begin{cases}
\bF_{0,0}(y-x), & (x,y) \in \Xi_1, \\
\bF_0(y-x;,\bH(y)-\bH(x)), & (x,y) \in \Xi_2, \\
(-1)^{\don[\bp^0(x)<\bp^0(x-)]}\bF_0(y-x;\bH(x)-\bH(y)) , & (x,y) \in \Xi_3, \\
(-1)^{\don[\bp^0(x)<\bp^0(x-)]} \bF(y-x;\bH(x)-\bp^0(x), \bH(y)-\bp^0(y-) ), & (x,y) \in \Xi_4.
\end{cases}
\end{equation*}
And we further denote
\[
\bEr[\vec \bp, \vec\bb,\bH]= \sum_{(x,y)\in \Xi} \bEr[x,y],
\]
where
\begin{equation*}
\bEr[x,y] =
\begin{cases}
N^{-1/40}, & (x,y)\in \Xi_1, \\
        1\wedge \big(N^{-1/40} +N^{-1/3}/(\bH(y)-\bH(x))  \big),
 & (x,y)\in \Xi_2, \\
\!\begin{aligned}
        1\wedge & \big(N^{-3/5}(y-x)^{-3} +N^{-1/40} 
        +  N^{-1/3}/(\bH(x)-\bH(y))  \big),
    \end{aligned}
& (x,y)\in \Xi_3, \\
\!\begin{aligned}
        1\wedge & \big(N^{-3/5}(y-x)^{-3} +N^{-1/40} +  N^{-1/3}/(\bH(x)-\bp^0(x)) \\
       & +N^{-1/3}/(\bH(y)-\bp^0(y-)) \big),
    \end{aligned}
 & (x,y)\in \Xi_4.
\end{cases}
\end{equation*}
Finally, for $\wes>0$, we say that $(\vec\bp,  \vec\bb,\bH)$ is $\wes$-\emph{well-spaced}, or simply \emph{well-spaced}, if
\begin{itemize}
    \item $y-x>\wes N^{-2/3}$ for each $(x,y)\in\Xi$,
    \item  $\bH(y)-\bH(x) > \wes N^{-1/3}$ for each $(x,y)\in\Xi_2$,
    \item  $\bH(x)-\bH(y) > \wes N^{-1/3}$ for each $(x,y)\in \Xi_3$,
    \item $\bH(x)-\bp^0(x), \bH(y)-\bp^0(y-)>\wes N^{-1/3}$ for each  $(x,y)\in\Xi_4$.
\end{itemize}
The notion of being well-spaced is introduced for the asymptotic approximations: for such blocks $\bI(x;h,g)$, $\bI_0(x;h)$, and $\bI_{0,0}(x)$ are well-approximated by their discrete counterparts, as we later establish in \Cref{lem:RW1}.

We next state an asymptotic approximation for $\sum_{\vec r\in \cL^{-1}(\vec p, \vec\upsilon, \cH)} w(\vec r)$.
\begin{prop}  \label{prop:IXHGest}
Take any constants $C_1, \Clo, \wes>0$.
Take any limiting blocks $(\vec \bp, \vec\bb,\bH)$ that is $\wes$-well-spaced, with $\bdel, \bp^0, \bH<C_1$.
Take any $(\vec p, \vec\upsilon, \cH)\in \sL_\epsilon$, and assume that $(\vec p, \vec\upsilon, \cH)$ and $(\vec \bp, \vec\bb,\bH)$ are $\Clo$-close, and that $\cL^{-1}(\vec p, \vec\upsilon, \cH)$ is non-empty.
Also take $\vec \ttt\in \R_{\ge 0}^m$ such that $\ttt_1\le \cdots \le \ttt_m<C_1$, and $|N_\ell-N+\ttt_\ell N^{2/3}|<C_1$ for each $\ell\in\llbracket 1, m\rrbracket$.
Then there exists $C_2=C_2(C_1, \Clo, \wes)$ such that
\begin{multline*}
\Bigg| 2^{-m-\bdel-|\{\ell\in\llbracket 1,m\rrbracket: \bb_\ell\neq\varnothing\}|}N^{ m-\frac{2}{3}\bu+\frac{4}{3}\bdel+\frac{2}{3}|\{\ell\in\llbracket 1,m\rrbracket: \upsilon_\ell\neq\varnothing\}|  } \prod_{\ell=1}^m (2\sqrt{N_\ell N})^{-k_\ell} \sum_{\vec r \in \cL^{-1}(\vec p, \vec\upsilon, \cH)}  w(\vec r) \\ - \exp\left(\sum_{\ell=1}^{m-1}
(\ttt_\ell-\ttt_{\ell+1}) \bH(\bQ_\ell)/2 \right)\bI_{\beta,\vec\bk}[\vec \bp, \vec\bb,\bH] \Bigg| <
C_2\bF[\vec\bp, \vec\bb,\bH]\bEr[\vec\bp, \vec\bb,\bH].
\end{multline*}
\end{prop}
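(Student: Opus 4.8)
\textbf{Proof strategy for Proposition \ref{prop:IXHGest}.}

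The plan is to factorize both the discrete sum $\sum_{\vec r \in \cL^{-1}(\vec p, \vec\upsilon, \cH)} w(\vec r)$ and the continuous quantity $\bI_{\beta,\vec\bk}[\vec\bp,\vec\bb,\bH]$ as products over the segments $(x,y)\in\Xi$ (in the discrete case, over the corresponding intervals $\llbracket v,w\rrbracket$ cut out by the points where $\cH$ is defined), and then compare the two products factor by factor using the triangle inequality in the form $|\prod a_i - \prod b_i| \le \sum_i |a_i - b_i|\prod_{i'\ne i}\max(|a_{i'}|,|b_{i'}|)$. First I would use \Cref{lem:defcLrev}: a walk $\vec r\in\cL^{-1}(\vec p,\vec\upsilon,\cH)$ is determined by (i) the choice of the set $U\subset\llbracket m+1,N\rrbracket$ with $|U|=u$ and the bijection $\sigma$, which together contribute a combinatorial factor $\binom{N-m}{u}u! = N^u(1+O(N^{-1/3}))$; and (ii) the Bernoulli path $\sH$ on $\llbracket 0,Q_m\rrbracket$ subject to the constraints listed there --- pinned at the points where $\cH$ is defined, staying $\ge p^0$, and with prescribed down-steps at the discontinuity points and virtual-block points. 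Conditional on the block data, $\sH$ restricted to each segment $\llbracket v,w\rrbracket$ is an independent Bernoulli bridge (or excursion, when both endpoints sit on $p^0$), constrained to stay above the constant $p^0$-level on that segment; the weight $w(\vec r)$ from \eqref{eq_x3}/\eqref{eq:weight} likewise factorizes into: a power of $N$ and $N_\ell$ (the second line of \eqref{eq:weight}, which together with the prefactor $\prod_\ell(2\sqrt{N_\ell N})^{-k_\ell}$ and the $N$-power prefactor in the statement produces the $\exp(\sum(\ttt_\ell-\ttt_{\ell+1})\bH(\bQ_\ell)/2)$ term after using $|N_\ell - N + \ttt_\ell N^{2/3}|<C_1$ and $\sH(Q_\ell)=p^0(Q_\ell)$), a sign $(-1)^{\#\{3(b)\text{-jumps}\}}$ matching the $(-1)^{\don[\bp^0(x)<\bp^0(x-)]}$ factors, and a product of the third-line factors $1 + \frac{2r_{i_\ell}(t-1)}{\beta N_\ell} - \frac{\#\{\cdots\}}{N_\ell}$ over the $\pm1$-steps, which in the diffusive limit exponentiates to $\exp(\beta^{-1}\int B)$ --- this is exactly the area term in $\bI_{0,0}$, $\bI_0$, $\bI$, plus the $\exp(\beta^{-1}(y-x)(\bH(x)-\bp^0(x)))$ correction coming from the part of $p^0$ below the virtual-block level (cases $\Xi_1,\Xi_2$).

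The core of the argument is then a per-segment estimate: for a single $(x,y)\in\Xi_i$, show that the appropriately normalized sum of $\prod_{\text{steps}}(1+\cdots)$ over Bernoulli bridges/excursions with the given boundary heights and positivity constraint is within $C\bF[x,y]\bEr[x,y]$ of the corresponding $\bI$-quantity ($\bI_{0,0}(y-x)$, $\exp(\cdots)\bI_0$, $\bI_0$, or $\exp(\cdots)\bI$). This is precisely the content of \Cref{lem:RW1} (the random-walk-to-Brownian-bridge coupling promised in the appendix, Sections \ref{Appendix_2} and \ref{ssec:couplgwbb}): the number of Bernoulli paths with prescribed endpoints and positivity is $2^{w-v}$ times a local CLT density that converges to $\bF_{0,0}$, $\bF_0$, or $\bF$ up to relative error $N^{-1/40}$ (plus the $N^{-1/3}/(\text{gap})$ terms when an endpoint height is close to the constraint, and the $N^{-3/5}(y-x)^{-3}$ terms when the segment is short, which is where the $\wes$-well-spacedness hypothesis and $\bdel,\bp^0,\bH<C_1$ keep the error controlled); and the expectation of $\exp(\beta^{-1}\sum B)$ under the random-walk bridge measure converges to the Brownian counterpart with the same order of error, using that each increment factor is $1+O(N^{-1/3}\cdot\text{height})$ and the height is $O(N^{1/3})$. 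The matching of the $\Xi_i$ cases to the four cases of $\bI_\beta[x,y]$ in \Cref{def:xibi} is dictated by whether the two endpoints of the segment lie on $p^0$ or strictly above it, exactly as encoded in the $\Xi_1,\dots,\Xi_4$ definitions. The powers of $2$ bookkeeping --- $2^{-\bdel-|\{\ell:\bb_\ell\ne\varnothing\}|}$ in $\bI_{\beta,\vec\bk}$, the extra $2^{-m}$, and the $2^{w-v}$ per segment --- must be reconciled against the total path length $Q_m$, the down-steps forced at the $\bdel$ discontinuity points and the $|\{\ell:\upsilon_\ell\ne\varnothing\}|$ virtual-block points, and the pinning at $\{Q_\ell\}$; this accounting is mechanical but needs care, and the $N$-power prefactor $N^{m-\frac23\bu+\frac43\bdel+\frac23|\{\ell:\upsilon_\ell\ne\varnothing\}|}\prod(2\sqrt{N_\ell N})^{-k_\ell}$ is calibrated to absorb the $N^u$ from choosing $U$, the $N^{-2/3}$ scaling per discontinuity point and virtual point (from converting sums over integer positions/heights to integrals), and the overall Brownian scaling.

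The main obstacle I expect is the uniformity and sharpness of the per-segment random-walk estimates near the boundary of the constraint region --- i.e.\ when $\bH(y)-\bH(x)$ (in $\Xi_2$), $\bH(x)-\bH(y)$ (in $\Xi_3$), or $\bH(x)-\bp^0(x)$, $\bH(y)-\bp^0(y-)$ (in $\Xi_4$) is of order $N^{-1/3}$, so that the Brownian bridge is barely staying positive and the local CLT must be replaced by a reflection-principle estimate; the $1\wedge(\cdots + N^{-1/3}/(\text{gap}))$ form of $\bEr[x,y]$ is exactly the bookkeeping device that lets these borderline contributions be bounded by $\bF[x,y]\bEr[x,y]$ rather than requiring a stronger estimate. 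A secondary difficulty is controlling the short-segment errors $N^{-3/5}(y-x)^{-3}$ in $\Xi_3,\Xi_4$ (these come from the Edgeworth-type correction in the local CLT for a bridge of length $y-x$) and making sure they are genuinely summable against $\bF$ when one later integrates over all blocks --- but for the present proposition $(y-x)>\wes N^{-2/3}$ is fixed, so the error is simply absorbed into $C_2=C_2(C_1,\Clo,\wes)$. I would defer all the local-CLT and area-functional convergence statements to \Cref{lem:RW1} and the appendix, and in the body of this proof only carry out the factorization, the combinatorial count of $U$ and $\sigma$, the reconciliation of the powers of $N$ and $2$, and the final telescoping triangle-inequality bound.
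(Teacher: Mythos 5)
Your proposal is correct and follows essentially the same route as the paper's proof: factorize the walk sum over the segments cut out by $\cH$ via \Cref{lem:defcLrev}, count the $U,\sigma$ choices as $N^u(1+O(N^{-1/3}))$, extract $\exp\bigl(\sum(\ttt_\ell-\ttt_{\ell+1})\bH(\bQ_\ell)/2\bigr)$ from the $N_\ell$-powers in \eqref{eq:weight}, apply \Cref{lem:RW1} per segment, and telescope the product comparison, with the power-of-$N$/power-of-$2$ accounting closed by the identity \eqref{eq:mXi}. No gaps.
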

For the proof, we introduce discrete analogs of the functions $\bI$, $\bI_0$, $\bI_{0,0}$ in \Cref{defn:bIs}.

For $X,H,G\in\Z_{\ge 0}$ with $X+H+G$ even, we let $\sF(X;H,G)$ be the collection of all $F:\llbracket 0,X\rrbracket\to \Z_{\ge 0}$, satisfying $|F(t)-F(t-1)|=1$ for each $t\in\llbracket 1,X\rrbracket$, and $F(0)=H$, $F(X)=G$.
When $H\le G$, we denote $\sF^+(X;H,G)$ to be the subset of $\sF(X;H,G)$, consisting of $F$ with $F\ge H$.
\begin{definition}\label{defn:IXHG}
We denote\[
I(X;H,G)=2^{-X-1}\sum_{F\in \sF(X;H,G)} \prod_{\substack{t\in \llbracket 1, X \rrbracket: \\ F(t)=F(t-1)-1 }}\left(1+\frac{2F(t)}{\beta N}\right).
\]
When $H\le G$, we also denote $I^+(X;H,G)$ to be the same, except for that the summation is over $F\in\sF^+(X;H,G)$.
\end{definition}

\begin{proof}[Proof of \Cref{prop:IXHGest}]
In this proof $C,c>0$ may depend on $C_1, \Clo, \wes$, and we take $C_2$ to be large enough depending on all $C,c$.

For the discrete block $(\vec p, \vec\upsilon, \cH)$, and each $\ell\in\llbracket 1,m\rrbracket$, let $\Upsilon_\ell$ be the set consisting of all maximal intervals $\llbracket X,Y\rrbracket\subset \llbracket Q_{\ell-1}, Q_\ell\rrbracket$, such that $p^\ell$ is constant on $\llbracket X,Y\rrbracket$, and the interval contains at most one of $Q_{\ell-1}+\upsilon_\ell$ and $Q_{\ell-1}+\upsilon_\ell-1$.
Then there is an exact correspondence between discrete intervals in $\bigcup_{\ell=1}^m\Upsilon_\ell$ (constructed by $(\vec p, \vec\upsilon, \cH)$), and continuous intervals $\Xi$ constructed by $(\vec \bp, \vec\bb,\bH)$.
This in particular implies that $\sum_{\ell=1}^m |\Upsilon_\ell|=|\Xi|=\delta+m+|\{\ell\in\llbracket 1,m\rrbracket: \upsilon_\ell\neq\varnothing\}|$.

For each $\llbracket X,Y\rrbracket \in \Upsilon_\ell$, if $X=Q_{\ell-1}$, we let \[\sF[X,Y]=\sF^+(Y-X;\cH(X)-p^\ell(X), \cH(Y)-p^\ell(X)),\] and \[I[X,Y]=I^+(Y-X;\cH(X)-p^\ell(X), \cH(Y)-p^\ell(X));\] otherwise, we let  \[\sF[X,Y]=\sF(Y-X;\cH(X)-p^\ell(X), \cH(Y)-p^\ell(X)),\] and \[I[X,Y]=(-1)^{\don[p^\ell(X)<p^\ell(X-1)]}I(Y-X;\cH(X)-p^\ell(X), \cH(Y)-p^\ell(X)).\]

Using the expression from \Cref{defn:wr}, 
for each $\vec r \in \sB_\epsilon^*$ with $\cL(\vec r)=(\vec p, \vec\upsilon, \cH)$, we have
\begin{equation}   \label{eq:bigmnbound}
\Bigg| \frac{2^{-m}N^\delta\prod_{\ell=1}^m (2\sqrt{N_\ell N})^{-k_\ell}|w(\vec r)|}{2^{-Q_m} \prod_{\ell=1}^m N_\ell^{(\cH(Q_{\ell-1})-\cH(Q_\ell))/2} \prod_{\substack{t\in \llbracket Q_{\ell-1}+1, Q_\ell \rrbracket \\ \sH(t)=\sH(t-1)-1}} \left( 1 + \frac{2r_{i_\ell}(t-1)}{\beta N_\ell} \right)} - 1\Bigg| < C N^{-1/3}.
\end{equation}
Indeed, plugging \eqref{eq:weight} into the ratio in the left-hand side, we would get
\begin{align*}
 &\prod_{\ell=1}^m \Biggl[ (N_\ell/N)^{-\bigl|\{t\in\llbracket Q_{\ell-1}+1, Q_\ell \rrbracket:\, \sH(t)=\sH(t-1)-1,\, r_{i_\ell}(t)\neq r_{i_\ell}(t-1)-1\}\bigr|}\notag \\
&\times \prod_{\substack{t\in \llbracket Q_{\ell-1}+1, Q_\ell \rrbracket \cap \Delta \\ \sH(t)=\sH(t-1)-1}}  \left(1+\frac{2r_{i_\ell}(t-1)}{\beta N_\ell}-\frac{|\{j\in \llbracket 1,N\rrbracket: r_j(t-1)\ge r_{i_\ell}(t-1)\}|}{N_\ell}\right)\left( 1 + \frac{2r_{i_\ell}(t-1)}{\beta N} \right)^{-1}  \\
&\times \prod_{\substack{t\in \llbracket Q_{\ell-1}+1, Q_\ell \rrbracket\setminus \Delta \\ \sH(t)=\sH(t-1)-1}}  \left(1+\frac{2r_{i_\ell}(t-1)}{\beta N_\ell}-\frac{|\{j\in \llbracket 1,N\rrbracket: r_j(t-1)\ge r_{i_\ell}(t-1)\}|}{N_\ell}\right)\Biggr].
\end{align*}
Using that $\bdel=\delta<C_1$, each of the three lines above is between $1-CN^{-1/3}$ and $1+CN^{-1/3}$, thus we get \eqref{eq:bigmnbound}.

On the other hand, from the definition of $I[X,Y]$ and \Cref{lem:defcLrev}, we have
\begin{multline}  \label{eq:defnIxyfrom}
\sum_{\vec r \in \sB_\epsilon^*: \cL(\vec r)=(\vec p, \vec\upsilon, \cH)} 2^{-Q_m} \prod_{\ell=1}^m (-1)^{\bigl|\{t\in\llbracket Q_{\ell-1}+1, Q_\ell \rrbracket:\, \sH(t)=\sH(t-1)-1,\, r_{i_\ell}(t)\ge r_{i_\ell}(t-1)\}\bigr|}\\ \times\prod_{\substack{t\in \llbracket Q_{\ell-1}+1, Q_\ell \rrbracket: \\ \sH(t)=\sH(t-1)-1}} \left( 1 + \frac{2r_{i_\ell}(t-1)}{\beta N} \right)  = P[\vec p,\vec\upsilon, \cH]\prod_{\ell=1}^m \prod_{\llbracket X,Y\rrbracket\in \Upsilon_\ell} I[X,Y],    
\end{multline}
where $P[\vec p,\vec\upsilon, \cH]$ is the number of possible $U$ and $\sigma$ for $\vec r$ with $\cL(\vec r)=(\vec p,\vec\upsilon, \cH)$, and is between $N^u(1-CN^{-1/3})$ and $N^u$.
Therefore from \eqref{eq:defnIxyfrom} and \eqref{eq:bigmnbound}, we have that
\begin{multline}  \label{eq:nNdiff}
  \Bigg| 2^{-m}N^\delta\prod_{\ell=1}^m (2\sqrt{N_\ell N})^{-k_\ell} \sum_{\vec r \in \sB_\epsilon^*: \cL(\vec r)=(\vec p, \vec\upsilon, \cH)}  w(\vec r)  -  P[\vec p,\vec\upsilon, \cH]\prod_{\ell=1}^m N_\ell^{(\cH(Q_{\ell-1})-\cH(Q_\ell))/2} \prod_{\llbracket X,Y\rrbracket\in \Upsilon_\ell} I[X,Y]\Bigg|\\ < CN^{-1/3}2^{-Q_m}|\{\vec r \in \sB_\epsilon^*: \cL(\vec r)=(\vec p, \vec\upsilon, \cH)\}|.
\end{multline}
For the right-hand side, it equals
\begin{multline*}
CN^{-1/3}2^{-Q_m} P[\vec p,\vec\upsilon, \cH] \prod_{\ell=1}^m \prod_{\llbracket X,Y\rrbracket\in \Upsilon_\ell} |\sF[X,Y]| <
CN^{-1/3} \cdot N^{u-|\Xi_1| - \frac{2}{3}(|\Xi_2| + |\Xi_3|) - \frac{1}{3}|\Xi_4|} \bF[\vec \bp, \vec\bb,\bH]\\ = CN^{-1/3}\cdot N^{-m+\frac{2}{3}\bu-\frac{1}{3}\bdel-\frac{2}{3}|\{\ell\in\llbracket 1,m\rrbracket: \upsilon_\ell\neq\varnothing\}|}\bF[\vec \bp, \vec\bb,\bH].
\end{multline*}
Here the inequality is due to replacing each $|\sF[X,Y]|$ with $\bF[x,y]$, for $(x,y)\in \Xi$ being the interval in correspondence with $\llbracket X, Y\rrbracket$, using Lemmas \ref{lem:A1}, \ref{lem:sFog}, and \ref{lem:sfoo}. 
The last equality is due to $u=\bu$, and
\begin{equation}  \label{eq:mXi}
 |\Xi_1| + \frac{2}{3}(|\Xi_2| + |\Xi_3|) + \frac{1}{3}|\Xi_4|=   m+\frac{1}{3}\bu+\frac{1}{3}\bdel+\frac{2}{3}|\{\ell\in\llbracket 1,m\rrbracket: \upsilon_\ell\neq\varnothing\}|,
\end{equation}
which follows from $m=|\Xi_1|+|\Xi_2|$,  $m+\bu+|\{\ell\in\llbracket 1,m\rrbracket: \upsilon_\ell\neq\varnothing\}|=|\Xi_1|+|\Xi_3|$, and $\bdel+m+|\{\ell\in\llbracket 1,m\rrbracket: \upsilon_\ell\neq\varnothing\}|=|\Xi|$.

As for the left-hand side of \eqref{eq:nNdiff}, 
for each $\ell\in\llbracket 1,m\rrbracket$,
\[
|\log(N_\ell/N)N^{1/3} + \ttt_\ell| < CN^{-1/3},\quad |N^{-1/3}(\cH(Q_{\ell-1})-\cH(Q_\ell)) - \ttt_\ell (\bH(\bQ_{\ell-1})-\bH(\bQ_\ell))| < CN^{-1/3},
\]
using that $|N_\ell-N+\ttt_\ell N^{2/3}|<C_1$, and the fact $(\vec p, \vec\upsilon, \cH)$ and $(\vec \bp, \vec\bb,\bH)$ are $\Clo$-close.
Thus
\[
|\log(N_\ell / N) (\cH(Q_{\ell-1})-\cH(Q_\ell))/2 + \ttt_\ell (\bH(\bQ_{\ell-1})-\bH(\bQ_\ell))/2 | < CN^{-1/3},
\]
Then by summing over $\ell$ and taking the exponential, 
and using that $1-CN^{-1/3}< P[\vec p,\vec\upsilon, \cH]N^{-u} \le 1$, we get
\begin{equation}   \label{eq:Pnexp}
\left|P[\vec p,\vec\upsilon, \cH]N^{-u}\prod_{\ell=1}^m N_\ell^{(\cH(Q_{\ell-1})-\cH(Q_\ell))/2} - \exp\left(\sum_{\ell=1}^{m-1}
(\ttt_\ell-\ttt_{\ell+1}) \bH(\bQ_\ell)/2 \right)\right|<CN^{-1/3}.
\end{equation}
And for each $\ell\in\llbracket 1,m\rrbracket$ and $\llbracket X,Y\rrbracket \in \Upsilon_\ell$, there is a corresponding $(x,y)\in \Xi$, such that $|X-N^{2/3}x|, |Y-N^{2/3}y|<C$.
Moreover, we have $|\cH(X)-p^\ell(X) - N^{1/3}(\bH(x)-\bp^0(x))|<C$. And if $(x,y) \in \Xi_1\cup\Xi_3$, we have $\cH(Y)=p^\ell(Y)$; and for other $(x,y)$ we have $|\cH(Y)-p^\ell(Y) - N^{1/3}(\bH(y)-\bp^0(y-))|<C$.
Therefore, by \Cref{lem:RW1} below (where the conditions of $x>C_1N^{-2/3}$, and $h, g>C_1N^{-1/3}$ or $h>C_1N^{-1/3}$ or $g-h>C_1N^{-1/3}$, are satisfied by the well-spaced assumption) and $\epsilon>N^{-1/10}$,
we have
\begin{align*}
    |NI[X,Y] - \bI_{\beta}[x,y]| < C\bEr[x,y]\bF[x,y], & \quad  (x,y)\in \Xi_1, \\
    |N^{2/3}I[X,Y] - \bI_{\beta}[x,y]| < C\bEr[x,y]\bF[x,y], & \quad  (x,y)\in \Xi_2\cup \Xi_3, \\
    |N^{1/3}I[X,Y] - \bI_{\beta}[x,y]| < C\bEr[x,y]\bF[x,y], &  \quad (x,y)\in \Xi_4.
\end{align*}
The conclusion follows by combining the above estimates with \eqref{eq:defnibk} and \eqref{eq:nNdiff}, and using \eqref{eq:mXi} again.
\end{proof}

\begin{lemma}  \label{lem:RW1}
For any $C_1, C_2>0$, there is a constant $C_3=C_3(C_1,C_2)>0$, such that for any $C_1N^{-2/3}<x<C_2$ and $0\le h, g <C_2$, and $X, H, G\in \Z_{\ge 0}$, such that $|X-xN^{2/3}|, |H-hN^{1/3}|, |G-gN^{1/3}|<C_2$, the following estimates hold:
\begin{enumerate}
    \item[(1)] When $X+H+G$ is even and $h,g>C_1N^{-1/3}$, we have
\begin{multline*}
\left|N^{1/3}I(X;H,G)-\bI(x;h,g)\right| \\ <C_3\bF(x;h,g)\left(1\wedge (N^{-3/5}x^{-3} + N^{-1/40} + N^{-1/3}/h+N^{-1/3}/g)\right).
\end{multline*}
    \item[(2)] When $X+H$ is even and $h>C_1N^{-1/3}$, we have
\[
\left|N^{2/3}I(X;H,0)-\bI_0(x;h)\right| < C_3\bF_0(x;h)\left(1\wedge (N^{-3/5}x^{-3} + N^{-1/40} +N^{-1/3}/g )\right).
\]
    \item[(3)] When $X+H+G$ is even, $H\le G$, and $g-h>C_1N^{-1/3}$, we have
\begin{multline*}
\left|N^{2/3}I^+(X;H,G)-\exp(-\beta^{-1} xh )\bI_0(x;g-h)\right| \\ <C\bF_0(x;g-h)\left(1\wedge (N^{-3/5}x^{-3} + N^{-1/40} +N^{-1/3}/g)\right).
\end{multline*}
    \item[(4)] When $X$ is even, we have
\[
\left|NI^+(X;H,H)-\exp(-\beta^{-1} xh)\bI_{0,0}(x)\right| < C_3\bF_{0,0}(x)\left(1\wedge (N^{-3/5}x^{-3} + N^{-1/40} )\right).
\]
\end{enumerate}
\end{lemma}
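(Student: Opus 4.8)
\textbf{Proof plan for Lemma \ref{lem:RW1}.}

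The plan is to establish all four estimates through a single mechanism: a local central limit theorem for Bernoulli bridges (and their positivity-conditioned analogues, Brownian excursions and Bessel$_3$-type objects), combined with an approximation of the area functional $\sum_t F(t) \approx N^{1/3}\int_0^x B(y)\d y$ appearing in the product $\prod_t (1 + 2F(t)/(\beta N))$. First I would rewrite the logarithm of the product: for a walk $F$ with $\max F = O(N^{1/3})$, we have $\log \prod_{t: F(t)=F(t-1)-1}(1+2F(t)/(\beta N)) = \beta^{-1}N^{-1}\sum_{t: \text{down-step}} 2F(t) + O(N^{-2}\sum_t F(t)^2)$, and since on a down-step $2F(t)\approx F(t)+F(t-1)$, the leading term is $\beta^{-1}N^{-1}\sum_{t=1}^X F(t) + O(N^{-1}\max F)$. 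The error $O(N^{-2}(\max F)^2 X) = O(N^{-2}\cdot N^{2/3}\cdot N^{2/3}) = O(N^{-2/3})$ is harmless. Under the diffusive rescaling $F(\lfloor N^{2/3}s\rfloor) \mapsto N^{-1/3}F$, the Riemann sum $N^{-1}\sum_t F(t) = N^{-2/3}\cdot N^{2/3}\cdot N^{-1}\sum_t F(t)$ converges to $\int_0^x B(y)\d y$ where $B$ is the appropriate limiting process. Thus $I(X;H,G)$, which is $2^{-X-1}\sum_F \exp(\beta^{-1}N^{-1}\sum F + \text{error})$, becomes $2^{-X-1}|\sF(X;H,G)| \cdot \E[\exp(\beta^{-1}N^{-1}\sum F)]$ where the expectation is over the uniform measure on $\sF(X;H,G)$, and $2^{-X-1}|\sF(X;H,G)| \to N^{-1/3}\bF(x;h,g)$ (for case (1); analogous scalings $N^{-2/3}$, $N^{-1/3}$ in the other cases), while the expectation converges to $\E[\exp(\beta^{-1}\int_0^x B)]$.

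The main steps, in order, would be: (i) the combinatorial/LCLT step — establish $2^{-X-1}|\sF(X;H,G)| = N^{-1/3}\bF(x;h,g)(1 + O(\text{err}))$ with the stated error, using the reflection principle for $\sF$ (giving exactly the difference of two binomial coefficients, matching the two Gaussians in $\bF$) together with Stirling with explicit remainder; the $N^{-3/5}x^{-3}$ term captures the regime of very small $x$ (few steps), the $N^{-1/40}$ term is the generic LCLT error after rescaling, and the $N^{-1/3}/h$, $N^{-1/3}/g$ terms handle proximity of endpoints to the boundary (where the reflection term is comparable to the main term); these are exactly the estimates in Sections \ref{Appendix_2} and \ref{ssec:couplgwbb} of the appendix, which I would invoke. (ii) The coupling step — couple the uniformly random element of $\sF(X;H,G)$ with a Brownian bridge $B$ on $[0,x]$ (conditioned appropriately: unconditioned bridge for $\Xi_4$/case (1), Bessel$_3$-type for $\Xi_2$/$\Xi_3$, Brownian excursion for $\Xi_1$) so that $|N^{-1/3}F(\lfloor N^{2/3}s\rfloor) - B(s)|$ is uniformly small with high probability; this is the content of the Komlós–Major–Tusnády-type approximations referenced as being in the appendix. (iii) The area-functional step — transfer the coupling to the area: $|N^{-1}\sum_{t=1}^X F(t) - \int_0^x B(y)\d y| \to 0$ in an appropriate sense, so that $\E[\exp(\beta^{-1}N^{-1}\sum F)] \to \E[\exp(\beta^{-1}\int_0^x B)]$; since the exponents are $O(N^{1/3}\cdot N^{1/3}\cdot N^{-1}) = O(1)$ bounded, dominated convergence applies and the rate is controlled by the coupling rate, contributing to the $N^{-1/40}$ term. (iv) For cases (3) and (4), the extra factor $\exp(-\beta^{-1}xh)$ arises because in $I^+(X;H,G)$ the walk stays $\ge H$, so writing $F = H + \tilde F$ with $\tilde F \ge 0$ gives $N^{-1}\sum F = N^{-1}XH + N^{-1}\sum \tilde F \to xh + \int_0^x \tilde B$, and $\tilde B$ is a Bessel$_3$ bridge (case (3)) or Brownian excursion (case (4)); one must be careful that the number of down-steps is $\approx X/2$ so the prefactor is $\exp(\beta^{-1}\cdot\frac12 X\cdot\frac{2H}{\beta N}) = \exp(XH/N) \to \exp(xh)$ — wait, sign: on down-steps only, and there are $\approx X/2$ of them with value $\approx H$, giving $\exp(\beta^{-1}\cdot\frac{X}{2}\cdot\frac{2H}{\beta N}\cdot\beta)$; I would track this constant carefully against the definition of $\bI_0$, $\bI_{0,0}$ in \Cref{defn:bIs} to confirm it matches $\exp(-\beta^{-1}xh)$ as stated (the sign convention there has the area added, so the shift contributes with the sign making the net statement correct).

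The hard part will be making the error term uniform in all the regimes simultaneously — in particular, the interplay between small $x$ (where $X$ is small, LCLT is weak, and the $N^{-3/5}x^{-3}$ term must dominate), endpoints near zero (where the $N^{-1/3}/h$, $N^{-1/3}/g$ terms are needed and the conditioned-process couplings degenerate), and the generic bulk regime. The cleanest route is to prove first the unconditioned statement (case (1)) in full, then obtain cases (2), (3), (4) as limits: $\bI_0(x;h) = \lim_{g\to 0}g^{-1}\bI(x;h,g)$ and $\bI_{0,0}(x) = \lim_{h\to 0}h^{-1}\bI_0(x;h)$ as noted after \Cref{defn:bIs}, with matching discrete identities $I(X;H,0)$ relating to $I$ via a first-step decomposition (the walk from $G=0$ must have had $F(X-1)=1$, and by the ballot/cycle lemma the count of positive walks is $\frac1{?}$ times the unconstrained count). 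One should check the discrete limit relations $I_0 \leftrightarrow I$, $I_{0,0} \leftrightarrow I_0$ are compatible with the scalings $N^{2/3} \leftrightarrow N^{1/3}\cdot N^{1/3}$ etc. Since all of the genuinely technical random-walk estimates are deferred to the appendix (\Cref{Appendix_2}, \Cref{ssec:couplgwbb}, \Cref{ssec:expvar}), the body of the proof here is mainly bookkeeping: assembling the LCLT prefactor, the coupling, and the area-functional convergence into the four displayed inequalities, and verifying the exact constants and parity conditions.
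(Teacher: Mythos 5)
Your proposal follows essentially the same route as the paper's proof in Section \ref{ssec:couplgwbb}: the reflection-principle/Stirling asymptotics for $|\sF(X;H,G)|$ are Lemmas \ref{lem:A1}--\ref{lem:sfoo}, the walk-to-bridge coupling is \Cref{lem:coupleeb}, and the final assembly is exactly your steps (i)--(iii) (the paper replaces your ``$2F(t)\approx F(t)+F(t-1)$ and down-steps are half of all steps'' heuristic by an exact pairing of each down-step with a matching up-step, leaving only an $O(N^{-1/3})$ boundary sum, but the effect is the same).

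Two caveats. First, your fallback of deriving (2)--(4) from (1) by taking $g\to 0$ (via $\bI_0=\lim_{g\to 0}g^{-1}\bI$ and a ballot-type identity) does not work directly: estimate (1) is only valid for $g>C_1N^{-1/3}$ and its error term contains $N^{-1/3}/g$, which blows up in that limit. The paper instead reruns the same coupling argument with the positivity-conditioned walk and the Bessel$_3$/excursion limits, which is also your primary plan, so this is only a warning about the alternative. Relatedly, ``the exponents are $O(1)$ so dominated convergence applies'' needs the Gaussian tail bound $\PP(\max\mathfrak f>mN^{1/3})<Ce^{-cm^2}$ to dominate the $e^{Cm}$ growth of the integrand on atypical walks; the paper supplies this explicitly. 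Second, on the sign you worried about in step (iv): your computation giving $I^+(X;H,G)\approx e^{+\beta^{-1}xh}\cdot(\text{count})\cdot\E[e^{\beta^{-1}\int\tilde B}]$ is the correct one. Indeed, since every factor $1+2F(t)/(\beta N)\ge 1+2H/(\beta N)$ over roughly $X/2$ down-steps, one has $N^{2/3}I^+(X;H,G)\gtrsim e^{\beta^{-1}xh}\bF_0(x;g-h)$, which is incompatible with the stated $e^{-\beta^{-1}xh}\bI_0(x;g-h)$ for large $h$; moreover the application in \Cref{prop:IXHGest} matches $N^{2/3}I[X,Y]$ against $\bI_\beta[x,y]=\exp\bigl(+\beta^{-1}(y-x)(\bH(x)-\bp^0(x))\bigr)\bI_0(y-x;\cdot)$. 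The minus signs in parts (3) and (4) of the statement are a typo, and you should not contort your derivation to match them.
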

The proof of this lemma is by coupling Bernoulli random walks with Brownian bridges. We postpone the details of the proof until \Cref{ssec:couplgwbb}.

\begin{remark}  \label{rem:lowbdxhg}
We note that if $C_2>C_1$, then the required lower bounds on $x$, $h$, $g$, $g-h$, do not lead to any lower bounds on $X$, $H$, $G$, $G-H$. This is essential when \Cref{lem:RW1} is applied in the proof of \Cref{prop:IXHGest}, and then implicitly used in the next subsection (when upper-bounding \eqref{eq:Asum} there).
\end{remark}

\subsection{Summation over discrete blocks and proof of Proposition \ref{prop:conv}}  \label{ssec:sob}

In the previous subsection we computed the asymptotics of the sum of the walks corresponding to a fixed discrete blocks structure $(\vec p, \vec\upsilon, \cH)$. In this subsection we sum over all discrete blocks in $\sL_\epsilon$, thus arriving at the sum over all walks in $\sB_\epsilon^*$. We will show that the sum approximates an integral over the space of all blocks $(\vec \bp, \vec\bb,\bH)$ as $N\to\infty$. As in the previous subsection, we shall use notations of blocks and their weights defined in \Cref{sec:forjm}.

Due to parity issues in counting discrete walks (which reflects in the necessity to add two terms of different parities in the product in Proposition \ref{prop:conv}), we work under the following setup.
Take any $\vec\fb=\{\fb_\ell\}_{\ell=1}^m\in \{0,1\}^m$ and let $\sB_\epsilon^*[\vec \fb]$ and $\sL_\epsilon[\vec \fb]$ (respectively) denote the corresponding set of walks and blocks of the operator $(\hD_{m}^{N_m})^{k_m+\fb_m} \cdots (\hD_{1}^{N_1})^{k_1+\fb_1}$.

We take $\vec \bk\in \R_+^m$ and denote $\bQ_\ell=\sum_{\ell'=1}^\ell \bk_{\ell'}$ for each $\ell \in \llbracket 0, m\rrbracket$, and take $\vec \ttt\in \R_{\ge 0}^m$ such that $\ttt_1\le \cdots \le \ttt_m$.
The main results of this subsection are the following two statements, including justifying \Cref{defn:core}, plus \Cref{prop:conv}. 
\begin{prop}    \label{prop:fixedepsconv}
Take $C_1>0$, and assume that $|k_\ell-N^{2/3}\bk_\ell|<C_1$, $|N_\ell-N+\ttt_\ell N^{2/3}|<C_1$ for each $\ell\in\llbracket 1, m\rrbracket$.
Then
\begin{multline}  \label{eq:limieq}
\lim_{N\to\infty} 2^{-m}N^m \sum_{\vec\fb\in \{0,1\}^m} \prod_{\ell=1}^m (2\sqrt{N_\ell N})^{-k_\ell-\fb_\ell}  \sum_{\vec r \in \sB_\epsilon^*}  w(\vec r)
\\
=\lim_{C_2\to\infty}\int_{\substack{(\vec\bp,  \vec\bb,\bH)\in\sK_\epsilon[\vec\bk]:  \\ \bp^0, \bH, \bdel < C_2 }} \exp\left(\sum_{\ell=1}^{m-1}
(\ttt_\ell-\ttt_{\ell+1}) \bH(\bQ_\ell)/2 \right)\bI_{\beta,\vec\bk}[\vec \bp, \vec\bb,\bH] \d(\vec\bp,  \vec\bb,\bH),
\end{multline}
and
\begin{multline}  \label{eq:limieqabs}
\lim_{N\to\infty} 2^{-m}N^m \sum_{\vec\fb\in \{0,1\}^m} \prod_{\ell=1}^m (2\sqrt{N_\ell N})^{-k_\ell-\fb_\ell}  \sum_{\vec r \in \sB_\epsilon^*}  |w(\vec r)|
\\
=\lim_{C_2\to\infty}\int_{\substack{(\vec\bp,  \vec\bb,\bH)\in\sK_\epsilon[\vec\bk]:  \\ \bp^0, \bH, \bdel < C_2 }} \exp\left(\sum_{\ell=1}^{m-1}
(\ttt_\ell-\ttt_{\ell+1}) \bH(\bQ_\ell)/2 \right)|\bI_{\beta,\vec\bk}[\vec \bp, \vec\bb,\bH]|\d(\vec\bp,  \vec\bb,\bH),
\end{multline}
and all the limits above exist and are finite.
\end{prop}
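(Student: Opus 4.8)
The plan is to combine the local asymptotics of \Cref{prop:IXHGest} with a Riemann-sum argument for the bulk of the parameter space and with the tail bounds of \Cref{cor:mbdcII}, \Cref{prop:mbdc2} and \Cref{cor:sumoverUh} for the rest. Fix $\vec\fb\in\{0,1\}^m$; by \Cref{lem:defcL} the map $\cL$ decomposes
\[
\sum_{\vec r\in\sB_\epsilon^*[\vec\fb]} w(\vec r)=\sum_{(\vec p,\vec\upsilon,\cH)\in\sL_\epsilon[\vec\fb]}\ \sum_{\vec r\in\cL^{-1}(\vec p,\vec\upsilon,\cH)} w(\vec r),
\]
and within a single fibre $\cL^{-1}(\vec p,\vec\upsilon,\cH)$ every walk carries the same sign, namely $(-1)$ to the number of down-jumps of $\vec p$, which (by the matching of discrete and continuous jump directions in \Cref{defn:closen}) is exactly the sign of $\bI_{\beta,\vec\bk}[\vec\bp,\vec\bb,\bH]$ for any close limiting block. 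Hence the decomposition holds verbatim with $w$ replaced by $|w|$ and $\bI_{\beta,\vec\bk}$ by its modulus (deleting the factors $(-1)^{\don[\bp^0(x)<\bp^0(x-)]}$). It therefore suffices to prove \eqref{eq:limieqabs}; the signed identity \eqref{eq:limieq} then follows by running the same computation while keeping signs, with \eqref{eq:limieqabs} furnishing the uniform-in-$N$ summable majorant that legitimizes exchanging $\lim_N$ with the (infinite, over combinatorial types and over $\bu$) sum of contributions.

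\emph{Bulk term.} Fix a truncation level $C_2$, a well-spacing threshold $\wes>0$, and $\Clo$ large relative to $C_1$. Restrict to discrete blocks in $\sL_\epsilon[\vec\fb]$ that are $\Clo$-close to some $\wes$-well-spaced $(\vec\bp,\vec\bb,\bH)$ with $\bp^0,\bH,\bdel<C_2$. Group these by combinatorial type (the values of $u$, $\bdel$, the $\delta_{j,\ell}$, the support pattern of the $p_j$, and the jump directions) and, within a type, by which unit cube of the lattice of rescaled parameters $(N^{-2/3}t_i,\,N^{-1/3}p^0(t_i),\,N^{-1/3}\cH(t_i),\,N^{-2/3}\upsilon_\ell)$ the block occupies. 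Applying \Cref{prop:IXHGest} to a representative of each cube and collecting powers of $N$: the number of discrete blocks in a cube is $N^{\frac23(\bdel+|\{\ell:\upsilon_\ell\ne\varnothing\}|)+\frac23(\bdel-\bu)}$ times the Lebesgue volume of the cube in the continuous parameters of \Cref{defn:measurecb}, up to $1+o(1)$ and up to powers of $2$; together with the explicit $N$-power of \Cref{prop:IXHGest}, the factor $N^m$, and $\prod_\ell(2\sqrt{N_\ell N})^{-k_\ell}$, all powers of $N$ cancel, leaving the value $\exp(\sum(\ttt_\ell-\ttt_{\ell+1})\bH(\bQ_\ell)/2)\bI_{\beta,\vec\bk}[\vec\bp,\vec\bb,\bH]$ near the cube's centre times the cube's volume. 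A bookkeeping of the factors of $2$---the parity constraints defining validity in parity, the normalization $2^{-X-1}$ in \Cref{defn:IXHG}, the explicit powers of $2$ in \Cref{prop:IXHGest}, the $2^{-m}$ in the statement, and the sum over $\vec\fb$---shows these also balance. Since the functions $\bF,\bI,\bF_0,\bI_0,\bF_{0,0},\bI_{0,0}$ of \Cref{defn:bIs} are continuous off the coordinate hyperplanes, which $\wes$-well-spacedness avoids, the integrand is continuous on the region and the cube sum is a Riemann sum converging as $N\to\infty$ to $\int(\cdots)\,\d(\vec\bp,\vec\bb,\bH)$ over the $\wes$-well-spaced part of $\{\bp^0,\bH,\bdel<C_2\}$. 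The per-cube error $C_2(C_1,\Clo,\wes)\,\bF[\vec\bp,\vec\bb,\bH]\bEr[\vec\bp,\vec\bb,\bH]$, weighted by the cube counts, is dominated by a constant times $\int\bF\,\bEr\,\d(\cdots)$ over that region; splitting according to whether the height gaps and interval lengths exceed $N^{-1/3+\eta}$ and $N^{-2/3+\eta}$ (small fixed $\eta>0$) makes this $o(1)$ as $N\to\infty$ off a thin slab, and the thin slab is absorbed into the non-well-spaced estimate below.

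\emph{Tails and passage to the limit.} Two families of discrete blocks remain. First, those not $\Clo$-close to any limiting block with $\bp^0,\bH,\bdel<C_2$: applying \Cref{cor:mbdcII} and summing over $U$ as in \Cref{cor:sumoverUh} gives a bound whose dependence on $\delta$, $h$ (with $\max_t\sH(t)\asymp hN^{1/3}$) and $|U|$ carries the decay factors $1/(\delta-2|U|)!$, $\exp(-C_3h^2)$ and $((|U|+1)\log(|U|+2))^{|U|}$; summing the resulting series over all parameter choices that force $\bp^0$, $\bH$ or $\bdel$ past $C_2$ shows this contribution is $o(1)$ as $C_2\to\infty$, uniformly in $N$. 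Second, those close to a limiting block with $\bp^0,\bH,\bdel<C_2$ that fails to be $\wes$-well-spaced: here one uses the refinements of \Cref{prop:mbdc2} (the extra $(y-x)^{-1/2}$ and $N^{-1/3}$ for walks constrained on, or forced to jump in, a designated interval) to bound the contribution by a quantity tending to $0$ as $\wes\to0$ uniformly in $N$; the same bound controls the continuous measure of the non-$\wes$-well-spaced part of $\{\bp^0,\bH,\bdel<C_2\}$, which shrinks to $0$ as $N\to\infty$ for any fixed $\wes$. Combining the three estimates, for fixed $C_2,\wes$ the left side of \eqref{eq:limieqabs} with the outer $C_2$-truncation equals $\int_{\wes\text{-w.s.},\,<C_2}(\cdots)\,\d(\cdots)$ plus terms that are, respectively, $o(1)$ as $N\to\infty$, $o(1)$ as $\wes\to0$, and $o(1)$ as $C_2\to\infty$. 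Sending $N\to\infty$, then $\wes\to0$ (the continuous thin-slab integral vanishes by the same bound, which simultaneously shows the integrand is in $L^1$ of $\{\bp^0,\bH,\bdel<C_2\}$), and finally $C_2\to\infty$, yields \eqref{eq:limieqabs}, the finiteness of the limit, and the fact that the right-hand side is a genuine finite improper integral---which in passing justifies the $C_2\to\infty$ limit appearing in \Cref{defn:core}.

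The main obstacle is the first family in the tail step: one must check that the general bound of \Cref{cor:mbdcII}, once summed over $U$ via \Cref{cor:sumoverUh}, stays summable over $\bu$, $\bdel$ and $h$ simultaneously---so that the $C_2$-tail is uniformly small---while also matching cube by cube the exponent-of-$N$ bookkeeping of the bulk term so that no power of $N$ is lost; the parallel bookkeeping of the powers of $2$ tied to parity validity and to the sum over $\vec\fb$ is a further delicate point, as is confirming that the $\wes$-well-spacing refinements of \Cref{prop:mbdc2} are strong enough to kill the integrable-but-singular contribution of nearly degenerate blocks.
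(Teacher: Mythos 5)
Your proposal follows essentially the same route as the paper: a quantitative comparison of the $C_2$-truncated discrete sum with the $C_2$-truncated integral via \Cref{prop:IXHGest} (the paper packages your cube-by-cube Riemann sum, the sign bookkeeping within each fibre of $\cL$, and the well-spacing considerations into \Cref{prop:bdsumblo}, using an $N$-dependent well-spacing threshold and an error of order $\epsilon^{-m/2}N^{-1/40}$ rather than a separate $\wes\to 0$ limit), followed by a uniform-in-$N$ tail bound from \Cref{cor:mbdcII} and a sandwich argument exchanging the $N\to\infty$ and $C_2\to\infty$ limits. The step you flag as the main obstacle --- summability of the \Cref{cor:mbdcII} bound over $\delta-2|U|$, $|U|$ and dyadic $h$ simultaneously, restricted to the region where at least one of them exceeds a constant times $C_2$ --- is exactly the computation the paper carries out at the end of its proof, and the factorial and Gaussian decay factors you identify do make that series converge with a tail of size $O(e^{-cC_2})$.
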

In particular, the existence and finiteness of the limit in the right-hand side of \eqref{eq:limieqabs} implies that $(\vec \bp, \vec\bb,\bH)\mapsto \exp\left(\sum_{\ell=1}^{m-1}
(\ttt_\ell-\ttt_{\ell+1}) \bH(\bQ_\ell)/2 \right)\bI_{\beta,\vec\bk}[\vec \bp, \vec\bb,\bH]$ is absolutely integrable in $\sK_\epsilon[\vec\bk]$. 

The next statement is on sending $\epsilon\to 0+$.
\begin{prop}  \label{prop:epszerocov}
$\bL_\beta(\vec\bk, \vec\ttt)$ from \Cref{defn:core} is well-defined, i.e.,
\begin{equation}   \label{eq:epzerlimh}
\lim_{\epsilon\to 0+} \int_{(\vec\bp,  \vec\bb,\bH)\in\sK_\epsilon[\vec\bk]} \exp\left(\sum_{\ell=1}^{m-1}
(\ttt_\ell-\ttt_{\ell+1}) \bH(\bQ_\ell)/2 \right)\bI_{\beta,\vec\bk}[\vec \bp, \vec\bb,\bH]\d(\vec\bp,  \vec\bb,\bH)    
\end{equation}
exists and is finite.
\end{prop}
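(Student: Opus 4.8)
The plan is to derive \Cref{prop:epszerocov} from \Cref{prop:truncreg} and \Cref{prop:fixedepsconv} by a Cauchy-type argument in the regularization parameter $\epsilon$. The guiding observation is that, although the $\epsilon$-truncated sums $\sum_{\vec r\in\sB_\epsilon^*}w(\vec r)$ genuinely depend on $\epsilon$, their common ``target'' in \Cref{prop:truncreg} --- the rescaled moment $\hP_{k_m}^{N_m}\cdots\hP_{k_1}^{N_1}[1]_{x_1=\dots=x_N=0}$ --- does not, so any two admissible values of $\epsilon$ can be compared through it with an error of size $O(\sqrt\epsilon)$. Since the integrand in \eqref{eq:epzerlimh} involves $\vec\ttt$ only through the differences $\ttt_\ell-\ttt_{\ell+1}$ (the factor $\bI_{\beta,\vec\bk}$ being $\vec\ttt$-free), one may first reduce to $\vec\ttt\in\R_{\ge 0}^m$ by a common shift of the $\ttt_\ell$, so that the hypotheses of \Cref{prop:fixedepsconv} apply.

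To set this up, fix $C_1>0$ and, for each large $N$, choose $\vec k,\vec N$ satisfying \eqref{eq_x1}. For $\vec\fb\in\{0,1\}^m$ let $\sB_\epsilon^*[\vec\fb]$ be the walk set of the operator $(\hD_m^{N_m})^{k_m+\fb_m}\cdots(\hD_1^{N_1})^{k_1+\fb_1}$ as in \Cref{ssec:bdecom}, and put
\[
A_N^\epsilon=2^{-m}N^m\sum_{\vec\fb\in\{0,1\}^m}\prod_{\ell=1}^m(2\sqrt{N_\ell N})^{-k_\ell-\fb_\ell}\sum_{\vec r\in\sB_\epsilon^*[\vec\fb]}w(\vec r),
\]
\[
B_N=2^{-m}N^m\sum_{\vec\fb\in\{0,1\}^m}\prod_{\ell=1}^m(2\sqrt{N_\ell N})^{-k_\ell-\fb_\ell}\,\hP_{k_m+\fb_m}^{N_m}\cdots\hP_{k_1+\fb_1}^{N_1}[1]_{x_1=\dots=x_N=0}.
\]
Applying \Cref{prop:truncreg} to each of the $2^m$ power-shifted operators (with $C_1$ enlarged by $1$ so that \eqref{eq_x1} still holds for $\vec k+\vec\fb$), dividing by $\prod_\ell(2\sqrt{N_\ell N})^{k_\ell+\fb_\ell}$, and summing over $\vec\fb$, I expect the bound $|A_N^\epsilon-B_N|<C(\sqrt\epsilon+N^{-1/12})$ with $C$ independent of $N$ and $\epsilon$, valid once $N^{-1/10}<\epsilon<\Ceps$. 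Because $B_N$ carries no $\epsilon$-dependence, for any two admissible $\epsilon,\epsilon'$ and all large $N$ this yields $|A_N^\epsilon-A_N^{\epsilon'}|<C(\sqrt\epsilon+\sqrt{\epsilon'}+2N^{-1/12})$.

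Next I would invoke \Cref{prop:fixedepsconv}: for each fixed $\epsilon$ the limit $I(\epsilon):=\lim_{N\to\infty}A_N^\epsilon$ exists, and in view of \eqref{eq:limieqabs} and the absolute integrability it entails, $I(\epsilon)$ equals the finite integral $\int_{\sK_\epsilon[\vec\bk]}\exp\big(\sum_{\ell=1}^{m-1}(\ttt_\ell-\ttt_{\ell+1})\bH(\bQ_\ell)/2\big)\bI_{\beta,\vec\bk}[\vec\bp,\vec\bb,\bH]\,\d(\vec\bp,\vec\bb,\bH)$. Letting $N\to\infty$ in the previous display gives $|I(\epsilon)-I(\epsilon')|\le C(\sqrt\epsilon+\sqrt{\epsilon'})$ for all small $\epsilon,\epsilon'$, so $\epsilon\mapsto I(\epsilon)$ satisfies the Cauchy criterion as $\epsilon\to 0+$; hence $\lim_{\epsilon\to 0+}I(\epsilon)$ exists and is finite, which is exactly \eqref{eq:epzerlimh}. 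The same estimate also shows that this limit is independent of the auxiliary choices of $\vec k,\vec N$ along $N$, consistent with the definition of $\bL_\beta(\vec\bk,\vec\ttt)$ in \Cref{defn:core}.

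I do not anticipate a genuine obstacle, since the analytic substance is entirely contained in \Cref{prop:truncreg,prop:fixedepsconv}. The only delicate points are bookkeeping: that the constant in \Cref{prop:truncreg} can be taken uniform over the $2^m$ power-shifted operators (immediate from enlarging $C_1$), and that the order of limits is legitimate --- with $\epsilon$ fixed, the requirement $N^{-1/10}<\epsilon$ from \Cref{ssec:bdecom} holds automatically for all large $N$, so one sends $N\to\infty$ first and then $\epsilon\to 0+$.
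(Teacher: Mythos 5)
Your proposal is correct and is essentially the paper's own argument: both rest entirely on \Cref{prop:truncreg} (comparison of the $\epsilon$-truncated walk sum with the $\epsilon$-independent Dunkl-operator quantity, with error $O(\sqrt\epsilon+N^{-1/12})$) and \Cref{prop:fixedepsconv} (identification of the $N\to\infty$ limit at fixed $\epsilon$ with the integral over $\sK_\epsilon$). The only cosmetic difference is that you phrase the conclusion as a Cauchy criterion in $\epsilon$, whereas the paper sandwiches $A_5(\epsilon)\pm C\sqrt\epsilon$ around $\liminf_N$ and $\limsup_N$ of the operator expression and lets $\epsilon\to0+$; these are equivalent.
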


Below we prove these two statements and \Cref{prop:conv}. The key step is the following pre-limiting estimate.
\begin{prop}   \label{prop:bdsumblo}
Take $C_1, C_2>0$.
Assume that $|k_\ell-N^{2/3}\bk_\ell|<C_1$ and $|N_\ell-N+\ttt_\ell N^{2/3}|<C_1$ for each $N$ and $\ell\in\llbracket 1, m\rrbracket$.
Then there exists $C_3=C_3(C_1, C_2)$ such that
\begin{multline}   \label{eq:bdsumblo}
\Bigg| 2^{-m}N^m \sum_{\vec\fb\in \{0,1\}^m} \prod_{\ell=1}^m (2\sqrt{N_\ell N})^{-k_\ell-\fb_\ell} \sum_{\substack{(\vec p,\vec\upsilon,\cH)\in \sL_\epsilon[\vec \fb]:\\ p^0,\cH<C_2 N^{1/3},\delta<C_2}} \sum_{\vec r \in \cL^{-1}(\vec p, \vec\upsilon, \cH)}  w(\vec r) \\ - \int_{\substack{(\vec\bp,  \vec\bb,\bH)\in\sK_\epsilon[\vec\bk]:  \\ \bp^0, \bH, \bdel < C_2 }} \exp\left(\sum_{\ell=1}^{m-1}
(\ttt_\ell-\ttt_{\ell+1}) \bH(\bQ_\ell)/2 \right)\bI_{\beta,\vec\bk}[\vec \bp, \vec\bb,\bH] \d(\vec\bp,  \vec\bb,\bH)\Bigg| <
C_3\epsilon^{-m/2}N^{-1/40}.
\end{multline}
The same bound holds in absolute values, i.e., if we simultaneously replace $w(r)$ with $|w(r)|$ and $\bI_{\beta,\vec\bk}[\vec\bp,\vec\bb,\bH]$ with $|\bI_{\beta,\vec\bk}[\vec\bp,\vec\bb,\bH]|$.
\end{prop}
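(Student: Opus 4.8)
The plan is to prove \Cref{prop:bdsumblo} by matching up the discrete sum over $\sL_\epsilon[\vec\fb]$ with the continuous integral over the truncated block space $\{(\vec\bp,\vec\bb,\bH): \bp^0,\bH,\bdel<C_2\}$ term by term, using \Cref{prop:IXHGest} as the local comparison and then controlling the accumulated error. Concretely, for each fixed $\vec\fb\in\{0,1\}^m$ I would partition $\sL_\epsilon[\vec\fb]$ according to which continuous blocks structure a discrete structure is close to. Recall that a discrete block height takes integer values and the jump positions are integers; after the rescaling $t\mapsto N^{-2/3}t$, $p^0\mapsto N^{-1/3}p^0$, $\cH\mapsto N^{-1/3}\cH$, a discrete structure $(\vec p,\vec\upsilon,\cH)$ determines a cell in the parameter space of continuous blocks of side-lengths $N^{-2/3}$ (for time/jump-position coordinates) and $N^{-1/3}$ (for height coordinates). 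Each such cell, intersected with the truncation $\bp^0,\bH,\bdel<C_2$, has Lebesgue measure $N^{-\frac{2}{3}(\#\text{position params})-\frac{1}{3}(\#\text{height params})}$ with respect to $\d(\vec\bp,\vec\bb,\bH)$. The key bookkeeping identity is that the scaling prefactor $N^{m-\frac23\bu+\frac43\bdel+\frac23|\{\ell:\upsilon_\ell\neq\varnothing\}|}\prod_\ell(2\sqrt{N_\ell N})^{-k_\ell}$ appearing in \Cref{prop:IXHGest}, multiplied by the cell volume, exactly produces the factor $2^{-m}N^m\prod_\ell(2\sqrt{N_\ell N})^{-k_\ell-\fb_\ell}$ in front of the discrete sum; this is essentially the same arithmetic as in \eqref{eq:mXi} and the measure-decoding discussion after \Cref{defn:measurecb}, relating the number of free position parameters to $\bdel-\bu$ and the number of free height parameters to $\bdel-\bu$ as well, with the $\upsilon_\ell$'s contributing the Lebesgue factors. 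So, up to the per-cell error, $\sum_{\vec r\in\cL^{-1}(\vec p,\vec\upsilon,\cH)}w(\vec r)$ scaled appropriately is a Riemann-sum sample of the integrand $\exp(\sum(\ttt_\ell-\ttt_{\ell+1})\bH(\bQ_\ell)/2)\bI_{\beta,\vec\bk}[\vec\bp,\vec\bb,\bH]$ at a point of the cell.

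Next I would assemble the global error. The per-cell error from \Cref{prop:IXHGest} is $C_2\bF[\vec\bp,\vec\bb,\bH]\bEr[\vec\bp,\vec\bb,\bH]$ (after the same rescaling), valid when the blocks are $\wes$-well-spaced; multiplying by cell volume and summing over cells turns this into an integral $\int \bF[\vec\bp,\vec\bb,\bH]\bEr[\vec\bp,\vec\bb,\bH]\,\d(\vec\bp,\vec\bb,\bH)$ over the truncated space. The error function $\bEr$ is designed so that each of its terms is integrable: the $N^{-1/40}$ pieces integrate against $\bF$ to give $O(\epsilon^{-m/2}N^{-1/40})$ once one knows $\int\bF<C\epsilon^{-m/2}$ on the $\epsilon$-regularized truncated space (the $\epsilon^{-m/2}$ comes from the $\bF_0$ and $\bF_{0,0}$ singularities near the points $\bQ_{\ell-1}$, truncated at scale $\epsilon$ by the $\sK_\epsilon$ constraint — this is the same mechanism as in \Cref{prop:csBJ,prop:sBJe}); the $N^{-3/5}(y-x)^{-3}$ pieces, which occur only for $\Xi_3,\Xi_4$ segments, integrate since on $\sK_\epsilon$ consecutive breakpoints are separated by $\ge\epsilon$ except near $\bQ_{\ell-1}$ where the relevant segments are in $\Xi_1,\Xi_2$ instead; and the $N^{-1/3}/(\bH(\cdot)-\cdot)$ pieces integrate against the Gaussian $\bF_0$-type factors to give $O(N^{-1/3}\log N)$-type contributions, absorbed into $N^{-1/40}$. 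I would also need to dispose of (a) discrete structures not close to any well-spaced continuous structure — here the non-well-spaced configurations form a boundary region of total measure $O(\wes^{c})$ for small $\wes$, but more carefully one takes $\wes$ a small fixed power of $N^{-1}$ and uses \Cref{cor:sumoverUh}/\Cref{prop:mbdc2} directly to bound their weight contribution, since these are exactly walks where some segment is short or some height gap is small; (b) the "discontinuous-support" discrete blocks (allowed in \Cref{defn:discretebp} but not in the continuous \Cref{Definition_block_process}) — as remarked after \Cref{defn:discretebp} these have total mass $\to 0$, again bounded via \Cref{prop:mbdc} by noting such configurations force an extra jump hence an extra $N^{-1/3}$; and (c) the error from replacing the sample point inside a cell by, say, its corner — this is a standard Riemann-sum/modulus-of-continuity estimate using that $\bF\bEr$ and the integrand are piecewise smooth with controlled derivatives on the truncated well-spaced region, and contributes another $O(N^{-c})$.

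The absolute-value version \eqref{eq:bdsumblo} with $|w(\vec r)|$ and $|\bI_{\beta,\vec\bk}|$ is handled identically: \Cref{prop:IXHGest} already gives the two-sided estimate, and taking absolute values inside both the discrete sum and the integral only changes signs of individual $\bI_\beta[x,y]$ factors (the $(-1)^{\don[\cdots]}$ prefactors), which does not affect the error analysis since $\bF,\bEr$ are sign-independent; one just replaces $\bI_{\beta,\vec\bk}$ by $|\bI_{\beta,\vec\bk}|$ throughout, noting $||a|-|b||\le|a-b|$. Given \Cref{prop:bdsumblo}, \Cref{prop:fixedepsconv} follows by first using \Cref{prop:bdsumblo} to see the truncated discrete sum converges (as $N\to\infty$) to the truncated integral with error $O(\epsilon^{-m/2}N^{-1/40})\to0$, then using \Cref{cor:sumoverUh}/\Cref{prop:mbdc} to show the tails $p^0,\cH\ge C_2N^{1/3}$ or $\delta\ge C_2$ of the discrete sum — and correspondingly the $\bp^0,\bH\ge C_2$ or $\bdel\ge C_2$ tails of the integral, via the same absolute bounds — are $O(\exp(-cC_2^2)+C^{C_2}/C_2!)$ uniformly in $N$ and $\epsilon$, hence vanish as $C_2\to\infty$; a standard diagonal/interchange-of-limits argument then yields \eqref{eq:limieq} and \eqref{eq:limieqabs}. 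Finally \Cref{prop:epszerocov} follows because \Cref{prop:sBJe}, transferred through \eqref{eq:limieqabs}, shows the $\epsilon$-dependence of the limiting integral is Cauchy: $|\bL^{(\epsilon)}-\bL^{(\epsilon')}|\le C\sqrt{\epsilon}$ for $\epsilon'<\epsilon$, so the $\epsilon\to0+$ limit exists; and \Cref{prop:conv} then follows by combining \Cref{prop:truncreg} (discrete sum $\approx N^{-m}\hP\cdots\hP[1]$ up to $\sqrt\epsilon+N^{-1/12}$), \Cref{prop:fixedepsconv} (discrete sum $\to$ truncated integral), \Cref{prop:bdsumblo} (truncated integral $\to$ full $\sK_\epsilon$ integral as $C_2\to\infty$), and \Cref{prop:epszerocov} ($\sK_\epsilon$ integral $\to\bL_\beta$ as $\epsilon\to0$), sending $N\to\infty$ then $\epsilon\to0$. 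The main obstacle is the bookkeeping in the first paragraph: verifying that the $N$-power in \Cref{prop:IXHGest} times the cell volume of $\d(\vec\bp,\vec\bb,\bH)$ collapses exactly to $2^{-m}N^m\prod(2\sqrt{N_\ell N})^{-k_\ell-\fb_\ell}$, uniformly over all combinatorial types of discrete blocks and both parities $\vec\fb$, and that the non-well-spaced / discontinuous-support / boundary-cell exceptional sets are genuinely negligible — this requires carefully re-deriving the dimension counts of \Cref{defn:measurecb} in the discrete setting and reconciling them with \eqref{eq:mXi}, and is where a sign error or an off-by-one in the parity constraints would be fatal.
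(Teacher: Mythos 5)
Your overall architecture is the same as the paper's: set up a correspondence between discrete blocks in $\sL_\epsilon[\vec\fb]$ and cells in the continuous block space, use \Cref{prop:IXHGest} as the local two-sided comparison, check that the $N$-power in \Cref{prop:IXHGest} times the cell volume collapses to $2^{-m}N^m\prod_\ell(2\sqrt{N_\ell N})^{-k_\ell-\fb_\ell}$ (your bookkeeping here is correct and is exactly the paper's Steps 1--2), and then bound the accumulated error by $\int \bF[\vec\bp,\vec\bb,\bH]\,\bEr[\vec\bp,\vec\bb,\bH]\,\d(\vec\bp,\vec\bb,\bH)\le C\epsilon^{-m/2}N^{-1/40}$, with the $\epsilon^{-m/2}$ coming from the $y^{-3/2}$ singularities of the $\Xi_1$ factors truncated at $\epsilon$. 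One small simplification you are missing: since \Cref{prop:IXHGest} holds uniformly for \emph{every} $\Clo$-close continuous block, not just one sample point per cell, your item (c) (a Riemann-sum modulus-of-continuity estimate requiring derivative bounds on the integrand) is unnecessary --- one simply integrates the pointwise inequality over the cell.

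The genuine gap is in your item (a), the exceptional discrete structures that are not in correspondence with (enough) well-spaced continuous blocks --- including those near the truncation boundary $p^0,\cH\approx C_2N^{1/3}$ and those with insufficient separation between jump positions or between jump heights and the barrier. Your fallback of taking $\wes$ a small power of $N^{-1}$ and invoking \Cref{cor:sumoverUh} or \Cref{prop:mbdc2} does not cover these cases: the improvements in \Cref{prop:mbdc2} apply only to a \emph{fixed} jump position away from the $Q_{a_j-1}$'s, or to a staying-above constraint anchored at some $Q_{\ell-1}$, and give no gain when, say, two interior jumps of the same walk are close together, or a jump height is within $O(1)$ of the lower barrier, or $p^0$ grazes the cutoff $C_2N^{1/3}$. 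The paper's device (Step 3 of its proof) is to stay entirely inside the \Cref{prop:IXHGest} framework: for each exceptional discrete block one exhibits a set of \emph{well-spaced} continuous blocks of measure $\ge cN^{-\frac43\delta+\frac23|U|-\frac23|\{\ell:\upsilon_\ell\neq\varnothing\}|}$ that are $\Clo$-close for a larger (but still $N$-independent) $\Clo$ --- this is where \Cref{rem:lowbdxhg} is essential, since the well-spacedness lower bounds on $x,h,g$ impose no lower bounds on the discrete $X,H,G$ --- and thereby converts the exceptional discrete sum into an integral of $\bF$ over the thin region $\sK_\epsilon\setminus\sK_\epsilon^*$ (plus the boundary shell), which is then shown to be $O(\epsilon^{-m/2}N^{-1/40})$ by the same one-dimensional integral estimates, using that each breakpoint confined to an interval of length $O(N^{-1/3})$ or $O(N^{-2/3})$ costs a factor $1\wedge N^{-1/3}(y-x)^{-1/2}$. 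Without this step (or a correct substitute) your error analysis does not close. The absolute-value version and the downstream deductions of \Cref{prop:fixedepsconv,prop:epszerocov,prop:conv} in your last paragraph are fine and match the paper.
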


\begin{proof}
In this proof $C,c>0$ may depend on $C_1, C_2$, and we take $C_3$ to be large enough depending on all $C,c$.

The general idea is to apply \Cref{prop:IXHGest} to blocks and discrete blocks that are close (in the sense of \Cref{defn:closen}), and sum over discrete blocks and integrate over blocks simultaneously.
To implement this, we need the following `correspondence relation', which is more precise than `closeness' in \Cref{defn:closen}.

\medskip
\noindent\textit{Step 1: correspondence.}
We say that any $(\vec\bp,  \vec\bb,\bH)\in\sK_\epsilon=\sK_\epsilon[\vec\bk]$ and $(\vec p, \vec\upsilon, \cH)\in\bigcup_{\vec\fb\in \{0,1\}^m}\sL_\epsilon[\vec\fb]$ are \emph{in correspondence}, if the following holds:
\begin{itemize}
    \item $u=\bu$.
    \item For each $\ell\in \llbracket 1,m\rrbracket$, either $\upsilon_\ell=\bb_\ell=\varnothing$, or $\upsilon_\ell - \lfloor N^{2/3}\bb_\ell\rfloor \in \{-2, -1\}$.
    \item For each $\ell \in \llbracket 1,m\rrbracket$ and $j\in \llbracket \ell, m+\bu\rrbracket$,
we have $\delta_{j,\ell}=\bdel_{j,\ell}$.
    Besides, if we denote the numbers in the sets $\Delta_{j,\ell}$ and $\bDel_{j,\ell}$ by $t_1<\cdots < t_{\delta_{j,\ell}}$ and $x_1<\cdots<x_{\delta_{j,\ell}}$ respectively, we have for each $i\in\llbracket 1, \delta_{j,\ell}\rrbracket$,
    \[
     t_i - \lfloor N^{2/3}x_i\rfloor \in \{0, 1\}, \quad p^0(t_i)=\lfloor N^{1/3}\bp^0(x_i)\rfloor, \quad \cH(t_i)-\lfloor N^{1/3}\bH(x_i) \rfloor\in \{0, -1\}.    \]
\end{itemize}
We note that for $\bb_\ell$, $t_i$, and $\cH(t_i)$ above, it seems that each is allowed to take one of two values. However, each of them can actually just take one of the two possible values, as the parity for each is already determined.

We next count the number of discrete blocks that are in correspondence to one blocks structure, and consider the measure of all blocks that are in correspondence to one discrete blocks.

\medskip
\noindent\textit{Blocks to discrete blocks.}
We let $\sK_\epsilon^*=\sK_{\epsilon,N}^*$ be the subset of $\sK_\epsilon$, consisting of all $(\vec\bp,  \vec\bb,\bH)$ such that
\begin{itemize}
    \item Any two (finite) numbers in $\bDel\cup\{\bQ_\ell\}_{\ell=0}^m\cup\{\bQ_{\ell-1}+\bb_\ell\}_{\ell=1}^m$ are at least $10N^{-2/3}$ away.
    \item For each $j\in\llbracket 1,m+\bu\rrbracket$, the numbers $\bp_j(x)$ for $x\in\bigcup_{\ell=1}^m\bDel_{j,\ell}$ are mutually different, and any two are at least $10N^{-1/3}$ away.
    In addition, when $j\in\llbracket 1,m\rrbracket$, any one of these numbers is $\ge 10N^{-1/3}$.
    \item For each $\ell\in \llbracket 1,m\rrbracket$, $j\in \llbracket 1, m+\bu\rrbracket$, and $x\in \bDel_{j,\ell}$, unless $\bp_j=0$ on $[x,\bQ_m]$ (i.e., $j\in\llbracket m+1, m+\bu\rrbracket$ and $x=\max\bigcup_{\ell=1}^m \bDel_{j,\ell}$), we must have $\bH(x)\ge \max\{\bp^0(x), \bp^0(x-)\}+10N^{-1/3}$, and $|\bp_j(x)-\bH(x)+\bp^0(x-)|\ge 10N^{-1/3}$.
\end{itemize}
Then any $(\vec\bp,  \vec\bb,\bH)\in\sK_\epsilon^*$ is necessarily $\wes$-well-spaced for some $\wes$ depending only on $C_1$ and $C_2$.

For a fixed $(\vec\bp,  \vec\bb,\bH)\in\sK_\epsilon^*$ with $\bp^0, \bH <C_2-10N^{-1/3}, \bdel < C_2$, we consider all $(\vec p, \vec\upsilon, \cH)\in\bigcup_{\vec\fb\in \{0,1\}^m}\sL_\epsilon[\vec\fb]$ in correspondence with $(\vec\bp,  \vec\bb,\bH)$, satisfying $p^0,\cH<C_2 N^{1/3}$ and $\delta=\bdel<C_2$.
The number\footnote{This is due to that, for each $\ell\in\llbracket 1,m\rrbracket$, $j\in\llbracket \ell, m+\bu\rrbracket$, and $x_i\in \bDel_{j,\ell}$,
unless $\bp_j=0$ on $[x_i,\bQ_m]$ (i.e., $j\in\llbracket m+1, m+\bu\rrbracket$ and $x_i=\max\bigcup_{\ell=1}^m\bDel_{j,\ell}$), there are two choices of the corresponding $\cH(t_i)$.
When $\bp_j=0$ on $[x_i,\bQ_m]$, there is only one choice since necessarily $\cH(t_i)=\lfloor N^{1/3}\bH(x_i) \rfloor-1$. From these, the corresponding $t_i$, as well as $\vec\upsilon$ and $\vec\fb$ are uniquely determined, to ensure that $(\vec p, \vec\upsilon, \cH)$ is valid in parity. 
} of such $(\vec p, \vec\upsilon, \cH)$ is $2^{\bdel-\bu}$.
More generally, for any $(\vec\bp,  \vec\bb,\bH)\in\sK_\epsilon$, the number of $(\vec p, \vec\upsilon, \cH)$ in correspondence with it is $\le 2^{\bdel-\bu}$.

\medskip

\noindent\textit{Discrete blocks to blocks.} On the other hand, for each $\vec\fb\in \{0,1\}^m$, we let $\sL_\epsilon^*[\vec\fb]$ be the subset of $\sL_\epsilon[\vec\fb]$, consisting of all $(\vec p, \vec\upsilon, \cH)$ such that
\begin{itemize}
    \item The (finite) numbers in $\Delta$, $\{Q_\ell+\sum_{\ell'=1}^\ell\fb_{\ell'}\}_{\ell=0}^m$, and $\{Q_{\ell-1}+\upsilon_\ell+\sum_{\ell'=1}^{\ell-1}\fb_{\ell'}\}_{\ell=1}^m$ are mutually different, and any two differ by at least $10$.
    \item For each $\ell\in \llbracket 1,m\rrbracket$, $j\in \llbracket 1, N\rrbracket$, and $t\in \Delta_{j,\ell}$, unless $p_j=0$ on $\llbracket t,Q_m+\sum_{\ell=1}^m\fb_\ell\rrbracket$ (i.e., $j>m$ and $t=\max\bigcup_{\ell=1}^m \Delta_{j,\ell}$), we must have $\cH(t)\ge \max\{p^0(t), p^0(t-1)\}+10$, and $|p_j(t)-\cH(t)+p^0(t-1)|\ge 10$.
\end{itemize}
Then $\cL^{-1}(\vec p, \vec\upsilon, \cH)$ is non-empty for any $(\vec p, \vec\upsilon, \cH)\in\sL_\epsilon^*[\vec\fb]$.

For a fixed $(\vec p, \vec\upsilon, \cH)\in\bigcup_{\vec\fb\in \{0,1\}^m}\sL_\epsilon^*[\vec\fb]$ with $p^0, \cH < C_2N^{1/3}-10$ and $\delta<C_2$, we consider the set of all $(\vec\bp,  \vec\bb,\bH)\in\sK_\epsilon$ that are in correspondence with $(\vec p, \vec\upsilon, \cH)$.
In this case, any $(\vec\bp,  \vec\bb,\bH)\in\sK_\epsilon$ in this set satisfies that $\bp^0, \bH, \bdel < C_2$, and the measure of the set would equal
\begin{multline*}
(2N^{-2/3})^{|\{\ell\in\llbracket 1,m\rrbracket: \upsilon_\ell\neq\varnothing\}|} (2N^{-2/3})^{\delta} (N^{-1/3})^{\delta-|U|} (2N^{-1/3})^{\delta-|U|} \\ = N^{-\frac{4}{3}\delta + \frac{2}{3}|U|-\frac{2}{3}|\{\ell\in\llbracket 1,m\rrbracket: \upsilon_\ell\neq\varnothing\}|} 2^{2\delta-|U|+|\{\ell\in\llbracket 1,m\rrbracket: \upsilon_\ell\neq\varnothing\}|}.
\end{multline*}
More generally, for any $(\vec p, \vec\upsilon, \cH)\in\bigcup_{\vec\fb\in \{0,1\}^m}\sL_\epsilon^*[\vec\fb]$, the measure of $(\vec\bp,  \vec\bb,\bH)$ in correspondence with it is $\le N^{-\frac{4}{3}\delta + \frac{2}{3}|U|-\frac{2}{3}|\{\ell\in\llbracket 1,m\rrbracket: \upsilon_\ell\neq\varnothing\}|} 2^{2\delta-|U|+|\{\ell\in\llbracket 1,m\rrbracket: \upsilon_\ell\neq\varnothing\}|}$.

\medskip
\noindent\textit{Step 2: summation over pairs.}
We now consider all pairs of $(\vec p, \vec\upsilon, \cH)$ and $(\vec\bp,  \vec\bb,\bH)$, such that they are in correspondence, and either
\begin{itemize}
    \item $(\vec\bp,  \vec\bb,\bH)\in\sK_\epsilon^*$ with $\bp^0, \bH <C_2-10N^{-1/3}, \bdel < C_2$, or
    \item $(\vec p, \vec\upsilon, \cH)\in\bigcup_{\vec\fb\in \{0,1\}^m}\sL_\epsilon^*[\vec\fb]$ with $p^0, \cH < C_2N^{1/3}-10$ and $\delta<C_2$.
\end{itemize}
We call any such $(\vec p, \vec\upsilon, \cH)$ and $(\vec\bp,  \vec\bb,\bH)$ an \emph{applicable} pair.
We also put a measure on the space of all applicable pairs, given by the product of the counting measure on discrete blocks, and the measure on blocks.
Then for any $(\vec\bp,  \vec\bb,\bH)$, the number of applicable pairs containing it is at most $2^{\bdel-\bu}$;
and for any $(\vec p, \vec\upsilon, \cH)$, the measure of applicable pairs containing it is at most $N^{-\frac{4}{3}\delta + \frac{2}{3}|U|-\frac{2}{3}|\{\ell\in\llbracket 1,m\rrbracket: \upsilon_\ell\neq\varnothing\}|} 2^{2\delta-|U|+|\{\ell\in\llbracket 1,m\rrbracket: \upsilon_\ell\neq\varnothing\}|}$.

We then apply \Cref{prop:IXHGest} to all applicable pairs, integrate against the measures, and multiply by $2^{-\bdel+\bu}$.
We get
\begin{multline}   \label{eq:aAbd}
\Bigg| 2^{-m}N^m \sum_{\vec\fb\in \{0,1\}^m} \prod_{\ell=1}^m (2\sqrt{N_\ell N})^{-k_\ell-\fb_\ell} \sum_{\substack{(\vec p,\vec\upsilon,\cH)\in \sL_\epsilon[\vec \fb]:\\ p^0,\cH<C_2 N^{1/3},\delta<C_2}}
A(\vec p, \vec\upsilon, \cH) \sum_{\vec r \in \cL^{-1}(\vec p, \vec\upsilon, \cH)}  w(\vec r) \\ - \int_{\substack{(\vec\bp,  \vec\bb,\bH)\in\sK_\epsilon:  \\ \bp^0, \bH, \bdel < C_2 }}
\bA(\vec\bp,  \vec\bb,\bH) \exp\left(\sum_{\ell=1}^{m-1}
(\ttt_\ell-\ttt_{\ell+1}) \bH(\bQ_\ell)/2 \right)\bI_{\beta,\vec\bk}[\vec\bp,\vec\bb,\bH] \d(\vec\bp,  \vec\bb,\bH)\Bigg| \\ <
C \int_{\substack{(\vec\bp,  \vec\bb,\bH)\in\sK_\epsilon:  \\ \bp^0, \bH, \bdel < C_2 }}  \bF[\vec\bp,\vec\bb,\bH]\bEr[\vec\bp,\vec\bb,\bH]\d((\vec\bp,  \vec\bb,\bH),
\end{multline}
where $N^{-\frac{4}{3}\delta + \frac{2}{3}|U|-\frac{2}{3}|\{\ell\in\llbracket 1,m\rrbracket: \upsilon_\ell\neq\varnothing\}|} 2^{2\delta-|U|+|\{\ell\in\llbracket 1,m\rrbracket: \upsilon_\ell\neq\varnothing\}|} A(\vec p, \vec\upsilon, \cH)$ is the measure of applicable pairs for given $(\vec p, \vec\upsilon, \cH)$, and $2^{\bdel-\bu}\bA(\vec\bp,  \vec\bb,\bH)$ is the number of applicable pairs for given $(\vec\bp,  \vec\bb,\bH)$.
Both $\bA(\vec\bp,  \vec\bb,\bH)$ and $A(\vec p, \vec\upsilon, \cH)$ are always in $[0, 1]$; and
\begin{itemize}
    \item $\bA(\vec \bp, \vec\bb,\bH)=1$ for $(\vec\bp,  \vec\bb,\bH)\in\sK_\epsilon^*$ with $\bp^0, \bH <C_2-10N^{-1/3}, \bdel < C_2$,
    \item  $A(\vec p, \vec\upsilon, \cH)=1$ for $(\vec p, \vec\upsilon, \cH)\in\bigcup_{\vec\fb\in \{0,1\}^m}\sL_\epsilon^*[\vec\fb]$ with $p^0, \cH < C_2N^{1/3}-10$ and $\delta<C_2$.
\end{itemize}
We next deduce the conclusion from \eqref{eq:aAbd}.

\medskip
\noindent\textit{Step 3: remaining terms in \eqref{eq:bdsumblo}.}
We consider the difference between the left-hand side of \eqref{eq:bdsumblo} and the left-hand side of \eqref{eq:aAbd}.
It is bounded by the sum of
\begin{equation}  \label{eq:Asum}
 2^{-m}N^m \sum_{\vec\fb\in \{0,1\}^m} \prod_{\ell=1}^m (2\sqrt{N_\ell N})^{-k_\ell-\fb_\ell} \sum_{\substack{(\vec p,\vec\upsilon,\cH)\in \sL_\epsilon[\vec \fb]:\\ p^0,\cH<C_2 N^{1/3},\delta<C_2 \\ A(\vec p, \vec\upsilon, \cH)<1}}
 \Bigg| \sum_{\vec r \in \cL^{-1}(\vec p, \vec\upsilon, \cH)}  w(\vec r) \Bigg|,
\end{equation}
and
\begin{equation}  \label{eq:Asumb}
\int_{\substack{(\vec\bp,  \vec\bb,\bH)\in\sK_\epsilon:  \\ \bp^0, \bH, \bdel < C_2 \\ \bA(\vec\bp,  \vec\bb,\bH)<1 }}
 \exp\left(\sum_{\ell=1}^{m-1}
(\ttt_\ell-\ttt_{\ell+1}) \bH(\bQ_\ell)/2 \right)|\bI_{\beta,\vec\bk}[\vec\bp,\vec\bb,\bH]| \d(\vec\bp,  \vec\bb,\bH).
\end{equation}
We first consider \eqref{eq:Asum}, for which we still use \Cref{prop:IXHGest} and \Cref{rem:lowbdxhg}, and compare the sum on discrete blocks with well-spaced blocks.
For $(\vec p, \vec\upsilon, \cH)\in \bigcup_{\vec\fb\in \{0,1\}^m}\sL_\epsilon[\vec \fb]$ with $p^0,\cH<C_2 N^{1/3},\delta<C_2$ and $A(\vec p, \vec\upsilon, \cH)<1$ (necessarily $(\vec p, \vec\upsilon, \cH)\not\in\bigcup_{\vec\fb\in \{0,1\}^m}\sL_\epsilon^*[\vec \fb]$, or $\max p^0\ge C_2N^{1/3}-10$, or $\max\cH\ge C_2N^{1/3}-10$), and $\cL^{-1}(\vec p, \vec\upsilon, \cH)$ non-empty, we can still find a subset of $\sK_\epsilon$, where each $(\vec\bp,  \vec\bb,\bH)$ satisfies
\begin{itemize}
    \item $\bp^0, \bH, \bdel < C_2$, but either $(\vec\bp,  \vec\bb,\bH)\not\in\sK_\epsilon^*$ or $\max\bp^0\ge C_2-10N^{-1/3}$, or $\max\bH\ge C_2-10N^{-1/3}$;
    \item $(\vec\bp,  \vec\bb,\bH)$ is $\wes$-well-spaced for $\wes$ depending on $C_1$ and $C_2$;
    \item $(\vec p, \vec\upsilon, \cH)$ and $(\vec\bp,  \vec\bb,\bH)$ are $\Clo$-close, for $\Clo$ depending on $C_1$, $C_2$, and $\wes$;
\end{itemize}
and the set has measure $>cN^{-\frac{4}{3}\delta + \frac{2}{3}|U|-\frac{2}{3}|\{\ell\in\llbracket 1,m\rrbracket: \upsilon_\ell\neq\varnothing\}|}$.
We can then apply \Cref{prop:IXHGest} to such $(\vec p, \vec\upsilon, \cH)$ and a uniformly chosen $(\vec\bp,  \vec\bb,\bH)$ in this set, and conclude that \eqref{eq:Asum} is bounded by
\begin{multline}  \label{eq:Asum2}
C \left(\int_{\substack{(\vec\bp,  \vec\bb,\bH)\in\sK_\epsilon:  \\ \bp^0, \bH, \bdel < C_2 }}  |\bI_{\beta,\vec\bk}[\vec\bp,\vec\bb,\bH]|\d(\vec\bp,  \vec\bb,\bH)
-\int_{\substack{(\vec\bp,  \vec\bb,\bH)\in\sK_\epsilon^*:  \\ \bp^0, \bH<C_2-10N^{-1/3}, \bdel < C_2 }}  |\bI_{\beta,\vec\bk}[\vec\bp,\vec\bb,\bH]|\d(\vec\bp,  \vec\bb,\bH)
\right)\\
+C \int_{\substack{(\vec\bp,  \vec\bb,\bH)\in\sK_\epsilon:  \\ \bp^0, \bH, \bdel < C_2 }}  \bF[\vec\bp,\vec\bb,\bH]\bEr[\vec\bp,\vec\bb,\bH]\d(\vec\bp,  \vec\bb,\bH).
\end{multline}
Note that here the second line (i.e., the error term) is the same as the right-hand side of \eqref{eq:aAbd}.
Then using that $|\bI_{\beta,\vec\bk}[\vec\bp,\vec\bb,\bH]|<C\bF[\vec\bp,\vec\bb,\bH]$ whenever $\bp^0<C_2$, we can further bound the first line of \eqref{eq:Asum2} by
\begin{equation}  \label{eq:bddiff99}
C \left(\int_{\substack{(\vec\bp,  \vec\bb,\bH)\in\sK_\epsilon:  \\ \bp^0, \bH, \bdel < C_2 }}  \bF[\vec\bp,\vec\bb,\bH]\d(\vec\bp,  \vec\bb,\bH)
-\int_{\substack{(\vec\bp,  \vec\bb,\bH)\in\sK_\epsilon^*:  \\ \bp^0, \bH<C_2-10N^{-1/3}, \bdel < C_2 }}  \bF[\vec\bp,\vec\bb,\bH]\d(\vec\bp,  \vec\bb,\bH)
\right).
\end{equation}
As for \eqref{eq:Asumb}, using that $|\bI_{\beta,\vec\bk}[\vec\bp,\vec\bb,\bH]|<C\bF[\vec\bp,\vec\bb,\bH]$ whenever $\bp^0<C_2$ and $ \exp\left(\sum_{\ell=1}^{m-1}
(\ttt_\ell-\ttt_{\ell+1}) \bH(\bQ_\ell)/2 \right)\le 1$, it is also bounded by \eqref{eq:bddiff99}.

In summary, to get \eqref{eq:bdsumblo}, it now remains to bound the right-hand side of \eqref{eq:aAbd}, and \eqref{eq:bddiff99}.
This is possible because both are integrals over blocks $(\vec\bp,\vec\bb,\bH)$ in a set with finite total measure; in particular, the blocks are in $\sK_\epsilon$ with $\bdel<C_2$.

\medskip
\noindent\textit{Step 4: bounding \eqref{eq:aAbd}.}
We claim that using the definition of $\bF[\vec\bp,\vec\bb,\bH]$ and $\bEr[\vec\bp,\vec\bb,\bH]$, and integrating over $(\vec\bp,  \vec\bb,\bH)$ with fixed $\Xi_1$, $\Xi_2$, $\Xi_3$, $\Xi_4$,  the right-hand side of \eqref{eq:aAbd} can be bounded by
\begin{multline}  \label{eq:101}
    C \int \Bigg(N^{-1/40}
    +\sum_{(x,y)\in \Xi_2\cup\Xi_3\cup\Xi_4}1\wedge\left(N^{-3/5}(y-x)^{-3}+N^{-1/3}(y-x)^{-1/2}\right) \Bigg)\\ \times \prod_{(x,y)\in \Xi_1}(y-x)^{-3/2} \prod_{(x,y)\in \Xi_2\cup\Xi_3} (y-x)^{-1/2} \d(\Xi_1, \Xi_2, \Xi_3, \Xi_4),
\end{multline}
where the measure on the space of all $(\Xi_1, \Xi_2, \Xi_3, \Xi_4)$ is simply induced by the Lebesgue measure of the endpoints.
In order to prove the bound \eqref{eq:101} , we first imagine that $\bEr[\vec \bp, \vec\bb,\bH]$ of Proposition \ref{prop:IXHGest} is the constant $N^{-1/40}$, and then we get the factors in the second line of \eqref{eq:101} by integrating $\bF[\vec \bp, \vec\bb,\bH]$.
More precisely, the integration of $\bF[\vec \bp, \vec\bb,\bH]$ over $(\vec\bp,  \vec\bb,\bH)$ with fixed $\Xi_1$, $\Xi_2$, $\Xi_3$, $\Xi_4$ is actually an integration over $\bH$, and can be bounded by
\begin{equation}   \label{eq:Xi234p}
C\int \prod_{(x,y)\in\Xi_1} \bF_{0,0}(y-x) \prod_{(x,y)\in\Xi_2\cup \Xi_3} \bF_0(y-x;h_{(x,y)})  \prod_{(x,y)\in\Xi_4} \bF(y-x;h_{(x,y)},g_{(x,y)}) \d \vec{h},
\end{equation}
where $h_{(x,y)}$ corresponds to $\bH(y)-\bH(x)$ for $(x,y)\in\Xi_2$, $\bH(x)-\bH(y)$ for $(x,y)\in\Xi_3$, $\bH(x)-\bp^0(x)$ for $(x,y)\in\Xi_4$, and $g_{(x,y)}$ corresponds to $\bH(y)-\bp^0(y-)$ for $(x,y)\in\Xi_4$.
The integral is over all $\vec{h}=\{h_{(x,y)}\}_{(x,y)\in\Xi_2\cup\Xi_3\cup\Xi_4}\cup \{g_{(x,y)}\}_{(x,y)\in\Xi_4}$, satisfying each $0\le h_{(x,y)}, g_{(x,y)}\le C_2$, and
\[
\sum_{(x,y)\in\Xi_2} h_{(x,y)} - \sum_{(x,y)\in\Xi_3} h_{(x,y)} + \sum_{(x,y)\in\Xi_4} \big( -h_{(x,y)} + g_{(x,y)} \big) = 0.
\]
If $\Xi_2=\Xi_3=\Xi_4=\emptyset$, \eqref{eq:Xi234p} equals $C\prod_{(x,y)\in\Xi_1}\bF_{0,0}(y-x)$.
Otherwise, there exists at least one $\ell\in \llbracket 1, m\rrbracket$ such that $(\bQ_{\ell-1}, \bQ_\ell)\not\in \Xi_1$. Take the smallest such $\ell$.
Then $\bH(\bQ_{\ell-1})=0$, and $\bb_\ell=\varnothing$ by \Cref{Definition_virtual_block}.
Thus, from the definition of $\Xi_1$ in \Cref{ssec:wobvbb}, every $(x,y)\subset (\bQ_{\ell-1}, \bQ_\ell)$ is not in $\Xi_1$, so we can find one $(x_*, y_*)\in \Xi_2\cup\Xi_3\cup\Xi_4$, which is contained in $(\bQ_{\ell-1}, \bQ_\ell)$, with $y_*-x_*>c$.
If $(x_*, y_*)\in \Xi_2\cup\Xi_3$, we bound the factor $\bF_0(y_*-x_*;h_{(x_*,y_*)})$ in \eqref{eq:Xi234p} by $\sup_{h\ge 0}\bF_0(y_*-x_*;h)<C$.
If $(x_*, y_*)\in \Xi_4$, we bound the factor $\bF(y_*-x_*;h_{(x_*,y_*)},g_{(x_*,y_*)})$ in \eqref{eq:Xi234p} by $\sup_{g,h\ge 0}\bF(y_*-x_*;h,g)<C$.
Then the remaining entries of $\vec h$ can be integrated over $[0, C_2]$ in \eqref{eq:Xi234p}.
In summary, we can upper bound \eqref{eq:Xi234p} by
\begin{equation}   \label{eq:Xi234}
C\prod_{(x,y)\in\Xi_1} \bF_{0,0}(y-x) \cdot \prod_{(x,y)\in\Xi_2\cup \Xi_3} \int_0^{C_2}\bF_0(y-x;h) \d h  \cdot \prod_{(x,y)\in\Xi_4} \sup_{g\ge 0}\int_0^{C_2} \bF(y-x;h,g) \d h.
\end{equation}
Note that here we do not explicitly exclude $(x_*, y_*)$ in the product of \eqref{eq:Xi234}, since $\int_0^{C_2}\bF_0(y_*-x_*;h) \d h>c$ (if $(x_*,y_*)\in\Xi_2\cup \Xi_3$) and  $\sup_{g\ge 0}\int_0^{C_2} \bF(y_*-x_*;h,g) \d h>c$ (if $(x_*,y_*)\in\Xi_4$).
In particular, below we bound the integral for the special interval $(x_*, y_*)$ the same way as the other intervals.

The factors in the second line of \eqref{eq:101} then come from the following bounds: for any $x>0$, we have $\bF_{0,0}(x)<Cx^{-3/2}$, $\int_0^\infty \bF_0(x;h) \d h < Cx^{-1/2}$, and $\int_0^\infty \bF(x;h,g) \d h< C$ for any fixed $g\ge 0$.

As for the first factor in the integrand in \eqref{eq:101}, it comes from more careful analysis of $\bEr[\vec \bp, \vec\bb,\bH]$, i.e., the summation over $\bEr[x,y]$ taking into account all four cases appearing in its definition.
In particular, for the term of $1\wedge\big(N^{-1/3}/(\bH(y)-\bH(x))\big)$ for each $(x, y)\in\Xi_2$, and the term of $1\wedge\big(N^{-1/3}/(\bH(x)-\bH(y))\big)$ for each $(x, y)\in\Xi_3$,
we use that for any $x>0$,
\[
\int_0^{C_2} (1\wedge (N^{-1/3}/h)) \bF_0(x;h) \d h < C\left(1\wedge \big(N^{-1/3}x^{-1/2}\big)\right)\cdot x^{-1/2};
\]
and for the term of $1\wedge\big(N^{-1/3}/(\bH(x)-\bp^0(x))+ N^{-1/3}/(\bH(y)-\bp^0(y-))\big)$ for each $(x, y)\in\Xi_4$, 
we use that for any $x>0$ and $g\ge 0$,
\[
\int_0^{C_2} (1\wedge (N^{-1/3}/h)) \bF(x;h,g) \d h < C\left(1\wedge \big(N^{-1/3}x^{-1/2}\big)\right).
\]
These give the terms of $N^{-1/3}(y-x)^{-1/2}$ in the first line of \eqref{eq:101}.

\medskip
\noindent\textit{Step 5: bounding \eqref{eq:101}.}
We next claim that \eqref{eq:101} can be further upper bounded by $C\epsilon^{-m/2}N^{-1/40}$.
For this, we can first upper bound \eqref{eq:101} by
\begin{multline}  \label{eq:76}
    C \sum_{a_\ell \in \llbracket 0, C_2\rrbracket, \ell\in\llbracket 1, m\rrbracket, b_\ell \in \{0, 1\}} \int \Bigg(N^{-1/40}
    +\sum_{\ell=1}^m \sum_{i=1}^{a_\ell} 1\wedge\left(N^{-3/5}x_{\ell,i}^{-3}+N^{-1/3}x_{\ell,i}^{-1/2}\right) \Bigg)\\ \times \prod_{\ell=1}^m \prod_{i=1}^{b_\ell} y_{\ell,i}^{-3/2}\prod_{\ell=1}^m \prod_{i=1}^{a_\ell} x_{\ell,i}^{-1/2} \d (\vec x,\vec y),
\end{multline}
where $a_\ell$, $b_\ell$ correspond to the numbers of intervals in $\Xi_2\cup\Xi_3\cup\Xi_4$ contained in $(\bQ_{\ell-1}, \bQ_\ell)$, and the numbers of intervals in $\Xi_1$ contained in $(\bQ_{\ell-1}, \bQ_\ell)$, respectively. The integral in \eqref{eq:76} is over all $\vec x=\{\{x_{\ell,i}\}_{i=1}^{a_\ell}\}_{\ell=1}^m$ and $\vec y=\{\{y_{\ell,i}\}_{i=1}^{b_\ell}\}_{\ell=1}^m$, satisfying $\sum_{i=1}^{a_\ell}x_{\ell,i} + \sum_{i=1}^{b_\ell}y_{\ell,i}= \bk_\ell$, and each $x_{\ell,i}\ge 0$, and $y_{\ell,i}>\epsilon$.
Here the lower bound of $y_{\ell,i}$ is due to that, for $(\vec\bp,  \vec\bb,\bH)\in\sK_\epsilon$ and $(x,y)\in\Xi_1$, necessarily $y-x>\epsilon$ (which follows from the definition of $\Xi_1$ in \Cref{ssec:wobvbb}, and the definition of $\sK_\epsilon$ in \Cref{defn:core}).

In \eqref{eq:76}, we upper bound each $x_{\ell, i}^{-1}$ or $y_{\ell, i}^{-1}$ by $C$,  whenever $x_{\ell, i}<\bk_\ell/C_2$ or $y_{\ell, i}<\bk_\ell/C_2$ (which holds for at least one $x_{\ell, i}$ or $y_{\ell, i}$ for each $\ell$).
Then for the remaining $x_{\ell,i}$ and $y_{\ell,i}$, we integrate each of them in $[0, \bk_\ell/C_2]$.
The integrals over $y_{\ell, i}$ would give a factor that is at most $C\epsilon^{-m/2}$.
For the integrals over $x_{\ell, i}$, we use that $\int_0^{\bQ_m} N^{-1/40} x^{-1/2} \d x < CN^{-1/40}$, and
\begin{align*}
&\int_0^{\bQ_m} \left(1\wedge\big(N^{-3/5}x^{-3} + N^{-1/3}x^{-1/2} \big)\right) x^{-1/2} \d x \\ \le &  \int_0^{N^{-1/5}} x^{-1/2} \d x + \int_{N^{-1/5}}^{\bQ_m} \big(N^{-3/5}x^{-3} + N^{-1/3}x^{-1/2} \big) x^{-1/2} \d x \\
< & C N^{-1/10} + CN^{-3/5}\cdot N^{1/2} + CN^{-1/3}\cdot\log(N) < C N^{-1/40}.
\end{align*}

\medskip
\noindent\textit{Step 6: bounding \eqref{eq:bddiff99}.}
As for \eqref{eq:bddiff99}, here we integrate $\bF[\vec \bp, \vec\bb,\bH]$, over all $(\vec\bp,  \vec\bb,\bH)\in\sK_\epsilon$, but either $(\vec\bp,  \vec\bb,\bH)\not\in\sK_\epsilon^*$ or $\max\bp^0\ge C_2-10N^{-1/3}$, or $\max\bH\ge C_2-10N^{-1/3}$.
Again we first integrate over $(\vec\bp,  \vec\bb,\bH)$ with fixed $\Xi_1$, $\Xi_2$, $\Xi_3$, $\Xi_4$.
We would get \eqref{eq:Xi234}, except for that, for one $(x,y) \in \Xi_2\cup\Xi_3\cup\Xi_4$ we would replace the domain of integration by an interval of length $<CN^{-1/3}$;
and then we would sum over all choices of this $(x,y)$.
Therefore \eqref{eq:bddiff99} is bounded by
\begin{multline*}
    C\int\Bigg(\sum_{(x,y)\in \Xi_2\cup\Xi_3\cup\Xi_4} 1\wedge (N^{-1/3}(y-x)^{-1/2}) \Bigg)
    \\ \times   \prod_{(x,y)\in \Xi_1}(y-x)^{-3/2} \prod_{(x,y)\in \Xi_2\cup\Xi_3} (y-x)^{-1/2}
     \d(\Xi_1, \Xi_2, \Xi_3, \Xi_4)
    < C \epsilon^{-m/2}N^{-1/40},
\end{multline*}
where we also use that, for any $x>0$ and interval $I\subset (0,\infty)$ of length $N^{-1/3}$, we have  $\int_I \bF_0(x;h) \d h < C(1\wedge N^{-1/3}x^{-1/2})x^{-1/2}$, and $\int_I \bF(x;h,g) \d g< C(1\wedge N^{-1/3}x^{-1/2})$ for any fixed $h$.
The inequality is derived similarly as in bounding \eqref{eq:101}. Therefore \eqref{eq:bdsumblo} follows.

\medskip
\noindent\textit{Step 7: absolute values.}
Finally, in terms of absolute values, we note that for any blocks $(\vec p, \vec\upsilon, \cH)$, the weight $w(\vec r)$ for each $\vec r\in\cL^{-1}(\vec p, \vec\upsilon, \cH)$ have the same sign. Therefore \Cref{prop:IXHGest} still holds if we replace $w(r)$ with $|w(r)|$ and $\bI_{\beta,\vec\bk}[\vec\bp,\vec\bb,\bH]$ with $|\bI_{\beta,\vec\bk}[\vec\bp,\vec\bb,\bH]|$.
Then we can apply \Cref{prop:IXHGest} in the same way as above and deduce \eqref{eq:bdsumblo} in absolute values.
\end{proof}
Given the estimates established so far, we can now deduce the asymptotic results.

\begin{proof}[Proof of \Cref{prop:fixedepsconv}]
Note that $\epsilon>0$ is fixed throughout this proof.
From \Cref{prop:bdsumblo} we have for any $C_2>0$, 
\begin{multline}  \label{eq:fixedpf1}
    \Bigg| \int_{\substack{(\vec\bp,  \vec\bb,\bH)\in\sK_\epsilon[\vec\bk]:  \\ \bp^0, \bH, \bdel < C_2 }} \exp\left(\sum_{\ell=1}^{m-1}
(\ttt_\ell-\ttt_{\ell+1}) \bH(\bQ_\ell)/2 \right)\bI_{\beta,\vec\bk}[\vec \bp, \vec\bb,\bH] \d(\vec\bp,  \vec\bb,\bH) \\ -  2^{-m}N^m \sum_{\vec\fb\in \{0,1\}^m} \prod_{\ell=1}^m (2\sqrt{N_\ell N})^{-k_\ell-\fb_\ell} \sum_{\vec r \in \sB_\epsilon^*}  w(\vec r)\Bigg| <
C_3\epsilon^{-m/2}N^{-1/40} \\ + 2^{-m}N^m \sum_{\vec\fb\in \{0,1\}^m} \prod_{\ell=1}^m (2\sqrt{N_\ell N})^{-k_\ell-\fb_\ell} \sum_{\substack{(\vec p,\vec\upsilon,\cH)\in \sL_\epsilon[\vec \fb]:\\ \max\{ N^{-1/3}p^0, N^{-1/3}\cH, \delta\}\ge C_2}} \sum_{\vec r \in \cL^{-1}(\vec p,\vec\upsilon,\cH)}  |w(\vec r)| ,
\end{multline}
for $C_3=C_3(C_1,C_2)$. Let $A_1(C_2)$ denote the integral in the first line, $A_2(N)$ denote the sum in the second line, and $A_3(C_2, N)$ denote the third line.
By sending $N\to\infty$ with fixed $C_2$, we have
\[
\limsup_{N\to\infty} A_2(N) - \limsup_{N\to\infty} A_3(C_2, N) \le  A_1(C_2)  \le \liminf_{N\to\infty} A_2(N) + \limsup_{N\to\infty} A_3(C_2, N) .
\]
It now remains to show that $\lim_{C_2\to\infty}  \limsup_{N\to\infty} A_3(C_2, N) = 0$.
Since then, $\limsup_{N\to\infty} A_2(N)<\infty$, and by sending $C_2\to \infty$ we have
\[
\limsup_{N\to\infty} A_2(N) \le \liminf_{C_2\to\infty} A_1(C_2) \le  \limsup_{C_2\to\infty} A_1(C_2)  \le \liminf_{N\to\infty} A_2(N),
\]
which implies that both $\lim_{N\to\infty} A_2(N)$ and $\lim_{C_2\to\infty} A_1(C_2)$ exist and are finite, and the limits are equal, i.e., \eqref{eq:limieq} holds.

For \eqref{eq:limieqabs}, by \eqref{eq:bdsumblo} in absolute values, \eqref{eq:fixedpf1} still holds if  $w(\vec r)$ by $|w(\vec r)|$ in the first line and $\bI_{\beta,\vec\bk}[\vec \bp, \vec\bb,\bH]$ by $|\bI_{\beta,\vec\bk}[\vec \bp, \vec\bb,\bH]|$ in the second line, and the same argument as deriving \eqref{eq:limieq} applies.

The rest of this proof is devoted to proving $\lim_{C_2\to\infty}  \limsup_{N\to\infty} A_3(C_2, N) = 0$.
We note that all $C, c$ below are independent of $C_2$.

We apply \Cref{cor:mbdcII} to all $\vec r\in \bigcup_{\vec\fb\in \{0,1\}^m}\sB_\epsilon^*[\vec \fb]$, such that $\max_t \sH(t) \ge C_2N^{1/3}$ or $\delta\ge C_2-m$.
More precisely, from the upper bound in \Cref{cor:mbdcII}, we first multiply $N^m \prod_{\ell=1}^m (2\sqrt{N_\ell N})^{-k_\ell-\fb_\ell}$, sum over $U$ (with fixed $|U|$), and lower bound $|J_\rmVIa|$, $|J_\rmVIb|$, $|J_\rmVIc|$, $|J_\rmC|$ by zero. 
Note that $|J|=m$, and recall (from above \Cref{prop:truncreg}) $\sB_\epsilon^*$ consists of walks $\vec r$ with $J_\rmB=J_\rmC=\emptyset$, $J_\rmIII=J_\rmVIa=J_\rmVIb=J_\rmVIc=\emptyset$, and $\vartheta_\ell\wedge \dot\vartheta_\ell>\epsilon N^{2/3}$ for each $\ell\in J_\rmI\cup J_\rmII\cup J_\rmIV$. 
Also note that the summation over $U$ gives a factor of at most ${N\choose |U|}\le \frac{N^{|U|}}{|U|!}$, so we get
\begin{multline*}
N^{\frac{1}{3}|J_\rmI|+\frac{1}{3}|J_\rmII| -\frac{1}{3}\bigl(|J_\rmIV|+|\{j\in J_\rmIV:\,\vartheta_j=\varnothing\}|\bigr)}
C_5^{\delta+1} \frac{h^{\delta- 2|U|}}{(\delta-2|U|)!}\exp(-C_4h^2) \\
\times \frac{1}{|U|!}\big((|U|+1)\log(|U|+2)\big)^{|U|} \big(h/(|U|+1)\big)^{C_4|U|+m\mathbf{1}_{h\ge |U|}} \prod_{j\in J:\vartheta_j \ne \varnothing} \vartheta_j^{-3/2},
\end{multline*}
where $C_4, C_5$ are constants depending only on $C_1$, and all $C,c$ are allowed to depend on $C_4, C_5$.
We next sum the product of $\prod_{j\in J:\vartheta_j \ne \varnothing} \vartheta_j^{-3/2}$, over $\vartheta_j\ge \epsilon N^{2/3}$ for each $j\in J_\rmI$, $\dot\vartheta_j=\vartheta_j\ge \epsilon N^{2/3}$ for each $j\in J_\rmII$, and $\vartheta_j>\dot\vartheta_j\ge \epsilon N^{2/3}$ or $\vartheta_j=\varnothing$, $\dot\vartheta_j\ge \epsilon N^{2/3}$ for each $j\in J_\rmIV$.
We get a factor of $C\epsilon^{-\frac{1}{2}}N^{-\frac{1}{3}}$ for each $j\in J_\rmI\cup J_\rmII$, a factor of $N^{\frac{2}{3}}$ for each $j\in J_\rmIV$ with $\varepsilon_j=\emptyset$, and a factor of $N^{\frac{1}{3}}$ for each $j\in J_\rmIV$ with $\varepsilon_j\neq \emptyset$.
Thus, the sum is at most
\[
CC_5^{\delta+1} \frac{h^{\delta- 2|U|}}{(\delta-2|U|)!}\exp(-C_4h^2) \frac{1}{|U|!}\big((|U|+1)\log(|U|+2)\big)^{|U|} \big(h/(|U|+1)\big)^{C_4|U|+m\mathbf{1}_{h\ge |U|}}.
\]
By further using that $\frac{(|U|+1)^{|U|}}{|U|!}<C\exp(|U|)$, denoting $a=\delta-2|U|$, $b=|U|$, and taking dyadic $h=2^i$, we have
\[
A_3(C_2, N)<C\epsilon^{-m/2}  \sum_{a, b, i \in \Z_{\ge 0}: \max\{a,b,2^i\}>C_2/4} \frac{(C_62^i)^a}{a!} \exp(-C_42^{2i})
(C_6\log(b+2))^b (2^i/(b+1))^{C_4b},
\]
where $C_6$ is a constant depending only on $C_1$, and all $C,c$ are allowed to depend on $C_6$.

In the rest of this proof we show that the above sum is at most $C\exp(-cC_2)$, from which the conclusion follows.
We can assume that $C_2$ is large enough (depending on $C_4, C_6$), since otherwise the bound immediately follows, as the sum over all $a, b, i \in \Z_{\ge 0}$ is finite.

For any $i\in\Z_{\ge 0}$ we have $\sum_{a=0}^\infty \frac{(C_62^i)^a}{a!} = \exp(C_62^i)$, and
\begin{multline*}
\sum_{b=0}^\infty (C_6\log(b+2))^b(2^i/(b+1))^{C_4b} \\ = 
\sum_{b=0}^{\lceil (i+1)^{2/C_4}\rceil 2^i-1} (C_6\log(b+2))^b(2^i/(b+1))^{C_4b} + \sum_{b=\lceil (i+1)^{2/C_4}\rceil 2^i}^\infty (C_6\log(b+2))^b(2^i/(b+1))^{C_4b}\\
< \lceil (i+1)^{2/C_4}\rceil 2^i\cdot
(C_6\log((i+1)^{2/C_4+1}2^{i}))^{(i+1)^{2/C_4+1}2^{i}}\cdot 2^{C_4(i+1)^{2/C_4+2}2^i} + C
< C \exp(Ci^{2/C_4+2}2^i).
\end{multline*}
Thus,
\begin{multline}   \label{eq:bd111}
\sum_{i \in \Z_{\ge 0}: 2^{2i}>C_2/4 } \exp(-C_42^{2i}) \sum_{a=0}^\infty \frac{(C_62^i)^a}{a!} \sum_{b=0}^\infty (C_6\log(b+2))^b(2^i/(b+1))^{C_4b} \\ <
\sum_{i \in \Z_{\ge 0}: 2^{2i}>C_2/4 } C \exp(-C_42^{2i}+C_62^i+Ci^{2/C_4+2}2^i)
 < C\exp(-cC_2).
\end{multline}
When $2^{2i}\le C_2/4$, we have $\sum_{a=\lceil C_2/4\rceil}^\infty \frac{(C_62^i)^a}{a!}<\frac{(C_6 2^i)^{\lceil C_2/4\rceil}}{\lceil C_2/4\rceil !}\sum_{a=0}^\infty \frac{(C_62^i)^a}{a!}<\frac{(CC_2)^{C_2/8}}{\lceil C_2/4\rceil!}\exp(C_6 2^i)$, and
$\sum_{b=\lceil C_2/4\rceil}^\infty  (C_6\log(b+2))^b (2^i/(b+1))^{C_4b}<\sum_{b=\lceil C_2/4\rceil}^\infty(C_6\log(C_2))^bC_2^{-C_4b/2}<CC_2^{-cC_2}$.
Thus,
\begin{multline}   \label{eq:bd112}
\sum_{i \in \Z_{\ge 0}: 2^{2i}\le C_2/4 }
\exp(-C_42^{2i}) \sum_{a=\lceil C_2/4\rceil}^\infty \frac{(C_62^i)^a}{a!} \sum_{b=0}^\infty  (C_6\log(b+2))^b(2^i/(b+1))^{C_4b} \\ < \frac{(CC_2)^{C_2/8}}{\lceil C_2/4\rceil!} \sum_{i\in \Z_{\ge 0}: 2^{2i}\le C_2/4} \exp(-C_42^{2i}+C_62^i+Ci^{2/C_4+2}2^i) < C\exp(-cC_2)
\end{multline}
and
\begin{multline}   \label{eq:bd113}
\sum_{i \in \Z_{\ge 0}: 2^{2i}\le C_2/4 }
\exp(-C_42^{2i}) \sum_{a=0}^\infty \frac{(C_62^i)^a}{a!} \sum_{b=\lceil C_2/4\rceil}^\infty  (C_6\log(b+2))^b (2^i/(b+1))^{C_4b} \\ < CC_2^{-cC_2} \sum_{i\in \Z_{\ge 0}: 2^{2i}\le C_2/4} \exp(-C_42^{2i}+C_62^i) < C\exp(-cC_2).
\end{multline}
By adding up \eqref{eq:bd111}, \eqref{eq:bd112}, and \eqref{eq:bd113}, the proof concludes.
\end{proof}

\begin{proof}[Proof of \Cref{prop:epszerocov} and \Cref{prop:conv}]
Denote
\[
A_4(N)=2^{-m}\left(\frac{\hP_{k_m}^{N_m,2N/\beta}}{(2\sqrt{N_mN})^{k_m}}+\frac{\hP_{k_m+1}^{N_m,2N/\beta}}{(2\sqrt{N_mN})^{k_m+1}}\right) \cdots \left(\frac{\hP_{k_1}^{N_1,2N/\beta}}{(2\sqrt{N_1N})^{k_1}}+\frac{\hP_{k_1+1}^{N_1,2N/\beta}}{(2\sqrt{N_1N})^{k_1+1}}\right) [1]_{x_1=\dots=x_N=0},  
\]
\[
A_5(\epsilon)= \int_{(\vec\bp,  \vec\bb,\bH)\in\sK_\epsilon[\vec\bk]} \exp\left(\sum_{\ell=1}^{m-1}
(\ttt_\ell-\ttt_{\ell+1}) \bH(\bQ_\ell)/2 \right)\bI_{\beta,\vec\bk}[\vec \bp, \vec\bb,\bH]\d(\vec\bp,  \vec\bb,\bH).
\]
We need to show that $\lim_{N\to\infty}A_4(N) = \lim_{\epsilon\to 0+} A_5(\epsilon)$, and both limits exist and are finite.

From  \Cref{prop:truncreg} and \eqref{eq:limieq}, we have that $\limsup_{N\to\infty} A_4(N)< A_5(\epsilon) + C\sqrt{\epsilon}<\infty$ and $\liminf_{N\to\infty} A_4(N)> A_5(\epsilon) - C\sqrt{\epsilon}$.
Then by sending $\epsilon\to 0+$ we have
\[
\limsup_{\epsilon\to 0+}A_5(\epsilon)\le \liminf_{N\to\infty} A_4(N) \le \limsup_{N\to\infty} A_4(N)\le \liminf_{\epsilon\to 0+}A_5(\epsilon),
\]
so the conclusion follows.
\end{proof}

\subsection{Dyson Brownian Motion: proof of Proposition \ref{prop:DBMconv}}\label{sec:DBMlimit}

The proof of \Cref{prop:DBMconv} is almost the same as the proof of \Cref{prop:conv} and in this section we highlight the necessary modifications.

Instead of Theorem \ref{Theorem_corners_moments}, our starting point is now Theorem \ref{Theorem_DBM_moments}. Arguing similarly to Section \ref{ssec:setdisexp}, we expand the operator $(\hD_{i_m}^{N,\tau_m})^{k_m} \cdots (\hD_{i_1}^{N,\tau_1})^{k_1}$ in terms of walks. The difference is that now all $N_1,\dots,N_m$ are the same and equal to $N$, but $\tau$ (which was fixed in Section \ref{ssec:setdisexp}) is now varying. This leads to an adjustment in the weight of the walk, which we introduced in Definition \ref{defn:wr} and transformed in \eqref{eq:weight}. In the proof of Proposition \ref{prop:DBMconv}, the weight becomes
\begin{equation}   \label{eq:dbmww}
\begin{split}
w(\vec r) = &N^{-\delta}\prod_{\ell=1}^m  (-1)^{|\{t\in\llbracket Q_{\ell-1}+1, Q_\ell \rrbracket: \sH(t)=\sH(t-1)-1, r_{i_\ell}(t)\ge r_{i_\ell}(t-1)\}|} \\ &\times (\beta\tau_\ell N/2)^{k_\ell/2} (\beta\tau_\ell /2)^{(\sH(Q_\ell)-\sH(Q_{\ell-1}))/2}\\
&\times \prod_{\substack{t\in \llbracket Q_{\ell-1}+1, Q_\ell \rrbracket\setminus\Delta \\ \sH(t)=\sH(t-1)-1 }}  \left(1+\frac{2r_{i_\ell}(t-1)}{\beta N}-\frac{|\{j\in \llbracket 1,N\rrbracket: r_j(t-1)\ge r_{i_\ell}(t-1)>0\}|}{N}\right).
\end{split}
\end{equation}
Compared to \eqref{eq:weight}, we replace each $N_\ell$ in the third line, and in $N_\ell^{-\bigl|\llbracket Q_{\ell-1}+1, Q_\ell \rrbracket\cap\Delta\bigr|}$ in the second line, by $N$.
This is due to that, from the definition of the operators \eqref{eq:defhD}, in 2, 3(a), 3(b) of \Cref{defn:tie}, $j$ is allowed to take any number in $\llbracket 1, N\rrbracket$ in the DBM setting.
Then the product $\prod_{\ell=1}^m N^{-\bigl|\llbracket Q_{\ell-1}+1, Q_\ell \rrbracket\cap\Delta\bigr|}$ gives $N^{-\delta}$.
On the other hand, for $N_\ell$ in $(\sqrt{NN_\ell})^{k_\ell}$ and $N_\ell^{(\sH(Q_{\ell-1})-\sH(Q_\ell))/2}$ in the second line of \eqref{eq:weight}, we replace each by $\beta\tau_\ell/2$, and get $(\beta\tau_\ell N/2)^{k_\ell/2}$ and $(\beta\tau_\ell /2)^{(\sH(Q_\ell)-\sH(Q_{\ell-1}))/2}$.
This is because these factors come from counting the number of down-steps versus the number of up-steps: in \Cref{defn:tie} each $-$ or $\top$ step in $\llbracket Q_{\ell-1}+1, Q_\ell\rrbracket$ corresponds to a factor of $N_\ell$, while in the DBM setting each $+$ step gives a factor of $\beta\tau_\ell/2$.

The classification of walks developed in \Cref{ssec:classi} is still valid. The bound of Proposition \ref{prop:mbdc} in Section \ref{ssec:generabd} can be deduced verbatim, except for that the factor $\prod_{\ell=1}^m (2\sqrt{NN_\ell})^{k_\ell}$ in the right-hand side of \eqref{eq:wbdfdt} is replaced by $\prod_{\ell=1}^m (2\beta\tau_\ell N)^{k_\ell/2}$. 
In particular, for the factor $\prod_{\ell=1}^m N_\ell^{(\sH(Q_{\ell-1})-\sH(Q_\ell))/2}$, essentially it is upper bounded by $\exp(Ch)$, and the same holds for $\prod_{\ell=1}^m(\beta\tau_\ell /2)^{(\sH(Q_\ell)-\sH(Q_{\ell-1}))/2}$.
Also, $N^{-\delta}$ is of the same order (up to a factor between $1$ and $1+C\delta N^{-1/3}$) as $\prod_{\ell=1}^m N_\ell^{-\bigl|\llbracket Q_{\ell-1}+1, Q_\ell \rrbracket\cap\Delta\bigr|}$. Thus these factors are bounded the same way as their counterparts in the proof of \Cref{prop:mbdc}.

The bounds of Propositions \ref{prop:csBJ} and \ref{prop:sBJe} of Section \ref{ssec:buc} follow in the same way, with the same replacement of $\prod_{\ell=1}^m (2\sqrt{NN_\ell})^{k_\ell}$ by $\prod_{\ell=1}^m (2\beta\tau_\ell N)^{k_\ell/2}$ in the right-hand sides. From there, in parallel to Section \ref{ssec:pair}, we develop an analogue of Proposition \ref{prop:truncreg}, which is a bound:  
\[
\left| N^m \sum_{\vec r \in \sB_\epsilon^*} w(\vec r) - \hP_{k_m}^{N,\tau_m} \cdots \hP_{k_1}^{N,\tau_1} [1]_{x_1=\dots=x_N=0} \right| < C \prod_{\ell=1}^m (2\beta\tau_\ell N)^{k_\ell/2} (\sqrt{\epsilon} + N^{-1/12}).
\]

Further, in parallel with Sections \ref{ssec:bdecom} and \ref{ssec:sob}, we analyze $N^{m}\sum_{\vec r\in \sB_\epsilon^*}w(\vec r)$ as $N\to\infty$ and show that it converges towards the integral over $\sK_\epsilon[\vec\bk]$ in \Cref{defn:core}. The same block decomposition as in \Cref{ssec:bdecom} applies and an analogue of the estimate \Cref{prop:IXHGest} holds, with $\prod_{\ell=1}^m (2\sqrt{NN_\ell})^{-k_\ell}$ replaced by $\prod_{\ell=1}^m (2\beta\tau_\ell N)^{-k_\ell/2}$;
and in its proof, $\prod_{\ell=1}^m N_\ell^{(\cH(Q_{\ell-1})-\cH(Q_\ell))/2}$ would be replaced by $\prod_{\ell=1}^m(\beta\tau_\ell/2)^{(\cH(Q_{\ell})-\cH(Q_{\ell-1}))/2}$. 
In particular, \eqref{eq:Pnexp} would be replaced by
\[
\left|P[\vec p,\vec\upsilon, \cH]N^{-u}\prod_{\ell=1}^m(\beta\tau_\ell/2)^{(\cH(Q_{\ell})-\cH(Q_{\ell-1}))/2} - \exp\left(\sum_{\ell=1}^{m-1}
(\ttt_\ell-\ttt_{\ell+1}) \bH(\bQ_\ell)/2 \right)\right|<CN^{-1/3},
\]
which can be derived using $|\log(\beta\tau_\ell/(2N))N^{1/3}-\ttt_\ell|<CN^{-1/3}$ (instead of $|\log(N_\ell/N)N^{1/3} + \ttt_\ell| < CN^{-1/3}$), and the other estimates in obtaining \eqref{eq:Pnexp}.

Finally, everything in \Cref{ssec:sob} (\Cref{prop:bdsumblo} and its proof, and the final proof of \Cref{prop:conv}) goes through verbatim, with each appearance of $\prod_{\ell=1}^m (2\sqrt{N_\ell N})^{-k_\ell-\fb_\ell}$ replaced by $\prod_{\ell=1}^m (2\beta\tau_\ell N)^{-(k_\ell+\fb_\ell)/2}$.

\section{Random process convergence} \label{Section_process_convergence}
In this section we finish the proofs of Theorems \ref{thm:main}, \ref{thm:cor-conv}, and \ref{thm:dbm-conv}. Our task is to upgrade the convergence of joint moments in Theorems \ref{thm:multil} and \ref{thm:multit} to distributional convergence in the space of continuous curves equipped with uniform topology (on compact intervals of time). The two main ingredients are:
\begin{itemize}
 \item A uniform bound of Section \ref{Section_increment_bound} on the moments of the time increments for the G$\beta$E corners process, the Dyson Brownian Motion, and the Airy$_\beta$ line ensemble. This is used both for proving tightness as $N\to\infty$ in the space of continuous curves, and for the existence of continuous modification for the limiting Airy$_\beta$ line ensemble. The proof relies on an algebraic identity for Dunkl operators, which can be of independent interest.
     
 \item An indirect argument of Section \ref{Section_identification} deducing the distributional convergence from the convergence of the moments, taking into account that the moments problem for the individual random variables $\exp(\bk \AB_i(\ttt)/2)$ of Theorem \ref{thm:main} is not uniquely determined (the moments grow too fast as a function of $m$).
\end{itemize}

We introduce the notations for the quantities in Theorems \ref{thm:cor-conv} and \ref{thm:dbm-conv}. Let  $\{y_i^n\}_{0\le i\le n}$ be the (infinite rank) G$\beta$E corners process with variance $1$ from \Cref{eq_GBE_corners}. 
For any $\ttt<N^{1/3}$ and $i\in\llbracket 1, N-\ttt N^{2/3}\rrbracket$, denote
\[
\by_i^{(N)}(\ttt)=\by_i(\ttt)=N^{2/3}\left(\frac{y_i^{\lfloor N-\ttt N^{2/3} \rfloor}}{\sqrt{2\beta\lfloor N-\ttt N^{2/3}\rfloor}}-1\right).
\]
Let $\{Y_i(\tau)\}_{1\leq i\leq N, \tau>0}$ be the $N$ dimensional DBM started from zero initial condition from \Cref{Definition_DBM}.
For any $\ttt>-N^{1/3}$ and $i\in\llbracket 1, N\rrbracket$, denote
\[
\bY_i^{(N)}(\ttt)=\bY_i(\ttt)=N^{2/3}\left( \frac{Y_{i}(2N/\beta +2N^{2/3}\ttt/\beta)}{2N\sqrt{1+\ttt N^{-1/3}}}-1\right).
\]

\subsection{Uniform bound on time increments} \label{Section_increment_bound}
In order to upgrade \Cref{thm:multil} and \Cref{thm:multit} to the statements about convergence of continuous functions, we need to have control over time increments. In the spirit of the classical Kolmogorov Continuity Theorem, for this it would be sufficient to prove the following two estimates on the fourth moments.

\begin{prop}   \label{prop:mom-cor}
For any $C_1>0$, there exists $C=C(C_1)>0$ such that the following holds.
For any $N\in\N$, $N>C_1^3$, $0\le \ttt_1<\ttt_2\le C_1$, $\ttt_2-\ttt_1\ge N^{-2/3}$, $C_1^{-1}N^{2/3}<k<C_1N^{2/3}$, we have
\begin{multline*}
\E \left[  \left( \sum_{i=1}^{\lfloor N-\ttt_1 N^{2/3} \rfloor} \left(1+N^{-2/3}\by_i^{(N)}(\ttt_1)/2 \right)^k - \sum_{i=1}^{\lfloor N-\ttt_2 N^{2/3} \rfloor} \left( 1+N^{-2/3}\by_i^{(N)}(\ttt_2)/2 \right)^k\right)^4  \right] \\ < C (\ttt_2-\ttt_1)^2 + C(\ttt_2-\ttt_1)N^{-1/3}.
\end{multline*}
\end{prop}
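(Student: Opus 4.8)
The plan is to reduce the fourth-moment bound for the time increment of the rescaled power sum to a moment identity coming from the Dunkl operators, analogous to (but finer than) \Cref{Theorem_corners_moments}. First I would observe that by \Cref{Theorem_corners_moments}, each power sum $\sum_i (y^{N_\ell}_i)^{k}$ has its expectation expressed through $\P_k^{N_\ell}$ applied to the Gaussian weight; so the left-hand side, being a fourth moment of a difference, can be written through $\P_{k}^{N_1}$'s, $\P_k^{N_2}$'s, products and ``mixed'' products $\P_{k}^{N_1}\P_{k}^{N_2}\cdots$ applied to $\exp(\tfrac{\tau}{2}\sum x_i^2)$ and evaluated at $0$, where $N_1=\lfloor N-\ttt_1 N^{2/3}\rfloor$, $N_2=\lfloor N-\ttt_2 N^{2/3}\rfloor$, $\tau=2N/\beta$. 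Expanding the fourth power of the difference $A-B$ (with $A$ the level-$N_1$ sum, $B$ the level-$N_2$ sum) as $\sum_{j=0}^4 \binom{4}{j}(-1)^j \E[A^{4-j}B^j]$, the key is to exploit cancellations between the terms: the difference $A-B$ itself is small because $N_1,N_2$ differ by only $O((\ttt_2-\ttt_1)N^{2/3})$ and the normalization forces near-coincidence. The natural tool is a telescoping/commutator estimate: $\P_k^{N_1}-\P_k^{N_2}$, acting on appropriate functions, should contribute a factor of order $(\ttt_2-\ttt_1)^{1/2}N^{1/3}$ relative to a single $\P_k^{N}$ after the $\prod(2\sqrt{N_\ell N})^{k}$-type normalization, which (squared, since we take a fourth power of a difference that is a ``second-order'' object) yields the $(\ttt_2-\ttt_1)^2$ and $(\ttt_2-\ttt_1)N^{-1/3}$ on the right.

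Concretely, the steps I would carry out are: (1) State and prove an algebraic identity for $\P_k^{N_1}-\P_k^{N_2}$ (equivalently, $\hP_k^{N_1,\tau}-\hP_k^{N_2,\tau}$) in terms of lower-order Dunkl data --- this is where the ``algebraic identity for Dunkl operators, which can be of independent interest'' mentioned in the introduction to Section~\ref{Section_process_convergence} enters; the point is that removing a few variables from the range of summation in the definition of $\D_i^N$ changes the operator by a controllable amount. (2) Use the walk/weight expansion of Sections~\ref{ssec:setdisexp}--\ref{ssec:dwrep}, but now for the \emph{difference} of two operators: the walks splitting the fourth moment factor through a walk of $(\hD^{N_1})^{k}$ minus one of $(\hD^{N_2})^{k}$ can be coupled, and the uncoupled part has an extra smallness. (3) Apply the general bound \Cref{prop:mbdc} (and \Cref{cor:mbdcII}) to estimate the total contribution of walks with a prescribed number of jumps, noting that the passage from level $N_1$ to level $N_2$ is exactly the kind of ``blow-up vs.\ no blow-up'' bookkeeping already organized in \Cref{ssec:classi}; the difference of the two levels introduces a factor bounded by $C(\ttt_2-\ttt_1)^{1/2}N^{1/3}+C N^{1/3}$ per ``level-mismatch'' index, and there must be at least one such index (or else the walk cancels). (4) Also handle the inclusion of the extra term $(1+N^{-2/3}\by_i/2)^k$ versus $(1+N^{-2/3}\by_i/2)^{k+1}$ parity correction exactly as in \Cref{prop:conv}; here this is a lower-order perturbation. (5) Sum the resulting geometric-type series in the number of jumps $\delta$ and the height scale $h$, using the $\frac{1}{(\delta-2|U|)!}$ and $\exp(-C_3 h^2)$ decay from \Cref{prop:mbdc}, to obtain the final bound uniformly over the allowed ranges of $\ttt_1,\ttt_2,k,N$.

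The main obstacle, I expect, is Step (1)--(2): getting the \emph{right} power of $(\ttt_2-\ttt_1)$. A crude bound treating $A$ and $B$ separately would only give $O(1)$, not $O((\ttt_2-\ttt_1)^2)$; a bound treating $A-B$ as $O((\ttt_2-\ttt_1)^{1/2})$ in $L^2$ but ignoring cross-cancellations in the fourth moment would give only $O(\ttt_2-\ttt_1)$. To reach $O((\ttt_2-\ttt_1)^2)$ one genuinely needs that $A-B$, suitably interpreted, behaves like a martingale increment (or at least like a quantity with matching $L^2$ and $L^4$ scalings), which in the Dunkl-operator language amounts to showing that the leading contributions to $\E[(A-B)^4]$ organize into (square of second moment)-type terms plus honestly smaller remainders. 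The cleanest route is probably to prove the stronger statement that $\E[(A-B)^{2})] \le C(\ttt_2-\ttt_1)+CN^{-1/3}$ and that $A-B$ has sub-Gaussian-type tails on the relevant scale (again via \Cref{prop:mbdc}-style walk counting with the extra level-mismatch smallness), and then deduce the fourth-moment bound; the telescoping over a dyadic sequence of intermediate levels between $N_1$ and $N_2$, combined with the commutator identity of Step~(1), is what makes the second-moment estimate work. The analogous statement for the DBM (the companion Proposition, with $\{Y_i(\tau)\}$) and for the limiting Airy$_\beta$ line ensemble follows the same scheme --- for the DBM via \Cref{Theorem_DBM_moments} with two nearby times $\tau_1,\tau_2$ in place of two nearby levels, and for the limit by passing \Cref{thm:multil}/\Cref{thm:multit} to the limit in the uniform bound --- so once the corners-process estimate is in place the rest is bookkeeping.
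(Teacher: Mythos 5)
Your framework is right — express the fourth moment through Dunkl operators, expand the binomial $\sum_j\binom{4}{j}(-1)^j\E[A^{4-j}B^j]$, Taylor-expand each term in $N_1-N_2$, and control the errors with the walk machinery of \Cref{prop:mbdc}, \Cref{lem:eq:caneq}, \Cref{cor:sumoverUh} — and your diagnosis of the main obstacle (a naive treatment gives only $O(\ttt_2-\ttt_1)$) is accurate. But the mechanism that actually closes the gap is missing. The ``algebraic identity of independent interest'' is not, as you guess in Step (1), a formula for $\P_k^{N_1}-\P_k^{N_2}$; it is the vanishing of the triple commutator $[[\baP_{k}^{N,\tau}, \hP_{k}^{N,\tau}], \hP_{k}^{N,\tau}], \hP_{k}^{N,\tau}]=0$, where $\baP_k^{N,\tau}=[\hP_k^{N,\tau},\cS]$ and $\cS$ is multiplication by $\sum_i x_i^2/2$ (\Cref{prop:piden}, proved by reducing to $[[[x_{i_1}^2,\hD_{i_2}],\hD_{i_3}],\hD_{i_4}]=0$ via commutativity of the $\hD_i$). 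Equivalently, $-\baP_k\hP_k^3+3\hP_k\baP_k\hP_k^2-3\hP_k^2\baP_k\hP_k+\hP_k^3\baP_k=0$ (\Cref{cor:piden}). Each of the four consecutive differences $\E[A^{4-j}B^{j}]-\E[A^{3-j}B^{j+1}]$ equals $(N_1-N_2)$ times the operator $\baP_k-k\beta(4N)^{-1}\hP_k$ inserted in the $j$-th slot of $\hP_k^3$, plus an $O((\ttt_2-\ttt_1)^2)$ remainder; the alternating-sign combination of these first-order terms is exactly the vanishing triple commutator. That identity, not any probabilistic self-averaging of $A-B$, is why the first-order contribution disappears and only $O((\ttt_2-\ttt_1)^2)+O((\ttt_2-\ttt_1)N^{-1/3})$ survives.

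Your proposed fallback — prove $\E[(A-B)^2]\le C(\ttt_2-\ttt_1)$ plus sub-Gaussian tails and deduce the fourth moment — does not close the gap. Sub-Gaussian tails for $A-B$ at scale $(\ttt_2-\ttt_1)^{1/2}$ would require $\E[(A-B)^{2m}]\le C_m(\ttt_2-\ttt_1)^m$ for all $m$, which is at least as hard as the $m=2$ case you are trying to prove, and the Dunkl-operator method gives access only to integer moments one at a time; dyadic telescoping of intermediate levels likewise converts second-moment bounds into modulus-of-continuity statements but cannot upgrade a fourth-moment bound from $O(\ttt_2-\ttt_1)$ to $O((\ttt_2-\ttt_1)^2)$. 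A second, more technical point you underestimate: for the corners process the two levels $N_1\neq N_2$ produce \emph{different sets of walks}, so the cancellation between $\E[A^{4-j}B^j]$ and $\E[A^{3-j}B^{j+1}]$ cannot be done termwise on a common index set; the paper handles this with an explicit matching between the walk families $\sB_+$, $\sB_-$ and a distinguished subfamily $\sB_*$ of walks whose fourth main index is of type II, which is where the extra $O((\ttt_2-\ttt_1)N^{-1/3})$ term (absent in the DBM case) comes from.
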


\begin{prop}   \label{prop:mom-dbm}
For any $C_1>0$, there exists $C=C(C_1)>0$ such that the following holds.
For any $N\in\N$, $0\le \ttt_1<\ttt_2 \le C_1$, $C_1^{-1}N^{2/3}<k<C_1N^{2/3}$, we have
\[
\E \left[  \left( \sum_{i=1}^N \left(1+N^{-2/3}\bY_i^{(N)}(\ttt_1)/2 \right)^k - \sum_{i=1}^N \left( 1+N^{-2/3}\bY_i^{(N)}(\ttt_2)/2\right)^k\right)^4  \right] < C (\ttt_2-\ttt_1)^2.
\]
\end{prop}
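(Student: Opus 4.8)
\textbf{Proof strategy for \Cref{prop:mom-dbm}.}
The plan is to expand the fourth moment on the left-hand side into a combination of joint moments of the type handled by \Cref{Theorem_DBM_moments}, and then use the Dunkl-operator expansion together with the asymptotic machinery of \Cref{sec:gme} to extract the leading $(\ttt_2-\ttt_1)^2$ behavior. More precisely, writing $X_\ell = \sum_i (1+N^{-2/3}\bY_i^{(N)}(\ttt_\ell)/2)^k$ for $\ell\in\{1,2\}$, we must bound $\E[(X_1-X_2)^4]$. Expanding the fourth power produces a signed sum of terms of the form $\E[X_{\ell_1}X_{\ell_2}X_{\ell_3}X_{\ell_4}]$ with each $\ell_i\in\{1,2\}$; each such term is, after rescaling, precisely the kind of multi-time joint moment of power sums computed by \Cref{Theorem_DBM_moments} (with $m=4$ and $\vec\tau$ taking only the two values $\tau_1 \approx 2N/\beta + 2\ttt_1 N^{2/3}/\beta$ and $\tau_2\approx 2N/\beta + 2\ttt_2 N^{2/3}/\beta$, possibly with degrees $k$ or $k+1$ to absorb the binomial $(1+z)^k$ versus its powers, as in the formulation of \Cref{thm:multit}). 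The key point is that the difference $X_1-X_2$ exhibits a cancellation: at leading order in $N$ all four terms $\E[X_{\ell_1}\cdots X_{\ell_4}]$ converge to the same limit $\bL_\beta$-type quantity, so the alternating sum kills the leading contribution and what survives is controlled by $\ttt_2-\ttt_1$.

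First I would set up the algebraic identity alluded to in the section introduction: a commutator-type formula for how $\hP_k^{N,\tau}$ changes as $\tau$ varies, i.e. an expression for $\frac{\partial}{\partial\tau}$ (or the discrete difference) of $\hP_k^{N,\tau}$ applied to the relevant Gaussian weight, in terms of lower-order Dunkl data. This would let me write $X_1-X_2$ (after taking expectations via \Cref{Theorem_DBM_moments}) as $\tau_2-\tau_1$ times a ``derivative'' object plus a controlled remainder; squaring this and taking expectations then produces the $(\tau_2-\tau_1)^2$ factor, which translates to $(\ttt_2-\ttt_1)^2 N^{4/3}$ in the unscaled variables and to $(\ttt_2-\ttt_1)^2$ after the $N^{-2/3}$-scalings built into $\bY_i^{(N)}$ and the normalization of the power sums. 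Concretely, I would run the walk expansion of \Cref{ssec:setdisexp}--\Cref{ssec:dwrep} for the fourth-moment operator product, but track separately the walks that ``see'' the time gap $[\tau_1,\tau_2]$: the weight formula \eqref{eq:dbmww} shows that a factor $\beta\tau_\ell/2$ appears per up-step in the $\ell$-th block, so the difference between using $\tau_1$ and $\tau_2$ in a given block contributes a factor bounded by $C(\tau_2-\tau_1)/N \cdot (\text{number of up-steps})$; combined with the general bound \Cref{prop:mbdc} (adapted to the DBM as outlined in \Cref{sec:DBMlimit}) this yields the claimed estimate.

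The main obstacle I anticipate is the uniformity and the absence of a $(\ttt_2-\ttt_1)N^{-1/3}$ error term (unlike \Cref{prop:mom-cor}, which has such a term coming from the discreteness of the row index $\lfloor N-\ttt N^{2/3}\rfloor$): for the DBM the time variable $\tau$ is continuous, so one expects a clean $(\ttt_2-\ttt_1)^2$ bound with no lower-order correction, but proving this cleanly requires that the ``derivative in $\tau$'' object itself be bounded uniformly, which in turn needs a version of the blow-up cancellation analysis of \Cref{ssec:buc}--\Cref{ssec:pair} carried out for the differentiated operator. A secondary technical point is handling the regime $\ttt_2-\ttt_1$ small but not too small relative to $N^{-2/3}$: when $\ttt_2-\ttt_1$ is comparable to $N^{-2/3}$ one cannot expand to leading order and instead must fall back on the crude bound $\E[(X_1-X_2)^4] \le C\E[X_1^4]+C\E[X_2^4] \le C$ (finite by \Cref{thm:multit} and \Cref{prop:truncreg}), noting that in that regime $(\ttt_2-\ttt_1)^2 \gtrsim N^{-4/3}$ is \emph{not} bounded below by a constant, so one genuinely needs the quantitative cancellation rather than a trivial bound --- hence the need for the differentiated-operator estimate to be interpolated with the trivial bound to cover all scales of $\ttt_2-\ttt_1 \in [N^{-2/3}, C_1]$.
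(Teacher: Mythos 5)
Your overall skeleton matches the paper's: set $\bX(\ttt_\iota)=\sum_i(1+N^{-2/3}\bY_i^{(N)}(\ttt_\iota)/2)^k$, expand the fourth power into the sixteen signed joint moments $\E[\bX(\ttt_{\ell_1})\cdots\bX(\ttt_{\ell_4})]$, express each via \Cref{Theorem_DBM_moments}, introduce the $\tau$-derivative operator $\baP_k^{N,\tau}=[\hP_k^{N,\tau},\cS]$ (with $\cS$ multiplication by $\sum_i x_i^2/2$), and control remainders with the walk expansion and the cancellation machinery of \Cref{ssec:buc}. But there is a genuine gap at the decisive step. You claim that writing the difference as $(\tau_2-\tau_1)$ times a derivative object plus a remainder and then ``squaring'' produces the $(\tau_2-\tau_1)^2$ factor. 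That does not work for a fourth moment: $\E[(\bX(\ttt_1)-\bX(\ttt_2))^4]$ is not determined by $\E[\bX(\ttt_1)-\bX(\ttt_2)]$, and the target scaling is that of a Brownian-type increment (fourth moment comparable to the square of the variance), which cannot be extracted by squaring a first-order identity for the expectation. What is actually required is to Taylor-expand each of the sixteen joint moments to first order in $\tau_2-\tau_1$ and to show that the first-order coefficients cancel in the signed sum. Those coefficients are the operators $(\hP_k^{N,\tau_1})^a\,\baP_k^{N,\tau_1}\,(\hP_k^{N,\tau_1})^{3-a}$ (up to a $k(2\tau_1)^{-1}\hP_k^{N,\tau_1}$ correction) with $\baP_k$ inserted at each of the four positions; since $\baP_k$ and $\hP_k$ do not commute, their cancellation under the binomial weights $-1,3,-3,1$ is precisely the identity $[[[\baP_k^{N,\tau},\hP_k^{N,\tau}],\hP_k^{N,\tau}],\hP_k^{N,\tau}]=0$. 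This is the nontrivial algebraic fact the paper proves (\Cref{prop:piden}, reduced via commutativity of the $\hD_i$ and the explicit commutators $[\hD_i,x_j]$ to $[[[x_{i_1}^2,\hD_{i_2}],\hD_{i_3}],\hD_{i_4}]=0$). Your proposal identifies only the derivative formula $\baP_k=\partial_\tau\hP_k$, not this vanishing iterated commutator; without it the argument only yields a bound of order $\ttt_2-\ttt_1$, not $(\ttt_2-\ttt_1)^2$.

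A secondary remark: your worry about interpolating with a trivial bound when $\ttt_2-\ttt_1$ is of order $N^{-2/3}$ is unnecessary here. Once the first-order terms cancel, the second-order Taylor remainder on each walk is bounded by $CN^{-2}(\tau_2-\tau_1)^2(\max_t\sH(t))^2$, and summing via \Cref{lem:eq:caneq} and \Cref{cor:sumoverUh} gives a bound uniform in $\ttt_2-\ttt_1$; the restriction $\ttt_2-\ttt_1\ge N^{-2/3}$ and the extra $(\ttt_2-\ttt_1)N^{-1/3}$ error appear only in the corners version, \Cref{prop:mom-cor}, where the discreteness of the level index forces them.
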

The proofs of the above estimates are obtained by expressing the moments in terms of Dunkl operators (as in Theorems \ref{Theorem_corners_moments} and \ref{Theorem_DBM_moments}), and bounding them using estimates in \Cref{sec:gme}. In order to get the right-hand sides of the inequalities proportional to $(\ttt_2-\ttt_1)^2$, rather than $|\ttt_2-\ttt_1|$, we rely on an interesting purely algebraic identity for Dunkl operators, which we state next.

Recall the operators
\[
 \D_i^N= \frac{\partial}{\partial x_i} + \frac{\beta}{2} \sum_{\begin{smallmatrix} j\in\llbracket 1,N\rrbracket,\\ j\ne i\end{smallmatrix}} \frac{1-\sigma_{ij}}{x_i-x_j},\quad\quad
 \hD_i^{N,\tau}=  \D_i^N + \tau x_i,\quad\quad
 \hP_k^{N,\tau}=\sum_{i=1}^N (\hD_i^{N,\tau})^k,\]
acting on polynomials of $x_1, x_2, \cdots$.
We further denote
\[
\baP_k^{N,\tau}= \sum_{w=0}^{k-1} \sum_{i=1}^N
(\hD_i^{N,\tau})^wx_i(\hD_i^{N,\tau})^{k-1-w},
\]
which can be thought of as the $\tau$ derivative of $\hP_k^{N,\tau}$.
For any two operators $\P$ and $\P'$, we denote the commutator $[\P, \P']=\P\P'-\P'\P$.
Then we can also write $\baP_k^{N,\tau}=[\hP_k^{N,\tau}, \cS]$, where $\cS$ is the operator of multiplication by the function $\sum_{i=1}^N \frac{x_i^2}{2}$.
\begin{prop}  \label{prop:piden}
For any $N,k\in\N$, $k\ge 2$, and $\tau>0$, we have $[[\baP_{k}^{N,\tau}, \hP_{k}^{N,\tau}], \hP_{k}^{N,\tau}]=0$.
\end{prop}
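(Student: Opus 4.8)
The plan is to exploit the structure $\baP_k^{N,\tau}=[\hP_k^{N,\tau},\cS]$, where $\cS$ is multiplication by $\sum_{i=1}^N x_i^2/2$, together with the commutativity of the Dunkl operators $\hD_i^{N,\tau}$ among themselves (\Cref{Lemma_Dunkls_commute}; note $\hD_i^{N,\tau}$ and $\hD_j^{N,\tau}$ still commute because adding $\tau x_i$ to each $\D_i^N$ preserves commutativity, as $[\tau x_i,\tau x_j]=0$ and $[\D_i^N,\tau x_j]+[\tau x_i,\D_j^N]=\tau[\D_i^N,x_j]-\tau[\D_j^N,x_i]$, which one checks vanishes). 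Writing $\hP:=\hP_k^{N,\tau}$, the Jacobi identity gives $[[\baP_k^{N,\tau},\hP],\hP]=[[[\hP,\cS],\hP],\hP]=-[[[\cS,\hP],\hP],\hP]$, so the claim is equivalent to the vanishing of the triple nested commutator $\operatorname{ad}_{\hP}^3(\cS)=0$. I would prove this by establishing that $\operatorname{ad}_{\hP}^2(\cS)$ is a scalar (an operator commuting with everything, in fact a constant multiple of the identity on polynomials), from which $\operatorname{ad}_{\hP}^3(\cS)=[\operatorname{ad}_{\hP}^2(\cS),\hP]=0$ follows immediately.

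To compute $\operatorname{ad}_{\hP}(\cS)=[\hP,\cS]=\sum_{i=1}^N[(\hD_i^{N,\tau})^k,\cS]$, first I would record the single commutators $[\hD_i^{N,\tau},\cS]$ and $[\hD_i^{N,\tau},x_j]$. Since $\cS$ is multiplication by a symmetric function, and $\sigma_{ij}$ conjugates $\hD_i^{N,\tau}$ to $\hD_j^{N,\tau}$ while fixing $\cS$, one gets clean formulas: $[\D_i^N,x_j]=\delta_{ij}+\tfrac{\beta}{2}\sigma_{ij}$ for $i\neq j$ and $[\D_i^N,x_i]=1+\tfrac{\beta}{2}\sum_{j\neq i}\sigma_{ij}$, hence $[\hD_i^{N,\tau},\cS]=x_i+\tfrac{\beta}{2}\sum_{j\neq i}\tfrac{x_i+x_j}{2}(\text{something})$ — the key point being that $[\hD_i^{N,\tau},\cS]$ is a \emph{first-order-in-$x$} operator (a polynomial multiplication operator of degree one, possibly dressed with transpositions), \emph{not} a differential operator. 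Then using the Leibniz rule $[(\hD_i^{N,\tau})^k,\cS]=\sum_{w=0}^{k-1}(\hD_i^{N,\tau})^w[\hD_i^{N,\tau},\cS](\hD_i^{N,\tau})^{k-1-w}$ and summing over $i$, one recognizes $\baP_k^{N,\tau}$-type expressions. The main computation is then the second bracket: I would show $[\hP,[\hP,\cS]]$ reduces, after repeated use of the Leibniz rule and the commutativity of the $\hD_i^{N,\tau}$, to an expression in which every term that still contains a $\hD$-operator cancels in pairs, leaving only a constant. Concretely, the heuristic ``$\cS$ raises the degree-in-$x$ by $2$, each $\hD$ lowers it by $1$'' means $\operatorname{ad}_{\hP}^2(\cS)$ — which would naively have ``degree shift'' $2-1-1=0$ but is built from two operators each of degree shift $k\cdot(\text{net }{-1})$ composed with one degree-$+2$ object — collapses to a scalar because the only degree-$0$ operator commuting with all power sums of Dunkl operators and symmetric under $S_N$ is a constant (this last point can be made rigorous by evaluating on constants and on the span of symmetric polynomials, invoking the eigenrelation \eqref{eq_eigenrelation} via multivariate Bessel functions).

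The cleanest route to the scalar claim is arguably representation-theoretic: work in the rational Cherednik algebra $H_{\beta/2}(S_N)$ generated by the $x_i$, the $\hD_i:=\D_i^N$ (or their $\tau$-shifts), and $\mathbb{C}[S_N]$. In this algebra there is an $\mathfrak{sl}_2$-triple $(e,h,f)$ with $e=\cS=\tfrac12\sum x_i^2$, $f=-\tfrac12\sum(\D_i^N)^2=-\tfrac12\hP_2^{N,0}$, and $h=\sum x_i\partial_i+\tfrac{N}{2}+\tfrac{\beta}{2}\binom{N}{2}$ the (shifted) Euler operator; the defining relations are $[e,f]=h$, $[h,e]=2e$, $[h,f]=-2f$. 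Then $\hP_k^{N,\tau}$ for the \emph{shifted} operators is, by a standard conjugation $e^{\tau\cS}\,\hP_k^{N,0}\,e^{-\tau\cS}$ argument, an expression built from $f$-like and $e$-like pieces, and $\baP_k^{N,\tau}=[\hP_k^{N,\tau},e]$ lands in a space where the $\mathfrak{sl}_2$-weight bookkeeping forces $\operatorname{ad}_{\hP_k}^2(e)$ to have $h$-weight $0$ and to be central. I expect \textbf{the main obstacle} to be precisely the bookkeeping that $\operatorname{ad}_{\hP_k}^2(\cS)$ is genuinely \emph{scalar} and not merely a lower-order $\hD$-free operator: one must rule out surviving multiplication-operator or $S_N$-group-algebra terms, which requires either a careful grading argument (assigning $\deg x_i=1$, $\deg\hD_i=-1$, $\deg\sigma=0$ and tracking that the homogeneous degree-$0$ part is $S_N$-invariant and kills all non-constant symmetric polynomials, hence is scalar by the eigenrelation) or the $\mathfrak{sl}_2$-module structure above. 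Once that lemma is in place, $[[\baP_k^{N,\tau},\hP_k^{N,\tau}],\hP_k^{N,\tau}]=[\operatorname{scalar},\hP_k^{N,\tau}]=0$ is immediate, completing the proof.
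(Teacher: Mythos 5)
Your reduction of the claim to $\mathrm{ad}_{\hP}^3(\cS)=0$, where $\hP=\hP_k^{N,\tau}$ and $\cS$ is multiplication by $\sum_i x_i^2/2$, is exactly the paper's starting point. But the key lemma you propose --- that $\mathrm{ad}_{\hP}^2(\cS)$ is a \emph{scalar} --- is false, and the bulk of your plan (the grading argument, the eigenrelation argument, the $\mathfrak{sl}_2$-weight bookkeeping) is devoted to proving this false statement. Already for $k=2$ and $\tau=0$, using the $\mathfrak{sl}_2$-triple you introduce ($e=\cS$, $f=-\tfrac12\sum_i(\D_i^N)^2$, $h=[e,f]$, $[h,f]=-2f$), one computes $\mathrm{ad}_{\P_2^N}(\cS)=[-2f,e]=2h$ and $\mathrm{ad}_{\P_2^N}^2(\cS)=[-2f,2h]=4[h,f]=-8f=4\P_2^N$, which is manifestly not a scalar (it acts on $\B_{(\lambda_1,\dots,\lambda_N)}$ by $4\sum_i\lambda_i^2$), and for $\tau>0$ this non-scalar contribution persists. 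Your own degree heuristic also rules scalarity out: with $\deg x_i=1$, $\deg\D_i=-1$, the homogeneous part of $\mathrm{ad}_{\hP_k}^2(\cS)$ has degree $2-2k<0$ for $k\ge2$, so it cannot equal a nonzero constant, yet it is nonzero. Consequently the claim that the $\mathfrak{sl}_2$ structure ``forces $\mathrm{ad}_{\hP_k}^2(e)$ to be central'' cannot be correct, and the proposed appeal to the eigenrelation fails because $\mathrm{ad}_{\hP_k}^2(\cS)$ does not annihilate non-constant symmetric polynomials.

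What is true --- and all that is needed --- is the much weaker statement that $\mathrm{ad}_{\hP}^2(\cS)$ \emph{commutes with} $\hP$, and this is how the paper argues. One never needs to identify the operator: since the $\hD_i^{N,\tau}$ mutually commute, repeated use of the Leibniz rule writes $\mathrm{ad}_{\hP}^3(\cS)$ as a linear combination of terms $A\,[[[x_{i_1}^2,\hD_{i_2}^{N,\tau}],\hD_{i_3}^{N,\tau}],\hD_{i_4}^{N,\tau}]\,B$ with $A,B$ monomials in the Dunkl operators, so the whole proposition reduces to the purely local identity $[[[x_{i_1}^2,\hD_{i_2}^{N,\tau}],\hD_{i_3}^{N,\tau}],\hD_{i_4}^{N,\tau}]=0$. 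That identity is a short direct check from the commutators you already recorded: the first bracket is a linear combination of terms $x_i\sigma_{ij}$ (and $x_i$), the second lands in the group algebra of $S_N$, and the third then vanishes. To repair your argument, replace the scalarity lemma by this local triple-commutator identity (equivalently, by the statement that the double commutator lies in the commutant of the Dunkl operators); as written, the central step of your proof cannot be carried out.
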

\begin{proof}
In this proof we fix $N, \tau$.
We write $\hD_i=\hD_i^{N,\tau}$, $\hP_k=\hP_k^{N,\tau}$, $\baP_k=\baP_k^{N,\tau}$, for each $i\in\llbracket 1,N\rrbracket$ and $k\in\N$.
The identity to prove can be written as $[[[\cS, \hP_{k}], \hP_{k}], \hP_{k}]=0$.

Recall that Dukl operators commute, as stated in \Cref{Lemma_Dunkls_commute}. Hence, using $[\D_i, x_j]=\frac{\beta}{2} \sigma_{ij}$, valid for $i\ne j$, we conclude that $\hD_i$ also commute. Therefore, by linearity and product rule $[A,BC]=[A,B]C+B[A,C]$, the expression $[[[\cS, \hP_{k}], \hP_{k}], \hP_{k}]$ can be written as a linear combination of the terms of the kind $A [[[x_{i_1}^2, \hD_{i_2}], \hD_{i_3}], \hD_{i_4}] B$ for various operators $A$, $B$ (which are products of operators $\hD_i$) and various indices $(i_1, i_2, i_3, i_4)\in\llbracket 1, N\rrbracket^4$. Hence, it remains to show that $[[[x_{i_1}^2, \hD_{i_2}], \hD_{i_3}], \hD_{i_4}]=0$ for each $(i_1, i_2, i_3, i_4)\in\llbracket 1, N\rrbracket^4$.

It is straightforward to check that, for each $i,j\in\llbracket 1,N\rrbracket$, $i\neq j$,
\[
[\hD_i, x_j] = \frac{\beta}{2}\sigma_{ij}, \quad [\hD_i, x_j^2] = \frac{\beta}{2} (x_i+x_j)\sigma_{ij};
\]
and for each $i\in \llbracket 1, N\rrbracket$, 
\[
[\hD_i, x_i] = 1 + \frac{\beta}{2}\sum_{j\in\llbracket 1,N\rrbracket: j\neq i} \sigma_{ij},\quad [\hD_i, x_i^2] = 2x_i + \frac{\beta}{2}\sum_{j\in\llbracket 1,N\rrbracket: j\neq i} (x_i+x_j) \sigma_{ij};
\]
and for each $i, j \in \llbracket 1, N\rrbracket$, $i\neq j$, we have $[\hD_{i'},\sigma_{ij}] = 0$ for each $i'\in\llbracket 1, N\rrbracket$.
Using these we do get the desired identity: just note that $[x_{i_1}^2, \hD_{i_2}]$ is a linear combination of $x_i\sigma_{i,j}$, then $[[x_{i_1}^2, \hD_{i_2}], \hD_{i_3}]$ is a linear combination of $\sigma_{i,j}$, and then $[[[x_{i_1}^2, \hD_{i_2}], \hD_{i_3}], \hD_{i_4}]=0$.
\end{proof}

\begin{corollary}  \label{cor:piden}
For any $N,k\in\N$, $k\ge 2$, and $\tau>0$, we have
\[
-\baP_{k}^{N,\tau}(\hP_{k}^{N,\tau})^3 + 3\hP_{k}^{N,\tau}\baP_{k}^{N,\tau}(\hP_{k}^{N,\tau})^2 - 3(\hP_{k}^{N,\tau})^2\baP_{k}^{N,\tau}\hP_{k}^{N,\tau} +
(\hP_{k}^{N,\tau})^3\baP_{k}^{N,\tau}=0.
\]
\end{corollary}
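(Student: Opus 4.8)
\textbf{Proof proposal for Corollary \ref{cor:piden}.} The plan is to derive this identity purely formally from Proposition \ref{prop:piden} by repeated use of the commutator structure, without touching the Dunkl operators themselves again. Write $A = \baP_{k}^{N,\tau}$ and $B = \hP_{k}^{N,\tau}$ for brevity, so that Proposition \ref{prop:piden} states $[[A,B],B]=0$, i.e.\ $ABB - BAB - BAB + BBA = AB^2 - 2BAB + B^2A = 0$. The left-hand side of the corollary is exactly $-AB^3 + 3BAB^2 - 3B^2AB + B^3A$, which is the ``third-order commutator with a sign'' $-[[[?]]]$; more precisely one recognizes it as $-\big([[[A,B],B],B]\big)$ up to reorganizing, but the cleaner route is: since $[[A,B],B]=0$, the operator $C:=[A,B]=AB-BA$ commutes with $B$. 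Then I would simply expand $-AB^3 + 3BAB^2 - 3B^2AB + B^3A$ and show it equals $-[C,B]B^2 - B[C,B]B - B^2[C,B]$ (a telescoping identity valid for any three operators), and each summand vanishes because $[C,B]=0$.

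Concretely, the key algebraic step is the general identity, valid for arbitrary operators $A,B$ on any associative algebra:
\[
-AB^3 + 3BAB^2 - 3B^2AB + B^3A = -\big([A,B]B - B[A,B]\big)B - B\big([A,B]B - B[A,B]\big)
\]
wait — better to just state it as the clean telescoping: for $C=[A,B]$,
\[
-AB^3 + 3BAB^2 - 3B^2AB + B^3A = -[[C,B],B]\cdot B \ +\ (\text{lower terms}),
\]
so instead I would verify directly by expansion that
\[
-AB^3 + 3BAB^2 - 3B^2AB + B^3A = -\big[[A,B],B\big]B^{2} \cdot(\text{coefficient}) + \cdots
\]
The honest version: expand $-[[[A,B],B],B]$. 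We have $[[A,B],B] = AB^2 - 2BAB + B^2A$, hence
\[
[[[A,B],B],B] = (AB^2 - 2BAB + B^2A)B - B(AB^2 - 2BAB + B^2A) = AB^3 - 3BAB^2 + 3B^2AB - B^3A.
\]
Therefore the expression in the corollary is precisely $-[[[A,B],B],B]$. By Proposition \ref{prop:piden}, $[[A,B],B] = [[\baP_{k}^{N,\tau},\hP_{k}^{N,\tau}],\hP_{k}^{N,\tau}] = 0$, so its further commutator with $\hP_{k}^{N,\tau}$ vanishes identically, which is exactly the claimed identity.

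There is essentially no obstacle here: the whole content is the one-line observation that the stated degree-four expression is the iterated commutator $-[[[A,B],B],B]$, which is zero because its inner part is zero by the preceding proposition. The only care needed is bookkeeping of signs and coefficients in the expansion of the triple commutator; I would present this expansion explicitly (it is the binomial-type pattern $1,-3,3,-1$) so the reader can check it at a glance. I would also remark, for context, that this corollary is the operator-level statement underlying the $(\ttt_2-\ttt_1)^2$ (rather than merely $|\ttt_2-\ttt_1|$) bound in Propositions \ref{prop:mom-cor} and \ref{prop:mom-dbm}: since $\baP_k^{N,\tau}$ is the $\tau$-derivative of $\hP_k^{N,\tau}$, this identity says that the third $\tau$-derivative of a suitable fourth-moment-type quantity, when evaluated via the commutator expansion, has a vanishing leading piece, forcing the increment to be second order.
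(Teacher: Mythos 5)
Your proof is correct and is exactly the paper's argument: the left-hand side is $-[[[\baP_{k}^{N,\tau},\hP_{k}^{N,\tau}],\hP_{k}^{N,\tau}],\hP_{k}^{N,\tau}]$, which vanishes because the inner double commutator is zero by Proposition \ref{prop:piden}. The only suggestion is to cut the false starts in the middle and keep just the final explicit expansion of the triple commutator with the $1,-3,3,-1$ coefficients.
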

\begin{proof}
The above identity can be written as $[[[\baP_{k}^{N,\tau}, \hP_{k}^{N,\tau}], \hP_{k}^{N,\tau}], \hP_{k}^{N,\tau}]=0$, which follows from \Cref{prop:piden}.
\end{proof}

\begin{proof}[Proof of \Cref{prop:mom-dbm}]
For any $\ttt\in\R$ we denote $\bX(\ttt)=\sum_{i=1}^N \left(1+N^{-2/3}\bY_i^{(N)}(\ttt)/2 \right)^k$. Our goal is to bound $\E[(\bX(\ttt_2)-\bX(\ttt_1))^4]$.
We will use (within this proof) $C$ to denote a constant that is large enough (depending on $C_1$ in the statement), and its value may change from line to line.

We also denote $\tau_\iota=2N/\beta+2N^{2/3}\ttt_\iota/\beta$, for $\iota=1, 2$;
then we have $1+N^{-2/3}\bY_i(\ttt_\iota)/2=\frac{Y_i(\tau_\iota)}{\sqrt{2\beta N\tau_\iota}}$.
We next show that
\begin{multline}  \label{eq:Xdiffbd}
\left| \E\left[
\bX(\ttt_2)\bX(\ttt_1)^3- \bX(\ttt_1)^4
\right] - (\tau_2-\tau_1) \frac{\left(\baP_k^{N,\tau_1}-k(2\tau_1)^{-1}\hP_k^{N,\tau_1}\right)(\hP_k^{N,\tau_1})^3 [1]_{x_1=\dots=x_N=0}}{(2\beta N\tau_1)^{2k}} \right|\\ < C(\ttt_2-\ttt_1)^2.
\end{multline}
By \Cref{Theorem_DBM_moments}, and as explained below \eqref{eq:defhP}, we can write the expectation as
\[
\frac{\left((\tau_1/\tau_2)^{k/2}\hP_k^{N,\tau_2}-\hP_k^{N,\tau_1}\right)(\hP_k^{N,\tau_1})^3 [1]_{x_1=\dots=x_N=0}}{(2\beta N\tau_1)^{2k}}.
\]
Thus, it suffices to prove
\begin{multline}   \label{eq:abovleft}
\left|\left((\tau_1/\tau_2)^{k/2}\hP_k^{N,\tau_2}-\hP_k^{N,\tau_1}
-(\tau_2-\tau_1)\left(\baP_k^{N,\tau_1} -k(2\tau_1)^{-1}\hP_k^{N,\tau_1}\right)\right)(\hP_k^{N,\tau_1})^3[1]_{x_1=\dots=x_N=0}\right| \\ < CN^{-4/3}(2\beta N\tau_1)^{2k}(\tau_2-\tau_1)^2.
\end{multline}
This can be viewed as a Taylor expansion of $(\tau_1/\tau_2)^{k/2}\hP_k^{N,\tau_2}$ in $\tau_2-\tau_1$: by taking the $\tau_2$ derivative of $(\tau_1/\tau_2)^{k/2}\hP_k^{N,\tau_2}$ at $\tau_2=\tau_1$, we get $\baP_k^{N,\tau_1} -k(2\tau_1)^{-1}\hP_k^{N,\tau_1}$.

By \Cref{lem:exp}, we have
\[
(\hP_k^{N,\tau_1})^4[1]_{x_1=\dots=x_N=0}
=\sum_{(i_1,i_2,i_3,i_4) \in \llbracket 1, N \rrbracket^4} \sum_{\vec r\in\sB[i_1,i_2,i_3,i_4]} w(\vec r),
\]
where $\sB[i_1,i_2,i_3,i_4]$ is the set of all walks (of $(\hD_{i_4}^{N})^{k} \cdots (\hD_{i_1}^{N})^{k}$) from \Cref{defn:wallblock}, and $w(\vec r)$ is defined via \eqref{eq:dbmww} for the operator $(\hD_{i_4}^{N,\tau_1})^{k} \cdots (\hD_{i_1}^{N,\tau_1})^{k}$.
Similarly, (for the same $\sB[i_1,i_2,i_3,i_4]$ and $w(\vec r)$) we have 
\[
\hP_k^{N,\tau_2}(\hP_k^{N,\tau_1})^3[1]_{x_1=\dots=x_N=0}
=\sum_{\substack{(i_1,i_2,i_3,i_4) \in \llbracket 1, N \rrbracket^4 \\ \vec r\in\sB[i_1,i_2,i_3,i_4]}} (\tau_2/\tau_1)^{|\{t\in \llbracket 3k+1, 4k\rrbracket:\sH(t)=\sH(t-1)+1\}|} w(\vec r),
\]
and
\begin{multline*}
\sum_{i=1}^N
(\hD_i^{N,\tau})^wx_i(\hD_i^{N,\tau})^{k-1-w}(\hP_k^{N,\tau_1})^3[1]_{x_1=\dots=x_N=0} \\
=\sum_{\substack{(i_1,i_2,i_3,i_4) \in \llbracket 1, N \rrbracket^4 \\ \vec r\in\sB[i_1,i_2,i_3,i_4]}} \tau_1^{-1} \don[\sH(4k-w)=\sH(4k-w-1)+1] w(\vec r),
\end{multline*}
for each $w\in\llbracket 0, k-1\rrbracket$. By summing over $w$ we have
\[
\baP_k^{N,\tau_1}(\hP_k^{N,\tau_1})^3[1]_{x_1=\dots=x_N=0}
=\sum_{\substack{(i_1,i_2,i_3,i_4) \in \llbracket 1, N \rrbracket^4 \\ \vec r\in\sB[i_1,i_2,i_3,i_4]}} \tau_1^{-1}|\{t\in \llbracket 3k+1, 4k\rrbracket:\sH(t)=\sH(t-1)+1\}| w(\vec r).
\]
Thus, the left-hand side of \eqref{eq:abovleft} can be written as
\[
\left|\sum_{\substack{(i_1,i_2,i_3,i_4) \in \llbracket 1, N \rrbracket^4 \\ \vec r\in\sB[i_1,i_2,i_3,i_4]}}  \left( (\tau_1/\tau_2)^{\sH(3k)/2} - 1 + (\tau_2-\tau_1)(2\tau_1)^{-1}\sH(3k) \right)  w(\vec r)\right|,
\]
using that $\sH(3k)=\sH(3k)-\sH(4k)=k-2|\{t\in \llbracket 3k+1, 4k\rrbracket:\sH(t)=\sH(t-1)+1\}|$.

We then handle blow-up terms via cancellations, as done in \Cref{ssec:buc}. Let $\cC$ be the map in \Cref{defn:fP}. For any $\vec r\in \sB[i_1,i_2,i_3,i_4]$ satisfying $\cC(\vec r)=\vec r$ (i.e., without type II indices), we have
\[
\left| (\tau_1/\tau_2)^{\sH(3k)/2} - 1 + (\tau_2-\tau_1)(2\tau_1)^{-1}\sH(3k) \right| <C N^{-2}(\max_t \sH(t) )^2 (\tau_2-\tau_1)^2.
\]
Use this, \Cref{lem:eq:caneq}, and \Cref{cor:sumoverUh}, and sum over all $(i_1,i_2,i_3,i_4) \in \llbracket 1, N \rrbracket^4$ and $\vec r$ without type II indices, we get the bound $CN^{-4/3}(2\beta N\tau_1)^{2k}(\tau_2-\tau_1)^2$.
Therefore \eqref{eq:Xdiffbd} follows.

We can similarly estimate
\begin{itemize}
\item
$\E\left[\bX(\ttt_2)^2\bX(\ttt_1)^2- \bX(\ttt_2)\bX(\ttt_1)^3\right]$,
\item $\E\left[\bX(\ttt_2)^3\bX(\ttt_1)- \bX(\ttt_2)^2\bX(\ttt_1)^2\right]$,
\item $\E\left[\bX(\ttt_2)^4- \bX(\ttt_2)\bX(\ttt_1)^3\right]$
\end{itemize}
through the actions of the operators
\begin{itemize}
    \item $\hP_k^{N,\tau_1}\left(\baP_k^{N,\tau_1}-k(2\tau_1)^{-1}\hP_k^{N,\tau_1}\right)(\hP_k^{N,\tau_1})^2$,
    \item $(\hP_k^{N,\tau_1})^2\left(\baP_k^{N,\tau_1}-k(2\tau_1)^{-1}\hP_k^{N,\tau_1}\right)\hP_k^{N,\tau_1}$,
    \item $(\hP_k^{N,\tau_1})^3\left(\baP_k^{N,\tau_1}-k(2\tau_1)^{-1}\hP_k^{N,\tau_1}\right)$,
\end{itemize}
respectively. 
Then since
\begin{multline*}
\E[(\bX(\ttt_2)-\bX(\ttt_1))^4] = - \E\left[
\bX(\ttt_2)\bX(\ttt_1)^3- \bX(\ttt_1)^4
\right]  + 3\E\left[\bX(\ttt_2)^2\bX(\ttt_1)^2- \bX(\ttt_2)\bX(\ttt_1)^3\right] \\ - 3\E\left[\bX(\ttt_2)^3\bX(\ttt_1)- \bX(\ttt_2)^2\bX(\ttt_1)^2\right] + \E\left[\bX(\ttt_2)^4- \bX(\ttt_2)\bX(\ttt_1)^3\right],
\end{multline*}
via a linear combination of these estimates, and \Cref{cor:piden}, the conclusion follows.
\end{proof}

\begin{proof}[Proof of \Cref{prop:mom-cor}]
We follow the same strategy as the previous proof. However, estimates in this proof are more technical. The main reason is that in the setting of DBM, we need to consider $\hP_k^{N,\tau}$ with different $\tau$, and the set of walks remain the same; while for corners we need to consider different $N$, which lead to different sets of walks for different terms in a version of \eqref{eq:abovleft}.

Again we use $C$ to denote a constant that is large enough (depending on $C_1$ in the statement), whose value may change from line to line.
For any $\ttt\in\R$ we denote $\bx(\ttt)=\sum_{i=1}^N \left(1+N^{-2/3}\by_i^{(N)}(\ttt)/2 \right)^k$, and we want to show $\E[(\bx(\ttt_2)-\bx(\ttt_1))^4]<C(\ttt_2-\ttt_1)^2+C(\ttt_2-\ttt_1)N^{-1/3}$.

Write $N_\iota = \lfloor N-\ttt_\iota N^{2/3}\rfloor$ for $\iota=1, 2$; then we have $1+N^{-2/3}\by_i(\ttt_\iota)/2=\frac{y_i^{N_\iota}}{\sqrt{2\beta N_\iota}}$.
Now we shall prove an analogue of \eqref{eq:Xdiffbd}:
\begin{multline}  \label{eq:xdiffbd}
\left| \E\left[
\bx(\ttt_2)\bx(\ttt_1)^3- \bx(\ttt_1)^4
\right] - \frac{2(N_1-N_2)}{\beta}\cdot \frac{\left(\baP_k^{N_1}-k\beta(4N)^{-1}\hP_k^{N_1}\right)(\hP_k^{N_1})^3 [1]_{x_1=\dots=x_N=0}}{(4NN_1)^{2k}} \right|\\ < C(\ttt_2-\ttt_1)^2 +C(\ttt_2-\ttt_1)N^{-1/3},
\end{multline}
where we let $\hP_k^{N_\iota} = \hP_k^{N_\iota,2N/\beta}$ and $\baP_k^{N_\iota} = \baP_k^{N_\iota,2N/\beta}$ for $N_\iota=1,2$.

By \Cref{Theorem_corners_moments}, and as explained below \eqref{eq:defhP}, the expectation equals (Note that $\{y_i^n\}_{0\le i\le n}$ is the G$\beta$E corners process with variance $1$, while we used instead $2N/\beta$ throughout Section \ref{sec:gme}.)
\[
\frac{\left(N_1^{k/2}N_2^{-k/2}\hP_k^{N_2}-\hP_k^{N_1}\right)(\hP_k^{N_1})^3 [1]_{x_1=\dots=x_N=0}}{(4NN_1)^{2k}},
\]
therefore \eqref{eq:xdiffbd} is implied by an analogue of \eqref{eq:abovleft} and our assumption $\ttt_2-\ttt_1\ge N^{-2/3}$:
\begin{multline}  \label{eq:xdiffbd2}
\left|\left(2N_1^{k/2}N_2^{-k/2}\hP_k^{N_2}-2\hP_k^{N_1}
- (N_1-N_2)\left(4\baP_k^{N_1}/\beta -kN^{-1}\hP_k^{N_1}\right)\right)(\hP_k^{N_1})^3[1]_{x_1=\dots=x_N=0}\right| \\ < C(4NN_1)^{2k}\left(N^{-4/3}(N_1-N_2)^2 + N^{-1}(N_1-N_2)\right).
\end{multline}

For $\iota=1, 2$, we let $\sB_\iota[i_1,i_2,i_3,i_4]$ be the set of all walks from \Cref{defn:wallblock}, and $w_\iota(\vec r)$ be from \Cref{defn:wr} or \eqref{eq:weight}, for the operators $(\hD_{i_4}^{N_1})^{k}(\hD_{i_3}^{N_1})^{k}(\hD_{i_2}^{N_1})^{k}(\hD_{i_1}^{N_1})^{k}$ and $(\hD_{i_4}^{N_2})^{k}(\hD_{i_3}^{N_1})^{k}(\hD_{i_2}^{N_1})^{k}(\hD_{i_1}^{N_1})^{k}$, respectively.
Besides, we further denote $\sB_+=\bigcup_{(i_1,i_2,i_3,i_4) \in \llbracket 1, N_1 \rrbracket^4} \sB_1[i_1,i_2,i_3,i_4]$, and $\sB_-=\bigcup_{(i_1,i_2,i_3,i_4) \in \llbracket 1, N_1 \rrbracket^3\times\llbracket 1, N_2 \rrbracket} \sB_2[i_1,i_2,i_3,i_4]$.
We also let $\sB_*$ denote the collection of all $\vec r\in\sB_+$, such that $i_4$ is of type II, and $i_4 \neq i_1, i_2, i_3$.

Now similar to \eqref{eq:abovleft}, the left-hand side of \eqref{eq:xdiffbd2} can be bounded by the sum of the following:
\begin{equation}  \label{eq:mlbd0}
 2(N_1/N_2)^{k/2} \Bigg|\sum_{\vec r\in\sB_*} (N_2/N_1) w_2(\vec r) + \sum_{\vec r\in\sB_+\setminus \sB_*}  w_2(\vec r) - \sum_{\vec r\in\sB_-}  w_2(\vec r) \Bigg|,
\end{equation}
\begin{equation}  \label{eq:mlbd1}
\Bigg|  2(N_1/N_2)^{k/2} \Bigg( \sum_{\vec r\in\sB_*} (N_2/N_1) w_2(\vec r) + \sum_{\vec r\in\sB_+\setminus \sB_*} w_2(\vec r) \Bigg)
- \sum_{\vec r\in \sB_+} 2(N_2/N_1)^{\sH(3k)/2}w_1(\vec r) \Bigg|,
\end{equation}
and
\begin{equation}  \label{eq:mlbd2}
    \Bigg|\sum_{\vec r\in\sB_+}  \big( 2(N_2/N_1)^{\sH(3k)/2} - 2 + N^{-1}(N_1-N_2)\sH(3k) \big)  w_1(\vec r)\Bigg|.
\end{equation}

We next bound them one-by-one. For \eqref{eq:mlbd0} and \eqref{eq:mlbd1}, we need to exploit cancellations, as done in \Cref{ssec:buc}.

\smallskip

\noindent\textbf{Bound \eqref{eq:mlbd0}.}
Let $\cC$ be the map in \Cref{defn:fP}. Similarly to \Cref{lem:eq:caneq}, for any $\vec r \in \sB_-$ such that $\cC(\vec r)=\vec r$,  $i_4$ is of type I, and $i_4 \neq i_1, i_2, i_3$, we have
\begin{multline*}
 \Bigg| \sum_{\vec s\in \sB_*\setminus \sB_-, \cC(\vec s)=\vec r} (N_2/N_1) w_2(\vec s) - \sum_{\vec s\in \sB_-\cap \sB_*, \cC(\vec s)=\vec r} \frac{N_1-N_2}{N_1} w_2(\vec s) \Bigg| \\ < C(|U|+1)N^{-1}\prod_{j\in J_I, j\neq i_4}\left( |U|N^{-1}+ N^{-1}\vartheta_{j}\max_t \sH(t)\right)|w_2(\vec r)|.
\end{multline*}
In addition, if $U\cap \llbracket N_2+1, N_1\rrbracket=\emptyset$ and $i_1, i_2, i_3\le N_2$, the upper bound can be improved to
\[
C(|U|+1)(N_1-N_2)N^{-2}\prod_{j\in J_I, j\neq i_4}\left( |U|N^{-1}+ N^{-1}\vartheta_{j}\max_t \sH(t) \right)|w_2(\vec r)|.
\]
From the above two bounds and using \Cref{prop:mbdc}, we get
\begin{multline}  \label{eq:mlbd00}
 2(N_1/N_2)^{k/2} \Bigg| \sum_{\vec r\in \sB_*\setminus \sB_-, \cC(\vec r)\in \sB_-} (N_2/N_1) w_2(\vec r) - \sum_{\vec r\in \sB_-\cap \sB_*} \frac{N_1-N_2}{N_1} w_2(\vec r) \Bigg| \\ < C(4NN_1)^{2k}  N^{-5/3}(N_1-N_2).
\end{multline}
Besides, as in \Cref{lem:eq:caneq}, for any $\vec r\in\sB_+$ with $\cC(\vec r)=\vec r\not\in\sB_-$, we have
\[
 \Bigg| \sum_{\vec s\in \sB_*, \cC(\vec s)=\vec r} (N_2/N_1) w_2(\vec s) + \sum_{\vec s\in \sB_+\setminus \sB_*, \cC(\vec s)=\vec r} w_2(\vec s) \Bigg| < C\prod_{j\in J_I}\left( |U|N^{-1}+ N^{-1}\vartheta_{j}\max_t \sH(t) \right)|w_2(\vec r)|.
\]
Then using \Cref{cor:sumoverUh}, we get
\begin{multline}  \label{eq:mlbd01}
 2(N_1/N_2)^{k/2} \Bigg| \sum_{\vec r\in \sB_*, \cC(\vec r)\not\in \sB_-} (N_2/N_1) w_2(\vec r) + \sum_{\vec r\in \sB_+\setminus \sB_*, \cC(\vec r)\not\in \sB_-} w_2(\vec r)  \Bigg|\\ <C(4NN_1)^{2k}  N^{-1}(N_1-N_2).
\end{multline}
We next add \eqref{eq:mlbd00} and \eqref{eq:mlbd01}.
Note that for $\vec r\in\sB_*$ with $\cC(\vec r)\not\in\sB_-$, necessarily $\vec r\not\in \sB_-$; and for $\vec r\not\in\sB_*$, $\vec r\in \sB_-$ if and only if $\cC(\vec r)\in\sB_-$. Therefore,
\[
\sB_* = \{\vec r\in \sB_*\setminus\sB_-: \cC(\vec r)\in\sB_-\}\cup (\sB_-\cap \sB_*) \cup \{\vec r\in \sB_*: \cC(\vec r)\not\in\sB_-\},
\]
\[
\sB_- = (\sB_-\cap \sB_*)\cup\{\vec r\in \sB_+\setminus\sB_*: \cC(\vec r)\in\sB_-\}.
\]
Thus, adding up \eqref{eq:mlbd00} and \eqref{eq:mlbd01} implies that \eqref{eq:mlbd0} is bounded by $C(4NN_1)^{2k} N^{-1}(N_1-N_2)$.

\smallskip

\noindent\textbf{Bound \eqref{eq:mlbd1}.}
Take any  $\vec r\in \sB_+$ satisfying $\cC(\vec r)=\vec r$,
and $\vec s\in \sB_+$ with $\cC(\vec s)=\vec r$. If $\vec s\not\in \sB_*$, then we have $w_2(\vec s) - \frac{w_2(\vec r)w_1(\vec s)}{w_1(\vec r)} = 0$. If $\vec s\in \sB_*$, we have
\[
\left| (N_2/N_1) w_2(\vec s) - \frac{w_2(\vec r)w_1(\vec s)}{w_1(\vec r)}\right| < C N^{-3}(N_1-N_2) \vartheta_{i_4} |w_2(r)|\max_t \sH(t).
\]
Then by summing over $\vec s$, we have
\begin{multline*}
 2(N_1/N_2)^{k/2}\Bigg|   \sum_{\vec s\in\sB_*, \cC(\vec s)=\vec r} (N_2/N_1) w_2(\vec s) + \sum_{\vec s\in\sB_+\setminus \sB_*, \cC(\vec s)=\vec r} w_2(\vec s)  \\
-  \frac{w_2(\vec r)}{w_1(\vec r)} \sum_{\vec s\in\sB_+, \cC(\vec s)=\vec r} w_1(\vec s) \Bigg| < C\exp(CN^{-1/3}\max_t \sH(t)) N^{-5/3}(N_1-N_2) \vartheta_{i_4} |w_1(r)|.
\end{multline*}
Besides, we have
\begin{multline*}
\Bigg|  2(N_1/N_2)^{k/2}w_2(\vec r) - 2(N_2/N_1)^{\sH(3k)/2}w_1(\vec r) \Bigg| \\ < C(\exp(C(\delta N^{-1}+N^{-4/3}\max_t \sH(t) )(N_1-N_2))-1)  |w_1(\vec r)|.
\end{multline*}
The above two together imply
\begin{multline*}
\Bigg|  2(N_1/N_2)^{k/2} \Bigg( \sum_{\vec s\in\sB_*, \cC(\vec s)=\vec r} (N_2/N_1) w_2(\vec s) + \sum_{\vec s\in\sB_+\setminus \sB_*, \cC(\vec s)=\vec r} w_2(\vec s) \Bigg) \\
- \sum_{\vec s\in\sB_+, \cC(\vec s)=\vec r} 2(N_2/N_1)^{\sH(3k)/2}w_1(\vec s) \Bigg| < C\exp(CN^{-1/3}\max_t \sH(t)) N^{-5/3}(N_1-N_2) \vartheta_{i_4} |w_1(\vec r)| \\
+  C(\exp(C(\delta N^{-1}+N^{-4/3}\max_t \sH(t))(N_1-N_2))-1) \Bigg|\sum_{\vec s\in\sB_+, \cC(\vec s)=\vec r}w_1(\vec s)\Bigg|.
\end{multline*}
Then by \Cref{lem:eq:caneq}, and taking a summation over $\vec r$ using \Cref{prop:mbdc}, we can bound \eqref{eq:mlbd2} by $C(4NN_1)^{2k} N^{-1}(N_1-N_2)$.

\smallskip

\noindent\textbf{Bound \eqref{eq:mlbd2}.}
For any  $\vec r\in \sB_+$ we have
\[
 \left| 2(N_2/N_1)^{\sH(3k)/2} - 2 + N^{-1}(N_1-N_2)\sH(3k) \right|< CN^{-2}(N_1-N_2)^2(\max_t \sH(t) )^2 .
\]
Then with \Cref{lem:eq:caneq} and \Cref{cor:sumoverUh}, we can bound \eqref{eq:mlbd2} by  $C(4NN_1)^{2k}N^{-4/3}(N_1-N_2)^2$.

So far we obtained \eqref{eq:xdiffbd2}, therefore \eqref{eq:xdiffbd}. As in the previous proof, we can similarly estimate
\begin{itemize}
\item
$\E\left[\bx(\ttt_2)^2\bx(\ttt_1)^2- \bx(\ttt_2)\bx(\ttt_1)^3\right]$,
\item $\E\left[\bx(\ttt_2)^3\bx(\ttt_1)- \bx(\ttt_2)^2\bx(\ttt_1)^2\right]$,
\item $\E\left[\bx(\ttt_2)^4- \bx(\ttt_2)\bx(\ttt_1)^3\right]$,
\end{itemize}
respectively, through the actions of the operators
\begin{itemize}
    \item $\hP_k^{N_1}\left(\baP_k^{N_1}-k\beta(4N)^{-1}\hP_k^{N_1}\right)(\hP_k^{N_1})^2$,
    \item $(\hP_k^{N_1})^2\left(\baP_k^{N_1}-k\beta(4N)^{-1}\hP_k^{N_1}\right)\hP_k^{N_1}$,
    \item $(\hP_k^{N_1})^3\left(\baP_k^{N_1}-k\beta(4N)^{-1}\hP_k^{N_1}\right)$.
\end{itemize}
A linear combination of these estimates (as in the previous proof) and \Cref{cor:piden} imply the conclusion.
\end{proof}

\subsection{Topological statement}
In this section we present a general topological statement, which connects (uniform in compacts) convergence of families of continuous functions with convergence of sums of large powers of the kind we have in Theorems \ref{thm:multil} and \ref{thm:multit}. All functions in this section are deterministic.
Throughout this and the next subsection we work with $\R\cup\{-\infty\}$, whose topology is generated by open intervals $(a, b)$ and $[-\infty, a)$ for all $a<b\in\R$.

For any $x\in\R$, $\alpha>0$, $N\in\N$, we denote
\[
M_+[x,\alpha,N]= \frac{1}{2}(1+N^{-2/3}x)^{2\lfloor \alpha N^{2/3}/2\rfloor},\quad
M_-[x,\alpha,N]= \frac{1}{2}(1+N^{-2/3}x)^{2\lfloor \alpha N^{2/3}/2\rfloor+1},
\]
and $M[x,\alpha,N]=M_+[x,\alpha,N]+M_-[x,\alpha,N]$.

\begin{prop}  \label{prop:keyconv}
Take any compact interval $I\subset\R$.
For integers $1\le i\le N$, let $f^N_i:I\to\R$ be continuous functions, satisfying $f^N_i\ge f^N_{i+1}$ for each $1\le i \le N-1$.
Suppose there exists an increasing sequence of positive integers $N_1<N_2<\cdots$, such that for any $\alpha\in\N$ or $\alpha^{-1}\in\N$, as $k\to\infty$, $\sum_{i=1}^{N_k} M[f_i^{N_k},\alpha,N_k]$ converges uniformly towards a continuous function from $I$ to $\R$, and $\sum_{i=1}^{N_k} M_+[f_i^{N_k},\alpha,N_k]$ is bounded by a constant independent of $k$ (the constant allowed to depend on the sequence and $\alpha$).
Then for each $i\in\N$, as $k\to\infty$,
we have uniform convergence of $f_i^{N_k}$, as continuous functions from $I$ to $\R\cup\{-\infty\}$.
Moreover, if we denote the limit by $f_i$, then for any $\alpha>0$,
$\sum_{i=1}^{N_k} M[f_i^{N_k},\alpha,N_k]\to \sum_{i=1}^\infty \exp(\alpha f_i)$ uniformly over $I$ as $k\to\infty$.
\end{prop}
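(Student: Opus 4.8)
The plan is to reconstruct, for each index $i$, the function $f_i$ from the limiting power sums by a de-Poissonization / moment-extraction argument, and then to upgrade the convergence to the uniform (in compacts) topology on continuous functions valued in $\R\cup\{-\infty\}$. First I would observe that for $\alpha\in\N$ the quantity $M[x,\alpha,N]$ is, up to lower order corrections controlled by $N^{-2/3}$, a discretization of $\exp(\alpha x)$: uniformly for $x$ in a compact set, $(1+N^{-2/3}x)^{2\lfloor \alpha N^{2/3}/2\rfloor}\to \exp(\alpha x)$, with analogous statements when $x\to-\infty$ (where both factors tend to $0$). Because the $f^N_i$ are ordered, for any fixed real threshold $a$ the number of indices with $f^N_i(\ttt)\ge a$ is controlled: the boundedness of $\sum_i M_+[f^N_i,\alpha,N]$ together with $M_+[x,\alpha,N]\ge c\exp(\alpha x)$ for $x\ge a$ and large $N$ forces $\#\{i: f^N_i(\ttt)\ge a\}$ to be bounded uniformly in $N$ and $\ttt\in I$ (for $a$ fixed, by choosing $\alpha$ large). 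This rigidity is the key structural input: it means that on any compact time interval the ``top'' particles stay in a compact set, and the bulk is pushed down.

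Next I would carry out the identification of the limit pointwise in $\ttt$. Fix $\ttt\in I$. Along the subsequence $N_k$, the ordered tuples $(f^{N_k}_1(\ttt),\dots)$ have, by the rigidity above, all coordinates $\ge a$ lying in a fixed compact set with a bounded number of them; passing to a further subsequence one gets a limiting ordered configuration. The limiting power sums $\sum_i M[f^{N_k}_i,\alpha,N_k](\ttt)$ converge for all integer $\alpha$ and all $\alpha$ with $\alpha^{-1}\in\N$; I claim these determine the limiting configuration. Indeed, write the limit configuration as $g_1\ge g_2\ge\cdots$ (possibly $-\infty$); then $\sum_i \exp(\alpha g_i)$ is the limit, and knowing $\sum_i \exp(\alpha g_i)$ for, say, all $\alpha=1,2,3,\dots$ and $\alpha=1,\tfrac12,\tfrac13,\dots$ determines the multiset $\{g_i\}$: this is a standard fact, since $z\mapsto \sum_i z^{g_i}$ (or its behavior as $z\to 0+$ and the ratios of successive coefficients in an appropriate sense) pins down the largest $g_i$, then the next, and so on — concretely, $g_1=\lim_{\alpha\to\infty}\tfrac1\alpha\log\sum_i\exp(\alpha g_i)$, and inductively $g_{n+1}=\lim_{\alpha\to\infty}\tfrac1\alpha\log\big(\sum_i\exp(\alpha g_i)-\sum_{j\le n}\exp(\alpha g_j)\big)$, with the subtracted terms already known. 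Since the limit of the power sums is subsequence-independent by hypothesis, so is the configuration; hence $f^{N_k}_i(\ttt)\to f_i(\ttt):=g_i$ for every $i$, with no need to pass to a subsequence. This also gives $\sum_i M[f^{N_k}_i,\alpha,N_k](\ttt)\to\sum_i\exp(\alpha f_i(\ttt))$ for all $\alpha>0$ (not just the dense set), by a monotone/dominated convergence argument using the rigidity bound to control the tail.

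The remaining, and I expect main, obstacle is upgrading pointwise convergence of each $f^{N_k}_i$ to \emph{uniform} convergence on $I$ in the topology of $\R\cup\{-\infty\}$, and establishing continuity of $f_i$. For this I would use the uniform convergence of the power sums $S^N_\alpha:=\sum_i M[f^N_i,\alpha,N]$ on $I$, which is given by hypothesis. Uniform convergence of $S^{N_k}_\alpha$ plus equicontinuity of the limit is standard; the delicate point is to transfer this to the individual $f^{N_k}_i$. The mechanism: for any $\ttt$ and any $a<f_i(\ttt)$ with $a$ not in the (at most countable) set of limiting values, one shows that for large $k$ and $\ttt'$ near $\ttt$, $f^{N_k}_i(\ttt')>a$, by a continuity/monotonicity argument on $S^{N_k}_\alpha$ and the fact that crossing $a$ would cause a jump in the ``number of particles above $a$'' detectable in the power sums; conversely an upper barrier $f^{N_k}_i(\ttt')<b$ for $b>f_i(\ttt)$ comes from the rigidity bound applied locally. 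Combining gives uniform convergence on compacts into $\R\cup\{-\infty\}$, where one must be careful near times/indices where $f_i(\ttt)=-\infty$: there the statement is that $f^{N_k}_i$ eventually stays below any prescribed level uniformly on the relevant compact subinterval, which again follows from the uniform power-sum bound $\sum_i M_+[f^{N_k}_i,\alpha,N_k]\le C$ by choosing $\alpha$ large. Finally, continuity of $f_i$ follows because it is a uniform-on-compacts limit of continuous functions (into the topologized $\R\cup\{-\infty\}$), and the last assertion $\sum_i M[f^{N_k}_i,\alpha,N_k]\to\sum_i\exp(\alpha f_i)$ uniformly on $I$ for all $\alpha>0$ follows by combining the pointwise statement already established with the uniform bounds, via a Dini-type argument since all functions involved are continuous and the partial sums are monotone in the number of terms.
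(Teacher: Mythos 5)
There are genuine gaps in your proposal, concentrated exactly where the main content of the proposition lies.

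First, your opening reduction contains a factual error: it is not true that $M_\pm[x,\alpha,N]\to 0$ as $x\to-\infty$. For $x<-2N^{2/3}$ one has $1+N^{-2/3}x<-1$, so the even power $M_+[x,\alpha,N]$ blows up and $M[x,\alpha,N]=(2+N^{-2/3}x)M_+[x,\alpha,N]$ becomes large and \emph{negative}. Thus $M$ is not a benign discretization of $\exp(\alpha x)$ on all of $\R$, and controlling the contribution of very low-lying curves is a real issue. The paper handles this by comparing $M$ with the truncated quantity $M_+'[x,\alpha,N]=\bigl(\max\{1+N^{-2/3}x,0\}\bigr)^{2\lfloor\alpha N^{2/3}/2\rfloor}$ and bounding the difference by $N^{-2/3}$ times $M_+$ evaluated at neighboring exponents $\alpha\pm\delta$ (using a reflection $x\mapsto -2N^{2/3}-x$ in the regime $x<-N^{2/3}$); this is where the hypothesis that $\sum_i M_+$ is bounded for \emph{all} $\alpha\in\N$ and $\alpha^{-1}\in\N$ is consumed. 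Your proposal never performs this reduction, so your subsequent manipulations of the power sums are not justified.

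Second, the passage from convergence of power sums to uniform convergence of each individual $f_i^{N_k}$ is the heart of the proof, and your sketch ("crossing $a$ would cause a jump in the number of particles above $a$ detectable in the power sums") is not an argument: the power sums are smooth functionals of the configuration and do not detect such jumps without substantial extra work. The paper's mechanism is different and self-contained: assuming uniform convergence of $f_1^{N_k}$ fails, one extracts two subsequences $P_k$, $Q_k$ and times $t_k,s_k\to t$ with $f_1^{P_k}(t_k)\to a<b\leftarrow f_1^{Q_k}(s_k)$; continuity of the limiting power sum forces $\lim_k\sum_i M_+'[f_i^{P_k}(t_k),\alpha,P_k]\ge\exp(\alpha b)$, while domination by the top curve gives the upper bound $\exp((\alpha-1)a)\cdot\lim_k\sum_i M_+'[f_i^{P_k}(t_k),1,P_k]$, and these are incompatible for large $\alpha$. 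This handles pointwise and uniform convergence simultaneously (the times vary), so no separate "barrier" step is needed. Finally, your Dini-type argument for the last assertion does not apply: the summands $M[f_i^{N_k},\alpha,N_k]$ need not be nonnegative, and the convergence is in $k$ rather than in the number of summands. The paper instead controls the tail $\sum_{i\ge j}$ uniformly in $k$ via the interpolation bound $\limsup_k\sum_{i= j}^{N_k}M_+'[\cdot,\alpha,\cdot]\le (j^{-1}X)^{1-\alpha_1/\alpha}\lim_k\sum_i M_+'[\cdot,\alpha_1,\cdot]$ with $\alpha_1<\alpha$, $\alpha_1^{-1}\in\N$.
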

\begin{proof}
We also denote (for any $x\in\R$, $\alpha>0$, $N\in\N$)
\[
M_+'[x,\alpha,N] = \left(\max\{1+N^{-2/3}x, 0\}\right)^{2\lfloor \alpha N^{2/3}/2\rfloor}.
\]
Then for $x>-N^{2/3}$,
\[
|M_+'[x,\alpha,N]-M[x,\alpha,N]| = N^{-2/3}|x|M_+[x,\alpha,N].
\]
Take any $\delta_-, \delta_+>0$.
Note that $|x|\le \delta_+^{-1}(1+N^{-2/3}x)^{\delta_+ N^{2/3}}$ when $x>0$ and $|x|\le \delta_-^{-1}(1+N^{-2/3}x)^{-\delta_- N^{2/3}}$ when $-N^{2/3}<x<0$. So for $x>-N^{2/3}$, $\alpha>\delta_-$, and $N$ large enough depending on $\delta_-, \delta_+$, we have
\begin{equation}  \label{eq:mpmdif}
|M_+'[x,\alpha,N]-M[x,\alpha,N]| \le 2 N^{-2/3}(\delta_+^{-1}M_+[x,\alpha+\delta_+,N]+\delta_-^{-1} M_+[x,\alpha-\delta_-,N]).
\end{equation}
For $-\infty<x<-N^{2/3}$ we have
\[
|M_+'[x,\alpha,N]-M[x,\alpha,N]| = |2+xN^{-2/3}|M_+[x,\alpha,N].
\]
Let $x'=-2N^{2/3}-x>-N^{2/3}$. Then $|2+xN^{-2/3}|M_+[x,\alpha,N]=|x'|M_+[x',\alpha,N]$, $M_+[x,\alpha+\delta_+,N]=M_+[x',\alpha+\delta_+,N]$, and $M_+[x,\alpha-\delta_-,N]=M_+[x',\alpha-\delta_-,N]$.
Thus by applying the $x>-N^{2/3}$ case arguments with $x$ replaced by $x'$, we also get \eqref{eq:mpmdif}.
When $x=-N^{2/3}$, \eqref{eq:mpmdif} also holds since $M_+'[-N^{-2/3},\alpha,N]=M[-N^{-2/3},\alpha,N]=0$

Next, we combine \eqref{eq:mpmdif} with $\sum_{i=1}^{N_k} M_+[f_i^{N_k},\alpha,N_k]$ being bounded for all $\alpha\in\N$ and all $\alpha^{-1}\in\N$. We conclude that for any $\alpha>0$,  as $k\to\infty$, $\sum_{i=1}^{N_k} M_+'[f_i^{N_k}(s_k),\alpha,N_k]$ converges
if and only if $\sum_{i=1}^{N_k} M[f_i^{N_k}(s_k),\alpha,N_k]$ converges. The limit would be the same if both converge.

We now prove the convergence of $f_1^{N_k}$. The idea is that for a large $\alpha$ only the first term in $\sum_{i=1}^{N_k} M_+'[f_i^{N_k}(s_k),\alpha,N_k]$ significantly contributes to the $k\to\infty$ limit. For the formal proof, we argue by contradiction, and suppose that the convergence does not hold.
Note that $\limsup_{k\to\infty}\sup_I f_1^{N_k}<\infty$, because $M_+[f_1^{N_k},1,N_k]$ is uniformly bounded.
Then there exist two disjoint subsequences of $N_1<N_2<\cdots$, denoted by $P_1<P_2<\cdots$ and $Q_1<Q_2<\cdots$, and some $t_1, t_2, \cdots \to t$ and $s_1, s_2, \cdots \to t$ in $I$, such that $f_1^{P_k}(t_k)\to a$ and $f_1^{Q_k}(s_k)\to b$ as $k\to\infty$, where $a,b \in \R\cup\{-\infty\}$ with $a<b$.

Note that for any $\alpha\in\N$, since $\lim_{k\to\infty} f_1^{Q_k}(t_k)=b$, we have
\[\lim_{k\to\infty}\sum_{i=1}^{P_k} M_+'[f_i^{P_k}(t_k),\alpha,P_k]=\lim_{k\to\infty}\sum_{i=1}^{Q_k} M_+'[f_i^{Q_k}(s_k),\alpha,Q_k]\ge \exp(\alpha b).\]
On the other hand, since $\lim_{k\to\infty} f_1^{P_k}(t_k)=a$, and $f_1^{P_k}(t_k)\ge f_i^{P_k}(t_k)$ for any $i\in\N$, we have
\[
\lim_{k\to\infty} \sum_{i=1}^{P_k}  M_+'[f_i^{P_k}(t_k),\alpha,P_k] \le \exp((\alpha-1)a) \lim_{k\to\infty} \sum_{i=1}^{P_k} M_+'[f_i^{P_k}(t_k),1,P_k].
\]
Combining the above two inequalities, we obtain
\[
\exp(\alpha(b-a)+a) \le \lim_{k\to\infty} \sum_{i=1}^{P_k} M_+'[f_i^{P_k}(t_k),1,P_k]
\]
Since $a<b$, and the right-hand side is finite, the above inequality can not hold for large enough $\alpha$, and we arrive at a contradiction. Therefore we get the uniform convergence of $f_1^{N_k}$ as $k\to\infty$.

From the convergence of $f_1^{N_k}$, $\sum_{i=2}^{N_k} M[f_i^{N_k},\alpha,N_k]$ converges uniformly as $k\to\infty$, and  $\sum_{i=2}^{N_k} M_+[f_i^{N_k},\alpha,N_k]$ is bounded by a constant independent of $k$.
Then we can deduce the convergence of $f_2^{N_k}$ using the same arguments; and further, we can inductively get the convergence of $f_i^{N_k}$ for each $i\in\N$.

We next take any $\alpha>0$, and prove
\begin{equation} \label{eq:alphaconv}
    \sum_{i=1}^{N_k} M_+'[f_i^{N_k},\alpha,N_k]\to \sum_{i=1}^\infty \exp(\alpha f_i).
\end{equation}
Denote $X=\limsup_{k\to\infty}\sum_{i=1}^{N_k} M_+'[f_i^{N_k},\alpha,N_k]$.
Take $\alpha_1^{-1}\in\N$ and $\alpha_2\in\N$ such that $\alpha_1<\alpha<\alpha_2$.
From $\sum_{i=1}^{N_k} M_+'[f_i^{N_k},\alpha,N_k]\le \sum_{i=1}^{N_k} M_+'[f_i^{N_k},\alpha_1,N_k]+\sum_{i=1}^{N_k} M_+'[f_i^{N_k},\alpha_2,N_k]$ we have that $X<\infty$.
Take any $j\in\N$.
Note that for any $j'\in \llbracket j, N_k\rrbracket$,
\[
 M_+'[f_{j'}^{N_k},\alpha,N_k] \le j^{-1}\sum_{i=1}^{N_k} M_+'[f_i^{N_k},\alpha,N_k].
\]
Therefore,
\[
\limsup_{k\to\infty}\sum_{i=j}^{N_k} M_+'[f_i^{N_k},\alpha,N_k]
\le (j^{-1}X)^{1-\alpha_1/\alpha} \lim_{k\to\infty} \sum_{i=1}^{N_k} M_+'[f_i^{N_k},\alpha_1,N_k].
\]
This implies that
\[
\lim_{j\to\infty}\limsup_{k\to\infty}\sum_{i=j}^{N_k} M_+'[f_i^{N_k},\alpha,N_k] = 0.
\]
Then by the convergence of $f_i^{N_k}\to f_i$ for each $i\in\N$, we get \eqref{eq:alphaconv}.
\end{proof}

\subsection{Distributional convergence and uniqueness of the limiting process} \label{Section_identification}
Finally we prove the uniqueness of Airy$_\beta$ line ensemble, as well as the convergence for both the DBM and the G$\beta$E corners processes, thereby finishing the proofs of our main results.

\begin{proof}[Proof of \Cref{thm:main}, \Cref{thm:cor-conv}, and \Cref{thm:dbm-conv}]
Our proof consists of two steps. 
First, we show that both $\{\by_i^{(N)}\}_{i=1}^\infty$ and $\{\bY_i^{(N)}\}_{i=1}^\infty$ are tight, and the moments of Laplace transform for any subsequential limit are given by $\bL_\beta(\vec\bk,\vec\ttt)$ from \Cref{defn:core}.
This gives the existence part of \Cref{thm:main}.
Second, we prove the uniqueness part, with which we also get \Cref{thm:cor-conv} and \Cref{thm:dbm-conv}.
\medskip

\noindent\textbf{Step 1: tightness and subsequential limit.}
It is classical (by e.g., \cite[Theorem 7.3]{Bil}) that from \Cref{thm:multit} and \Cref{prop:mom-dbm}, for any fixed $\bk>0$,
$\sum_{i=1}^N M[\bY_i^{(N)}/2,\bk,N]$ is tight, as continuous functions from $[0,\infty)$ to $\R$, under the uniform in compact topology. 
Note that $\{Y_i(\tau)\}_{1\leq i\leq N, \tau>0}$ and $\{-Y_i(\tau)\}_{1\leq i\leq N, \tau>0}$ have the same law, therefore $\sum_{i=1}^N (M_+[\bY_i^{(N)}/2,\bk,N]-M_-[\bY_i^{(N)}/2,\bk,N])$ has the same law as $\sum_{i=1}^N M[\bY_i^{(N)}/2,\bk,N]$, and is also tight.
Taking the sum or the difference of the above two processes together, we have that $\sum_{i=1}^N M_+[\bY_i^{(N)}/2,\bk,N]$ and $\sum_{i=1}^N M_-[\bY_i^{(N)}/2,\bk,N]$ are also tight. (See \cite[Chapter 3]{EthierKurtz} for general information on tightness in the spaces of continuous functions.)\footnote{We note that the use of symmetry (in deriving the tightness of $\sum_{i=1}^N M_+[\bY_i^{(N)}/2,\bk,N]$ and $\sum_{i=1}^N M_-[\bY_i^{(N)}/2,\bk,N]$) is not essential. 
Alternatively, one-point tightness of $\sum_{i=1}^N M_+[\bY_i^{(N)}/2,\bk,N]$ and $\sum_{i=1}^N M_-[\bY_i^{(N)}/2,\bk,N]$ can be obtained using \Cref{prop:csBJ}, 
and bounds on time increments are again from
\Cref{prop:mom-dbm}.}

Thus, we can take a subsequence so that both $\sum_{i=1}^N M_+[\bY_i^{(N)}/2,\bk,N]$ and $\sum_{i=1}^N M_-[\bY_i^{(N)}/2,\bk,N]$ converge for each $\bk\in\N$ and $\bk^{-1}\in\N$; and by Skorokhod's representation theorem (see e.g., \cite[Chapter 1, Theorem 6.7]{Bil}) we can make it almost sure convergence.
By \Cref{prop:keyconv}, $\bY_i^{(N)}$ for each $i$ converges along this subsequence. 

Take any  $m\in \N$, $\vec \bk \in \R^m_+$. By \Cref{thm:multit}, $\prod_{j=1}^m (\sum_{i=1}^N M[\bY_i^{(N)}/2,\bk_j,N])$ is uniformly integrable in $N$.
Then  from the second part of \Cref{prop:keyconv} and \Cref{thm:multit}, the Laplace transforms of the limit of $\{\bY_i^{(N)}\}_{i=1}^\infty$ have moments given by \Cref{thm:main}. 

Using the same arguments, and \Cref{thm:multil}, \Cref{prop:mom-cor} instead of \Cref{thm:multit}, \Cref{prop:mom-dbm}, 
we also get the convergence of the rescaled Gaussian $\beta$ corners process $\{\by_i^{(N)}\}_{i=1}^\infty$, and the Laplace transforms of the limit have moments given by \Cref{thm:main}.
\footnote{For G$\beta$E corners process, $\by_i^{(N)}(\ttt)$ is not continuous in $\ttt$ due to discretization, and is also not defined when $i>N-\ttt N^{2/3}$. Thus to apply \Cref{prop:keyconv}, we need to consider a different $\tilde\by_i^{(N)}(\ttt)$, constructed as follows. For $\ttt\in N^{-2/3}\Z$, $0\le \ttt <N^{1/3}$, we let $\{\tilde\by_i^{(N)}(\ttt)\}_{i=1}^N$ contain the same numbers as $\{\by_i^{(N)}(\ttt)\}_{i=1}^{N-\ttt N^{2/3}}$ and in addition $\ttt N^{2/3}$ copies of $-N^{2/3}$ (note that $M_+[-N^{-2/3},\bk,N]=M_-[-N^{-2/3},\bk,N]=0$). For generic $\ttt$ we linearly interpolate between $\ttt\in N^{-2/3}\Z$. Then \Cref{prop:keyconv} applies and the same arguments go through.}

Note that so far we have established the uniform in compact subsequential convergence for $\bY_i^{(N)}$ and $\by_i^{(N)}$
and moments for the Laplace transform of the limit for $\ttt\in [0,\infty)$, and we need to extend this to $\ttt\in\R$.
For $\bY_i^{(N)}$, using that the DBM is invariant under diffusive scaling, the convergence can be extended to $[-C,\infty)$ for any $C>0$, thereby to $\R$.
For $\by_i^{(N)}$, note that for any $C>0$ we also have the same convergence for $\by_i^{(\lfloor N+CN^{2/3}\rfloor)}$; thus we have the convergence of $\bY_i^{(N)}$ on $[-C,\infty)$, thereby we get the extension to $\R$.
\medskip

\noindent\textbf{Step 2: uniqueness.}
Take  $\{\AB_{1,i}(\ttt)\}_{i=1}^\infty$ to be a subsequential limit of $\{\bY_i^{(N)}(\ttt)\}_{i=1}^\infty$ constructed in Step 1. Suppose that there is another processes $\{\AB_{2,i}(\ttt)\}_{i=1}^\infty$, such that both $\{\AB_{1,i}(\ttt)\}_{i=1}^\infty$ and $\{\AB_{2,i}(\ttt)\}_{i=1}^\infty$ have continuous sample paths in $\ttt$, and both have moments of their Laplace transforms given by \Cref{thm:main} and \Cref{defn:core}. 

Now for each $\iota=1, 2$ and any $\ttt, h\in \R$, we denote $f_\iota(\ttt, h)=\max\{i\in \N: \AB_{\iota,i}(\ttt)>h\}$ (with the convention $\max\emptyset=0$).
We would like to prove that $\{\AB_{1,i}(\ttt)\}_{i=1}^\infty$ and $\{\AB_{2,i}(\ttt)\}_{i=1}^\infty$ have the same distributions and for that it is sufficient to show that $f_1$ and $f_2$ have the same finite dimensional distributions.

From the moments of Laplace transforms in \eqref{eq_expected_Laplace}, via an integration by parts, we have that for any $\vec\bk\in\R_+^m$ and $\vec \ttt\in\R^m$,
\[
\int_{\R^m} \exp\left(\sum_{\ell=1}^m \bk_\ell h_\ell\right) \E \left[ \prod_{\ell=1}^m f_1(\ttt_\ell, h_\ell)\right]  \d \vec h = \int_{\R^m} \exp\left(\sum_{\ell=1}^m \bk_\ell h_\ell\right) \E \left[ \prod_{\ell=2}^m f_2(\ttt_\ell, h_\ell)\right]  \d \vec h.
\]
Then by the uniqueness theorem for Laplace transform (i.e. Lerch's theorem \cite{Lerch}), for any $\vec h\in \R^m$ except for a Lebesgue measure zero set,
\[
\E \left[ \prod_{\ell=1}^m f_1(\ttt_\ell, h_\ell)\right] = \E \left[ \prod_{\ell=2}^m f_2(\ttt_\ell, h_\ell)\right].
\]
Since the expectations are right continuous in each $h_\ell$, this equality holds for any $\vec h\in \R^m$.
Namely, we have that $f_1$ and $f_2$ equal in any finite mixed moments.

Next, we claim that for any fixed $\ttt, h$, the tails of $f_1(\ttt, h)$ have exponential decay. With this in hand, we have that the expectation of $\exp(\epsilon f_1(\ttt,h))$ is finite for some $\epsilon>0$, hence the one-dimensional marginal of $f_{1}(\ttt,h)$ is uniquely determined by its one-dimensional moments, see e.g \cite[Theorem 30.1]{Bill}. As for the finite dimensional distribution, Peterson's theorem (see \cite[Theorem 14.6]{momentprob}) says that it is uniquely determined by its mixed moments as long as this is the case for all its marginals. Therefore we have that the mixed moments of $f_{1}$ uniquely determine its finite dimensional distribution, which implies that for any $\vec{\tau}=(\ttt_1,...,\ttt_{m})\in \R^{m}$, $\vec{h}=(h_{1},...,h_{m})\in \R^{m}$, $(f_1(\ttt_1,h_1),...,f_1(\ttt_m,h_m))$ and $(f_2(\ttt_1,h_1),...,f_2(\ttt_m,h_m))$, must be equal in distribution, because they share the same mixed moments. This further implies that $\{\AB_{1,i}(\ttt)\}_{i=1}^\infty$ and $\{\AB_{2,i}(\ttt)\}_{i=1}^\infty$ also have the same finite dimensional distribution.

To get the exponential tail probability of $f_{1}(\ttt,h)$, note that for any $i\in\N$, $\PP[f_1(\ttt, h)\ge i]=\PP[\AB_{1,i}(\ttt)>h]$. Moreover, since we obtain $\{\AB_{1,i}(\ttt)\}_{i=1}^{\infty}$ as a subsequential limit of  $\{\bY_i^{(N)}(\ttt)\}_{i=1}^\infty$, for any fixed $\ttt$, $\{-\AB_{1,i}(\ttt)\}_{i=1}^{\infty}$ is equal in distribution to
the eigenvalues of the Stochastic Airy Operator, by \cite[Theorem 1.1]{RRV}.
Then the exponential decay of the tail of $f_1(\ttt, h)$ follows from \Cref{lem:bdabci} below (deduced from \cite[Theorem 1.2]{RRV}).
This finishes the proof of the uniqueness.
\end{proof}
Let us state the tail bound of the Stochastic Airy Operator, used in the previous proof.
Let $\Lambda_0\le \Lambda_1 \le \cdots$ denote the eigenvalues of the Stochastic Airy Operator, as in \cite{RRV}.
They are also the edge limit of the Gaussian $\beta$ Ensemble from \eqref{eq_GBE_t}, see \cite[Theorem 1.1]{RRV}. 
\begin{lemma}  \label{lem:bdabci}
For any $h\in\R$, there exists some $0<c<1$ (depending on $h$), such that $\PP[\Lambda_{i-1}<-h]<c^i$ for any $\tau\in\R$ and $i\in\N$.
\end{lemma}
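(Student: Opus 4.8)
The plan is to pass to the counting function of the Stochastic Airy spectrum and to establish an exponential moment for it. Write $N(-h):=\#\{k\ge 0:\Lambda_k<-h\}$, so that $\{\Lambda_{i-1}<-h\}=\{N(-h)\ge i\}$ (the parameter $\tau$ is immaterial: the law of $\{\Lambda_k\}_{k\ge 0}$ does not depend on it, and in the application of \Cref{lem:bdabci} the process $\{-\AB_{1,i}(\ttt)\}_{i=1}^\infty$ has this law for every fixed $\ttt$ by stationarity). I would first reduce the lemma to the claim that for every $h\in\R$ there is $\eta=\eta(h)>0$ with $\E[\exp(\eta N(-h))]<\infty$. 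Granting this, Markov's inequality gives $\PP[\Lambda_{i-1}<-h]\le e^{-\eta i}\,\E[e^{\eta N(-h)}]$ for all $i\ge 1$, while simultaneously $\PP[\Lambda_{i-1}<-h]\le\PP[\Lambda_0<-h]=:p_1<1$ because $\Lambda_0$ has a continuous law with full support. Combining the two bounds shows that any $c\in(1-\delta,1)$ with $\delta$ small (depending on $h,\eta,p_1$ and $\E[e^{\eta N(-h)}]$) satisfies $\PP[\Lambda_{i-1}<-h]<c^i$ for all $i\ge 1$ at once: for $i$ below a threshold $i_0=O\bigl(\eta^{-1}\log(\E[e^{\eta N(-h)}]/p_1)\bigr)$ the plateau bound $p_1$ is the larger one and $c^{i_0}>p_1$, whereas for $i\ge i_0$ the geometric bound is the larger one and is $<c^i$ as soon as $c>e^{-\eta}$.

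For the exponential–moment claim I would use the Riccati/diffusion description of the Stochastic Airy spectrum from \cite{RRV}: $N(-h)$ equals the number of explosions to $-\infty$ (each followed by an immediate restart at $+\infty$) over $t\in(0,\infty)$ of the diffusion $p$ solving $dp(t)=\tfrac{2}{\sqrt\beta}\,db(t)+\bigl(t+h-p(t)^2\bigr)\,dt$ with $p(0+)=+\infty$. I would split $N(-h)=N_{(0,T_0]}+N_{(T_0,\infty)}$ for a threshold $T_0=T_0(h)$ chosen so that $t+h\ge 1$ for $t\ge T_0$. On $(T_0,\infty)$: by the strong Markov property at each restart time $t_0>T_0$, a further explosion forces $p$ to descend from $+\infty$ to $-\infty$ against the drift $t+h-p^2\ge t_0-p^2$, and a comparison with a time‑homogeneous reference diffusion — this is exactly where the one‑point lower‑tail estimates of \cite[Theorem 1.2]{RRV} and the drift‑comparison arguments behind them enter — bounds the conditional probability of this by some $\rho(t_0)$ with $\sum_{j\ge 0}\rho(T_0+j)<\infty$ and $\rho(t_0)\to 0$; hence $N_{(T_0,\infty)}$ is stochastically dominated by a geometric‑type sum with a finite exponential moment.

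The bulk of the work, and the main obstacle, is bounding $N_{(0,T_0]}$, the number of $+\infty\to-\infty$ passages of the diffusion on the fixed bounded interval $(0,T_0]$. Here the linear‑in‑$t$ part of the drift is no help and must be absorbed into a bounded perturbation, so one is reduced to showing that the number of down‑crossings of a Brownian motion with quadratic restoring drift $-p^2$ on a bounded time interval has a finite exponential moment, uniformly over the time at which the interval is entered from $+\infty$. I would prove this by a strong‑Markov argument: after each passage the diffusion restarts at $+\infty$, and the probability that it descends all the way to $-\infty$ again before time $T_0$ is at most a constant $\rho_0=\rho_0(T_0,\beta)<1$ — controlling the rate of such descents via the explicit scale and speed measure of the reference ``bouncing'' diffusion and its local time at a fixed level — so that $N_{(0,T_0]}$ is dominated by a geometric variable of parameter $\rho_0$. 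Some care is needed at the right endpoint (a passage begun in $(0,T_0]$ may finish in $(T_0,\infty)$, handled by overlapping the two regimes) and in tracking the $h$‑dependence of the constants: as $h\to-\infty$ one has $T_0\sim|h|$, $\rho_0\to 1$, and correspondingly $c\to 1$, which is consistent with the statement. As an alternative to the Riccati picture, the same estimates can be derived from Sturm oscillation theory for $\mathcal H_\beta+h$: $N(-h)$ is the number of zeros on $(0,\infty)$ of the solution of $\mathcal H_\beta u=-h\,u$ with Dirichlet data at $0$, all of which lie in a bounded interval since the potential $x+h+\tfrac{2}{\sqrt\beta}b'$ is eventually positive, and the zero count is governed by the associated Pr\"ufer‑angle diffusion on that interval.
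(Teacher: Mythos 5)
Your core mechanism is the right one and is the same as the paper's: pass to the Riccati diffusion of \cite[Theorem 1.2]{RRV} and use the strong Markov property at each explosion/restart time to dominate the number of explosions by a geometric variable. But the paper's proof is essentially three lines, and the reason is an observation you miss: if $q(t)$ denotes the probability that the diffusion restarted from $+\infty$ at time $t$ ever reaches $-\infty$, then \cite[Theorem 1.2]{RRV} applied at the restart time gives \emph{exactly} $q(t)=\PP[\Lambda_0<-t]$, which is $<1$ by the tail estimate for $\Lambda_0$ and is non-increasing in $t$ (larger $t$ means a more positive drift, or just monotonicity of the distribution function). Hence after each explosion at a time $t'\ge h$ the conditional probability of a further explosion is $q(t')\le q(h)<1$, and $\PP[\Lambda_{i-1}<-h]\le q(h)^i$ immediately, with $c=q(h)$. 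This single observation makes your entire programme for the bounded window $(0,T_0]$ — the "bulk of the work" in your words, via scale and speed measures, local times, and a reference bouncing diffusion — unnecessary, and it also removes the need for the time-splitting, the summability estimate on $(T_0,\infty)$, the exponential moment of the counting function, and the final recombination with the plateau bound $\PP[\Lambda_0<-h]<1$ (which, note, is already the $i=1$ case of the geometric bound rather than a separate ingredient). Your route would work if carried out, but the per-restart explosion-probability bound $\rho_0<1$ on the bounded interval is only asserted, not proved, in your sketch; the cleanest way to actually prove it is precisely the identification with $\PP[\Lambda_0<-t']$ that collapses the whole argument.
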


\begin{proof}
For $h\in\R$, let $p=p_h$ be a one-dimensional diffusion on $t\in[h, \infty)$ defined by the It\^{o} equation:
\begin{equation}\label{eq_ricatti}
     dp(t)=\left(t-p^2(t)\right)dt+\frac{2}{\sqrt{\beta}}dB(t),
\end{equation}
such that it starts from $+\infty$ at time $h$; and whenever it arrives at $-\infty$, it starts again from $+\infty$.
\cite[Theorem 1.2]{RRV} states that
for any $i\in\N$, $\PP[\Lambda_{i-1}<-h]$ equals the probability of the event that $p_h$ comes through $-\infty$ at least $i$ times at times $t\ge h$.

From the diffusion $p_h$, it can also be derived that, if $q(h)$ is the probability that $p_h$ arrives at $-\infty$ in finite time, then (A) $q(h)<1$ for any $h$, and (B) $q(h)$ is non-increasing in $h$.

 To see (A), note that by \cite[Theorem 1.2]{RRV},
 \begin{equation}\label{eq_qh}
     q(h)=1-\PP[\Lambda_{0}\ge-h],
 \end{equation}
    which is in $(0,1)$ for all $h\in \R$ (see \cite[Theorem 1.3]{RRV} for a tail estimate of $\Lambda_{0}$). (B) also follows directly from (\ref{eq_qh}).   

From (B) we have $\PP[\Lambda_{i-1}<-h]<(q(h))^i$, and by (A) the conclusion follows.
\end{proof}

The just presented proof can be also used to deduce the following continuity estimate for the Laplace transform of $\{\AB_i(\ttt)\}_{i=1}^\infty$. This implies that the Laplace transform is H\"older $\frac{1}{4}-$ function in $\ttt$; with additional efforts the estimate can be upgraded to H\"older $\frac{1}{2}-$ class.
\begin{corollary}   \label{cor:last}
For any $C_1>0$, there exists $C=C(C_1)>0$ such that the following holds.
For any $-C_1\le \ttt_1<\ttt_2 \le C_1$, $C_1^{-1}<\bk<C_1$, we have
\[
\E \left[  \left( \sum_{i=1}^\infty \exp(\bk \AB_i(\ttt_1)/2) - \sum_{i=1}^\infty \exp(\bk \AB_i(\ttt_2)/2) \right)^4  \right]
< C (\ttt_2-\ttt_1)^2.
\]
\end{corollary}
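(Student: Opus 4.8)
\textbf{Proof proposal for Corollary \ref{cor:last}.}
The plan is to transfer the finite-$N$ increment bound of \Cref{prop:mom-dbm} to the limit. By Step 1 in the proof of \Cref{thm:main} (applied to the DBM), the Airy$_\beta$ line ensemble $\{\AB_i(\ttt)\}_{i=1}^\infty$ arises as a subsequential limit of $\{\bY_i^{(N)}(\ttt)\}_{i=1}^\infty$ in the uniform-in-compacts topology, with all the relevant Laplace-transform moments converging as well. So first I would fix $-C_1\le \ttt_1<\ttt_2\le C_1$ and $C_1^{-1}<\bk<C_1$, restrict to one of the convergent subsequences $N_k\to\infty$, and observe that for $\ttt\in\{\ttt_1,\ttt_2\}$ we have the pointwise (in $i$) convergence $\bY_i^{(N_k)}(\ttt)\to\AB_i(\ttt)$, hence $\sum_{i=1}^{N_k} M[\bY_i^{(N_k)}(\ttt)/2,\bk,N_k]\to \sum_{i=1}^\infty \exp(\bk \AB_i(\ttt))$ by the second part of \Cref{prop:keyconv}. (One should be careful that the exponent convention matches: $M[x,\bk,N]\to \exp(\bk x)$, and in the corollary the argument is $\AB_i(\ttt)/2$, which corresponds to taking $x=\AB_i(\ttt)/2$, so the relevant prelimit quantity is $\sum_i M[\bY_i^{(N_k)}(\ttt)/2,\bk,N_k]$; up to the $\bk$-vs-$\bk/2$ bookkeeping this is exactly the quantity controlled in \Cref{prop:mom-dbm} after the rescaling $1+N^{-2/3}\bY_i^{(N)}(\ttt)/2=(1+N^{-2/3}\bY_i^{(N)}(\ttt)/2)$.)

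Second, I would combine this with a uniform integrability statement. \Cref{prop:mom-dbm} gives, for $C_1^{-1}N_k^{2/3}<k_k<C_1 N_k^{2/3}$ (a choice of integer exponent with $k_k/N_k^{2/3}\to \bk$),
\[
\E\!\left[\left(\sum_{i=1}^{N_k}\big(1+N_k^{-2/3}\bY_i^{(N_k)}(\ttt_1)/2\big)^{k_k}-\sum_{i=1}^{N_k}\big(1+N_k^{-2/3}\bY_i^{(N_k)}(\ttt_2)/2\big)^{k_k}\right)^4\right]<C(\ttt_2-\ttt_1)^2,
\]
with $C=C(C_1)$ independent of $N_k$. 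Since $\big(1+N_k^{-2/3}\bY_i^{(N_k)}/2\big)^{k_k}$ differs from $M[\bY_i^{(N_k)}/2,\bk,N_k]$ only through the parity correction built into $M$ (see the definitions of $M_\pm$ just before \Cref{prop:keyconv}, and the bound \eqref{eq:mpmdif}-type argument in its proof), the same $O((\ttt_2-\ttt_1)^2)$ bound holds for the fourth moment of $\sum_i M[\bY_i^{(N_k)}(\ttt_1)/2,\bk,N_k]-\sum_i M[\bY_i^{(N_k)}(\ttt_2)/2,\bk,N_k]$, possibly after enlarging $C$ and absorbing the $N_k^{-2/3}$ corrections (which are themselves $o(1)$, hence dominated by a constant). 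A uniform fourth-moment bound gives uniform integrability of the squares, so the difference $D_k:=\sum_i M[\bY_i^{(N_k)}(\ttt_1)/2,\bk,N_k]-\sum_i M[\bY_i^{(N_k)}(\ttt_2)/2,\bk,N_k]$ is a sequence converging in distribution (indeed a.s., on a Skorokhod space, by Step 1) to $D:=\sum_{i=1}^\infty \exp(\bk\AB_i(\ttt_1)/2)-\sum_{i=1}^\infty\exp(\bk\AB_i(\ttt_2)/2)$, with $\sup_k \E[D_k^4]<\infty$; hence $\E[D^2]\le \liminf_k \E[D_k^2]$ by Fatou (applied to $D_k^2$), and more precisely $\E[D^4]\le \liminf_k\E[D_k^4]\le C(\ttt_2-\ttt_1)^2$ — wait, I should be careful: Fatou gives $\le$, which is exactly what is needed. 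So $\E[D^4]\le C(\ttt_2-\ttt_1)^2$, which is the claim. (Strictly, to use a.s.\ convergence I pass to the Skorokhod representation of Step 1; alternatively one invokes convergence in distribution plus uniform integrability to get convergence of the second moments and Fatou for the fourth.)

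Third, the remaining point is that \Cref{prop:mom-dbm} is stated for $0\le \ttt_1<\ttt_2$, whereas the corollary allows $\ttt\in[-C_1,C_1]$. This is handled exactly as in Step 1 of the proof of \Cref{thm:main}: the DBM (and the limiting line ensemble) is stationary and invariant under diffusive rescaling, so the increment estimate over any interval $[\ttt_1,\ttt_2]\subset[-C_1,C_1]$ follows from the estimate over $[0,\ttt_2-\ttt_1]$ after a time shift, which only changes the constant $C$ by a factor depending on $C_1$. Putting these three steps together yields the corollary. The main obstacle is the second step — making the passage to the limit in the fourth moment rigorous, i.e.\ establishing the uniform integrability cleanly and checking that the parity correction distinguishing the prelimit binomial sums from the $M[\cdot,\cdot,\cdot]$ sums (and the $\bk$ vs.\ $\bk/2$ exponent scaling) does not spoil the $O((\ttt_2-\ttt_1)^2)$ rate; everything else is either quoted verbatim from \Cref{prop:mom-dbm}, \Cref{prop:keyconv}, and Step 1 of the proof of \Cref{thm:main}, or is the routine stationarity/scaling reduction. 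The H\"older $\tfrac14-$ consequence is then immediate from Kolmogorov's continuity criterion applied to the random field $\ttt\mapsto \sum_i\exp(\bk\AB_i(\ttt)/2)$, and the promised upgrade to H\"older $\tfrac12-$ would require replacing the fourth moment by a $(2+\epsilon)p$-th moment bound with the same quadratic rate, which the Dunkl-operator identity of \Cref{prop:piden} plausibly supports but which I would not carry out here.
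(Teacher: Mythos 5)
Your proposal is correct and takes essentially the same approach as the paper: transfer the uniform pre-limit fourth-moment increment bound of \Cref{prop:mom-dbm} to the limiting line ensemble. The only difference is the limit-passage mechanism — the paper simply expands the fourth power into mixed Laplace-transform moments, each of which converges exactly by \Cref{thm:main} and \Cref{thm:multit}, so the expectation in the corollary is \emph{precisely} the limit of the pre-limit expectations, whereas you route through Skorokhod representation and Fatou, obtaining the (sufficient) inequality $\E[D^4]\le\liminf_k\E[D_k^4]$.
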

\begin{proof}
From \Cref{thm:main} and Theorems \ref{thm:multil}, \ref{thm:multit}, we have that the above expectation is precisely the $N\to\infty$ limit of the expectations in \Cref{prop:mom-cor} or \Cref{prop:mom-dbm}. Thus by \Cref{prop:mom-cor} or \Cref{prop:mom-dbm} the conclusion follows.
\end{proof}
\begin{remark}
One can write the expectation in \Cref{cor:last} through \Cref{thm:main}, then Taylor expand in $\ttt_1-\ttt_2$. \Cref{cor:last} says that the coefficient of $\ttt_1-\ttt_2$, which is a certain combination of integrals, miraculously vanishes. This is a non-trivial integral identity, which eventually follows from the Dunkl operator identity of \Cref{cor:piden}.
\end{remark}

\appendix

\section{Quantitative approximation of random walks}

\subsection{Upper bounds for numbers of walks}  \label{ssec:expvar}
The following estimates are used in the proof of \Cref{prop:mbdc}.

We let  $\overline\sF(X;H,G)$ be the collection of all $F:\llbracket 0,X\rrbracket\to \Z$, satisfying $|F(t)-F(t-1)|=1$ for each $t\in\llbracket 1,X\rrbracket$, and $F(0)=H$, $F(X)=G$. 
We also let $\sF(X;H,G)$ be the subset of $\overline\sF(X;H,G)$, consisting of those $F$ which stay non-negative.
And we use $C,c>0$ to denote large and small universal constants, whose values may change from line to line.
\begin{lemma}  \label{lem:countwk}
Take any $X, H \in \Z_{\ge 0}$, and $M\ge 0$. There exist $C,c>0$ such that:
\begin{itemize}
    \item[(i)] $\sum_{G\in \Z_{\ge 0}, |G-H|\ge M}|\sF(X;H,G)|\le C2^X\exp(-cM^2/X)$.
    \item[(ii)] $|\sF(X;H,G)|\le CX^{-1/2}2^X\exp(-cM^2/X)$, for any $G \in \Z_{\ge 0}$, such that $|G-H|\ge M$.
    \item[(iii)] $\sum_{G\in \Z_{\ge 0}, G\ge M}|\sF(X;0,G)|\le CX^{-1/2}2^X\exp(-cM^2/X)$.
    \item[(iv)]  $|\sF(X;H,0)|\le CX^{-1}2^X\exp(-cM^2/X)$, whenever $H\ge M$.
    \item[(v)] $|\sF(X;0,0)|\le CX^{-3/2}2^X$.
\end{itemize}
\end{lemma}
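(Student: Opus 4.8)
All five bounds are standard local/deviation estimates for simple symmetric random walks, so the plan is to reduce everything to the reflection principle together with the basic Gaussian-type local central limit bound
\[
|\overline\sF(X;H,G)| = \binom{X}{(X+G-H)/2} \le C\, 2^X X^{-1/2} \exp\!\left(-c\,\frac{(G-H)^2}{X}\right),
\]
valid when $X+G-H$ is even (and the count is $0$ otherwise), which itself follows from Stirling's formula. I would state this as a preliminary observation, together with the two elementary summation facts $\sum_{j\in\Z}\exp(-cj^2/X)\le C\sqrt{X}$ and $\sum_{|j|\ge M}\exp(-cj^2/X)\le C\sqrt{X}\exp(-cM^2/X)$ (after adjusting $c$), obtained by comparison with Gaussian integrals.

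\textbf{Proof of the individual items.} For (ii), I would use the reflection principle: a path in $\overline\sF(X;H,G)$ with $H,G\ge 0$ that dips below $0$ corresponds bijectively to a path in $\overline\sF(X;-H-2,G)$, hence $|\sF(X;H,G)| \le |\overline\sF(X;H,G)| \le C2^X X^{-1/2}\exp(-c(G-H)^2/X)$, and since $|G-H|\ge M$ this gives the claim. Item (i) follows by summing the bound in (ii) over $G$ with $|G-H|\ge M$ and using the second summation fact (one could also bound $\sum_G|\sF(X;H,G)|\le 2^X$ crudely for the part $|G-H|<M$, but it is cleanest to just sum the refined bound). For (iii): a path in $\sF(X;0,G)$ starts at $0$, so by the cycle-lemma / Catalan-type reflection, $|\sF(X;0,G)| = |\overline\sF(X;0,G)| - |\overline\sF(X;-2,G)| = |\overline\sF(X;0,G)| - |\overline\sF(X;0,G+2)|$. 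The difference of two consecutive binomial coefficients is $O(X^{-1})$ times the larger one, i.e.\ $\le C2^X X^{-3/2}\exp(-cG^2/X)$ (using $G\ge 0$); summing over $G\ge M$ and using the decaying summation fact yields $CX^{-3/2}\cdot\sqrt X\,2^X\exp(-cM^2/X) = CX^{-1/2}2^X\exp(-cM^2/X)$, which is (iii). Item (iv) is the symmetric statement: $|\sF(X;H,0)| = |\sF(X;0,H)|$ by time reversal, and this equals $|\overline\sF(X;0,H)|-|\overline\sF(X;0,H+2)| \le C2^X X^{-3/2}\exp(-cH^2/X)\cdot X^{1/2}$... wait — more carefully, $|\sF(X;H,0)|=|\overline\sF(X;H,0)|-|\overline\sF(X;H,-2)|$, and since $F(X)=0$ is fixed there is no summation, so one just needs the single-point bound: the difference of the two binomials is $\le C X^{-1}\binom{X}{(X+H)/2}\le C2^X X^{-3/2}\exp(-cH^2/X)$; multiplying by nothing and using $H\ge M$ gives $CX^{-1}2^X\exp(-cM^2/X)$ once one absorbs the extra $X^{-1/2}$, i.e.\ the stated exponent $X^{-1}$ is in fact implied (and the bound is even slightly stronger for $M$ small — fine). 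Finally (v) is the classical Catalan number asymptotic: $|\sF(X;0,0)| = C_{X/2} = \frac{1}{X/2+1}\binom{X}{X/2}\le CX^{-3/2}2^X$ when $X$ is even, and $0$ otherwise.

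\textbf{Main obstacle.} There is no deep obstacle here; the only points requiring a little care are (a) keeping track of parity (all counts vanish unless $X+G-H$ is even, which is why in the applications one sums two terms of opposite parity), and (b) being uniform in the ranges of $X,H,G,M$ — in particular handling small $X$ (where the claimed polynomial factors like $X^{-3/2}$ must not blow up the universal constant: for $X$ bounded the statements are trivial since $|\sF|\le 2^X$ and the exponential factor is $\ge e^{-cM^2}$, absorbed into $C$ after noting $|\sF(X;\cdot,\cdot)|$ also forces $M\le X$). I would therefore organize the write-up by first disposing of the case $X\le C_0$ for a suitable absolute constant, then running the reflection-principle computations above for $X$ large, where Stirling's approximation is uniform. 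The consolidation of the two-consecutive-binomials difference estimate $\binom{X}{k}-\binom{X}{k+1}\le CX^{-1}\binom{X}{k}$ valid for $|k-X/2|=O(\sqrt X)$ (and trivially in the tails) is the one routine lemma I would isolate and prove once, then reuse in (iii), (iv), (v).
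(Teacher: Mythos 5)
Your overall strategy is exactly the paper's: reflection principle plus the Stirling/Gaussian local bound $\binom{X}{(X+H-G)/2}\le C2^XX^{-1/2}\exp(-c(G-H)^2/X)$, with (i) and (ii) handled identically, (v) as a Catalan number, and the same attention to parity and small $X$. So the route is right; but there is one concrete error in the auxiliary estimate you plan to isolate and reuse.

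The claim ``$\binom{X}{k}-\binom{X}{k+1}\le CX^{-1}\binom{X}{k}$ for $|k-X/2|=O(\sqrt X)$'' is false. Exactly, $\binom{X}{k}-\binom{X}{k+1}=\binom{X}{k}\cdot\frac{2k+1-X}{k+1}$, so the correct factor is of order $(|2k-X|+1)/X$, not $1/X$; with $k=(X+H)/2$ this is $(H+1)/X$, which for $H\asymp\sqrt X$ is already $X^{-1/2}$, a full $\sqrt X$ larger than you assert. This infects (iii) and (iv) as written: in (iv) your intermediate bound $C2^XX^{-3/2}\exp(-cH^2/X)$ on $|\sF(X;H,0)|$ is simply not true for large $H$ (the truth is $\frac{H+1}{X}\binom{X}{(X+H)/2}$, which is of order $2^XX^{-1}$ when $H\asymp\sqrt X$), and in (iii) you compound it with the arithmetic slip $X^{-3/2}\cdot X^{1/2}=X^{-1/2}$ (this equals $X^{-1}$), so two errors cancel to produce the stated answer. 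Both items are repairable along your lines: for (iv), write $|\sF(X;H,0)|=\binom{X}{(X+H)/2}\frac{H+1}{(X+H)/2+1}\le C2^XX^{-1/2}\exp(-cH^2/X)\cdot\frac{H+1}{X}$ and absorb $(H+1)/\sqrt X$ into the exponential to get $C2^XX^{-1}\exp(-c'M^2/X)$; for (iii), either carry the correct factor $(G+1)/X$ through the sum (which again gives $CX^{-1/2}2^X\exp(-c'M^2/X)$ after absorbing $G+1$ into the Gaussian), or — as the paper does, and as your own identity $|\sF(X;0,G)|=|\overline\sF(X;0,G)|-|\overline\sF(X;0,G+2)|$ already sets up — observe that the sum over $G\ge M$ telescopes exactly to a single binomial coefficient $|\overline\sF(X;0,M)|$ or $|\overline\sF(X;0,M+1)|$, which makes (iii) immediate with no difference estimate at all.
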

\begin{proof}
For some choices of parity of $X$, $H$, $G$, the set $\sF(X;H,G)$ is empty and the inequalities are trivially true. Throughout the proof we silently assume that the parities are chosen so that  $\sF(X;H,G)$ has at least one element.
Besides, without loss of generality, we assume that $M$ is an even integer.

We repeatedly use the reflection principle throughout the proof (see, e.g.~\cite[Theorem 4.9.1]{DBOOK}). Namely, for any $X, H, G\in \Z_{\ge 0}$, we have
\[
|\sF(X;H,G)| = |\overline\sF(X;H,G)| - |\overline\sF(X;H,-G-2)|.
\]
Also recall that
\[
|\overline\sF(X;H,G)| = {X \choose (X+H-G)/2}.
\]
We will also use asymptotic bounds on binomial coefficients: for any integers $0<A\le B$, we have ${B\choose A} < 2^BB^{-1/2} \exp(-(2A-B)^2/(2B))$.
This can be obtained by the Stirling's formula.

The bound (i) follows from $\sum_{G\in \Z_{\ge 0}, |G-M|\ge M}|\sF(X;H,G)|\le \sum_{G\in \Z, |G-H|\ge M}|\overline\sF(X;H,G)| = \sum_{G\in \Z, |G-H|\ge M}{X\choose (X+H-G)/2} \le C2^X\exp(-cM^2/X)$.

For (ii), it follows from that $|\sF(X;H,G)|\le |\overline\sF(X;H,G)|={X\choose (X+H-G)/2}$.

For (iii), by reflection principle, the sum equals $|\overline\sF(X;0,M)|$ or $|\overline\sF(X;0,M+1)|$, depending on the parity of $X+M$. Therefore the estimate follows from the bound on the binomial coefficient.

For (iv), by reflection principle, $|\sF(X;H,0)|$ equals $|\overline\sF(X;H,0)|-|\overline\sF(X;H,-2)|={X\choose (X+H)/2}-{X\choose (X+H)/2 + 1}={X\choose (X+H)/2} \cdot \frac{H+1}{(X+H)/2+1}$, which is bounded as desired.

For (v), it is the Catalan number ${X\choose X/2} \cdot \frac{1}{X/2+1}$, and the bound holds.
\end{proof}

We will need the following statement on the monotonicity of random walk bridges.
\begin{lemma}   \label{lem:mon}
Take any $X, H, G, R, L, H', G', R', L' \in \Z_{\ge 0}$, with $X+H+G$ and $X+H'+G'$ even, and $H\le H'$, $G\le G'$, $R\le R'$, $L\le L'$.
Sample $\mathfrak{f}:\llbracket 0, X\rrbracket \to \Z_{\ge 0}$ uniformly from $\overline\sF(X;H,G)$, conditional on $R\le \min \mathfrak{f}\le \max \mathfrak{f}\le L$; and $\mathfrak{f}':\llbracket 0, X\rrbracket \to \Z_{\ge 0}$ uniformly from $\overline\sF(X;H',G;)$, conditional on $R'\le\min \mathfrak{f}'\le \max \mathfrak{f}'\le L'$.
Then we can couple $\mathfrak{f}$ and $\mathfrak{f}'$ in such a way that $\mathfrak{f}(t)\le \mathfrak{f}'(t)$ for all $t\in\llbracket 0, X\rrbracket$.
\end{lemma}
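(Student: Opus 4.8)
\textbf{Proof proposal for Lemma \ref{lem:mon}.} The natural approach is a standard monotone-coupling argument for conditioned random walk bridges, carried out via a height-function (or ``last-passage'') representation, or equivalently via an explicit Markov-chain coupling that processes the two bridges time-step by time-step and preserves the pointwise ordering. I would phrase it through the following intermediate reductions. First, it suffices to treat the case where all four of the data $(H,G,R,L)$ and $(H',G',R',L')$ differ in exactly one coordinate, since a general comparison is obtained by chaining together finitely many such elementary comparisons (monotonicity is transitive). So the plan reduces to four lemmas: raising $H$ by one, raising $G$ by one, raising $L$ by one, and raising $R$ by one (the last being the most delicate because conditioning on staying above a higher floor pushes the walk \emph{up}, which is the ``right'' direction, while conditioning on staying below a lower ceiling also pushes it up — one must check both are compatible with the ordering).

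Second, for each elementary step I would use the classical fact that the uniform measure on lattice bridges, conditioned to stay in a strip $\llbracket R,L\rrbracket$, is a space- and time-inhomogeneous Markov chain whose one-step transition kernels are \emph{stochastically monotone} in the current position: this is where the computation actually happens, and it is a short and standard check. Concretely, writing $Z_t(a) = |\{\text{bridges from }(t,a)\text{ to }(X,\cdot)\text{ staying in the strip}\}|$ for the partition function of the remaining path, the transition probability of going up at time $t$ given current height $a$ is $Z_{t+1}(a+1)/(Z_{t+1}(a+1)+Z_{t+1}(a-1))$, and one shows this is nondecreasing in $a$, and, crucially, also \emph{monotone under the elementary enlargements of the data} (raising $L$, lowering $R$, moving an endpoint). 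Log-concavity of binomial coefficients, or an explicit path-surgery/reflection argument, gives these monotonicity statements; I expect to invoke reflection-principle formulas of the kind already used in the proof of Lemma \ref{lem:countwk}. Given stochastic monotonicity of the kernels, the standard ``synchronous coupling'' (couple the two Markov chains so that at each step the lower one goes up only if the upper one does) produces a pair with $\mathfrak{f}(t)\le\mathfrak{f}'(t)$ for all $t$, and one verifies by induction on $t$ that the ordering, once established at $t=0$ (where $H\le H'$), is preserved.

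Third, I would handle the endpoint constraints by noting that a bridge ending at $G$ versus $G'$ with $G\le G'$ can be compared by a further reflection/surgery argument, or simply absorbed into the Markov-chain picture by making $Z_t$ depend on the prescribed right endpoint; the monotonicity of the up-probability in the endpoint is again a log-concavity statement for ratios of binomial coefficients. Putting the four elementary comparisons together yields the full statement.

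\textbf{Main obstacle.} The genuinely subtle point is the interaction of the two wall constraints with the direction of monotonicity: raising the floor $R$ and lowering the ceiling $L$ both \emph{increase} the conditioned walk in the stochastic order, so when we simultaneously enlarge the strip $\llbracket R,L\rrbracket \subseteq \llbracket R',L'\rrbracket$ (i.e.\ $R'\le R$ is the wrong inequality — here we have $R\le R'$, so the primed strip has a \emph{higher} floor and a \emph{higher} ceiling), one must be careful that the net effect of going from $(R,L)$ to $(R',L')$ is still in the upward direction. The cleanest way around this is to not enlarge the strip monotonically but to compare $(R,L)$ with $(R,L')$, then $(R,L')$ with $(R',L')$, checking at each step that the single changed constraint moves the law the correct way; this keeps each elementary comparison a clean one-parameter stochastic-domination statement and confines all the real work to verifying stochastic monotonicity of a single binomial transition kernel. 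I do not expect any of these verifications to be long, but assembling them carefully so that every inequality points the right way is where attention is required.
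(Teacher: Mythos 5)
Your overall architecture (a chain of elementary one-parameter comparisons, then a step-by-step coupling of the two bridges viewed as time-inhomogeneous Markov chains) is a legitimate alternative to what the paper does, but the central technical claim on which your coupling rests is false. For a bridge pinned at a future endpoint, the up-probability $Z_{t+1}(a+1)/(Z_{t+1}(a+1)+Z_{t+1}(a-1))$ is \emph{non-increasing} in the current height $a$, not nondecreasing: in the free case $Z_{t+1}(b)=\binom{X-t-1}{(X-t-1+b-G)/2}$ is log-concave in $b$, so the ratio $Z_{t+1}(a+1)/Z_{t+1}(a-1)$ decreases with $a$ (concretely, a free bridge to height $0$ in four steps goes up with probability $1/2$ from height $0$ but only $1/4$ from height $2$), and adding the floor and the ceiling only reinforces this. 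So the ``synchronous coupling driven by monotonicity in $a$'' is not available as you describe it. What the ordered coupling actually requires is the cross-condition $p_t(a)\le p'_t(a')$ only when $a'-a\in\{0,1\}$ (for larger gaps the order cannot break in a single $\pm1$ step), and there the inequality must come from monotonicity of the kernel in the data $(H,G,R,L)\mapsto(H',G',R',L')$ together with the parity constraint: when $a'=a+1$ the parities of $H,H'$ (hence of $G,G'$) differ, forcing $G'\ge G+1$, which is exactly what compensates the unfavourable dependence on the current position. Your argument as written does not contain this step, and the monotonicity you propose to verify cannot be verified.

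Two smaller problems. First, the elementary move ``raise $H$ by one'' is vacuous, since $\overline\sF(X;H+1,G)$ is empty when $X+H+G$ is even; the elementary comparisons must preserve parity (raise $H$ by $2$, or raise $H$ and $G$ by $1$ each), and you must also check that every intermediate conditioned set is nonempty, which constrains the order in which the moves are performed. Second, you have the direction of the ceiling effect backwards: lowering $L$ conditions on a smaller decreasing event and pushes the walk \emph{down}; since here $L\le L'$, both wall changes push the primed walk up and the conflict you worry about in your last paragraph does not arise. For comparison, the paper sidesteps all of these transfer-operator computations by running a Glauber/heat-bath Markov chain on the space of paths (resampling one interior site at a time) and coupling the two chains so that the pointwise order is preserved under each local update, as in Corwin--Hammond's Lemmas 2.6--2.7; that route requires no stochastic monotonicity of one-step kernels at all and is why the paper's proof is two lines.
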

\Cref{lem:mon} can be proved using a coupling of Monte-Carlo Markov chains for sampling $\mathfrak{f}$ and $\mathfrak{f}'$. 
See e.g.~Lemmas 2.6 and 2.7 in \cite{CH} and their proofs.

\begin{lemma}  \label{lem:eva}
Take any $X, H, G, L \in \Z_{\ge 0}$ with $X+H+G$ even, and sample $\mathfrak{f}:\llbracket 0, X\rrbracket \to \Z_{\ge 0}$ uniformly from $\sF(X;H,G)$, conditional on $\max \mathfrak{f}\le L$.
Take $k\in\N$ and discrete intervals $\llbracket X_1, Y_1\rrbracket, \ldots, \llbracket X_k, Y_k\rrbracket$ contained in $\llbracket 0, X\rrbracket$.
Assume that they are disjoint from each other, except for at endpoints  and also assume that $Y_i-X_i\ge 1$ for all $i$. Take $a_1,\ldots, a_k \in \N$, and denote $a=\sum_{i=1}^k a_i$.  For each $i\in\llbracket 1, k\rrbracket$, denote $V_i = \max_{\llbracket X_i, Y_i\rrbracket}\mathfrak{f} - \min_{\llbracket X_i, Y_i\rrbracket}\mathfrak{f}+1$.
We have
\begin{equation}  \label{eq:eva11}
\E\left[\prod_{i=1}^k \frac{V_i^{a_i}}{a_i!} \right] < C^a \log(a+1)^a (a^{-a/2}X^{a/2} + a^{-a} |H-G|^a )
\end{equation}
and for any $K\in \N$,
\begin{multline}  \label{eq:eva22}
\E\left[\prod_{i=1}^k \frac{V_i^{a_i}}{a_i!} (\don[\max \mathfrak{f} \ge \max\{H,G\}+K] + \don[\min \mathfrak{f} \le \min\{H,G\}-K]) \right] \\ < C^a \log(a)^a (a^{-a/2}X^{a/2} + a^{-a} (|H-G|+K)^a ) \exp(-cK^2/X).
\end{multline}
We emphasize that the constant $C$ does not depend on $k$, i.e., the number of intervals.
\end{lemma}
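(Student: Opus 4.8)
The plan is to replace each range $V_i$ by the deviation of $\mathfrak f$ from its linear interpolant (peeling off the $|H-G|$--contribution combinatorially), to reduce via the monotone coupling of \Cref{lem:mon} to a bridge conditioned only to stay nonnegative, and then to bound the resulting mixed moments of local oscillations \emph{uniformly in the number $k$ of intervals} by conditioning on endpoint values.

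First I would set $g(t)=H+(G-H)t/X$ and $D_i=\max_{t\in\llbracket X_i,Y_i\rrbracket}|\mathfrak f(t)-g(t)|$, $\beta_i=|G-H|(Y_i-X_i)/X$, so that $V_i\le 1+\beta_i+2D_i$. One may assume each $a_i\ge 1$, hence $k\le a$, and since the intervals overlap only at endpoints, $\sum_i\beta_i\le|G-H|$ and $\sum_i(Y_i-X_i)\le X$. Expanding the product binomially and using repeatedly the elementary inequality $\prod_i y_i^{d_i}/d_i!\le(\sum_i y_i)^{\sum_i d_i}/(\sum_i d_i)!$ (a case of the multinomial theorem), the left side of \eqref{eq:eva11} is dominated by a sum over $b\le a$ of $\frac{(a+|G-H|)^{a-b}}{(a-b)!}\,2^b\,\E\big[\sum_{\vec c:\,|\vec c|=b}\prod_i D_i^{c_i}/c_i!\big]$; the deterministic prefactors are easily seen to contribute at most the $a^{-a}|H-G|^a$ term (using $a+|G-H|\le 2a$ when $|G-H|\le a$, Stirling otherwise), so everything reduces to showing $\E\big[\sum_{\vec c:\,|\vec c|=b}\prod_i D_i^{c_i}/c_i!\big]\le C^b\log(b+1)^b b^{-b/2}X^{b/2}$ for $b\le a$ with $C$ absolute. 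Next, \Cref{lem:mon} lets me couple $\mathfrak f$ so that $\mathfrak f^{\downarrow}\le\mathfrak f\le\mathfrak f^{\uparrow}$ pointwise, where $\mathfrak f^{\uparrow}$ (resp.\ $\mathfrak f^{\downarrow}$) is the bridge from $H$ to $G$ conditioned only to stay $\ge 0$ (resp.\ only $\le L$); then $D_i\le D_i^{\uparrow}+D_i^{\downarrow}$, and a further application of the multinomial inequality together with a reflection about $y=L$ reduces everything to the case where $\mathfrak f$ is conditioned only to be nonnegative.

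For that case, I would condition on the values $\{\mathfrak f(X_i),\mathfrak f(Y_i)\}_i$; given these, the restrictions of $\mathfrak f$ to the distinct intervals become \emph{independent} nonnegativity--conditioned walk bridges, so $\prod_i D_i^{c_i}$ factorizes in conditional expectation. For a single piece, a reflection computation as in \Cref{lem:countwk} gives the sub--Gaussian bound
\[
\E\!\left[D_i^{c_i}\,\middle|\,\{\mathfrak f(X_i),\mathfrak f(Y_i)\}\right]\le C^{c_i}\Big(c_i^{c_i/2}(Y_i-X_i)^{c_i/2}+(E_{X_i}+E_{Y_i})^{c_i}\Big),
\]
where $E_j=|\mathfrak f(j)-g(j)|$ and the implicit polynomial prefactor is harmless because under the global nonnegativity conditioning the endpoint value at $X_i$ is typically of size $\sqrt{X_i\wedge(X-X_i)}$. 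Taking the expectation over the endpoint values (whose joint upper tail is again controlled by reflection, at scale $\sqrt X$) through a dyadic decomposition of each $E_j$ into $O(\log(b+1))$ relevant scales, combining $c_i^{c_i/2}$ with $1/c_i!$ by Stirling, summing over $\vec c$ with $|\vec c|=b$, and collapsing $\prod_i(Y_i-X_i)^{c_i/2}$ against $\sum_i(Y_i-X_i)\le X$ by the multinomial inequality, yields $C^b\log(b+1)^b b^{-b/2}X^{b/2}$; the $\log$ factor is the cost of the dyadic union bound, and keeping it multiplicative — one $\log$ per endpoint, with only $\le 2k\le 2a$ endpoints — is what makes the bound independent of $k$. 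The tail statement \eqref{eq:eva22} is obtained by rerunning this argument with the indicator $\don[\max\mathfrak f\ge\max\{H,G\}+K]+\don[\min\mathfrak f\le\min\{H,G\}-K]$ inserted: that event forces a global excursion of size $\ge K$ beyond the endpoints, so by reflection it has probability $\le C\exp(-cK^2/X)$, and on it the $E_j$ are bounded by $|H-G|+K$; propagating these two facts through the dyadic estimate produces the factor $\exp(-cK^2/X)$ and replaces $|H-G|$ by $|H-G|+K$.

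I expect the main obstacle to be the last step: making the decorrelation estimate quantitative with constants independent of the number of intervals. The two features that rescue it are that $a_i\in\N$ forces $k\le a$, so the combinatorial overhead of conditioning on the endpoints is absorbed into $C^a$, and the repeated use of $\prod_i y_i^{d_i}/d_i!\le(\sum_i y_i)^{\sum_i d_i}/(\sum_i d_i)!$, which trades products over intervals for single powers of the total length $X$ (and total drift $|H-G|$) at a cost of only a constant per interval; the genuine tension — between a long local interval $\llbracket X_i,Y_i\rrbracket$ and a possibly atypically small value of $\mathfrak f$ at its endpoints — must be resolved by the joint tail control of the endpoint deviations under the global conditioning.
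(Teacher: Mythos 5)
There is a genuine gap, and it occurs at your very first reduction. You bound the local oscillation $V_i=\max_{\llbracket X_i,Y_i\rrbracket}\mathfrak f-\min_{\llbracket X_i,Y_i\rrbracket}\mathfrak f+1$ by $1+\beta_i+2D_i$, where $D_i=\max_{\llbracket X_i,Y_i\rrbracket}|\mathfrak f-g|$ is the deviation from the \emph{global} linear interpolant. These two quantities live on different scales: $V_i$ is of order $(Y_i-X_i)^{1/2}$ (plus drift), whereas $D_i\ge\max\{E_{X_i},E_{Y_i}\}$ is of order $\sqrt{X_i\wedge(X-X_i)}$, i.e.\ up to $\sqrt X$, no matter how short the interval is — your own conditional bound $\E[D_i^{c_i}\mid\cdot]\le C^{c_i}(c_i^{c_i/2}(Y_i-X_i)^{c_i/2}+(E_{X_i}+E_{Y_i})^{c_i})$ makes this explicit. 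Consequently the intermediate target $\E[\sum_{\vec c:|\vec c|=b}\prod_iD_i^{c_i}/c_i!]\le C^b\log(b+1)^bb^{-b/2}X^{b/2}$ is false: take $H=G=0$, $L$ huge, and $k=b=a$ unit intervals clustered around $X/2$ with $a_i=c_i=1$; then $D_i\ge\mathfrak f(X_i)$, which is of order $\sqrt X$ with probability bounded below uniformly, so $\E[\prod_iD_i]\ge c^{\,b}X^{b/2}$, exceeding the target by a factor of roughly $(\sqrt b/\log b)^b$. The factor $a^{-a/2}$ in \eqref{eq:eva11} is precisely the gain coming from disjointness, via $\prod_i(Y_i-X_i)^{p_i/2}\le p^{-p/2}X^{p/2}\prod_ip_i^{p_i/2}$ (concavity of $\log$ together with $\sum_i(Y_i-X_i)\le X$); once each interval contributes the global scale $\sqrt X$ instead of its own length, that gain is irrecoverable. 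The paper never leaves the local scale: it sets $Z_i=(V_i-(G-H)(Y_i-X_i)/X)/(Y_i-X_i)^{1/2}$, proves a uniform sub-Gaussian tail $\PP[Z_i>x]<C\exp(-cx^2)$ (in which the endpoint contribution is the \emph{increment} $|\mathfrak f(Y_i)-\mathfrak f(X_i)|$, of order $(Y_i-X_i)^{1/2}$ after removing the drift, not the two separate deviations from the global chord), and handles correlations not by conditioning on endpoints but by the crude bound $Z_i\le Z:=\max_j Z_j\vee 0$, whose $m$-th moment costs only $(C\log(k+1)m)^{m/2}$ — that is the sole source of the $\log(a+1)^a$.

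Two further problems. First, in your treatment of \eqref{eq:eva22} the assertion that on the event $\max\mathfrak f\ge\max\{H,G\}+K$ ``the $E_j$ are bounded by $|H-G|+K$'' is wrong: that event is a lower bound on the maximum and imposes no upper bound on the deviations. The working route (the paper's) is to split $\mathfrak f$ at the first hitting time of level $\max\{H,G\}+K$ into two conditioned bridges, apply \eqref{eq:eva11} to each half, and average over the splitting point; the factor $\exp(-cK^2/X)$ comes from the probability of reaching that level at all. Second, your deterministic prefactor $(a+|G-H|)^{a-b}/(a-b)!$ (rather than $(k+|G-H|)^{a-b}/(a-b)!$) is already too large when $k\ll a$ and $X$ is small, e.g.\ $k=1$, $Y_1-X_1=1$, $a_1=a$, $H=G$, where the $b=0$ term alone is of order $e^a$ while the right-hand side of \eqref{eq:eva11} is $(C\log(a+1)/\sqrt a)^a$. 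Your endpoint-conditioning idea could in principle replace the paper's max-of-sub-Gaussians device, but only if carried out for the local oscillations themselves, controlling the joint law of the increments $\mathfrak f(Y_i)-\mathfrak f(X_i)$ rather than the individual $E_j$; as written the argument does not close.
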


\begin{proof}
Without loss of generality we assume that $H\le G$.

We start by deriving \eqref{eq:eva11}.
For each $i\in \llbracket 1,k\rrbracket$, we write $Z_i=\frac{V_i-(G-H)(Y_i-X_i)/X}{(Y_i-X_i)^{1/2}}$.
\begin{claim} \label{cla}
    We have that $\PP[Z_i>x] < C\exp(-cx^2)$ for any $x>0$, where $C,c>0$ are universal constants (i.e., independent of $X, H, G, L, X_i, Y_i$).
\end{claim}
We postpone the proof of this claim, and proceed assuming it.

We write $Z=\max\{0, Z_1, \ldots, Z_k\}$.
Then $\PP[Z>x] < Ck\exp(-cx^2)$ for any $x>0$, and $\PP[Z>x\sqrt{\log(k+1)}] < Ck\exp(-c(k+1)x^2) < C\exp(-cx^2)$.
This implies that $\frac{Z}{C\sqrt{\log(k+1)}}$ is stochastically dominated by the absolute value of a standard normal random variable.
Thus we have $\E[Z^m]<(C\log(k+1)m)^{m/2}$ for any $m\in \N$.

Now to prove \eqref{eq:eva11}, we note that for any $i\in\llbracket 1, k\rrbracket$,
\begin{equation}  \label{eq:evapff0}
\frac{V_i^{a_i}}{a_i!} = \sum_{p_i=0}^{a_i} \frac{Z_i^{p_i} (Y_i-X_i)^{p_i/2} ((G-H)(Y_i-X_i)/X)^{q_i} }{p_i! q_i!},
\end{equation}
where $q_i=a_i-p_i$.
Therefore we consider
\begin{equation}  \label{eq:evapff1}
\E\left[ \prod_{i=1}^k \frac{Z^{p_i} (Y_i-X_i)^{p_i/2} ((G-H)(Y_i-X_i)/X)^{q_i} }{p_i! q_i!} \right],
\end{equation}
where $p_i, q_i \in \Z_{\ge 0}$ and $p_i+q_i=a_i$ for each $i\in \llbracket 1, k\rrbracket$.
Denote $p=\sum_{i=1}^k p_i$ and $q=\sum_{i=1}^k q_i$.
We have $\E[Z^{p}] < (C\log(k+1) p )^{p/2}$, and $\prod_{i=1}^k p_i! q_i! > c^a \prod_{i=1}^k p_i^{p_i}q_i^{q_i}$.
Also, since the logarithm function is concave, we have
\[
\sum_{i=1}^k p_i \log\left(p_i^{-1}(Y_i-X_i)\right) \le p \log\left( p^{-1}\left(\sum_{i=1}^k Y_i-X_i\right) \right) \le p \log(p^{-1}X),
\]
\[
\sum_{i=1}^k q_i \log\left(q_i^{-1}(Y_i-X_i)\right) \le q \log\left( q^{-1}\left(\sum_{i=1}^k Y_i-X_i\right) \right) \le q \log(q^{-1}X),
\]
so
\[\prod_{i=1}^k(Y_i-X_i)^{p_i/2} \le \frac{\prod_{i=1}^k p_i^{p_i/2}}{ p^{p/2} } X^{p/2} ,\quad \prod_{i=1}^k(Y_i-X_i)^{q_i} \le \frac{\prod_{i=1}^k q_i^{q_i}}{ q^{q} } X^{q} .\]
Thus, we bound \eqref{eq:evapff1} by
\[
C^a\log(a+1)^a \frac{X^{p/2}}{ p^{p/2} } \cdot\frac{(G-H)^{q} }{q^q}.
\]
The inequality $y<e^y$ applied to $y=a/p,a/q$ leads to $(a/p)^p, (a/q)^q<e^a$, and, therefore, we have $\frac{1}{p^{p/2}q^q}<\frac{e^{3a/2}}{a^{p/2+q}}$.
Then we can further bound \eqref{eq:evapff1} by
\[
C^a\log(a+1)^a \frac{X^{p/2}(G-H)^{q}}{ a^{p/2+q}} < C^a\log(a+1)^a (a^{-a/2}X^{a/2}+a^{-a}(G-H)^a),
\]
where the inequality is by $\frac{X^{p/2}(G-H)^{q}}{ a^{p/2+q}}=\left(\frac{X^{1/2}}{a^{1/2}}\right)^p\left(\frac{G-H}{a}\right)^q \le \max\left\{\frac{X^{1/2}}{a^{1/2}}, \frac{G-H}{a}\right\}^{p+q}$, and $p+q=a$.
By summing over all (at most $2^a$) possible choices of $p_i, q_i$, and using \eqref{eq:evapff0}, we get \eqref{eq:eva11}.

For \eqref{eq:eva22}, note that $\PP[\max \mathfrak{f}\ge G+K]<C\exp(-cK^2/X)$. (This follows from the same arguments as bounding the probability of $\cE_2$ in the proof of \Cref{cla} below.)
For $\mathfrak{f}$ conditional on $\max \mathfrak{f}\ge G+K$, it can be sampled as follows.
First take a $Y\in \llbracket 0, X\rrbracket$ such that $Y+H+G+K$ is even.
Then take $\mathfrak{f}_-$ uniformly random from $\sF(Y;H,G+K)$ conditional on $\le G+K$, and $\mathfrak{f}_+$ uniformly random from $\sF(X-Y;G+K,G)$ conditional on $\le L$.
Finally one gets $\mathfrak{f}$ by concatenating $\mathfrak{f}_-$ and $\mathfrak{f}_+$.
It now suffices to show that
\begin{equation}   \label{eq:evaproof2}
\E\left[\prod_{i=1}^{k} \frac{V_{i,-}^{a_{i,-}}}{a_{i,-}!} \right]
\E\left[\prod_{i=1}^{k} \frac{V_{i,+}^{a_{i,+}}}{a_{i,+}!} \right] < C^a \log(a+1)^a (a^{-a/2}X^{a/2} + a^{-a} (G-H+K)^a ),
\end{equation}
where
\begin{itemize}
    \item $V_{i,-} = \max_{\llbracket X_i, Y_i\rrbracket} \mathfrak{f}_- - \min_{\llbracket X_i, Y_i\rrbracket} \mathfrak{f}_-$ and $a_{i,-}=a_i$ if $Y_i \le Y$; \\$V_{i,-}=1$ and $a_{i,-}=0$ if $X_i\ge Y$; \\ $V_{i,-} = \max_{\llbracket X_i, Y\rrbracket} \mathfrak{f}_- - \min_{\llbracket X_i, Y_i\rrbracket} \mathfrak{f}_-$ and $a_{i,-}$ be any number in $\llbracket 0, a_i\rrbracket$ if $X_i<Y<Y_i$.
    \item $V_{i,+} = \max_{\llbracket X_i-Y, Y_i-Y\rrbracket} \mathfrak{f}_+ - \min_{\llbracket X_i-Y, Y_i-Y\rrbracket} \mathfrak{f}_+$ and $a_{i,+}=a_i$ if $X_i \ge Y$; \\ $V_{i,+}=1$ and $a_{i,+}=0$ if $Y_i\le Y$; \\ $V_{i,+} = \max_{\llbracket 0, Y_i-Y\rrbracket} \mathfrak{f}_+ - \min_{\llbracket 0, Y_i-Y\rrbracket} \mathfrak{f}_-$ and $a_{i,+}= a_i-a_{i,-}$ if $X_i<Y<Y_i$.
\end{itemize}
We have that \eqref{eq:evaproof2} directly follows from applying \eqref{eq:eva11} to $\mathfrak{f}_-$ and $\mathfrak{f}_+$, respectively.
Then from \eqref{eq:evaproof2}, by summing over all
(at most $a+1$) possible choices of $a_{i,-}$ (when there is one $i\in \llbracket 1, k\rrbracket$ with $X_i<Y<Y_i$), and averaging over all $Y$, the conclusion follows.
The estimate for $\mathfrak{f}$ conditional on $\max \mathfrak{f}\le H-K$ follows similarly.
\end{proof}

\begin{proof}[Proof of \Cref{cla}]
Under the event where $Z_i>x$, one of the following must happen
\begin{itemize}
    \item[$\cE_1$:] $|\mathfrak{f}(Y_i)-\mathfrak{f}(X_i)|>(G-H)(Y_i-X_i)/X + x(Y_i-X_i)^{1/2}/3-1$; or
    \item[$\cE_2$:] $\max_{\llbracket X_i, Y_i\rrbracket} \mathfrak{f} > \max\{\mathfrak{f}(X_i), \mathfrak{f}(Y_i)\} + x(Y_i-X_i)^{1/2}/3$; or
    \item[$\cE_3$:] $\min_{\llbracket X_i, Y_i\rrbracket} \mathfrak{f} < \min\{\mathfrak{f}(X_i), \mathfrak{f}(Y_i)\} - x(Y_i-X_i)^{1/2}/3$.
\end{itemize}
We next show that the event $\cE_1$ is contained in the union of the following two events:
\begin{itemize}
    \item[$\cE_1'$:] $|\mathfrak{f}(Y_i)-\mathfrak{f}(X_i)|>(\mathfrak{f}(Y_i)-H)(Y_i-X_i)/Y_i + x(Y_i-X_i)^{1/2}/6-1$; or
    \item[$\cE_1''$:] $|\mathfrak{f}(Y_i)-\mathfrak{f}(X_i)|>(G-\mathfrak{f}(X_i))(Y_i-X_i)/(X-X_i) + x(Y_i-X_i)^{1/2}/6-1$.
\end{itemize}
Indeed, if none of $\cE_1'$ or $\cE_1''$ hold, while $\cE_1$ holds, then necessarily $(\mathfrak{f}(Y_i)-H)/Y_i>(G-H)/X$, thus, $\mathfrak{f}(Y_i)>H+(G-H)Y_i/X$, and similarly $\mathfrak{f}(X_i)<G-(G-H)(X-X_i)/X=H+(G-H)X_i/X$.
In particular $\mathfrak{f}(Y_i)>\mathfrak{f}(X_i)$.
Then, by taking the reverse inequality in $\cE_1'$ multiplied by $Y_i$, plus the reverse inequality in $\cE_1''$ multiplied by $X-X_i$, we obtain
\[
X(\mathfrak{f}(Y_i)-\mathfrak{f}(X_i)) \le (G-H)(Y_i-X_i) + (Y_i+X-X_i)(x(Y_i-X_i)^{1/2}/6-1).
\]
Dividing by $X$, we get a contradiction with $\cE_1$. This proves the desired $\cE_1\subset \cE'_1\cup \cE''_1$.

We next show that $\PP[\cE_1'\mid \mathfrak{f}(Y_i)]<C\exp(-cx^2)$. Note that while the left-hand side is a conditional probability, the bound is uniform and does not depend on $\mathfrak{f}(Y_i)$.

We first analyze the inequality in $\cE'_1$ without absolute value; namely, we prove
\begin{equation}  \label{eq:monapp}
\PP[\mathfrak{f}(X_i)< \mathfrak{f}(Y_i)X_i/Y_i +H(Y_i-X_i)/Y_i - x(Y_i-X_i)^{1/2}/6+1 \mid \mathfrak{f}(Y_i)] < C\exp(-cx^2).
\end{equation}
By \Cref{lem:mon}, it suffices to prove \eqref{eq:monapp} with $\mathfrak{f}$ replaced by $\mathfrak{f}'$, uniformly sampled from $\overline\sF(Y_i; H, \mathfrak{f}(Y_i))$ conditional on $\le \max\{\mathfrak{f}(Y_i), H\}$.
In order to prove the inequality involving $\mathfrak{f}'$, we count walks in $\overline\sF(Y_i; H, \mathfrak{f}(Y_i))$ satisfying certain conditions.
First, the number of walks in $\overline\sF(Y_i; H, \mathfrak{f}(Y_i))$ that are $\le \max\{\mathfrak{f}(Y_i), H\}$ equals $|\sF(Y_i;|H-\mathfrak{f}(Y_i)|, 0)|$, which (using the reflection principle) is the same as
\begin{equation}  \label{eq:spt1}
\begin{split}
&|\overline\sF(Y_i;|H-\mathfrak{f}(Y_i)|, 0)| - |\overline\sF(Y_i;|H-\mathfrak{f}(Y_i)|, -2)| \\ &= {Y_i\choose (Y_i+|H-\mathfrak{f}(Y_i)|)/2} - {Y_i\choose (Y_i+|H-\mathfrak{f}(Y_i)|+2)/2} \\ 
&=
\frac{2|H-\mathfrak{f}(Y_i)|+2}{ Y_i+|H-\mathfrak{f}(Y_i)|+2}|\overline\sF(Y_i;|H-\mathfrak{f}(Y_i)|, 0)|.
\end{split}    
\end{equation}
On the other hand, for walks in $\overline\sF(Y_i; H, \mathfrak{f}(Y_i))$ that are $\le \max\{\mathfrak{f}(Y_i), H\}$, and equal $\max\{\mathfrak{f}(Y_i), H\}-K$ at $X_i$ for some fixed $K\in \Z_{\ge 0}$, the number of such walks equals $|\sF(X_i;|H-\mathfrak{f}(Y_i)|, K)| \cdot |\sF(Y_i-X_i;0, K)|$ or $|\sF(X_i;0, K)| \cdot |\sF(Y_i-X_i;|H-\mathfrak{f}(Y_i)|, K)|$, depending on whether $\mathfrak{f}(Y_i)\ge H$ or not.
In the former case, using the reflection principle, the number of walks is bounded by
\begin{equation}  \label{eq:spt2}
\frac{(|H-\mathfrak{f}(Y_i)|+1)(K+1)}{X_i+2} |\overline\sF(X_i;|H-\mathfrak{f}(Y_i)|, K)| \cdot \frac{2K+2}{Y_i-X_i+2} |\overline\sF(Y_i-X_i;0, K)|.
\end{equation}
Note that by Stirling's formula, we have
\begin{multline*}
\frac{|\overline\sF(X_i;|H-\mathfrak{f}(Y_i)|, K)| \cdot |\overline\sF(Y_i-X_i;0, K)|}{|\overline\sF(Y_i;|H-\mathfrak{f}(Y_i)|, K)|}\\
< C\sqrt{\frac{Y_i+2}{(X_i+2)(Y_i-X_i+2)}} \exp\left( 
 - c \frac{(K-|H-\mathfrak{f}(Y_i)|(Y_i-X_i)/Y_i)^2(Y_i+2)}{(X_i+2)(Y_i-X_i+2)}\right)    .
\end{multline*}
Using this, if we sum \eqref{eq:spt2} over all $K> |H-\mathfrak{f}(Y_i)|(Y_i-X_i)/Y_i +x(Y_i-X_i)^{1/2}/6-1$, and divide by \eqref{eq:spt1}, the result would be upper bounded by $C\exp(-cx^2)$.
When $\mathfrak{f}(Y_i)<H$ the same upper bound can be derived similarly. 
These imply \eqref{eq:monapp}.

By the same argument we can analyze the inequality in $\cE_1'$ with the left-hand side replaced by $\mathfrak{f}(X_i)-\mathfrak{f}(Y_i)$ without absolute value. Then we get $\PP[\cE_1'\mid \mathfrak{f}(Y_i)]<C\exp(-cx^2)$, and similarly $\PP[\cE_1''\mid \mathfrak{f}(X_i)]<C\exp(-cx^2)$.
These together imply that $\PP[\cE_1]<C\exp(-cx^2)$.

We then consider $\cE_2\setminus \cE_1$. Without loss of generality, we assume that $Y_i-X_i$ is even, and the odd case follows similarly.

Conditional on $\mathfrak{f}(X_i)$ and $\mathfrak{f}(Y_i)$, the function $\mathfrak{f}$ on $\llbracket X_i, Y_i\rrbracket$ is a random Bernoulli random bridge, conditional on $\ge 0$ and $\le L$.
By \Cref{lem:mon}, $\PP[\cE_2\mid \mathfrak{f}(X_i), \mathfrak{f}(Y_i)]$ is bounded by the probability of $\max \mathfrak{f}' > x(Y_i-X_i)^{1/2}/3$, where $\mathfrak{f}'$ is uniformly chosen from $\sF(Y_i-X_i; \lfloor (Y_i-X_i)^{1/2}\rfloor, \lfloor (Y_i-X_i)^{1/2}\rfloor)$.
It then suffices to consider the number of walks in $\sF(Y_i-X_i; \lfloor (Y_i-X_i)^{1/2}\rfloor, \lfloor (Y_i-X_i)^{1/2}\rfloor)$ with maximum $>x(Y_i-X_i)^{1/2}/3$, and $|\sF(Y_i-X_i; \lfloor (Y_i-X_i)^{1/2}\rfloor, \lfloor (Y_i-X_i)^{1/2}\rfloor)|$, and upper bound their ratio. 
Using the reflection principle, we have
\begin{align*}
&|\sF(Y_i-X_i; \lfloor (Y_i-X_i)^{1/2}\rfloor, \lfloor (Y_i-X_i)^{1/2}\rfloor)| \\
&=
|\overline\sF(Y_i-X_i; 0, 0)| - |\overline\sF(Y_i-X_i; 0, 2\lfloor (Y_i-X_i)^{1/2}\rfloor+2)|
> \frac{1}{10} |\overline\sF(Y_i-X_i; 0, 0)|.
\end{align*}
On the other hand, for walks in $\sF(Y_i-X_i; \lfloor (Y_i-X_i)^{1/2}\rfloor, \lfloor (Y_i-X_i)^{1/2}\rfloor)$ with maximum $>x(Y_i-X_i)^{1/2}/3$,
their number is at most the number of walks in $\overline\sF(Y_i-X_i; 0, 0)$, with maximum $>(Y_i-X_i)^{1/2}(x/3-1)$.
Using the reflection principle, this is at most
$|\overline\sF(Y_i-X_i; 0, \lfloor x(Y_i-X_i)^{1/2}/3\rfloor)|$ (when $x>6$) or $|\overline\sF(Y_i-X_i; 0, 0)|$ (otherwise).
In either case, using Stirling's formula, the ratio of the last expression and $|\overline\sF(Y_i-X_i; 0, 0)|$ is $<C\exp(-cx^2)$.
We then conclude that $\PP[\cE_2\mid \mathfrak{f}(X_i), \mathfrak{f}(Y_i)]<C\exp(-cx^2)$, thus $\PP[\cE_2]<C\exp(-cx^2)$.
Similarly we have $\PP[\cE_3]<C \exp(-cx^2)$.

Putting all of the above bounds together, we get $\PP[Z_i>x] < C\exp(-cx^2)$.
\end{proof}

\subsection{Exact asymptotic walk counts}   \label{Appendix_2}

Recall the notations from \Cref{defn:bIs} and \Cref{defn:IXHG} (and also at the beginning of \Cref{ssec:expvar}).
We next provide more precise estimates on the size $|\sF(X;H,G)|$, which are used in \Cref{ssec:bdecom}.

\begin{lemma}  \label{lem:A1}
For any $C_1,C_2>0$, there exists a constant $C_3=C_3(C_1,C_2)>0$, such that for each $N=1,2,\dots$, and  $x,h,g\in\mathbb R_{\ge 0}$, and  $X, H, G\in \Z_{\ge 0}$, satisfying: $C_1 N^{-2/3}<x<C_2$ and $C_1 N^{-1/3}< h,g < C_2 N^{1/100}$, and $|X-xN^{2/3}|, |H-hN^{1/3}|, |G-gN^{1/3}|<C_2$, and $X+H+G$ is even, we have
\begin{multline}  \label{eq:A1bound}
\left|2^{-X-1}N^{1/3}|\sF(X;H,G)| - \bF(x;h,g) \right| \\ <  C_3\bF(x;h,g) \left(1\wedge (N^{-3/5}x^{-3} + N^{-1/10} + N^{-1/3}/h+N^{-1/3}/g )\right).
\end{multline}
\end{lemma}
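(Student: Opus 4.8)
The plan is to compute $|\sF(X;H,G)|$ exactly via the reflection principle and then apply a sharp Stirling-type expansion to the resulting binomial coefficients, tracking all error terms at the precision required by the three regimes on the right-hand side of \eqref{eq:A1bound} (small $x$, generic, and small $h$ or $g$). First I would write, by the reflection principle (as in the proof of \Cref{lem:countwk}),
\[
|\sF(X;H,G)| = \binom{X}{(X+H-G)/2} - \binom{X}{(X+H+G)/2+1},
\]
so that $2^{-X-1}|\sF(X;H,G)|$ is a difference of two normalized central binomial terms. Writing $\binom{X}{X/2+a} = 2^X\,b_X(a)$ with $b_X(a)$ the symmetric binomial probability, the quantity of interest is $\tfrac12\big(b_X(\tfrac{H-G}{2}) - b_X(\tfrac{H+G}{2}+1)\big)$, which should be compared with $\tfrac12\big(\varphi_X(\tfrac{H-G}{2}) - \varphi_X(\tfrac{H+G}{2})\big)$ where $\varphi_X(a) = \sqrt{\tfrac{2}{\pi X}}\exp(-2a^2/X)$ is the local CLT Gaussian; note $N^{1/3}\cdot\tfrac12(\varphi_X(\tfrac{H-G}{2}) - \varphi_X(\tfrac{H+G}{2}))$ converges to $\bF(x;h,g) = \tfrac{1}{\sqrt{2\pi x}}(e^{-(g-h)^2/(2x)} - e^{-(g+h)^2/(2x)})$ under the stated scalings, using $|X - xN^{2/3}|, |H-hN^{1/3}|, |G-gN^{1/3}| < C_2$.

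The technical core is a quantitative local CLT for the symmetric binomial with multiplicative control. The standard asymptotic expansion gives, for $|a| \le X^{1/2+\eta}$ with $\eta$ small,
\[
b_X(a) = \varphi_X(a)\Big(1 + O\big(X^{-1} + a^2/X^2 + a^4/X^3\big)\Big),
\]
obtained from Stirling's formula with the second-order term $\log b_X(a) = -2a^2/X + O(X^{-1} + a^4/X^3)$; for $|a|$ larger one uses the cruder bound $b_X(a) < CX^{-1/2}\exp(-ca^2/X)$ already isolated in \Cref{lem:countwk}, which forces such terms into the $1\wedge(\cdots)$ on the right-hand side. Plugging $a = (H-G)/2$ and $a = (H+G)/2 + 1$: in the first term $|a| \asymp |h-g|N^{1/3}$ which can be as small as $O(1)$, while in the second $|a| \asymp (h+g)N^{1/3}$, and the corresponding relative errors are $O(X^{-1} + a^4/X^3) = O(N^{-2/3} + (g\vee h)^4 N^{-2/3})$ — here the hypothesis $h,g < C_2 N^{1/100}$ keeps $a^4/X^3$ polynomially small, contributing to the $N^{-1/10}$ in \eqref{eq:A1bound}. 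The $x^{-3}$ factor in the $N^{-3/5}x^{-3}$ term tracks the worst case $X \asymp N^{2/3}x$ small, where the $a^4/X^3$ error is amplified; one checks $a^4/X^3 \lesssim (g\vee h)^4/(xN^{2/3})^3 \lesssim N^{-3/5}x^{-3}$ after absorbing the $N^{1/100}$ slack. The shift by $1$ in the second binomial index is handled by $b_X(a+1)/b_X(a) = \exp(-2(2a+1)/X + O(1/X)) = 1 - O((g\vee h)/N^{1/3}\cdot x^{-1}N^{-2/3})$, a routine correction. Finally, the difference structure $e^{-(g-h)^2/(2x)} - e^{-(g+h)^2/(2x)}$ itself is what produces the $N^{-1/3}/h + N^{-1/3}/g$ term: when, say, $h$ is small, $\bF(x;h,g) \asymp h\cdot(\text{bounded})$, so an additive error of absolute size $O(N^{-1/3}\cdot\text{something bounded})$ becomes a relative error of order $N^{-1/3}/h$; this requires expanding $e^{-(g-h)^2/(2x)} - e^{-(g+h)^2/(2x)}$ to first order in $h$ (resp.\ $g$) and bounding the discretization mismatch between $(H\mp G)^2/(4X)$ and $(h\mp g)^2/(2x)$, using $|H-hN^{1/3}|, |G - gN^{1/3}|, |X - xN^{2/3}| < C_2$ to get the mismatch $= O(g/N^{1/3}\cdot x^{-1} + \ldots)$.

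I expect the main obstacle to be bookkeeping the three error regimes \emph{simultaneously and multiplicatively}: each of the terms $N^{-3/5}x^{-3}$, $N^{-1/10}$, $N^{-1/3}/h$, $N^{-1/3}/g$ is the dominant one in a different corner of parameter space, and the estimate must be stated as a bound on $\bF(x;h,g)$ times their sum, not as an absolute bound. The delicate part is the $h,g \to 0$ corner, where $\bF$ itself degenerates and one must extract a factor of $h$ (or $g$) from $\bF$ before the Stirling errors can be rephrased as relative errors; this is exactly analogous to the relations $\bI_0(x;h) = \lim_{g\to 0}g^{-1}\bI(x;h,g)$ and $\bF_0 = \lim_{h\to 0}h^{-1}\bF$ noted after \Cref{defn:bIs}, and one should prove \eqref{eq:A1bound} in a form uniform enough that Lemmas~\ref{lem:sFog} and \ref{lem:sfoo} (the $G=0$ and $G=H=0$ degenerations, used in \Cref{prop:IXHGest}) follow by taking the same limits. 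A secondary nuisance is that $x$ is allowed to be as small as $C_1N^{-2/3}$, i.e.\ $X$ of order $1$; in that regime the asymptotic expansion is vacuous but \eqref{eq:A1bound} is also easy since the $1\wedge(\cdots)$ saturates — one just needs the crude two-sided bound $c\bF(x;h,g) < 2^{-X-1}N^{1/3}|\sF(X;H,G)| < C\bF(x;h,g)$, which for bounded $X$ reduces to finitely many explicit cases.
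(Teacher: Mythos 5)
Your overall skeleton---reflection principle, Stirling's formula, a case split between $x<N^{-1/5}$ (where the $1\wedge(\cdots)$ saturates and only a two-sided multiplicative comparison is needed) and $x\ge N^{-1/5}$, plus a degenerate treatment of bounded $X$---matches the paper's proof. But there is a genuine gap in how you handle the difference of the two binomial coefficients, and it sits exactly at the step that is supposed to produce the $N^{-1/3}/h+N^{-1/3}/g$ terms. You propose to approximate each of $b_X(\tfrac{H-G}{2})$ and $b_X(\tfrac{H+G}{2}+1)$ separately by its local-CLT Gaussian with relative error $O(X^{-1}+a^2/X^2+a^4/X^3)$ and then subtract. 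The resulting \emph{absolute} error in the difference is of order $X^{-1/2}$ times that relative error (generically $N^{-1/3}x^{-1/2}\cdot N^{-1/10}$ after your bounds), whereas the difference itself is only of order $X^{-1/2}\bigl(1-e^{-2gh/x}\bigr)\asymp X^{-1/2}\min(1,gh/x)$. Relative to $\bF(x;h,g)$ your error is therefore of order $N^{-1/10}\max(1,x/(gh))$; taking, say, $x\asymp 1$ and $g=h\asymp N^{-1/8}$ (admissible under the hypotheses), this is $N^{3/20}$, while the right-hand side of \eqref{eq:A1bound} permits only $\approx N^{-1/10}$. Your assertion that the additive error is ``of absolute size $O(N^{-1/3}\cdot\text{something bounded})$'' is precisely the point that needs proof and does not follow from separate Stirling approximations: it requires showing that the Stirling error terms of the two binomials cancel against each other, which subtracting two independently derived $1+O(\cdots)$ factors cannot deliver.

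The paper avoids this by never subtracting two approximations: it writes $|\sF(X;H,G)|=\binom{X}{(X+H-G)/2}\bigl(1-\frac{((X-H-G)/2)_{G+1}}{((X+H-G)/2+1)_{G+1}}\bigr)$, applies Stirling only to the single binomial (which matches $\frac{1}{\sqrt{2\pi x}}e^{-(g-h)^2/(2x)}$), and compares the exact Pochhammer ratio \emph{multiplicatively} with $1-e^{-2gh/x}$ via $\bigl|\log\frac{((X-H-G)/2)_{G+1}}{((X+H-G)/2+1)_{G+1}}+\frac{2gh}{x}\bigr|<CN^{-1/3}(gh^2/x^2+gh/x^2+g/x+h/x)$; dividing by $1-e^{-2gh/x}$ is what yields the $N^{-1/3}/h+N^{-1/3}/g$ terms (and, in Case 1, the bound $1\wedge\frac{(G+1)(H+1)}{X}<C(1-e^{-2gh/x})$ gives the needed one-sided comparison). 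To repair your route you would need an analogous cancellation statement, e.g.\ write $b_X(a)=\varphi_X(a)e^{R(a)}$ with $R$ and its increments explicitly controlled, and decompose $b_X(a_1)-b_X(a_2)=e^{R(a_1)}(\varphi_X(a_1)-\varphi_X(a_2))+\varphi_X(a_2)(e^{R(a_1)}-e^{R(a_2)})$ so that the second term is bounded by $|R(a_1)-R(a_2)|$ rather than by $\max_i|R(a_i)|$. As written, the proposal omits this and the claimed bound fails by a polynomial factor on an open region of the parameter space.
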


\begin{proof}
Recall that \[
\bF(x;h,g) = \frac{1}{\sqrt{2\pi x}} \left( \exp\left(-\frac{(g-h)^2}{2x}\right)-\exp\left(-\frac{(g+h)^2}{2x}\right)\right).
\]
In this proof we use $C,c>0$ to denote large and small constants that can depend on $C_1, C_2$.

Without loss of generality we assume that $G\le H$.
When $X$ is small (i.e., bounded by any fixed constant), $x$ is of order at most $N^{-2/3}$, thus, the second factor in the right-hand side of \eqref{eq:A1bound} equals $1$. 
Note that $2^{-X-1}N^{1/3}|\sF(X;H,G)|$ is of order $N^{1/3}$.
If $|H-G|\le X$, then $\bF(x;h,g)>cN^{1/3}$, because $x$ is of order $N^{-2/3}$, $g-h$ is of order $N^{-1/3}$, and $h$ is of order at least $N^{-1/3}$. Hence, the conclusion follows by taking $C_3$ in the right-hand side of \eqref{eq:A1bound} to be large. On the other hand, if $|H-G|>X$, then $|\sF(X;H,G)|=0$, and the conclusion follows by taking an arbitrary $C_3>1$ in the right-hand side of \eqref{eq:A1bound}.

Below we assume that $X$ is large enough. Note that large $X$ implies large $N$, because we assumed $x< C_2$, and $|X-x N^{2/3}|<C_2$.

When $|G-H|<X$ we have via reflection principle:
\[
|\sF(X;H,G)| = {X \choose (X+H-G)/2} \left(1 - \frac{((X-H-G)/2)_{G+1}}{((X+H-G)/2+1)_{G+1}}\right).
\]
Stirling's approximation implies that when $|G-H|<X$, we have:
\begin{multline}  \label{eq:boundsitr}
\Bigg|\log\left( {X \choose (X+H-G)/2}  \right) - \log\left(\frac{2X}{\pi (X+H-G)(X-H+G)}\right)/2 \\ - X\log(X) + (X+H-G)\log((X+H-G)/2)/2 \\ + (X-H+G)\log((X-H+G)/2)/2 \Bigg| < \frac{C}{X+H-G}+\frac{C}{X-H+G}.
\end{multline}
For $(X+H-G)\log(X+H-G)/2+(X-H+G)\log(X-H+G)/2-X\log(X)$, it can be written as 
\[
X\left( (1+(H-G)/X)\log(1+(H-G)/X)/2 + (1-(H-G)/X)\log(1-(H-G)/X)/2 \right).
\]
By taking a Taylor expansion in the variable $(H-G)/X$, this is between $\frac{(G-H)^2}{2X}+C_4^{-1}\frac{(G-H)^4}{X^3}$ and $\frac{(G-H)^2}{2X}+C_4\frac{(G-H)^4}{X^3}$, for $C_4>0$ being a large constant that depends only on $C_1, C_2$.
Therefore, we have
\[
|\sF(X;H,G)| = 2^{X+1} \frac{1}{\sqrt{2\pi X}} \exp\left(-\frac{(G-H)^2}{2X}\right)
\left(1 - \frac{((X-H-G)/2)_{G+1}}{((X+H-G)/2+1)_{G+1}}\right)\Err,
\]
where
\begin{multline}  \label{eq:bounderrterm}
 \sqrt{\frac{X^2}{X^2-(G-H)^2}}\exp\left(-C_4\frac{(G-H)^4}{X^3}-\frac{C}{X-|G-H|}\right) \\ <\Err < \sqrt{\frac{X^2}{X^2-(G-H)^2}}\exp\left(-C_4^{-1}
\frac{(G-H)^4}{X^3} + \frac{C}{X-|G-H|}\right)
\end{multline}
and the factor $\sqrt{\frac{X^2}{X^2-(G-H)^2}}$ is from the second term in the first line of \eqref{eq:boundsitr}.
We further have \begin{equation} \label{eq:bdgh}
\left|\frac{(g-h)^2}{2x}-\frac{(G-H)^2}{2X}\right| <CN^{-1/3}\frac{|g-h|}{x}+ CN^{-2/3}\frac{(g-h)^2}{x^2}.
\end{equation}
\noindent\textbf{Case 1.} When $x<N^{-1/5}$,
we have
\begin{equation}  \label{eq:onefive}
    1\wedge (N^{-3/5}x^{-3} + N^{-1/10} + N^{-1/3}/h+N^{-1/3}/g ) = 1.
\end{equation}
Thus, in this case, it suffices to show that  $2^{-X-1}N^{1/3}|\sF(X;H,G)| <C\bF(x;h,g)$.
When $|G-H|>X$ this is obvious, since then $|\sF(X;H,G)| = 0$.
When $|G-H|=X$, necessarily $|\sF(X;H,G)| = 1$, and $||g-h|-N^{1/3}x|<CN^{-1/3}$. Thus, since $h,g>C_1N^{-1/3}$, we have $gh/x>c$, and
\[
\bF(x;h,g) = \frac{1}{\sqrt{2\pi x}} \exp\left(-\frac{(g-h)^2}{2x}\right)(1-\exp(-2gh/x)) > c N^{1/3}X^{-1/2}\exp(-X/2),
\]
and \eqref{eq:A1bound} follows.

Below we assume that $|G-H|<X$.
Note that since $X$ is large enough, there is some $C_5>0$ depending only on $C_1, C_2$, such that $N^{-4/3}\frac{(g-h)^4}{x^4}<C_5\frac{(G-H)^4}{X^3}$.
Then we have that the right-hand side of \eqref{eq:bdgh} is bounded by
\[
C + (2C_4C_5)^{-1}N^{-4/3}\frac{(g-h)^4}{x^4}<C+(2C_4)^{-1}\frac{(G-H)^4}{X^3},
\]
by taking $C$ large and using $C_4$ from \eqref{eq:bounderrterm}.
By combining this with the upper bound of $\Err$ in \eqref{eq:bounderrterm}, we have
\[
\exp\left(-\frac{(G-H)^2}{2X}\right) \Err 
< C\exp\left(-\frac{(g-h)^2}{2x}\right) \sqrt{\frac{X^2}{X^2-(G-H)^2}} \exp\left(-(2C_4)^{-1}
\frac{(G-H)^4}{X^3} \right).
\]
Thus, we further have
\begin{equation}  \label{eq:a1bound001}
\exp\left(-\frac{(G-H)^2}{2X}\right) \Err < C\exp\left(-\frac{(g-h)^2}{2x}\right).
\end{equation}

On the other hand, we have
\begin{multline*}
1-\frac{((X-H-G)/2)_{G+1}}{((X+H-G)/2+1)_{G+1}}
= \sum_{i=0}^H  \frac{((X-H-G)/2+i+1)_{G+1}-((X-H-G)/2+i)_{G+1}}{((X+H-G)/2+1)_{G+1}} \\
= \sum_{i=0}^H \frac{(G+1)((X-H-G)/2+i)_G}{((X+H-G)/2+1)_{G+1}}  
\le \sum_{i=0}^H \frac{G+1}{X} = \frac{(G+1)(H+1)}{X},
\end{multline*}
so
\begin{equation}  \label{eq:a1bound002}
1-\frac{((X-H-G)/2)_{G+1}}{((X+H-G)/2+1)_{G+1}} \le 1 \wedge \frac{(G+1)(H+1)}{X} <C(1-\exp(-2gh/x)).
\end{equation}
Therefore, by putting together \eqref{eq:a1bound001} and \eqref{eq:a1bound002}, we have
\[
2^{-X-1}N^{1/3}|\sF(X;H,G)| <C\exp\left(-\frac{(g-h)^2}{2x}\right)(1-\exp(-2gh/x)) = C\bF(x;h,g),
\]
and the conclusion holds due to \eqref{eq:onefive} in this case.

\noindent\textbf{Case 2.} When $x\ge N^{-1/5}$, necessarily $|G-H|\le G+H<X$, since $h,g <C_2N^{1/100}$.
Now we have
\[\frac{2^{-X-1}N^{1/3}|\sF(X;H,G)|}{\bF(x;h,g)} = \frac{\frac{N^{1/3}}{\sqrt{2\pi X}} \exp\left(-\frac{(G-H)^2}{2X}\right)}{\frac{1}{\sqrt{2\pi x}} \exp\left(-\frac{(g-h)^2}{2x}\right)}\cdot \frac{1 - \frac{((X-H-G)/2)_{G+1}}{((X+H-G)/2+1)_{G+1}}}{1-\exp\left(-\frac{2gh}{x}\right)} \cdot \Err.\]
We are going to show that each factor in the right-hand side is close to $1$.

The right-hand side of \eqref{eq:bdgh} is smaller than $CN^{-1/10}$, and $|N^{1/3}\sqrt{x/X}-1|<CN^{-2/3+1/5}=CN^{-7/15}$.
Thus
\begin{equation}  \label{eq:86bd}
\left|\frac{\frac{N^{1/3}}{\sqrt{2\pi X}} \exp\left(-\frac{(G-H)^2}{2X}\right)}{\frac{1}{\sqrt{2\pi x}} \exp\left(-\frac{(g-h)^2}{2x}\right)} - 1\right| < CN^{-1/10}.
\end{equation}

On the other hand, we have
\begin{multline*}
\left| \log\left(\frac{((X-H-G)/2)_{G+1}}{((X+H-G)/2+1)_{G+1}} \right) + \frac{2gh}{x} \right| 
= \left| \sum_{i=0}^G \log\left(1-\frac{2H+2}{X+H-G+2i+2}\right) + \frac{2gh}{x}\right|
\\<
\left|\frac{2(H+1)(G+1)}{X} - \frac{2gh}{x}\right|
+C (H+1)^2(G+1)/X^2 
< CN^{-1/3}(gh^2/x^2+gh/x^2+g/x+h/x),
\end{multline*}
where in the first inequality we used that $\left|\log\left(1-\frac{2H+2}{X+H-G+2i+2}\right) + \frac{2H+2}{X}\right|<\frac{C(H+1)^2}{X^2}$ for each $i\in\llbracket 0, G\rrbracket$.
As a result,
\begin{multline}   \label{eq:resut}
 \left|\frac{1 - \frac{((X-H-G)/2)_{G+1}}{((X+H-G)/2+1)_{G+1}}}{1-\exp\left(-\frac{2gh}{x}\right)} -1 \right| < \frac{\left(\exp(CN^{-1/3}(gh^2/x^2+gh/x^2+g/x+h/x))-1\right)\exp\left(-\frac{2gh}{x}\right)}{1-\exp\left(-\frac{2gh}{x}\right)} \\ < C(N^{-1/10}+N^{-1/3}/h+N^{-1/3}/g).
\end{multline}
Here the last inequality is due to the following reason. Since $x\ge N^{-1/5}$ and $h,g <C_2N^{1/100}$, we have $N^{-1/3}(g/x+h/x)<N^{-1/100}$.
If $gh/x<N^{1/100}$, we can upper bound the second expression by $\frac{CN^{-1/3}(gh^2/x^2+gh/x^2+g/x+h/x)}{gh/x}$; otherwise, we get the upper bound $C\exp(-cN^{1/50})$.

Besides, using that $x\ge N^{-1/5}$ and $h,g <C_2N^{1/100}$, we have
\[
1\le \frac{(G-H)^4}{X^3} < C N^{-2/3} \frac{(g-h)^4}{x^3} < CN^{-3/5}x^{-3}, \quad \frac{1}{X-|G-H|} < N^{-1/10},
\]
\[
\sqrt{\frac{X^2}{X^2-(G-H)^2}} < 1+C\frac{(G-H)^2}{X^2} < 1+CN^{-1/10}.
\]
Therefore, by \eqref{eq:bounderrterm}, we obtain $|\Err-1| < C(N^{-3/5}x^{-3}+N^{-1/10})$.

Finally, the bound on $|\Err-1|$ with \eqref{eq:86bd} and \eqref{eq:resut} imply that 
\[
\left|\frac{2^{-X-1}N^{1/3}|\sF(X;H,G)|}{\bF(x;h,g)} - 1\right| 
< C(N^{-3/5}x^{-3}+N^{-1/10}+N^{-1/3}/h+N^{-1/3}/g).    \qedhere
\]
\end{proof}

\begin{lemma}  \label{lem:sFog}
For any $C_1,C_2>0$, there exists a constant $C_3=C_3(C_1,C_2)>0$, such that for each $N=1,2,\dots$, and  $x,h\in\mathbb R_{\ge 0}$, and  $X, H\in \Z_{\ge 0}$, satisfying: $C_1 N^{-2/3}<x<C_2$ and $C_1 N^{-1/3}< h < C_2 N^{1/100}$, and $|X-xN^{2/3}|, |H-hN^{1/3}|<C_2$, and $X+H$ is even, we have
\[
\left|2^{-X-1}N^{2/3}|\sF(X;H,0)| - \bF_0(x;h) \right| \\ <  C_3\bF_0(x;h)\left(1\wedge (N^{-3/5}x^{-3} + N^{-1/10} + N^{-1/3}/h )\right).
\]
\end{lemma}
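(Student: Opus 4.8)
The plan is to reduce \Cref{lem:sFog} to the already-established \Cref{lem:A1} by taking the limit $g\to 0$ in the counting identity for $\sF(X;H,G)$, just as the continuous statement $\bF_0(x;h)=\lim_{g\to 0}g^{-1}\bF(x;h,g)$ (implicitly present in the definitions of \Cref{defn:bIs}, since $\bF_0(x;h)=\frac{2h}{\sqrt{2\pi x^3}}\exp(-h^2/(2x))$ is exactly $\lim_{g\to 0}g^{-1}\bF(x;h,g)$). Concretely, I would first observe the discrete analogue: for walks that stay non-negative and terminate at height $0$, the reflection principle gives $|\sF(X;H,0)|={X\choose (X+H)/2}\frac{H+1}{(X+H)/2+1}$, and more usefully $|\sF(X;H,0)| = |\sF(X-1;H,1)|$ (a walk ending at $0$ must be at $1$ at time $X-1$), while $|\sF(X;H,1)|$ is what one gets by setting $G=1$ in the count analyzed in the proof of \Cref{lem:A1}. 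So the idea is: apply the proof machinery of \Cref{lem:A1} directly with $G=1$ (equivalently with the continuous parameter $g$ of size $\sim N^{-1/3}$), keeping careful track of how the extra factor of $N^{1/3}$ between $\bF$ and $\bF_0$, and between $2^{-X-1}N^{1/3}$ and $2^{-X-1}N^{2/3}$, is produced.

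The key steps, in order, would be: (1) use the reflection-principle identity to write $2^{-X-1}N^{2/3}|\sF(X;H,0)|$ in terms of a Stirling expansion of a single binomial coefficient times the ratio factor $\frac{H+1}{(X+H)/2+1}$; (2) expand $\frac{H+1}{(X+H)/2+1} = \frac{2(H+1)}{X+H+2}$, and note that with $X\sim xN^{2/3}$ and $H\sim hN^{1/3}$ this equals $\frac{2h}{x}N^{-1/3}$ up to a multiplicative error $1+O(N^{-1/3}/h + N^{-1/3}x^{-1})$ — here the explicit $N^{-1/3}$ gain that upgrades $N^{1/3}$ to $N^{2/3}$ appears; (3) invoke the Stirling bound \eqref{eq:boundsitr} from the proof of \Cref{lem:A1} with $G=0$ (or $G=1$) to get ${X\choose (X+H)/2} = 2^{X+1}\frac{1}{\sqrt{2\pi X}}\exp(-H^2/(2X))\cdot \Err$ with $\Err$ controlled exactly as in \eqref{eq:bounderrterm}; (4) compare $\frac{1}{\sqrt{2\pi X}}\exp(-H^2/(2X))$ to $\frac{1}{\sqrt{2\pi x}}\exp(-h^2/(2x))$ using $|X-xN^{2/3}|,|H-hN^{1/3}|<C_2$ and a Taylor expansion, which contributes the $N^{-1/10}$ and $N^{-3/5}x^{-3}$ error terms precisely as in Case 2 of the proof of \Cref{lem:A1}; (5) handle the regime $x<N^{-1/5}$ (equivalently $X$ small) separately exactly as in Case 1 there, where the right-hand side error factor is simply $1$ and one only needs the crude two-sided bound $2^{-X-1}N^{2/3}|\sF(X;H,0)|\le C\bF_0(x;h)$, together with $|\sF(X;H,0)|=0$ when $H>X$. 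Assembling these multiplicatively gives the claimed bound.

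The main obstacle — though it is more bookkeeping than conceptual — is getting the $h$-dependent error term $N^{-1/3}/h$ to come out with the right coefficient and to survive the division by $\bF_0(x;h)$ without blowing up when $h$ is small (of order $N^{-1/3}$). Since $\bF_0(x;h)\sim \frac{2h}{\sqrt{2\pi x^3}}\exp(-h^2/(2x))$ has an explicit factor of $h$ in front, the ratio $\frac{2^{-X-1}N^{2/3}|\sF(X;H,0)|}{\bF_0(x;h)}$ is genuinely bounded near $1$ as $h\downarrow C_1N^{-1/3}$ precisely because $H+1\ge 1$ matches $h+O(N^{-1/3})$ up to relative error $O(N^{-1/3}/h)$; this is the analogue of the $\lim_{g\to 0}g^{-1}\bI(x;h,g)=\bI_0(x;h)$ normalization noted after \Cref{defn:bIs}, and one must track it carefully through step (2). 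I expect no new ideas are needed beyond those already deployed in the proof of \Cref{lem:A1}; the proof will essentially be: ``repeat the proof of \Cref{lem:A1} with $G\in\{0,1\}$, using the reflection identity $|\sF(X;H,0)|=\frac{2(H+1)}{X+H+2}{X\choose (X+H)/2}$ in place of the $G$-dependent correction factor, and rescale by $N^{1/3}$.''
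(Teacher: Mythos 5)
Your proposal is correct and follows essentially the same route as the paper's proof: the paper likewise writes $|\sF(X;H,0)|$ via the reflection-principle identity as $2^{X+1}\frac{2H+2}{X+H+2}\cdot\frac{1}{\sqrt{2\pi X}}\exp(-H^2/(2X))\Theta_0$, extracts the extra $N^{-1/3}$ from the factor $\frac{2H+2}{X+H+2}\approx\frac{2h}{x}N^{-1/3}$ (which produces the $N^{-1/3}/h$ error term), controls $\Theta_0$ exactly as in \eqref{eq:bounderrterm}, and splits into the cases $x<N^{-1/5}$ (crude one-sided bound, error factor $1$) and $x\ge N^{-1/5}$ (full comparison), just as you describe. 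No gaps.
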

\begin{proof}
Recall that
\[
\bF_0(x;h) = \frac{2h}{\sqrt{2\pi x^3}} \exp\left(-\frac{h^2}{2x}\right).
\]
Similarly to the previous proof, we assume that $X$ is large enough, and when $H<X$ we have
\[
|\sF(X;H,0)| = 2^{X+1} \frac{2H+2}{X+H+2}\cdot \frac{1}{\sqrt{2\pi X}} \exp\left(-\frac{H^2}{2X}\right)\Theta_0,
\]
where
\[
\sqrt{\frac{X^2}{X^2-H^2}}\exp\left(-C_4\frac{H^4}{X^3}-\frac{C}{X-H}\right) \\ <\Theta_0 < \sqrt{\frac{X^2}{X^2-H^2}}\exp\left(-C_4^{-1}
\frac{H^4}{X^3} + \frac{C}{X-H}\right),
\]
for some large $C_4>0$, depending only on $C_1, C_2$.
We further have
\begin{equation} \label{eq:bdgh2}
\left|\frac{h^2}{2x}-\frac{H^2}{2X}\right| <CN^{-1/3}\frac{h}{x}+ CN^{-2/3}\frac{h^2}{x^2}.
\end{equation}
\noindent\textbf{Case 1.} When $x<N^{-1/5}$, it suffices to bound  $2^{-X-1}N^{2/3}|\sF(X;H,0)|$ by $\bF_0(x;h)$, up to a constant factor.
When $H>X$ this is obvious, since then $|\sF(X;H,0)| = 0$. When $H=X$, necessarily $|\sF(X;H,0)| = 1$, and $|h-N^{1/3}x|<CN^{-1/3}$. Thus, since $h>C_1N^{-1/3}$, we have $gh/x>c$, 
\[
\bF_0(x;h) > c N^{2/3}HX^{-3/2}\exp(-X/2),
\]
and the above stated bound also follows.

Below we assume that $H<X$.
The right-hand side of \eqref{eq:bdgh2} is smaller than
$C + (2C_4)^{-1}
\frac{H^4}{X^3}$,
by taking $C$ large.
By plugging this into the upper bound for $\Theta_0$, we get that $2^{-X-1}N^{1/3}|\sF(X;H,0)| < C\bF_0(x;h)$,
and the conclusion holds.

\noindent\textbf{Case 2.} When $x\ge N^{-1/5}$, necessarily $H<X$ since $h<C_2N^{1/100}$.
The right-hand side of \eqref{eq:bdgh2} is bounded by $CN^{-1/10}$.
We have $|\Theta_0-1| < C(N^{-2/3}x^{-3}+N^{-1/10})$ in this case, and $\left|\frac{2H}{X+H+2}\cdot\frac{xN^{1/3}}{2h}-1\right| < C(N^{-1/10}+N^{-1/3}/h)$.
Thus, the conclusion also holds.
\end{proof}
We also record without proof the following asymptotics of Calatan numbers.
\begin{lemma}  \label{lem:sfoo}
For any $C_1,C_2>0$, there exists a constant $C_3=C_3(C_1,C_2)>0$, such that for each $N=1,2,\dots$, and  $x\in\mathbb R_{\ge 0}$, and  $X\in \Z_{\ge 0}$, satisfying: $C_1 N^{-2/3}<x<C_2$ and $|X-xN^{2/3}|<C_2$, and $X$ is even, we have
\[
\left|2^{-X-1}N|\sF(X;0,0)| - \bF_{0,0}(x) \right| <  C_3N^{-2/3}x^{-1}\bF_{0,0}(x).
\]
\end{lemma}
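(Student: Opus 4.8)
The statement to prove is \Cref{lem:sfoo}, an asymptotic expansion of Catalan numbers. Recall $|\sF(X;0,0)| = \binom{X}{X/2}\frac{1}{X/2+1}$ is the $(X/2)$-th Catalan number, and $\bF_{0,0}(x) = \frac{2}{\sqrt{2\pi x^3}}$.

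\begin{proof}[Proof proposal for \Cref{lem:sfoo}]
The plan is to apply Stirling's approximation to the central binomial coefficient $\binom{X}{X/2}$ directly, and track the error term carefully. Throughout, $C,c>0$ denote constants depending only on $C_1,C_2$. First I would write $|\sF(X;0,0)| = \binom{X}{X/2}\frac{1}{X/2+1}$. When $X$ is bounded by a fixed constant (equivalently $N$ is bounded, since $x > C_1 N^{-2/3}$ and $|X - xN^{2/3}| < C_2$ force $N < C$), the claim is trivial by taking $C_3$ large, since both $2^{-X-1}N|\sF(X;0,0)|$ and $\bF_{0,0}(x)$ are bounded above and below by positive constants. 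So I would assume $X$ is large enough.

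Next, I would invoke the standard quantitative form of Stirling's formula: there is a constant $C>0$ such that for all even $X$ large,
\[
\left| \log\binom{X}{X/2} - X\log 2 + \tfrac12\log\left(\tfrac{\pi X}{2}\right) \right| < \frac{C}{X},
\]
which follows from $\log n! = n\log n - n + \tfrac12\log(2\pi n) + O(1/n)$ applied to $n = X$ and $n = X/2$. Exponentiating, $\binom{X}{X/2} = 2^X \sqrt{\tfrac{2}{\pi X}}\,(1 + O(1/X))$, and then $|\sF(X;0,0)| = 2^X \sqrt{\tfrac{2}{\pi X}}\cdot \frac{1}{X/2+1}(1+O(1/X)) = 2^{X+1}\sqrt{\tfrac{2}{\pi X^3}}(1+O(1/X))$, using $\frac{1}{X/2+1} = \frac{2}{X}(1 + O(1/X))$. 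Therefore
\[
2^{-X-1}N\,|\sF(X;0,0)| = N\sqrt{\tfrac{2}{\pi X^3}}\,(1 + O(1/X)) = \frac{2}{\sqrt{2\pi X^3 N^{-2}}}\,(1+O(1/X)).
\]

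It then remains to compare $X^3 N^{-2}$ with $x^3$. Since $|X - xN^{2/3}| < C_2$ and $C_1 N^{-2/3} < x < C_2$, I would write $X = xN^{2/3}(1 + \theta)$ with $|\theta| < C_2/(xN^{2/3}) \le C/(xN^{2/3})$, so $X^3 N^{-2} = x^3(1+\theta)^3$, giving $|X^3 N^{-2} - x^3| < C x^3 \cdot |\theta| < C x^2 N^{-2/3}$, hence $\left|\sqrt{x^3/(X^3 N^{-2})} - 1\right| < C x^{-1} N^{-2/3}$ (using $x^3 > c$ is false — rather, dividing: $x^3/(X^3N^{-2}) = (1+\theta)^{-3}$, and $|\theta| < Cx^{-1}N^{-2/3} < C C_1^{-1}$, a bounded quantity, so $|(1+\theta)^{-3} - 1| < C|\theta| < Cx^{-1}N^{-2/3}$). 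Also $1/X = 1/(xN^{2/3}(1+\theta)) < C x^{-1} N^{-2/3}$. Combining the three error contributions — the Stirling error $O(1/X)$, the error from replacing $X^3N^{-2}$ by $x^3$, and noting $\bF_{0,0}(x) = \frac{2}{\sqrt{2\pi x^3}}$ — yields
\[
\left| 2^{-X-1}N\,|\sF(X;0,0)| - \bF_{0,0}(x)\right| < C_3\, N^{-2/3} x^{-1}\,\bF_{0,0}(x),
\]
as desired. There is no serious obstacle here; the only point requiring a little care is keeping the bound in the form $N^{-2/3}x^{-1}$ rather than a cruder $N^{-1/3}$-type bound, which works out because each error term is governed by $1/X \asymp x^{-1}N^{-2/3}$. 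This lemma is in fact a strictly simpler special case of the already-proved \Cref{lem:sFog} (take $H = 0$, where the factor $\frac{2H+2}{X+H+2}$ becomes $\frac{2}{X+2}$), so one could alternatively deduce it from the same Stirling estimate already set up in that proof; I would likely just present the short self-contained argument above.
\end{proof}
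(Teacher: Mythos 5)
The paper records this lemma without proof, and your Stirling-based argument is exactly the intended one (it is the $H=0$ specialization of the computation already carried out for \Cref{lem:sFog}); the main estimate and the error bookkeeping in powers of $1/X\asymp x^{-1}N^{-2/3}$ are correct. One small slip in the base case: bounded $X$ does \emph{not} force bounded $N$ (take $X=0$ and $x\asymp N^{-2/3}$), so the two quantities need not be bounded by constants — rather, the hypotheses force $C_1<xN^{2/3}<X+C_2$, hence $\bF_{0,0}(x)\asymp N \asymp 2^{-X-1}N|\sF(X;0,0)|$ while $N^{-2/3}x^{-1}\gtrsim 1$, which still makes the claim trivial for $C_3$ large.
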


\subsection{Coupling between walks and Brownian bridges}  \label{ssec:couplgwbb}
We next establish that a uniformly random element from $\sF(X;H,G)$ is close to a corresponding Brownian bridge.
For this, we need the following statement.
\begin{lemma}  \label{lem:couple}
Take any $X, H, G, H', G'\in \Z_{\ge 0}$, with both $X+H+G$ and $X+H'+G'$ even, and $\mathfrak{f}, \mathfrak{f}':\llbracket 0, X\rrbracket\to \Z_{\ge 0}$, uniformly random from $\sF(X;H,G)$ and $\sF(X;H',G')$ respectively. We can couple them so that almost surely, $|\mathfrak{f}-\mathfrak{f}'|\le \max\{|G-G'|, |H-H'|\}$ on $\llbracket 0, X\rrbracket$.

Take any $x, h, g, h', g'>0$, and Brownian bridges $B$ and $B'$ on $[0,x]$ with $B(0)=h$, $B(x)=g$, conditional on $B>0$; and $B'(0)=h'$, $B'(x)=g'$, conditional on $B'>0$. We can couple them so that almost surely, $|B-B'|\le \max\{|g-g'|, |h-h'|\}$ on $[0,x]$.
\end{lemma}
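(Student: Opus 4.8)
Write $D=\max\{|H-H'|,|G-G'|\}$ in the discrete statement and $D=\max\{|h-h'|,|g-g'|\}$ in the continuum one. The plan is to obtain both one‑sided bounds from a single \emph{monotone grand coupling}, by sandwiching each of the two random bridges between a common third object and its vertical translate by $D$. The crucial observation is that the corner‑flip (heat‑bath) dynamics used to prove \Cref{lem:mon} (see \cite[Lemmas 2.6 and 2.7]{CH} and their proofs) commutes with global vertical shifts: if $\mathfrak f'$ is uniform on $\sF(X;H',G')$, then for any $c\in\Z$ the path $\mathfrak f'+c$ is uniform on the set of $\pm1$ bridges from $H'+c$ to $G'+c$ staying $\ge c$, and running the $\mathfrak f'$‑chain and then shifting is the same as running the shifted chain. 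Since $|H-H'|\le D$ and $|G-G'|\le D$, the endpoints of $\mathfrak f$ satisfy $H\le H'+D$, $G\le G'+D$ and $H-D\le H'$, $G-D\le G'$, while the floors obey $0\le D$ and $-D\le 0$; these are exactly the orderings required to compare $\mathfrak f$ with the shifted copies $\mathfrak f'+D$ and $\mathfrak f'-D$ through the monotone dynamics.

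For the discrete statement I would run the corner‑flip chains targeting the uniform laws on $\sF(X;H,G)$ and on $\sF(X;H',G')$ with the \emph{same} update randomness at every step, initialized from any coupled pair of configurations satisfying the sandwich $\mathfrak f-D\le\mathfrak f'\le\mathfrak f+D$ (such a pair exists: for $D=0$ the two bridge sets coincide, and for $D\ge 1$ one can fit a $\pm1$ bridge from $H'$ to $G'$ inside the width‑$2D$ tube around a minimal bridge from $H$ to $G$). By the monotonicity of the corner‑flip dynamics from \cite{CH}, applied once with $\mathfrak f$ below and $\mathfrak f'+D$ above and once with $\mathfrak f-D$ below and $\mathfrak f'$ above — legitimate because $\mathfrak f\pm D$ is a deterministic function of the $\mathfrak f$‑chain, so a single source of randomness drives everything — the inequalities $\mathfrak f\le\mathfrak f'+D$ and $\mathfrak f-D\le\mathfrak f'$ are preserved at every step. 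Letting both (finite, irreducible, aperiodic) chains run to stationarity and passing to the limit yields a coupling of the two uniform laws with $|\mathfrak f-\mathfrak f'|\le D$ on $\llbracket 0,X\rrbracket$ almost surely. I expect the only delicate point to be precisely this simultaneity: it is essential that one coupling delivers both one‑sided bounds at once, which is why one must use the explicit monotone dynamics of \cite{CH} rather than merely the existence of a one‑sided monotone coupling; verifying that the chains can be started respecting the sandwich is routine.

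The Brownian‑bridge statement is proved in the same way, using the corresponding monotone Gibbsian‑resampling coupling for Brownian bridges conditioned to stay positive, again available from \cite{CH}; here the argument is in fact cleaner, since there is no parity constraint and translating a conditioned bridge by $\pm D$ causes no trouble, while the ordering of endpoints and floors is checked exactly as above. Alternatively, one may deduce the continuum bound by taking the diffusive scaling limit of the discrete coupling. In either presentation the substance of the proof is the grand‑coupling observation; all remaining steps are a direct application of the monotone dynamics recalled in the proof of \Cref{lem:mon}.
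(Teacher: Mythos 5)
Your proposal is correct and follows the same route as the paper, which proves this lemma simply by invoking the monotone Monte--Carlo Markov chain (corner-flip / heat-bath) couplings of \cite[Lemmas 2.6 and 2.7]{CH}; your grand-coupling argument via vertical shifts by $D=\max\{|H-H'|,|G-G'|\}$ is exactly the standard way to extract the two-sided bound from that machinery, and you correctly flag that both one-sided inequalities must come from a single source of update randomness. No gaps.
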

This statement is in a similar spirit as \Cref{lem:mon}, and can also be proved using Monte-Carlo Markov chains. 
See Lemmas 2.6 and 2.7 in \cite{CH} and their proofs.

Our main coupling statement is as follows.
\begin{lemma}  \label{lem:coupleeb}
For any $C_1>0$ and $C_2>1$, there is a constant $C_3=C_3(C_1,C_2)>0$, such that the following is true for all $N\in \Z_+$.
Take any $C_1N^{-2/3}<x<(C_2/2)^{10}$ and $0\le h,g < (C_2-x^{1/10})N^{1/100}$, and $X, H, G\in \Z_{\ge 0}$, such that $|X-xN^{2/3}|, |H-hN^{1/3}|, |G-gN^{1/3}|<C_2$, and $X+H+G$ is even.
Take $\mathfrak{f}:\llbracket 0,X\rrbracket\to \Z_{\ge 0}$ uniformly random from $\sF(X;H,G)$, and a Brownian bridge $B$ on $[0,x]$ with $B(0)=h$, $B(x)=g$, conditional on $B\ge 0$. Then we can couple $\mathfrak{f}$ and $B$ together, such that with probability $>1-C_3\log(X)XN^{-7/10}$, we have $|\mathfrak{f}(Y)-N^{1/3}B(y)|<C_3\log(N)N^{3/10}+C_3\log(X)N^{1/10}$ for all $Y\in\llbracket 0, X\rrbracket$ and $y\in [0, x]$ with $|Y-yN^{2/3}|<1$.
\end{lemma}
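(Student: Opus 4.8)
The plan is to reduce the assertion to a strong coupling between an \emph{unconditioned} walk bridge and an \emph{unconditioned} Brownian bridge — for which the Komlós–Major–Tusnády (KMT) approximation is available — and then to upgrade it to the positivity‑conditioned setting; the upgrade is where essentially all of the difficulty lies, and the generous error budget $N^{3/10}\log N$ is what makes it affordable.

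\textbf{Step 1 (reductions).} Put $\tilde H=\lfloor hN^{1/3}\rfloor$, $\tilde G=\lfloor gN^{1/3}\rfloor$ (adjusted by $1$ for parity) and $\tilde X=\lfloor xN^{2/3}\rfloor$ (parity‑adjusted). I would first replace $(X,H,G)$ by $(\tilde X,\tilde H,\tilde G)$: the discrepancies are $O(1)$ in walk coordinates. For the endpoint change, apply the monotone coupling of \Cref{lem:couple} to bound the pathwise change of $\mathfrak f$ by $\max(|H-\tilde H|,|G-\tilde G|)=O(1)$, and of $B$ by $O(N^{-1/3})$. For the time‑horizon change from $X$ to $\tilde X$ and for the discretization $|Y-yN^{2/3}|<1$, use the Lévy modulus of continuity of the Brownian bridge: a time shift of $O(N^{-2/3})$ in $[0,x]$‑coordinates changes $B$ by $O(N^{-1/3}\sqrt{\log N})$, hence $N^{1/3}B$ by $O(\sqrt{\log N})$, which is absorbed. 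So it suffices to couple the length‑$X$ positive walk bridge from $H$ to $G$ with the length‑$x$ positive Brownian bridge from $h=HN^{-1/3}$ to $g=GN^{-1/3}$ (renaming).

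\textbf{Step 2 (unconditioned coupling on a block grid).} Let $L=\lfloor N^{3/5}\rfloor$ and $0=Y_0<Y_1<\cdots<Y_r=X$ be the grid of spacing $L$, with $r=O(N^{1/15})$. Couple the unconditioned walk bridge $\bar{\mathfrak f}$ ($H\to G$, length $X$) with the unconditioned Brownian bridge $\bar B$ ($h\to g$, length $x$) via KMT applied to the increments, obtaining $\PP[\sup_Y|\bar{\mathfrak f}(Y)-N^{1/3}\bar B(YN^{-2/3})|>C\log X+u]\le Ce^{-cu}$ for all $u\ge 0$; taking $u$ of order $\log N$ gives grid‑point closeness $O(\log N)$ with failure probability below the target $C\log(X)XN^{-7/10}$. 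The reason for the grid is the conditioning in Step 3: conditionally on $\bar{\mathfrak f}|_{\{Y_j\}}$ and $\bar B|_{\{y_j\}}$, the two processes restricted to a block are an unconditioned walk bridge and an unconditioned Brownian bridge with endpoints matching to $O(\log N)$, so each has oscillation $O(\sqrt L\log L)=O(N^{3/10}\log N)$ over the block — by reflection/Bernstein bounds in the spirit of \Cref{lem:countwk} and the Brownian modulus — and \Cref{lem:couple} transfers this to a within‑block comparison of the same order. A union bound over the $O(N^{1/15})$ blocks and the associated oscillation events is where most of the probability budget is spent.

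\textbf{Step 3 (positivity conditioning — the main obstacle).} Now $\mathfrak f$ is $\bar{\mathfrak f}$ conditioned on $\{\min\bar{\mathfrak f}\ge 0\}$ and $B$ is $\bar B$ conditioned on $\{\min\bar B\ge 0\}$, but these events are not coupled, and the conditioning is global and severe when $h,g$ are small (the extreme case $h=g=0$, where $B$ is a Brownian excursion and $H,G$ are $O(1)$). I would proceed as follows. Using reflection‑principle estimates as in \Cref{lem:countwk}, show that with high probability $\mathfrak f$ stays above level $M:=\lfloor N^{3/10}\rfloor$ outside two ``boundary windows'' $[0,\theta_1]$ and $[\theta_2,X]$ of length $O(M^2\log N)=O(N^{3/5}\log N)$, where $\theta_1,\theta_2$ are the first and last passages of $\mathfrak f$ to level $M$ (windows are empty when $H$, resp.\ $G$, already exceeds $M$); the analogous statement holds for $N^{1/3}B$. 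Conditionally on $\mathfrak f|_{[0,\theta_1]\cup[\theta_2,X]}$, the middle piece $\mathfrak f|_{[\theta_1,\theta_2]}$ is a uniform walk bridge from $M$ to $M$, still conditioned to stay $\ge 0$; since it sits a distance $M=N^{3/10}$ above the wall, the positivity conditioning can be removed at the cost of a change of law whose Radon–Nikodym derivative is $1+o(1)$ off a probability‑$O(e^{-c})$ event, after which Step 2 applies. Each boundary window carries a walk bridge of length $O(N^{3/5}\log N)$ running from $O(1)$ up to $M$ while staying $\ge 0$ — up to time‑reversal, a walk meander — which I would couple to the corresponding Bessel$_3$‑type limit of $B$ (as in \Cref{defn:bIs}) via a KMT‑type strong approximation for meanders, with error $O(\sqrt{N^{3/5}\log N})=O(N^{3/10}\log N)$. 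Throughout, one splits into cases according to whether $h$ (resp.\ $g$) exceeds a threshold of order $N^{-1/30}$, which decides whether the corresponding window is macroscopic; making this case analysis and the meander coupling uniform over all admissible $(x,h,g,X,H,G)$ is the technically heaviest ingredient.

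\textbf{Step 4 (assembly).} Combining the contributions — KMT at the grid points, the within‑block oscillation and Radon–Nikodym comparisons, and the boundary‑window meander couplings — and performing careful bookkeeping of the powers of $N$ and $\log$, one obtains a total pathwise error of the form $C_3\log(N)N^{3/10}+C_3\log(X)N^{1/10}$, while the union bound over the $O(N^{1/15})$ blocks, the $O(1)$ windows, and the KMT/meander failure events yields total failure probability $O(\log(X)XN^{-7/10})$. I expect Step 3, and in particular the uniform meander coupling near the boundary in the regime $h,g\to 0$, to be the crux; Steps 1, 2 and 4 are routine given KMT and the monotone couplings already available.
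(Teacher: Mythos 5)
Your Steps 1, 2 and 4 are plausible in outline, but Step 3 — which you correctly identify as the crux — does not work as described, and the failure is structural rather than a matter of missing details. First, the decomposition into two short boundary windows plus a middle piece staying above $M=\lfloor N^{3/10}\rfloor$ fails quantitatively: after the first passage to level $M$ the conditioned path behaves like a Bessel$_3$ process, so the probability that it revisits level $M$ at a time more than $W$ after $\theta_1$ scales like $M/\sqrt{W}$, which for $W=M^2\log N$ is $\asymp 1/\sqrt{\log N}$. The lemma requires every exceptional event to have probability $O(\log(X)XN^{-7/10})=O(N^{-1/30}\log N)$ when $X\asymp N^{2/3}$, so the path does leave your "middle region above $M$" with a probability that is far too large.

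Second, and more fatally, the Radon–Nikodym claim for the middle piece is false. A bridge from $M$ to $M$ over a time horizon $\asymp X$ stays nonnegative with probability $\approx 1-e^{-2M^2/X}\approx 2M^2/X=2N^{-1/15}\to 0$; conditioning on positivity is therefore conditioning on an asymptotically negligible event, the Radon–Nikodym derivative on the surviving paths is $\asymp X/M^2=N^{1/15}$ rather than $1+o(1)$, and the conditioned middle piece typically rises to height $\sqrt{X}\asymp N^{1/3}\gg M$ while the unconditioned one goes negative. To make the conditioning removable you would need $M\gg\sqrt{X}\asymp N^{1/3}$, which exceeds the admissible coupling error $N^{3/10}\log N$, so no choice of $M$ rescues the scheme; the reduction to unconditioned KMT cannot be made to work at this precision. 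The paper's proof avoids any such reduction: it is a dyadic induction on $X$, with base case $X<N^{3/5}$ (where both the positive walk bridge and the positive Brownian bridge are shown to lie within $\sqrt{X}\log N$ of the linear interpolation of their endpoints, via a chaining argument), and an induction step that couples the two midpoint values in total variation using the sharp local asymptotics of the positive-bridge counts $|\sF(X;H,G)|$ from \Cref{lem:A1}, together with the monotone coupling of \Cref{lem:couple} to absorb an $N^{1/10}$ mismatch; the conditioning is thus built into the one-point laws matched at every scale, and the $\log X$ factors in the statement are exactly the depth of this recursion.
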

\begin{proof}
In this proof we use $C,c>0$ to denote constants that may depend on $C_1, C_2$. 
We shall take $C_3$ to be large enough depending on $C_1, C_2$ and all $C,c$, and we assume without loss of generality that $N$ is large enough depending on $C_1, C_2, C_3$ and all $C,c$.

We prove by an induction in $X$.

For the base case, we consider $X<N^{3/5}$. 
Take any $U, W \in \llbracket 0, X\rrbracket$, $U<W$, and let $V=\lfloor (U+W)/2\rfloor$ or $V=\lceil (U+W)/2\rceil$.
Similarly to the proof of \Cref{cla}, we have that 
\[
\PP\left( |\mathfrak{f}(U)+\mathfrak{f}(W)-2\mathfrak{f}(V)| > \frac{1}{10}(W-U)^{1/2} \log(N) \right) < C\exp(-c(\log(N))^2).
\]
Now we take $U, W$ to be $\lfloor a2^{-b}X\rfloor, \lfloor (a+1)2^{-b}X\rfloor$, for any $b\in\Z_{\ge 0}$ and $a\in \llbracket 0, 2^b-1\rrbracket$, such that $U<W$.
By a union bound, we have that with probability $>1-C\exp(-c(\log(N))^2)$, for all such $U, V, W$ we have $|\mathfrak{f}(U)+\mathfrak{f}(W)-2\mathfrak{f}(V)| \le 0.1(W-U)^{1/2} \log(N)$.
This, in particular, implies that 
\begin{equation}   \label{eq:EYuniformbound}
\PP[|\mathfrak{f}(Y)-H-(G-H)Y/X|<X^{1/2}\log(N), \; \forall Y\in\llbracket 0,X\rrbracket] >1-C\exp(-c(\log(N))^2).
\end{equation}
Similarly we have that 
\begin{equation}    \label{eq:EYuniformboundB}
\PP[|B(y)-h-(g-h)y/x|<x^{1/2}\log(N), \; \forall y\in\llbracket 0,x\rrbracket] >1-C\exp(-c(\log(N))^2).    
\end{equation}
Therefore the statement holds.

For the induction step, now we assume that $X\ge N^{3/5}$, and that the conclusion holds for any smaller $X$.
The idea is to first couple the random variables $\mathfrak{f}(\lfloor X/2\rfloor)$ and $N^{1/3}B(\lfloor X/2\rfloor N^{-2/3})$ and then couple distributions conditional on the values of these variables.

We first deduce that
\begin{equation}  \label{eq:ndcbod}
\PP\left(N^{-1/3}\mathfrak{f}(\lfloor X/2\rfloor )\ge \left(C_2-\frac{99}{100}x^{1/10}\right)N^{1/100}\right) <CN^{-1/10},
\end{equation}
when both $h,g < (C_2-x^{1/10})N^{1/100}$.
By \Cref{lem:mon} it suffices to consider the case where $H=\lceil (C_2-x^{1/10})N^{1/3+1/100}+C_2\rceil$, and $G=H$ or $H+1$ (depending on the parity of $X$).
In this case we can estimate the number of walks in $\sF(X;H,G)$ that is $>\big(C_2-\frac{99}{100}x^{1/10}\big)N^{1/3+1/100}$ at $\lfloor X/2\rfloor$, using \Cref{lem:countwk}(i), and get an upper bound of $C2^X\exp(-cx^{1/5}N^{1/50}) < C2^X\exp(-cN^{1/150})$;
and by reflection principle we can lower bound $|\sF(X;H,G)|$ by $cX^{-1/2}2^X$.
Then their ratio is $<CN^{-1/10}$.

We now couple $\mathfrak{f}(\lfloor X/2\rfloor)$ and $N^{1/3}B(\lfloor X/2\rfloor N^{-2/3})$ so that with probability $>1-CN^{-1/10}$, they differ by $\le N^{1/10}$.

We take $\mathfrak{f}'$  uniformly sampled from $\sF(X;H+\lfloor N^{1/10}/3\rfloor,G+\lfloor N^{1/10}/3\rfloor)$, and a Brownian bridge $B'$ on $[0,x]$ with $B'(0)=h+N^{-7/30}/3$, $B'(x)=g+N^{-7/30}/3$, conditional on $B'>0$.
Take any $i\in\Z_{\ge 0}$ such that $X+H+\lfloor N^{1/10}/3\rfloor+\lfloor X/2\rfloor+i$ is even, and $i<C_2N^{1/3+1/100}$.
The probability $\PP[\mathfrak{f}'(\lfloor X/2\rfloor)=i]$ equals
\[
\frac{|\sF(\lfloor X/2\rfloor; H+\lfloor N^{1/10}/3\rfloor,i)|\cdot |\sF(\lceil X/2\rceil; i, G+\lfloor N^{1/10}/3\rfloor)|}{|\sF(X;H+\lfloor N^{1/10}/3\rfloor,G+\lfloor N^{1/10}/3\rfloor)|}.
\]
By \Cref{lem:A1}, this equals
\begin{equation}   \label{eq:Fratio}
2N^{-1/3}\frac{\bF(\lfloor X/2\rfloor N^{-2/3};h+N^{-7/30},\tau) \bF(x-\lfloor X/2\rfloor N^{-2/3};\tau,g+N^{-7/30}) }{\bF(x;h+N^{-7/30},g+N^{-7/30})} \Phi,    
\end{equation}
for any $\tau > 0$ with $|i-\tau N^{1/3}|\le 2$, 
where $\Phi$ is a number satisfying
\begin{equation}   \label{eq:Phibound}
|\Phi-1| < C(1\wedge (N^{-1/10} + N^{-1/3}/\tau)).
\end{equation}
On the other hand, the probability density of $B(\lfloor X/2\rfloor N^{-2/3})$ at $\tau$ is precisely the ratio in \eqref{eq:Fratio}.
Thus, if we consider $\mathfrak{f}'(\lfloor X/2\rfloor)$ and $2\lfloor N^{1/3}B'(\lfloor X/2\rfloor N^{-2/3})/2 \rfloor$ or $2\lfloor N^{1/3}B'(\lfloor X/2\rfloor N^{-2/3})/2 \rfloor+1$ (depending on the parity), the total variation distance between their distributions is bounded by
\begin{multline}  \label{eq:pplusinte}
\PP[\mathfrak{f}'\lfloor X/2\rfloor \ge C_2N^{1/3+1/100}] \\+ \int_0^{C_2N^{1/100}+N^{-1/3}} \frac{\bF(\lfloor X/2\rfloor N^{-2/3};h+N^{-7/30},\tau) \bF(x-\lfloor X/2\rfloor N^{-2/3};\tau,g+N^{-7/30}) }{\bF(x;h+N^{-7/30},g+N^{-7/30})} |\Phi-1| \d \tau.
\end{multline}
By \eqref{eq:ndcbod}, the first term is bounded by $CN^{-1/10}$.
As for the integral, for $\tau<x^{1/2}N^{-1/10}$ we can bound the ratio by $Cx^{-1/2}$, and $|\Phi-1|$ by $C$; so the integral for such $\tau$ is bounded by $CN^{-1/10}$. For $\tau>x^{1/2}N^{-1/10}$, we have $\tau>cN^{-2/15}$, and therefore, $|\Phi-1|<CN^{-1/10}$, so we get an upper bound of $CN^{-1/10}$ times the integral of the ratio.
Note that since the ratio is precisely the probability density of $B(\lfloor X/2\rfloor N^{-2/3})$ at $\tau$, its integral for $\tau$ from $0$ to $\infty$ equals $1$.
Thus, the integral in \eqref{eq:pplusinte} is bounded by $CN^{-1/10}$.

In other words, we can couple $\mathfrak{f}'$ and $B'$ so that with probability $>1-CN^{-1/10}$, we have
\[|\mathfrak{f}'(\lfloor X/2\rfloor)-N^{1/3}B'(\lfloor X/2\rfloor N^{-2/3})|\le 2.\]
We then couple $\mathfrak{f}$ and $\mathfrak{f}'$ using \Cref{lem:couple} so that almost surely they differ by $\le N^{1/10}/3$; similarly we couple $B$ and $B'$ using \Cref{lem:couple} so that almost surely they differ by $\le N^{-7/30}/3$.
Therefore, we have $|\mathfrak{f}(\lfloor X/2\rfloor)-N^{1/3}B(\lfloor X/2\rfloor N^{-2/3})|\le N^{1/10}$ with probability $>1-CN^{-1/10}$.

We then use the induction hypothesis for $\mathfrak{f}$ on $\llbracket 0, \lfloor X/2\rfloor\rrbracket$ and $\llbracket \lfloor X/2\rfloor, X\rrbracket$ respectively, conditional on $\mathfrak{f}(\lfloor X/2\rfloor)$ and $B(\lfloor X/2\rfloor N^{-2/3})$.
Note that here we only couple them so that (with probability $>1-CN^{-1/10}$) they differ by $\le N^{1/10}$, rather than $\le C_2$.
Therefore to use the induction hypothesis, we need one more procedure.

We consider Brownian bridges $B_-$ and $B_+$ on $[0,\lfloor X/2\rfloor N^{-2/3}]$ and $[\lfloor X/2\rfloor N^{-2/3}, x]$ respectively, with boundary conditions $B_-(0)=h$, $B_-(\lfloor X/2\rfloor N^{-2/3})=N^{-1/3}\mathfrak{f}(\lfloor X/2\rfloor)$, and  $B_+(\lfloor X/2\rfloor N^{-2/3})=N^{-1/3}\mathfrak{f}(\lfloor X/2\rfloor)$, $B_+(x)=g$, conditional on $B_-, B_+>0$.
From the bound \eqref{eq:ndcbod} and that $(C_2-\frac{99}{100}x^{1/10})N^{1/100}$ smaller than either $(C_2-(\lfloor X/2\rfloor N^{-2/3})^{1/10})N^{1/100}$ or $(C_2-(x-\lfloor X/2\rfloor N^{-2/3})^{1/10})N^{1/100}$, we have that
with probability $>1-CN^{-1/10}$,
we can couple $B_-$ and $B_+$ with $\mathfrak{f}$ on the two intervals respectively, using the induction hypothesis. 
We then couple $B_-$ and $B_+$ with $B$ on $[0,\lfloor X/2\rfloor N^{-2/3}]$ and $[\lfloor X/2\rfloor N^{-2/3}, x]$ respectively, conditional on $B(\lfloor X/2\rfloor N^{-2/3})$, using \Cref{lem:couple}.
Under the event that $|\mathfrak{f}(\lfloor X/2\rfloor)-N^{1/3}B(\lfloor X/2\rfloor N^{-2/3})|\le N^{1/10}$, we have that $B$ and $B_-$, $B_+$ differ by $\le N^{-7/30}$ almost surely.

Finally, using that $CN^{-1/10}+C_3\log(\lceil  X/2 \rceil)XN^{-7/10}<C_3\log(X)XN^{-7/10}$, and that $C_3\log(\lceil  X/2 \rceil)N^{1/10} + N^{1/10}<C_3\log(X)N^{1/10}$, we see that the coupling satisfies the requirements.
Thus the conclusion follows by the principle of induction.
\end{proof}

\begin{remark}
The bound in \Cref{lem:coupleeb} is stated in this way for the convenience of the proof by induction. It is straightforward to see that, under the same coupling, with probability $>1-N^{-1/31}$ we have $|N^{-1/3} \mathfrak{f}(Y)-B(y)|<N^{-1/31}$ for all $Y\in\llbracket 0, X\rrbracket$ and $y\in [0, x]$ with $|Y-yN^{2/3}|<1$.     
\end{remark}

Finally, we use this coupling to prove \Cref{lem:RW1}.

\begin{proof}[Proof of \Cref{lem:RW1}]
In this proof we use $C,c>0$ to denote constants that may depend on $C_1, C_2$, and we take $C_3$ to be large enough depending on all $C,c$

We first prove (1).
Take $\mathfrak{f}:\llbracket 0,X\rrbracket\to \Z_{\ge 0}$ uniformly random from $\sF(X;H,G)$, and a Brownian bridge $B$ on $[0,x]$ with $B(0)=h$, $B(x)=g$, conditional on $B>0$; and couple them using \Cref{lem:coupleeb}.

Note that when $H\ge G$, each up-step of the path has a matching down-step, except for $H-G$ up-steps. Thus, we have
\[
\sum_{\substack{t\in \llbracket 1, X \rrbracket: \\ \mathfrak{f}(t)=\mathfrak{f}(t-1)-1 }}\log\left(1+\frac{2\mathfrak{f}(t)}{\beta N}\right) = 
\sum_{\substack{t\in \llbracket 1, X \rrbracket: \\ \mathfrak{f}(t)=\mathfrak{f}(t-1)+1 }}\log\left(1+\frac{2\mathfrak{f}(t)-2}{\beta N}\right) + \sum_{i=G}^{H-1}\log\left(1+\frac{2i}{\beta N}\right);
\]
and when $H<G$, we have
\[
\sum_{\substack{t\in \llbracket 1, X \rrbracket: \\ \mathfrak{f}(t)=\mathfrak{f}(t-1)-1 }}\log\left(1+\frac{2\mathfrak{f}(t)}{\beta N}\right) + \sum_{i=H}^{G-1}\log\left(1+\frac{2i}{\beta N}\right)= 
\sum_{\substack{t\in \llbracket 1, X \rrbracket: \\ \mathfrak{f}(t)=\mathfrak{f}(t-1)+1 }}\log\left(1+\frac{2\mathfrak{f}(t)-2}{\beta N}\right).
\]
Thus (since $G, H < CN^{1/3}$) we have
\[
\left|\sum_{\substack{t\in \llbracket 1, X \rrbracket: \\ \mathfrak{f}(t)=\mathfrak{f}(t-1)-1 }}\log\left(1+\frac{2\mathfrak{f}(t)}{\beta N}\right)- \sum_{\substack{t\in \llbracket 1, X \rrbracket: \\ \mathfrak{f}(t)=\mathfrak{f}(t-1)+1 }}\log\left(1+\frac{2\mathfrak{f}(t)-2}{\beta N}\right)\right| < CN^{-1/3}.
\]
Let $\cE$ be the event where $\max \mathfrak{f}, \max B<N^{1/2}$, and $|N^{-1/3} \mathfrak{f}(t)-B(y)|<N^{-1/31}$ for all $t\in\llbracket 0, X\rrbracket$ and $y\in [0, x]$ with $|t-yN^{2/3}|<1$.
Then under $\cE$, for each $t\in\llbracket 1, X\rrbracket$ with $t<xN^{2/3}$, we have
\[
\left| \log\left(1+\frac{2\mathfrak{f}(t)+\iota}{\beta N}\right) - 2\beta^{-1}\int_{(t-1)N^{-2/3}}^{tN^{-2/3}} B(y) \d y \right| < CN^{-2/3-1/31},
\]
where $\iota=0$ or $-2$.
Thus, under $\cE$, we have
\[
\left| \sum_{\substack{t\in \llbracket 1, X \rrbracket: \\ \mathfrak{f}(t)=\mathfrak{f}(t-1)-1 }}\log\left(1+\frac{2\mathfrak{f}(t)}{\beta N}\right) - \beta^{-1}\int_0^x B(y) \d y \right| < CN^{-1/31},
\]
which implies that
\begin{equation}  \label{eq:pfa91}
\left|\prod_{\substack{t\in \llbracket 1, X \rrbracket: \\ \mathfrak{f}(t)=\mathfrak{f}(t-1)-1 }}\left(1+\frac{2\mathfrak{f}(t)}{\beta N}\right)
- \exp\left( \beta^{-1}\int_0^x B(y) \d y \right) \right| < CN^{-1/31}\exp\left( \beta^{-1}\int_0^x B(y) \d y \right).
\end{equation}
On the other hand, using the same arguments as deriving \eqref{eq:EYuniformbound} and \eqref{eq:EYuniformboundB} in the proof of \Cref{lem:coupleeb}, we have $\PP(\max \mathfrak{f} > mN^{1/3})<C\exp(-cm^2)$ and $\PP(\max B > m)<C\exp(-cm^2)$, for any $m>0$.
Thus,
\begin{equation}  \label{eq:pfa92}
\E \left[\exp\left( \beta^{-1}\int_0^x B(y) \d y \right) \right]< \int_0^\infty  C\exp(-cm^2) \exp(Cm) \d m < C,
\end{equation}
and $\PP[\cE^c]<CN^{-1/40}$ (where $\cE^c$ denotes the complement of $\cE$), and
\[
\E \left[\don[\cE^c]\prod_{\substack{t\in \llbracket 1, X \rrbracket: \\ \mathfrak{f}(t)=\mathfrak{f}(t-1)-1 }}\left(1+\frac{2\mathfrak{f}(t)}{\beta N}\right) \right]< \int_0^\infty  C(\exp(-cm^2)\wedge \PP[\cE^c]) \exp(Cm) \d m < C\log(N)N^{-1/31},
\]
\[
\E \left[\don[\cE^c]\exp\left( \beta^{-1}\int_0^x B(y) \d y \right) \right]< \int_0^\infty  C(\exp(-cm^2)\wedge \PP[\cE^c]) \exp(Cm) \d m < C\log(N)N^{-1/31}.
\]
These two bounds together with \eqref{eq:pfa91} and \eqref{eq:pfa92} imply that
\[
\left|\E\prod_{\substack{t\in \llbracket 1, X \rrbracket: \\ \mathfrak{f}(t)=\mathfrak{f}(t-1)-1 }}\left(1+\frac{2\mathfrak{f}(t)}{\beta N}\right)
- \E \exp\left( \beta^{-1}\int_0^x B(y) \d y \right) \right| < CN^{-1/40}.
\]
This leads to
\[
\left|
N^{1/3}I(X;H,G) - \frac{\bI(x;h,g)\cdot 2^{-X-1}N^{1/3}|\sF(X;H,G)|}{\bF(x;h,g)}
\right| < CN^{-1/40}\bF(x;h,g),
\]
using that $2^{-X-1}N^{1/3}|\sF(X;H,G)| < C\bF(x;h,g)$, which can be deduced from \Cref{lem:A1}.
Applying \Cref{lem:A1} again to the left-hand side, and using that $\bI(x;h,g)< C\bF(x;h,g)$, the estimate (1) follows.

For (2), (3), and (4), we similarly apply \Cref{lem:coupleeb} and \Cref{lem:sFog,lem:sfoo} to deduce that
\[
\left|N^{2/3}I(X;H,0)-\frac{\bI_0(x;h)\cdot 2^{-X-1}N^{2/3}|\sF(X;H,0)|}{\bF_0(x;h)}\right| < CN^{-1/40}\bF_0(x;h),
\]
and
\begin{multline*}
\left|N^{2/3}I^+(X;H,G)-\frac{\exp(-\beta^{-1}xh) \bI_0(x;g-h)\cdot 2^{-X-1}N^{2/3}|\sF^+(X;H,G)|}{\bF_0(x;g-h)}\right| \\ < CN^{-1/40}\bF_0(x;g-h),
\end{multline*}
and
\[
\left|NI^+(X;H,H)-\frac{\exp(-\beta^{-1}xh)\bI^\epsilon_0(x;h)\cdot 2^{-X-1}N|\sF^+(X;H,H)|}{\bF_{0,0}(x)}\right| < CN^{-1/40}\bF_{0,0}(x).
\]
These together with \Cref{lem:sFog,lem:sfoo} imply (2)--(4) of \Cref{lem:RW1}.
\end{proof}

\bibliographystyle{alpha}
\bibliography{bibliography}

\end{document}